\newcounter{notes}
\newtheorem{theorem}{Theorem}
\newtheorem{proposition}[theorem]{Proposition}
\newtheorem{corollary}[theorem]{Corollary}
\newtheorem{lemma}[theorem]{Lemma}
\newtheorem{observation}[theorem]{Observation}
\newtheorem{claim}[theorem]{Claim}
\newtheorem{fact}[theorem]{Fact}
\theoremstyle{definition}
\newtheorem{definition}[theorem]{Definition}
\theoremstyle{remark}
\newtheorem{remark}[theorem]{Remark}
\newtheorem{remarks}[theorem]{Remarks}
\newtheorem{example}[theorem]{Example}
\newtheoremstyle{theoremwithref}{}{}{\itshape}{}{\bfseries}{.}{.5em}{#1 #2 #3}
\theoremstyle{theoremwithref}
\newcommand{\ie}{i.e.\ }
\newcommand{\eg}{e.g.\ }
\newcommand{\resp}{resp.\ }
\newcommand{\PP}{\mathbf{P}}
\newcommand{\CC}{\mathbf{C}}
\newcommand{\bH}{\mathbf{H}}
\newcommand{\RR}{\mathbf{R}}
\newcommand{\RRp}{\mathbf{R}_+}
\newcommand{\QQ}{\mathbf{Q}}
\newcommand{\ZZ}{\mathbf{Z}}
\newcommand{\NN}{\mathbf{N}}
\newcommand{\KK}{\mathbf{K}}
\newcommand{\GL}{\mathrm{GL}}
\newcommand{\SL}{\mathrm{SL}}
\newcommand{\PSL}{\mathrm{PSL}}
\newcommand{\SO}{\mathrm{SO}}
\newcommand{\OO}{\mathrm{O}}
\newcommand{\SU}{\mathrm{SU}}
\newcommand{\U}{\mathrm{U}}
\newcommand{\Sp}{\mathrm{Sp}}
\newcommand{\Spin}{\mathrm{Spin}}
\newcommand{\g}{\mathfrak{g}}
\newcommand{\h}{\mathfrak{h}}
\newcommand{\aaa}{\mathfrak{a}}
\newcommand{\athetaplus}{\overline{\mathfrak{a}}^{+}_{\theta}}
\newcommand{\z}{\mathfrak{z}}
\newcommand{\s}{\mathfrak{s}}
\newcommand{\gl}{\mathfrak{gl}}
\newcommand{\Hom}{\mathrm{Hom}}
\newcommand{\Aut}{\mathrm{Aut}}
\newcommand{\HH}{\mathbb{H}}
\newcommand{\Isom}{\mathrm{Isom}}
\newcommand{\Ker}{\mathrm{Ker}}
\newcommand{\Diag}{\mathrm{Diag}}
\newcommand{\Ad}{\mathrm{Ad}}
\newcommand{\ad}{\mathrm{ad}}
\newcommand{\CM}{C_{\!\!_\mathcal{M}}}
\newcommand{\pr}{\mathrm{pr}}
\newcommand{\dtheta}{d_{\mu_{\theta}}}
\newcommand{\ellFF}[2]{|#1|_{#2}}
\newcommand{\ellGamma}[1]{|#1|_{\Gamma}}
\newcommand{\ellGammaf}{\ellGamma{\cdot{}}}
\newcommand{\ellinfty}[1]{|#1|_{\infty{}}}
\newcommand{\ellinftyf}{\ellinfty{\cdot{}}}
\newcommand{\trlf}{\ell_{\Gamma}{}}
\newcommand{\trl}[1]{\trlf(#1)}
\newcommand{\vt}{{T}^{\mathrm{v}}}
\newcommand*{\longhookrightarrow}{\ensuremath{\lhook\joinrel\relbar\joinrel\rightarrow}}
\newcommand{\dist}{\mathrm{dist}}
\title{Anosov representations and proper actions}
\author[F. Gu\'eritaud]{Fran\c{c}ois Gu\'eritaud}
\address{CNRS and Universit\'e Lille 1, Laboratoire Paul Painlev\'e, 59655 Villeneuve d'Ascq Cedex, France 
\newline Wolfgang-Pauli Institute, University of Vienna, CNRS-UMI 2842, Austria}\email{francois.gueritaud@math.univ-lille1.fr}
\author[O. Guichard]{Olivier Guichard}
\address{Universit\'e de Strasbourg, IRMA, 7 rue  Descartes, 67000 Strasbourg, France}
\email{olivier.guichard@math.unistra.fr}
\author[F. Kassel]{Fanny Kassel}
\address{CNRS and Universit\'e Lille 1, Laboratoire Paul Painlev\'e, 59655 Villeneuve d'Ascq Cedex, France}
\email{fanny.kassel@math.univ-lille1.fr}
\author[A. Wienhard]{Anna Wienhard}
\address{Ruprecht-Karls Universit\"at Heidelberg, Mathematisches Institut, Im Neuenheimer Feld~288, 69120 Heidelberg, Germany
\newline HITS gGmbH, Heidelberg Institute for Theoretical Studies, Schloss-Wolfs\-brunnen\-weg 35, 69118 Heidelberg, Germany }
\email{wienhard@mathi.uni-heidelberg.de}
\thanks{FG, OG, and FK were partially supported by the Agence Nationale de la Recherche under the grants ETTT (ANR-09-BLAN-0116-01) and DiscGroup (ANR-11-BS01-013), as well as through the Labex CEMPI (ANR-11-LABX-0007-01).
OG also received funding from the European Research Council under the European Community's seventh Framework Programme (FP7/2007-2013)/ERC grant agreement FP7-246918.
AW was partially supported by the National Science Foundation under agreements DMS-1065919 and 0846408, by the Sloan Foundation, by the Deutsche Forschungsgemeinschaft, by the European Research Council under ERC-Consolidator grant 614733, and by the Klaus Tschira Foundation.
This paper was completed while OG, FK, and AW were in residence at the MSRI in Berkeley, California, supported by the National Science Foundation under grant 0932078~000.}
\begin{document}

\numberwithin{theorem}{section}
\numberwithin{equation}{section}

\begin{abstract}
We establish several characterizations of Anosov representations of word hyperbolic groups into real reductive Lie groups, in terms of a Cartan projection or Lyapunov projection of the Lie group. 
Using a properness criterion of Benoist and Kobayashi, we derive applications to proper actions on homogeneous spaces of reductive groups.
\end{abstract}

\maketitle

\section{Introduction}

Anosov representations of word hyperbolic groups into real Lie groups were first introduced by~Labou\-rie \cite{Labourie_anosov}.
They provide an interesting class of discrete subgroups of semisimple or reductive Lie groups, with a rich structure theory.
In many respects they generalize, to a higher-rank setting, the convex cocompact representations into rank-one simple groups \cite{Guichard_Wienhard_DoD, KapovichLeebPortiv2, KapovichLeebPorti14, KapovichLeebPorti14_2}.
They also play an important role in the context of higher Teichm\"uller spaces. 

The original definition of Anosov representations from \cite{Labourie_anosov} involves the flow space of a word hyperbolic group, whose construction is not completely straightforward.
In this paper, we establish several characterizations of Anosov
representations that do not involve the flow space. 
A central role in our characterizations is played by the Cartan projection of~$G$ (associated with a fixed Cartan decomposition), which measures dynamical properties of diverging sequences in~$G$.
We apply our characterizations to the study of proper actions on homogeneous spaces by establishing a direct link between the properties, for a representation $\rho : \Gamma\to G$ to be Anosov, and for~$\Gamma$ to act properly discontinuously via~$\rho$ on certain homogeneous spaces of~$G$.

We now describe our results in more detail.

\subsection{Existence of continuous boundary maps}\label{subsec:intro-constr-xi}

Since the foundational work of Furstenberg and the celebrated rigidity theorems of Mostow and Margulis, the existence of boundary maps has been playing a crucial role in the study of discrete subgroups of Lie groups. 
Given a representation $\rho :\nolinebreak\Gamma\to G$ of a finitely generated group $\Gamma$ into a reductive (\eg semisimple) Lie group~$G$, {\emph{measurable}} $\rho$-equivariant boundary maps from a Poisson boundary of~$\Gamma$ to a boundary of~$G$ exist under rather weak assumptions, \eg Zariski density of $\rho(\Gamma)$ in~$G$.
However, if $\Gamma$ comes with some geometric or topological boundary, obtaining a \emph{continuous} $\rho$-equivariant boundary map is in general difficult. 

Anosov representations of word hyperbolic groups come, by definition, with a pair of continuous equivariant boundary maps.
More precisely, let $\rho :\nolinebreak\Gamma\to G$ be a $P_\theta$-Anosov representation.
(We always assume $G$ to be noncompact and linear, and use the notation $P_{\theta}, P_{\theta}^-$ for its standard parabolic subgroups with the convention that $P_{\emptyset}=G$, see Section~\ref{subsec:some-structure-semi}.) 
Then there exist $\rho$-equivariant boundary maps $\xi^+ : \partial_{\infty}\Gamma\to G/P_{\theta}$ and $\xi^- : \partial_{\infty}\Gamma\to G/P_{\theta}^-$ that are continuous. 
These boundary maps have additional remarkable properties: they are {\em transverse}, \ie for any distinct points $\eta , \eta' \in \partial_{\infty} \Gamma$ the images $\xi^+ (\eta) \in G/P_{\theta}$ and $\xi^-(\eta') \in G/P_{\theta}^-$ are in general position, and they are {\em dynamics-preserving}, which means that for any $\gamma\in\Gamma$ of infinite order with attracting fixed point $\eta^+_{\gamma} \in \partial_{\infty} \Gamma$, the point $\xi^+(\eta^{+}_{\gamma})$ (\resp $\xi^-(\eta^{+}_{\gamma})$) is an attracting fixed point for the action of $\rho(\gamma)$ on $G/P_{\theta}$ (\resp $G/P_{\theta}^-$).
Furthermore, these maps satisfy an \emph{exponential contraction} property involving certain bundles over the flow space of~$\Gamma$ (see Section~\ref{subsec:anos-repr}).

In this paper, given a word hyperbolic group $\Gamma$ and a representation $\rho : \Gamma\to\nolinebreak G$, we construct, under some growth assumption for the Cartan projection of~$G$ restricted to~$\rho(\Gamma)$ (Theorem~\ref{thm:constr-xi}.\eqref{item:xi-exist_1}), an explicit pair $(\xi^+, \xi^-)$ of continuous, $\rho$-equiva\-riant boundary maps.
(Recall that the Cartan projection $\mu: G \to \overline{\aaa}^+$, defined from a Cartan decomposition $G=K(\exp\overline{\aaa}^+)K$, is a continuous, proper, surjective map to the closed Weyl chamber $\overline{\aaa}^+$; see Section~\ref{subsubsec:Cartan-proj}, and Example~\ref{ex:mu} for $G=\GL_d(\RR)$.)
We give a sufficient condition for these maps to be dynamics-preserving (Theorem~\ref{thm:constr-xi}.\eqref{item:xi-exist_2}).
Under an additional assumption on the growth of~$\mu$ along geodesic rays in~$\Gamma$ (Theorem~\ref{thm:constr-xi}.\eqref{item:xi-transv_1}), we prove that the pair of maps $(\xi^+, \xi^-)$ is also transverse and that $\rho$ is Anosov. 
This assumption involves the following notion: we say that a sequence $(x_n)\in(\RRp)^{\NN}$ is \emph{CLI} (\ie has \emph{coarsely linear increments}) if there exist $\kappa,\kappa',\kappa'',\kappa'''>0$ such that for all $n,m\in\NN$,
\begin{equation} \label{eqn:clik}
\kappa m - \kappa' \leq x_{n+m} - x_n \leq \kappa'' m + \kappa'''.
\end{equation}
In other words, $n\mapsto x_n$ is a quasi-isometric embedding of $\NN$ into~$\RRp$.

\begin{theorem} \label{thm:constr-xi}
Let $\Gamma$ be a word hyperbolic group and $\ellGammaf : \Gamma\to\NN$ its
word length function with respect to some fixed finite generating subset of~$\Gamma$.
Let $G$ be a real reductive Lie group and $\rho : \Gamma \to G$ a representation.
Fix a nonempty subset $\theta \subset \Delta$ of the simple restricted roots of~$G$ (see Section~\ref{subsubsec:lie-algebr-decomp}), and let $\Sigma^+_{\theta}\subset\aaa^*$ be the set of positive roots that do \emph{not} belong to the span of $\Delta\smallsetminus\theta$.
\begin{enumerate}
  \item\label{item:xi-exist_1} If there is a constant $C>0$ such that for any $\alpha\in\theta$, 
  \[ {\langle\alpha,\mu(\rho(\gamma))\rangle} \geq {2\log \ellGamma {\gamma}} -C, \]
then there exist continuous, $\rho$-equivariant boun\-dary maps $\xi^+ : \partial_{\infty}\Gamma\to G/P_{\theta}$ and $\xi^- : \partial_{\infty}\Gamma\to\nolinebreak G/P_{\theta}^-$.
  \item\label{item:xi-exist_2} If moreover for any $\alpha\in\theta$ and any $\gamma\in\Gamma$, 
  \[ {\langle\alpha,\mu(\rho(\gamma^n))\rangle} - {2\log |n|} \underset{|n|\to +\infty}{\longrightarrow} +\infty ,\]
then $\xi^+$ and~$\xi^-$ are dynamics-preserving.
  \item\label{item:xi-transv_1} If moreover for any $\alpha\in\Sigma^+_{\theta}$ and any geodesic ray $(\gamma_n)_{n\in\NN}$ in the Cayley graph of~$\Gamma$, the sequence $\big(\langle \alpha,\, \mu( \rho(\gamma_n))\rangle\big)_{n\in\NN}$ is CLI, then $\xi^+$ and~$\xi^-$ are transverse and $\rho$ is $P_{\theta}$-Anosov.
\end{enumerate}
\end{theorem}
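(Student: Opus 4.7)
The overall strategy is to construct $\xi^+(\eta)$ (\resp $\xi^-(\eta)$) as the limit of a ``Cartan attractor'' $U_\theta(\rho(\gamma_n)) := k_n P_\theta$ in $G/P_\theta$ (\resp of an analogous ``Cartan repeller'' in $G/P_\theta^-$), where $\rho(\gamma_n) = k_n \exp(\mu(\rho(\gamma_n)))\, \ell_n$ is a Cartan decomposition and $(\gamma_n)_{n\in\NN}$ is a geodesic ray in the Cayley graph of $\Gamma$ converging to $\eta$. The map $U_\theta : G \to G/P_\theta$ is well defined modulo the centralizer of $\exp\mu(g)$ in $K$, which is contained in the Levi factor of $P_\theta$ as soon as $\langle \alpha, \mu(g)\rangle > 0$ for every $\alpha\in\theta$. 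For part~\eqref{item:xi-exist_1}, the essential tool is a contraction estimate of the form
\[
\dist_{G/P_\theta}\!\bigl(U_\theta(g),\, U_\theta(gh)\bigr) \;\leq\; C_1\,\|h\|\,\exp\!\bigl(-\min_{\alpha \in \theta} \langle \alpha, \mu(g)\rangle\bigr),
\]
valid whenever the right-hand side is small. Applied along a geodesic ray $(\gamma_n)$ with $\rho(\gamma_{n+1}) = \rho(\gamma_n)\,\rho(s_n)$ for a generator $s_n$, the hypothesis $\langle\alpha,\mu(\rho(\gamma_n))\rangle \geq 2\log\ellGamma{\gamma_n} - C$ gives a summable bound of order $n^{-2}$, hence a Cauchy sequence. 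I would then verify that $\xi^+(\eta) := \lim_n U_\theta(\rho(\gamma_n))$ is independent of the ray (by the fellow-traveler property in hyperbolic groups together with the Lipschitz bound $\|\mu(\rho(\gamma^{-1}\gamma'))\| = O(d_\Gamma(\gamma,\gamma'))$ and the same contraction estimate), is $\rho$-equivariant (built into the definition), and depends continuously on~$\eta$ (through uniform convergence of Cauchy tails over families of rays that fellow-travel for long times). The map $\xi^-$ is constructed identically with $P_\theta^-$ in place of~$P_\theta$.

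For part~\eqref{item:xi-exist_2}, apply part~\eqref{item:xi-exist_1} to the quasi-geodesic ray $(\gamma^n)_{n\in\NN}$ converging to $\eta^+_\gamma$, so that $\xi^+(\eta^+_\gamma) = \lim_n U_\theta(\rho(\gamma^n))$. The strengthened hypothesis $\langle\alpha,\mu(\rho(\gamma^n))\rangle - 2\log|n| \to +\infty$ combines with a companion estimate to show that $\rho(\gamma^n)\cdot x \to \xi^+(\eta^+_\gamma)$ for every $x \in G/P_\theta$ transverse to the flag $\xi^-(\eta^-_\gamma)$ associated to the repelling endpoint. Since this transverse locus is a Zariski-open neighborhood of $\xi^+(\eta^+_\gamma)$ on which $\rho(\gamma^n)$ acts as a strict contraction, $\xi^+(\eta^+_\gamma)$ is an attracting fixed point; the argument for $\xi^-$ is symmetric.

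Part~\eqref{item:xi-transv_1} contains the main obstacle. For transversality, given distinct $\eta,\eta'\in\partial_\infty\Gamma$, pick a bi-infinite geodesic $(\gamma_n)_{n\in\ZZ}$ with $\gamma_n\to\eta$ (\resp $\gamma_n\to\eta'$) as $n\to+\infty$ (\resp $n\to-\infty$). The CLI hypothesis now controls \emph{all} roots $\alpha\in\Sigma^+_\theta$, not only the simple ones in~$\theta$; since the roots of $\Sigma^+_\theta\smallsetminus\theta$ are precisely those that govern the relative position of a flag of $G/P_\theta$ and a flag of $G/P_\theta^-$ via the Bruhat decomposition, their forced linear growth along the ray drives the pair $(\xi^+(\eta), \xi^-(\eta'))$ into the open Bruhat cell, \ie into general position. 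For the Anosov property itself, the CLI control along arbitrary geodesic rays amounts exactly to saying that $n\mapsto \mu(\rho(\gamma_n))$ is a quasi-isometric embedding of $\NN$ into $\aaa$ in every direction of $\Sigma^+_\theta$; together with the transverse, continuous, dynamics-preserving, and $\rho$-equivariant boundary maps produced above, this should give the required exponential contraction/expansion of the equivariant bundles over the flow space of~$\Gamma$ that appear in Labourie's definition. The technical heart---and principal obstacle---is this last passage: converting pointwise CLI bounds \emph{along individual rays} into a \emph{uniform} exponential rate on the bundles, which I expect to handle by exploiting the uniformity of the constants $\kappa,\kappa',\kappa'',\kappa'''$ across all geodesic rays and feeding them back into the contraction estimate of part~\eqref{item:xi-exist_1}.
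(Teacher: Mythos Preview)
Your strategy for parts~\eqref{item:xi-exist_1} and~\eqref{item:xi-exist_2} matches the paper's approach closely: the paper defines exactly your ``Cartan attractor'' map $\Xi_\theta(g)=k_g\cdot x_\theta$ (Section~\ref{subsec:limit-set}), proves precisely your contraction estimate as Lemma~\ref{lem:prefix}, and packages the summability condition you identify as the \emph{uniform gap summation property} (Definition~\ref{defi:GSP}). For~\eqref{item:xi-exist_2} the paper is slightly more direct than your sketch: it shows (Lemma~\ref{lem:prox}) that the growth hypothesis is \emph{equivalent} to proximality of $\rho(\gamma)$ in $G/P_\theta$, and then (Lemma~\ref{lem:xi-preserves-dynamics}) that $\Xi_\theta(\rho(\gamma)^n)$ converges to the unique attracting fixed point --- avoiding any need to first know that $\xi^+(\eta^+_\gamma)$ is transverse to $\xi^-(\eta^-_\gamma)$, which your argument would require before transversality has been established.

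Part~\eqref{item:xi-transv_1} is where your proposal has two genuine gaps. First, your transversality argument (``CLI growth drives the pair into the open Bruhat cell'') is only a heuristic and does not engage with the actual difficulty. The paper's proof (Section~\ref{subsec:proof-xi-transv}) reduces via a $\theta$-proximal representation to $G=\GL_d(\RR)$ with $\theta=\{\varepsilon_1-\varepsilon_2\}$, and then proves a delicate combinatorial estimate (Lemma~\ref{lem:bound-h-n-infty}): writing $\rho(\gamma_n)=k_n a_n k'_n$ and $H_n^m=k_n^{-1}k_{n+m}$, one must bound the off-diagonal entries of $a_n^{-1}H_n^m a_n$ \emph{uniformly in both $n$ and~$m$}. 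This requires expanding $H_n^m$ as a product of single-step matrices $h_n=k_n^{-1}k_{n+1}$, decomposing the resulting sum according to the pattern of ``record-low'' indices, and applying a summation-by-parts that converts the CLI hypothesis on $\langle\varepsilon_1-\varepsilon_i,\mu(\rho(\gamma_n))\rangle$ into exponential decay. Transversality then follows via an expansion argument (Corollary~\ref{cor:limsup-Hnm}) combined with the convergence-group dynamics of~$\Gamma$ on $\partial_\infty\Gamma$ (Fact~\ref{fact:dyn_at_infty}).

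Second, and more seriously, your plan to deduce the Anosov property ``by exploiting the uniformity of the constants $\kappa,\kappa',\kappa'',\kappa'''$ across all geodesic rays'' is based on a hypothesis the theorem does not make: as the paper stresses in Remark~\ref{rem:constr-xi}.\eqref{item:cli-non-unif}, the CLI constants are \emph{not} assumed uniform across rays. Uniformity is a \emph{consequence} of the Anosov property (via Theorem~\ref{thm:char_ano}.\eqref{item:cli}), not an input. The paper sidesteps this entirely: once $\xi^+,\xi^-$ are continuous, $\rho$-equivariant, dynamics-preserving, and transverse, and once $\langle\alpha,\mu(\rho(\gamma))\rangle\to+\infty$ for $\alpha\in\theta$ (immediate from the hypothesis of~\eqref{item:xi-exist_1}), the Anosov conclusion follows from the implication $\eqref{item:away-from-walls}\Rightarrow\eqref{item:Ano}$ of Theorem~\ref{thm:char_ano}, which is established independently in Section~\ref{sec:anos-repr-cart} by a flow-space argument (Proposition~\ref{prop:weakcontr_ano} and Proposition~\ref{prop:mu-theta-mu}). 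So the Anosov property is obtained as a corollary of transversality plus a separately proved characterization, not by directly verifying exponential rates from the nonuniform CLI data.
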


Let us briefly discuss the meaning of the assumptions of Theorem~\ref{thm:constr-xi}.
Let $\Vert\cdot\Vert$ be a Euclidean norm on the Cartan subspace~$\aaa$ which is invariant under the restricted Weyl group of $\aaa$ in~$G$.
For any $\alpha\in\Delta$, the function $\langle\alpha,\cdot\rangle : \overline{\aaa}^+\to\RRp$ is proportional to the distance function to the wall $\Ker(\alpha)$, with respect to~$\Vert\cdot\nolinebreak\Vert$.
Thus the assumption of Theorem~\ref{thm:constr-xi}.\eqref{item:xi-exist_1} means that the set $\mu(\rho(\Gamma))$ ``drifts away at infinity'' from the union of walls $\bigcup_{\alpha\in\theta} \Ker(\alpha)$ in~$\overline{\aaa}^+$, at speed at least $2\log$ in the word length.
The CLI assumption in Theorem~\ref{thm:constr-xi}.\eqref{item:xi-transv_1} means that the image under $\mu\circ\rho$ of any geodesic ray in the Cayley graph of~$\Gamma$ drifts away ``forever linearly'' from the hyperplane $\Ker(\alpha)$ for every $\alpha\in\Sigma_{\theta}^+$ (see Section~\ref{subsubsec:Cartan-proj-proper}).
Inside the Riemannian symmetric space $G/K$ of~$G$, the function $\Vert\mu(\cdot)\Vert : G\to\RRp$ gives the distance between the basepoint $x_0=eK\in G/K$ and its image under an element of~$G$ (see \eqref{eqn:mu-dist}).
The functions $\langle\alpha,\mu(\cdot)\rangle$ can be thought of as refinements of $\Vert\mu(\cdot)\Vert$.
The CLI assumption in Theorem~\ref{thm:constr-xi}.\eqref{item:xi-transv_1} means that $\langle\alpha,\mu \circ \rho(\cdot)\rangle : \Gamma\to\RRp$ restricts to a quasi-isometric embedding on any geodesic ray in the Cayley graph of~$\Gamma$, for $\alpha \in \Sigma_{\theta}^+$.

\begin{remarks} \label{rem:constr-xi}
\begin{enumerate}[label=(\alph*),ref=\alph*]
  \item Theorem~\ref{thm:constr-xi}.\eqref{item:xi-exist_1} extends a rank-one result of Floyd \cite{Floyd80} to the setting of higher real rank.
  As in \cite{Floyd80}, the image of~$\xi^+$ in Theorem~\ref{thm:constr-xi} is the limit set of $\Gamma$ in $G/P_{\theta}$ (Definition~\ref{defi:limit-set}), see Theorem~\ref{thm:constr-xi-explicit}.
  \item Boundary maps are commonly constructed only on some large subset of $\partial_{\infty}\Gamma$ (dense or of full measure).
  By contrast, Theorem~\ref{thm:constr-xi}.\eqref{item:xi-exist_1} is based on an explicit construction of the boundary maps at every point: see Theorem~\ref{thm:constr-xi-explicit}.
  \item Theorem~\ref{thm:constr-xi-explicit} refines Theorem~\ref{thm:constr-xi} by providing weaker conditions for the existence of $\rho$-equivariant boundary maps with various properties.
  The subtleties in dropping one assumption among continuity, dynamics-preservation, or transversality are illustrated by Examples \ref{ex:log-growth} and~\ref{ex:xi-not-dyn-preserv}.
  We expect that the methods of the proof of Theorem~\ref{thm:constr-xi-explicit} will have applications in broader contexts where $\Gamma$ is not necessarily word hyperbolic.
\item The growth condition in Theorem~\ref{thm:constr-xi}.\eqref{item:xi-exist_2} is optimal (see Lemma~\ref{lem:prox} and Remark~\ref{rem:compatible}.\eqref{item:Ano-prox}). 
  \item \label{item:cli-non-unif} In Theorem~\ref{thm:constr-xi}.\eqref{item:xi-transv_1} the CLI constants are not required to be uniform. 
  \item The CLI assumption in Theorem~\ref{thm:constr-xi}.\eqref{item:xi-transv_1} can be restricted to the set of geodesic rays starting at the identity element $e\in\Gamma$.
    In fact, we only need the CLI assumption for one quasi-geodesic representative per point in the boundary at infinity $\partial_{\infty}\Gamma$ (see Proposition~\ref{prop:transv-SL}).
    If $\theta=\Delta$ (\ie $P_{\theta}$ is a minimal parabolic subgroup of~$G$), then the CLI assumption for all $\alpha\in\Sigma_{\theta}^+$ is equivalent to the CLI assumption for all $\alpha\in\theta$, because in this particular case $\Sigma_{\theta}^+\subset\mathrm{span}(\theta)$.
  \item For a quasi-geodesic ray $(\gamma_n)_{n \in \NN}$, under the hypothesis of Theorem~\ref{thm:constr-xi}.\eqref{item:xi-transv_1}, the sequence $(\langle \alpha,\, \mu( \rho(\gamma_n))\rangle)_{n\in\NN}$ is always upper CLI: see \eqref{eqn:mu-leq-klength} and Fact~\ref{fact:mu-subadditive}.
\end{enumerate}
\end{remarks}

(Here we use the terminology \emph{$(\kappa'',\kappa''')$-upper CLI} for a sequence $(x_n)\in(\RRp)^{\NN}$ satisfying the right-hand inequality of \eqref{eqn:clik} for all $n,m\in\NN$; we say $(x_n)_{n\in\NN}$ is upper CLI if it is $(\kappa'',\kappa''')$-upper CLI for some $\kappa'',\kappa'''>0$.
Similarly, we shall use below the terminology \emph{$(\kappa,\kappa')$-lower CLI} for a sequence $(x_n)\in(\RRp)^{\NN}$ satisfying the left-hand inequality of \eqref{eqn:clik} for all $n,m\in\NN$.)

\subsection{Characterizations of Anosov representations in terms of the Cartan projection}

Theorem~\ref{thm:constr-xi} provides sufficient conditions for a representation $\rho : \Gamma\to\nolinebreak G$ to be Anosov in terms of the Cartan projection~$\mu$.
Conversely, we prove that any Anosov representation satisfies these conditions.
This yields the following characterizations.

\begin{theorem}\label{thm:char_ano}
Let $\Gamma$ be a word hyperbolic group, $G$ a real reductive Lie group, and $\theta \subset \Delta$ a nonempty subset of the simple restricted roots of~$G$.
For any representation $\rho : \Gamma \to G$, the following conditions are equivalent:
\begin{enumerate}
  \item\label{item:Ano} $\rho$ is $P_\theta$-Anosov;
  \item\label{item:away-from-walls} There exist continuous, $\rho$-equivariant, dynamics-preserving, and transverse maps $\xi^+ : \partial_{\infty}\Gamma\to G/P_{\theta}$ and $\xi^- : \partial_{\infty}\Gamma\to G/P_{\theta}^-$, and for any $\alpha\in\theta$ we have $\langle\alpha,\mu(\rho(\gamma))\rangle\to+\infty$ as $\gamma\to\infty$ in~$\Gamma$;
  \item\label{item:lin-away-from-walls} There exist continuous, $\rho$-equivariant, dynamics-preserving, and transverse maps $\xi^+ : \partial_{\infty}\Gamma\to G/P_{\theta}$ and $\xi^- : \partial_{\infty}\Gamma\to G/P_{\theta}^-$, and constants $c,C>0$ such that $\langle\alpha,\mu(\rho(\gamma))\rangle\geq c\,\ellGamma{\gamma}-C$ for all $\alpha\in\theta$ and $\gamma\in\Gamma$;
  \item\label{item:cli} There exist $\kappa,\kappa'>0$ such that for any $\alpha\in\Sigma^+_{\theta}$ and any geodesic ray $(\gamma_n)_{n\in\NN}$ with $\gamma_0=e$ in the Cayley graph of~$\Gamma$, the sequence $(\langle \alpha,\, \mu( \rho(\gamma_n))\rangle)_{n\in\NN}$ is $(\kappa,\kappa')$-lower CLI.
\end{enumerate}
\end{theorem}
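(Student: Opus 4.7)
I would establish the equivalence through the cycle $(1)\Rightarrow(4)\Rightarrow(1)$ together with the easy chain $(1)\Rightarrow(3)\Rightarrow(2)\Rightarrow(1)$.

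The implication $(3)\Rightarrow(2)$ is immediate, since $c\,\ellGamma{\gamma}-C\to+\infty$ as $\gamma\to\infty$ in~$\Gamma$. For $(4)\Rightarrow(1)$ I would apply Theorem~\ref{thm:constr-xi}.\eqref{item:xi-transv_1} after verifying its three hypotheses. The CLI hypothesis of that theorem is exactly~\eqref{item:cli}. For the remaining two hypotheses I use that $\theta\subset\Sigma^+_\theta$: any $\gamma\in\Gamma$ sits on a geodesic ray from~$e$ (extend a shortest path from $e$ to $\gamma$), so~\eqref{item:cli} yields $\langle\alpha,\mu(\rho(\gamma))\rangle\geq\kappa\,\ellGamma{\gamma}-\kappa'$ for $\alpha\in\theta$, far exceeding the required $2\log$ bound; and for any $\gamma\in\Gamma$ of infinite order the sequence $(\gamma^n)$ is a quasi-geodesic ray in~$\Gamma$, to which~\eqref{item:cli} applies via part~(f) of Remark~\ref{rem:constr-xi}, producing linear growth of $\langle\alpha,\mu(\rho(\gamma^n))\rangle$ in~$n$ that dominates $2\log|n|$.

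For $(1)\Rightarrow(4)$, the quantitative direction, I would translate the uniform exponential contraction in the Anosov bundles $E^\pm$ over the flow space of~$\Gamma$ into lower CLI estimates for $\langle\alpha,\mu\circ\rho\rangle$ along geodesic rays. For $\alpha\in\theta$, the Anosov contraction factor $e^{-\kappa n}$ along a geodesic orbit of length~$n$ translates, via the standard correspondence between $\langle\alpha,\mu(g)\rangle$ and log-ratios of singular values of $\rho(g)$ on the corresponding flag bundle, into a bound $\langle\alpha,\mu(\rho(\gamma_n))\rangle\geq\kappa n-\kappa'$, i.e.\ $(\kappa,\kappa')$-lower CLI. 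To extend to $\alpha\in\Sigma^+_\theta$, decompose $\alpha$ as a nonnegative integer combination of simple roots with at least one coefficient on some $\beta\in\theta$ strictly positive, and use that $\langle\alpha',\mu(\cdot)\rangle\geq 0$ on~$\overline{\aaa}^+$ for every $\alpha'\in\Delta$. Upper CLI is immediate from subadditivity of~$\mu$ and the coarse Lipschitz bound of~$\mu\circ\rho$ on~$\Gamma$. The implication $(1)\Rightarrow(3)$ then follows as a corollary: boundary maps exist by definition of Anosov, and the linear lower bound comes from taking a geodesic ray from~$e$ through any given~$\gamma$.

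The main obstacle is $(2)\Rightarrow(1)$, where~\eqref{item:away-from-walls} provides only qualitative divergence and the goal is to bootstrap it into the quantitative~\eqref{item:cli}. My plan is a compactness/limit argument: assume, for contradiction, that lower CLI fails along some geodesic ray $(\gamma_n)_{n\in\NN}$ for some $\alpha\in\Sigma^+_\theta$. Extract a subsequence along which $\gamma_n\to\eta^+\in\partial_\infty\Gamma$ (and the reversed ray accumulates on some $\eta^-\in\partial_\infty\Gamma$), and Cartan decompositions $\rho(\gamma_n)=k_n(\exp\mu_n)k_n'$ with $k_n\to k$, $k_n'\to k'$. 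The qualitative divergence from~\eqref{item:away-from-walls} ensures that $kP_\theta$ and $k'P_\theta^-$ are well-defined flags in $G/P_\theta$ and $G/P_\theta^-$. Dynamics-preservation of $\xi^\pm$, applied to infinite-order elements of~$\Gamma$ whose attracting points approach~$\eta^\pm$, identifies these limits with $\xi^+(\eta^+)$ and $\xi^-(\eta^-)$. The failure of CLI for~$\alpha$ then locates a nonzero vector in the intersection of the flag subspaces labelled by these two points, contradicting the transversality of~$\xi^\pm$. It is this passage from coarse behaviour of~$\mu$ to the flow-space dynamics defining Anosov, via the full interplay of continuity, dynamics-preservation, and transversality of~$\xi^\pm$, that I expect to demand the main work.
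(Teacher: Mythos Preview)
Your extension in $(1)\Rightarrow(4)$ from $\alpha\in\theta$ to $\alpha\in\Sigma^+_\theta$ does not work. Writing $\alpha=\beta+\sum_{\alpha'} n_{\alpha'}\alpha'$ with $\beta\in\theta$ and $\alpha'\in\Delta\smallsetminus\theta$, the positivity $\langle\alpha',\mu(\cdot)\rangle\geq 0$ on~$\overline{\aaa}^+$ bounds the \emph{value} of $\langle\alpha',\mu(\rho(\gamma_n))\rangle$ from below, but lower CLI is a statement about \emph{increments} $\langle\alpha',\mu(\rho(\gamma_{n+m}))-\mu(\rho(\gamma_n))\rangle$, which can be negative and can swamp the positive contribution from~$\beta$. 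The paper's route (Proposition~\ref{prop:Ano-CLI}) is genuinely different: the Anosov contraction acts on the \emph{whole} bundle $E^+\simeq\mathfrak{u}^-_\theta=\bigoplus_{\alpha\in\Sigma^+_\theta}\g_{-\alpha}$, so the principal semi-axes of the image of a round ball under~$l_{t,v}$ are precisely the numbers $e^{-\langle\alpha,\mu_\theta(l_{t,v})\rangle}$ for $\alpha\in\Sigma^+_\theta$ (in some order depending on~$t$). An elementary ellipsoid inclusion lemma (Lemma~\ref{lem:ellipsoids}) shows that each ordered axis shrinks at the Anosov rate, and a reordering lemma (Lemma~\ref{lem:reordering}, case~\eqref{item:astast}) then transfers lower CLI from the ordered axes back to the individual $\langle\alpha,\cdot\rangle$.

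Your plan for $(2)\Rightarrow(1)$ is also far from the paper's and, as stated, does not go through: it is unclear how ``failure of CLI for~$\alpha$'' manufactures a vector witnessing non-transversality of $\xi^+(\eta^+)$ and $\xi^-(\eta^-)$, particularly when $\alpha\in\Sigma^+_\theta\smallsetminus\theta$ (the flags recorded by $\xi^\pm$ are governed by~$\theta$, not by such~$\alpha$). The paper instead works entirely with the coset distance $\dtheta$ and proves directly that qualitative divergence implies Anosov. First (Proposition~\ref{prop:weakcontr_ano}), a short cocycle-iteration argument shows that if $\langle\alpha,\dtheta(\tilde\beta(v),\tilde\beta(\varphi_t\cdot v))\rangle\to+\infty$ uniformly in~$v$ for each $\alpha\in\theta$, then the linear Anosov bound follows. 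The substantive step is then to upgrade divergence of $\langle\alpha,\mu(\rho(\gamma))\rangle$ to divergence of $\langle\alpha,\mu_\theta(l_{t,v})\rangle$; these quantities agree only once $\mu_\theta(l_{t,v})\in\overline{\aaa}^+$ (Remark~\ref{rem:mu-theta-equal-mu-l-in-L}). This is where dynamics-preservation enters: on periodic orbits it forces $\mu_\theta(l_{T_\gamma,v})\in\overline{\aaa}^+$ (Lemma~\ref{lem:l_tgamma_in_aplus}), and a connectedness argument (Lemma~\ref{lem:aRplus_connec}) together with density of periodic geodesics in~$\mathcal{G}_\Gamma$ propagates this to all~$v$ (Proposition~\ref{prop:mu-theta-mu}).
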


By $\gamma\to\infty$ we mean that $\gamma$ leaves every finite subset of~$\Gamma$, or equivalently that the word length $\ellGamma{\gamma}$ of~$\gamma$ goes to $+\infty$.

\begin{remark}
For Zariski-dense representations the existence of continuous, $\rho$-equiva\-riant, dynamics-preserving, and transverse maps $\xi^+ : \partial_{\infty}\Gamma\to G/P_{\theta}$ and $\xi^- : \partial_{\infty}\Gamma\to G/P_{\theta}^-$ is sufficient for $\rho$ to be $P_{\theta}$-Anosov  \cite[Th.\,4.11]{Guichard_Wienhard_DoD}.
However, we observe that this is \emph{not} true in general, even when $\rho$ is semisimple: see Example~\ref{ex:nice-maps-not-Ano}.
\end{remark}

\begin{remark}\label{rem:intro-char-Ano}
  From Theorem~\ref{thm:char_ano}.\eqref{item:lin-away-from-walls} we recover the fact \cite{Labourie_anosov}, \cite[Th.\,5.3]{Guichard_Wienhard_DoD} that any Anosov representation is a quasi-isometric embedding. 
    For a semi\-simple Lie group $G$ of real rank one, \ie when \(|\Delta|=1\), being a quasi-isometric embedding is equivalent to being $P_{\Delta}$-Anosov (Remark~\ref{rem:rank1ccquasi}), and in particular is an open property. 
    In higher real rank this is not true: being a quasi-isometric embedding is \emph{not} an open property (see Appendix~\ref{app:discr-quasi-isom}), whereas being Anosov is. 
  In higher real rank it is more difficult to find natural constraints on quasi-isometric embeddings $\rho: \Gamma \to G$  that define an open subset of $\Hom(\Gamma, G)$; characterization~\eqref{item:cli} of Theorem~\ref{thm:char_ano} provides one answer to this problem.
\end{remark}

The characterization of Anosov representations given by Theorem~\ref{thm:char_ano}.\eqref{item:cli} does not involve the boundary $\partial_{\infty}\Gamma$, but only the behavior of the Cartan projection along geodesic rays.
Here are some consequences.

\begin{remarks}\label{rem:intro-char-Ano-bis}
\begin{enumerate}[label=(\alph*),ref=\alph*]
  \item\label{item:p-adic} Theorem~\ref{thm:char_ano}.\eqref{item:cli} provides a notion of Anosov representations of word hyperbolic groups into $p$-adic Lie groups.
  Indeed, the Cartan projection~$\mu$, with values in a convex cone inside some Euclidean space, is also well defined, with similar properties, when $G$ is a reductive group over~$\QQ_p$ (or more generally a non-Archimedean local field).
  \item\label{item:prim_stable} Theorem~\ref{thm:char_ano}.\eqref{item:cli} can be used to define new classes of representations into real Lie groups.
  For instance, for a free group~$\Gamma$, requiring condition~\eqref{item:cli} only for certain ``primitive'' geodesic rays gives rise to a notion of primitive stable representations into higher-rank Lie groups (generalizing the notion introduced by Minsky \cite{Minsky} for $G=\PSL_2(\CC)$).
\end{enumerate}
\end{remarks}

\subsection{Relation to the work of Kapovich, Leeb, and Porti} \label{subsec:KLP}

There is overlap between Theorem~\ref{thm:char_ano}
(which contains a weaker version of Theorem~\ref{thm:constr-xi}.\eqref{item:xi-transv_1}, see Remark~\ref{rem:constr-xi}.\ref{item:cli-non-unif}) and results of Kapovich, Leeb, and Porti, as we now~describe.

In a series of three papers \cite{KapovichLeebPortiv2, KapovichLeebPorti14, KapovichLeebPorti14_2} (see also \cite{KapovichLeebPorti15}), Kapovich, Leeb, and Porti develop a theory of discrete groups of isometries of higher-rank Riemannian symmetric spaces with nice geometric, dynamical, and topological properties, generalizing some of the characterizations of convex cocompactness in rank one.
This theory depends on a choice of a face $\tau_{mod}$ of the model Weyl chamber $\sigma_{mod}$ associated with the Riemannian symmetric space.
In \cite{KapovichLeebPortiv2,KapovichLeebPorti14}, they prove the equivalence of several properties for $\tau_{mod}$-nonelementary finitely generated discrete groups of isometries~$\Gamma$, namely
\begin{enumerate}[label=(\roman*),ref=\roman*]
  \item\label{item:KLP-RCA} $\tau_{mod}$-RCA (regularity, conicality, antipodality);
  \item\label{item:KLP-expand} $\tau_{mod}$-CEA (convergence, expansion, antipodality);
  \item\label{item:KLP-bound-map} word hyperbolicity, $\tau_{mod}$-regularity, and $\tau_{mod}$-asymptotical embeddedness;
  \item\label{item:KLP-Morse} word hyperbolicity and a $\tau_{mod}$-Morse property;
  \item\label{item:KLP-Ano} word hyperbolicity and a $\tau_{mod}$-Anosov property.
\end{enumerate}
In \cite{KapovichLeebPorti14_2}, using asymptotic cones, they prove that these properties are equivalent to
\begin{enumerate}[label=(\roman*),ref=\roman*]
\setcounter{enumi}{5}
  \item\label{item:KLP-URU} nondistorsion and asymptotic uniform $\tau_{mod}$-regularity.
\end{enumerate}

Let $G$ be the full isometry group of a Riemannian symmetric space.
In our language and with our notation, the choice of $\tau_{mod}$ is equivalent to the choice of a subset $\theta\subset\Delta$ of the simple restricted roots of~$G$, via the identification $\theta\mapsto \bigcap_{\alpha\in\Delta\smallsetminus\theta} \Ker(\alpha) \cap \overline{\aaa}^+$; in turn, this is equivalent to the choice of a standard parabolic subgroup $P_{\theta}$ of~$G$ (see Section~\ref{subsec:some-structure-semi}).
The $\tau_{mod}$-Anosov property in~\eqref{item:KLP-Ano} is the $P_{\theta}$-Anosov property of the present paper (see Definition~\ref{defi:ano1}). 
Condition~\eqref{item:KLP-URU} means, in our language, that there exist $c,C>0$ such that $\langle\alpha,\mu(\rho(\gamma))\rangle\geq c\,\ellGamma{\gamma}-C$ for all $\alpha\in\theta$ and $\gamma\in\Gamma$.
The implications $\eqref{item:Ano}\Leftrightarrow\eqref{item:away-from-walls}\Rightarrow\eqref{item:lin-away-from-walls}$ in Theorem~\ref{thm:char_ano} are analogous to the implications~\eqref{item:KLP-bound-map}\,$\Leftrightarrow$\,\eqref{item:KLP-Ano}$\Rightarrow$\,\eqref{item:KLP-URU} above, after observing that the $\tau_{mod}$-conicality requirement in $\tau_{mod}$-asymptotic embeddedness in \eqref{item:KLP-bound-map} is always satisfied when the continuous equivariant boundary maps are dynamics-preserving and transverse.
The implication $\eqref{item:cli}\Rightarrow\eqref{item:Ano}$ in Theorem~\ref{thm:char_ano} follows from \eqref{item:KLP-URU}\,$\Rightarrow$\,\eqref{item:KLP-Ano} above.
We do not see any direct link between the implication $\eqref{item:Ano}\Rightarrow\eqref{item:cli}$ and the characterizations above in general, since \eqref{item:cli} involves $\Sigma^+_{\theta}$ and not~$\theta$. 

We note that characterizations \eqref{item:KLP-RCA}, \eqref{item:KLP-expand}, \eqref{item:KLP-URU} of Kapovich--Leeb--Porti do not assume the discrete group to be hyperbolic a priori.

In \cite{KapovichLeebPorti14_2} the authors also develop a notion of Morse action on Euclidean buildings, in particular Bruhat--Tits buildings of $p$-adic groups; compare our Remark~\ref{rem:intro-char-Ano-bis}.\eqref{item:p-adic}.

\subsection{Characterizations of Anosov representations in terms of the Lyapunov projection}

We also establish new characterizations of Anosov representations that
are analogous to Theorem~\ref{thm:char_ano} but involve the
\emph{Lyapunov projection} $\lambda: g \mapsto \lim_n \mu(g^n)/n$
associated with the Jordan decomposition in~$G$ (see Section~\ref{subsec:proximal}),
and the \emph{stable length} \(\gamma \mapsto \ellinfty{\gamma} = \lim_n \ellGamma{\gamma^n}/n\) (see \eqref{eqn:stablelength}).

\begin{theorem}\label{thm:char_ano_lambda_intro}
Let $\Gamma$ be a word hyperbolic group, $G$ a real reductive Lie group, and $\theta \subset \Delta$ a nonempty subset of the simple restricted roots of~$G$.
For any representation $\rho : \Gamma \to G$, the following conditions are equivalent:
\begin{enumerate}
  \item\label{item:Ano-lambda} $\rho$ is $P_\theta$-Anosov;
  \item\label{item:away-from-walls-lambda} There exist continuous, $\rho$-equivariant, dynamics-preserving, and transverse maps $\xi^+ : \partial_{\infty}\Gamma\to G/P_{\theta}$ and $\xi^- : \partial_{\infty}\Gamma\to G/P_{\theta}^-$, and for any $\alpha\in\theta$ we have $\langle\alpha,\lambda(\rho(\gamma))\rangle\to+\infty$ as $\ellinfty{\gamma} \to +\infty$;
  \item\label{item:lin-away-from-walls-lambda} There exist continuous, $\rho$-equivariant, dynamics-preserving, and transverse maps $\xi^+ : \partial_{\infty}\Gamma\to G/P_{\theta}$ and $\xi^- : \partial_{\infty}\Gamma\to G/P_{\theta}^-$, and a constant $c>0$ such that $\langle\alpha,\lambda(\rho(\gamma))\rangle\geq c\,\ellinfty{\gamma}$ for all $\alpha\in\theta$ and $\gamma\in\Gamma$.
\end{enumerate}
\end{theorem}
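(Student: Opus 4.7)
The plan is to prove the cycle $\eqref{item:Ano-lambda}\Rightarrow\eqref{item:lin-away-from-walls-lambda}\Rightarrow\eqref{item:away-from-walls-lambda}\Rightarrow\eqref{item:Ano-lambda}$, pivoting on the Cartan-projection characterizations from Theorem~\ref{thm:char_ano}. The implication $\eqref{item:Ano-lambda}\Rightarrow\eqref{item:lin-away-from-walls-lambda}$ is the easy direction: the boundary maps come from the Anosov property, and Theorem~\ref{thm:char_ano}.\eqref{item:lin-away-from-walls} furnishes constants $c,C>0$ with $\langle\alpha,\mu(\rho(\gamma))\rangle\geq c\,\ellGamma{\gamma}-C$ for all $\gamma\in\Gamma$ and $\alpha\in\theta$; applying this to $\gamma^n$, dividing by~$n$, and passing to the limit (using $\mu(g^n)/n\to\lambda(g)$ and $\ellGamma{\gamma^n}/n\to\ellinfty{\gamma}$) yields $\langle\alpha,\lambda(\rho(\gamma))\rangle\geq c\,\ellinfty{\gamma}$. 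The implication $\eqref{item:lin-away-from-walls-lambda}\Rightarrow\eqref{item:away-from-walls-lambda}$ is immediate.

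For the main implication $\eqref{item:away-from-walls-lambda}\Rightarrow\eqref{item:Ano-lambda}$, the strategy is to verify the condition of Theorem~\ref{thm:char_ano}.\eqref{item:away-from-walls}: since the boundary maps are already supplied, it suffices to show that $\langle\alpha,\mu(\rho(\gamma))\rangle\to+\infty$ as $\gamma\to\infty$ in~$\Gamma$, for each $\alpha\in\theta$. The bridge from ``stable length'' to ``word length'' will be a standard displacement lemma in word hyperbolic groups: there exist a finite subset $F\subset\Gamma$ and constants $c',C'>0$ such that for every $\gamma\in\Gamma$, some $\eta\in F$ makes $\gamma\eta$ an element of infinite order with $\ellinfty{\gamma\eta}\geq c'\ellGamma{\gamma}-C'$ (obtained by a ping-pong argument with two loxodromic elements of~$\Gamma$ having disjoint pairs of fixed points in~$\partial_{\infty}\Gamma$). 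Given this, for any $\gamma_n\to\infty$ in~$\Gamma$, choosing $\eta_n\in F$ accordingly gives $\ellinfty{\gamma_n\eta_n}\to+\infty$, so hypothesis~\eqref{item:away-from-walls-lambda} yields $\langle\alpha,\lambda(\rho(\gamma_n\eta_n))\rangle\to+\infty$ for each $\alpha\in\theta$.

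To transfer this from~$\lambda$ to~$\mu$ and from $\gamma_n\eta_n$ to~$\gamma_n$, I would combine three ingredients. First, the inequality $\Vert\lambda(g)\Vert\leq\Vert\mu(g)\Vert$, which is immediate from subadditivity of~$\mu$ and $\lambda(g)=\lim_n\mu(g^n)/n$, forces $\Vert\mu(\rho(\gamma_n\eta_n))\Vert\to+\infty$. Second, subadditivity of~$\mu$ together with the boundedness of $\{\rho(\eta):\eta\in F\}$ yields $\Vert\mu(\rho(\gamma_n))\Vert\to+\infty$ as well. Third, a wall-avoidance principle supplied by the boundary maps $\xi^\pm$ upgrades this norm divergence to divergence in each root direction: if $\langle\alpha,\mu(\rho(\gamma_n))\rangle$ were to remain bounded along a subsequence for some $\alpha\in\theta$, then the Cartan direction $\mu(\rho(\gamma_n))/\Vert\mu(\rho(\gamma_n))\Vert$ would accumulate on the $\alpha$-wall of~$\overline{\aaa}^+$, forcing any limit of $\rho(\gamma_n)\cdot y$ in the flag variety to lie in a \emph{coarser} partial flag variety than~$G/P_\theta$, and thereby contradicting the continuity of~$\xi^+$ at an accumulation point of~$(\gamma_n)$ in~$\partial_{\infty}\Gamma$. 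Theorem~\ref{thm:char_ano}.\eqref{item:away-from-walls} then delivers~\eqref{item:Ano-lambda}.

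The principal obstacle is this last ingredient: unlike the norm inequality $\Vert\lambda\Vert\leq\Vert\mu\Vert$, the componentwise analogue $\langle\alpha,\lambda(g)\rangle\leq\langle\alpha,\mu(g)\rangle$ for a simple root~$\alpha$ is \emph{false} in general (already for $\GL_3(\RR)$, Horn's theorem allows $\langle\alpha_2,\mu(g)\rangle<\langle\alpha_2,\lambda(g)\rangle$), so the passage from norm divergence of~$\mu\circ\rho$ to divergence in each root direction of~$\theta$ must genuinely invoke the boundary maps, via a dynamical wall-avoidance lemma that one expects the paper to develop alongside the Cartan-projection characterizations.
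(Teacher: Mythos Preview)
Your easy implications $\eqref{item:Ano-lambda}\Rightarrow\eqref{item:lin-away-from-walls-lambda}\Rightarrow\eqref{item:away-from-walls-lambda}$ match the paper's, and your group-theoretic displacement lemma (a finite $F\subset\Gamma$ with $\ellinfty{\gamma\eta}\geq c'\ellGamma{\gamma}-C'$ for some $\eta\in F$) is correct and does appear in the paper's argument, though in a different guise.

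The genuine gap is exactly the step you flag as the principal obstacle. The ``wall-avoidance principle'' you postulate --- that continuous, dynamics-preserving, transverse boundary maps $\xi^\pm$ together with $\Vert\mu(\rho(\gamma_n))\Vert\to+\infty$ force $\langle\alpha,\mu(\rho(\gamma_n))\rangle\to+\infty$ for each $\alpha\in\theta$ --- is \emph{false}. The paper itself builds a counterexample (Example~\ref{ex:nice-maps-not-Ano}): a semisimple representation into $\OO(2,4)$ admitting continuous, $\rho$-equivariant, dynamics-preserving, transverse boundary maps, yet not $Q_0$-Anosov; by the equivalence $\eqref{item:Ano}\Leftrightarrow\eqref{item:away-from-walls}$ of Theorem~\ref{thm:char_ano} this means $\langle\alpha_0,\mu(\rho(\gamma_n))\rangle$ stays bounded along some sequence $\gamma_n\to\infty$, despite the boundary maps. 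Your heuristic (``the limit of $\rho(\gamma_n)\cdot y$ would lie in a coarser flag variety, contradicting continuity of~$\xi^+$'') does not hold because continuity of~$\xi^+$ says nothing about limits of $\rho(\gamma_n)$-orbits of generic points; that link is precisely what a gap-summation/contraction estimate on~$\mu$ would supply, and that is what you are trying to prove.

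What the paper does instead is replace your hoped-for wall-avoidance lemma by a Lie-group-side finite set: the Abels--Margulis--Soifer/Benoist result (Theorem~\ref{thm:abels-marg-soif}) furnishes, for \emph{semisimple}~$\rho$, a finite $F\subset\Gamma$ and $C_\rho>0$ such that every $\gamma$ admits $f\in F$ with $\Vert\lambda(\rho(\gamma f))-\mu(\rho(\gamma))\Vert\leq C_\rho$. This is a componentwise comparison, not merely a norm comparison, so from $\langle\alpha,\lambda(\rho(\gamma_n f_n))\rangle\to+\infty$ (obtained from hypothesis~\eqref{item:away-from-walls-lambda} once one knows $\ellinfty{\gamma_n f_n}\to+\infty$, which follows via $\Vert\lambda(\rho(\gamma_n f_n))\Vert\to+\infty$ and Proposition~\ref{prop:word-length-stable}) one reads off $\langle\alpha,\mu(\rho(\gamma_n))\rangle\to+\infty$ directly, and Theorem~\ref{thm:char_ano}.\eqref{item:away-from-walls} finishes. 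The semisimplicity assumption is discharged separately: Proposition~\ref{prop:xi-for-rho-ss} shows the boundary maps descend to the semisimplification~$\rho^{ss}$, Lemma~\ref{lem:lyapu_not_changed} gives $\lambda\circ\rho^{ss}=\lambda\circ\rho$, so one proves~\eqref{item:Ano-lambda} for~$\rho^{ss}$ and then transfers it back to~$\rho$ by openness (Proposition~\ref{prop:semisimplification_anosov}). Your displacement-in-$\Gamma$ idea is in the right spirit, but the finite set must be chosen for $\varepsilon$-proximality in~$G$, not for stable length in~$\Gamma$; that is what makes the $\lambda$-to-$\mu$ transfer work root by root.
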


A $P_{\theta}$-Anosov representation $\rho : \Gamma\to G$ is not necessarily semisimple, even when $P_{\theta}$ is a minimal parabolic subgroup of~$G$ (Remark~\ref{rem:Ano-not-ss}).
In the course of the proof of Theorem~\ref{thm:char_ano_lambda_intro} we establish the following result, which is of independent interest.
(See Section~\ref{subsubsec:rho-non-ss} for a definition of the semisimplification.)

\begin{proposition}\label{prop:Ano_iff_ssAno}
  Let $\Gamma$ be a word hyperbolic group, $G$ a real reductive Lie group, and $\theta \subset \Delta$ a nonempty subset of the simple restricted roots of~$G$.
Let  $\rho : \Gamma \to G$ be a representation and $\rho^{ss}$ its semisimplification.
Then
  \begin{center}
    $\rho$ is $P_\theta$-Anosov $\Longleftrightarrow$ $\rho^{ss}$ is $P_\theta$-Anosov.
  \end{center}
\end{proposition}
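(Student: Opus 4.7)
The plan is to characterize the Anosov property via the Lyapunov projection using Theorem~\ref{thm:char_ano_lambda_intro}.\eqref{item:lin-away-from-walls-lambda}: $\rho$ is $P_\theta$-Anosov if and only if (a) there exist continuous, $\rho$-equivariant, dynamics-preserving, transverse boundary maps $\xi^\pm:\partial_\infty\Gamma\to G/P_\theta^\pm$, and (b) some $c>0$ satisfies $\langle\alpha,\lambda(\rho(\gamma))\rangle\geq c\,\ellinfty{\gamma}$ for all $\alpha\in\theta$ and $\gamma\in\Gamma$. It then suffices to prove that conditions (a) and (b) each transfer between $\rho$ and $\rho^{ss}$.

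For condition (b) the key point is that $\lambda(\rho(\gamma))=\lambda(\rho^{ss}(\gamma))$ for every $\gamma\in\Gamma$. Indeed, by the construction of the semisimplification (Section~\ref{subsubsec:rho-non-ss}), $\rho^{ss}(\gamma)$ is $G$-conjugate to the Jordan semisimple part of $\rho(\gamma)$, and the Lyapunov projection $\lambda(g)=\lim_n \mu(g^n)/n$ depends only on the hyperbolic component of the Jordan decomposition. Hence the Lyapunov growth condition is satisfied by $\rho$ if and only if it is satisfied by $\rho^{ss}$, with exactly the same constant~$c$.

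For condition (a), I would exploit the description $\rho^{ss}(\gamma)=\lim_{t\to\infty} g_t\,\rho(\gamma)\,g_t^{-1}$, valid for a suitable one-parameter subgroup $(g_t)\subset G$ associated with the Zariski closure of $\rho(\Gamma)$. In the direction $\rho\Rightarrow\rho^{ss}$, I would set $\xi_{ss}^{\pm}(\eta):=\lim_{t\to+\infty} g_t\cdot\xi^\pm(\eta)$ in $G/P_\theta^{\pm}$, and show that this limit exists, is continuous, $\rho^{ss}$-equivariant, dynamics-preserving and transverse. The needed convergence comes from the fact that $(g_t)$ contracts $G/P_\theta^{\pm}$ onto a flag attracting set whose complement is a Schubert cell, together with transversality of $\xi^\pm$, which guarantees that $\xi^\pm(\eta)$ never lies in the repelling locus of $(g_t)$ that is transverse to the limit flag. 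In the reverse direction $\rho^{ss}\Rightarrow\rho$, since $\rho^{ss}(\gamma)$ and $\rho(\gamma)$ share a Jordan semisimple part, $P_\theta^{\pm}$-proximality of $\rho^{ss}(\gamma)$ implies $P_\theta^{\pm}$-proximality of $\rho(\gamma)$; one defines $\xi^\pm(\gamma^\pm)$ to be the attracting fixed point of $\rho(\gamma)$ on the dense set of endpoints of infinite-order elements, and then extends continuously, with continuity produced either directly (using the growth condition (b) that is already established, via Theorem~\ref{thm:constr-xi}) or by pulling back $\xi_{ss}^{\pm}$ through the inverse of the conjugation that converts $\rho^{ss}(\gamma)$ to the Jordan semisimple part of $\rho(\gamma)$, uniformly in $\gamma$.

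The main obstacle is the first half of Step~(a): the limit $\lim_{t\to\infty} g_t\cdot\xi^\pm(\eta)$ involves a divergent sequence in $G$ acting on the flag variety, and Anosovness of $\rho$ must be used not just pointwise but in a uniform way to ensure the limit is continuous on all of $\partial_\infty\Gamma$ and remains transverse. The careful bookkeeping of where $\xi^\pm(\partial_\infty\Gamma)$ sits with respect to the Bruhat decomposition associated with $(g_t)$ is the crux of the argument; once this is in place, equivariance and dynamics-preservation are formal, and condition~(b) — already inherited via $\lambda(\rho)=\lambda(\rho^{ss})$ — delivers the full Anosov property via Theorem~\ref{thm:char_ano_lambda_intro}.
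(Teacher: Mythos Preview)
Your overall plan---characterize Anosov via Theorem~\ref{thm:char_ano_lambda_intro} and transfer the two ingredients separately---is exactly the paper's strategy for the hard direction, but there is a real gap in the direction $\rho^{ss}\Rightarrow\rho$, and the other direction is vaguer than it should be.

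The easy direction is $\rho^{ss}\Rightarrow\rho$: the paper dispatches it in one line using that the set of $P_\theta$-Anosov representations is \emph{open} in $\Hom(\Gamma,G)$ and that $\rho^{ss}$ lies in the closure of the $G$-conjugacy orbit of~$\rho$ (Proposition~\ref{prop:semisimplification_anosov}). You miss this entirely. Your proposed construction of boundary maps for $\rho$ does not work as stated: Theorem~\ref{thm:constr-xi} requires growth of the Cartan projection~$\mu$, not of~$\lambda$, and only $\lambda$ is preserved under semisimplification; and there is no ``inverse of the conjugation'' $g_t$ to pull back along, since that degeneration is not invertible on the flag variety. Even if you did obtain boundary maps for~$\rho$, closing the loop via Theorem~\ref{thm:char_ano_lambda_intro}.\eqref{item:lin-away-from-walls-lambda}$\Rightarrow$\eqref{item:Ano-lambda} for a \emph{non-semisimple}~$\rho$ is delicate: in the paper that implication is itself proved by reducing to~$\rho^{ss}$ via openness, so you would be relying on what you are trying to establish.

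For the direction $\rho\Rightarrow\rho^{ss}$, your idea $\xi_{ss}^\pm(\eta)=\lim_{t\to\infty} g_t\cdot\xi^\pm(\eta)$ is morally correct, and the paper in effect does this---but by first passing to an irreducible $\theta$-proximal representation $G\to\GL_\RR(V)$ (so $G/P_\theta\hookrightarrow\PP_\RR(V)$) and then replacing the limit by an explicit linear projection (Proposition~\ref{prop:xi-for-rho-ss}). Concretely, the paper sets $U:=\sum_\eta\xi^+(\eta)$ and $V_1:=U\cap\bigcap_\eta\xi^-(\eta)$, splits $V=V_1\oplus V_2\oplus V_3$ with $U=V_1\oplus V_2$, and defines $\xi_{ss}^+:=\PP p\circ\xi^+$ for the projection $p:V_1\oplus V_2\to V_2$. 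Transversality forces $\xi^+(\partial_\infty\Gamma)\subset\PP(V_1\oplus V_2)\smallsetminus\PP(V_1)$, so the projection is defined and continuous; this is the precise version of your claim that ``$\xi^\pm(\eta)$ never lies in the repelling locus of $(g_t)$.'' The point you underestimate is that dynamics-preservation for~$\rho^{ss}$ is \emph{not} formal: the paper must show that the induced representation $\rho_2$ on~$V_2$ is irreducible (using minimality of the $\Gamma$-action on $\partial_\infty\Gamma$), hence $\rho_2=\rho_2^{ss}$, which is what makes the projected map dynamics-preserving for $\rho^{ss}$ rather than merely for $\rho_1^{ss}\oplus\rho_2\oplus\rho_3^{ss}$.

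A small correction: your justification of $\lambda(\rho(\gamma))=\lambda(\rho^{ss}(\gamma))$ is not quite right. It is false in general that $\rho^{ss}(\gamma)$ is $G$-conjugate to the Jordan semisimple part of $\rho(\gamma)$ (take $\rho$ already semisimple with some $\rho(\gamma)$ unipotent). The correct argument (Lemma~\ref{lem:lyapu_not_changed}) is that $\rho^{ss}$ is a limit of $G$-conjugates of~$\rho$, and $\lambda$ is continuous and conjugation-invariant.
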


Since the character variety of $\Gamma$ in~$G$ can be viewed as the quotient of \(\Hom(\Gamma,G)\) by the relation ``having the same semisimplification'', Proposition~\ref{prop:Ano_iff_ssAno} means that the notion of being $P_{\theta}$-Anosov is well defined in the character variety.

\subsection{Anosov representations and proper actions}\label{subsec:intro-Ano-implies-proper}

The first applications of Anosov representations to proper actions on homogeneous spaces were investigated in \cite{Guichard_Wienhard_DoD} through constructions of domains of discontinuity.
Theorem~\ref{thm:char_ano} now provides a more direct link via the following properness criterion of Benoist and Kobayashi.

\smallskip 
\noindent
{\bf Properness criterion} \cite{Benoist_properness, TKobayashi_proper}: 
{\em Let $G$ be a reductive Lie group and $H,\Gamma$ two closed subgroups of~$G$.
Then $\Gamma$ acts properly on $G/H$ if and only if for any compact subset $\mathcal{C}$ of~$\aaa$ the intersection $(\mu(\Gamma) + \mathcal{C}) \cap \mu (H) \subset \aaa$ is compact. }

\smallskip

In other words, $\Gamma$ acts properly on $G/H$ if and only if the set $\mu(\Gamma)$ ``drifts away at infinity'' from~$\mu(H)$.
In this case the quotient $\Gamma \backslash G/H$ is an orbifold, sometimes called a \emph{Clifford--Klein form} of $G/H$.

Based on the properness criterion, a strengthening of the notion of proper discontinuity was introduced in \cite{KasselKobayashi}: a discrete subgroup $\Gamma < G$ is said to act \emph{sharply} (or \emph{strongly properly discontinuously}) on $G/H$ if the set $\mu(\Gamma)$ drifts away from $\mu(H)$ at infinity ``with a nonzero angle'', \ie there are constants $c, C >0$ such that for all $\gamma\in\Gamma$,
\begin{equation}\label{eqn:sharp}
d_{\aaa}(\mu(\gamma),\, \mu(H)) \geq c\,\Vert\mu(\gamma)\Vert - C,
\end{equation}
where $d_{\aaa}$ denotes the metric on $\aaa$ induced by $\Vert\cdot\Vert$.
The quotient $\Gamma \backslash G/H$ is said to be a sharp Clifford--Klein form. 
Many (but not all) properly discontinuous actions are sharp; the sharpness constants $(c,C)$ give a way to quantify this proper discontinuity.
Sharp actions are interesting for several reasons.
Firstly, 
they tend to be stable under small deformations, which is not true for general properly discontinuous actions.
Secondly, there are applications to spectral theory in the setting of affine symmetric spaces $G/H$: by \cite{KasselKobayashi}, if the discrete spectrum of the Laplacian on $G/H$ is nonempty (which is~equi\-valent to the rank condition $\mathrm{rank}\,G/H=\mathrm{rank}\,K/(K\cap H)$), then the discrete spectrum of the Laplacian is infinite on any sharp Clifford--Klein form $\Gamma\backslash G/H$.

Here is an immediate consequence of the implication $\eqref{item:Ano}\Rightarrow\eqref{item:lin-away-from-walls}$ of Theorem~\ref{thm:char_ano} and of \eqref{eqn:mu-leq-klength} below (which expresses the subadditivity of $\Vert\mu\Vert$). 

\begin{corollary}\label{cor:link-proper-Ano}
Let $\Gamma$ be a word hyperbolic group, $G$ a real reductive Lie group, and $\theta \subset \Delta$ a nonempty subset of the simple restricted roots of~$G$.
For any $P_{\theta}$-Anosov representation $\rho : \Gamma\to G$, the group $\rho(\Gamma)$ acts sharply (in particular, properly discontinuously) on $G/H$ for any closed subgroup $H$ of~$G$ such that $\mu(H)\subset\bigcup_{\alpha\in\theta} \Ker(\alpha)$.
\end{corollary}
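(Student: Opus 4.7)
The plan is to chain together three ingredients: characterization~\eqref{item:lin-away-from-walls} of Anosov representations in Theorem~\ref{thm:char_ano}, the geometric interpretation of the simple roots as (proportional to) distances to the walls of $\overline{\aaa}^+$, and the subadditivity bound $\Vert\mu(\rho(\gamma))\Vert\leq K\,\ellGamma{\gamma}$ recorded in \eqref{eqn:mu-leq-klength}. The output will be precisely the sharpness inequality \eqref{eqn:sharp}, and properness will then be automatic from Benoist--Kobayashi.

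First, since $\rho$ is $P_\theta$-Anosov, the implication $\eqref{item:Ano}\Rightarrow\eqref{item:lin-away-from-walls}$ of Theorem~\ref{thm:char_ano} produces constants $c,C>0$ with $\langle\alpha,\mu(\rho(\gamma))\rangle\geq c\,\ellGamma{\gamma}-C$ for every $\alpha\in\theta$ and every $\gamma\in\Gamma$. As recalled right after Theorem~\ref{thm:constr-xi}, for each simple restricted root $\alpha$ the linear form $\langle\alpha,\cdot\rangle$ is, up to a positive multiplicative constant $c_\alpha$, exactly the $\Vert\cdot\Vert$-distance to the wall $\Ker(\alpha)$ on $\overline{\aaa}^+$. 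Rescaling and taking the minimum over the finite set $\theta$ therefore gives constants $c_1,C_1>0$ such that
\[
\min_{\alpha\in\theta}\, \dist\!\bigl(\mu(\rho(\gamma)),\, \Ker(\alpha)\bigr) \;\geq\; c_1\,\ellGamma{\gamma}-C_1
\]
for all $\gamma\in\Gamma$.

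Next I translate this into a statement about $\mu(H)$. By hypothesis $\mu(H)\subset\bigcup_{\alpha\in\theta}\Ker(\alpha)$, so for any $x\in\aaa$,
\[
\dist\bigl(x,\mu(H)\bigr)\;\geq\;\dist\!\Bigl(x,\;\bigcup_{\alpha\in\theta}\Ker(\alpha)\Bigr)\;=\;\min_{\alpha\in\theta}\dist\!\bigl(x,\Ker(\alpha)\bigr).
\]
Applying this with $x=\mu(\rho(\gamma))$ yields
\[
\dist\bigl(\mu(\rho(\gamma)),\, \mu(H)\bigr)\;\geq\;c_1\,\ellGamma{\gamma}-C_1.
\]

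Finally I convert the right-hand side from word length to $\Vert\mu(\rho(\gamma))\Vert$. The inequality \eqref{eqn:mu-leq-klength} (subadditivity of $\Vert\mu\Vert$, together with a uniform bound on $\Vert\mu(\rho(s))\Vert$ for $s$ in the generating set) provides a constant $K>0$ such that $\Vert\mu(\rho(\gamma))\Vert\leq K\,\ellGamma{\gamma}$, whence $\ellGamma{\gamma}\geq K^{-1}\Vert\mu(\rho(\gamma))\Vert$. Substituting gives
\[
\dist\bigl(\mu(\rho(\gamma)),\, \mu(H)\bigr)\;\geq\;\frac{c_1}{K}\,\Vert\mu(\rho(\gamma))\Vert - C_1,
\]
which is exactly the sharpness condition \eqref{eqn:sharp} with $c=c_1/K$ and $C=C_1$. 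Hence $\rho(\Gamma)$ acts sharply on $G/H$; in particular, the Benoist--Kobayashi properness criterion recalled above is satisfied (any compact set $\mathcal{C}\subset\aaa$ only allows finitely many $\gamma$ with $\mu(\rho(\gamma))\in\mu(H)+\mathcal{C}$), so the action is properly discontinuous.

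There is no genuine obstacle here: the corollary is really a packaging statement. The only point requiring any care is the passage from the hypothesis ``$\mu(H)$ lies in the \emph{union} of walls'' to a pointwise lower bound on the distance, where one must note that the inclusion $A\subset B$ reverses distances ($\dist(x,A)\geq\dist(x,B)$) and that the distance to a finite union of closed subsets is the minimum of the individual distances; both are elementary.
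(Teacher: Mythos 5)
Your proof is correct and follows exactly the route the paper indicates: combine the implication $\eqref{item:Ano}\Rightarrow\eqref{item:lin-away-from-walls}$ of Theorem~\ref{thm:char_ano} with the interpretation of $\langle\alpha,\cdot\rangle$ as a rescaled distance to $\Ker(\alpha)$ and the subadditivity bound \eqref{eqn:mu-leq-klength} to obtain the sharpness inequality \eqref{eqn:sharp}. The paper presents the corollary as an immediate consequence of these same ingredients without spelling them out; your write-up simply supplies the elementary details, including the observation that $\dist(x,\mu(H))\geq\min_{\alpha\in\theta}\dist(x,\Ker(\alpha))$.
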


Since the set of $P_{\theta}$-Anosov representations is open in $\Hom(\Gamma,G)$ \cite{Labourie_anosov,Guichard_Wienhard_DoD}, Corollary~\ref{cor:link-proper-Ano} provides sharp actions that remain sharp under any small deformation.

This corollary applies for instance to Hitchin representations of surface groups (which are Anosov with respect to a minimal parabolic
subgroup, \ie $\theta=\Delta$) and to maximal representations (which are Anosov with respect to a specific maximal proper parabolic subgroup, \ie $|\theta| = 1$).
We refer to Sections \ref{sec:hitch-repr} and~\ref{sec:maxim-repr} for more explanation.

\begin{corollary}\label{cor:Hitchin}
Let $(G,H)$ be a pair in Table~\ref{table1}.
For any Hitchin representation $\rho: \pi_1(\Sigma) \to G$, the group $\pi_1(\Sigma)$ acts sharply on $G/H$.

\medskip

\begin{table}[h!] 
\centering
\begin{tabular}{|c|c|c|c|}
\hline
& $G$ & $H$ & \rm{Conditions} \tabularnewline
\hline
{\rm (i)} & $\SL_d(\RR)$ & $\SL_k(\RR)$ & $k<d-1$\tabularnewline
{\rm (ii)} & $\SL_d(\RR)$ & $\SO(d-k,k)$ & $|d-2k|>1$\tabularnewline
{\rm (iii)} & $\SL_{2d}(\RR)$ & $\SL_d(\CC)\times\U(1)$ & \tabularnewline
{\rm (iv)} & $\SO(d,d)$ & $\SO(k,\ell)\times\SO(d-k,d-\ell)$ & $|k-\ell|>1$ \tabularnewline
{\rm (v)} & $\SO(d,d+1)$ & $\SO(k,\ell)\times\SO(d-k,d+1-\ell)$ & $\ell \notin \{k,k+1\}$\tabularnewline
{\rm (vi)} & $\SO(d,d)$ & $\GL_k(\RR)$ & $k<d-1$ \tabularnewline
{\rm (vii)} & $\SO(d,d+1)$ & $\GL_k(\RR)$ & $k<d$ \tabularnewline
{\rm (viii)} & $\SO(2d,2d)$ & $\U(d,d)$ & \tabularnewline
{\rm (ix)} & $\Sp(2d,\RR)$ & $\U(d-k,k)$ & \tabularnewline
{\rm (x)} & $\Sp(2d,\RR)$ & $\Sp(2k,\RR)$ & $k<d$ \tabularnewline
{\rm (xi)} & $\Sp(4d,\RR)$ & $\Sp(2d,\CC)$ & \tabularnewline
\hline
\end{tabular}
\vspace{0.2cm}
\caption{In these examples, \(0\leq k, \ell \leq d\) are any integers with \(d\geq
  2\), satisfying the specified conditions.}
\label{table1}
\end{table}
\end{corollary}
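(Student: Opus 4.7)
The plan is to invoke Corollary~\ref{cor:link-proper-Ano}. Every Hitchin representation $\rho : \pi_1(\Sigma)\to G$ is $P_{\Delta}$-Anosov (see Section~\ref{sec:hitch-repr}), so it suffices to verify, for each row $(G,H)$ of Table~\ref{table1}, that
\[ \mu(H)\subset\bigcup_{\alpha\in\Delta}\Ker(\alpha). \]

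In every case of the table, the embedding $H\hookrightarrow G$ can be chosen so that $\h$ is stable under the Cartan involution of~$\g$ and so that $\aaa_H\subset\aaa$ for some maximal $\RR$-split abelian subspace $\aaa_H$ of~$\h$. Then $\mu(h)$ lies in the restricted Weyl orbit of some element of~$\aaa_H$ for every $h\in H$. Since being on a wall of~$\overline{\aaa}^+$ is a Weyl-invariant condition, the required inclusion is equivalent to
\[ \aaa_H\subset\bigcup_{\alpha}\Ker(\alpha), \]
the union running over all restricted roots of~$G$. As no real vector space is a finite union of proper subspaces, this reduces to exhibiting, for each pair $(G,H)$, one restricted root of~$G$ that vanishes identically on~$\aaa_H$.

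The verification is then direct linear algebra with the standard realizations of the classical Lie algebras. For instance, in case~(i) the subspace $\aaa_H$ consists of diagonal matrices with the last $d-k\geq 2$ entries equal to zero, so the root $\Diag(a_1,\dots,a_d)\mapsto a_{k+1}-a_{k+2}$ vanishes on it. In case~(ii) the elements of $\aaa_H$ have the form $\Diag(t_1,\dots,t_p,0,\dots,0,-t_p,\dots,-t_1)$ with $p=\min(d-k,k)$ and a central zero block of length $|d-2k|\geq 2$, so a simple root again vanishes. Case~(iii) follows from the fact that the realification of the $\SL_d(\CC)$-action on~$\CC^d$ doubles each real eigenvalue. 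Cases~(iv)--(v), involving root systems of types~$D$ and~$B$, use the arithmetic conditions $|k-\ell|>1$ and $\ell\notin\{k,k+1\}$ to guarantee that $\aaa_H$ lies in a coordinate hyperplane. Cases~(vi)--(vii) and~(viii)--(xi) are handled by the same style of block-diagonal bookkeeping, using the complex or symplectic structure preserved by~$H$ in the latter to pair up coordinates.

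The main obstacle is not conceptual but combinatorial: organizing a uniform case-by-case analysis, and in particular verifying the subtler instances where $\aaa_H$ has codimension one in~$\aaa$ (as in case~(v) with $\ell=k-1$, or in case~(vii) with $k=d-1$), where one must identify $\aaa_H$ precisely with a root hyperplane rather than merely a proper subspace.
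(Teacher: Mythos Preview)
Your proposal is correct and follows essentially the same approach as the paper: invoke Corollary~\ref{cor:link-proper-Ano} with $\theta=\Delta$, reduce via compatible Cartan decompositions to the inclusion $\aaa_H\subset\bigcup_{\alpha\in\Sigma}\Ker(\alpha)$ (this is the paper's Lemma~\ref{lem:aH-a-Hitchin}), and then verify case by case. Your additional remark that a real vector space cannot be a finite union of proper subspaces, hence one vanishing root suffices, is a clean sharpening the paper leaves implicit; the paper works out case~(vi) explicitly and otherwise defers to the classifications of Okuda and of Boche\'nski--Jastrz\k{e}bski--Tralle rather than doing the remaining bookkeeping itself.
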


\begin{corollary}\label{cor:max}
Let $(G,H)$ be a pair in Table~\ref{table2}.
For any maximal representation $\rho: \pi_1(\Sigma) \to G$, the group $\pi_1(\Sigma)$ acts sharply on $G/H$.\end{corollary}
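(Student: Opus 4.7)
The plan is to reduce Corollary~\ref{cor:max} to a direct application of Corollary~\ref{cor:link-proper-Ano}, in exact parallel with the proof strategy for Corollary~\ref{cor:Hitchin}. The starting point is the result of Burger--Iozzi--Wienhard (see also Guichard--Wienhard) that every maximal representation $\rho : \pi_1(\Sigma)\to G$ into a Hermitian Lie group $G$ is $P_\theta$-Anosov for $\theta = \{\alpha\}\subset\Delta$, where $\alpha$ is the distinguished simple restricted root whose associated maximal parabolic $P_\alpha$ is the stabilizer of a point in the Shilov boundary of the bounded symmetric domain of~$G$. Since $\pi_1(\Sigma)$ is word hyperbolic and $G$ is real reductive, the hypotheses of Corollary~\ref{cor:link-proper-Ano} are met once a compatible closed subgroup $H<G$ is fixed.

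Given this, the proof of Corollary~\ref{cor:max} reduces to verifying, for every entry $(G,H)$ of Table~\ref{table2}, the single Lie-theoretic containment
\[
\mu(H) \;\subset\; \Ker(\alpha),
\]
where $\alpha$ is the simple root attached to maximality for the given Hermitian group~$G$. Once this is established, Corollary~\ref{cor:link-proper-Ano} yields at once that $\rho(\pi_1(\Sigma))$ acts sharply on $G/H$, hence in particular properly discontinuously, with sharpness constants that remain uniform under small deformation because the $P_\alpha$-Anosov property is open in $\Hom(\pi_1(\Sigma),G)$.

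Concretely, the case-by-case part of the argument goes as follows. For each Hermitian target $G$ appearing in the table (typically $\Sp(2d,\RR)$, $\SU(p,q)$, $\SO^*(2n)$, or $\SO(2,d)$), one fixes a Cartan decomposition of~$G$ in which the distinguished simple root~$\alpha$ admits an explicit description (for instance, for $G=\Sp(2d,\RR)$, $\alpha$ is the long simple root, so $\langle\alpha,\mu(g)\rangle$ is twice the top symplectic singular value of~$g$). One then chooses an embedding of~$H$ into~$G$ compatible with this Cartan decomposition, so that the restriction $\mu|_H$ is expressible in coordinates, and checks that the relevant coordinate vanishes. In each listed case the subgroup~$H$ has been chosen precisely so that this vanishing is a tautology of the block structure of~$H$ inside~$G$.

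The main obstacle is not conceptual but organizational: each line of Table~\ref{table2} requires its own identification of~$\alpha$, its own choice of compatible Cartan decompositions of $G$ and~$H$, and its own verification that $\mu(H)$ lies in the wall $\Ker(\alpha)$. The restrictions on the numerical parameters listed in the table are exactly those needed to ensure this containment, and the verifications run in close analogy with those sketched for the Hitchin pairs of Table~\ref{table1}. The reader who prefers a uniform viewpoint may note that in every case the Lie algebra of~$H$ is contained in the Levi factor of a parabolic of the form $P_\theta^- P_\theta$ with $\alpha\notin\theta$, which gives an intrinsic reason for the containment $\mu(H)\subset\Ker(\alpha)$.
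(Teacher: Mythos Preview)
Your approach is essentially the same as the paper's: maximal representations are $P_{\{\alpha_d\}}$-Anosov (Burger--Iozzi--Labourie--Wienhard), so by Corollary~\ref{cor:link-proper-Ano} it suffices to check $\mu(H)\subset\Ker(\alpha_d)$, which the paper reformulates (Lemma~\ref{lem:aH-a-max}) as $\aaa_H\subset\bigcup_{w\in W}\Ker(w\cdot\alpha_d)$ and then verifies case by case.

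Two small corrections. First, your illustrative description for $\Sp(2d,\RR)$ is inverted: the distinguished long root is $\alpha_d=2\varepsilon_d$, so $\langle\alpha_d,\mu(g)\rangle$ is twice the \emph{smallest} positive log-singular value, not the top one. Second, your closing ``uniform viewpoint'' is not correct as stated: the Levi $L_\theta=P_\theta\cap P_\theta^-$ always contains the full Cartan subspace~$\aaa$, so containment of $H$ in some $L_\theta$ does not by itself force $\mu(H)\subset\Ker(\alpha)$, and in any case several subgroups in Table~\ref{table2} are not contained in a proper Levi of~$G$. Drop that remark or replace it with the paper's formulation via $\aaa_H$ and $W$-translates of $\Ker(\alpha_d)$.
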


\begin{table}[h!] 
\centering
\begin{tabular}{|c|c|c|c|}
\hline
& $G$ & $H$ & \rm{Conditions} \tabularnewline
\hline
{\rm (i)} & $\SO(2,d)$ & $\U(1,k)$ & $k\leq d/2$ \tabularnewline
{\rm (ii)} & $\Sp(2d,\RR)$ & $\U(d-k,k)$ & $d \neq 2k$\tabularnewline
{\rm (iii)} & $\Sp(2d,\RR)$ & $\Sp(2k,\RR)$ & $k<d$ \tabularnewline
{\rm (iv)} & $\Sp(4d,\RR)$ & $\Sp(2d-2,\CC)$ & \tabularnewline
{\rm (v)} & $\SU(2d+1,2d+1)$ & $\SO^\ast(4d+2)$ & \tabularnewline
{\rm (vi)} & $\SU(p,q)$ & $\SU(k,\ell) \times \SU(p-k,q-\ell)$ & $(p-q)(k-\ell)<0$ \tabularnewline
{\rm (vii)} & $\SO^\ast(2d)$ & $\U(d-k,k)\times\U(1)$ & \tabularnewline
{\rm (viii)} & $\SO^\ast(4d)$ & $\SO^\ast(4d-2)$ & \tabularnewline
{\rm (ix)} & $E_{6(-14)}$ & $F_{4(-20)}$ & \tabularnewline
{\rm (x)} & $E_{7(-25)}$ & $E_{6(-14)}$ & \tabularnewline
{\rm (xi)} & $E_{7(-25)}$ & $\SU(6,2)$ & \tabularnewline
\hline
\end{tabular}
\vspace{0.2cm}
\caption{In these examples, $ k,\ell,d,p,q\in\NN$ are any integers with $d\geq 2$ and $k,\ell \leq d$ (as well as \(k\leq p\) and \(\ell\leq q\) in~\textrm{(vi)}), satisfying the specified conditions.}
\label{table2}
\end{table}

Applying Corollary~\ref{cor:link-proper-Ano}, it is easy to find many other examples with similar properties.

Conversely to Corollary~\ref{cor:link-proper-Ano}, we prove that certain properly discontinuous actions give rise to Anosov representations, using the implication $\eqref{item:cli}\Rightarrow\eqref{item:Ano}$ of Theorem~\ref{thm:char_ano}.
This works well, for instance, in the so-called \emph{standard} case, namely when the discrete group $\Gamma$ lies inside some Lie subgroup $G_1$ of~$G$ that itself acts properly on $G/H$; in this case the action of $\Gamma$ is automatically properly discontinuous, and even sharp \cite[Ex.\,4.10]{KasselKobayashi} (see Section~\ref{subsec:proof-cor-intro}).

\begin{corollary}\label{cor:standard-Ano}
Let $G$ be a real reductive Lie group, $\theta \subset \Delta$ a nonempty subset of the simple restricted roots of~$G$, and $H$ a closed subgroup of~$G$ such that $\mu(H) \supset \bigcup_{\alpha\in\theta} \Ker(\alpha) \cap \overline{\aaa}^+$.
Let $G_1$ be a reductive subgroup of~$G$, of real rank~$1$, acting properly on $G/H$.
Then for any convex cocompact subgroup $\Gamma$ of~$G_1$, the inclusion of $\Gamma$ into~$G$ is $P_{\theta}$-Anosov.
\end{corollary}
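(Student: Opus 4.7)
The plan is to verify condition~\eqref{item:cli} of Theorem~\ref{thm:char_ano} for the inclusion $\Gamma\hookrightarrow G$; the $P_{\theta}$-Anosov conclusion then follows immediately. The main obstacle will be to show that the ray $\mu(G_1)\subset\overline{\aaa}^+$ stays off the walls $\Ker(\alpha)$ for $\alpha\in\Sigma^+_{\theta}$, which I will extract from the properness hypothesis via the Benoist--Kobayashi criterion.

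First I would set up compatible Cartan decompositions: by a theorem of Mostow there exists a Cartan involution $\Theta$ of $G$ preserving $G_1$, so that $K_1 := K \cap G_1$ is maximal compact in $G_1$. I would then choose a Cartan subspace $\aaa_1$ of $G_1$ contained in a Cartan subspace $\aaa$ of $G$ (a line, since $G_1$ has real rank one) and pick $\overline{\aaa}^+$ so that $\overline{\aaa_1}^+ := \overline{\aaa}^+\cap \aaa_1$ is a ray $\RR_+ Y_0$ for some unit vector $Y_0\in\overline{\aaa}^+$. With these choices, the $G_1$-Cartan decomposition of any $g\in G_1$ is also a $G$-Cartan decomposition, so $\mu(g) = t(g)\,Y_0$ with $t(g) := \Vert\mu(g)\Vert$, and in particular $\mu(G_1) = \RR_+ Y_0$. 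To establish $\langle\alpha,Y_0\rangle > 0$ for all $\alpha\in\Sigma^+_{\theta}$, I would argue by contradiction: if $\langle\alpha,Y_0\rangle=0$ for some $\alpha = \sum_{\beta\in\Delta} n_{\beta}\beta\in\Sigma^+_{\theta}$ (with $n_{\beta}\geq 0$ and some $n_{\beta_0}>0$ for $\beta_0\in\theta$), then since $\langle\beta,Y_0\rangle\geq 0$ for all $\beta\in\Delta$ (as $Y_0\in\overline{\aaa}^+$) the expansion $0 = \sum_{\beta} n_{\beta}\langle\beta,Y_0\rangle$ forces $\langle\beta_0,Y_0\rangle=0$, whence $Y_0\in\Ker(\beta_0)\cap\overline{\aaa}^+\subset\mu(H)$. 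The right-hand side is invariant under positive scaling, so $\RR_+ Y_0 = \mu(G_1) \subset \mu(H)$; applying the Benoist--Kobayashi criterion with the compact set $\mathcal{C}=\{0\}$ would then make the unbounded set $\mu(G_1)\cap\mu(H) = \RR_+ Y_0$ compact, contradicting properness of $G_1$ acting on $G/H$.

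Finally, since $\Gamma$ is convex cocompact in the rank-one group $G_1$ it is word hyperbolic and the inclusion $\Gamma\hookrightarrow G_1$ is $P_{\Delta_1}$-Anosov; applying the implication \eqref{item:Ano}$\Rightarrow$\eqref{item:cli} of Theorem~\ref{thm:char_ano} to this inclusion (in rank one $\Sigma^+_{\Delta_1}$ consists of positive multiples of the unique simple root) furnishes $\kappa_1,\kappa_1'>0$ such that along any geodesic ray $(\gamma_n)_{n\in\NN}$ from $e$ in $\Gamma$, the sequence $(t(\gamma_n))_{n\in\NN}$ is $(\kappa_1,\kappa_1')$-lower CLI. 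Combining with the previous step, for any $\alpha\in\Sigma^+_{\theta}$ the sequence $\langle\alpha,\mu(\gamma_n)\rangle = \langle\alpha,Y_0\rangle\,t(\gamma_n)$ is $(\langle\alpha,Y_0\rangle\kappa_1,\langle\alpha,Y_0\rangle\kappa_1')$-lower CLI; taking $\kappa := \kappa_1\min_{\alpha\in\Sigma^+_{\theta}}\langle\alpha,Y_0\rangle > 0$ and $\kappa' := \kappa_1'\max_{\alpha\in\Sigma^+_{\theta}}\langle\alpha,Y_0\rangle$ (finite and positive since $\Sigma^+_{\theta}$ is finite) verifies Theorem~\ref{thm:char_ano}.\eqref{item:cli}, and so the inclusion $\Gamma\hookrightarrow G$ is $P_{\theta}$-Anosov.
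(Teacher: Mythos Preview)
Your approach is essentially the paper's own: reduce to the CLI criterion of Theorem~\ref{thm:char_ano}.\eqref{item:cli} by exploiting that $\mu$ restricted to $G_1$ lands on a ray, and extract the wall-avoidance $\langle\alpha,Y_0\rangle>0$ from properness via Benoist--Kobayashi. Your treatment of the implication $\langle\alpha,Y_0\rangle=0\Rightarrow\langle\beta_0,Y_0\rangle=0$ for some $\beta_0\in\theta$ is in fact the correct justification for what the paper needs in the proof of Corollary~\ref{cor:cc-G1-Ano}.

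There is one genuine gap. Your claim that $\mu(G_1)=\RR_+Y_0$ and your appeal to ``$\Gamma\hookrightarrow G_1$ is $P_{\Delta_1}$-Anosov'' both tacitly assume that $G_1$ is \emph{semisimple} of real rank~$1$. In that case the nontrivial Weyl element $\tilde w_1\in K_1\subset K$ sends $Y_0$ to $-Y_0$, forcing $Y_0=-w_0\cdot Y_0$ and hence a single ray; your argument then goes through. But the corollary allows $G_1$ to be merely \emph{reductive} of real rank~$1$, for instance a one-parameter subgroup $\exp(\RR Y_0)$ times a compact factor. Then $\Delta_1=\emptyset$, the Weyl group of $G_1$ is trivial, and $\mu(G_1)=\RR_+Y_0\cup\RR_+(-w_0\cdot Y_0)$ may consist of two distinct rays; moreover there is no proper parabolic $P_{\Delta_1}$ to invoke. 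The paper handles this case separately in the proof of Corollary~\ref{cor:cc-G1-Ano}: $\Gamma$ is then virtually cyclic, along each of the two geodesic rays $(\gamma^{\pm n})_{n\geq 0}$ the Cartan projection stays on a single ray, and one must check $\langle\alpha,\cdot\rangle>0$ on \emph{both} rays (which follows from your Benoist--Kobayashi argument applied to each). The fix is routine, but your proof as written does not cover it.
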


Recall that $\Gamma$ being convex cocompact in~$G_1$ means that there is a non\-empty, $\Gamma$-invariant, closed, convex subset $\mathcal{C}$ of the Riemannian symmetric space of~$G_1$ such that $\Gamma\backslash\mathcal{C}$ is compact.
Since $G_1$ has real rank~$1$, this is equivalent to $\Gamma$ being finitely generated and quasi-isometrically embedded in~$G_1$ (Remark~\ref{rem:rank1ccquasi}).

Corollary~\ref{cor:standard-Ano} applies in particular to the examples in Table~\ref{table3} below.
For more examples, including exceptional groups, see \cite{Kobayashi_Yoshino}.
In examples (i) to~(iv) when $k=d/2$, in example (v) when $\ell=d/4$, and in example (vii), the group $G_1$ acts cocompactly on $G/H$, hence compact Clifford--Klein forms of $G/H$ can be obtained by taking $\Gamma$ to be a uniform lattice in~$G_1$.
We refer to \cite{Okuda13, Bochenski_Jastrzebski_Tralle} for many more examples to which Corollary~\ref{cor:standard-Ano} applies, where $G_1$ is locally isomorphic to $\SL_2(\RR)$.

\begin{table}[h!]
\centering
\begin{tabular}{|c|c|c|c|c|}
\hline
& $G$ & $\theta$ & $H$ & $G_1$\tabularnewline
\hline
(i) & $\SO(2,d)$ & $\alpha_1$& $\SO(1,d)$ & $\U(1,k)$\tabularnewline
(ii) & $\SO(2,d)$ &$ \alpha _0$& $\U(1,k)$ & $\SO(1,d)$\tabularnewline
(iii) & $\U(2,d)$ & $\alpha_1$& $\U(1,d)$ & $\Sp(1,k)$\tabularnewline
(iv) & $\U(2,d)$ & $ \alpha _0$& $\Sp(1,k)$ & $\U(1,d)$\tabularnewline
(v) & $\SO(4,d)$ & $\alpha_1$& $\SO(3,d)$ & $\Sp(1,\ell)$\tabularnewline
(vi) & $\SO(4,d)$ &$ \alpha_0$& $\Sp(1,\ell)$ & $\SO(1,d)$\tabularnewline
(vii) & $\SO(8,8)$ & $\alpha_1$& $\SO(7,8)$ & $\Spin(8,1)$\tabularnewline
(viii) & $\SO(8,8)$ & $\alpha_0$& $\Spin(8,1)$ & $\SO(1,8)$\tabularnewline
\hline
\end{tabular}
\vspace{0.2cm}
\caption{In these examples, $d,k,\ell$ are any integers with $0<~\!\!\!k\leq d/2$ and $0<\ell\leq d/4$. We denote by $\alpha_0$ the simple root of~$G$ such that $P_{\alpha_0}$ is the stabilizer of an isotropic line, and by $\alpha_1$ the simple root of~$G$ such that $P_{\alpha_1}$ is the stabilizer of a maximal isotropic subspace.}
\label{table3}
\end{table}

The geometric construction of domains of discontinuity from \cite{Guichard_Wienhard_DoD} can be applied in many of the cases of Table~\ref{table3} to furthermore obtain compactifications of the corresponding Clifford--Klein forms $\Gamma \backslash G/H$.
These compactifications generalize for instance the conformal compactifications of Fuchsian and quasi-Fuchsian groups.  
This is the object of \cite{GGKW_compact}.

\begin{remarks}
\begin{enumerate}[label=(\alph*),ref=\alph*]
  \item The properness criterion of Benoist and Kobayashi also applies when $G$ is a reductive group over a non-Archimedean local field (\eg $\QQ_p$): see \cite{Benoist_properness}.
Corollaries \ref{cor:link-proper-Ano} and~\ref{cor:standard-Ano} also hold in this setting (see Remark~\ref{rem:intro-char-Ano-bis}.\eqref{item:p-adic}).
  \item From Corollaries \ref{cor:link-proper-Ano} and~\ref{cor:standard-Ano}, we recover the main result of \cite{Kassel_deformation}: in the setting of Corollary~\ref{cor:standard-Ano} (over $\RR$ or~$\QQ_p$), there is a neighborhood $\mathcal{U}\subset\Hom(\Gamma,G)$ of the natural inclusion such that for any $\varphi\in\mathcal{U}$ the group $\varphi(\Gamma)$ is discrete in~$G$ and acts properly discontinuously on $G/H$.
\end{enumerate}
\end{remarks}

\subsection{Proper actions on group manifolds}

For a Lie group~$G$, let $\Diag(G)$ be the diagonal of $G\times G$.
The homogeneous space $(G\times G)/\Diag(G)$ identifies with $G$ endowed with the transitive action of $G\times G$ by left and right translation, and is 
 called a \emph{group manifold}.
Using the full equivalence $\eqref{item:Ano}\Leftrightarrow\eqref{item:lin-away-from-walls}$ of Theorem~\ref{thm:char_ano}, as well as Theorem~\ref{thm:char_ano_lambda_intro}, we obtain a particularly satisfying characterization of quasi-isometrically embedded groups acting properly on $(G\times G)/\Diag(G)$ when $G$ is semisimple of real rank~$1$.
This covers in particular the cases of anti-de Sitter $3$-manifolds ($G=\PSL_2(\RR)$) and of Riemannian holomorphic complex $3$-manifolds with constant nonzero sectional curvature ($G=\PSL_2(\CC)$).

\begin{theorem}\label{thm:proper-GxG-rk1}
Let $G$ be a semisimple Lie group of real rank~$1$ and $\Gamma$ a finitely generated subgroup of $G\times G$.
Then the following are equivalent:
  \begin{enumerate}
  \item\label{item:proper-GxG-rk1} $\Gamma$ acts properly discontinuously on $(G\times G)
    / \Diag(G)$ and the inclusion $\Gamma \hookrightarrow G\times G$ is a quasi-isometric embedding,
    \item\label{item:sharp-GxG-rk1} $\Gamma$ acts sharply on $(G\times G)
    / \Diag(G)$ and the inclusion $\Gamma \hookrightarrow G\times G$ is a quasi-isometric embedding,
  \item\label{item:Gamma-j-rho} $\Gamma$ is word hyperbolic, of the form
        \[\Gamma = \{ ( \rho_L( \gamma), \rho_R( \gamma)) \mid \gamma \in \Gamma_0\} ,\]
 where $\rho_L,\rho_R : \Gamma_0\to G$ are representations and, up to switching the two factors of $G\times G$, the representation $\rho_L$ is convex cocompact and uniformly dominates~$\rho_R$.
  \end{enumerate}
\end{theorem}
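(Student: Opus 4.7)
The implication $(2)\Rightarrow(1)$ is immediate since sharpness is stronger than proper discontinuity. For $(3)\Rightarrow(2)$: in the rank-one group $G$, convex cocompactness of $\rho_L$ yields $\mu(\rho_L(\gamma))\geq c_1\ellGamma{\gamma}-C_1$ (Remark~\ref{rem:rank1ccquasi}), while uniform domination provides $c_0<1$ and $C_0\geq 0$ with $\mu(\rho_R(\gamma))\leq c_0\,\mu(\rho_L(\gamma))+C_0$. Identifying the Cartan subspace $\aaa\oplus\aaa$ of $G\times G$ with $\RR^2$ so that $\mu(\Diag(G))$ corresponds to the diagonal, the distance $d_{\aaa}(\mu((\rho_L,\rho_R)(\gamma)),\mu(\Diag(G)))$ equals $|\mu(\rho_L(\gamma))-\mu(\rho_R(\gamma))|/\sqrt{2}$ and is bounded below by $\frac{1-c_0}{\sqrt 2}\,\mu(\rho_L(\gamma))-\frac{C_0}{\sqrt 2}$. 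Combined with the comparability $\Vert\mu((\rho_L,\rho_R)(\gamma))\Vert\asymp\mu(\rho_L(\gamma))\asymp\ellGamma{\gamma}$ (from uniform domination and convex cocompactness), this gives both the sharpness inequality~\eqref{eqn:sharp} and the QIE of the inclusion $\Gamma\hookrightarrow G\times G$.

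For the main direction $(1)\Rightarrow(3)$: by the Benoist--Kobayashi criterion, proper discontinuity is equivalent to $|\mu(\rho_L(\gamma))-\mu(\rho_R(\gamma))|\to+\infty$ as $\gamma\to\infty$ in $\Gamma$, and the quasi-isometric embedding of $\Gamma$ into $G\times G$ yields $\max(\mu(\rho_L(\gamma)),\mu(\rho_R(\gamma)))\geq c\,\ellGamma{\gamma}-C$. Consider $f:=\mu\circ\rho_L-\mu\circ\rho_R:\Gamma\to\RR$: it is Lipschitz with respect to the word metric (by subadditivity of the Cartan projection) and $|f|\to+\infty$. A connectivity argument on the Cayley graph, combined with the asymptotics $f(\gamma^n)/n\to\lambda(\rho_L(\gamma))-\lambda(\rho_R(\gamma))$ for $\gamma$ of infinite order, forces the sign of $f$ to be eventually constant; up to exchanging the two factors of $G\times G$ we may assume $f\geq 0$ outside a finite subset of $\Gamma$. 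Then $\mu(\rho_L(\gamma))\geq c\,\ellGamma{\gamma}-C'$, so $\rho_L$ is a quasi-isometric embedding into the rank-one group $G$, with finite kernel and convex cocompact image by Remark~\ref{rem:rank1ccquasi}. Moreover $p_L$ is injective: any non-trivial $\gamma_0\in\ker p_L$ would, by discreteness of $\Gamma$ in $G\times G$, yield $\mu(\rho_R(\gamma_0^n))\to+\infty$ and hence $f(\gamma_0^n)\to-\infty$, contradicting $f\geq 0$ at infinity. Thus $\Gamma\cong\Gamma_0:=p_L(\Gamma)$ is word hyperbolic and $\rho_L$ is injective convex cocompact.

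Setting $\rho_R:=p_R\circ p_L^{-1}$, the inclusion $\Gamma\hookrightarrow G\times G$ satisfies the CLI hypothesis of Theorem~\ref{thm:char_ano}\eqref{item:cli} for $\theta=\{\alpha_L\}$, the simple restricted root coming from the left factor (for which $\Sigma_\theta^+$ consists of multiples of $\alpha_L$), and hence is $P_{\{\alpha_L\}}$-Anosov. The main obstacle is to upgrade the qualitative divergence $|\mu_L-\mu_R|\to+\infty$ coming from properness to the uniform ratio bound $\mu(\rho_R(\gamma))\leq c\,\mu(\rho_L(\gamma))+C$ with $c<1$, equivalent to sharpness and to uniform domination (by the first paragraph). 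This is established via a compactness argument: if the ratio $\mu(\rho_R(\gamma_n))/\mu(\rho_L(\gamma_n))$ approached $1$ along a sequence $\gamma_n\in\Gamma_0$, then after extracting a subsequence converging to some $\eta\in\partial_\infty\Gamma$, the dynamics-preservation of the Anosov boundary map $\xi^+:\partial_\infty\Gamma\to G/P$ would force $\rho_L(\gamma_n)$ and $\rho_R(\gamma_n)$ to accumulate at the same flag $\xi^+(\eta)\in G/P$ with comparable Cartan projections; an Iwasawa-type computation then shows $\rho_L(\gamma_n)\rho_R(\gamma_n)^{-1}$ remains in a bounded subset of~$G$, contradicting properness of the $\Gamma$-action on $(G\times G)/\Diag(G)$. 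Applying Theorem~\ref{thm:char_ano_lambda_intro} and passing from Cartan to Lyapunov via $\gamma\mapsto\gamma^n/n$ finally yields $\lambda(\rho_R(\gamma))\leq c\,\lambda(\rho_L(\gamma))$ for all $\gamma\in\Gamma_0$, i.e., uniform domination.
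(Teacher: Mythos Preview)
There is a genuine gap in your $(1)\Rightarrow(3)$ argument, at the ``compactness argument'' meant to upgrade the divergence $|\mu(\rho_L(\gamma))-\mu(\rho_R(\gamma))|\to\infty$ to a uniform ratio bound. You claim that if $\mu(\rho_R(\gamma_n))/\mu(\rho_L(\gamma_n))\to 1$ along $\gamma_n\to\eta$, then $\rho_R(\gamma_n)$ ``accumulates at the same flag $\xi^+(\eta)$'' as $\rho_L(\gamma_n)$, and that an Iwasawa computation then bounds $\rho_L(\gamma_n)\rho_R(\gamma_n)^{-1}$. But $\xi^+$ is the boundary map of $\rho_L$ only; the representation $\rho_R$ need not be discrete or proximal and has no boundary map, so there is no mechanism forcing the Cartan frames of $\rho_R(\gamma_n)$ toward $\xi^+(\eta)$. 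Without that, the Iwasawa step has no input. (Your $P_{\{\alpha_L\}}$-Anosov property in $G\times G$ via the CLI criterion merely restates that $\rho_L$ is Anosov in $G$; it gives no control on $\rho_R$.) A smaller issue: in $(3)\Rightarrow(2)$ you write uniform domination as a Cartan inequality $\mu(\rho_R)\leq c_0\,\mu(\rho_L)+C_0$, but the definition is in terms of $\lambda$, and the passage requires something like Theorem~\ref{thm:abels-marg-soif}.

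The paper's route is quite different and sidesteps this difficulty. It chooses an irreducible $\theta$-proximal $\tau:G\to\Aut_\KK(b)$ and considers $\tau\circ\rho_L\oplus\tau\circ\rho_R:\Gamma\to\Aut_\KK(b\oplus b')$. The boundary map of $\rho_L$, pushed into $\mathcal{F}_0(b)\hookrightarrow\mathcal{F}_0(b\oplus b')$, is continuous, equivariant, transverse, and --- using only the \emph{strict} inequality $\lambda(\rho_R(\gamma))<\lambda(\rho_L(\gamma))$ supplied by Theorem~\ref{thm:kas-cork1} together with Remark~\ref{rem:attract-f-p-g-g'} --- dynamics-preserving for the combined representation. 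The properness divergence $|\mu_L-\mu_R|\to\infty$ then becomes, via Lemma~\ref{fact:tau-O(b)}, condition~\eqref{item:away-from-walls} of Theorem~\ref{thm:char_ano} for $\tau\circ\rho_L\oplus\tau\circ\rho_R$. The nontrivial implication $\eqref{item:away-from-walls}\Rightarrow\eqref{item:lin-away-from-walls}$ of that theorem (proved through the flow-space contraction of Proposition~\ref{prop:weakcontr_ano}) then \emph{automatically} upgrades divergence to linear divergence, giving the $Q_0(b\oplus b')$-Anosov property; uniform domination follows via Theorem~\ref{thm:char_ano_lambda_intro} and Lemma~\ref{fact:tau-O(b)}. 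The detour through the larger group is exactly what makes the ``properness $\Rightarrow$ sharpness'' step work, and your direct argument lacks a substitute for it.
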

Here we say that $\rho_L$ \emph{uniformly dominates}~$\rho_R$ if there exists $c<1$ such that for all $\gamma\in\Gamma_0$,
 \[ \lambda(\rho_R(\gamma)) \leq c \, \lambda(\rho_L(\gamma)), \]
where $\lambda : G\to\RRp$ is the translation length function in the Riemannian symmetric space $G/K$ of~$G$, given by $\lambda(g)=\inf_{x\in G/K} d(x,g\cdot x)$ for all $g\in G$. 

\begin{remark}
Theorem~\ref{thm:proper-GxG-rk1}, together with Corollary~\ref{cor:proper-open-GxG} below, was first established in \cite{Kassel_these} for $G = \PSL_2(\RR)\simeq\SO(1,2)_0$, then in \cite{Gueritaud_Kassel} for $G=\SO(1,d)$ with $d\geq 2$.
(For a $p$-adic version, with $G$ of relative rank~$1$ over a non-Archimedean local field, see \cite{Kassel_padic}.)
The fact that for a general Lie group $G$ of real rank~$1$, any discrete subgroup of $G\times G$ acting properly discontinuously on $(G\times G)/\Diag(G)$ is of the form $(\rho_L,\rho_R)(\Gamma_0)$ where $\rho_L$ or~$\rho_R$ is discrete with finite kernel, was proved in \cite{Kassel_corank1}: see Theorem~\ref{thm:kas-cork1} for a precise statement.
\end{remark}

To prove Theorem~\ref{thm:proper-GxG-rk1}, we relate conditions \eqref{item:proper-GxG-rk1}, \eqref{item:sharp-GxG-rk1}, \eqref{item:Gamma-j-rho} to the fact that $\Gamma$ is word hyperbolic and its natural inclusion inside some larger group containing $G\times\nolinebreak G$ is Anosov.
Such a relationship also exists, in a weaker form, when $G$ has higher real rank: a general statement is given in Theorem~\ref{thm:complete-proper-GxG} below.
Here we explain this relationship when $G=\Aut_{\KK}(b)$ is the group of automorphisms of a vector space over $\KK=\RR$ or \(\CC\) preserving a nondegenerate bilinear (symmetric or symplectic) form~$b$, or the group of automorphisms of a vector space over $\KK=\CC$ or $\bH$ (quaternions) preserving a nondegenerate (Hermitian or anti-Hermitian) form~$b$ --- a situation that includes all classical simple groups of real rank~$1$, namely $\SO(1,d)$, $\SU(1,d)$, $\Sp(1,d)$ (Example~\ref{ex:SO(1,d)}). 

\begin{theorem} \label{thm:Anosov=proper}
For $\KK = \RR$, $\CC$, or~$\bH$, let $V$ be a $\KK$-vector space and $b : V \otimes_{\RR} V \to \KK$ a nondegenerate $\RR$-bilinear form which is symmetric, antisymmetric, Hermitian, or anti-Hermitian over~$\KK$, with $G:=\Aut_{\KK}(b)$ noncompact.
Let $Q_0(b\oplus b)$ be the stabilizer in $\Aut_{\KK}(b \oplus b)$ of a $(b\oplus b)$-isotropic line in $V\oplus V$, and similarly for $b\oplus (-b)$.
For a discrete subgroup $\Gamma$ of $G\times G$, the following three conditions are equivalent:
\begin{enumerate}
\setcounter{enumi}{2}
  \item \label{item:Gamma-j-rho-Ano}
$\Gamma$ is word hyperbolic, of the form \[\Gamma = \{ ( \rho_L( \gamma), \rho_R( \gamma)) \mid \gamma \in \Gamma_0\} ,\]  where $\rho_L,\rho_R : \Gamma_0\to G$ are representations and, up to switching the two factors of $G\times G$, the representation $\rho_L$ is $Q_0(b)$-Anosov and uniformly $Q_0(b)$-dominates~$\rho_R$ (see Definition~\ref{def:dominate});
  \item \label{item:Q0bb-Ano} $\Gamma$ is word hyperbolic and the natural inclusion
  \[ \Gamma \longhookrightarrow G \times G = \Aut_{\KK}(b) \times \Aut_{\KK}(b) \longhookrightarrow \Aut_{\KK}(b \oplus b) \]
  is $Q_0(b\oplus b)$-Anosov;
  \item \label{item:Q0b-b-Ano} $\Gamma$ is word hyperbolic and the natural inclusion
  \[ \Gamma \longhookrightarrow G \times G = \Aut_{\KK}(b) \times \Aut_{\KK}(-b) \longhookrightarrow \Aut_{\KK}(b \oplus (-b)) \]
  is $Q_0(b\oplus (-b))$-Anosov.
\end{enumerate}
If \eqref{item:Gamma-j-rho-Ano}, \eqref{item:Q0bb-Ano}, or \eqref{item:Q0b-b-Ano} holds, then \eqref{item:proper-GxG-rk1} and \eqref{item:sharp-GxG-rk1} of Theorem~\ref{thm:proper-GxG-rk1} hold.
The converse is true if and only if $G$ has real rank $1$. 
\end{theorem}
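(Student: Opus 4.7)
The strategy is to establish the equivalence $(3)\Leftrightarrow(4)\Leftrightarrow(5)$ via explicit computation of the Cartan and Lyapunov projections under the block-diagonal embeddings $G\times G\hookrightarrow\widetilde{G}_\epsilon:=\Aut_\KK(b\oplus\epsilon b)$ for $\epsilon=\pm1$, to deduce $(1),(2)$ from $(3)$ via the Benoist--Kobayashi properness criterion, and to treat the rank-one converse by reduction to \cite{Gueritaud_Kassel,Kassel_corank1}. The technical core is an identity: interpreting $\langle\alpha_0,\mu(\cdot)\rangle$ on $\widetilde{G}_\epsilon$ as $\log s_1-\log s_2$, the difference of the top two singular values with respect to a $K$-invariant positive definite form compatible with $b\oplus\epsilon b$, and assuming without loss of generality $s_1(g_L)\geq s_1(g_R)$, one has
\[ \langle\alpha_0,\mu(g_L,g_R)\rangle \ =\ \min\bigl(\langle\alpha_0^G,\mu(g_L)\rangle,\ \log s_1(g_L)-\log s_1(g_R)\bigr), \]
and the same identity for the Lyapunov projection $\lambda$. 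This follows from the fact that the singular-value multiset of a block-diagonal matrix is the union of those of its blocks.

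For $(3)\Rightarrow(4),(5)$: Theorem~\ref{thm:char_ano}\eqref{item:lin-away-from-walls} converts $\rho_L$ being $Q_0(b)$-Anosov into a linear lower bound on the first argument of the $\min$, and uniform $Q_0$-dominance provides one on the second, so the minimum grows linearly in $\ellGamma{\gamma}$. Boundary maps for $\widetilde{G}_\epsilon$ are then produced by post-composing $\xi_L^\pm$ with the natural inclusion of $b$-isotropic lines into $(b\oplus\epsilon b)$-isotropic lines via $v\mapsto(v,0)$; continuity, transversality, and dynamics-preservation all pass through because dominance forces the top eigenline of $\rho(\gamma)$ to lie genuinely in $V\oplus\{0\}$ for every hyperbolic $\gamma$, so Theorem~\ref{thm:char_ano}\eqref{item:away-from-walls} gives the Anosov property in $\widetilde{G}_\epsilon$.

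For the harder direction $(4),(5)\Rightarrow(3)$: after passing to the semisimplification via Proposition~\ref{prop:Ano_iff_ssAno}, the boundary map $\xi^+:\partial_\infty\Gamma_0\to \widetilde{G}_\epsilon/Q_0$ sends each attracting fixed point $\eta_\gamma^+$ to the unique top eigenline of the block-diagonal proximal element $\rho(\gamma)$, which must lie in $V\oplus\{0\}$ or $\{0\}\oplus V$. Continuity of $\xi^+$ and density of $\{\eta_\gamma^+\}$ in $\partial_\infty\Gamma_0$ confine $\xi^+(\partial_\infty\Gamma_0)$ to the closed union of these two subsets, and minimality of the $\Gamma_0$-action on the boundary forces the image into a single factor, say $V\oplus\{0\}$ after relabelling. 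Projecting yields boundary maps for $\rho_L$ satisfying the hypotheses of Theorem~\ref{thm:char_ano}\eqref{item:away-from-walls}, so $\rho_L$ is $Q_0(b)$-Anosov; uniform $Q_0$-dominance of $\rho_L$ over $\rho_R$ is then read off from the Lyapunov analogue of the displayed identity combined with Theorem~\ref{thm:char_ano_lambda_intro}\eqref{item:lin-away-from-walls-lambda}.

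For $(3)\Rightarrow(1),(2)$, uniform dominance forces $\|\mu(\rho_L(\gamma))-\mu(\rho_R(\gamma))\|$ to grow linearly in $\ellGamma{\gamma}$, so $\mu_{G\times G}(\rho(\gamma))$ drifts linearly away from the diagonal $\mu_{G\times G}(\Diag(G))$ and the Benoist--Kobayashi criterion yields sharpness; the quasi-isometric embedding is inherited from $\rho_L$. For the rank-one converse, a one-dimensional Cartan subspace together with properness and the quasi-isometric embedding condition forces $(3)$ directly by elementary inequalities on $\mu$, whereas in higher rank one constructs counter-examples by taking $(\rho_L,\rho_R)$ with each factor Anosov into a different rank-one subgroup of $G$, producing sharp proper actions with neither factor dominating the other. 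The chief obstacle throughout is the sign-consistency step in the converse direction: showing that a \emph{single} factor is dominant for every hyperbolic $\gamma$ hinges crucially on proximality of $\rho(\gamma)$, on minimality of the $\Gamma_0$-action on $\partial_\infty\Gamma_0$, and on the reduction to semisimple $\rho$.
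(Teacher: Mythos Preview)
Your overall strategy matches the paper's (which derives Theorem~\ref{thm:Anosov=proper} from the more general Theorem~\ref{thm:complete-proper-GxG} with $\tau=\mathrm{id}$), and your key identity is exactly Lemma~\ref{fact:tau-O(b)}.\eqref{item:alpha-0-b+b}. The main gap is a recurring conflation of $\lambda$ and~$\mu$: uniform $Q_0$-domination is by definition a condition on the Lyapunov projection~$\lambda$, yet in $(3)\Rightarrow(4),(5)$ you invoke it to bound the second argument of your displayed $\min$ for~$\mu$, and in $(3)\Rightarrow(1),(2)$ you claim it forces linear growth of $\|\mu(\rho_L(\gamma))-\mu(\rho_R(\gamma))\|$. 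Neither follows directly, and your per-element normalization ``$s_1(g_L)\geq s_1(g_R)$'' is guaranteed by $\lambda$-dominance only for~$\lambda$, not $\gamma$-by-$\gamma$ for~$\mu$. The paper avoids this by using the $\lambda$-characterization (Theorem~\ref{thm:char_ano_lambda_intro}) together with the $\lambda$-version of the identity to prove $(3)\Rightarrow(4)$, and only \emph{then} applying Theorem~\ref{thm:char_ano}.\eqref{item:lin-away-from-walls} to the now-established Anosov representation $\rho_L\oplus\rho_R$ in $\Aut_{\KK}(b\oplus b')$ to extract the $\mu$-gap needed for sharpness via Benoist--Kobayashi.

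Two smaller points. The passage to the semisimplification in $(4),(5)\Rightarrow(3)$ is harmless but unnecessary: block-diagonality alone forces the attracting eigenline of each proximal $\rho(\gamma)$ into $\PP_{\KK}(V\oplus\{0\})\cup\PP_{\KK}(\{0\}\oplus V)$, and your minimality argument (which is Proposition~\ref{prop:3<->4partly} in the paper) settles the rest without it. For the rank-one converse, ``elementary inequalities on~$\mu$'' is too quick: one needs the result of \cite{Kassel_corank1} (Theorem~\ref{thm:kas-cork1} here) to pin down a single dominant factor and obtain \emph{strict} $\lambda$-dominance; the paper then passes through~(4) --- boundary maps induced from the convex-cocompact~$\rho_L$ plus Theorem~\ref{thm:char_ano}.\eqref{item:away-from-walls} --- rather than arguing uniform dominance directly, since strict dominance alone does not imply uniform dominance (cf.\ Example~\ref{ex:nice-maps-not-Ano}).
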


We refer to Remark~\ref{rem:GxG-false-higher-rk} for an explanation of why \eqref{item:sharp-GxG-rk1} does not imply \eqref{item:Q0bb-Ano} when $G$ has higher real rank.

Even though $\Aut_{\KK}(b) = \Aut_{\KK}(-b)$, the embeddings in \eqref{item:Q0bb-Ano} and \eqref{item:Q0b-b-Ano} are in general quite different.
For instance, for $\Aut_{\KK}(b) = \OO(1,d)$,  these embeddings are $\Gamma \hookrightarrow \OO(1,d) \times \OO(1,d) \hookrightarrow \OO(2,2d)$ and $\Gamma \hookrightarrow \OO(1,d) \times \OO(1,d) \simeq \OO(1,d) \times \OO(d,1) \hookrightarrow \OO(d+1, d+1)$.

Here are two consequences of Theorems~\ref{thm:proper-GxG-rk1} and~\ref{thm:Anosov=proper} (and their refinement, Theorem~\ref{thm:complete-proper-GxG}); the second one uses the fact that being Anosov is an open property.

\begin{corollary}\label{cor:sharpness-conj-GxG-rk1}
Let $G$ be a semisimple Lie group of real rank~$1$ and $\Gamma$ a discrete subgroup of $G\times G$.
If the action of $\Gamma$ on $(G\times G) / \Diag(G)$ is properly discontinuous and cocompact, then it is in fact sharp. 
\end{corollary}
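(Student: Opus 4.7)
The plan is to apply the equivalence $\eqref{item:proper-GxG-rk1} \Leftrightarrow \eqref{item:sharp-GxG-rk1}$ of Theorem~\ref{thm:proper-GxG-rk1}: since proper discontinuity is assumed, it suffices to show that the inclusion $\Gamma \hookrightarrow G\times G$ is a quasi-isometric embedding.

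I would first invoke the structure result of \cite{Kassel_corank1} (Theorem~\ref{thm:kas-cork1}): up to switching the two factors of $G\times G$, we may write $\Gamma = \{(\rho_L(\gamma_0), \rho_R(\gamma_0)) : \gamma_0 \in \Gamma_0\}$, with $\rho_L : \Gamma_0 \to G$ injective and of discrete image. The $\Gamma$-action on $X := (G\times G)/\Diag(G) \cong G$ then reads $\gamma \cdot g = \rho_L(\gamma_0)\, g\, \rho_R(\gamma_0)^{-1}$, and the compact quotient $\Gamma \backslash X$ identifies with the double coset space $\rho_L(\Gamma_0) \backslash G / \rho_R(\Gamma_0)$. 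The next step, which I expect to be the crux of the argument, is to deduce from the combination of cocompactness and proper discontinuity (the latter forcing, via the Benoist--Kobayashi criterion, that $\rho_L$ strictly dominates $\rho_R$) that $\rho_L(\Gamma_0)$ is already a uniform lattice in $G$. The rank-one hypothesis is essential here: since $\aaa \simeq \RR$, the strict domination $\mu(\rho_R(\gamma_0))/\mu(\rho_L(\gamma_0)) < 1$ confines $\rho_R(\Gamma_0)$ to ``sub-maximal directions'' in $G$, leaving no room for $\rho_R(\Gamma_0)$ to provide additional cocompactness transverse to $\rho_L(\Gamma_0)$ in $\rho_L(\Gamma_0)\backslash G$.

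Once $\rho_L(\Gamma_0)$ is known to be a uniform lattice in a real rank-one semisimple Lie group, $\Gamma_0$ (and hence $\Gamma$) is word hyperbolic and $\rho_L : \Gamma_0 \to G$ is a quasi-isometry; in particular $\rho_L$ is a quasi-isometric embedding. The second factor $\rho_R : \Gamma_0 \to G$, being a homomorphism from a finitely generated group to a Lie group with left-invariant Riemannian metric, is automatically Lipschitz. Combining these two facts, the diagonal embedding $(\rho_L, \rho_R) : \Gamma_0 \to G\times G$ is a quasi-isometric embedding, so $\Gamma \hookrightarrow G\times G$ is too, and Theorem~\ref{thm:proper-GxG-rk1} completes the proof.

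The main obstacle is the claim that cocompactness of $\Gamma$ on $X$ forces $\rho_L(\Gamma_0)$ to be a uniform lattice in $G$; in higher real rank this step fails (the ``missing'' cocompactness can in principle be provided by $\rho_R(\Gamma_0)$ along directions transverse to $\mu(\Diag(G))$), which is precisely why Corollary~\ref{cor:sharpness-conj-GxG-rk1} is restricted to rank one. A rigorous justification will likely come from the refined statement Theorem~\ref{thm:complete-proper-GxG} announced in the preceding discussion; alternatively, one could bypass the structure theorem and apply the Svarc--Milnor lemma to $X$ equipped with a $\Gamma$-invariant proper Riemannian metric obtained by averaging, then transfer the resulting quasi-isometry to $G\times G$ via the triangle inequality along a finite generating set (whose $d_{G\times G}$-lengths are bounded), producing the required QI embedding directly.
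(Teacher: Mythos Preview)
Your overall strategy matches the paper's: invoke Theorem~\ref{thm:kas-cork1} to get one projection $\rho_L$ with finite kernel and discrete image, show that $\rho_L(\Gamma_0)$ is a uniform lattice in~$G$, deduce that $\rho_L$ is a quasi-isometric embedding, and feed this into Theorem~\ref{thm:proper-GxG-rk1} (equivalently, into the implication $\eqref{item:p8}\Rightarrow\eqref{item:p7}$ of Theorem~\ref{thm:complete-proper-GxG}). The gap is exactly the step you flagged as ``the main obstacle'': you do not prove that $\rho_L(\Gamma_0)$ is a uniform lattice, and neither of your suggested routes closes it. Appealing to Theorem~\ref{thm:complete-proper-GxG} is circular, since its rank-one equivalences $\eqref{item:p9}\Rightarrow\eqref{item:p8}$ are established precisely via Theorem~\ref{thm:kas-cork1} and the same lattice claim. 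The \v{S}varc--Milnor alternative gives a quasi-isometry between $\Gamma$ and $X$ for some $\Gamma$-invariant Riemannian metric on~$X$, but that metric has no a~priori comparison with the left-invariant metric on $G\cong X$ (right multiplication by $\rho_R(\gamma)^{-1}$ is not an isometry), so there is no direct transfer to a quasi-isometric embedding $\Gamma\hookrightarrow G\times G$.

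The paper closes the gap with a clean cohomological dimension argument. First, cocompactness and the Milnor--\v{S}varc lemma give that $\Gamma$ is finitely generated; then by Selberg one passes to a torsion-free finite-index subgroup. Two standard facts are then used (see the beginning of Section~7.4): a torsion-free discrete subgroup of $G\times G$ acting properly discontinuously on $(G\times G)/\Diag(G)$ acts cocompactly if and only if its cohomological dimension equals $\dim_{\RR}(G/K)$, and a torsion-free discrete subgroup of~$G$ is a uniform lattice if and only if its cohomological dimension equals $\dim_{\RR}(G/K)$. Since $\rho_L$ is injective on the torsion-free subgroup, $\rho_L(\Gamma_0)$ inherits the cohomological dimension $\dim_{\RR}(G/K)$ from~$\Gamma$, hence is a uniform lattice in~$G$, hence a quasi-isometric embedding. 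Note that the rank-one hypothesis enters not at this lattice step (the cohomological dimension matching works regardless of rank) but earlier, via Theorem~\ref{thm:kas-cork1}, and later, via the rank-one equivalence $\eqref{item:p8}\Leftrightarrow\eqref{item:p7}$.
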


\begin{corollary}\label{cor:proper-open-GxG}
Let $G$ be a semisimple Lie group of real rank~$1$ and $\Gamma$ a finitely generated quasi-isometrically embedded subgroup of $G\times G$.
If $\Gamma$ acts properly discontinuously on $(G\times G)/\Diag(G)$, then there is a neighborhood $\mathcal{U}\subset\Hom(\Gamma,G\times G)$ of the natural inclusion such that any $\rho\in\mathcal{U}$ is a quasi-isometric embedding from $\Gamma$ to $G\times G$, and $\Gamma$ acts properly discontinuously on $(G\times G)/\Diag(G)$ via~$\rho$. 
If moreover the action of $\Gamma$ on $(G\times G)/\Diag(G)$ is cocompact, then $\Gamma$ also acts cocompactly on $(G\times G)/\Diag(G)$ via~$\rho$.
\end{corollary}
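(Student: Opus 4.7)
The plan is to reduce the statement to the openness of the Anosov property. By Theorem~\ref{thm:proper-GxG-rk1} applied to the hypothesis, $\Gamma$ is word hyperbolic. I would then use Theorem~\ref{thm:Anosov=proper} (or the general refinement Theorem~\ref{thm:complete-proper-GxG}, needed to cover the exceptional real-rank-one group $F_{4(-20)}$) to promote the natural inclusion $\iota_0 : \Gamma \hookrightarrow G \times G$ to a $Q$-Anosov representation $j_0 : \Gamma \hookrightarrow \widetilde{G}$ into a classical ambient group $\widetilde{G}$ (for example $\Aut_{\KK}(b \oplus b)$ in the classical cases), with respect to a suitable parabolic subgroup $Q < \widetilde G$.

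Since the $Q$-Anosov condition is open in $\Hom(\Gamma, \widetilde G)$ by \cite{Labourie_anosov,Guichard_Wienhard_DoD}, there is an open neighborhood $\mathcal V$ of $j_0$ in $\Hom(\Gamma, \widetilde G)$ consisting of $Q$-Anosov representations. Pulling back $\mathcal V$ through the continuous post-composition map $\Hom(\Gamma, G\times G) \to \Hom(\Gamma,\widetilde G)$, $\rho\mapsto\iota\circ\rho$ (where $\iota : G\times G \hookrightarrow \widetilde G$ denotes the ambient embedding), yields an open neighborhood $\mathcal U$ of $\iota_0$ in $\Hom(\Gamma, G\times G)$. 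For any $\rho \in \mathcal U$, the composition $\iota\circ\rho$ is $Q$-Anosov, hence a quasi-isometric embedding into $\widetilde G$ by the implication $\eqref{item:Ano}\Rightarrow\eqref{item:lin-away-from-walls}$ of Theorem~\ref{thm:char_ano}; since $\iota$ is Lipschitz, the quasi-isometric lower bound transfers, and $\rho$ is a quasi-isometric embedding from $\Gamma$ to $G\times G$. Applying the implication $\eqref{item:Q0bb-Ano} \Rightarrow \eqref{item:sharp-GxG-rk1}$ of Theorem~\ref{thm:Anosov=proper} to $\iota\circ\rho$, the group $\rho(\Gamma)$ acts sharply, and in particular properly discontinuously, on $(G\times G)/\Diag(G)$.

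To upgrade to cocompactness, I would invoke the Ehresmann--Thurston deformation principle. After passing to a torsion-free subgroup of finite index (by Selberg's lemma, which preserves all the relevant conclusions), the cocompact proper action endows $M := \Gamma \backslash (G\times G)/\Diag(G)$ with the structure of a closed $(G\times G, (G\times G)/\Diag(G))$-manifold whose holonomy is~$\iota_0$. After shrinking $\mathcal U$ if necessary, Ehresmann--Thurston produces, for each $\rho \in \mathcal U$, a nearby $(G\times G)$-structure on~$M$ with holonomy~$\rho$, whose developing map $\mathrm{dev}_\rho : \widetilde M \to (G\times G)/\Diag(G)$ is $C^0$-close to the original developing map (which is a diffeomorphism). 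Since $\rho(\Gamma)$ already acts properly discontinuously and freely on $(G\times G)/\Diag(G)$ by the first part, $\mathrm{dev}_\rho$ descends to a local diffeomorphism $\overline{\mathrm{dev}}_\rho : M \to \rho(\Gamma) \backslash (G\times G)/\Diag(G)$. Its image is open (being that of a local diffeomorphism) and closed (being the image of the compact manifold $M$), so by connectedness of $(G\times G)/\Diag(G) \cong G$, this image exhausts the target, which is therefore compact.

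The main technical obstacle is producing the ambient Anosov embedding uniformly in all rank-one semisimple Lie groups, in particular the exceptional~$F_{4(-20)}$, which is exactly the content of Theorem~\ref{thm:complete-proper-GxG}; once this is available, the openness of the Anosov condition and the classical Ehresmann--Thurston stability of compact $(G,X)$-structures handle the remainder of the argument.
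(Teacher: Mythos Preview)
Your argument for the first part (openness) is essentially the paper's own: both promote the inclusion to an Anosov representation into an ambient group via Theorem~\ref{thm:complete-proper-GxG} (the correct reference to cover the exceptional $F_{4(-20)}$, as you note), invoke the openness of the Anosov condition, and pull the quasi-isometric embedding and properness conclusions back through that equivalence.

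For cocompactness you take a different route. The paper passes to a torsion-free finite-index subgroup via Selberg, as you do, but then appeals to a cohomological criterion (recorded just before the proof, attributed to Kobayashi): a torsion-free discrete subgroup of $G\times G$ acting properly discontinuously on $(G\times G)/\Diag(G)$ does so cocompactly if and only if its cohomological dimension equals $\dim_{\RR}(G/K)$. Since each $\rho\in\mathcal{U}$ is injective (Anosov representations have finite kernel, and $\Gamma$ is torsion-free), $\rho(\Gamma)\cong\Gamma$ inherits the correct cohomological dimension from the original cocompact action, and cocompactness follows without any $(G,X)$-structure machinery. Your Ehresmann--Thurston argument is also correct in spirit, but has two small wrinkles you would need to address: the paper's conventions allow $G$ to be disconnected, so your appeal to connectedness of $(G\times G)/\Diag(G)\cong G$ must be replaced by using the $C^0$-closeness of the developing maps to ensure the image meets every component; and $G$ need not be simply connected, so the universal cover of $M=\Gamma\backslash G$ is $\widetilde{G}$ rather than $G$, which requires first factoring the developing map through $G$ before descending to $\rho(\Gamma)\backslash G$. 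The cohomological argument sidesteps both issues.
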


\begin{remark} \label{rem:complete-struct}
For $G$ semisimple of real rank~$1$, Corollary~\ref{cor:proper-open-GxG} together with \cite[Th.\,3]{Tholozan} implies that the space of \emph{complete} $(G\times G, (G\times G)/\Diag(G))$-structures on a compact manifold $M$ is a union of connec\-ted components of the space of $(G\times G,\linebreak (G\times G)/\Diag(G))$-structures on~$M$.
\end{remark}

\subsection*{Conventions}

In the whole paper, we assume the reductive group $G$ to be noncompact, equal to a finite union of connected components (for the real topology) of $\mathbf{G}(\RR)$ for some algebraic group~$\mathbf{G}$.
We set $\RRp:=[0,+\infty)$, as well as $\NN:=\ZZ\cap\RRp$ and $\NN^{\ast}:=\NN\smallsetminus\{ 0\}$.

\subsection*{Organization of the paper}

In Section~\ref{sec:preliminaries} we review some background material on word hyperbolic groups, the structure of reductive Lie groups, proximality, and Anosov representations, and establish some basic preliminary results.
In Section~\ref{sec:repr-semis-lie} we explain how one can always reduce to Anosov representations into $\GL(V)$.
In Section~\ref{sec:anos-repr-cart} we prove the equivalences $\eqref{item:Ano}\Leftrightarrow\eqref{item:away-from-walls}\Leftrightarrow\eqref{item:lin-away-from-walls}$ of Theorems \ref{thm:char_ano} and~\ref{thm:char_ano_lambda_intro} (characterizations of Anosov representations assuming the existence of boundary maps).
In Section~\ref{sec:boundary} we give a point-by-point construction of boundary maps (proving Theorem~\ref{thm:constr-xi}) and establish the equivalence $\eqref{item:Ano}\Leftrightarrow\eqref{item:cli}$ of Theorem~\ref{thm:char_ano}.
In Section~\ref{sec:gen-proper} we provide short proofs of Corollaries \ref{cor:Hitchin}, \ref{cor:max}, and~\ref{cor:standard-Ano}.
The link between Anosov representations and proper actions on group manifolds is established in Section~\ref{sec:proper}, where we prove Theorems \ref{thm:proper-GxG-rk1} and~\ref{thm:Anosov=proper} as well as Corollaries \ref{cor:sharpness-conj-GxG-rk1} and~\ref{cor:proper-open-GxG}.

\subsection*{Acknowledgements}

This work started during a visit of F.G.\ and F.K.\ to Princeton in April 2011, and continued through several meetings at the IHP (Paris), Princeton University, CIRM (Luminy), Universit\'e de Strasbourg, Universit\'e Lille~1, University of Maryland, Universit\"at Heidelberg, Caltech, and MSRI (Berkeley).
We are grateful to these institutions for their hospitality. 
We also thank the research network ``Geometric structures and representation varieties'' (GEAR), funded by the NSF under the grants DMS 1107452, 1107263, and 1107367, for providing several opportunities for us to meet.

We would like to thank Yves Benoist, Marc Burger, Dick Canary, Misha Kapovich, Fran\c{c}ois Labourie, Sara Maloni, and Joan Porti for interesting discussions, as well as an anonymous referee for many valuable comments.

\section{Preliminaries on Anosov representations and the structure of reductive Lie groups}
\label{sec:preliminaries}

In this section we set up notation and recall some definitions and useful facts about word hyperbolic groups~$\Gamma$, real reductive Lie groups~$G$, and Anosov representations $\rho : \Gamma\to G$.

\subsection{Word hyperbolic groups and their boundary at infinity}
\label{subsec:hyperb-groups-their}

Recall that a finitely generated group $\Gamma$, with finite generating set $S\subset\Gamma$, is said to be \emph{word hyperbolic} if its Cayley graph $\mathcal{C}(\Gamma, S)$, equipped with the natural graph metric, is Gromov hyperbolic.
The induced metric on~\(\Gamma\) is the one coming from the word~length~\(|\cdot\nolinebreak|_\Gamma\).

\subsubsection{The boundary at infinity} \label{subsubsec:boundary-Gamma}
 
Let $c,C>0$.
A map \(f: (X,d) \to (X', d')\) between metric spaces is a \emph{\((c,C)\)-quasi-isometric embedding} if for all \(x,y\in X\), 
\[c^{-1} d(x,y) -C \leq d'(f(x), f(y)) \leq c \, d(x,y) + C.\]
 It is a \emph{quasi-isometry} if furthermore there exists \(R\geq 0\) such that for any \(x'\in X'\) we can find \(x\in X\) with \(d'(x', f(x))\leq R\).
When \(X=\NN\), a \((c,C)\)-quasi-isometric embedding is called a \emph{quasi-geodesic ray}. 
When \(X'\) is the Cayley graph of~\(\Gamma\), a sequence $(\gamma_n) \in \Gamma^{\NN}$ defines a \emph{$(c,C)$-quasi-geodesic ray} in the Cayley graph of~$\Gamma$ if for all $n,m\in\NN$,
\[ c^{-1} |n-m| -C \leq \ellGamma{\gamma_{n}^{-1} \gamma_{m}} \leq c \, |n-m| + C. \]

If $X$ and~$X'$ are geodesic metric spaces and if $f: X \to X'$ is a quasi-isometry, then $X$ is Gromov hyperbolic if and only if $X'$ is, and in this case $f$ induces a homeomorphism $\partial_{\infty} f : \partial_{\infty} X \to \partial_{\infty} X'$ between the visual boundaries \cite[Ch.\,III, Th.\,2.2]{Coornaert_Delzant_Papadopoulos}.
This fundamental fact has the following consequences:
\begin{enumerate}[label=(\roman*),ref=\roman*]
  \item the word hyperbolicity of the group~$\Gamma$ does not depend on the choice of finite generating set~$S$;
  \item the boundary at infinity $\partial_{\infty} \Gamma = \partial_{\infty} \mathcal{C}(\Gamma,S)$ is well defined and $\Gamma$ acts on it by homeomorphisms.
\end{enumerate}

The word hyperbolic group $\Gamma$ acts on $\partial_{\infty} \Gamma$ as a \emph{uniform convergence group} (see \eg \cite{Bowditch_tophyp}), which means that it acts properly discontinuously and cocompactly on the set of triples of pairwise distinct elements of $\partial_{\infty} \Gamma$.
As a con\-sequence, it satisfies the following dynamical properties:
\begin{fact}\label{fact:dyn_at_infty} 
  \begin{enumerate}
  \item\label{item:4} For any sequence $(\gamma_n)\in \Gamma^\NN$
    going to infinity, there exist
    $\eta, \eta' \in \partial_{\infty} \Gamma$ (possibly equal) and a
    subsequence $(\gamma_{\phi(n)})_{n\in\NN}$ such that
    $\gamma_{\phi(n)} |_{\partial_{\infty} \Gamma \smallsetminus \{
      \eta'\}}$
    converges, in the compact-open topology, to the constant map with
    image~$\{\eta\}$.
  \item \label{item:1} For any $\gamma \in \Gamma$ of infinite order,
    there exist $\eta^+_{\gamma} \neq \eta^-_{\gamma}$ in
    $\partial_{\infty} \Gamma$ such that
    $\lim_{n\to +\infty} \gamma^n \cdot \eta = \eta^+_{\gamma}$ for
    all $\eta \neq \eta^-_{\gamma}$ and
    $\lim_{n\to +\infty} \gamma^{-n} \cdot \eta = \eta^-_{\gamma}$ for
    all $\eta \neq \eta^+_{\gamma} $.
  \item \label{item:3} The pairs $(\eta^{+}_{\gamma}, \eta^{-}_{\gamma})$ of attracting and repelling fixed points of elements $\gamma\in\Gamma$ of infinite order form a dense subset of $(\partial_{\infty} \Gamma \times \partial_{\infty} \Gamma) \smallsetminus \mathrm{Diag}(\partial_\infty\Gamma)$.
  \item \label{item:9} If $\Gamma$ is nonelementary (\ie if $\# \partial_{\infty} \Gamma >2$, \ie if $\Gamma$ is not virtually cyclic), then the action of $\Gamma$ on $\partial_{\infty}\Gamma$ is \emph{minimal} (\ie every nonempty $\Gamma$-invariant subset is dense).
  \end{enumerate}
\end{fact}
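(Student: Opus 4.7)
The strategy is to derive all four statements from a common package: (a) the identification of $\partial_{\infty}\Gamma$ with equivalence classes of sequences in~$\Gamma$ having large pairwise Gromov products, (b) the Morse lemma for quasi-geodesics in the $\delta$-hyperbolic Cayley graph, and (c) the basic observation that every infinite-order element of a hyperbolic group generates an undistorted cyclic subgroup. Together these encode the uniform convergence group property of $\Gamma$ acting on $\partial_{\infty}\Gamma$, of which \eqref{item:4}--\eqref{item:9} are standard consequences (see Bowditch \cite{Bowditch_tophyp}).

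For \eqref{item:4}, fix the basepoint $o = e$, and pass to a subsequence along which both $\gamma_n$ and $\gamma_n^{-1}$ converge in the compact space $\Gamma \cup \partial_{\infty}\Gamma$, say to $\eta$ and $\eta'$ in $\partial_{\infty}\Gamma$ respectively. For any $\xi \in \partial_{\infty}\Gamma \smallsetminus \{\eta'\}$ represented by a quasi-geodesic ray from~$o$, the Gromov product $(\gamma_n^{-1} \cdot o \mid \xi)_o$ stays bounded; a $\delta$-hyperbolicity estimate then forces $(\gamma_n \cdot \xi \mid \eta)_o \to +\infty$, and a standard equicontinuity argument upgrades this to uniform convergence on compact subsets of $\partial_{\infty}\Gamma \smallsetminus \{\eta'\}$. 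Statement \eqref{item:1} follows by applying \eqref{item:4} to $(\gamma^n)_{n \geq 0}$ and $(\gamma^{-n})_{n \geq 0}$; the nontrivial point is the distinctness $\eta^+_{\gamma} \neq \eta^-_{\gamma}$, which holds because $n \mapsto \gamma^n$ is a quasi-isometric embedding of~$\ZZ$ (by quasi-convexity of $\langle\gamma\rangle$), so its two ends limit to distinct boundary points.

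For \eqref{item:3}, I would use a ``north--south conjugation'' argument: given distinct $\xi, \xi' \in \partial_{\infty}\Gamma$, pick any $\gamma_0 \in \Gamma$ with fixed pair $(\eta^+, \eta^-)$ supplied by \eqref{item:1}, use \eqref{item:4} to find $h_n \in \Gamma$ with $h_n \cdot \eta^+ \to \xi$ and $h_n \cdot \eta^- \to \xi'$, and observe that for large enough integers $N_n$ the conjugates $h_n \gamma_0^{N_n} h_n^{-1}$ have attracting/repelling fixed pairs tending to $(\xi, \xi')$. Finally, \eqref{item:9} follows from \eqref{item:3} and \eqref{item:1}: for nonelementary~$\Gamma$, the orbit closure $\overline{\Gamma \cdot \xi}$ contains the attracting fixed point of any infinite-order $\gamma$ with $\eta^-_{\gamma} \neq \xi$, and the set of such attracting points is dense by \eqref{item:3}. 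The main technical obstacle is the distinctness $\eta^+_{\gamma} \neq \eta^-_{\gamma}$ in \eqref{item:1}: it excludes parabolic dynamics on $\partial_{\infty}\Gamma$ and crucially relies on the absence of distorted cyclic subgroups in a hyperbolic group, which is itself a consequence of the Morse lemma.
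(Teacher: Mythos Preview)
The paper does not prove this statement: it is labeled a \emph{Fact}, preceded by the remark that $\Gamma$ acts on $\partial_{\infty}\Gamma$ as a uniform convergence group (with a citation to Bowditch \cite{Bowditch_tophyp}), and items \eqref{item:4}--\eqref{item:9} are simply asserted as consequences of that property. Your proposal therefore supplies a proof where the paper offers only a reference, and your overall strategy --- deduce everything from the convergence group property, which in turn comes from $\delta$-hyperbolicity via Gromov products and the Morse lemma --- is the standard one and is correct in outline.

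One imprecision worth flagging: in your argument for \eqref{item:3} you write ``use \eqref{item:4} to find $h_n \in \Gamma$ with $h_n \cdot \eta^+ \to \xi$ and $h_n \cdot \eta^- \to \xi'$.'' Item~\eqref{item:4} is the \emph{convergence} property (sequences going to infinity collapse the boundary to a point off a single exception); it does not by itself let you move a prescribed pair $(\eta^+,\eta^-)$ arbitrarily close to a prescribed pair $(\xi,\xi')$. What you actually need there is the \emph{cocompactness} of the $\Gamma$-action on $(\partial_{\infty}\Gamma\times\partial_{\infty}\Gamma)\smallsetminus\mathrm{Diag}$, i.e.\ the ``uniform'' half of uniform convergence group. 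This is available --- it follows from the Morse lemma, since any bi-infinite geodesic with endpoints $(\xi,\xi')$ passes within bounded distance of a $\Gamma$-translate of the basepoint --- but it is not item~\eqref{item:4}. Once you invoke cocompactness, the conjugates $h_n\gamma_0 h_n^{-1}$ already have fixed pair exactly $(h_n\cdot\eta^+, h_n\cdot\eta^-)$, so the exponents $N_n$ are unnecessary. With that correction your sketch goes through, and there is no circularity between \eqref{item:3} and \eqref{item:9}.
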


\subsubsection{Word length, stable length, and translation length}
\label{subsubsec:word-length-stable}

Associated with the word length function $\ellGammaf : \Gamma \to \NN$ is the \emph{stable length} function $\ellinfty{\cdot} : \Gamma \to \RR$, given by
\begin{equation} \label{eqn:stablelength}
\ellinfty{\gamma} = \lim_{n\to +\infty}\, \frac{1}{n} \, \ellGamma {\gamma^n}
 \end{equation}
for all $\gamma\in\Gamma$.
It is easily seen to be invariant under conjugation: $\ellinfty{\beta\gamma \beta^{-1}} = \ellinfty{\gamma}$ for all $\beta,\gamma\in\Gamma$.
Moreover, it is related as follows to the translation length function on the Cayley graph
\begin{equation} \label{eqn:translationlength}
\gamma \longmapsto \trl{\gamma} = \inf_{ \beta \in \Gamma} \ellGamma {\beta \gamma \beta^{-1}}.
\end{equation}

\begin{proposition}[{\cite[Ch.\,X, Prop.\,6.4]{Coornaert_Delzant_Papadopoulos}}]\label{prop:word-length-stable}
  If the group $\Gamma$ is $\delta$-hyperbolic, then $\trl{\gamma} -16 \delta \leq \ellinfty{\gamma}
  \leq \trl{\gamma}$ for all $\gamma\in\Gamma$.
\end{proposition}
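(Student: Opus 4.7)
The plan is to prove the two inequalities separately: the upper bound $\ellinfty{\gamma} \leq \trl{\gamma}$ holds in any finitely generated group, while the lower bound $\trl{\gamma} \leq \ellinfty{\gamma} + 16\delta$ is where Gromov hyperbolicity enters.

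For the upper bound, I would first recall that the stable length is well defined: since $n \mapsto \ellGamma{\gamma^n}$ is subadditive by the triangle inequality $\ellGamma{\gamma^{n+m}} \leq \ellGamma{\gamma^n} + \ellGamma{\gamma^m}$, Fekete's lemma guarantees the limit exists and equals $\inf_n \ellGamma{\gamma^n}/n$. Next, I would check that $\ellinfty{\cdot}$ is invariant under conjugation, using the bound $\ellGamma{\beta\gamma^n\beta^{-1}} \leq \ellGamma{\gamma^n} + 2\ellGamma{\beta}$: dividing by $n$ and letting $n\to\infty$ yields $\ellinfty{\beta\gamma\beta^{-1}} \leq \ellinfty{\gamma}$, and the reverse inequality follows by symmetry. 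Since $\ellinfty{\gamma} \leq \ellGamma{\gamma}$ (a direct consequence of subadditivity), applying this to every conjugate gives $\ellinfty{\gamma} = \ellinfty{\beta\gamma\beta^{-1}} \leq \ellGamma{\beta\gamma\beta^{-1}}$ for every $\beta \in \Gamma$. Taking the infimum over $\beta$ completes the upper bound.

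For the lower bound, the strategy is to find, for each large $n$, an element $\beta_n \in \Gamma$ with $\ellGamma{\beta_n \gamma \beta_n^{-1}} \leq \ellGamma{\gamma^n}/n + 16\delta$; letting $n \to \infty$ then gives $\trl{\gamma} \leq \ellinfty{\gamma} + 16\delta$. To produce $\beta_n$ when $\gamma$ has infinite order, I would fix a geodesic $g_n$ in the Cayley graph from $e$ to $\gamma^n$ and consider its translate $\gamma \cdot g_n$, which is a geodesic from $\gamma$ to $\gamma^{n+1}$. The two bi-infinite extensions fellow-travel along the ``quasi-axis'' of $\gamma$, and on the large middle portion of $g_n$, $\delta$-thinness of geodesic quadrilaterals forces the nearest-point projection of $\gamma \cdot g_n$ onto $g_n$ to be a translation by approximately $\ellGamma{\gamma^n}/n$. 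Concretely, one subdivides $g_n$ into $n$ arcs of equal length and uses hyperbolic quadrilateral geometry (the four sides being $g_n$, $\gamma \cdot g_n$, and the two short edges of length $\ellGamma{\gamma}$ from $e$ to $\gamma$ and from $\gamma^n$ to $\gamma^{n+1}$) to show that some vertex $x_n$ on $g_n$ satisfies $d(x_n, \gamma \cdot x_n) \leq \ellGamma{\gamma^n}/n + 16\delta$. Setting $\beta_n = x_n^{-1}$ then gives the desired conjugate.

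The main obstacle is the quadrilateral-thinness estimate producing precisely the constant $16\delta$: one must carefully track how the four $\delta$-slim triangles composing the quadrilateral contribute, and handle the case $\gamma$ torsion separately, where $\ellinfty{\gamma} = 0$ and one must invoke the standard fact that torsion elements of a $\delta$-hyperbolic group have translation length bounded by a small multiple of~$\delta$ (obtained by applying the above argument to $\gamma^N$ with $\gamma^N$ close to $e$, or via a direct averaging argument on a bounded orbit). Since the statement is a classical result from Coornaert--Delzant--Papadopoulos, I would simply cite their Chapter~X for the numerical optimization and focus on presenting the above geometric picture.
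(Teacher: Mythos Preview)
The paper does not give its own proof of this proposition: it is stated with a direct citation to \cite[Ch.\,X, Prop.\,6.4]{Coornaert_Delzant_Papadopoulos} and no argument is supplied. So there is nothing in the paper to compare your proposal against; the authors simply quote the result as background.

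That said, your outline is a reasonable sketch of the standard argument. The upper bound via subadditivity and conjugation-invariance is correct and complete as written. For the lower bound your geometric picture (quasi-axis, thin quadrilateral, finding a point $x_n$ on a geodesic from $e$ to $\gamma^n$ that is moved a short distance by $\gamma$) is the right idea and is indeed what Coornaert--Delzant--Papadopoulos do, though you correctly flag that extracting the precise constant $16\delta$ requires care. Since the paper itself defers entirely to the reference, your final suggestion --- present the geometric picture and cite CDP for the numerical details --- is exactly what is appropriate here.
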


\subsubsection{The flow space}
\label{subsubsec:flow-space-an}

An important object for the definition of an Anosov representation below and for some proofs in this paper is the \emph{flow space} of the word hyperbolic group~$\Gamma$.
It is a proper metric space $\mathcal{G}_\Gamma$ with the following properties:
\begin{enumerate}
  \item $\mathcal{G}_\Gamma$ is Gromov hyperbolic.
  \item $\mathcal{G}_\Gamma$ is equipped with a properly discontinuous and cocompact action of~$\Gamma$ by isometries.
  In particular, any orbit map $\gamma \mapsto \gamma \cdot v$ from $\Gamma$ to $\mathcal{G}_\Gamma$ is a quasi-isometry, and $\partial_{\infty} \Gamma$ is equivariantly homeomorphic to $\partial_{\infty} \mathcal{G}_\Gamma$.
\item\label{item:10} $\mathcal{G}_\Gamma$ is equipped with a flow $\{\varphi_t\}_{t\in \RR}$ (\ie a continuous $\RR$-action) which commutes with the $\Gamma$-action and for which there exist $c, C >0$ such that any orbit $\RR \to \mathcal{G}_\Gamma$ of the flow is a $(c,C)$-quasi-isometric embedding.
  This implies the existence of two continuous maps
  \begin{align*}
    \varphi_{\pm \infty} :  \mathcal{G}_\Gamma & \longrightarrow \hspace{0.6cm} \partial_{\infty} \Gamma \\
     v  & \longmapsto  \lim_{t\to\pm\infty} \varphi_t\cdot v
  \end{align*}
  associating to $v \in \mathcal{G}_\Gamma$ the endpoints of its orbit.
\item $\mathcal{G}_\Gamma$ is equipped with an isometric $\ZZ/2\ZZ$-action commuting with
  $\Gamma$ and anticommuting with $\RR$.
\item The natural map
    \[(\varphi_{+\infty}, \varphi_{-\infty}) :\ \RR\backslash\mathcal{G}_\Gamma \longrightarrow (\partial_{\infty} \Gamma \times \partial_{\infty} \Gamma) \smallsetminus \mathrm{Diag}(\partial_\infty\Gamma) \]
  is a homeomorphism.
\end{enumerate}
The flow space was constructed by Gromov \cite[Th.\,8.3.C]{Gromov_hyp}, and more details were provided by Champetier \cite[\S\,4]{Champetier}.
Mineyev \cite{Mineyev_flow} introduced a different construction of the flow
space of a hyperbolic graph with bounded valency (not necessarily coming with a group action).
It is based on the existence of a hyperbolic metric $\hat{d}$ on the graph satisfying some subtle properties (see \cite[Th.\,26]{Mineyev_flow} and \cite[Th.\,17]{MineyevYu}): when applied to a Cayley graph of $\Gamma$ these yield a  space $\mathcal{G}_\Gamma$ as above.
In Mineyev's version the $\RR$-orbits are geodesics and not only quasi-geodesics.
There is also a uniqueness statement for the flow space $\mathcal{G}_\Gamma$ as a $\Gamma \times ( \RR \rtimes \ZZ/2\ZZ)$-space up to quasi-isometry and up to reparameterization of the $\RR$-orbits, but we shall not need~it.

\begin{remark}\label{rem:flow-space-dense}
It follows from Fact~\ref{fact:dyn_at_infty}.\eqref{item:3} that the union of the periodic geodesics of the flow $\{\varphi_t\}_{t\in\RR}$ is dense in~$\mathcal{G}_{\Gamma}$.
\end{remark}

If \(\Gamma_1\to \Gamma_2\) is a homomorphism with finite kernel and finite-index image and if \(\Gamma_2\) is word hyperbolic with flow space \(\mathcal{G}_{\Gamma_2}\), then \(\Gamma_1\) is finitely generated and word hyperbolic and a flow space for \(\Gamma_1\) is \(\mathcal{G}_{\Gamma_2}\) with the action of \(\Gamma_1\) induced by the homomorphism \(\Gamma_1\to \Gamma_2\).

\subsubsection{Negatively curved Riemannian manifolds}
\label{subsubsec:negat-curv-riem}

For a large class of word hyperbolic groups~$\Gamma$ (including the fundamental groups of closed negatively-curved Riemannian manifolds), the flow space $\mathcal{G}_{\Gamma}$ has a simple geometric interpretation:

\begin{fact}\label{fact:flow}
  Let $X$ be a simply connected Riemannian manifold with sectional curvature bounded above by $-a^2$, for some $a \neq 0$, and let $\rho:\Gamma \to \Isom(X)$ be a homomorphism with finite kernel and convex cocompact image.
  Then \(\Gamma\) is finitely generated, word hyperbolic, and a flow space of~$\Gamma$ is given by
  \begin{align*}
    \mathcal{G}_\Gamma & = \{ v \in T^1(X) \mid (\varphi_{+\infty},\varphi_{-\infty})(v) \in \Lambda_{\rho(\Gamma)} \times \Lambda_{\rho(\Gamma)} \}\\
    & = \{v \in T^1(X) \mid \forall t \in \RR, \ \pi(\varphi_t\cdot v) \in \mathcal{C}_{\rho(\Gamma)}\}
  \end{align*}
  with its natural $\Gamma \times (\RR \rtimes \ZZ / 2 \ZZ)$-action.
\end{fact}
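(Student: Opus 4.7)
The plan is to verify the five properties of a flow space listed in Section~\ref{subsubsec:flow-space-an} for the proposed $\mathcal{G}_{\Gamma}$. First I would reduce to $\rho(\Gamma)$: since $\rho$ has finite kernel and finite-index image (after replacing $\rho(\Gamma)$ with its image), the observation at the end of Section~\ref{subsubsec:flow-space-an} allows us to work with $\rho(\Gamma)$ and pull the action back through~$\rho$. Taking $\mathcal{C}_{\rho(\Gamma)}$ to be the convex hull of $\Lambda_{\rho(\Gamma)}$ in~$X$, I would check that the two descriptions of $\mathcal{G}_{\Gamma}$ coincide: a biinfinite geodesic with both endpoints in $\Lambda_{\rho(\Gamma)}$ is contained in $\mathcal{C}_{\rho(\Gamma)}$ by convexity, and conversely any complete geodesic lying in $\mathcal{C}_{\rho(\Gamma)}$ has its two boundary endpoints in $\overline{\mathcal{C}_{\rho(\Gamma)}} \cap \partial_{\infty} X = \Lambda_{\rho(\Gamma)}$, by definition of the convex hull in a CAT$(-a^2)$ space.

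Next I would establish the hyperbolicity and finite generation of~$\Gamma$. Since $X$ is CAT$(-a^2)$, it is Gromov hyperbolic, and so is its closed convex subset $\mathcal{C}_{\rho(\Gamma)}$. By the Milnor--\v{S}varc lemma applied to the proper cocompact isometric action of $\rho(\Gamma)$ on $\mathcal{C}_{\rho(\Gamma)}$, the group $\rho(\Gamma)$ is finitely generated and quasi-isometric to $\mathcal{C}_{\rho(\Gamma)}$, hence word hyperbolic; the same follows for~$\Gamma$, and the orbit map induces a $\Gamma$-equivariant homeomorphism $\partial_{\infty}\Gamma \simeq \partial_{\infty} \mathcal{C}_{\rho(\Gamma)} \simeq \Lambda_{\rho(\Gamma)}$. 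The footpoint projection $\pi : \mathcal{G}_{\Gamma} \to \mathcal{C}_{\rho(\Gamma)}$ has fibers contained in unit spheres of $T^1X$, hence of uniformly bounded diameter, so $\pi$ is a $\Gamma$-equivariant quasi-isometry. Properties (i) (Gromov hyperbolicity) and (ii) (proper cocompact $\Gamma$-action, with $\partial_{\infty}\mathcal{G}_{\Gamma} \simeq \partial_{\infty}\Gamma$) then follow immediately; the action is properly discontinuous because $\rho$ has finite kernel and $\rho(\Gamma)$ acts properly on~$X$.

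Property (iii) is handled by the standard geodesic flow on $T^1 X$: it restricts to $\mathcal{G}_{\Gamma}$ because the endpoint maps $\varphi_{\pm\infty}$ are flow-invariant, it commutes with the $\Gamma$-action because $\Gamma$ acts by isometries, and each orbit is an isometric (in particular $(1,0)$-quasi-isometric) embedding of~$\RR$ since geodesics in~$X$ are parametrized by arclength. Property (iv) is realized by the antipodal map $v \mapsto -v$ on $T^1X$, which preserves $\mathcal{G}_{\Gamma}$, commutes with the $\Gamma$-action, and swaps $\varphi_{+\infty}$ and $\varphi_{-\infty}$. For property (v), the map
\[ (\varphi_{+\infty}, \varphi_{-\infty}) : \ \RR \backslash \mathcal{G}_{\Gamma} \longrightarrow (\Lambda_{\rho(\Gamma)} \times \Lambda_{\rho(\Gamma)}) \smallsetminus \mathrm{Diag} \]
is a continuous bijection by unique geodesicity in the CAT$(-a^2)$ space~$X$; the inverse continuity follows from continuous dependence of geodesics on their ideal endpoints, combined with $\partial_{\infty}\Gamma \simeq \Lambda_{\rho(\Gamma)}$.

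The main obstacle I expect is the careful verification of property (v): continuity of the inverse map relies on the standard but nontrivial fact that in a CAT$(-a^2)$ space the biinfinite geodesic joining two distinct points of the visual boundary depends continuously on the pair of endpoints, together with a mild properness check using compactness of fibers of~$\pi$. All other steps reduce to standard consequences of convex cocompactness and of the Milnor--\v{S}varc lemma.
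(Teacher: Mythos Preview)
The paper does not give a proof of this statement: it is presented as a \emph{Fact} and left to the reader. Your outline is a correct verification of the five axioms of Section~\ref{subsubsec:flow-space-an}, and is essentially the standard argument one would give.

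Two small points worth tightening. First, you should specify the metric on $\mathcal{G}_\Gamma$ (the Sasaki metric on $T^1X$ is the natural choice); with this metric $\mathcal{G}_\Gamma$ is a closed subset of a complete Riemannian manifold, hence proper, and the footpoint projection is $1$-Lipschitz. Second, for the quasi-isometry $\pi:\mathcal{G}_\Gamma\to\mathcal{C}_{\rho(\Gamma)}$ you need coarse surjectivity: the image of~$\pi$ is the union of geodesics with both endpoints in $\Lambda_{\rho(\Gamma)}$, which in a $\delta$-hyperbolic space is only at bounded Hausdorff distance from the convex hull, not equal to it in general. This is enough for your purposes (and in any case cocompactness of the $\Gamma$-action on $\mathcal{G}_\Gamma$ follows directly from cocompactness on $\mathcal{C}_{\rho(\Gamma)}$ and compactness of the fibers of~$\pi$, without needing surjectivity). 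Neither point is a genuine gap.
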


Here $\Isom(X)$ is the group of isometries of~$X$ and $\pi : T^1(X)\to X$ the natural projection.
We denote by $\Lambda_{\rho(\Gamma)}$ the limit set of $\rho(\Gamma)$ in~$\partial_{\infty} X$, which is by definition the closure in~$\partial_{\infty} X$ of any $\Gamma$-orbit in~$X$.
By convex cocompact we mean that $\Gamma$ acts properly discontinuously and cocompactly, via~$\rho$, on the convex hull $\mathcal{C}_{\rho(\Gamma)}\subset X$ of $\Lambda_{\rho(\Gamma)}$.
In this case $\partial_{\infty}\Gamma$ is homeomorphic to the limit set~$\Lambda_{\rho(\Gamma)}$.

  This example illustrates the nonuniqueness of the flow space as a metric space, since a given convex cocompact subgroup of $\Isom(X)$ can have nontrivial deformations.

\begin{remark}
In Corollaries \ref{cor:Hitchin}, \ref{cor:max}, \ref{cor:standard-Ano}, Theorem~\ref{thm:proper-GxG-rk1}, and Corollaries \ref{cor:sharpness-conj-GxG-rk1}, \ref{cor:proper-open-GxG}, the group~$\Gamma$ falls in the setting of Fact~\ref{fact:flow}.
\end{remark}

\subsubsection{Geodesics in $\Gamma$ and in its flow space}
\label{subsubsec:geodesics-gamma-flow}

We make the following definition.

\begin{definition}\label{defi:cli}
A sequence $(x_n)\in\RR^{\NN}$ is \emph{CLI} (\ie has \emph{coarsely linear increments}) if $n\mapsto x_n$ is a quasi-isometric embedding 
 of $\NN$ into $[a,+\infty)$ for some $a\in\RR$, \ie there exist $\kappa,\kappa',\kappa^{\prime \prime},\kappa^{\prime \prime \prime}>0$ such that for all $n,m\in\NN$,
\[ \kappa m - \kappa' \leq x_{n+m} - x_n \leq \kappa^{\prime \prime} m + \kappa^{\prime \prime\prime}.\]
In this case we say that $(x_n)_{n\in\NN}$ is \emph{$(\kappa, \kappa')$-lower CLI}.
\end{definition}

\begin{remark}
For a positive sequence $(x_n)_{n\in\NN}$, the property of being CLI is stronger than the property of growing linearly as $n\to +\infty$.
For instance, if \(f : \NN \to (\RRp)^2\) is a quasi-isometric embedding whose image zigzags vertically and horizontally between the lines \(y=x\) and \(y=2x\), then the composition of~$f$ with either of the two projections $(\RRp)^2\to\RRp$ grows linearly but is not CLI. 
\end{remark}

The following result will be used several times throughout the paper.

\begin{proposition}\label{prop:QIflowbis}
Let $\Gamma$ be a word hyperbolic group with flow space~$\mathcal{G}_\Gamma$.
For any $c,C>0$, there exist a compact subset $\mathcal{D}$ of~$\mathcal{G}_\Gamma$ and constants $\kappa, \kappa' >0$ with the following property: for any $(c,C)$-quasi-geodesic ray $(\gamma_n)_{n\in\NN}$ with $\gamma_0=e$ in the Cayley graph of~$\Gamma$, there exist $v \in \mathcal{D}$ and a $(\kappa, \kappa')$-lower CLI sequence $(t_n)\in\RR^{\NN}$ such that $\varphi_{t_n}\cdot v \in \gamma_n\cdot\mathcal{D}$ for all $n\in\NN$. 
\end{proposition}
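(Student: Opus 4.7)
The plan is to transfer the problem from the Cayley graph of~$\Gamma$ to the flow space~$\mathcal{G}_\Gamma$ via an orbit map, then shadow the quasi-geodesic ray by a flow orbit with the same forward endpoint at infinity, using stability of quasi-geodesics in the Gromov hyperbolic space~$\mathcal{G}_\Gamma$.

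First I would fix a basepoint $v_0\in\mathcal{G}_\Gamma$ and invoke the \v{S}varc--Milnor lemma for the cocompact, properly discontinuous, isometric $\Gamma$-action: the orbit map $\gamma\mapsto\gamma\cdot v_0$ is then a quasi-isometry, so $(\gamma_n\cdot v_0)_{n\in\NN}$ is a $(c',C')$-quasi-geodesic ray in~$\mathcal{G}_\Gamma$ with $c',C'$ depending only on~$c,C$; it converges to some boundary point $\eta^+\in\partial_\infty\mathcal{G}_\Gamma\cong\partial_\infty\Gamma$. Next I would locate a flow orbit with forward endpoint~$\eta^+$ close to~$v_0$, \emph{uniformly} in~$\eta^+$. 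Define $f:\partial_\infty\Gamma\to\RRp$ by $f(\eta)=\dist(v_0,\varphi_{+\infty}^{-1}(\eta))$; the fiber is nonempty by the homeomorphism $\RR\backslash\mathcal{G}_\Gamma\simeq\partial_\infty^2\Gamma$, and $f$ is continuous (using local continuous sections of the $\RR$-quotient $\mathcal{G}_\Gamma\to\RR\backslash\mathcal{G}_\Gamma$ near a preimage of~$\eta$). Since $\partial_\infty\Gamma$ is compact, $f$ is bounded by some $R_1>0$; pick $v\in\varphi_{+\infty}^{-1}(\eta^+)$ with $d(v_0,v)\leq R_1$.

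Then I would apply the Morse lemma, i.e.\ stability of quasi-geodesics in the $\delta$-hyperbolic space~$\mathcal{G}_\Gamma$, together with the synchronous fellow-traveling principle for two quasi-geodesic rays having a common endpoint at infinity and starting within bounded distance. This applies to the ray $(\gamma_n\cdot v_0)_{n\in\NN}$ and the forward half-orbit $(\varphi_t\cdot v)_{t\geq 0}$, the latter being a quasi-geodesic ray with constants depending only on~$\mathcal{G}_\Gamma$. It yields a constant $D>0$ and a sequence $(t_n)_{n\in\NN}$, with constants depending only on $c,C$ and~$\mathcal{G}_\Gamma$, such that $d(\varphi_{t_n}\cdot v,\gamma_n\cdot v_0)\leq D$ for all~$n$ and such that $n\mapsto t_n$ is a quasi-isometric embedding of $\NN$ into $[-a,+\infty)$ for some $a>0$, hence $(\kappa,\kappa')$-lower CLI. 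Setting $\mathcal{D}:=\overline{B(v_0,\max(R_1,D))}$ completes the proof: $v\in\mathcal{D}$, and the $\Gamma$-invariance of the metric yields $d(\gamma_n^{-1}\cdot\varphi_{t_n}\cdot v,v_0)\leq D$, so $\varphi_{t_n}\cdot v\in\gamma_n\cdot\mathcal{D}$.

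The main obstacle is extracting the signed lower CLI bound $t_{n+m}-t_n\geq\kappa m-\kappa'$ rather than the weaker absolute-value bound $|t_{n+m}-t_n|\geq\kappa m-\kappa'$ that follows directly from the Morse lemma. This requires showing that $n\mapsto t_n$ is coarsely monotone increasing, and can be done by comparing the Gromov products $(\gamma_n\cdot v_0\,|\,\eta^+)_{v_0}$, which grow linearly along the quasi-geodesic ray, with their counterparts $(\varphi_{t_n}\cdot v\,|\,\eta^+)_{v_0}$ along the flow orbit, which are coarsely proportional to~$t_n$.
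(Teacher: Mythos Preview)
Your approach is correct and follows the same overall strategy as the paper's proof---transport to~$\mathcal{G}_\Gamma$ via an orbit map, then shadow by a flow orbit using the Morse lemma---but differs in one technical detail worth noting. The paper first \emph{extends} the quasi-geodesic ray to a full bi-infinite $(c,C)$-quasi-geodesic $(\gamma_n)_{n\in\ZZ}$, and then takes for $(\varphi_t\cdot v)_{t\in\RR}$ the unique flow orbit with the \emph{same two endpoints} at infinity; the Morse lemma applied to the bi-infinite picture then places $v$ automatically within distance~$R$ of $\psi(e)$, with $R$ depending only on $(c,C)$ and the hyperbolicity constant. This sidesteps your compactness argument for~$f$, which is fine but requires the care you indicate (upper semicontinuity via local sections of $\mathcal{G}_\Gamma\to\RR\backslash\mathcal{G}_\Gamma\simeq\partial_\infty^2\Gamma$). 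Your explicit treatment of the \emph{signed} lower CLI bound via Gromov products is a point the paper's proof leaves implicit; in the paper's bi-infinite setup the orientation is fixed by matching both endpoints, which makes coarse monotonicity of $n\mapsto t_n$ slightly more immediate, but the underlying issue is the same and your Gromov-product argument handles it correctly.
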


\begin{proof}
Any $(c,C)$-quasi-geodesic ray $(\gamma_n)_{n\in\NN}$ with $\gamma_0=e$ can be extended to a full uniform quasi-geodesic $(\gamma_n)_{n\in\ZZ}$ in the Cayley graph of~$\Gamma$.
Let $\psi: \Gamma \rightarrow \mathcal{G}_\Gamma$ be an orbit map; it is a quasi-isometry.
By hyperbolicity, $(\psi(\gamma_n))_{n\in\ZZ}$ lies within uniformly bounded Hausdorff distance $R>0$ from the $\RR$-orbit in~$\mathcal{G}_\Gamma$ with the same endpoints at infinity.
Let us write this $\RR$-orbit as $(\varphi_t\cdot v)_{t\in\RR}$ where $v$ lies at distance $\leq R$ from $\psi(e)$.  
For any $n\in\NN$, the point $\psi(\gamma_n)\in\mathcal{G}_\Gamma$ lies at distance $\leq R$ from $\varphi_{t_n}\cdot v$ for some $t_n\in\RR$, and the sequence $(t_n)_{n\in\NN}$ is CLI because $\psi$ is a quasi-isometry and the \(\RR\)-orbits are quasi-isometric embeddings.
The lower CLI constants of $(t_n)_{n\in\NN}$ depend only on $(c,C)$ and on the quasi-isometry constants of~$\psi$.
\end{proof}

\begin{corollary}\label{cor:geo_seg_ell}
  Let $\Gamma$ be a word hyperbolic group.
  Then there exist a compact subset $\mathcal{D}$ of~$\mathcal{G}_\Gamma$ and constants $c_1,c_2>0$ with the following property: for any $\gamma\in\Gamma$ there exist $v \in \mathcal{D}$ and $t\geq 0$ such that $\varphi_t \cdot v \in \gamma \cdot \mathcal{D}$ and $t\geq c_1 \ellGamma{\gamma} -c_2$.
\end{corollary}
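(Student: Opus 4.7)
The plan is to deduce Corollary~\ref{cor:geo_seg_ell} from Proposition~\ref{prop:QIflowbis}, by evaluating the latter at the single index $n=\ellGamma{\gamma}$ of a suitably chosen quasi-geodesic ray $(\gamma_n)_{n\in\NN}$ in the Cayley graph of~$\Gamma$ passing through $\gamma_0=e$ and $\gamma_N=\gamma$, with $N=\ellGamma{\gamma}$ and with quasi-isometry constants depending only on~$\Gamma$.

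First I would dispose of the case where $\Gamma$ is finite by choosing $\mathcal{D}$ large.  Otherwise $\Gamma$ is infinite and word hyperbolic, hence contains an element $\beta$ of infinite order with distinct fixed points $\eta^{+}_{\beta},\eta^{-}_{\beta}\in\partial_{\infty}\Gamma$ (Fact~\ref{fact:dyn_at_infty}.\eqref{item:1}).  For each $\gamma\in\Gamma$, I would first choose a geodesic word $e=\gamma_0,\gamma_1,\dots,\gamma_N=\gamma$ with $N=\ellGamma{\gamma}$, and then extend it by any geodesic ray in the (proper, hyperbolic) Cayley graph from $\gamma$ to $\gamma\cdot\eta^{\epsilon}_{\beta}\in\partial_{\infty}\Gamma$, with $\epsilon\in\{+,-\}$ chosen so that the Gromov product $(e\,|\,\gamma\cdot\eta^{\epsilon}_{\beta})_{\gamma}$ is uniformly bounded in~$\gamma$.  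Such an $\epsilon$ exists by the tripod inequality in a $\delta$-hyperbolic space: viewing the three points $e$, $\gamma\cdot\eta^{+}_{\beta}$, $\gamma\cdot\eta^{-}_{\beta}$ from~$\gamma$, the two smallest Gromov products lie within $\delta$ of one another, so
\[
\min\!\Bigl\{(e\,|\,\gamma\cdot\eta^{+}_{\beta})_{\gamma},\ (e\,|\,\gamma\cdot\eta^{-}_{\beta})_{\gamma}\Bigr\}\ \leq\ (\eta^{+}_{\beta}\,|\,\eta^{-}_{\beta})_{e}+\delta,
\]
a constant independent of~$\gamma$.  Standard hyperbolic geometry (``fellow-traveler'' stability of quasi-geodesics) then yields that the concatenated sequence $(\gamma_n)_{n\in\NN}$ is a $(c,C)$-quasi-geodesic ray with $(c,C)$ depending only on~$\Gamma$.

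Feeding this uniform quasi-geodesic ray into Proposition~\ref{prop:QIflowbis} produces a compact $\mathcal{D}_0\subset\mathcal{G}_{\Gamma}$, constants $\kappa,\kappa'>0$, a point $v\in\mathcal{D}_0$, and a $(\kappa,\kappa')$-lower CLI sequence $(t_n)_{n\in\NN}$ with $\varphi_{t_n}\cdot v\in\gamma_n\cdot\mathcal{D}_0$.  Specializing to $n=N$ gives $\varphi_{t_N}\cdot v\in\gamma\cdot\mathcal{D}_0$ with $t_N\geq t_0+\kappa N-\kappa'$.  The proof of Proposition~\ref{prop:QIflowbis} selects $v$ within uniform distance of~$\psi(e)$, so $|t_0|$ is bounded by some constant~$T_0$ independent of~$\gamma$; therefore $t_N\geq\kappa\,\ellGamma{\gamma}-(\kappa'+T_0)$, which is nonnegative as soon as $\ellGamma{\gamma}$ exceeds an explicit threshold.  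For the finitely many~$\gamma$ falling below that threshold, it suffices to enlarge $\mathcal{D}_0$ to a compact $\mathcal{D}$ containing $\gamma^{-1}\cdot v$ and to take $t=0$.

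The main obstacle will be the uniform quasi-geodesicity of the concatenation in the second paragraph: while the ingredients (existence of an infinite-order $\beta$, existence of geodesic rays to boundary points, the tripod inequality, and the stability of quasi-geodesics in hyperbolic spaces) are all classical, careful tracking of the constants is needed to ensure they do not depend on~$\gamma$.
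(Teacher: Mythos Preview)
Your approach is essentially the same as the paper's: both reduce the Corollary to Proposition~\ref{prop:QIflowbis} by embedding each $\gamma$ into a uniform quasi-geodesic ray $(\gamma_n)_{n\in\NN}$ with $\gamma_0=e$ and $\gamma_{\ellGamma{\gamma}}=\gamma$, and then reading off the conclusion at $n=\ellGamma{\gamma}$. The paper's proof is terse and simply asserts the existence of such a uniform quasi-geodesic; you supply an explicit construction (geodesic segment to $\gamma$ concatenated with a ray to a well-chosen translate of $\eta^{\pm}_{\beta}$, with the tripod inequality controlling the Gromov product at the junction), and you also handle the bookkeeping on $t_0$ and on the finitely many short $\gamma$ that the paper leaves implicit --- all of which is correct.
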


\begin{proof}
Any $\gamma\in\Gamma$ belongs to a uniform quasi-geodesic $(\gamma_n)_{n\in \NN}$ with $\gamma_0=e$ and $\gamma_{\ellGamma{\gamma}} =\gamma$.
We conclude using Proposition~\ref{prop:QIflowbis}.
\end{proof}

\subsection{Parabolic subgroups of reductive Lie groups} \label{subsec:some-structure-semi}

We now recall the necessary Lie-theoretic background. 
Let $G$ be a noncompact real reductive Lie group.
We assume that $G$ is a finite union of connected components (for the real topology) of $\mathbf{G}(\RR)$ for some algebraic group~$\mathbf{G}$.
For simplicity, we assume that the adjoint action of \(G\) on its Lie algebra \(\g\) is by inner automorphisms, \ie \(\Ad(G)\subset\Aut(\g)_0\); this is the case for instance if $G$ is connected.
Recall that $G$ is the almost product of $Z(G)_0$ and $G_s$, where $Z(G)_0$ is the identity component (for the real topology) of the center $Z(G)$ of~$G$, and $G_s=D(G)$ is the derived subgroup of~$G$, which is semisimple.

\subsubsection{Parabolic subgroups}\label{subsubsec:parabolic-subgroups}

By definition, a \emph{parabolic subgroup} of~$G$ is a subgroup of the form $P=G\cap\mathbf{P}(\RR)$ for some algebraic subgroup $\mathbf{P}$ of~$\mathbf{G}$ with $\mathbf{G}(\RR)/\mathbf{P}(\RR)$ compact.

\begin{definition}\label{defi:transverse-compatible}
Two parabolic subgroups $P$ and $Q$ are said to be
\begin{itemize}
  \item \emph{transverse} (or \emph{opposite}) if their intersection is a reductive subgroup;
  \item \emph{compatible} (or in \emph{singular position}) if their intersection is a parabolic subgroup.
\end{itemize}
\end{definition}

When $G$ has real rank~$1$, two proper parabolic subgroups are either transverse (\ie distinct) or compatible (\ie equal), but when $G$ has higher real rank there are other cases between these two extremes.

\begin{remark} \label{rem:pnorm}
Any parabolic subgroup $P$ is its own normalizer in~$G$, hence $G/P$ identifies (as a $G$-set) with the set of conjugates of $P$ in~$G$.
In the sequel, we shall make no distinction between elements of $G/P$ and parabolic subgroups.
In particular, the terminology \emph{transverse} and \emph{compatible} will be used for elements of $G/P \times G/Q$.
\end{remark}

\begin{remark}
Let $X=G/K$ be the Riemannian symmetric space of~$G$; it has nonpositive curvature and its visual boundary $\partial_{\infty} X$ is a sphere.
Geome\-trically, a proper parabolic subgroup of~$G$ is the stabilizer in~$G$ of a (not necessarily unique) point $\xi \in \partial_{\infty} X$.
Two proper parabolic subgroups $P$ and~$Q$ are transverse if and only if there is a bi-infinite geodesic $c : \RR\to X$ such that $P= \mathrm{Stab}_G (\lim_{+\infty} c)$ and $Q= \mathrm{Stab}_G (\lim_{-\infty}c)$.
\end{remark}

\begin{example}\label{ex:transv-GL}
Let $\KK$ be $\RR$, $\CC$, or the ring $\bH$ of quaternions, and let $G$ be $\GL_{\KK}(V)$ for some (right) $\KK$-vector space~$V$.
Any parabolic subgroup of~$G$ is the stabilizer in~$G$ of a partial flag of $\KK$-subspaces of~$V$.
Two parabolic subgroups are transverse if and only if the corresponding flags $\{0\}=V_0\subsetneq \dots\subsetneq V_r=V$ and $\{0\}=W_0\subsetneq \dots\subsetneq W_{s}=V$ 
satisfy $r=s$ and $V=V_i\oplus W_{r-i}$ for all $0\leq i\leq r$.
\end{example}

\subsubsection{Lie algebra decompositions}
\label{subsubsec:lie-algebr-decomp}

Let $\z(\g)$ (\resp $\g_s$) be the Lie algebra of the center $Z(G)$ (\resp of the derived group~$G_s$).
Then $\g=\z(\g)\oplus\g_s$, and this decomposition is orthogonal with respect to the Killing form of~$\g$, whose restriction to $\z(\g)$ (\resp\nolinebreak $\g_s$) is zero (\resp nondegenerate).
Here are some algebraic and combinatorial objects needed to give a more comprehensive description of the parabolic subgroups of~$G$:
\begin{itemize}
  \item $K$: a maximal compact subgroup of~$G$, with Lie algebra~$\mathfrak{k}$;
  \item $\g = \mathfrak{k} \oplus \mathfrak{k}^{\perp}$: the induced orthogonal decomposition of~$\g$ for the Killing form;
  \item $\aaa \subset \mathfrak{k}^{\perp}$: a Cartan subspace of~$\g$, \ie a maximal abelian subspace of~$\mathfrak{k}^{\perp}$; it is the direct sum of $\mathfrak{k}^{\perp}\cap \z(\g)$ and of a maximal abelian subspace $\aaa_s$ of $\mathfrak{k}^{\perp}\cap \g_s$ (unique up to the $\Ad(K)$-action);
  \item $\g = \g_0 \oplus \bigoplus_{\alpha \in \Sigma} \g_\alpha$: the decomposition of~$\g$ into $\ad(\aaa)$-eigenspaces. By definition,
  \[ (\ad\,Y)(Y')=\langle \alpha,Y\rangle Y' \]
  for all $Y\in\aaa$ and $Y'\in\g_\alpha$.
  The eigenspace $\g_0$ is the centralizer of $\aaa$ in~$\g$; it is the direct sum of $\z(\g)$ and of the centralizer of $\aaa$ in $\g_s$.
  The set $\Sigma \subset \aaa^* = \Hom_{\RR}(\aaa,\RR)$ projects to a (possibly nonreduced) root system of~$\aaa_s^*$, and each $\alpha \in \Sigma$ is called a \emph{restricted root} of $\aaa$ in~$\g$;
  \item  $\Delta\subset \Sigma$: a simple system (see \cite[\S\,II.6, p.\,164]{Knapp_LieGrp}), \ie a subset such that any root is expressed uniquely as a linear combination of elements of~\(\Delta\) with coefficients all of the same sign; the elements of~\(\Delta\) are called the \emph{simple} roots;  
  \item $\Sigma^+ \subset \Sigma$: the set of \emph{positive} roots, \ie roots that are nonnegative linear combinations of elements of \(\Delta\); then $\Sigma = \Sigma^+ \cup (- \Sigma^+)$.
\end{itemize}
Note that \(\Delta\) projects to a basis of the vector space~$\aaa_s^*$.
The \emph{real rank} of~$G$ is by definition the dimension of~$\aaa$.
Let 
\[ \overline{\aaa}^+ := \{Y \in \aaa \mid \langle{\alpha,Y}\rangle \geq
0\;\; \forall \alpha \in \Sigma^+\} = \{Y \in \aaa \mid \langle{\alpha,Y}\rangle \geq
0\;\; \forall \alpha \in \Delta\} \]
be the closed positive Weyl chamber of~$\aaa$ associated
with~$\Sigma^+$.

Given a subset $\theta \subset \Delta$, we define $P_\theta$ (\resp $P_{\theta}^-$) to be the normalizer in~$G$ of the Lie algebra
\begin{equation} \label{eqn:T-G/P} \mathfrak{u}_\theta = \bigoplus_{\alpha \in \Sigma_{\theta}^{+}} \g_\alpha \quad\quad \text{\bigg(resp.}~\mathfrak{u}_{\theta}^{-} = \bigoplus_{\alpha \in \Sigma_{\theta}^{+}} \g_{-\alpha} \text{\bigg)},\end{equation}
where $\Sigma_{\theta}^{+} = \Sigma^+ \smallsetminus  \mathrm{span}(\Delta \smallsetminus \theta)$ is the set of positive roots that do \emph{not} belong to the span of $\Delta \smallsetminus \theta$.
The group $P_\theta$ (\resp $P_{\theta}^-$) is a parabolic subgroup of~$G$, equal to the semidirect product of its unipotent radical $U_\theta := \exp(\mathfrak{u}_\theta)$ (\resp $\exp(\mathfrak{u}_{-\theta})$) and of the Levi subgroup
\begin{equation}
\label{eqn:levi} L_\theta := P_\theta \cap P_{\theta}^-.
\end{equation}  
Explicitly, 
\begin{equation} \label{eqn:decompose} \mathrm{Lie}(P_{\theta}) = \g_0 \oplus \bigoplus_{\alpha\in\Sigma^+} \g_{\alpha} \oplus \bigoplus_{\alpha\in\Sigma^+\smallsetminus \Sigma_\theta^+} \g_{-\alpha}.\end{equation}
In particular, $P_\emptyset = G$ and $P_\Delta$ is a minimal parabolic subgroup of~$G$.

\begin{example} \label{ex:roots}
Let $\KK$ be $\RR$, $\CC$, or the ring $\bH$ of quaternions and let $G$ be $\GL_d(\KK)$, seen as a real Lie group.
Its derived group is $G_s = D(G) = \SL_d(\KK)$.
If $\KK=\RR$ (\resp $\CC$, \resp $\bH$), then we can take $K$ to be $\OO(d)$ (\resp $\U(d)$, \resp $\Sp(d)$), and in all cases we can take $\aaa \subset \gl_d(\KK)$ to be the set of real diagonal matrices of size $d\times d$.
For $1\leq i\leq d$, let $\varepsilon_i\in\aaa^{\ast}$ be the evaluation of the $i$-th diagonal entry.
Then $\aaa = \z(\g) \oplus \aaa_s$, where $\z(\g) = \bigcap_{1\leq i,j\leq d} \Ker(\varepsilon_i - \varepsilon_j)$ is the set of real scalar matrices and $\aaa_s = \Ker(\varepsilon_1 + \dots + \varepsilon_d)$ the set of traceless real diagonal matrices.
The set of restricted roots of $\aaa$ in~$G$ is
\[ \Sigma = \{ \varepsilon_i - \varepsilon_j ~|~ 1\leq i\neq j\leq d\} .\]
We can take $\Delta = \{ \varepsilon_i - \varepsilon_{i+1} ~|~ 1\leq i\leq d-1\}$, so that
\[ \Sigma^+ = \{ \varepsilon_i - \varepsilon_j ~|~ 1\leq i<j\leq d\} \]
and $\overline{\aaa}^+$ is the set of the elements of~$\aaa$ whose entries are in nonincreasing~order.
For $\theta=\{ \varepsilon_{n_1}-\varepsilon_{n_1+1}, \dots, \varepsilon_{n_m}-\varepsilon_{n_m+1}\}$ with $1\leq n_1<\dots<\nolinebreak n_m\leq\nolinebreak d-\nolinebreak 1$, the parabolic subgroup $P_{\theta}$ (\resp $P_{\theta}^-$) is the set of block upper (\resp lower) triangular matrices in $\GL_d(\KK)$ with square diagonal blocks of sizes $n_1,n_2-n_1,\dots,n_m-n_{m-1},d-n_m$.
In particular, $P_{\Delta}$ is the set of upper triangular matrices in $\GL_d(\KK)$.
\end{example}

The following classical fact will be used in Section~\ref{sec:repr-semis-lie}.

\begin{fact}[see \eg {\cite[Prop.\,7.76]{Knapp_LieGrp}}]\label{fact:para-lie-subalgebra}
  Any Lie subalgebra of \(\g\) containing \(\mathrm{Lie}(P_\Delta)\) is of the form \(\mathrm{Lie}(P_\theta)\) for a unique \(\theta\subset\Delta\).
\end{fact}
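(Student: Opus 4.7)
Let $\mathfrak{b} := \mathrm{Lie}(P_\Delta) = \g_0 \oplus \bigoplus_{\alpha \in \Sigma^+} \g_\alpha$ and let $\mathfrak{q}$ be any Lie subalgebra of~$\g$ containing~$\mathfrak{b}$. I would associate to~$\mathfrak{q}$ the candidate subset
\[
\theta := \{\alpha \in \Delta \mid \g_{-\alpha} \not\subset \mathfrak{q}\} \subset \Delta
\]
and then prove $\mathfrak{q} = \mathrm{Lie}(P_\theta)$. Uniqueness of $\theta$ follows at once, since the explicit formula~\eqref{eqn:decompose} shows that $\g_{-\alpha} \subset \mathrm{Lie}(P_\theta)$ if and only if $\alpha \in \Delta \smallsetminus \theta$.

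The first step is to observe that since $\aaa \subset \g_0 \subset \mathfrak{b} \subset \mathfrak{q}$ and $\mathfrak{q}$ is a Lie subalgebra, $\mathfrak{q}$ is stable under $\ad(\aaa)$ and therefore splits as $\mathfrak{q} = \g_0 \oplus \bigoplus_{\alpha \in \Sigma}(\mathfrak{q} \cap \g_\alpha)$, with $\mathfrak{q} \cap \g_\alpha = \g_\alpha$ for $\alpha \in \Sigma^+$. A short argument using the $\ad(\g_0)$-module structure of each restricted root space, together with the real $\mathfrak{sl}_2$-triples attached to restricted roots (see \cite[Ch.\,VI]{Knapp_LieGrp}), then shows that for each $\alpha \in \Sigma^+$ the subspace $\mathfrak{q} \cap \g_{-\alpha}$ is either $\{0\}$ or all of~$\g_{-\alpha}$. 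The problem thus reduces to identifying the subset
\[
\Phi^- := \{\beta \in \Sigma^+ \mid \g_{-\beta} \subset \mathfrak{q}\}
\]
as $\Sigma^+ \cap \mathrm{span}(\Delta \smallsetminus \theta)$.

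For the inclusion $\mathrm{Lie}(P_\theta) \subset \mathfrak{q}$, I would observe that $\mathfrak{q}$ contains the Lie subalgebra generated by $\mathfrak{b}$ together with the simple negative root spaces $\{\g_{-\alpha} : \alpha \in \Delta \smallsetminus \theta\}$, and that a direct induction on height, using surjectivity of brackets of restricted root spaces whose sum is a root, shows that this generated subalgebra equals $\mathrm{Lie}(P_\theta)$: given $\beta \in \Sigma^+ \cap \mathrm{span}(\Delta \smallsetminus \theta)$ of height $\geq 2$, a standard root-system fact lets us write $\beta = \gamma + \alpha$ with $\alpha \in \Delta \smallsetminus \theta$ and $\gamma \in \Sigma^+ \cap \mathrm{span}(\Delta \smallsetminus \theta)$ of smaller height, and then $\g_{-\beta} = [\g_{-\gamma}, \g_{-\alpha}] \subset \mathfrak{q}$ by the induction hypothesis. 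Conversely, for $\mathfrak{q} \subset \mathrm{Lie}(P_\theta)$, I would argue contrapositively: given $\beta \in \Phi^-$ and any simple root~$\alpha_i$ appearing with positive coefficient in~$\beta$, a root-string argument produces a chain of positive roots $\beta = \beta_0, \beta_1, \dots, \beta_k = \alpha_i$ with each $\beta_j - \beta_{j+1} \in \Delta$; iterated brackets of $\g_{-\beta_0} \subset \mathfrak{q}$ with the positive simple root spaces $\g_{\beta_j - \beta_{j+1}} \subset \mathfrak{b} \subset \mathfrak{q}$ then produce a nonzero element of $\g_{-\alpha_i} \cap \mathfrak{q}$, hence $\g_{-\alpha_i} \subset \mathfrak{q}$ by the fullness step, hence $\alpha_i \in \Delta \smallsetminus \theta$, so $\beta \in \mathrm{span}(\Delta \smallsetminus \theta)$.

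The main technical obstacle, typical of the restricted-root setting (as opposed to the complex case with reduced root system and one-dimensional root spaces), lies in the two ``fullness'' statements invoked above: that $\mathfrak{q} \cap \g_\alpha \in \{0, \g_\alpha\}$ and that $[\g_\alpha, \g_\beta] = \g_{\alpha+\beta}$ whenever $\alpha + \beta \in \Sigma$. Both can be handled by the rank-one reduction to $\mathfrak{sl}_2$-triples for restricted roots combined with the $\ad(\g_0)$-module structure of each~$\g_\alpha$, as developed in \cite[Ch.\,VI--VII]{Knapp_LieGrp}; alternatively one may pass to the complexification and descend the complex classification of parabolic subalgebras back to~$\g$.
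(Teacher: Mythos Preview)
The paper does not prove this statement; it is recorded as a Fact with a reference to \cite[Prop.\,7.76]{Knapp_LieGrp} and used as a black box. Your sketch follows the standard textbook route (decompose $\mathfrak{q}$ under $\ad(\aaa)$, reduce to a combinatorial question on $\Sigma^+$, and settle it by root-chain/height arguments), and the outline is correct.

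One caution on the ``fullness'' step. It is not true in general that each $\g_{-\alpha}$ is irreducible as an $\ad(\g_0)$-module, so the phrase ``$\ad(\g_0)$-module structure together with $\mathfrak{sl}_2$-triples'' hides real work; what actually makes it go is the bracket-surjectivity $[\g_\alpha,\g_\beta]=\g_{\alpha+\beta}$ for restricted roots (Knapp establishes this via the $\theta$-stable Cartan $\mathfrak{h}=\mathfrak{t}\oplus\aaa$ and the ordinary root system of $\g_\CC$ relative to $\mathfrak{h}_\CC$, where root spaces are one-dimensional). Your complexification alternative is the right instinct, but note that $\mathfrak{b}_\CC$ is not a Borel of $\g_\CC$ when $\mathfrak{m}\neq 0$: one must first enlarge $\aaa$ to $\mathfrak{h}$ before invoking the complex classification, and then check that the resulting parabolic is the complexification of some $\mathrm{Lie}(P_\theta)$. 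Either way, the details are exactly what Knapp's Prop.\,7.76 supplies.
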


\subsubsection{Conjugacy classes of parabolic subgroups and invariant distributions on $G/L_{\theta}$}
\label{subsubsec:conj-class-parab}

Recall that any parabolic subgroup is conjugate to $P_\theta$ for some $\theta \subset \Delta$, and any pair of opposite parabolic subgroups is conjugate to $(P_\theta, P_\theta^-)$ for some $\theta \subset \Delta$; the set $\theta$ is unique since $\Ad(G)$ is assumed to act on $\mathfrak{g}$ by inner automorphisms (see \cite[\S\,5]{Borel_Tits_ihes}).
Since the stabilizer in~$G$ of $(P_{\theta},P_{\theta}^-)$ is $L_{\theta}=P_{\theta}\cap P_{\theta}^-$, the set of pairs $(P,Q)$ of transverse parabolic subgroups of~$G$ identifies, as a $G$-set, with the disjoint union of the $G/L_\theta$ for $\theta \subset \Delta$.
More precisely, with the identification of Remark~\ref{rem:pnorm},
\begin{equation} \label{eqn:orbits}
\{(P,Q) \in G/ P_\theta \times G/P_{\theta}^- \mid P, Q \text{ transverse}\} \simeq
 G/L_\theta, 
 \end{equation}
and $G/L_\theta$ is the unique open $G$-orbit in $G/ P_\theta \times G/P_{\theta}^-$.
From this the tangent bundle $T(G/L_\theta)$ inherits a decomposition
\begin{equation} \label{eqn:E+E-}
T(G/L_\theta) = E^+ \oplus E^-. 
\end{equation}
This decomposition is $G$-invariant, and so for any bundle with fiber $G/L_\theta$ there is a corresponding decomposition of the vertical tangent space.

\subsection{The Cartan projection}\label{subsec:Cartan-Lyap}

A central role in this paper is played by the Cartan projection~$\mu$, which can be used to measure dynamical properties of diverging sequences in~$G$.

\subsubsection{Basics on the Cartan projection}\label{subsubsec:Cartan-proj}

Recall that, with the notation of Section~\ref{subsubsec:lie-algebr-decomp}, the Cartan decomposition $G=K(\exp\overline{\aaa}^+)K$ holds: any $g\in G$ may be written $g=k(\exp\mu(g))k'$ for some $k,k'\in K$ and a unique $\mu(g)\in\overline{\aaa}^+$ (see \cite[Ch.\,IX, Th.\,1.1]{Helgason}).
This defines a map
\begin{align*}
  \mu :\ G & \longrightarrow \hspace{0.2cm} \overline{\aaa}^+ \\
  g \hspace{0.1cm} & \longmapsto \mu(g) ,
\end{align*}
called the \emph{Cartan projection}, inducing a homeomorphism $K \backslash G / K \simeq \overline{\aaa}^+$.

\begin{example}\label{ex:mu}
For $\KK=\RR$ or~$\CC$, let $G=\GL_d(\KK)$, and let $K\subset G$ and~$\overline{\aaa}^+$ be as in Example~\ref{ex:roots}.
Then the diagonal entries of $\mu(g)$ are the logarithms of the singular values of~$g$ (\ie of the square roots of the eigenvalues of $^t\bar{g}g$, where $\bar{g}$ is the complex conjugate of~$g$), in nonincreasing order.
\end{example}

The \emph{(restricted) Weyl group} of $\aaa$ in~$\g$ is the group $W=N_K(\aaa)/Z_K(\aaa)$, where $N_K(\aaa)$ (\resp $Z_K(\aaa)$) is the normalizer (\resp centralizer) of $\aaa$ in~$K$.
We now fix a $W$-invariant Euclidean norm $\|\cdot\|$ on~$\aaa$.
By a little abuse of notation, we shall use the same symbol for the induced norm on the dual space~$\aaa^{\ast}$.
If $G$ is simple, then $\|\cdot\|$ is unique up to scale: it derives from the restriction to~$\aaa$ of the Killing form of~$\g$.
In general, $\|\cdot\|$ is not unique, but any choice will do.
This choice determines the Riemannian metric $d_{G/K}$ on the symmetric space $G/K$, and for any $g\in G$ we have
\begin{equation} \label{eqn:mu-dist}
\Vert \mu(g) \Vert = d_{G/K}(x_0, g\cdot x_0),
\end{equation}
where $x_0:=eK\in G/K$.

Seen as a subgroup of \(\GL_{\RR}(\aaa)\), the Weyl group $W$ is a finite Coxeter group.
A system of generators of $W$ is given by the orthogonal reflections $s_{\alpha}$ in the hyperplanes $\Ker(\alpha)\subset\aaa$, for $\alpha\in\Delta$.
The group $W$ acts simply transitively on the set of connected components of \(\aaa \smallsetminus \bigcup_{\alpha\in\Sigma}\,\Ker(\alpha)\) (open Weyl chambers).
Therefore there is a unique element $w_0 \in W$ such that $w_0\cdot (-\overline{\aaa}^+)=\overline{\aaa}^+$; it is the longest element with respect to the generating set \(\{s_\alpha\}_{\alpha \in \Delta}\).
The involution of~$\aaa$ defined by $Y\mapsto -w_0\cdot Y$
is called the \emph{opposition involution}\footnote{This involution is nontrivial only if the restricted root system $\Sigma$ is of type $A_n$, $D_{2n+1}$, or~$E_6$, where $n\geq 2$.}; it sends $\mu(g)$ to $\mu(g^{-1})$ for any $g\in G$.
The corresponding dual linear map preserves~$\Sigma$.
We shall denote it by
\begin{align}\label{eqn:opp-inv}
  \aaa^{\ast} & \longrightarrow  \aaa^{\ast}\\
  \alpha\, & \longmapsto  \alpha^{\star} = -w_0\cdot \alpha. \notag
\end{align}
By definition, for any $\alpha\in\Sigma$ and any $g\in G$,
\begin{equation} \label{eqn:opp-inv-mu}
\langle\alpha,\mu(g)\rangle = \langle\alpha^{\star},\mu(g^{-1})\rangle .
\end{equation}

\begin{example}\label{ex:opp-inv}
Take $G=\GL_d(\KK)$ with $\KK$, $K$, and~$\overline{\aaa}^+$ as in Example~\ref{ex:roots}.
The Weyl group $W$ is the symmetric group $\mathfrak{S}_d$, which permutes the diagonal entries of the elements of~$\aaa$.
The longest element $w_0$ of~$W$ is the permutation of $\{1, \dots, d\}$ taking $i$ to $d+1-i$.
For $\alpha=\varepsilon_i-\varepsilon_{i+1}\in\Delta$, we have $\alpha^{\star}=\varepsilon_{d-i}-\varepsilon_{d-i+1}$.
\end{example}

Here are some useful properties (see for instance \cite[Lem.\,2.3]{Kassel_corank1}), expressing that the map $\mu$ is ``strongly subadditive''.

\begin{fact}\label{fact:mu-subadditive}
For any $g,g_1,g_2,g_3\in G$,
\begin{enumerate}
\item $\| \mu(g) \| = \| \mu( g^{-1} )\|$;
\item \label{item:16} $\| \mu(g_1g_2) -\mu(g_1)\| \leq \| \mu(g_2)\|$;
\item \label{item:mu-strong-subadd} in particular, $\| \mu(g_1 g_2 g_3) -\mu(g_2)\| \leq \| \mu(g_1)\| + \|
  \mu(g_3)\|$.
\end{enumerate}
\end{fact}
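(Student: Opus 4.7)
The plan is to prove the three claims in order, with (1) and (3) being fairly direct consequences of (2), which carries the main content.

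Assertion (1) is immediate from the Riemannian interpretation \eqref{eqn:mu-dist}: since $K$ stabilizes the basepoint $x_0 \in G/K$ and $d_{G/K}$ is symmetric,
\[ \|\mu(g)\| = d_{G/K}(x_0, g\cdot x_0) = d_{G/K}(g^{-1}\cdot x_0, x_0) = \|\mu(g^{-1})\|. \]

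The heart of the proof is assertion (2), which is strictly stronger than the scalar triangle inequality $|\,\|\mu(g_1 g_2)\| - \|\mu(g_1)\|\,|\leq \|\mu(g_2)\|$ that the Riemannian interpretation would give directly. My plan is to first reduce to the case $g_1 = \exp(Y)$ with $Y \in \overline{\aaa}^+$, using the $K$-biinvariance of~$\mu$: write $g_1 = k \exp(Y) k'$ with $k,k' \in K$ and $Y=\mu(g_1)$, and absorb $k'$ into $g_2$ (which does not affect $\mu(g_2)$ or $\|\mu(g_1 g_2)\|$). The remaining claim is
\[ \|\mu(\exp(Y)h) - Y\| \leq \|\mu(h)\| \qquad \text{for all } Y \in \overline{\aaa}^+,\ h \in G. \]
This is what I expect to be the main obstacle. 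I would invoke a convexity theorem going back to Kostant: the vector $\mu(\exp(Y)h) - Y \in \aaa$ lies in the convex hull of the Weyl group orbit $W\cdot \mu(h)\subset\aaa$. Since $\|\cdot\|$ is $W$-invariant and convex, it follows that $\|\mu(\exp(Y)h) - Y\| \leq \max_{w\in W} \|w\cdot \mu(h)\| = \|\mu(h)\|$, as desired. The point is that the inequality is inherently \emph{vector-valued} in~$\aaa$; the scalar triangle inequality for $d_{G/K}$ alone does not suffice, and some convexity input is needed.

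Assertion (3) then follows by combining (1) and (2). Applying (2) with $g_1$ replaced by $g_1 g_2$ yields $\|\mu(g_1 g_2 g_3) - \mu(g_1 g_2)\| \leq \|\mu(g_3)\|$. To bound $\|\mu(g_1 g_2) - \mu(g_2)\|$ by $\|\mu(g_1)\|$, I would use the identity $\mu(g^{-1}) = -w_0\cdot\mu(g)$ (a consequence of the Cartan decomposition, since $w_0\cdot\overline{\aaa}^+ = -\overline{\aaa}^+$) together with the fact that $w_0 \in W$ is a $\|\cdot\|$-isometry: this gives
\[ \|\mu(g_1 g_2) - \mu(g_2)\| = \|w_0\,(\mu(g_2^{-1} g_1^{-1}) - \mu(g_2^{-1}))\| \leq \|\mu(g_1^{-1})\| = \|\mu(g_1)\|, \]
where the middle inequality is (2) applied to $(g_2^{-1}, g_1^{-1})$ and the final equality is (1). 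The triangle inequality in the Euclidean space $(\aaa, \|\cdot\|)$ then concludes.
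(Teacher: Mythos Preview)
The paper does not prove this fact; it merely records it with a pointer to \cite[Lem.\,2.3]{Kassel_corank1}. Your argument is correct and supplies what the paper omits. Part~(1) via \eqref{eqn:mu-dist} and the derivation of~(3) from~(1)--(2) using $\mu(g^{-1})=-w_0\cdot\mu(g)$ are exactly right. For~(2), the reduction to $g_1=\exp Y$ with $Y\in\overline{\aaa}^+$ is fine, and the convexity statement you invoke --- that $\mu(\exp(Y)h)-Y$ lies in the convex hull of $W\cdot\mu(h)$ --- is a genuine theorem; one small caveat is that attributing it to Kostant is a bit loose. Kostant's original convexity theorem concerns the Iwasawa projection; the Cartan-projection version you need is a closely related but distinct result (for $\GL_d$ it is the Lidskii--Wielandt majorization of log-singular values, and for general~$G$ it appears for instance in the work of Kapovich--Leeb--Millson on $\overline{\aaa}^+$-valued side lengths, and can indeed be deduced from Kostant's theorem). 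With that reference adjusted, your proof is complete.
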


As a consequence, for any representation $\rho: \Gamma \to G$, there exists $k>0$ such that for any $\gamma\in\Gamma$,
\begin{equation} \label{eqn:mu-leq-klength}
  \| \mu( \rho(\gamma))\| \leq k \, \ellGamma{\gamma}.
\end{equation}
Indeed, we can take $k:=\max_{s\in S} \| \mu\circ\rho(s) \|$ where $S$ is the finite generating set of~$\Gamma$ defining the word length~$\ellGammaf$.

\subsubsection{Properness of the Cartan projection and consequences}\label{subsubsec:Cartan-proj-proper}

A crucial point is that the map $\mu : G\to\overline{\aaa}^+$ is proper, by compactness of~$K$.
This implies the following.

\begin{remark}\label{rem:qimu}
Let $\Gamma$ be a finitely generated discrete group and $\rho:\Gamma\rightarrow G$ a representation.
The representation $\rho$ has finite kernel in~$\Gamma$ and discrete image in~$G$ if and only if there is a function $f : \NN\rightarrow\RR$ with $\lim_{+\infty} f = +\infty$ such that for all $\gamma\in\Gamma$,
  \[ \Vert \mu(\rho(\gamma)) \Vert \geq f(\ellGamma{\gamma}). \]
  The map $\rho$ is a quasi-isometric embedding if and only if $f$ can be taken to be affine.
\end{remark}

In particular, if $\rho$ is a quasi-isometric embedding, then for any positive root $\alpha\in\Sigma^+$ the following two conditions are equivalent:
\begin{enumerate}[label=(\roman*),ref=\roman*]
  \item\label{item:avoid-cone} There exist $c,C>0$ such that $\langle\alpha,\mu(\rho(\gamma))\rangle\geq c\,\Vert\mu(\rho(\gamma))\Vert - C$ for all $\gamma\in\Gamma$;
  \item There exist $c,C>0$ such that $\langle\alpha,\mu(\rho(\gamma))\rangle\geq c\,\ellGamma{\gamma} - C$ for all $\gamma\in\Gamma$.
\end{enumerate}
Condition~\eqref{item:avoid-cone} means that the set $\mu(\rho(\Gamma))$ avoids some translate $\mathfrak{C}'$ of the cone $\mathfrak{C}:=\{ x\in\aaa~|~\langle\alpha,x\rangle < c\Vert x\Vert\}$ in the Euclidean space~$\aaa$ (see Figure~\ref{CLI}, left panel).

In Theorem~\ref{thm:constr-xi}.\eqref{item:xi-transv_1} we consider the following slightly different condition: for any geodesic ray $\mathcal{R} = (\gamma_n)_{n\in\NN}$ in the Cayley graph of~$\Gamma$, the sequence $(\langle \alpha,\, \mu( \rho(\gamma_n))\rangle)_{n\in\NN}\in\nolinebreak\RR^{\NN}$ is \emph{lower CLI}, \ie there exist $\kappa_{\mathcal{R}},\kappa^{\prime}_{\mathcal{R}}>0$ such that for all $n,m\in\NN$,
\[ \langle \alpha,\, \mu(\rho(\gamma_{n+m})) - \mu(\rho(\gamma_n))\rangle \geq \kappa_{\mathcal{R}} \, m - \kappa^{\prime}_{\mathcal{R}}.\]
This means that there is a translate $\mathfrak{C}'_{\mathcal{R}}$ in~$\aaa$ of the cone
\[\mathfrak{C}_{\mathcal{R}} := \{ x\in\aaa~|~\langle\alpha,x\rangle < \kappa_{\mathcal{R}}\Vert x\Vert\} \]
such that for any $n\in\NN$, the sequence $(\mu(\rho(\gamma_{n+m})))_{m\in\NN}$ avoids $\mu(\rho(\gamma_n))+\mathfrak{C}'_{\mathcal{R}}$ (see Figure~\ref{CLI}, right panel).
This ``nested cone'' property is what we mean when we say (in the introduction) that the sequence $(\mu(\rho(\gamma_n)))_{n\in\NN}\in (\overline{\aaa}^+)^{\NN}$ drifts away ``forever linearly'' from $\Ker(\alpha)$.

\begin{figure}[h!]
\centering
\labellist
\small\hair 2pt
\pinlabel $\overline{\mathfrak{a}}^+$  at 60 50
\pinlabel $\mu(\rho(\Gamma))$  at 40 30
\pinlabel $\mathfrak{C}$  at 62 24
\pinlabel $\mathfrak{C}'$  at 63 15
\pinlabel {$\color{red} \varepsilon_1-\varepsilon_2$}  at 35 1
\pinlabel {$\color{red} \varepsilon_2-\varepsilon_3$}  at 11 29
\pinlabel $\mu(\rho(\gamma_n))$  at 114 14
\pinlabel $\mu(\rho(\gamma_n))+\mathfrak{C}'_{\mathcal{R}}$  at 150 21
\pinlabel $\mu(\rho(\gamma_{n'}))$  at 138 34
\pinlabel $\mu(\rho(\gamma_{n'}))+\mathfrak{C}'_{\mathcal{R}}$  at 178 40
\pinlabel {$\color{red} \varepsilon_1-\varepsilon_2$}  at 114 1
\pinlabel {$\color{red} \varepsilon_2-\varepsilon_3$}  at 90 29
\endlabellist
\includegraphics[width=12.5cm]{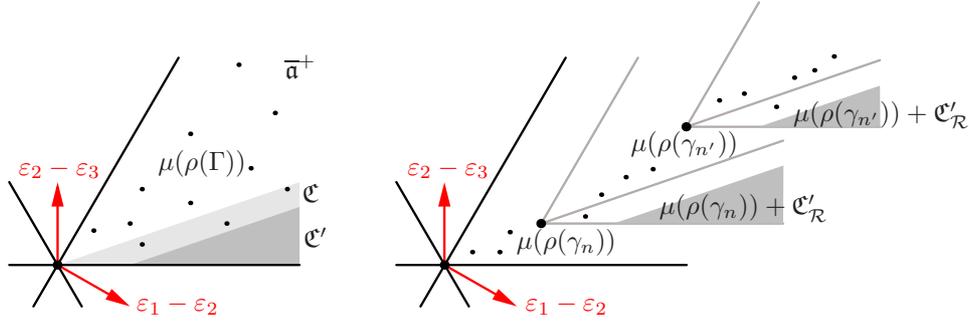}
\caption{Here $G=\SL_3(\RR)$ and $\theta = \{ \varepsilon_2 - \varepsilon_3\}$. Left panel: Condition~\eqref{item:avoid-cone} above. Right panel: The CLI condition of Theorem~\ref{thm:constr-xi}.\eqref{item:xi-transv_1} for a geodesic ray $\mathcal{R} = (\gamma_n)_{n\in\NN}$.}
\label{CLI}
\end{figure} 

If $G$ has real rank~$1$, then Proposition~\ref{prop:QIflowbis} implies the following strengthening of Remark~\ref{rem:qimu} (which yields the implication $\eqref{item:Ano}\Rightarrow \eqref{item:cli}$ of Theorem~\ref{thm:char_ano} in that case):

\begin{corollary}\label{cor:CLI-rk1}
Let $\Gamma$ be a finitely generated discrete group and $G$ a semisimple Lie group of real rank~$1$.
If $\rho : \Gamma\to G$ is a quasi-isometric embedding, then for any geodesic ray $(\gamma_n)_{n\in\NN}$ in the Cayley graph of~$\Gamma$, the sequence $(\Vert\mu(\rho(\gamma_n))\Vert)_{n\in\NN}$ is CLI; moreover, the CLI constants are uniform over all geodesic rays $(\gamma_n)_{n\in\NN}$ with $\gamma_0=e$.
\end{corollary}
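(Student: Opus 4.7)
The plan is to transfer the lower CLI property from flow-space trajectories to \((\|\mu(\rho(\gamma_n))\|)_{n\in\NN}\) via the concrete realization of the flow space inside \(T^1(G/K)\) available in rank~\(1\). Since \(G\) has real rank~\(1\), the symmetric space \(G/K\) is Gromov hyperbolic; the quasi-isometric embedding \(\rho\) therefore has finite kernel and convex cocompact image (Remark~\ref{rem:rank1ccquasi}), so \(\Gamma\) is word hyperbolic, and Fact~\ref{fact:flow} lets me take \(\mathcal{G}_\Gamma\subset T^1(G/K)\) with \(\{\varphi_t\}\) the unit-speed geodesic flow and \(\Gamma\) acting through~\(\rho\).

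Any geodesic ray \((\gamma_n)_{n\in\NN}\) with \(\gamma_0=e\) is a \((1,0)\)-quasi-geodesic ray, so Proposition~\ref{prop:QIflowbis} applied with \((c,C)=(1,0)\) furnishes \emph{uniform} data: a compact \(\mathcal{D}\subset\mathcal{G}_\Gamma\) and constants \(\kappa,\kappa'>0\) (independent of the ray), together with \(v\in\mathcal{D}\) and a \((\kappa,\kappa')\)-lower CLI sequence \((t_n)_{n\in\NN}\in(\RRp)^\NN\) satisfying \(\varphi_{t_n}\cdot v\in\rho(\gamma_n)\cdot\mathcal{D}\) for all~\(n\). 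Let \(\pi:T^1(G/K)\to G/K\) be the footpoint projection and set \(R:=\sup_{w\in\pi(\mathcal{D})}d_{G/K}(x_0,w)\), which is finite by compactness of~\(\mathcal{D}\). Writing \(\pi(\varphi_{t_n}\cdot v)=\rho(\gamma_n)\cdot y_n\) with \(y_n\in\pi(\mathcal{D})\), the flow being unit-speed gives \(d_{G/K}(\pi(v),\rho(\gamma_n)\cdot y_n)=t_n\); since both \(\pi(v)\) and \(y_n\) lie within distance~\(R\) of~\(x_0\), two applications of the triangle inequality together with \eqref{eqn:mu-dist} yield
\[
\bigl|\,\|\mu(\rho(\gamma_n))\|-t_n\,\bigr|\;\leq\;2R,
\]
so that \((\|\mu(\rho(\gamma_n))\|)_{n\in\NN}\) is \((\kappa,\kappa'+4R)\)-lower CLI with constants uniform in the ray. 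For geodesic rays with \(\gamma_0\neq e\), CLI (without uniformity of additive constants) follows by replacing the ray with \((\gamma_0^{-1}\gamma_n)_{n\in\NN}\), which starts at~\(e\), and noting that the two sequences \((\|\mu(\rho(\gamma_n))\|)_{n\in\NN}\) and \((\|\mu(\rho(\gamma_0^{-1}\gamma_n))\|)_{n\in\NN}\) differ pointwise by at most \(\|\mu(\rho(\gamma_0))\|\).

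The upper CLI bound needs no rank-one input: subadditivity of~\(\|\mu\|\) (Fact~\ref{fact:mu-subadditive}) applied to the factorization \(\rho(\gamma_{n+m})=\rho(\gamma_n)\,\rho(\gamma_n^{-1}\gamma_{n+m})\), combined with \(\ellGamma{\gamma_n^{-1}\gamma_{n+m}}=m\) (geodesic ray property) and \eqref{eqn:mu-leq-klength}, gives \(\|\mu(\rho(\gamma_{n+m}))\|-\|\mu(\rho(\gamma_n))\|\leq km\) with \(k>0\) depending only on~\(\rho\). Combined with the lower bound this establishes the full CLI property, with constants uniform over geodesic rays starting at~\(e\). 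No serious obstacle arises; the only point that requires care is the bookkeeping of additive constants, which is guaranteed by compactness of~\(\mathcal{D}\) and the uniformity clause in Proposition~\ref{prop:QIflowbis}.
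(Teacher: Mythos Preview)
Your proof is correct and follows essentially the same route as the paper's: invoke convex cocompactness in rank~$1$ (Remark~\ref{rem:rank1ccquasi}), realize the flow space concretely via Fact~\ref{fact:flow}, and apply Proposition~\ref{prop:QIflowbis}. The paper's proof is a three-line sketch that simply cites these ingredients; you have filled in the bridge it leaves implicit, namely the comparison $\bigl|\,\|\mu(\rho(\gamma_n))\|-t_n\,\bigr|\leq 2R$ via \eqref{eqn:mu-dist} and the footpoint projection, as well as the separate verification of the upper CLI bound from subadditivity.
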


In other words, in real rank one the fact that $\rho$ is a quasi-isometric embedding implies that the sequence $(\langle\alpha,\mu(\rho(\gamma_n))\rangle)_{n\in\NN}$ is CLI for all $\alpha\in\Sigma^+$.
This is not true when $G$ has higher real rank: see \eg the representation $\rho_0 : \Gamma\to\SL_2(\RR)\times\SL_2(\RR)$ constructed in the proof of Proposition~\ref{prop:an-unstable-quasi}.

\begin{proof}[Proof of Corollary~\ref{cor:CLI-rk1}]
If $\rho:\Gamma \to G$ is a quasi-isometric embedding, then $\rho$ has finite kernel and the group $\rho(\Gamma)$ is convex cocompact in~$G$ (see \cite{Bourdon_conforme,BowditchGFV}). 
By Fact~\ref{fact:flow}, $\Gamma$ 
admits a flow space $\mathcal{G}_{\Gamma}$ which isometrically, $\rho$-equivariantly, and $(\varphi_t)$-equivariantly embeds into the unit tangent bundle $T^1(X)$.
We conclude using Proposition~\ref{prop:QIflowbis}.
\end{proof}

\subsubsection{The Cartan projection for $L_\theta$ and the $\theta$-coset distance map}
\label{subsubsec:cart-proj-l_th}

Let $\theta\subset \Delta$ be a nonempty subset of the simple restricted roots of~$G$.
Recall the Levi subgroup $L_\theta$ of~$P_{\theta}$ from \eqref{eqn:levi}.
The group $K_\theta := K \cap L_\theta$ is a maximal compact subgroup of~$L_\theta$, and $L_\theta$ admits the Cartan decomposition $L_{\theta} = K_{\theta} (\exp\athetaplus) K_{\theta}$ where
\begin{equation} \label{eqn:thetachamber}
\athetaplus :=\{ Y\in \aaa \mid \langle\alpha,Y\rangle \geq 0 \quad
\forall \alpha \in \Delta \smallsetminus \theta\}.
\end{equation}
We denote by
\begin{equation} \label{eqn:defmuth}
\mu_\theta : L_\theta \longrightarrow  \athetaplus
\end{equation}
the corresponding Cartan projection of~$L_{\theta}$.
As in Section~\ref{subsubsec:Cartan-proj}, the map $\mu_{\theta}$ induces a homeomorphism $K_\theta \backslash L_\theta / K_\theta \simeq \athetaplus$.
The Weyl chamber $\athetaplus$ for~$L_{\theta}$ is convex and is a union of  $W$-translates of the Weyl chamber $\overline{\aaa}^+$ for~$G$.
We will sometimes use the following observation.

\begin{remark}\label{rem:mu-theta-equal-mu-l-in-L}
  For $l\in L_\theta$, if $\mu_\theta(l) \in \overline{\aaa}^+$ (equivalently if \(\langle \alpha, \mu_\theta(l) \rangle \geq 0\) for all \(\alpha\in \theta\)), then $\mu_\theta(l) = \mu(l)$. 
\end{remark}

\begin{example}\label{ex:a+theta}
Take $G=\GL_d(\KK)$ with $\KK$, $K$ and~$\overline{\aaa}^+$ as in Example~\ref{ex:roots}.
For $\theta=\{ \varepsilon_i-\varepsilon_{i+1}\}$, the set $\athetaplus$ consists of elements $\mathrm{diag}(t_1,\dots,t_d)\in\aaa$ with $t_j\geq t_{j+1}$ for all $j\in\{ 1,\dots,d-1\}\smallsetminus\{ i\}$.
\end{example}

The Cartan projection $\mu_{\theta}$ induces a Weyl-chamber-valued metric on the Riemannian symmetric space of~$L_\theta$:
\begin{equation*}
  \begin{array}{ccc}
  \dtheta :\ L_\theta/K_{\theta} \times L_\theta/K_{\theta} & \longrightarrow &
  \athetaplus \\
  \hspace{1cm} (g K_\theta, h K_\theta) & \longmapsto & \mu_\theta(g^{-1}h).
  \end{array}
\end{equation*}
This extends to a {\em coset distance} map on the set of pairs of elements of $G/ K_\theta$ projecting to the same element in $G/ L_\theta$.

\begin{definition}\label{defi:mutheta}
The \emph{$\theta$-coset distance} map is
\[ \begin{array}{ccc}
\dtheta :\ \{(gK_\theta, h K_\theta) \in G/K_\theta \times
  G/K_\theta \mid g L_\theta = h L_\theta\} & \longrightarrow &  \athetaplus\\
  (gK_\theta, hK_\theta)  & \longmapsto & \mu_\theta (g^{-1}h).
  \end{array} \]
\end{definition}

This map was introduced in the study of Anosov representations in \cite{Guichard_Wienhard_DoD}; it will play a crucial role in Section~\ref{sec:anos-repr-cart}.

\subsection{The Lyapunov projection and proximality in \(G/P_\theta\)}\label{subsec:proximal}

The natural projection associated with the Jordan decomposition is called the \emph{Lyapunov projection}; we denote it by $\lambda: G \to \overline{\aaa}^+$. 
Explicitly, any $g\in G$ can be written uniquely as the commuting product $g = g_h g_e g_u$ of a hyperbolic, an elliptic, and a unipotent element (see \eg \cite[Th.\,2.19.24]{Eberlein}). 
The conjugacy class of $g_h$ intersects $\exp(\overline{\aaa}^+)$ in a unique element $\exp(\lambda(g))$.
This projection can also be defined as a limit: for any $g\in G$,
\begin{equation} \label{eqn:lambda-lim-mu}
\lambda(g) = \lim_{n \to +\infty} \frac{1}{n} \mu( g^n).
\end{equation}
Note that by definition of the opposition involution, and similarly to \eqref{eqn:opp-inv-mu},
\begin{equation} \label{eqn:opp-inv-lambda}
\langle\alpha,\lambda(g)\rangle = \langle\alpha^{\star},\lambda(g^{-1})\rangle
\end{equation}
for all $\alpha\in\Sigma$ and $g\in G$.

\begin{example}
For $\KK=\RR$ or~$\CC$, let $G=\GL_d(\KK)$, and let $K\subset G$ and~$\overline{\aaa}^+$ be as in Example~\ref{ex:roots}.
Then the diagonal entries of $\lambda(g)$ are the logarithms of the moduli of the complex eigenvalues of~$g$, in nonincreasing order.
\end{example}

Let $\theta \subset \Delta$ be a nonempty subset of the simple restricted roots of~$G$.
We shall use the following terminology.

\begin{definition} \label{defi:prox-G/P}
An element $g\in G$ is \emph{proximal in $G/P_{\theta}$} if it satisfies any of the following two equivalent properties:
\begin{enumerate}
  \item\label{item:prox} $g$ has a fixed point $\xi^+_g\in G/P_{\theta}$ which is attracting, in the sense that the derivative at $\xi^+_g\in G/P_{\theta}$ of the action of $g$ on $G/P_{\theta}$ has spectral radius $<1$;
  \item\label{item:lambda>0} $\langle\alpha,\lambda(g)\rangle>0$ for all $\alpha\in\theta$.
\end{enumerate}
\end{definition}

The equivalence between \eqref{item:prox} and \eqref{item:lambda>0} is well known, and implicitly contained in \cite{Benoist}.
Since we could not find it explicitly in the literature, we shall provide a proof of it, as well as of the uniqueness of the attracting fixed point $\xi^+_g\in G/P_{\theta}$, in Proposition~\ref{prop:theta_emb}.\eqref{item:c_thtmb} below.
The basin of attraction of a proximal element $g$ in $G/P_{\theta}$ is described as follows (see Section~\ref{subsec:proof-theta-emb} for a proof).

\begin{lemma} \label{lem:attr-basin}
If $g\in G$ is proximal in $G/P_{\theta}$, then $g^{-1}$ is proximal in $G/P_{\theta}^-$ and $\lim_{n\to +\infty} g^n\cdot x=\xi^+_g$ for all $x\in G/P_{\theta}$ transverse to the attracting fixed point $\xi^-_{g^{-1}}$ of $g^{-1}$ in $G/P_{\theta}^-$.
\end{lemma}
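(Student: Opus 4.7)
The plan is to use the algebraic criterion of Definition~\ref{defi:prox-G/P}.\eqref{item:lambda>0} together with the Jordan decomposition $g = g_h g_e g_u$. After conjugation I may assume $g_h = \exp(Y)$ with $Y = \lambda(g)\in\overline{\aaa}^+$, so $\langle\alpha,Y\rangle > 0$ for all $\alpha\in\theta$; since any root $\alpha \in \Sigma_\theta^+$ writes as a nonnegative integer combination of $\Delta$ with at least one $\beta \in \theta$ appearing with positive coefficient, this forces $\langle\alpha,Y\rangle > 0$ for every $\alpha \in \Sigma_\theta^+$. Moreover, $g_e$ and $g_u$ commute with $g_h$, so they lie in its centralizer, whose Lie algebra is $\g_0 \oplus \bigoplus_{\alpha:\langle\alpha,Y\rangle=0} \g_\alpha \subset \mathrm{Lie}(L_\theta)$; hence $g \in L_\theta = P_\theta \cap P_\theta^-$, and $g$ fixes both $P_\theta \in G/P_\theta$ and $P_\theta^- \in G/P_\theta^-$.

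For the first claim, I would identify the tangent space of $G/P_\theta^-$ at $P_\theta^-$ with $\mathfrak{u}_\theta$ as an $L_\theta$-module. The derivative of the $g^{-1}$-action at this point, coming from the hyperbolic part, has eigenvalues $e^{-\langle\alpha, Y\rangle}$ for $\alpha \in \Sigma_\theta^+$, all of modulus $< 1$; the elliptic and unipotent parts contribute only modulus-one (resp.\ unipotent) factors. Hence $g^{-1}$ fixes $P_\theta^-$ attractively, so is proximal in $G/P_\theta^-$, and $\xi^-_{g^{-1}} = P_\theta^-$; symmetrically $\xi^+_g = P_\theta$.

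For the basin-of-attraction statement, by the Bruhat decomposition any $x \in G/P_\theta$ transverse to $P_\theta^-$ lies in the open cell $U_\theta^- \cdot P_\theta$ and can be written uniquely as $x = u\cdot P_\theta$ with $u \in U_\theta^- = \exp(\mathfrak{u}_\theta^-)$. Since $g \in L_\theta$ normalizes $U_\theta^-$, we get
\[
g^n \cdot x \;=\; (g^n u g^{-n}) \cdot P_\theta \;=\; \exp\!\bigl(\Ad(g)^n(\log u)\bigr) \cdot P_\theta,
\]
so the lemma reduces to the assertion that $\Ad(g)^n(\log u) \to 0$ in $\mathfrak{u}_\theta^-$.

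The main technical step is this last spectral estimate. Decomposing $\mathfrak{u}_\theta^- = \bigoplus_{\alpha\in\Sigma_\theta^+}\g_{-\alpha}$ and using that $g_h, g_e, g_u$ pairwise commute: $\Ad(g_h)$ acts on $\g_{-\alpha}$ as the scalar $e^{-\langle\alpha,Y\rangle}$, whose modulus is bounded above by some $\rho < 1$; $\Ad(g_e)$ has all eigenvalues of modulus one (being conjugate to an element of $K$) and commutes with $\Ad(g_h)$; and $\Ad(g_u)$ is unipotent and commutes with the other two. Hence $\|\Ad(g)^n|_{\mathfrak{u}_\theta^-}\| \leq C n^k \rho^n$ for some $C, k \geq 0$, which tends to zero, concluding the proof.
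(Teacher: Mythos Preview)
Your argument is correct and takes a genuinely different route from the paper's. The paper invokes Lemma~\ref{lem:theta-comp-exists} to get an irreducible $\theta$-proximal representation $(\tau,V)$ and then uses Proposition~\ref{prop:theta_emb} to transport everything into $\PP_{\RR}(V)$, where the statement becomes the elementary linear-algebra fact about a matrix with a simple top eigenvalue: the attracting line is the top eigenline, the repelling hyperplane is the sum of the remaining generalized eigenspaces, and anything off that hyperplane is attracted. Your approach is intrinsic --- Jordan decomposition, root-space grading, and the open Bruhat cell --- and never leaves~$G$. The paper's route is shorter given that Proposition~\ref{prop:theta_emb} is already available and needed elsewhere; yours is more self-contained and makes the contraction on $\mathfrak{u}_\theta^-$ completely explicit.

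One step needs a touch more care. From ``the centralizer of $g_h$ has Lie algebra contained in $\mathrm{Lie}(L_\theta)$'' you jump to ``$g\in L_\theta$'', but that centralizer need not be connected. For $g_u$ there is no issue: $g_u=\exp(N)$ with $N$ nilpotent and $\Ad(g_h)N=N$, so $N\in Z_\g(Y)\subset\mathrm{Lie}(L_\theta)$ and $g_u\in L_\theta$. For $g_e$, argue at the group level instead: $\Ad(g_e)$ commutes with $e^{\ad Y}$, hence preserves each $\ad Y$-eigenspace $\bigoplus_{\langle\alpha,Y\rangle=c}\g_\alpha$. Setting $\theta'=\{\alpha\in\Delta:\langle\alpha,Y\rangle>0\}\supset\theta$, one checks that the positive (resp.\ negative) eigenspaces sum exactly to $\mathfrak{u}_{\theta'}$ (resp.\ $\mathfrak{u}_{\theta'}^-$), so $g_e$ normalizes both and therefore lies in $P_{\theta'}\cap P_{\theta'}^-=L_{\theta'}\subset L_\theta$. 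With this adjustment your proof goes through as written.
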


We now prove the following.

\begin{lemma}\label{lem:prox}
An element $g\in G$ is proximal in $G/P_{\theta}$ if and only if for any $\alpha\in\theta$,
\begin{equation}\label{eqn:growth>2log}
\langle\alpha,\mu(g^n)\rangle -2\log n \underset{n\to +\infty}{\longrightarrow} +\infty. 
\end{equation}
\end{lemma}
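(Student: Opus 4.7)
The plan is to split the argument simple root by simple root. Both sides of the equivalence decouple over $\theta$: proximality in $G/P_\theta$ is characterized by $\langle\alpha,\lambda(g)\rangle>0$ for every $\alpha\in\theta$ (Definition~\ref{defi:prox-G/P}.\eqref{item:lambda>0}), while the growth hypothesis \eqref{eqn:growth>2log} is stated separately for each $\alpha\in\theta$. So I will reduce to the case $\theta=\{\alpha\}$. The ``only if'' direction is then immediate from \eqref{eqn:lambda-lim-mu}: the hypothesis $\langle\alpha,\lambda(g)\rangle>0$ forces $\langle\alpha,\mu(g^n)\rangle\sim n\langle\alpha,\lambda(g)\rangle$, which grows linearly in $n$ and therefore dominates $2\log n$.

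For the ``if'' direction, my plan is to reinterpret $\langle\alpha,\mu(g)\rangle$ as the log of a ratio of singular values of a finite-dimensional operator. Let $(\pi_\alpha,V_\alpha)$ be an irreducible linear representation of~$G$ with highest restricted weight~$\chi$ such that $\chi-\alpha$ is a weight of multiplicity one, and such that for every other weight $\chi'$ of $\pi_\alpha$, $\chi-\alpha-\chi'$ is a nonnegative integer combination of simple roots; such a representation is standard (see e.g.\ \cite[\S\,3]{Guichard_Wienhard_DoD}). Fix a Cartan involution of~$G$ fixing~$K$ pointwise and a compatible $K$-invariant Hermitian form on~$V_\alpha$. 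Then $\pi_\alpha(\exp Y)$ is positive self-adjoint for every $Y\in\aaa$, with eigenvalues $e^{\langle\chi',Y\rangle}$ as $\chi'$ runs over the weights of $\pi_\alpha$ (with multiplicities). Via the Cartan decomposition $h=k_1\exp(\mu(h))k_2$, the singular values of $\pi_\alpha(h)$ are thus exactly these $e^{\langle\chi',\mu(h)\rangle}$, and the defining property of $\pi_\alpha$ guarantees that the two largest are $e^{\langle\chi,\mu(h)\rangle}$ and $e^{\langle\chi-\alpha,\mu(h)\rangle}$, yielding
\[
\log\sigma_1\!\bigl(\pi_\alpha(h)\bigr) - \log\sigma_2\!\bigl(\pi_\alpha(h)\bigr) \;=\; \langle\alpha,\mu(h)\rangle
\]
for every $h\in G$. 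Setting $A:=\pi_\alpha(g)\in\GL(V_\alpha)$, the hypothesis rewrites as $\log\bigl(\sigma_1(A^n)/\sigma_2(A^n)\bigr)-2\log n\to +\infty$.

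I then argue by contradiction. Suppose $g$ is not proximal in $G/P_\alpha$, i.e.\ $\langle\alpha,\lambda(g)\rangle=0$. Then $\langle\chi,\lambda(g)\rangle=\langle\chi-\alpha,\lambda(g)\rangle$, meaning that the two top eigenvalues of~$A$ share the same modulus. A classical Jordan block analysis --- restricting to the sum of generalized eigenspaces of maximal modulus, factoring out the unitary part, and comparing the successive polynomial degrees of the singular values of the resulting unipotent operator --- yields a uniform bound $\sigma_1(A^n)/\sigma_2(A^n)\leq Cn^2$, contradicting the hypothesis.

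The main obstacle is the Jordan block estimate in the last step. The exponent~$2$ is sharp, and ultimately traces back to the fact that the singular values of a single iterated Jordan block $J_k(\lambda)^n$ are asymptotically $|\lambda|^n\bigl(n^{k-1},n^{k-3},\dots,n^{-(k-1)}\bigr)$, so successive singular values differ by a factor of $n^2$. This sharpness is what makes $2\log n$, rather than any other multiple of $\log n$, the correct threshold in the statement.
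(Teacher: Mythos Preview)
Your proof is correct and takes a genuinely different route from the paper's. The paper works intrinsically in~$G$: it uses the Jordan decomposition $g=g_hg_eg_u$ inside~$G$, reduces to bounding $\langle\beta,\mu(g_u^n)\rangle$ for a unipotent~$g_u$, and controls this via Jacobson--Morozov together with Kostant's lemma that $\langle\alpha,\mathrm{d}_e\tau(x)\rangle\in\{0,1/2,1\}$ for the standard $\mathfrak{sl}_2$-triple element~$x$. You instead linearize through an $\{\alpha\}$-proximal representation (this is exactly the paper's Lemma~\ref{lem:theta-comp-exists} plus Lemma~\ref{lem:tau}.\eqref{item:eps1-eps2}, so your formula $\log\sigma_1-\log\sigma_2=\langle\alpha,\mu(\cdot)\rangle$ is available without your stronger ``multiplicity one'' hypothesis on $\chi-\alpha$), and then argue by Jordan normal form inside $\GL_\KK(V_\alpha)$. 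Your approach is more elementary in that it avoids Kostant's lemma, and it makes the origin of the threshold~$2$ transparent via the ratio $n^{k-1}/n^{k-3}$ of consecutive Jordan-block singular values; the paper's approach is more uniform in that it directly bounds $\langle\beta,\mu(g^n)\rangle\leq 2\log n+C$ for every $\beta\in\Delta\smallsetminus\theta'$ at once.

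One phrasing to tighten: ``factoring out the unitary part'' is not quite right, since the semisimple part of $A|_W$ (eigenvalues on the unit circle) need not be unitary in the given inner product. The clean fix is to pass to a Jordan basis for $A|_W$: this changes all singular values by bounded multiplicative factors, the matrix becomes block-diagonal in Jordan blocks $J_{k_i}(\lambda_i)$ with $|\lambda_i|=1$, and each such block is unitarily conjugate (by a diagonal phase matrix) to $J_{k_i}(1)$, so your asymptotics $n^{k-1},n^{k-3},\dots$ apply block by block. With that adjustment the contradiction $\sigma_1(A^n)/\sigma_2(A^n)=O(n^2)$ goes through exactly as you outline.
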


Lemma~\ref{lem:prox} is based on the following claim.

\begin{claim} \label{cla:unip-growth}
For any unipotent element $u\in G$, there is a constant $C_u>0$ such that for all $\alpha\in\Delta$ and $n\in\NN^{\ast}$,
\begin{equation*} 
\langle\alpha,\mu(u^n)\rangle \leq 2 \log n + C_u.
\end{equation*}
\end{claim}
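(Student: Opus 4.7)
The plan is to encode the unipotent $u$ in an $\mathfrak{sl}_2$-structure and transfer the elementary singular-value asymptotics from $\SL_2(\RR)$ to~$G$. First, I would write $u = \exp X$ with $X \in \g$ nilpotent, and invoke the Jacobson--Morozov theorem to complete $X$ into an $\mathfrak{sl}_2$-triple $(H, X, Y)$ in~$\g$, \ie with $[H, X] = 2X$, $[H, Y] = -2Y$, $[X, Y] = H$. Since the semisimple element $H$ can be $G$-conjugated into~$\overline{\aaa}^+$, and since replacing $u$ by a conjugate $g_0^{-1} u g_0$ only shifts $\mu(u^n)$ by a uniformly bounded amount (Fact~\ref{fact:mu-subadditive}.\eqref{item:mu-strong-subadd}), I may assume $H \in \overline{\aaa}^+$.

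For every irreducible representation $\pi_\omega : G \to \GL(V_\omega)$ with highest weight~$\omega$ and $K$-invariant inner product, I would analyze $\pi_\omega(u^n) = \exp(n \, \pi_\omega(X))$ by decomposing $V_\omega$ under the $\mathfrak{sl}_2$-action induced by the triple. An elementary $\SL_2(\RR)$ computation shows that on any $\mathfrak{sl}_2$-irreducible summand of dimension $m+1$ the operator $\pi_\omega(u^n)$ has norm of order~$n^m$. Since $H \in \overline{\aaa}^+$, the maximum $\mathfrak{sl}_2$-weight in $V_\omega$ is $\omega(H)$ (because the highest $G$-weight $\omega$ realizes the maximum of $\chi(H)$ over all weights~$\chi$ of~$V_\omega$). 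Hence $\log \|\pi_\omega(u^n)\|_{\mathrm{op}} = \omega(H) \log n + O(1)$, and the standard identity $\log \|\pi_\omega(g)\|_{\mathrm{op}} = \langle \omega, \mu(g)\rangle$ then gives
\[ \langle \omega, \mu(u^n) \rangle \,=\, \omega(H)\, \log n \,+\, O(1) \]
for every highest weight $\omega$ of~$G$.

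As $u$ lies in the semisimple part~$G_s$ of~$G$, we have $\mu(u^n) \in \aaa_s$, and the highest weights of irreducibles of~$G_s$ span~$\aaa_s^*$; the preceding asymptotic therefore pins down
\[ \mu(u^n) \,=\, (\log n)\, H \,+\, O(1) \]
in~$\aaa$. Consequently, for each $\alpha \in \Delta$, $\langle \alpha, \mu(u^n)\rangle = \langle \alpha, H\rangle \log n + O(1)$. The proof closes with the classical theorem of Dynkin on weighted diagrams: for any $\mathfrak{sl}_2$-triple $(H, X, Y)$ in a real reductive Lie algebra with $H \in \overline{\aaa}^+$, the integers $\langle \alpha, H\rangle$ all lie in $\{0, 1, 2\}$ for $\alpha \in \Delta$. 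This yields the uniform bound $\langle \alpha, \mu(u^n)\rangle \leq 2 \log n + C_u$.

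The main nontrivial ingredient is the Dynkin label fact, a classical but deep result from the theory of nilpotent orbits in semisimple Lie algebras; the remaining steps are soft, combining Jacobson--Morozov, the subadditivity of~$\mu$ (Fact~\ref{fact:mu-subadditive}), and the elementary singular-value computation of $\bigl(\begin{smallmatrix}1 & n\\ 0 & 1\end{smallmatrix}\bigr)$ acting on each $\mathfrak{sl}_2$-irreducible.
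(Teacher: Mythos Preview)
Your proof is correct and follows the same overall strategy as the paper: use Jacobson--Morozov to produce an $\mathfrak{sl}_2$-triple $(H,X,Y)$ with $u=\exp X$, conjugate $H$ into $\overline{\aaa}^+$, show that $\mu(u^n)=(\log n)\,H+O(1)$, and conclude via the Dynkin/Kostant fact that $\langle\alpha,H\rangle\in\{0,1,2\}$ for all $\alpha\in\Delta$.

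The only substantive difference is in how you compute $\mu(u^n)$. The paper lifts the $\mathfrak{sl}_2$-triple to a homomorphism $\tau:\SL_2(\RR)\to G$, further conjugates so that $\tau(\SO(2))\subset K$, and then reads off $\mu(u^n)=t_n\,\mathrm{d}_e\tau(x)$ \emph{exactly} from the Cartan decomposition of $\SL_2(\RR)$, with $t_n=2\,\mathrm{argsinh}(n/2)\leq 2\log(n+1)$. You instead recover $\mu(u^n)$ up to $O(1)$ by testing against highest weights: the identity $\langle\omega,\mu(g)\rangle=\log\|\pi_\omega(g)\|_{\mathrm{op}}$ together with the $\mathfrak{sl}_2$-decomposition of $V_\omega$ gives $\langle\omega,\mu(u^n)\rangle=\omega(H)\log n+O(1)$, and since enough highest weights span $\aaa_s^*$ this pins down $\mu(u^n)$. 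The paper's route is shorter and yields an exact formula, but it relies on the (standard yet nontrivial) fact that an $\mathfrak{sl}_2$-triple can be conjugated to be compatible with the Cartan involution of~$G$; your route sidesteps this at the cost of a mild detour through representation theory. Both invoke the same classical bound on Dynkin labels (the paper cites Kostant, you cite Dynkin---same result).
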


\begin{proof}[Proof of Claim~\ref{cla:unip-growth}]
Consider the following two elements of $\mathfrak{sl}_2(\RR)$:
\[ x = \begin{pmatrix} 1/2 & 0\\ 0 & -1/2 \end{pmatrix}, \quad\quad e_+ = \begin{pmatrix} 0 & 1\\ 0 & 0 \end{pmatrix}. \]
Let $\mu_{\SL_2(\RR)} : \SL_2(\RR)\to\RRp x$ be the Cartan projection of $\SL_2(\RR)$ with respect to the Cartan decomposition $\SL_2(\RR) = \SO(2) (\exp\RRp x) \SO(2)$.
For any $n\in\NN$, an elementary computation shows that $\mu_{\SL_2(\RR)}(\exp n e_+) = t_n x$ where $t_n = 2\mathrm{argsinh} (\frac{n}{2}) \leq2\log (n+1)$.

Let $u\in G$ be unipotent.
By the Jacobson-Morozov theorem (see \eg \cite[Ch.\,VIII, \S\,11, Prop.\,2]{Bourbaki_Lie_79}), there is a homomorphism $\tau : \SL_2(\RR)\rightarrow G$ such that $\tau(\exp e_+)=u$.
Up to conjugating in~\(G\) (which only changes $\mu$ by a bounded additive amount, see Fact~\ref{fact:mu-subadditive}), we may assume that \(\mathrm{d}_e\tau(x)\in\overline{\aaa}^+\) and that $\tau(\SO(2))\subset K$.
Then $\mu(u^n) = t_n \mathrm{d}_e\tau(x)$, and so
\[ \langle\alpha,\mu(u^n)\rangle = t_n \, \langle \alpha, \mathrm{d}_e\tau(x)\rangle \leq 2\log(n+1) \, \langle \alpha, \mathrm{d}_e\tau(x)\rangle \]
for all \(\alpha\in\Delta\) and $n\in\NN$.
We conclude using the fact \cite[Lem.\,5.1]{Kostantsl2} that \(\langle \alpha, \mathrm{d}_e\tau(x)\rangle\in\{ 0, 1/2, 1\}\) for all \(\alpha\in\Delta\).
\end{proof}

\begin{proof}[Proof of Lemma~\ref{lem:prox}]
Let $g\in G$.
If $g$ is proximal in $G/P_{\theta}$, then \eqref{eqn:growth>2log} holds by \eqref{eqn:lambda-lim-mu}.
Conversely, suppose \(g\) satisfies \eqref{eqn:growth>2log}, and let
\[ \theta' := \{ \alpha\in\Delta~|~\langle\alpha,\lambda(g)\rangle>0\}. \]
It is sufficient to prove the existence of a constant $C>0$ such that for all $\beta\in\Delta\smallsetminus\theta'$ and $n\in\NN$,
\begin{equation} \label{eq:beta_logn}
 \langle\beta,\mu(g^n)\rangle \leq 2\log n + C.
\end{equation}
Indeed, then \eqref{eqn:growth>2log} shows that \(\theta\) and \(\Delta \smallsetminus \theta'\) do not intersect, hence \(\theta \subset \theta'\) and \(g\) is proximal in $G/P_{\theta}$ by definition of~\(\theta'\).

To prove \eqref{eq:beta_logn}, consider the Jordan decomposition \(g= g_h g_e g_u\) of~\(g\).
By Fact~\ref{fact:mu-subadditive}, $\Vert\mu(g^n)-\mu(g_h^n g_u^n)\Vert\leq\Vert\mu(g_e^n)\Vert$ is bounded, and so we may assume $g_e=1$.
Up to conjugation (which only changes $\mu$ by a bounded additive amount, by Fact~\ref{fact:mu-subadditive} again), we may assume that $g_h=\exp(\lambda(g))\in\overline{\aaa}^+$ and that $g_u\in\exp(\bigoplus_{\beta\in\Sigma^+} \g_{\beta})$.
Then $g_h$ belongs to the center of the group $L_{\theta'}$ of \eqref{eqn:levi}; the element $g_u$ commutes with~$g_h$, hence belongs to
\[ \exp\bigg(\bigoplus_{\beta\in\Sigma^+\cap\mathrm{span}(\Delta\smallsetminus\theta')} \g_{\beta}\bigg) \subset L_{\theta'} \,; \]
we have $\mu_{\theta'}(g^n) = n\,\lambda(g) + \mu_{\theta'}(g_u^n)$ for all $n\in\NN$.
By Fact~\ref{fact:mu-subadditive}, for any $\alpha\in\theta'$ and $n\in\NN$,
\[ |\langle\alpha, \mu_{\theta'}(g^n) - \mu_{\theta'}(g_h^n)\rangle| \leq \Vert\alpha\Vert \, \Vert\mu_{\theta'}(g^n) - \mu_{\theta'}(g_h^n)\Vert \leq \Vert\alpha\Vert \, \Vert\mu_{\theta'}(g_u^n)\Vert. \]
By Claim~\ref{cla:unip-growth} applied to $g_u\in L_{\theta'}$, the right-hand side grows logarithmically with~$n$, while $\langle\alpha, \mu_{\theta'}(g_h^n)\rangle = n\,\langle\alpha,\lambda(g)\rangle$ grows linearly; therefore, $\langle\alpha, \mu_{\theta'}(g^n)\rangle \geq 0$ for all large enough $n\in\NN$.
This holds for all $\alpha\in\theta'$, and so for all large enough $n\in\NN$ we have $\mu_{\theta'}(g^n)\in\overline{\aaa}^+$, hence $\mu(g^n)=\mu_{\theta'}(g^n)$ (see Remark~\ref{rem:mu-theta-equal-mu-l-in-L}).
In particular, for any $\beta\in\Delta\smallsetminus\theta'$ and any large enough $n\in\NN$,
\[ \langle\beta,\mu(g^n)\rangle = \langle\beta,\mu_{\theta'}(g^n)\rangle = \langle\beta, n\,\lambda(g) + \mu_{\theta'}(g_u^n)\rangle = \langle\beta,\mu_{\theta'}(g_u^n)\rangle. \]
By Claim~\ref{cla:unip-growth} again, there is a constant $C_{g_u}>0$ such that the right-hand side is $\leq 2\log n+C_{g_u}$ for all $n\in\NN^*$.
This proves \eqref{eq:beta_logn} and completes the proof.
\end{proof}

\subsection{Anosov representations}\label{subsec:anos-repr}

Finally, in this section we recall the definition and some properties of Anosov representations.
For more details and proofs we refer to \cite{Guichard_Wienhard_DoD}.

\begin{remark}
In this paper our convention for the notation of parabolic subgroups is different from the one adopted in \cite{Guichard_Wienhard_DoD}: definitions and statements involving $\theta \subset \Delta$ should be changed to $\Delta \smallsetminus \theta$ when compared with their versions in \cite{Guichard_Wienhard_DoD}.
\end{remark}

We now fix a word hyperbolic group~$\Gamma$ with flow space~$\mathcal{G}_\Gamma$, a nonempty subset $\theta\subset\Delta$ of the simple restricted roots of our reductive Lie group~$G$, and a representation $\rho : \Gamma \to G$.

\subsubsection{The dynamical definition}
\label{subsubsec:dynamical-defi}

The space
\[ \mathcal{E}(\rho) := \Gamma \backslash (\mathcal{G}_\Gamma \times G/L_\theta) \]
is a $G/L_\theta$-bundle over $\Gamma \backslash \mathcal{G}_\Gamma$.
The subbundles $E^+, E^-$ of $T(G/L_\theta)$ defined in \eqref{eqn:E+E-} induce vector bundles (still denoted by $E^+, E^-$) over $\mathcal{E}(\rho)$.
In fact, $E^+$ and $E^-$ are subbundles of the vertical tangent bundle 
\[ \vt \mathcal{E}(\rho):=\Gamma \backslash (\mathcal{G}_\Gamma \times T (G/L_\theta)). \]
The geodesic flow $\{\varphi_t\}_{t\in \RR}$ naturally acts on the products $\mathcal{G}_\Gamma \times G/L_\theta$ and $\mathcal{G}_\Gamma \times T (G/L_\theta)$ (leaving the second coordinate unchanged), hence on their quotients $\mathcal{E}(\rho) $ and $\vt \mathcal{E}(\rho)$; the subbundles $E^\pm$ are flow-invariant.

\begin{definition} \label{defi:ano1}
  The representation $\rho : \Gamma\to G$ is $P_\theta$\emph{-Anosov} if there is a continuous section $\sigma : \Gamma \backslash \mathcal{G}_\Gamma \to \mathcal{E}(\rho)$ with the following properties:
  \begin{enumerate}[label=(\roman*),ref=\roman*]
  \item $\sigma$ is flow-equivariant (\ie its image $F = \sigma(\Gamma \backslash \mathcal{G}_\Gamma)$ is flow-invariant);
  \item\label{item:E+} the action of the flow on the vector bundle $E^+ |_F\subset \vt \mathcal{E}(\rho)$ is dilating;
  \item\label{item:E-} the action of the flow on the vector bundle $E^- |_F\subset \vt \mathcal{E}(\rho)$ is contracting.
  \end{enumerate}
\end{definition}

It is known that \eqref{item:E+} implies \eqref{item:E-} \cite[Prop.\,3.16]{Guichard_Wienhard_DoD} and that
the section~$\sigma$ is unique.
This definition is useful to determine certain properties of Anosov representations, such as openness, or to define natural metrics on spaces of Anosov representations \cite{Bridgeman_Canary_Labourie_Sambarino}.

\subsubsection{Another equivalent definition}
\label{subsubsec:anoth-equiv-defin}

The following notions will be needed for various characterizations of Anosov representations.

\begin{definition}\label{defi:comp_trsvrs_maps}
  Two maps $\xi^+: \partial_{\infty}\Gamma \to G/ P_\theta$ and $\xi^-:\partial_{\infty}\Gamma \to G/ P_\theta^{-}$ are said~to~be
\begin{itemize}
  \item \emph{transverse} if for any $\eta \neq \eta'$ in~$\partial_{\infty}\Gamma$, the points $\xi^+(\eta)$ and $\xi^-(\eta')$ are transverse in the sense of Definition~\ref{defi:transverse-compatible}; 
  \item {\emph{dynamics-preserving}} for $\rho: \Gamma \to G$ if for any $\gamma\in\Gamma$ of infinite order with attracting fixed point $\eta^{+}_{\gamma} \in \partial_{\infty} \Gamma$, the point $\xi^+(\eta^{+}_{\gamma})$ (\resp $\xi^-(\eta^{+}_{\gamma})$) is an attracting fixed point
  for the action of $\rho(\gamma)$ on $G/P_{\theta}$ (\resp $G/P_{\theta}^-$).
\end{itemize}
\end{definition}

\begin{remarks}\label{rem:compatible}
\begin{enumerate}[label=(\alph*),ref=\alph*]
    \item \label{item:Ano-prox} If there exists a map $\xi^+ : \partial_{\infty}\Gamma\to G/P_{\theta}$ which is dynamics-preserving for~$\rho$, then $\rho(\gamma)$ is proximal in $G/P_{\theta}$ 
  for any $\gamma\in\Gamma$ of infinite order.
  \item \label{item:unique-bdry-maps} Continuous, dynamics-preserving boundary maps $\xi^+ : \partial_{\infty}\Gamma \to G/ P_\theta$ and $\xi^- : \partial_{\infty}\Gamma \to G/ P_\theta^{-}$ for~$\rho$, if they exist, are entirely determined on the dense subset consisting of the attracting fixed points $\eta^{+}_{\gamma}$ for $\gamma \in \Gamma$ of infinite order.
  As a consequence, such maps are necessarily unique and $\rho$-equivariant.
  Moreover, for any $\eta \in \partial_{\infty}\Gamma$, the points $\xi^+(\eta)\in G/P_{\theta}$ and $\xi^-(\eta)\in G/P_{\theta}^-$ are compatible in the sense of Definition~\ref{defi:transverse-compatible}: indeed, the attracting fixed points $\xi^+(\eta^{+}_{\gamma})$ and $\xi^-(\eta^{+}_{\gamma})$ are always compatible for $\gamma\in\Gamma$ of infinite order.
  \item \label{item:compat-transv->inj} If $\xi^+$ and~$\xi^-$ are continuous, dynamics-preserving for~$\rho$, and transverse, then they are both injective.
  Indeed, for any \(\eta\neq \eta'\) in \(\partial_\infty \Gamma\), the points $\xi^+(\eta)\in G/P_{\theta}$ and $\xi^-(\eta)\in G/P_{\theta}^-$ are compatible while the points $\xi^+(\eta')\in G/P_{\theta}$ and $\xi^-(\eta)\in G/P_{\theta}^-$ are transverse.
\end{enumerate}
\end{remarks}

A section $\sigma$ as in Definition~\ref{defi:ano1} is equivalent to a $\rho$-equivariant map $\tilde{\sigma} : \mathcal{G}_\Gamma \to G/L_\theta$.
Working out the properties of~$\tilde{\sigma}$, one obtains the following equivalent definition of Anosov representations (see \cite[Def.\,2.10]{Guichard_Wienhard_DoD}), which still makes use of the flow space $\mathcal{G}_\Gamma$ of~$\Gamma$ but avoids the language of bundles.
Recall the maps \(\varphi_{\pm\infty}\) from point~\eqref{item:10} of Section~\ref{subsubsec:flow-space-an} and the coset distance map \(\dtheta\) from Definition~\ref{defi:mutheta}.

\begin{definition}\label{defi:ano2}
  A representation $\rho: \Gamma \to G$ is $P_\theta$-Anosov if there exist continuous, $\rho$-equivariant maps $\xi^+: \partial_{\infty} \Gamma \to G/ P_\theta$ and $\xi^- : \partial_{\infty} \Gamma \to G/ P_\theta^{-}$ with the following properties:
  \begin{enumerate}[label=(\roman*),ref=\roman*]
  \item \label{item:ano-i}
 $\xi^+$ and $\xi^-$ are transverse.
  This implies that $\xi^+$ and $\xi^-$ combine, via \eqref{eqn:orbits}, to a continuous, $\Gamma$-equivariant, flow-invariant map
\[ \begin{array}{rcc}
    \tilde{\sigma} :\ \mathcal{G}_\Gamma & \longrightarrow & G/L_\theta\\
    v \hspace{0.15cm} & \longmapsto & \bigl( \xi^+\circ\varphi_{+\infty}(v),\ \xi^-\circ\varphi_{-\infty}(v) \bigr).
  \end{array} \]
  Such a map $\tilde{\sigma}$ always admits a continuous, $\Gamma$-equivariant lift
  \[\tilde{\beta} : \mathcal{G}_\Gamma \longrightarrow G/K_\theta,\]
  (this follows from the contractibility of $L_\theta/ K_\theta$), \ie $\pr \circ \tilde{\beta} = \tilde{\sigma}$.
  
    \item\label{item:ano-ii} 
  There exist $c,C > 0$ such that for all $\alpha \in \theta$, all $v \in \mathcal{G}_\Gamma$, and all $t \in \RR$,
    \[\bigl\langle \alpha,\, \dtheta \bigl(\tilde{\beta}(v),
    \tilde{\beta}(\varphi_t\cdot v)\bigr) \bigr\rangle \geq ct -C. \]
  \end{enumerate}
\end{definition}

This last inequality expresses the exponential contraction in Definition~\ref{defi:ano1}.
Note that the left-hand side of the inequality is nonnegative for $\alpha \in \Delta \smallsetminus \theta$ as well, by definition \eqref{eqn:thetachamber} of the range~$\athetaplus$ of \(\dtheta\).

The maps $\xi^+, \xi^-$ of an Anosov representation are always dynamics-preser\-ving (see \cite[Lem.\,3.1]{Guichard_Wienhard_DoD}); in particular, they are unique (Remark~\ref{rem:compatible}.\eqref{item:unique-bdry-maps}).

Let $\theta^{\star}\subset\Delta$ be the image of~$\theta$ under the opposition involution \eqref{eqn:opp-inv}.
By definition, the group $P_{\theta}^-$ is conjugate to $P_{\theta^{\star}}$, hence $G/P_{\theta}^-$ identifies with $G/P_{\theta^{\star}}$.
It is sometimes useful to reduce to the case that $\theta=\theta^{\star}$: this is always possible by the following fact.

\begin{fact}\cite[Lem.\,3.18]{Guichard_Wienhard_DoD} \label{fact:Anosov_opp}
A representation $\rho : \Gamma\to G$ is $P_{\theta}$-Anosov if and only if it is $P_{\theta\cup\theta^{\star}}$-Anosov.
\end{fact}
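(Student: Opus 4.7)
My plan is to handle the two implications of the equivalence separately.

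The easy direction ``$P_{\theta\cup\theta^\star}$-Anosov $\Rightarrow$ $P_\theta$-Anosov'' is a monotonicity statement under the inclusion $\theta\subset\theta\cup\theta^\star$. I would note that $P_{\theta\cup\theta^\star}\subset P_\theta$ and $P_{\theta\cup\theta^\star}^-\subset P_\theta^-$ give $G$-equivariant projections of flag varieties; composing $\xi^\pm_{\theta\cup\theta^\star}$ with these projections produces $\xi^\pm_\theta$, which inherit continuity, $\rho$-equivariance, dynamics-preservation, and transversality. The contraction condition of Definition~\ref{defi:ano2}.\eqref{item:ano-ii} for $\alpha\in\theta$ would then follow from the one for $\alpha\in\theta\cup\theta^\star$, after relating $d_{\mu_\theta}$ and $d_{\mu_{\theta\cup\theta^\star}}$ via the inclusion $L_{\theta\cup\theta^\star}\subset L_\theta$ of Levi subgroups.

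For the converse, the first step is to observe that $\rho$ is also $P_{\theta^\star}$-Anosov. I would exploit the isometric $\ZZ/2\ZZ$-action on the flow space $\mathcal{G}_\Gamma$ that anticommutes with the flow: precomposing the section $\sigma$ of Definition~\ref{defi:ano1} with this involution interchanges the roles of the dilated and contracted bundles, which via the canonical identification $P_\theta^-\simeq P_{\theta^\star}$ yields a $P_{\theta^\star}$-Anosov structure with boundary maps $\xi^\pm_{\theta^\star}\simeq\xi^\mp_\theta$. In particular the exponential contraction of Definition~\ref{defi:ano2}.\eqref{item:ano-ii} holds along every root in $\theta\cup\theta^\star$.

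Next, to refine the boundary maps, I would use $P_{\theta\cup\theta^\star}=P_\theta\cap P_{\theta^\star}$, so that the diagonal realizes $G/P_{\theta\cup\theta^\star}$ as a closed $G$-orbit in $G/P_\theta\times G/P_{\theta^\star}$ (the orbit of $(P_\theta,P_{\theta^\star})$; closed by compactness of $G/P_{\theta\cup\theta^\star}$). I would then tentatively define
\[ \xi^+_\cup(\eta) := \bigl(\xi^+_\theta(\eta),\, \xi^+_{\theta^\star}(\eta)\bigr) \in G/P_\theta \times G/P_{\theta^\star} \]
and verify that this pair always lies in the closed orbit. At each attracting fixed point $\eta^+_\gamma$, Lemma~\ref{lem:prox} combined with the $\star$-symmetry \eqref{eqn:opp-inv-lambda} yields proximality of $\rho(\gamma)$ in $G/P_{\theta\cup\theta^\star}$; its unique attracting fixed point there projects to the two coordinates of $\xi^+_\cup(\eta^+_\gamma)$, so the pair lies in the orbit. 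By density of $\{\eta^+_\gamma\}$ in $\partial_\infty\Gamma$ (Fact~\ref{fact:dyn_at_infty}.\eqref{item:3}), continuity of $\xi^\pm_\theta$, and closedness of the orbit, the inclusion extends to all $\eta$, giving a continuous $\xi^+_\cup:\partial_\infty\Gamma\to G/P_{\theta\cup\theta^\star}$; I would construct $\xi^-_\cup$ symmetrically.

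Transversality of $(\xi^+_\cup,\xi^-_\cup)$ will follow by projecting onto each factor (using the transversality of $\xi^\pm_\theta$ and of $\xi^\pm_{\theta^\star}$), and dynamics-preservation from uniqueness of the attracting fixed point of a proximal element. The exponential contraction of Definition~\ref{defi:ano2}.\eqref{item:ano-ii} on the full set $\theta\cup\theta^\star$ is then the combination of the contraction estimates provided by the $P_\theta$- and $P_{\theta^\star}$-Anosov structures. The main obstacle will be the final bookkeeping step that compares the coset distance maps $d_{\mu_\theta}$, $d_{\mu_{\theta^\star}}$, and $d_{\mu_{\theta\cup\theta^\star}}$ through the nested Levis $L_{\theta\cup\theta^\star}\subset L_\theta\cap L_{\theta^\star}$, ensuring that the two separate estimates assemble into the uniform contraction required with respect to the finer coset distance $d_{\mu_{\theta\cup\theta^\star}}$.
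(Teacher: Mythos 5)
The paper quotes this statement from \cite[Lem.\,3.18]{Guichard_Wienhard_DoD} without reproving it, so there is no internal proof to compare against. Your plan is correct, and both directions hinge on the same observations one would expect: projections of flag varieties for the easy direction, and uniqueness/density of attracting fixed points plus closedness of the orbit $G/P_{\theta\cup\theta^\star}\hookrightarrow G/P_\theta\times G/P_{\theta^\star}$ for the converse.

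Two of the points you flag as delicate are in fact cleaner than you anticipate. For the ``final bookkeeping'' comparing coset distance maps: since the Cartan decompositions of $L_{\theta\cup\theta^\star}\subset L_\theta$ can be taken compatibly (same $\aaa$, and $K_{\theta\cup\theta^\star}=K\cap L_{\theta\cup\theta^\star}\subset K_\theta$) and $\overline{\aaa}^+_{\theta\cup\theta^\star}\subset\overline{\aaa}^+_\theta$, uniqueness of the Cartan projection forces $\mu_{\theta\cup\theta^\star}(l)=\mu_\theta(l)=\mu_{\theta^\star}(l)$ for every $l\in L_{\theta\cup\theta^\star}$. Choosing one set-theoretic lift $\breve{\beta}$ of the $P_{\theta\cup\theta^\star}$-section, the increments $l_{t,v}=\breve{\beta}(v)^{-1}\breve{\beta}(\varphi_t\cdot v)$ lie in $L_{\theta\cup\theta^\star}=L_\theta\cap L_{\theta^\star}$ and simultaneously compute all three coset distances, so the contraction estimates from the $P_\theta$- and $P_{\theta^\star}$-Anosov structures simply hold side by side; this identity also handles the easy direction with no further work. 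For transversality of $(\xi^+_\cup,\xi^-_\cup)$ via the two projections: this is legitimate because the open $G$-orbit in $G/P_{\theta\cup\theta^\star}\times G/P_{\theta\cup\theta^\star}^-$ is exactly the preimage of the two open orbits, as $W_{\Delta\smallsetminus\theta}\cap W_{\Delta\smallsetminus\theta^\star}=W_{\Delta\smallsetminus(\theta\cup\theta^\star)}$ identifies the big Bruhat cells.
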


\begin{remark}\label{rem:xipm}
If $\theta=\theta^{\star}$, then the maps $\xi^+$ and~$\xi^-$ associated with an Anosov representation are equal (after identifying $G/P_{\theta}$ and $G/P_{\theta}^-$ with the set of parabolic subgroups of~$G$ conjugate to~$P_{\theta}$, see Remark~\ref{rem:pnorm}).
More generally, any continuous, dynamics-preserving boundary maps $\xi^+ : \partial_{\infty}\Gamma\to G/P_{\theta}$ and $\xi^- : \partial_{\infty}\Gamma\to G/P_{\theta}^-$ for an arbitrary representation $\rho : \Gamma\to G$ are equal, by uniqueness (Remark~\ref{rem:compatible}.\eqref{item:unique-bdry-maps}).
\end{remark}

\subsubsection{Examples and properties}
\label{subsubsec:examples-properties}

Examples of Anosov representations include:
\begin{enumerate}[label=(\alph*),ref=\alph*]
  \item\label{item:2} The inclusion of convex cocompact subgroups in semisimple Lie groups $G$ of real rank~$1$ \cite{Labourie_anosov, Guichard_Wienhard_DoD} (here $|\Delta|=1$, and so $P_{\Delta}$ is the only proper parabolic subgroup of~$G$ up to conjugacy);
  \item Representations of surface groups belonging to the Hitchin component, when $G$ is a split real semisimple Lie group \cite{Labourie_anosov}, \cite[Th.\,1.15]{Fock_Goncharov};
    \item Maximal representations of surface groups, when the Riemannian symmetric space of~$G$ is Hermitian \cite{Burger_Iozzi_Labourie_Wienhard, Burger_Iozzi_Wienhard_anosov};
  \item The inclusion of quasi-Fuchsian subgroups in $\SO(2,d)$ \cite{Barbot_Merigot_fusionPubli, BarbotDefAdSQF};
  \item Holonomies of compact convex $\PP^n(\RR)$-manifolds whose fundamental group is word hyperbolic \cite{Benoist_CD1}.
\end{enumerate}

\noindent Here are some basic properties of Anosov representations \cite{Labourie_anosov, Guichard_Wienhard_DoD}:
\begin{enumerate}
  \item The set of $P_\theta$-Anosov representations is open in $\Hom(\Gamma,G)$, invariant under conjugation by~$G$ at the target, and the map sending $\rho$ to its pair of boundary maps $(\xi^+,\xi^-)=:(\xi^{+}_{\rho},\xi^{-}_{\rho})$ is continuous.
  \item Any $P_\theta$-Anosov representation $\rho : \Gamma\to G$ is a quasi-isometric embedding (see Remark~\ref{rem:qimu} for an interpretation in terms of~$\mu$).
  \item \label{item:f-i-subgroup} Let $\Gamma'$ be a finite-index subgroup of~$\Gamma$.
  A representation $\rho : \Gamma\to G$ is $P_{\theta}$-Anosov if and only if its restriction to $\Gamma'$ is.\end{enumerate}
  
\begin{remark}\label{rem:rank1ccquasi}
Let $\Gamma$ be a finitely generated discrete group and $\rho : \Gamma\to G$ a representation.
If $G$ has real rank~$1$, then the following are actually equivalent (see \cite{Bourdon_conforme,BowditchGFV}):
\begin{itemize}
  \item \(\Gamma\) is word hyperbolic and $\rho$ is Anosov;
  \item the kernel of~$\rho$ is finite and the image of~$\rho$ is convex cocompact in~$G$;
  \item \(\Gamma\) is finitely generated and $\rho$ is a quasi-isometric embedding.
\end{itemize}
\end{remark}

\begin{remark}\label{rem:reductive-ss}
To check the Anosov property, it is always possible to restrict to a semisimple Lie group instead of a reductive one.
Indeed, if $G$ is reductive with center $Z(G)$, then the group $G':=G/Z(G)$ is semisimple, and $\Delta$ identifies with a simple system for~$G'$.
For $\theta\subset\Delta$, let $P'_{\theta}$ be the corresponding parabolic subgroup of~$G'$.
Then $G/P_{\theta}$ identifies with $G'/P'_{\theta}$.
A representation $\rho : \Gamma\to G$ is $P_{\theta}$-Anosov if and only if the induced representation $\rho' : \Gamma\to G'$ is $P'_{\theta}$-Anosov.
Similarly, \(\rho\) admits equivariant (\resp continuous, \resp transverse, \resp dynamics-preserving) boundary maps if and only if \(\rho'\) does.
\end{remark}

\subsubsection{Semisimple representations of discrete groups}\label{subsubsec:rho-non-ss}

Let $\rho : \Gamma\to G$ be a representation of the discrete group $\Gamma$ into the reductive Lie group~$G$.
Recall that $\rho$ is called \emph{semisimple} if the Zariski closure of $\rho(\Gamma)$ in~$G$ is reductive
or, equivalently, if for any linear representation $\tau : G\to\GL(V)$ we can write $V$ as a direct sum of irreducible $(\tau\circ\rho)(\Gamma)$-modules.

\begin{remark}
  \label{rem:semisimple_finiteindex}
  If \(\Gamma'\) is a finite-index subgroup of~\(\Gamma\), then \(\rho\) is semisimple if and only if \(\rho|_{\Gamma'}\) is semisimple. 
\end{remark}

For a general representation $\rho : \Gamma\to G$, we define the semisimplification of~$\rho$ as follows.
Let $H$ be the Zariski closure of $\rho(\Gamma)$ in~$G$.
Choose a Levi decomposition $H=L\ltimes\nolinebreak R_u(H)$, where $R_u(H)$ is the unipotent radical of~$H$.
The composition of~$\rho$ with the projection onto~$L$ does not depend, up to conjugation by $R_u(H)$, on the choice of the Levi factor~$L$.
We shall call this representation the \emph{semisimplification} of~$\rho$, denoted by~$\rho^{ss}$.
The $G$-orbit of~$\rho^{ss}$ in \(\Hom(\Gamma, G)\) (for the action of~$G$ by conjugation at the target) is the unique closed orbit in the closure of the $G$-orbit of~$\rho$.

As a consequence of the openness of the set of Anosov representations, a representation is Anosov as soon as its semisimplification is.
The converse is also true, and will be proved in Section~\ref{subsec:char-terms-lyap}.

\begin{proposition}\label{prop:semisimplification_anosov}
  Let $\Gamma$ be a word hyperbolic group, $G$ a real reductive Lie group, $\theta \subset \Delta$ a nonempty subset of the simple restricted roots of~$G$, and $\rho: \Gamma \to G$ a representation.
  If a representation $\rho'$ belonging to the closure of the $G$-orbit of~$\rho$ in $\Hom(\Gamma,G)$ is $P_{\theta}$-Anosov, then the representation $\rho$ itself is $P_{\theta}$-Anosov.
  In particular, if the semisimplification of $\rho$ is $P_{\theta}$-Anosov, then $\rho$ is $P_{\theta}$-Anosov.
\end{proposition}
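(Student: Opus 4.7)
The plan is to use only two ingredients, both recalled in Section~\ref{subsubsec:examples-properties}: openness of the Anosov locus in $\Hom(\Gamma,G)$, and invariance of this locus under $G$-conjugation at the target. The statement about the semisimplification will then follow immediately from the first (more general) statement, since by construction $\rho^{ss}$ lies in the closure of the $G$-orbit of $\rho$ in $\Hom(\Gamma,G)$.

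Concretely, suppose $\rho'$ belongs to the closure of the $G$-orbit of~$\rho$ and is $P_\theta$-Anosov. By definition of this closure, there exists a sequence $(g_n)_{n\in\NN}$ in~$G$ such that the conjugates $g_n\rho g_n^{-1}$ converge to~$\rho'$ in $\Hom(\Gamma,G)$. By openness of the set of $P_\theta$-Anosov representations, there exists $n_0$ such that $g_{n_0}\rho g_{n_0}^{-1}$ is $P_\theta$-Anosov. Conjugating back by $g_{n_0}^{-1}$, and using invariance of the Anosov property under conjugation by~$G$, we conclude that $\rho$ itself is $P_\theta$-Anosov.

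There is essentially no obstacle: the entire argument rests on two facts already stated in the preliminaries. I note that this proof is genuinely one-directional: the converse implication ``$\rho$ Anosov $\Rightarrow$ $\rho^{ss}$ Anosov'' is the substantive content of Proposition~\ref{prop:Ano_iff_ssAno}, and will require a different argument (carried out later, as announced in Section~\ref{subsec:char-terms-lyap}). In particular, nothing in the present proof shows that the Anosov property is a closed condition in~$\Hom(\Gamma,G)$, nor does it need to: we only need that the Anosov locus is open and $G$-invariant, and that $\rho^{ss}$ lies in $\overline{G\cdot\rho}$.
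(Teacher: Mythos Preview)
Your proof is correct and essentially identical to the paper's own argument: both use only openness of the $P_\theta$-Anosov locus and its invariance under $G$-conjugation, noting that some conjugate of $\rho$ must land in the open Anosov neighborhood of~$\rho'$. The paper phrases it via a neighborhood $\mathcal{U}$ rather than a sequence $(g_n)$, but this is a cosmetic difference.
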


\begin{proof}
Recall from Section~\ref{subsubsec:examples-properties} that being $P_{\theta}$-Anosov is an open property which is invariant under the action of $G$ on $\Hom(\Gamma,G)$.
Let $\rho'\in\Hom(\Gamma,G)$ be $P_{\theta}$-Anosov.
There is a neighborhood $\mathcal{U}\subset\Hom(\Gamma,G)$ of~$\rho'$ consisting of $P_{\theta}$-Anosov representations.
If $\rho'$ belongs to the closure of the $G$-orbit of~$\rho$, then $\rho$ admits a conjugate in~$\mathcal{U}$, hence $\rho$ is $P_{\theta}$-Anosov.
\end{proof}

Finally, semisimplification does not change the values taken by the Lyapunov projection $\lambda$ of Section~\ref{subsec:proximal}:

\begin{lemma}\label{lem:lyapu_not_changed}
  Let $\rho: \Gamma \to G$ be a homomorphism from a group $\Gamma$ into a reductive Lie group~$G$, and let $\lambda : G\to\overline{\aaa}^+$ be a Lyapunov projection for~$G$.
  Then the semisimplification $\rho^{ss}$ of~$\rho$ satisfies, for all $\gamma\in\Gamma$,
  \[\lambda( \rho^{ss}(\gamma)) = \lambda( \rho(\gamma)).\]
\end{lemma}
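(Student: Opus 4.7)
The strategy is to compare the Jordan decompositions of $\rho(\gamma)$ and $\rho^{ss}(\gamma)$ inside the Zariski closure $H := \overline{\rho(\Gamma)}^{\mathrm{Zar}} \subset G$, and then exploit the fact that $\lambda$ is a $G$-conjugacy invariant depending only on the semisimple part. Write $H = L \ltimes R_u(H)$, with projection $p : H \to L$; by definition of~$\rho^{ss}$, one has $\rho(\gamma) = \rho^{ss}(\gamma) \cdot u$ for some $u \in R_u(H)$, and $\rho^{ss}(\gamma) = p(\rho(\gamma))$.

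First I would set up the algebraic Jordan decomposition. Let $\rho(\gamma) = s \cdot n$ with $s\in H$ semisimple, $n\in H$ unipotent, and $sn = ns$; similarly, let $\rho^{ss}(\gamma) = l_s \cdot l_u$ be the Jordan decomposition in~$L$ (which coincides with the one in~$G$, since Jordan decomposition is preserved by inclusions of algebraic groups). I would then invoke two classical facts about linear algebraic groups in characteristic zero: (i) the Jordan decomposition commutes with algebraic morphisms, so $p(s) = p(\rho(\gamma))_s = \rho^{ss}(\gamma)_s = l_s$; and (ii) every semisimple element of~$H$ lies in some Levi subgroup, and any two Levi subgroups of~$H$ are $R_u(H)$-conjugate (Mostow). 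Combining (i) and (ii), there exists $r \in R_u(H)$ with $r^{-1} s r \in L$; applying~$p$ gives $r^{-1} s r = p(r^{-1} s r) = p(s) = l_s$, so $s$ is $G$-conjugate to~$l_s$.

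Finally I would conclude using two elementary observations. First, $\lambda$ is $G$-conjugation invariant: this follows from the formula $\lambda(g) = \lim_n \mu(g^n)/n$ of \eqref{eqn:lambda-lim-mu} combined with Fact~\ref{fact:mu-subadditive}.\eqref{item:mu-strong-subadd}, which shows that $\mu$ changes by a uniformly bounded additive amount under $G$-conjugation. Hence $\lambda(s) = \lambda(l_s)$. Second, $\lambda(g_0) = \lambda(s_0)$ whenever $g_0 = s_0 n_0$ is a commuting product of a semisimple element~$s_0$ and a unipotent element~$n_0$: indeed $g_0^k = s_0^k n_0^k$ for all $k\in\NN$, and by Fact~\ref{fact:mu-subadditive}.\eqref{item:16} together with Claim~\ref{cla:unip-growth} one has $\Vert\mu(g_0^k) - \mu(s_0^k)\Vert \leq \Vert\mu(n_0^k)\Vert = O(\log k)$, whence division by~$k$ and passage to the limit yields $\lambda(g_0) = \lambda(s_0)$. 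Applied to $\rho(\gamma) = s n$ and $\rho^{ss}(\gamma) = l_s l_u$, this gives $\lambda(\rho(\gamma)) = \lambda(s) = \lambda(l_s) = \lambda(\rho^{ss}(\gamma))$, as desired.

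The only nonelementary input is the pair of facts (i) and~(ii) about algebraic groups. Both are entirely standard in the characteristic-zero setting, so I do not anticipate a serious obstacle; I regard them as the key structural ingredient that makes the argument work.
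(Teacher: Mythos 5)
Your proof is correct, but it takes a genuinely different route from the paper's. The paper's proof is a two-line argument: $\rho^{ss}$ lies in the closure of the $G$-orbit of $\rho$ in $\Hom(\Gamma,G)$, and $\lambda$ is continuous and $G$-conjugation-invariant, so $\lambda\circ\rho^{ss}$ is the limit of $\lambda\circ\rho_n = \lambda\circ\rho$ for a sequence of conjugates $\rho_n \to \rho^{ss}$. You instead work at the level of algebraic group theory: you show directly that the semisimple part of $\rho(\gamma)$ and the semisimple part of $\rho^{ss}(\gamma) = p(\rho(\gamma))$ are $R_u(H)$-conjugate, using functoriality of the Jordan decomposition and the Levi--Mostow conjugacy theorem, and then reduce to conjugation-invariance of $\lambda$ and the fact that $\lambda$ depends only on the semisimple (indeed hyperbolic) part. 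What the paper's approach buys is brevity, at the cost of invoking continuity of $\lambda$ (true, but not entirely trivial -- it comes down to continuity of spectral radii in the fundamental representations); what your approach buys is independence from that continuity fact, at the cost of heavier algebraic inputs (functoriality of Jordan decomposition, Levi conjugacy over $\RR$). One small simplification to your final step: the identity $\lambda(s_0 n_0) = \lambda(s_0)$ for a commuting semisimple--unipotent pair is immediate from the paper's definition of $\lambda$ via the hyperbolic part in the refined Jordan decomposition $g = g_h g_e g_u$ (the hyperbolic part of $s_0 n_0$ is exactly the hyperbolic part of $s_0$), so you don't actually need the Cartan-projection estimate via Claim~\ref{cla:unip-growth} -- though that estimate also works. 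Also, to go from Claim~\ref{cla:unip-growth} (which bounds $\langle\alpha,\mu(u^n)\rangle$ for simple roots $\alpha$) to a bound on $\Vert\mu(u^n)\Vert$, you should note that unipotent elements lie in the derived group $G_s$, so $\mu(u^n)\in\overline{\aaa}^+\cap\aaa_s$, where $\Delta$ spans the dual -- but this is a minor point and the conclusion is correct.
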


\begin{proof}
  There is a sequence $(\rho_n)_{n\in \NN}$ of conjugates of $\rho$ converging to $\rho^{ss}$.
  The map~$\lambda$ is invariant under conjugation and continuous.
\end{proof}

\begin{remark}\label{rem:Ano-not-ss}
There exist $P_{\theta}$-Anosov representations that are not semisimple.
For instance, let $\Gamma$ be a free group and $\rho : \Gamma\to\SL_2(\RR)$ a convex cocompact representation.
By embedding $\SL_2(\RR)$ into the upper left corner of $\SL_3(\RR)$, we see $\rho$ as a representation of $\Gamma$ into $G = \SL_3(\RR)$.
It easily follows from Theorem~\ref{thm:char_ano} that $\rho : \Gamma\to G$ is $P_{\Delta}$-Anosov. 
By embedding $\RR^2$ into the upper right corner of $\SL_3(\RR)$, any $\rho$-cocycle $\zeta : \Gamma\to\RR^2$ defines a representation $\rho_{\zeta} : \Gamma\to G=\SL_3(\RR)$ with semisimplification $\rho_{\zeta}^{ss} = \rho$.
This representation is $P_{\theta}$-Anosov by Proposition~\ref{prop:semisimplification_anosov}; it is semisimple only if \(\zeta\) is a \(\rho\)-coboundary.
\end{remark}

\section{Reducing to Anosov representations into $\GL_{\KK}(V)$}\label{sec:repr-semis-lie}

Let $G$ be a reductive Lie group and $\theta\subset\Delta$ a nonempty subset of the simple restricted roots of~$G$.
In this section we explain that there exist (infinitely many) finite-dimensional linear representations $(\tau,V)$ of~$G$ over $\KK=\RR$, $\CC$, or~$\bH$ with the property that a homomorphism $\rho:\Gamma\rightarrow G$ is $P_{\theta}$-Anosov if and only if the composed homomorphism $\tau\circ\rho : \Gamma \rightarrow\GL_{\KK}(V)$ is Anosov with respect to the stabilizer of a line (Lemma~\ref{lem:theta-comp-exists} and Proposition~\ref{prop:theta-comp-Anosov}).
This will be used with $\KK=\RR$ in the proofs of Section~\ref{sec:boundary}: it will make computations simpler by reducing them to the group $\GL_{\RR}(V)$.
Certain technical lemmas, and the possibility of working with \(\KK=\CC\) or~\(\bH\), will also be used in Section~\ref{sec:proper}.

The section is organized as follows. 
In Section~\ref{subsec:weights} we introduce some notation.
In Section~\ref{subsec:theta-compat} we introduce the notion of \emph{$\theta$-proximal} linear representation of~$G$; we state that irreducible $\theta$-proximal representations $(\tau,V)$ exist in abundance and make the link with Anosov representations.
The core of the proofs lies in Section~\ref{subsec:proof-theta-emb}.

\emph{All linear representations in this paper are understood to be finite-di\-men\-sional.}

\subsection{Notation: Restricted weights of linear representations of~$G$}
\label{subsec:weights}

Let $G$ be a real reductive Lie group as in Section~\ref{subsec:some-structure-semi}.
We use the notation of Section~\ref{sec:preliminaries}:
in particular, we denote by $(\cdot,\cdot)$ a $W$-invariant scalar product on~$\aaa$ and by $\Vert\cdot\Vert$ the induced Euclidean norm on~$\aaa$; we use the same symbols for the induced scalar product and norm on~$\aaa^{\ast}$.
Any linear representation $(\tau,V)$ of~$G$ decomposes under the action of $\aaa$; the joint eigenvalues (elements of~$\aaa^*$) are called the \emph{restricted weights} of $(\tau,V)$.
The union of the restricted weights of all linear representations of~$G$ is the set
\[\Phi = \left \{ \alpha \in \aaa^* \: \middle | \: 2\frac{(\alpha, \beta)}{(\beta, \beta)} \in \ZZ \quad \forall \beta \in \Sigma \right \} \]
which projects to a lattice of~$\aaa_{s}^{\ast}$; the set $\Phi$ is discrete if and only if $G$ is semisimple.
Let $\z(\g)^0\subset\aaa^*$ be the annihilator of~$\z(\g)$, \ie the subspace of~$\aaa^*$ consisting of linear forms vanishing on~$\z(\g)$; it identifies with the dual $\aaa_{s}^{*}$ of $\aaa_s$.
Similarly, let $\aaa_{s}^0\subset\aaa^*$ be the annihilator of~$\aaa_s$.
For any $\alpha\in\Delta$, let $\omega_{\alpha}\in \z(\g)^0\subset\aaa^*$ be the \emph{fundamental weight} associated with~$\alpha$, defined by 
\begin{equation}\label{eqn:9}
2\frac{( \omega_{\alpha}, \beta)}{(\beta, \beta)} = \delta_{\alpha, \beta} \quad \text{for all} \
\beta \in \Delta
\end{equation}
where $\delta_{\cdot, \cdot}$ is the Kronecker symbol.
Then 
\[\Phi = \aaa_s^0 + \sum_{\alpha \in \Delta} \ZZ \omega_\alpha, \]
and $(\omega_\alpha)_{\alpha \in \Delta}$ projects to a basis of~$\aaa_s^*$.
The set of \emph{dominant} weights is the semigroup $\Phi^+ = \aaa_s^0 + \sum_{\alpha \in \Delta} \NN \omega_\alpha$.
The cone generated by the positive roots (or, equivalently, by the simple roots) determines a partial ordering on~\(\aaa^*\), given by
\begin{equation*}
\nu \leq \nu' \quad \Longleftrightarrow \quad \nu' - \nu \in \sum_{\alpha \in \Sigma^+} \RRp \alpha = \sum_{\alpha \in \Delta} \RRp \alpha.
\end{equation*}
Given an \emph{irreducible} linear representation $(\tau,V)$ of~$G$, the set of restricted weights of $\tau$ admits, for that ordering, a unique maximal element (see \eg \cite[Cor.\,3.2.3]{GoodmanWallach}), which is a dominant weight called the \emph{highest weight} of~$\tau$; we denote it by~$\chi_{\tau}$.

\subsection{Compatible and proximal linear representations of~$G$}\label{subsec:theta-compat}

For $\KK=\RR$, $\CC$ or~$\bH$, let $V$ be a finite-dimensional (right) \(\KK\)-vector space.
Recall that an endomorphism $g \in \GL_{\KK}(V)$ is said to be \emph{proximal in} $\PP_\KK(V) = (V-\{0\})/\KK^*$ if it has a unique eigenvalue of maximal modulus and if the corresponding eigenspace is one-dimensional.
This coincides with both Definitions~\ref{defi:prox-G/P}.\eqref{item:prox} and~\ref{defi:prox-G/P}.\eqref{item:lambda>0} for $G=\GL_\KK(V)$ and $P_{\theta}$ the stabilizer of a line in~$V$.
The eigenspace corresponding to the highest eigenvalue then gives rise to a unique attracting fixed point $\xi_g^+\in\PP_\KK(V)$ for the action of $g$ on $\PP_\KK(V)$.
There is a unique complementary hyperplane $H_g^-$ which is stable under~$g$, and $\lim_{n \to +\infty} g^n\cdot x = \xi_g^+$ for all $x \in \PP_\KK(V)\smallsetminus\nolinebreak\PP_\KK(H_g^-)$. 

Let $G$ be a reductive group as above.
We shall say that a linear representation $\tau : G \to \GL_\KK(V)$ is \emph{proximal} if the group $\tau(G) \subset \GL_\KK(V)$ contains an element which is proximal in $\PP_\KK(V)$.
For irreducible~$\tau$, this is equivalent to the highest-weight space $V^{\chi_{\tau}}\subset V$ being a line.

We introduce the following notions. 

\begin{definition} \label{defi:theta-compatible}
Let $\theta \subset \Delta$ be a nonempty subset of the simple restricted roots of~$G$.
An irreducible representation $\tau : G\to\GL_{\KK}(V)$ with highest weight~$\chi_{\tau}$ is
\begin{enumerate}
   \item \emph{$\theta$-compatible} if
  \[ \{\alpha \in \Delta \mid (\chi_\tau, \alpha) > 0\} = \theta \, ;\]
\item \emph{\(\theta\)-proximal} if it is proximal and \(\theta\)-compatible.
\end{enumerate}
\end{definition}

Since the highest weight $\chi_{\tau}$ of any irreducible representation $(\tau,V)$ belongs to $\Phi_+ = \aaa_s^0 + \sum_{\alpha \in \Delta} \NN \omega_\alpha$, we have that $(\tau,V)$ is $\theta$-compatible if and only if
\[ \chi_\tau \in \aaa_s^0 + \sum_{\alpha\in\theta} \NN^{\ast} \, \omega_{\alpha} . \]
We shall use the following fact.

\begin{lemma}\label{lem:theta-comp-exists}
For any real reductive Lie group $G$, there is an integer \(N\geq 1\) such that any \(\chi \in N \sum_{\alpha \in \Delta} \NN \omega_\alpha\) is the highest weight of some irreducible proximal linear representation \((\tau,V)\) of~$G$.
By definition, such a representation \((\tau,V)\) is \(\theta\)-compatible (for some nonempty subset \(\theta\) of \(\Delta\)) if and only if \(\chi \in \sum_{\alpha \in \theta} \NN^{\ast} \omega_\alpha\).
\end{lemma}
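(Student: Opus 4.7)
The strategy is to realize each $\chi\in N\sum_{\alpha\in\Delta}\NN\omega_\alpha$ as the highest weight of a \emph{Cartan product} of ``fundamental'' proximal representations, one for each simple restricted root.

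First I would reduce to the case where $G$ is semisimple. Since $G$ is the almost product of $Z(G)_0$ and $G_s=D(G)$ and $\aaa^*=\aaa_s^0\oplus\z(\g)^0$, any $\nu\in\aaa_s^0$ extends (possibly after passing to a finite cover) to a one-dimensional representation of $G$, which is trivially proximal and has highest weight $\nu$. Tensoring with such characters shifts the highest weight by any element of $\aaa_s^0$ without affecting irreducibility or proximality. It is therefore enough to construct, for every $\chi\in\sum_{\alpha\in\Delta}\NN\omega_\alpha$ sufficiently divisible, an irreducible proximal representation of $G$ with highest weight $\chi$.

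The key input is the existence, for each simple restricted root $\alpha\in\Delta$, of an irreducible linear representation $(\tau_\alpha,V_\alpha)$ of $G$ over some $\KK_\alpha\in\{\RR,\CC,\bH\}$ whose highest restricted weight is $N_\alpha\omega_\alpha$ for some positive integer $N_\alpha$, and whose highest restricted weight space is one-dimensional (equivalently, $\tau_\alpha$ is proximal). For a split real form this holds with $N_\alpha=1$: every absolutely irreducible representation of highest weight $\omega_\alpha$ is defined over $\RR$ and is proximal. In general this is a classical but nontrivial fact from the theory of irreducible linear representations of real semisimple Lie groups; it can be extracted from Tits' classification (via Satake diagrams) of irreducible representations of semisimple algebraic groups over arbitrary fields.

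Once these fundamental representations are in hand, set $N:=\mathrm{lcm}\{N_\alpha:\alpha\in\Delta\}$. For any $\chi\in N\sum_{\alpha\in\Delta}\NN\omega_\alpha$, write $\chi=\sum_{\alpha}k_\alpha N_\alpha\omega_\alpha$ with $k_\alpha\in\NN$, and form the real tensor product
\[
W\ :=\ \bigotimes_{\alpha\in\Delta}V_\alpha^{\otimes k_\alpha}
\]
with its diagonal $G$-action. The tensor of the highest weight vectors of the factors is an $\aaa$-weight vector of weight $\chi$, annihilated by the positive root spaces. By standard highest-weight theory, it generates an irreducible $G$-sub\-representation $V_\chi\subset W\otimes_{\RR}\CC$ (the ``Cartan component'') whose highest weight is $\chi$ and whose highest-weight space is the tensor product of the one-dimensional highest-weight lines of the factors, hence one-dimensional. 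Its endomorphism ring determines a field $\KK\in\{\RR,\CC,\bH\}$ over which $V_\chi$ is irreducible and proximal, providing the desired $(\tau,V)$.

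The second assertion of the lemma is then immediate from the defining identity \eqref{eqn:9}: for $\chi=\sum_\beta n_\beta\omega_\beta$ with $n_\beta\in\NN$, the inner product $(\chi,\alpha)$ is a positive multiple of $n_\alpha$, so $\{\alpha\in\Delta:(\chi,\alpha)>0\}=\theta$ exactly when $\chi\in\sum_{\alpha\in\theta}\NN^*\omega_\alpha$.

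The main obstacle is the middle step: producing the fundamental proximal representations $(\tau_\alpha,V_\alpha)$ uniformly in $\alpha\in\Delta$. When $G$ is non-split, the restricted root system is strictly smaller than the absolute root system, several absolute weights can restrict to the same restricted weight, and the restricted highest-weight space of an irreducible complex representation can have multiplicity greater than one; recovering both an integral highest weight in $\NN^*\omega_\alpha$ and proximality is what forces the introduction of the factor $N_\alpha$ and requires a careful inspection of the Satake diagram of $G$.
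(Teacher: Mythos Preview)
Your approach via Cartan products of ``fundamental'' proximal representations is a legitimate and classical alternative to the paper's argument, but the proposal as written leaves two genuine gaps.

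First, the disconnectedness of $G$ is not addressed. You reduce to $G$ semisimple but never to $G$ connected, and your key input asks for irreducible proximal representations of $G$ itself with highest weight $N_\alpha\omega_\alpha$. When $G$ is disconnected (even under the paper's standing hypothesis $\Ad(G)\subset\Aut(\g)_0$), an irreducible representation of $G$ whose restriction to $G_0$ has one-dimensional highest-weight space need not itself be proximal: the component group can permute several copies of the $G_0$-highest-weight line. The paper confronts this explicitly: it first obtains, via Abels--Margulis--Soifer and Helgason (see \cite[\S 2.3]{Benoist_autcon}), proximal representations of the connected adjoint group $H=\Ad(G_0)$ for every $\chi_1\in 2\sum_\alpha\NN\omega_\alpha$, then induces to $G$ (which multiplies the highest-weight multiplicity by $p=[G:G_0]$), and finally passes to the subrepresentation of $\Lambda^p V_2$ generated by the top exterior power of the highest-weight space, recovering proximality at the cost of replacing $\chi_1$ by $p\chi_1$. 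This yields $N=2p$. Your Cartan-product construction would need the same induction/exterior-power step, applied either to each $\tau_\alpha$ or to the final Cartan product.

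Second, as you yourself flag, the existence of the fundamental proximal representations $(\tau_\alpha,V_\alpha)$ is asserted rather than proved. You are right that this can be extracted from Tits' theory, but the paper sidesteps the issue entirely: the cited result of Abels--Margulis--Soifer and Helgason delivers \emph{all} of $2\sum_\alpha\NN\omega_\alpha$ at once for the connected case, so no Cartan-product bookkeeping is needed. Your route is conceptually natural but strictly longer; the paper's is a black-box citation plus a short trick for components.
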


\begin{proof}
The image \(H := \Ad(G_0) \subset \GL_{\RR}(\g)\) of the identity component \(G_0\) of~$G$ under the adjoint representation is a connected, semisimple, linear Lie group whose Lie algebra $\h$ is isomorphic to~\(\g_s\).
Any weight \(\chi\) for the group~$G$ induces a weight for the group~\(H\).
By results of Abels--Margulis--Soifer \cite[Th.\,6.3]{Abels_Margulis_Soifer_Prox} and Helgason \cite[Ch.\,V, Th.\,4.1]{HelgasonGGA} (see \cite[\S\,2.3]{Benoist_autcon}), any weight $\chi_1 \in 2  \sum_{\alpha \in \Delta} \NN \omega_\alpha$ is the highest weight of an irreducible proximal representation $(\tau_1, V_1)$ of~\(H\), hence of~\(G_0\). 
The induced representation \(V_2= \mathrm{Ind}_{G_0}^{G} V_1\) is an irreducible representation of~\(G\); its highest weight is again \(\chi_1\) but the weight space \(V_{2}^{\chi_1}\) is now \(p\)-dimensional, where $p:=[G:G_0]$ is the number of connected components of~$G$.
The factor \((\tau,V)\) in \(\Lambda^p(V_2)\) generated by \(\Lambda^p(V_2^{\chi_1})\) is then an irreducible proximal representation of~\(G\) with highest weight \(\chi= p \chi_1\).
Thus \(N=2p\) has the desired property.
\end{proof}

The relevance of the notion of $\theta$-proximality lies in the following two propositions.

\begin{proposition}\label{prop:theta_emb}
  Let $(\tau,V)$ be an irreducible, $\theta$-proximal linear representation of~$G$ with highest weight~$\chi_{\tau}$.
  Let $V^{\chi_{\tau}}$ be the weight space corresponding to $\chi_{\tau}$  in~$V$ and \(V_{< \chi_\tau}\) the sum of all other weight spaces. 
  \begin{enumerate}[label=(\alph*),ref=\alph*]
  \item\label{item:a_thtmb} The stabilizer in~$G$ of \(V^{\chi_\tau}\) (\resp \(V_{< \chi_\tau}\)) is the parabolic subgroup \(P_\theta\) (\resp\nolinebreak \(P^{-}_{\theta}\)).
  \item \label{item:b_thtmb} The maps $g\mapsto\tau(g)V^{\chi_{\tau}}$ and $g\mapsto\tau(g) V_{<\chi_{\tau}}$ induce $\tau$-equivariant embeddings
    \[ \iota^+: G/P_{\theta} \to \PP_\KK(V) \quad\mathrm{and}\quad \iota^-: G/P^-_{\theta} \to \PP_\KK(V^*).\]
    Two parabolic subgroups $P\in G/P_\theta$ and $Q\in G/P^{-}_{\theta}$ of~$G$ are transverse if and only if $\iota^+(P)$ and $\iota^-(Q)$ are transverse.
  \item \label{item:c_thtmb} For an element \(g\in G\), the following conditions are equivalent: 
  \begin{enumerate}[label=(\roman*),ref=\roman*]
    \item\label{item:prox-G} $g$ has an attracting fixed point $\xi^+_g\in G/P_{\theta}$;
    \item\label{item:lambda>0-G} $\langle\alpha,\lambda(g)\rangle>0$ for all $\alpha\in\theta$;
    \item\label{item:prox-P(V)} $\tau(g)$ is proximal in $\PP_{\KK}(V)$.
  \end{enumerate}
  In this case the attracting fixed point \(\xi^{+}_{g}\in G/P_\theta\) of~$g$ is unique, and its image \(\iota^+(\xi^+_g) \in \PP_\KK(V)\) is the unique fixed point of \(\tau(g)\).
  \item \label{item:d_thtmb}
  A similar statement holds after replacing $(\theta,\iota^+,V)$ with $(\theta^\star,\iota^-,V^*)$.
\end{enumerate}
\end{proposition}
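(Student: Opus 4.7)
For part~\eqref{item:a_thtmb}, the stabilizer of $V^{\chi_\tau}$ in $G$ is a closed algebraic subgroup containing the minimal parabolic $P_\Delta$ (the positive root spaces annihilate the highest-weight line), hence by Fact~\ref{fact:para-lie-subalgebra} equals $P_{\theta'}$ for a unique $\theta'\subset\Delta$. Comparing with \eqref{eqn:decompose}, I would identify $\theta'$ via: $\alpha\in\theta'$ iff $\g_{-\alpha}$ does not stabilize $V^{\chi_\tau}$, iff $V^{\chi_\tau-\alpha}\neq 0$; and by $\mathfrak{sl}_2$-theory applied to the subalgebra $\g_\alpha+[\g_\alpha,\g_{-\alpha}]+\g_{-\alpha}$ acting on a highest-weight vector, this is equivalent to $(\chi_\tau,\alpha)>0$, i.e., $\alpha\in\theta$ by $\theta$-compatibility. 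The statement for $V_{<\chi_\tau}$ follows by applying the same argument to the dual representation $(\tau^*,V^*)$, which is irreducible and $\theta^\star$-proximal with highest weight $\chi_\tau^\star$; its highest-weight line is the annihilator of $V_{<\chi_\tau}$, so the stabilizer of $V_{<\chi_\tau}$ is $P_{\theta^\star}$, which equals $P_\theta^-$ by definition of $\theta^\star$.

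For part~\eqref{item:b_thtmb}, once (a) is in hand the maps $\iota^+$ and $\iota^-$ are $\tau$-equivariant bijective maps between compact homogeneous spaces, hence topological embeddings. The pair $(V^{\chi_\tau},V_{<\chi_\tau})$ is transverse since $V=V^{\chi_\tau}\oplus V_{<\chi_\tau}$, so by $G$-equivariance the unique open $G$-orbit of transverse pairs in $G/P_\theta\times G/P_\theta^-$ maps into transverse pairs of $\PP_\KK(V)\times\PP_\KK(V^*)$. For the converse, I would check the claim on Bruhat representatives: the pair $(P_\theta,wP_\theta^- w^{-1})$ is transverse iff $w\in W_{\Delta\smallsetminus\theta}$ (the common Weyl group of both Levis), while $\iota^+(P_\theta)=V^{\chi_\tau}$ lies in the hyperplane $\iota^-(wP_\theta^- w^{-1})=\bigoplus_{\mu\neq w\cdot\chi_\tau}V^{\mu}$ iff $w\cdot\chi_\tau\neq\chi_\tau$; since $\theta$-compatibility forces the stabilizer of $\chi_\tau$ in $W$ to be $W_{\Delta\smallsetminus\theta}$, the two conditions coincide.

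For part~\eqref{item:c_thtmb}, the cornerstone is the equivalence \eqref{item:lambda>0-G}$\Leftrightarrow$\eqref{item:prox-P(V)}: the moduli of the eigenvalues of $\tau(g)$ are $e^{\langle\chi,\lambda(g)\rangle}$ (with multiplicity $\dim V^{\chi}$) as $\chi$ ranges over the weights of $V$, so $\tau(g)$ is proximal on $\PP_\KK(V)$ iff $\chi_\tau$ uniquely maximizes $\langle\chi,\lambda(g)\rangle$ among weights (recalling $\dim V^{\chi_\tau}=1$). The crucial observation is that a highest-weight vector is annihilated by $f_\alpha$ for $\alpha\in\Delta\smallsetminus\theta$ (same $\mathfrak{sl}_2$-argument, using $(\chi_\tau,\alpha)=0$); by PBW the Lie subalgebra generated by $\g_{\pm\alpha}$ for such $\alpha$ annihilates it, so no weight of $V$ distinct from $\chi_\tau$ lies in $\chi_\tau+\mathrm{span}(\Delta\smallsetminus\theta)$; combined with $\lambda(g)\in\overline{\aaa}^+$, this yields \eqref{item:lambda>0-G}$\Leftrightarrow$\eqref{item:prox-P(V)}. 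For \eqref{item:prox-G}$\Leftrightarrow$\eqref{item:lambda>0-G}, after conjugating so that $\xi^+_g=P_\theta$, the Jordan decomposition $g=g_h g_e g_u$ with $g_h=\exp\lambda(g)$ shows $g\in L_\theta$, and the derivative of $g$ at $P_\theta$ on $T_{P_\theta}(G/P_\theta)\cong\mathfrak{u}_\theta^-=\bigoplus_{\alpha\in\Sigma^+_{\theta}}\g_{-\alpha}$ coincides with $\Ad(g)|_{\mathfrak{u}_\theta^-}$, whose eigenvalues have moduli $e^{-\langle\alpha,\lambda(g)\rangle}$ (the elliptic and unipotent parts contributing no growth). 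Spectral radius $<1$ is therefore equivalent to $\langle\alpha,\lambda(g)\rangle>0$ for all $\alpha\in\Sigma^+_{\theta}$, which since $\lambda(g)\in\overline{\aaa}^+$ and every element of $\Sigma^+_\theta$ has a positive coefficient on some simple root in $\theta$, reduces to $\langle\alpha,\lambda(g)\rangle>0$ for $\alpha\in\theta$. Uniqueness of $\xi^+_g$ and the identification of its image with the attracting fixed point of $\tau(g)$ then follow: any attracting fixed point of $g$ on $G/P_\theta$ maps under $\iota^+$ to an attracting fixed point of $\tau(g)$ on $\PP_\KK(V)$ (via the derivative computation in $V^{\chi_\tau}\oplus V_{<\chi_\tau}$), and the latter is unique. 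Part~\eqref{item:d_thtmb} is then obtained by applying \eqref{item:a_thtmb}--\eqref{item:c_thtmb} to the $\theta^\star$-proximal representation $(\tau^*,V^*)$.

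The main obstacle will be the reverse implication in the transversality claim of \eqref{item:b_thtmb}. While all other steps reduce to highest-weight module theory and a routine Jordan decomposition calculation, this one relies on explicit Bruhat double coset arithmetic in $P_\theta\backslash G/P_\theta^-$ and the identification $\mathrm{Stab}_W(\chi_\tau)=W_{\Delta\smallsetminus\theta}$.
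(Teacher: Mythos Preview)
Your approach matches the paper's throughout: part~\eqref{item:a_thtmb} via Fact~\ref{fact:para-lie-subalgebra} and $\mathfrak{sl}_2$-theory on the highest-weight vector, part~\eqref{item:b_thtmb} via Bruhat representatives and the identification $\mathrm{Stab}_W(\chi_\tau)=W_{\Delta\smallsetminus\theta}$ (the paper fills in the step ``$(wP_\theta w^{-1}, P_\theta^-)$ transverse $\Leftrightarrow w\in W_{\Delta\smallsetminus\theta}$'' with a reduced-expression root computation, which you correctly flag as the main obstacle), and part~\eqref{item:c_thtmb} via the weight-space eigenvalue analysis and the tangent identification $T_{x_\theta}(G/P_\theta)\cong\mathfrak{u}_\theta^-$. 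Your PBW argument that no weight $\neq\chi_\tau$ lies in $\chi_\tau+\mathrm{span}(\Delta\smallsetminus\theta)$ is a legitimate alternative to the paper's Lemma~\ref{lem:tau}, which obtains the same conclusion from the convex-hull description of the weight set.

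There is one slip in your treatment of $V_{<\chi_\tau}$ in~\eqref{item:a_thtmb}. The annihilator of $V_{<\chi_\tau}$ in $V^*$ has weight $-\chi_\tau$, which is the \emph{lowest} weight of $V^*$, not the highest (the highest is $\chi_\tau^\star=-w_0\cdot\chi_\tau$). Its stabilizer is therefore $P_\theta^-$ directly, not $P_{\theta^\star}$; the latter is the stabilizer of the highest-weight line of $V^*$, and the two parabolics are conjugate by a representative of $w_0$ but not equal. The quickest fix is to bypass the dual and argue symmetrically to the $V^{\chi_\tau}$ case: $\g_{-\alpha}$ maps $V_{<\chi_\tau}$ into itself for every $\alpha\in\Sigma^+$, so the stabilizer contains $\mathrm{Lie}(P_\Delta^-)$ and is of the form $\mathrm{Lie}(P_{\theta'}^-)$; and $\g_\alpha$ (for $\alpha\in\Delta$) fails to preserve $V_{<\chi_\tau}$ exactly when it maps some weight space into $V^{\chi_\tau}$, i.e., when $\chi_\tau-\alpha$ is a weight, i.e., when $\alpha\in\theta$.
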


Here we use the identification of Remark~\ref{rem:pnorm}.
Recall also Example~\ref{ex:transv-GL} characterizing transversality in $\PP_\KK(V)$.
Proposition~\ref{prop:theta_emb} will be proved in Section~\ref{subsec:proof-theta-emb} just below.

\begin{remark}
For $G=\GL_d(\RR)$, for $\theta=\{ \varepsilon_i-\varepsilon_{i+1}\}$, and for $V=\Lambda^i\RR^d$, the space $G/P_{\theta}$ is the Grassmannian of $i$-dimensional planes of~$\RR^d$ and the map $\iota^+$ of Proposition~\ref{prop:theta_emb} is the \emph{Pl\"ucker embedding}.
\end{remark}

\begin{proposition} \label{prop:theta-comp-Anosov}
Let $(\tau,V)$ be an irreducible, $\theta$-proximal linear representation of~$G$ over $\KK=\RR$, $\CC$ or~$\bH$.
Let $\Gamma$ be a word hyperbolic group and $\theta\subset\Delta$ a nonempty subset of the simple restricted roots of~$G$.
Then 
\begin{enumerate}
  \item\label{item:theta-comp-xi} there exist continuous, $\rho$-equivariant, dynamics-preserving, transverse boundary maps $\xi^+ : \partial_{\infty}\Gamma\to G/P_{\theta}$ and $\xi^- : \partial_{\infty}\Gamma\to G/P_{\theta}^-$ if and only if there exist continuous, $(\tau\circ\rho)$-equivariant, dynamics-preserving, transverse boundary maps $\xi_V^+ : \partial_{\infty}\Gamma\to\PP_{\KK}(V)$ and $\xi_V^- : \partial_{\infty}\Gamma\to\PP_{\KK}(V^*)$;
  \item\label{item:theta-comp-Ano} a representation $\rho: \Gamma \to G$ is $P_{\theta}$-Anosov if and only if $\tau \circ \rho : \Gamma \to\nolinebreak \GL_{\KK}(V)$ is \(P_{\varepsilon_1-\varepsilon_2}\)-Anosov (\ie Anosov with respect to the stabilizer of a line, see Example~\ref{ex:roots}).
\end{enumerate}
\end{proposition}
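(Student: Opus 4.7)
The whole proof is driven by the embeddings $\iota^\pm$ of Proposition~\ref{prop:theta_emb}, which translate the geometry of $G/P_\theta$ and $G/P_\theta^-$ into that of $\PP_\KK(V)$ and $\PP_\KK(V^*)$.

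For the forward direction of~\eqref{item:theta-comp-xi}, the natural candidates are $\xi_V^\pm := \iota^\pm\circ \xi^\pm$. Continuity and $(\tau\circ\rho)$-equivariance are immediate; transversality is exactly Proposition~\ref{prop:theta_emb}.\eqref{item:b_thtmb}; dynamics-preservation is Proposition~\ref{prop:theta_emb}.\eqref{item:c_thtmb}, since the attracting fixed point $\xi^+(\eta_\gamma^+)\in G/P_\theta$ of $\rho(\gamma)$ is sent by $\iota^+$ to the unique fixed point of $\tau(\rho(\gamma))$ in $\PP_\KK(V)$, which is automatically attracting. Conversely, given $\xi_V^\pm$ dynamics-preserving, each $\tau(\rho(\gamma))$ is proximal in $\PP_\KK(V)$ for $\gamma$ of infinite order, so the implication \eqref{item:prox-P(V)}$\Rightarrow$\eqref{item:prox-G} of Proposition~\ref{prop:theta_emb}.\eqref{item:c_thtmb} produces an attracting fixed point $p_\gamma^+\in G/P_\theta$ with $\iota^+(p_\gamma^+)=\xi_V^+(\eta_\gamma^+)$. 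Thus $\xi_V^+$ takes values in the compact set $\iota^+(G/P_\theta)$ on a dense subset of $\partial_\infty\Gamma$ (Fact~\ref{fact:dyn_at_infty}.\eqref{item:3}), hence everywhere by continuity. Since $\iota^+$ is a topological embedding, $\xi^+:=(\iota^+)^{-1}\circ\xi_V^+$ is a continuous map, and its $\rho$-equivariance, dynamics-preservation, and transversality with the analogously-defined $\xi^-$ follow at once from the injectivity of $\iota^\pm$ together with parts \eqref{item:b_thtmb}--\eqref{item:c_thtmb} of Proposition~\ref{prop:theta_emb}.

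For~\eqref{item:theta-comp-Ano}, I would use Definition~\ref{defi:ano2}. Part~\eqref{item:theta-comp-xi} transfers the continuous transverse dynamics-preserving boundary maps in both directions, so only the exponential-contraction condition remains to be compared. The key algebraic fact is that, since $(\tau,V)$ is irreducible and $\theta$-proximal, the highest-weight line $V^{\chi_\tau}$ is a one-dimensional $L_\theta$-stable subspace (Proposition~\ref{prop:theta_emb}.\eqref{item:a_thtmb}), on which $L_\theta$ acts by a character whose restriction to $\aaa$ is $\chi_\tau$. Choosing a $\tau(K)$-invariant Hermitian norm on $V$, I get that for any $g\in G$ the top singular value of $\tau(g)$ is $e^{\langle \chi_\tau,\mu(g)\rangle}$, and more generally, using that the second-highest weight of $\tau$ is of the form $\chi_\tau - \alpha_0$ for some $\alpha_0\in\theta$ (a standard $\mathfrak{sl}_2$-triple argument applied to a simple root in $\theta$ and the assumption $(\chi_\tau,\alpha)>0\Leftrightarrow \alpha\in\theta$), one obtains linear bounds
\[
c\,\min_{\alpha\in\theta}\langle\alpha,\mu(g)\rangle - C \;\leq\; \langle \varepsilon_1-\varepsilon_2,\,\mu(\tau(g))\rangle \;\leq\; C'\,\max_{\alpha\in\theta}\langle\alpha,\mu(g)\rangle + C'',
\]
for universal constants depending only on $\tau$. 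The same estimate transferred to $L_\theta$ compares $\langle \chi_\tau,\mu_\theta(\cdot)\rangle$ with the $(\varepsilon_1-\varepsilon_2)$-coset distance on the $\GL_\KK(V)$ side.

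Given this dictionary, the remaining steps are routine: a $\Gamma$-equivariant lift $\tilde\beta:\mathcal G_\Gamma\to G/K_\theta$ associated with $\rho$ composes with $\tau$ and the natural $G$-map $G/K_\theta\to\GL_\KK(V)/K'$ (where $K'$ is the stabilizer in the unitary group of both the line $\iota^+(\xi^+)$ and the complementary hyperplane $\iota^-(\xi^-)$) to produce a lift $\tilde\beta_V$ on the $\tau\circ\rho$ side, and vice versa. The exponential contraction of Definition~\ref{defi:ano2}.\eqref{item:ano-ii} for $\rho$ and all $\alpha\in\theta$ then becomes, via the displayed comparison, the exponential contraction for $\tau\circ\rho$ and the single root $\varepsilon_1-\varepsilon_2$, and conversely. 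The main technical obstacle I foresee is verifying the quantitative two-sided comparison above (especially the lower bound) with uniform constants; the qualitative statement is an easy consequence of $\theta$-compatibility, but the \emph{linear} comparison requires a careful inspection of how the weights of $\tau$ below $\chi_\tau$ sit with respect to the simple roots in $\theta$.
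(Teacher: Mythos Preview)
Your proof of part~\eqref{item:theta-comp-xi} is essentially identical to the paper's: forward direction via $\xi_V^\pm:=\iota^\pm\circ\xi^\pm$, converse by showing that $\xi_V^+$ lands in the closed set $\iota^+(G/P_\theta)$ on the dense set of attracting fixed points (Fact~\ref{fact:dyn_at_infty}.\eqref{item:3}), hence everywhere, and then pulling back through the embedding~$\iota^+$.

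For part~\eqref{item:theta-comp-Ano} you take a genuinely different route. You work directly with Definition~\ref{defi:ano2} and the coset distance~$\dtheta$, comparing the contraction on $G/K_\theta$ with that on the $\GL_\KK(V)$ side via a lift $\tilde\beta_V$. This is exactly the approach the paper cites from \cite[\S4]{Guichard_Wienhard_DoD} for $\KK=\RR$, and explicitly says could be carried out in general. The paper instead postpones the proof and uses the equivalence $\eqref{item:5}\Leftrightarrow\eqref{item:7}$ of Theorem~\ref{thm:char_ano_maps} (established in Sections~\ref{subsec:contr-cart-proj}--\ref{subsec:when-cart-proj}): once one knows that $P_\theta$-Anosov is equivalent to the existence of boundary maps together with $\langle\alpha,\mu(\rho(\gamma))\rangle\to+\infty$ for all $\alpha\in\theta$, the whole comparison reduces to the \emph{exact identity}
\[
\langle\varepsilon_1-\varepsilon_2,\,\mu_{\GL_\KK(V)}(\tau(g))\rangle \;=\; \min_{\alpha\in\theta}\,\langle\alpha,\mu(g)\rangle
\]
of Lemma~\ref{lem:tau}.\eqref{item:eps1-eps2}, and no further work is needed. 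Your approach buys self-containment (no forward reference to Section~\ref{sec:anos-repr-cart}); the paper's buys brevity. Note also that your ``main technical obstacle'' is not really there: the two-sided linear comparison you are worried about is precisely the identity above, which follows directly from Lemma~\ref{lem:tau}.\eqref{item:nonhighest-weights}--\eqref{item:diff-weights-theta} (every nonhighest weight is $\chi_\tau$ minus a nonnegative combination of roots in $\Sigma_\theta^+$, and $\chi_\tau-\alpha$ is a weight for \emph{each} $\alpha\in\theta$, not just one). The only genuine subtlety in your route is the passage from $\mu$ to $\mu_\theta$ on the Levi~$L_\theta$; this requires knowing when $\mu_\theta(l_{t,v})\in\overline\aaa^+$, which in the paper's framework is handled by the analysis around Proposition~\ref{prop:mu-theta-mu} and Remark~\ref{rem:mu-theta-equal-mu-l-in-L}.
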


Proposition~\ref{prop:theta-comp-Anosov}.\eqref{item:theta-comp-Ano} was proved in \cite[\S\,4]{Guichard_Wienhard_DoD} for $\KK=\RR$, using the characterization of $P_{\theta}$-Anosov representations in terms of the coset distance map $d_{\mu_{\theta}}$ (Definition~\ref{defi:ano2}).
We could use the same characterization to prove Proposition~\ref{prop:theta-comp-Anosov}.\eqref{item:theta-comp-Ano} in general.
However, here we shall instead use the characterization of Anosov representations given by Theorem~\ref{thm:char_ano}.\eqref{item:away-from-walls}, which is simpler.
This characterization is established in Sections \ref{subsec:prelim-result}--\ref{subsec:when-cart-proj}; therefore, we postpone the proof of Proposition~\ref{prop:theta-comp-Anosov}.\eqref{item:theta-comp-Ano} to Section~\ref{sec:anos-repr-embedd}.

\subsection{Embeddings into projective spaces}\label{subsec:proof-theta-emb}

This subsection is devoted to the proof of Propositions \ref{prop:theta_emb} and~\ref{prop:theta-comp-Anosov}.\eqref{item:theta-comp-xi}, as well as Lemma~\ref{lem:attr-basin}.
As in Theorem~\ref{thm:constr-xi}, we denote by $\Sigma^+_{\theta}$ the set of positive roots that do \emph{not} belong to the span of $\Delta\smallsetminus\theta$.

\begin{remark}
  \label{rem:Cartan_comp_rep}
  For a linear representation \((\tau, V)\) of~\(G\), we will always choose a Cartan decomposition of \(\GL_\KK(V)\) compatible with that of~\(G\).
  This means that the basis \((e_1, \dots, e_d)\) of~\(V\) providing the isomorphism \(\GL_\KK(V) \simeq \GL_d(\KK)\) is a basis of eigenvectors for the action of~\(\mathrm{d}_e\tau(\aaa)\), and that the group \(\tau(K)\) is included in \(\OO(d)\) or \(\U(d)\) or \(\Sp(d)\), depending on whether \(\KK=\RR\) or \(\CC\) or \(\bH\).
  We shall always assume that \(e_1 \in V^{\chi_\tau}\), so that \(\langle \chi_\tau, Y\rangle = \langle \varepsilon_1, \mathrm{d}_e\tau(Y)\rangle\) for all \(Y\in \aaa\).
  The Cartan projection for \(\GL_\KK(V)\) will be denoted by \(\mu_{\GL_\KK(V)}\) and the Lyapunov projection by \(\lambda_{\GL_\KK(V)}\).
\end{remark}

Proposition~\ref{prop:theta_emb} relies on the following fact, which will also be used in Section~\ref{sec:proper}.

\begin{lemma}\label{lem:tau}
Suppose $\tau : G\to\GL_{\KK}(V)$ is irreducible and $\theta$-compatible.
Then 
\begin{enumerate}  
  \item \label{item:nonhighest-weights}
  for any weight $\chi$ of~$\tau$, we have $\chi_{\tau} - \chi \in \sum_{\alpha\in\Sigma^+_{\theta}} \NN\alpha$;
  \item \label{item:diff-weights-theta}
  for any $\alpha\in\theta$, the element $\chi_{\tau}-\alpha\in\aaa^*$ is a weight of~$\tau$.
\item \label{item:eps1-eps2} If \(\tau\) is \(\theta\)-proximal, then \(\langle \varepsilon_1 - \varepsilon_2, \mu_{\GL_\KK(V)}( \tau(g))\rangle = \min_{\alpha \in \theta} \langle \alpha, \mu(g)\rangle\) and\linebreak \(\langle \varepsilon_1 - \varepsilon_2, \lambda_{\GL_\KK(V)}( \tau(g))\rangle = \min_{\alpha \in \theta} \langle \alpha, \lambda(g)\rangle\) for all \(g\in G\).
\end{enumerate}
\end{lemma}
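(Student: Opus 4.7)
The plan is to reduce parts~\eqref{item:nonhighest-weights} and~\eqref{item:eps1-eps2} to the structural identity $V = U(\mathfrak{u}_\theta^-)\cdot v_0$, where $v_0$ is any nonzero vector of weight~$\chi_\tau$; part~\eqref{item:diff-weights-theta} will then follow from a single $\mathfrak{sl}_2$-triple argument. For part~\eqref{item:nonhighest-weights}, I would first observe that $v_0$ is annihilated by $\mathfrak{u}_\theta$ (by maximality of $\chi_\tau$) and by $\g_{-\alpha}$ for every $\alpha\in\Delta\smallsetminus\theta$ (via the $\mathfrak{sl}_2$-triple attached to~$\alpha$, whose weight on~$v_0$ is $2(\chi_\tau,\alpha)/(\alpha,\alpha)=0$). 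An induction on height using iterated brackets extends this annihilation to every $\g_{-\beta}$ with $\beta\in\Sigma^+\cap\mathrm{span}(\Delta\smallsetminus\theta)$; put differently, the $L_\theta$-submodule generated by~$v_0$ is one-dimensional, since the semisimple factor acts trivially and the center acts by a character. Combined with irreducibility $V=U(\g)\cdot v_0$ and the PBW factorization $U(\g)=U(\mathfrak{u}_\theta^-)U(\mathfrak{l}_\theta)U(\mathfrak{u}_\theta)$, this gives $V = U(\mathfrak{u}_\theta^-)\cdot v_0$, whence every weight of~$V$ is of the form $\chi_\tau-\sum_{\beta\in\Sigma^+_\theta}k_\beta\beta$ with $k_\beta\in\NN$. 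Part~\eqref{item:diff-weights-theta} is then immediate: for $\alpha\in\theta$, the $\mathfrak{sl}_2$-triple attached to~$\alpha$ acts on~$v_0$ as a highest-weight vector of positive integer weight $2(\chi_\tau,\alpha)/(\alpha,\alpha)$, so $e_{-\alpha}\cdot v_0\neq 0$ produces a nonzero vector of weight $\chi_\tau-\alpha$.

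For part~\eqref{item:eps1-eps2}, the compatibility of Cartan decompositions (Remark~\ref{rem:Cartan_comp_rep}) implies that $\mu_{\GL_\KK(V)}(\tau(g))$ is the decreasing rearrangement of the family $(\langle\chi,\mu(g)\rangle)_\chi$ indexed by the weights of~$\tau$ counted with multiplicity. Since $\mu(g)\in\overline{\aaa}^+$ and $\chi_\tau-\chi$ is a nonnegative integer sum of positive roots by part~\eqref{item:nonhighest-weights}, the largest entry is $\langle\chi_\tau,\mu(g)\rangle$, achieved with multiplicity one by proximality; hence the gap to the second entry equals $\min_{\chi\neq\chi_\tau,\,V^\chi\neq 0}\langle\chi_\tau-\chi,\mu(g)\rangle$. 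Part~\eqref{item:nonhighest-weights} bounds this from below by $\min_{\alpha\in\theta}\langle\alpha,\mu(g)\rangle$, since every $\beta\in\Sigma^+_\theta$ carries a positive integer coefficient on some root in~$\theta$, and part~\eqref{item:diff-weights-theta} realizes the equality via $\chi=\chi_\tau-\alpha_0$, where $\alpha_0\in\theta$ attains the right-hand minimum. The Lyapunov statement is identical once one decomposes $g=g_h g_e g_u$ (Jordan decomposition): the three pairwise-commuting factors push under~$\tau$ to pairwise-commuting factors, so the moduli of the eigenvalues of $\tau(g)$ coincide with those of $\tau(g_h)$, namely $\exp\langle\chi,\lambda(g)\rangle$ with the weight multiplicities.

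The main technical obstacle lies in justifying the generation statement $V=U(\mathfrak{u}_\theta^-)\cdot v_0$ in the real restricted-root setting: restricted root spaces can be higher-dimensional, and the $\mathfrak{sl}_2$-triple argument for a single simple root must be boosted by a height induction inside the Levi in order to cover all of $\Sigma^+\cap\mathrm{span}(\Delta\smallsetminus\theta)$. The cleanest way around this is to repackage the annihilation step as the assertion that the $L_\theta$-submodule of~$V$ generated by~$v_0$ is one-dimensional, which follows directly from $\theta$-compatibility since $\chi_\tau$ vanishes on the semisimple factor of~$\mathfrak{l}_\theta$.
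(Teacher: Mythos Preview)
Your argument is correct, and for part~\eqref{item:nonhighest-weights} it takes a genuinely different route from the paper. The paper appeals to the fact that the weight set of an irreducible representation is the intersection of $\chi_\tau+\Phi'$ with the convex hull of $W\cdot\chi_\tau$ (citing \cite{GoodmanWallach}), and then uses a short orthogonality argument: since $\chi_\tau\perp\mathrm{span}(\Delta\smallsetminus\theta)$, the relation $\chi_\tau-w\cdot\chi_\tau\in\mathrm{span}(\Delta\smallsetminus\theta)$ would force $\|w\cdot\chi_\tau\|^2=\|\chi_\tau\|^2+\|\chi_\tau-w\cdot\chi_\tau\|^2$, contradicting $W$-invariance of the norm unless $w\cdot\chi_\tau=\chi_\tau$. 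Your approach is instead representation-theoretic: you establish $V=U(\mathfrak{u}_\theta^-)\cdot v_0$ via PBW after showing that the $L_\theta$-cyclic module through~$v_0$ is one-dimensional. The paper's argument is shorter and avoids the height induction inside the Levi (and the attendant care about multi-dimensional restricted root spaces), at the cost of invoking the convex-hull description of weights as a black box; your argument is self-contained and yields the useful structural byproduct $V=U(\mathfrak{u}_\theta^-)\cdot v_0$, but is longer to write out carefully in the real restricted-root setting. For parts~\eqref{item:diff-weights-theta} and~\eqref{item:eps1-eps2} the two proofs are essentially the same: both use the $\mathfrak{sl}_2$-string through $\chi_\tau$ for $\alpha\in\theta$, and both deduce~\eqref{item:eps1-eps2} from the identification of the ordered singular values of $\tau(g)$ with the sorted list $(\langle\chi,\mu(g)\rangle)_\chi$.
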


\begin{proof}
Let $\Phi':=\sum_{\alpha\in\Delta} \ZZ\alpha\subset\Phi$ be the root lattice of~$G$.
The set of weights of~$\tau$ is the intersection of $\chi_{\tau}+\Phi'$ with the convex hull of the $W$-orbit of $\chi_{\tau}$ in~$\aaa^{\ast}$ (see \cite[Prop.\,3.2.10]{GoodmanWallach}).
Therefore, in order to prove \eqref{item:nonhighest-weights}, it is sufficient to prove that $\chi_{\tau} - w\cdot\chi_{\tau} \in \sum_{\alpha\in\Sigma^+_{\theta}}\RRp \alpha$ for all $w\in W$, or in other words that $\chi_{\tau} - w\cdot\chi_{\tau} \notin \mathrm{span}(\Delta\smallsetminus\theta)$ for all $w\in W$ with $w\cdot\chi_{\tau}\neq\chi_{\tau}$.
By definition of $\theta$-compatibility, $\chi_{\tau}$ belongs to the orthogonal of $\mathrm{span}(\Delta\smallsetminus\theta)$.
Therefore, for any $w\in W$, if $\chi_{\tau} - w\cdot\chi_{\tau} \in \mathrm{span}(\Delta\smallsetminus\theta)$, then
\[ \Vert w\cdot\chi_{\tau}\Vert^2 = \Vert\chi_{\tau}\Vert^2 + \Vert\chi_{\tau}-w\cdot\chi_{\tau}\Vert^2. \]
But this is also equal to $\Vert\chi_{\tau}\Vert^2$ by $W$-invariance of the norm, and so \(\chi_{\tau}=w\cdot\chi_{\tau}\).
This proves \eqref{item:nonhighest-weights}.
For any $\alpha\in\Delta$, the orthogonal reflection $s_{\alpha}\in W$ in the hyperplane $\Ker(\alpha)$ satisfies
\[ s_{\alpha}\cdot\chi_{\tau} = \chi_{\tau} - 2\frac{(\chi_{\tau},\alpha)}{(\alpha,\alpha)}\alpha, \]
and $s_{\alpha}\cdot\chi_{\tau}$ is a weight of~$\tau$.
Therefore the intersection of $\chi_{\tau}+\ZZ\alpha$ with the segment $[\chi_{\tau}, \chi_{\tau} - 2\frac{(\chi_{\tau},\alpha)}{(\alpha,\alpha)}\alpha]$ consists of weights of~$\tau$.
In particular, $\chi_{\tau}-\alpha$ is a weight of~$\tau$ since $2\frac{(\chi_{\tau},\alpha)}{(\alpha,\alpha)}\geq 1$ and $\alpha\in\Phi$.
This proves~\eqref{item:diff-weights-theta}.

Point~\eqref{item:eps1-eps2} follows from~\eqref{item:nonhighest-weights}--\eqref{item:diff-weights-theta} and from the definition of $\Sigma^+_{\theta}$.
\end{proof}

\begin{proof}[Proof of Proposition~\ref{prop:theta_emb}]
\eqref{item:a_thtmb} Let us first prove that the stabilizer of $V^{\chi_{\tau}}$ in~$\g$ is $\mathrm{Lie}(P_{\theta})$.
To simplify notation, we abusively write $\tau$ for the derivative $\mathrm{d}_e\tau : \g\to\gl(V)$.
We use the notation \(\g_0\), \(\g_\alpha\) of Section~\ref{subsubsec:lie-algebr-decomp}.
The fact that this derivative $\tau : \g\to\gl(V)$ is a Lie algebra homomorphism implies that for any root $\alpha\in\Sigma$ and any weight $\chi$~of~$\tau$,
\begin{equation}\label{eqn:alpha-V-chi}
\tau(\g_\alpha) V^\chi \subset V^{\chi+\alpha}.
\end{equation}
In particular, \(\tau(\g_0) V^{\chi_\tau} \subset V^{\chi_\tau}\), and  \(\tau(\g_\alpha) V^{\chi_\tau} = \{0\}\) for all \(\alpha\in \Sigma^+\) by \eqref{eqn:alpha-V-chi} and maximality of~\(\chi_\tau\) for the partial order of Section~\ref{subsec:weights}.
This means that the stabilizer of \(V^{\chi_\tau}\) in~$\g$ contains the Lie algebra $\mathrm{Lie}(P_\Delta)$, hence it is of the form \(\mathrm{Lie}(P_{\theta'})\) for a unique subset \(\theta'\) of \(\Delta\) (see Fact~\ref{fact:para-lie-subalgebra}).
By \eqref{eqn:alpha-V-chi} and Lemma~\ref{lem:tau}, for any \(\alpha \in \Delta \smallsetminus \theta\) we have  
\(\tau( \g_{-\alpha}) V^{\chi_\tau} = \{ 0\}~\subset V^{\chi_\tau}\); moreover, for any \(\alpha \in \theta\) we have $\{ 0\} \neq \tau( \g_{-\alpha}) V^{\chi_\tau} \subset V^{\chi_{\tau}-\alpha}$ (see \eg \cite[proof of Lem.\,3.2.9]{GoodmanWallach}), hence $\tau( \g_{-\alpha}) V^{\chi_\tau}\neq V^{\chi_{\tau}}$.
Therefore, $\theta'=\theta$, \ie the stabilizer of $V^{\chi_{\tau}}$ in~$\g$ is $\mathrm{Lie}(P_{\theta})$.

In particular, the stabilizer of $V^{\chi_\tau}$ in~$G$ is contained in~$P_\theta$.
For the reverse inclusion, note that for any $g\in P_{\theta}$ the line \(\tau(g) V^{\chi_\tau}\) is stable under \(\Ad(g)\,\mathrm{Lie}(P_\theta) = \mathrm{Lie}(P_\theta)\), hence in particular under \(\mathrm{Lie}(P_\Delta)\).
But $V^{\chi_{\tau}}$ is the only \(\mathrm{Lie}(P_\Delta)\)-invariant line in~$V$, by arguments similar to the above (any \(\mathrm{Lie}(P_\Delta)\)-invariant line \(L\) is \(\aaa\)-invariant, hence contained in a weight space \(V^{\chi}\), and for \(\chi\neq \chi_\tau\) there always exists \(\alpha\in \Delta\) with \(\{0\} \neq \tau( \g_\alpha) L \subset V^{\chi+\alpha}\)).
This proves that the stabilizer of $V^{\chi_\tau}$ in~$G$ is exactly~$P_\theta$.

Similarly, the stabilizer in~$G$ of the hyperplane \(V_{<\chi_\tau}\) is~\(P_{\theta}^{-}\).

\eqref{item:b_thtmb} The map $\iota^+ : G/P_{\theta} \to \PP_{\KK}(V)$ is well defined and injective by \eqref{item:a_thtmb}; it is clearly a $\tau$-equivariant embedding.
Similarly, $\iota^- : G/P_{\theta}^- \to \PP_{\KK}(V^*)$ is a $\tau$-equivariant embedding.

Let us show that two parabolic subgroups $P \in G/P_{\theta}$ and $Q \in G/P_{\theta}^-$ of~$G$ are transverse if and only if $\iota^+(P)$ and $\iota^-(Q)$ are transverse.
Note that transversality is invariant under the $G$-action, both in $G/P_{\theta}\times G/P_{\theta}^-$ and in $\PP_{\KK}(V)\times\PP_{\KK}(V^*)$ (by $\tau$-equivariance of $\iota^+$ and~$\iota^-$).
We can write $P = g P_{\theta} g^{-1}$ and \(Q = h P_{\theta}^- h^{-1}\) where $g,h\in G$.
By the Bruhat decomposition (see \cite[Th.\,5.15]{Borel_Tits_ihes}), there exist \((p,p') \in P_{\theta}^- \times P_{\theta}\) and \(w\in W=N_K(\aaa)/Z_K(\aaa)\) such that \(h^{-1}g = p w p'\).
Up to conjugating both $P$ and~$Q$ by $p^{-1} h^{-1}$, we may assume that $(P,Q) = (wP_{\theta}w^{-1}, P_{\theta}^-)$, so that $\iota^+(P) = V^{w\cdot \chi_\tau}$ and $\iota^-(Q) = V^{< \chi_\tau}$.
Then $\iota^+(P)$ and~$\iota^-(Q)$ are transverse if and only if $w\cdot\chi_\tau = \chi_{\tau}$.
By \cite[Ch.\,V, \S\,3, Prop.\,1]{Bourbaki}, this happens if and only if \(w\) belongs to the subgroup \(W_{\Delta\smallsetminus \theta}\) of~$W$ generated by the reflections $s_\alpha$ for $\alpha\in \Delta\smallsetminus \theta$.
Therefore, it is sufficient to prove that $P = wP_{\theta}w^{-1}$ and $Q = P_{\theta}^-$ are trans\-verse if and only if $w \in W_{\Delta\smallsetminus\theta}$.
If \( w \in W_{\Delta\smallsetminus\theta}\), it is not difficult to see that
\[ \Ad(w) \mathrm{Lie}(P_\theta) =\mathrm{Lie}(P_\theta), \]
and so $P$ and~$Q$ are transverse.
Conversely, suppose $w\notin W_{\Delta\smallsetminus\theta}$; let us prove that \(P = w P_\theta w^{-1}\) and \(Q = P^{-}_{\theta}\) are \emph{not} transverse.
Let \(s_{\alpha_1} \cdots s_{\alpha_q}\) be a reduced expression of~$w$ and \(i\in \{1,\dots,q\}\) the smallest index such that \(\alpha_i\) belongs to~\(\theta\).
By \cite[Ch.\,VI, \S\,1, Cor.\,2]{Bourbaki}, the root \(\beta = s_{\alpha_q} \cdots s_{\alpha_{i+1}}(\alpha_i)\) is positive, hence \(\g_\beta \subset \mathrm{Lie}(P_\theta)\).
Its image under~$w$ is \(-\beta'=- s_{\alpha_1}\cdots s_{\alpha_{i-1}}(\alpha_i)\),
which satisfies
\[ \g_{-\beta'} = \Ad(w) \g_\beta \subset \Ad(w)\mathrm{Lie}( P_\theta) = \mathrm{Lie}(w P_\theta w^{-1}). \]
Since \(\omega_{\alpha_i}\) is invariant under the reflections $s_\alpha$ for $\alpha \neq \alpha_i$ and since the scalar product $(\cdot,\cdot)$ is $W$-invariant,
\[ ( \beta', \omega_{\alpha_i}) = ( s_{\alpha_1}\cdots s_{\alpha_{i-1}}(\alpha_i), s_{\alpha_1}\cdots s_{\alpha_{i-1}}(\omega_{\alpha_i})) = (\alpha_i, \omega_{\alpha_i}) >0. \]
This forces \(\beta'\) to belong to \(\Sigma^{+}_{\theta}\); this means that \(\g_{-\beta'} \in \mathfrak{u}^{-}_{\theta}\) and \(\mathrm{Lie}( wP_\theta w^{-1})\) intersects nontrivially the unipotent radical of \(\mathrm{Lie}(P^{-}_{\theta})\).
Thus \(P = w P_\theta w^{-1}\) and \(Q = P_{\theta}^{-}\) are not transverse.

\eqref{item:c_thtmb} It follows from the definition that $\tau(g)\in\GL_{\KK}(V)$ is proximal in $\PP_{\KK}(V)$ if and only if \( \langle \varepsilon_1 - \varepsilon_2 , \lambda_{\GL_\KK(V)}(g)\rangle >0\).
This is equivalent to \(\min_{\alpha \in \theta} \langle \alpha, \lambda(g)\rangle >0\) by Lemma~\ref{lem:tau}.\eqref{item:eps1-eps2}.
Thus the equivalence $\eqref{item:lambda>0-G}\Leftrightarrow\eqref{item:prox-P(V)}$ holds.

We claim that if \eqref{item:prox-P(V)} holds, \ie if $\tau(g)$ admits an attracting fixed point $\xi_{\tau(g)}^+$ in $\PP_{\KK}(V)$, then $g$ admits a fixed point in $G/P_{\theta}$.
Indeed, let $g=g_h g_e g_u$ be the Jordan decomposition of~$g$, so that $\tau(g)=\tau(g_h)\tau(g_e)\tau(g_u)$ is the Jordan decomposition of~$\tau(g)$.
If $\xi_{\tau(g)}^+$ is an attracting fixed point for $\tau(g)$, then it is also an attracting fixed point for $\tau(g_h)$; by construction, it is then equal to $\iota^+(m\cdot x_{\theta})$, where $m\in G$ satisfies $g_h\in m\exp(\overline{\aaa}^+)m^{-1}$ and $x_{\theta}=eP_{\theta}\in G/P_{\theta}$.
Then $m\cdot x_{\theta}$ is a fixed point of~$g$, by equivariance and injectivity of~$\iota^+$.
Thus, if either of \eqref{item:prox-G}, \eqref{item:lambda>0-G}, or \eqref{item:prox-P(V)} holds, then $g$ admits a fixed point in $G/P_{\theta}$.

We now prove the equivalence $\eqref{item:prox-G}\Leftrightarrow\eqref{item:lambda>0-G}$ for $g$ admitting a fixed point in $G/P_{\theta}$.
Up to replacing $g$ with a conjugate (which does not change $\lambda(g)$), we may assume that this fixed point is~$x_{\theta}$.
The tangent space $T_{x_{\theta}}(G/P_{\theta})$ with the derivative of the action of~$g$ on $G/P_{\theta}$ identifies with $\g/\mathrm{Lie}(P_{\theta})$ with the adjoint action of~$g$.
Further identifying $\g/\mathrm{Lie}(P_{\theta})$ with $\bigoplus_{\alpha\in\Sigma_{\theta}^+} \g_{-\alpha}$ using \eqref{eqn:decompose}, we see that the eigenvalues of the derivative at~$x_{\theta}$ of the action of $g$ on $G/P_{\theta}$ are the $e^{-\langle\alpha,\lambda(g)\rangle}$ for $\alpha\in\Sigma_{\theta}^+$.
By definition of~$\Sigma_{\theta}^+$, these eigenvalues are all $<1$ (\ie \eqref{item:prox-G} holds) if and only if $\langle\alpha,\lambda(g)\rangle>0$ for all $\alpha\in\theta$ (\ie \eqref{item:lambda>0-G} holds).
In this case $V^{\chi_{\tau}}=\iota^+(x_{\theta})$ is an attracting fixed point of $\tau(g)$ in $\PP_{\KK}(V)$ since \( \langle \varepsilon_1 - \varepsilon_2 , \lambda_{\GL_\KK(V)}(g)\rangle >0\).
This proves that in general, if $\xi_g^+$ is an attracting fixed point of~$g$ in \(G/P_\theta\), then $\iota^+(\xi_g^+)$ is an attracting fixed point of \(\tau(g)\) in \(\PP_\KK(V)\); the uniqueness of $\xi_g^+$ follows from the uniqueness of $\xi_{\tau(g)}^+$ and from the injectivity of~$\iota^+$.
\end{proof}

\begin{proof}[Proof of Proposition~\ref{prop:theta-comp-Anosov}.\eqref{item:theta-comp-xi}]
Let $\iota^+: G/P_{\theta} \to \PP_\KK(V)$ and $\iota^-: G/P^-_{\theta} \to \PP_\KK(V^*)$ be the $\tau$-equivariant embeddings given by Proposition~\ref{prop:theta_emb}.\eqref{item:b_thtmb}.

Suppose there exist continuous, $\rho$-equivariant, transverse, dynamics-pre\-serv\-ing boundary maps \(\xi^+: \partial_\infty \Gamma \to G/ P_\theta\) and \(\xi^- : \partial_\infty \Gamma \to G/ P_{\theta}^{-}\).
  The maps \(\xi_V^+ := \iota^+ \circ \xi^+ : \partial_{\infty}\Gamma\to\PP_{\RR}(V)\) and \( \xi_V^- := \iota^- \circ \xi^- : \partial_{\infty}\Gamma\to\PP_{\RR}(V^*)\) are continuous and \((\tau\circ \rho)\)-equivariant.
  They are transverse by Proposition~\ref{prop:theta_emb}.\eqref{item:b_thtmb} and dynamics-pre\-serv\-ing by Proposition~\ref{prop:theta_emb}.\eqref{item:c_thtmb}--\eqref{item:d_thtmb}.

Conversely, suppose there exist continuous, $(\tau\circ\rho)$-equivariant, transverse, dynamics-pre\-serv\-ing boundary maps \(\xi_V^+ : \partial_{\infty}\Gamma\to\PP_{\RR}(V)\) and \( \xi_V^- : \partial_{\infty}\Gamma\to\PP_{\RR}(V^*)\).
For any \(\gamma\in\Gamma\) of infinite order with attracting fixed point \(\eta^{+}_{\gamma}\in \partial_\infty \Gamma\), the element \(\tau\circ \rho(\gamma)\) is proximal in \(\PP_\KK(V)\) with attracting fixed point \(\xi^{+}_{V}( \eta^{+}_{\gamma})\).
By Proposition~\ref{prop:theta_emb}.\eqref{item:c_thtmb}, the element \(\rho(\gamma)\in G\) is proximal in \(G/P_\theta\) and its attracting fixed point \(\xi^{+}_{g}\in G/P_\theta\) satisfies \(\iota^+(\xi^+_g) = \xi^{+}_{V}( \eta^{+}_{\gamma})\).
The set
\[ \{ \eta \in \partial_\infty \Gamma \mid \xi^{+}_{V}( \eta) \in \iota^+ ( G/P_\theta)\} \]
is closed and contains the dense set \(\{ \eta^{+}_{\gamma} \mid \gamma \in \Gamma \text{ of infinite order}\}\), hence it is equal to \(\partial_\infty \Gamma\).
Therefore there is a map \(\xi^+ : \partial_\infty \Gamma \to G/P_\theta\) such that \(\xi^{+}_{V} = \iota^+ \circ \xi^+\).
Similarly there is a map \(\xi^- : \partial_\infty \Gamma \to G/P_\theta^-\) such that \(\xi^{-}_{V} = \iota^- \circ \xi^-\).
The maps \(\xi^+\) and \(\xi^-\) are continuous and \(\rho\)-equivariant.
They are transverse by Proposition~\ref{prop:theta_emb}.\eqref{item:b_thtmb} and dynamics-preserving by Proposition~\ref{prop:theta_emb}.\eqref{item:c_thtmb}--\eqref{item:d_thtmb}.
\end{proof}

\begin{proof}[Proof of Lemma~\ref{lem:attr-basin}]
By Lemma~\ref{lem:theta-comp-exists}, there exists an irreducible, $\theta$-proximal, linear representation $(\tau,V)$ of~$G$.
For any $g\in G$, if $\tau(g)\in\GL_{\RR}(V)$ is proximal in $\PP_{\RR}(V)$, then $\tau(g^{-1})$ is proximal in $\PP_{\RR}(V^*)$ by definition of the action on~$V^*$.
The attracting fixed point $\xi^+_{\tau(g)}$ of $\tau(g)$ in $\PP_{\RR}(V)$ is the eigenline of $\tau(g)$ corresponding to the maximal eigenvalue, and the attracting fixed point $\xi^-_{\tau(g^{-1})}$ of $\tau(g^{-1})$ in $\PP_{\RR}(V^*)$ is the hyperplane of~$V$ which is the sum of the eigenspaces of $\tau(g)$ corresponding to nonmaximal eigenvalues.
In particular, $\tau(g)^n\cdot x\underset{\scriptscriptstyle n\to +\infty}{\longrightarrow} \xi^+_{\tau(g)}$ for all $x\in\PP_{\RR}(V)\smallsetminus\xi^-_{\tau(g^{-1})}$.
We conclude using Proposition~\ref{prop:theta_emb}.
\end{proof}

\section{Anosov representations in terms of boundary maps and Cartan or Lyapunov projections}\label{sec:anos-repr-cart}

In this section we prove the equivalences $\eqref{item:Ano}\Leftrightarrow\eqref{item:away-from-walls}\Leftrightarrow\eqref{item:lin-away-from-walls}$ of Theorems \ref{thm:char_ano} and~\ref{thm:char_ano_lambda_intro}.
(The equivalence $\eqref{item:Ano}\Leftrightarrow\eqref{item:cli}$ of Theorem~\ref{thm:char_ano} will be proved in Section~\ref{sec:boundary}.)
More precisely, we prove the following refinements.
Recall the word length function $\ellGammaf:\Gamma\rightarrow \NN$ and the stable length function $\ellinftyf:\Gamma\rightarrow \RRp$ from Section~\ref{subsubsec:word-length-stable}.

\begin{theorem}\label{thm:char_ano_maps}
  Let $\Gamma$ be a word hyperbolic group, $G$ a real reductive Lie group, and $\theta \subset \Delta$ a nonempty subset of the simple restricted roots of~$G$.
  Let $\rho : \Gamma \to G$ be a representation, and suppose there exist continuous, $\rho$-equivariant, and transverse maps $\xi^+ : \partial_{\infty}\Gamma \to G/ P_\theta$ and $\xi^- : \partial_{\infty} \Gamma \to G/P_{\theta}^-$.
    Then the following conditions are equivalent:
  \begin{enumerate}
  \item\label{item:5} The maps $\xi^+, \xi^-$ lift to a map
    $\tilde{\beta}:\mathcal{G}_\Gamma \rightarrow G/K_\theta$
    satisfying the contraction property \eqref{item:ano-ii} of Definition~\ref{defi:ano2}, \ie $\rho$ is $P_\theta$-Anosov;
  \item\label{item:8} The maps $\xi^+, \xi^-$ are dynamics-preserving for~$\rho$ and 
    \[ \exists c,C>0,\ \forall \alpha \in \theta,\ \forall \gamma \in \Gamma, \quad
    \bigl\langle \alpha,\, \mu( \rho(\gamma)) \bigr\rangle \geq c \, \ellGamma{\gamma}-C \,;\]
  \item\label{item:6} The maps $\xi^+, \xi^-$ are dynamics-preserving for~$\rho$ and
    \[\exists c,C>0,\ \forall \alpha \in \theta,\ \forall \gamma \in \Gamma, \quad
    \bigl\langle \alpha,\, \mu( \rho(\gamma)) \bigr\rangle \geq c \, \| \mu(\rho(\gamma))\|-C \,;\]
  \item\label{item:7} The maps $\xi^+, \xi^-$ are dynamics-preserving for~$\rho$ and
    \[\forall \alpha \in \theta, \quad \lim_{\gamma \to \infty} \bigl\langle \alpha,\, \mu( \rho(\gamma))
    \bigr\rangle = +\infty. \]
\end{enumerate}
\end{theorem}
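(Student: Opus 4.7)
The four conditions will be shown equivalent via the cycle \eqref{item:5}$\Rightarrow$\eqref{item:8}$\Rightarrow$\eqref{item:6}$\Rightarrow$\eqref{item:7}$\Rightarrow$\eqref{item:5}. Two of these implications are short. The step \eqref{item:8}$\Rightarrow$\eqref{item:6} is immediate from the subadditivity bound $\|\mu(\rho(\gamma))\| \leq k\,\ellGamma{\gamma}$ recorded in \eqref{eqn:mu-leq-klength}. For \eqref{item:6}$\Rightarrow$\eqref{item:7}, it suffices to prove that $\|\mu(\rho(\gamma))\| \to +\infty$ as $\gamma\to\infty$ in~$\Gamma$ (the elementary case being trivial); otherwise a sequence $\gamma_n\to\infty$ with $\rho(\gamma_n)$ bounded in~$G$, together with Fact~\ref{fact:dyn_at_infty}.\eqref{item:4} and continuity plus $\rho$-equivariance of $\xi^+$, would force $\xi^+$ to be constant on $\partial_{\infty}\Gamma\smallsetminus\{\eta'\}$, contradicting its injectivity (Remark~\ref{rem:compatible}.\eqref{item:compat-transv->inj}).

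For \eqref{item:5}$\Rightarrow$\eqref{item:8}, I will use that the boundary maps of an Anosov representation are automatically dynamics-preserving (stated after Definition~\ref{defi:ano2}). To extract the linear growth bound, I apply Corollary~\ref{cor:geo_seg_ell} to each $\gamma\in\Gamma$, obtaining $v\in\mathcal{D}$ and $t\geq c_1\ellGamma{\gamma}-c_2$ with $\varphi_t\cdot v\in\gamma\cdot\mathcal{D}$. Writing $\tilde\beta(v)=g_v K_\theta$ and $\tilde\beta(\varphi_t\cdot v)=\rho(\gamma)g_{v'}K_\theta$ with $g_v,g_{v'}$ lying in a fixed compact subset of~$G$, the flow-invariance of $\tilde\sigma$ forces $l:=g_v^{-1}\rho(\gamma)g_{v'}\in L_\theta$. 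The contraction estimate of Definition~\ref{defi:ano2}.\eqref{item:ano-ii} then bounds $\langle\alpha,\mu_\theta(l)\rangle=\langle\alpha,\dtheta(\tilde\beta(v),\tilde\beta(\varphi_t\cdot v))\rangle$ linearly from below in~$t$, hence linearly in $\ellGamma{\gamma}$, and Fact~\ref{fact:mu-subadditive} transfers this to $\langle\alpha,\mu(\rho(\gamma))\rangle$ up to a bounded additive correction.

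The main content lies in \eqref{item:7}$\Rightarrow$\eqref{item:5}. I plan first to reduce to $G=\GL_{\RR}(V)$ and $\theta=\{\varepsilon_1-\varepsilon_2\}$: Lemma~\ref{lem:theta-comp-exists} supplies an irreducible $\theta$-proximal representation $\tau:G\to\GL_{\RR}(V)$, Proposition~\ref{prop:theta-comp-Anosov}.\eqref{item:theta-comp-xi} carries the boundary maps through~$\tau$, Lemma~\ref{lem:tau}.\eqref{item:eps1-eps2} converts hypothesis~\eqref{item:7} into the analogous statement for $(\tau\circ\rho,\{\varepsilon_1-\varepsilon_2\})$, and Proposition~\ref{prop:theta_emb} lets me pull the Anosov conclusion back through the $\tau$-equivariant embeddings $\iota^\pm$. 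In the reduced linear setting I construct $\tilde\beta:\mathcal{G}_\Gamma\to G/K_\theta$ as any continuous $\Gamma$-equivariant lift of the section $\tilde\sigma$ induced by $(\xi^+,\xi^-)$, which exists by contractibility of $L_\theta/K_\theta$. To verify contraction I first establish it along every periodic orbit: for $\gamma\in\Gamma$ of infinite order, dynamics-preservation combined with the growth hypothesis and Lemma~\ref{lem:prox} shows that $\rho(\gamma)$ is proximal with $\langle\varepsilon_1-\varepsilon_2,\mu(\rho(\gamma^n))\rangle$ growing linearly in~$n$, giving a linear contraction rate along the closed orbit of period $\ellinfty{\gamma}$ (comparable to $\trl{\gamma}$ by Proposition~\ref{prop:word-length-stable}). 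Density of periodic orbits (Remark~\ref{rem:flow-space-dense}), compactness of $\Gamma\backslash\mathcal{G}_\Gamma$, and continuity of $\tilde\beta$ will then propagate the estimate to all flow orbits.

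The hardest part will be extracting \emph{uniform} contraction constants in this last step. Hypothesis~\eqref{item:7} is only asymptotic, so uniform quantitative estimates along arbitrary quasi-geodesics are not available directly: the uniformity has to emerge from a compactness argument on $\Gamma\backslash\mathcal{G}_\Gamma$ that exploits the continuity of $\tilde\beta$, together with a quantitative control (rather than a merely qualitative one) on the highest-to-second singular value ratio of $\rho(\gamma)$ along periodic orbits, furnished by the growth hypothesis.
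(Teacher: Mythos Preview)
Your cycle and the two easy implications \eqref{item:8}$\Rightarrow$\eqref{item:6}$\Rightarrow$\eqref{item:7} match the paper. Your sketch of \eqref{item:5}$\Rightarrow$\eqref{item:8} is also essentially the paper's argument (Section~\ref{subsec:contr-cart-proj}), though you gloss over one point: Fact~\ref{fact:mu-subadditive} controls $\mu(\rho(\gamma))$ in terms of $\mu(l)$, not $\mu_\theta(l)$, so you need the observation (Remark~\ref{rem:mu-theta-equal-mu-l-in-L}) that $\mu_\theta(l_{t,v})=\mu(l_{t,v})$ once $t\geq C/c$, since the contraction estimate forces $\mu_\theta(l_{t,v})\in\overline{\aaa}^+$.

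For \eqref{item:7}$\Rightarrow$\eqref{item:5} your plan diverges from the paper in two significant ways. First, the reduction to $\GL_{\RR}(V)$ is circular as stated: pulling the Anosov conclusion back from $\tau\circ\rho$ to $\rho$ is not furnished by Proposition~\ref{prop:theta_emb} but by Proposition~\ref{prop:theta-comp-Anosov}.\eqref{item:theta-comp-Ano}, and the paper proves the latter (in Section~\ref{sec:anos-repr-embedd}) \emph{using} the equivalence \eqref{item:5}$\Leftrightarrow$\eqref{item:7} already established for general~$G$. The paper therefore does \emph{not} reduce to $\GL_{\RR}(V)$ for this implication; it works directly in~$G$.

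Second, and more importantly, your mechanism for obtaining uniform contraction is not the paper's, and your own last paragraph correctly senses a gap. The paper's argument has two ingredients you are missing. The first is Proposition~\ref{prop:weakcontr_ano}: a cocycle/semigroup argument showing that the merely \emph{qualitative} condition $\inf_{v}\langle\alpha,\dtheta(\tilde\beta(v),\tilde\beta(\varphi_t\cdot v))\rangle\to+\infty$ already forces linear contraction (one finds a single $\tau$ with uniform $e^{-1}$-contraction on $T_{x_\theta}(G/P_\theta)$, then composes). This replaces your attempt to extract uniform constants from per-orbit linear rates, which need not be bounded below a priori. The second is Proposition~\ref{prop:mu-theta-mu}: using the dynamics-preserving hypothesis on periodic axes (Lemmas~\ref{lem:l_tgamma_in_aplus}--\ref{lem:on_axis_in_aplus}), a connectedness argument in~$\athetaplus$ (Lemma~\ref{lem:aRplus_connec}), and density of periodic orbits, one shows $\mu_\theta(l_{t,v})=\mu(l_{t,v})$ for all $t\geq T$ and all~$v$. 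Hypothesis~\eqref{item:7} then transfers directly from $\mu$ to $\mu_\theta$, feeding Proposition~\ref{prop:weakcontr_ano}. So the density of periodic orbits is indeed used, but to identify $\mu_\theta$ with~$\mu$, not to propagate contraction estimates.
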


\begin{theorem}\label{thm:char_ano_lambda}
Let $\Gamma$, $G$, $\rho$, $\theta$ and $\xi^\pm$ be as in Theorem~\ref{thm:char_ano_maps}.
Then the following conditions are equivalent:
  \begin{enumerate}
  \item\label{item:lambda1}  $\rho$ is $P_\theta$-Anosov;
  \item\label{item:lambda2} The maps $\xi^+, \xi^-$ are dynamics-preserving for~$\rho$ and 
    \[ \exists c>0,\ \forall \alpha \in \theta,\ \forall \gamma \in \Gamma, \quad
    \bigl\langle \alpha,\, \lambda( \rho(\gamma)) \bigr\rangle \geq c \, \ellinfty{\gamma} \,;\]
  \item\label{item:lambda3} The maps $\xi^+, \xi^-$ are dynamics-preserving for~$\rho$ and 
    \[\exists c>0,\ \forall \alpha \in \theta,\ \forall \gamma \in \Gamma, \quad    \bigl\langle \alpha,\, \lambda( \rho(\gamma)) \bigr\rangle \geq c \, \| \lambda(\rho(\gamma))\| \,;\]
  \item\label{item:lambda4} The maps $\xi^+, \xi^-$ are dynamics-preserving for~$\rho$ and 
    \[\forall \alpha \in \theta, \quad \lim_{\ellinfty{\gamma} \to +\infty}\, \bigl\langle \alpha,\, \lambda( \rho(\gamma))
    \bigr\rangle = +\infty. \]
  \end{enumerate}
\end{theorem}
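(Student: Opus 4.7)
The plan is to establish the cycle $(1) \Rightarrow (2) \Rightarrow (3) \Rightarrow (4) \Rightarrow (1)$, obtaining Proposition~\ref{prop:Ano_iff_ssAno} as a byproduct. For $(1) \Rightarrow (2)$, I would apply Theorem~\ref{thm:char_ano_maps}.\eqref{item:8} to each power $\gamma^n$, divide by~$n$, and let $n \to +\infty$, using $\lambda(g) = \lim_n \mu(g^n)/n$ from~\eqref{eqn:lambda-lim-mu} and $\ellinfty{\gamma} = \lim_n \ellGamma{\gamma^n}/n$ from~\eqref{eqn:stablelength}. The implication $(2) \Rightarrow (3)$ is obtained similarly, via the bound $\Vert\lambda(\rho(\gamma))\Vert \leq k\,\ellinfty{\gamma}$ coming from~\eqref{eqn:mu-leq-klength} by the same limiting procedure, yielding $\langle\alpha, \lambda(\rho(\gamma))\rangle \geq c\,\ellinfty{\gamma} \geq (c/k)\,\Vert\lambda(\rho(\gamma))\Vert$. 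The implication $(2) \Rightarrow (4)$ is immediate; $(3) \Rightarrow (4)$ reduces to showing $\Vert\lambda(\rho(\gamma_n))\Vert \to +\infty$ whenever $\ellinfty{\gamma_n} \to +\infty$, which will be obtained a posteriori from the cycle once $(4) \Rightarrow (1)$ is established.

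The main step $(4) \Rightarrow (1)$ is coupled with the proof of Proposition~\ref{prop:Ano_iff_ssAno} via the semisimplification $\rho^{ss}$. By Lemma~\ref{lem:lyapu_not_changed}, $\lambda(\rho^{ss}(\gamma)) = \lambda(\rho(\gamma))$, so hypothesis~(4) transfers to~$\rho^{ss}$. Dynamics-preserving boundary maps for~$\rho^{ss}$ can be constructed from those of~$\rho$ as follows: using Lemma~\ref{lem:theta-comp-exists} I would pick a $\theta$-proximal linear representation $(\tau, V)$ of~$G$; for each $\gamma$ of infinite order, the attracting fixed line of $\tau(\rho(\gamma))$ in~$\PP_{\KK}(V)$ depends only on the hyperbolic part of $\rho(\gamma)$, which is conjugate to that of $\rho^{ss}(\gamma)$, and Proposition~\ref{prop:theta_emb}.\eqref{item:c_thtmb}, combined with a limiting argument in the $G$-orbit closure of~$\rho$, yields boundary maps for~$\rho^{ss}$. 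One may thus assume $\rho$ is semisimple; the Zariski closure $H \subset G$ of $\rho(\Gamma)$ is then reductive, and Benoist's theorem on the limit cone of a Zariski-dense subgroup identifies the closed cone generated by the Jordan projections $\lambda(\rho(\Gamma))$ with the asymptotic cone of the Cartan projection $\mu(\rho(\Gamma))$. Hypothesis~(4) forces this common cone to lie in the open Weyl chamber $\{\langle\alpha, \cdot\rangle > 0 : \alpha \in \theta\}$, which translates into condition~\eqref{item:7} of Theorem~\ref{thm:char_ano_maps} for~$\mu$, and hence into~(1) for~$\rho^{ss}$; Proposition~\ref{prop:semisimplification_anosov} then delivers~(1) for~$\rho$.

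The principal obstacle is this last step. First, producing dynamics-preserving boundary maps for~$\rho^{ss}$ from those of~$\rho$ requires a careful passage to the limit in the $G$-orbit closure, since $\rho(\gamma)$ and $\rho^{ss}(\gamma)$ are generally not conjugate in~$G$ and the naive identification of attracting fixed points fails outside proximal situations. Second, converting $\lambda$-control into $\mu$-control uniformly for all $\gamma \to \infty$ in~$\Gamma$ (and not merely along sequences with $\ellinfty{\gamma} \to +\infty$) rests on Benoist's limit-cone machinery applied to $\rho(\Gamma) \subset H$, combined with the density of attracting fixed points of proximal elements provided by dynamics-preservation and Fact~\ref{fact:dyn_at_infty}.\eqref{item:3}.
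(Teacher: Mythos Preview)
Your proof of $(1)\Rightarrow(2)\Rightarrow(3)$ and $(2)\Rightarrow(4)$ matches the paper. However, there are two genuine gaps in the remaining direction.

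First, your limit-cone argument for $(4)\Rightarrow(1)$ in the semisimple case does not work as stated. Condition~(4) tells you that $\langle\alpha,\lambda(\rho(\gamma))\rangle>0$ for each individual $\gamma$ of infinite order (by applying it to powers), but this is only \emph{pointwise} positivity: the closure of the cone generated by $\lambda(\rho(\Gamma))$ can still touch $\Ker(\alpha)$, so Benoist's identification of the limit cone with the asymptotic cone of $\mu(\rho(\Gamma))$ does not yield condition~\eqref{item:7} of Theorem~\ref{thm:char_ano_maps}. The paper instead uses the Abels--Margulis--Soifer result (Theorem~\ref{thm:abels-marg-soif}): there is a finite set $F\subset\Gamma$ and $C_\rho>0$ such that for every $\gamma$ one can choose $f\in F$ with $\|\lambda(\rho(\gamma f))-\mu(\rho(\gamma))\|\leq C_\rho$. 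Given $\gamma_n\to\infty$, this forces $\|\lambda(\rho(\gamma_n f_n))\|\to+\infty$, hence $\ell_\infty(\gamma_n f_n)\to+\infty$ via Proposition~\ref{prop:word-length-stable}, and then~(4) gives $\langle\alpha,\mu(\rho(\gamma_n))\rangle\to+\infty$. This elementwise bridge between $\mu$ and $\lambda$ is exactly what the limit-cone statement lacks. The same device handles $(3)\Rightarrow(1)$ directly, avoiding your circular deferral of $(3)\Rightarrow(4)$.

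Second, your construction of boundary maps for~$\rho^{ss}$ is flawed. You write that the attracting fixed line of $\tau(\rho(\gamma))$ ``depends only on the hyperbolic part of $\rho(\gamma)$, which is conjugate to that of $\rho^{ss}(\gamma)$.'' The hyperbolic parts are indeed both conjugate to $\exp\lambda(\rho(\gamma))$, but by \emph{different} conjugating elements, so their attracting fixed points in $G/P_\theta$ lie in different places; a limiting argument along $g_n\rho g_n^{-1}\to\rho^{ss}$ does not obviously converge either, since $g_n\to\infty$. The paper's Proposition~\ref{prop:xi-for-rho-ss} carries this out concretely: it passes to an irreducible $\theta$-proximal $(\tau,V)$, decomposes $V=V_1\oplus V_2\oplus V_3$ with $V_1=U\cap\bigcap_\eta\xi^-(\eta)$ and $U=\sum_\eta\xi^+(\eta)$, shows $\rho_2$ on $V_2$ is already irreducible (hence equals $\rho_2^{ss}$), and obtains the boundary maps for $\rho^{ss}$ by projecting $\xi^\pm$ to $\PP(V_2)$ along~$V_1$.
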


\begin{remarks} \label{rem:combine}
  \begin{enumerate}[label=(\alph*),ref=\alph*]
  \item In Theorem~\ref{thm:char_ano_maps}, conditions \eqref{item:8}, \eqref{item:6}, \eqref{item:7} use the Cartan projection~$\mu$, whereas condition~\eqref{item:5} uses the coset distance map $\dtheta$ as in Definition~\ref{defi:ano2}.\eqref{item:ano-ii} of the Anosov property.
    Recall that the range of $\dtheta$ is a finite union of copies of the range of~$\mu$ (see Section~\ref{subsubsec:cart-proj-l_th}). 
  \item \label{item:compatible} If $\Gamma$ is nonelementary, then the existence of a continuous, injective, $\rho$-equivariant map $\xi^+ : \partial_{\infty}\Gamma\to G/P_{\theta}$ as in Theorems \ref{thm:char_ano_maps} and~\ref{thm:char_ano_lambda} (see Remark~\ref{rem:compatible}.\eqref{item:compat-transv->inj}) already implies that $\rho$ has finite kernel and discrete image. 
    Indeed, consider a sequence $(\gamma_n)\in\Gamma^{\NN}$ such that \((\rho(\gamma_n))_{n\in\NN}\) is bounded in~\(G\).
    Seen as a sequence of homeomorphisms of \(G/P_\theta\), the family \((\rho(\gamma_n))_{n\in\NN}\) is equicontinuous.
    Since \(\xi^+\) is injective and continuous and since \(\partial_\infty \Gamma\) is compact, the map \(\xi^+\) is a homeomorphism onto its image.
    Therefore, seen as a sequence of homeomorphisms of \(\partial_\infty \Gamma\), the family \((\gamma_n)_{n\in\NN}\) is equicontinuous. 
    By Fact~\ref{fact:dyn_at_infty}.\eqref{item:4}, no subsequence of \((\gamma_n)_{n\in\NN}\) can diverge to infinity, \ie \((\gamma_n)_{n\in\NN}\) is bounded in~\(\Gamma\).
    \item By Proposition~\ref{prop:word-length-stable}, condition 
   \eqref{item:lambda2} of Theorem~\ref{thm:char_ano_lambda} is also equivalent to:
   
   \medskip
  \begin{itemize}
    \item[\emph{(2')}] \emph{The maps $\xi^+, \xi^-$ are dynamics-preserving for~$\rho$ and
    \[ \exists c,C>0,\ \forall \alpha \in \theta,\ \forall \gamma \in \Gamma, \quad
    \bigl\langle \alpha,\, \lambda( \rho(\gamma)) \bigr\rangle \geq c \, \trl{\gamma} - C ,\]
    where $\trlf : \Gamma\to\NN$ is the translation length function \eqref{eqn:translationlength} on the Cayley graph of~$\Gamma$.}
  \end{itemize}
  
  \medskip
  
  \item \label{item:theta-symmetric} By Fact~\ref{fact:Anosov_opp}, as well as \eqref{eqn:opp-inv-mu} and \eqref{eqn:opp-inv-lambda}, all conditions of Theorems \ref{thm:char_ano_maps} and~\ref{thm:char_ano_lambda} are invariant under replacing $\theta$ with $\theta^{\star}$ or with $\theta\cup\theta^{\star}$.
  Thus, without loss of generality, we may assume that $\theta$ is symmetric (\ie $\theta=\nolinebreak\theta^{\star}$).
\end{enumerate}
\end{remarks}

The implication $\eqref{item:8} \Rightarrow \eqref{item:6}$ of Theorem~\ref{thm:char_ano_maps} follows from the subadditivity of $\Vert\mu\Vert$, see \eqref{eqn:mu-leq-klength}.
The implication $\eqref{item:6} \Rightarrow \eqref{item:7}$ follows from Remark~\ref{rem:combine}.\eqref{item:compatible} above and from the properness of~\(\mu\). 
We start by proving the implications $\eqref{item:5} \Rightarrow \eqref{item:8}$ and $\eqref{item:7} \Rightarrow \eqref{item:5}$ of Theorem~\ref{thm:char_ano_maps} in Sections \ref{subsec:contr-cart-proj} and~\ref{subsec:when-cart-proj}, respectively.
Then Theorem~\ref{thm:char_ano_lambda} is proved in Sections~\ref{subsec:abels-marg-souif} and~\ref{subsec:char-terms-lyap}.

\subsection{A preliminary result}
\label{subsec:prelim-result}

In the setting of Theorem~\ref{thm:char_ano_maps}, the maps $\xi^+, \xi^-$ always combine as in Definition~\ref{defi:ano2}.\eqref{item:ano-i} into a continuous, $\rho$-equivariant, flow-invariant map $\tilde{\sigma}:\mathcal{G}_\Gamma\rightarrow G/L_\theta$, of which we choose a continuous lift $\tilde{\beta}:\mathcal{G}_\Gamma\rightarrow G/K_\theta$ (with $L_\theta, K_\theta$ defined as in Section~\ref{subsubsec:cart-proj-l_th}).
We further choose a $\rho$-equivariant set-theoretic lift $\breve{\beta}:\mathcal{G}_\Gamma\rightarrow G$ of $\tilde{\beta}$.
Then for any $(t,v)\in \RR \times \mathcal{G}_\Gamma$, since $\breve{\beta}(\varphi_t \cdot v) L_\theta = \tilde{\sigma}(\varphi_t \cdot v) = \tilde{\sigma}(v) = \breve{\beta}(v) L_\theta$, we have
\[ \breve{\beta}(\varphi_t \cdot v) = \breve{\beta}(v) \, l_{t,v}  \]
for some $l_{t,v} \in L_\theta$ satisfying the following $\Gamma$-invariance property: for all $\gamma\in\Gamma$ and \(v\in \mathcal{G}_\Gamma\),
\begin{equation}\label{eqn:l-inv}
l_{t,\gamma\cdot v} = l_{t,v},
\end{equation}
since \(\breve{\beta}(\gamma\cdot v)^{-1} \breve{\beta}(\varphi_t \cdot \gamma\cdot v) = \breve{\beta}(v)^{-1}\rho(\gamma)^{-1}\rho(\gamma)\breve{\beta}(\varphi_t\cdot v) \).
By definition, $\mu_\theta (l_{t,v}) = \dtheta\big( \tilde{\beta}(v),\tilde{\beta}(\varphi_t \cdot v)\big)$, where $\dtheta$ is the $\theta$-coset distance map (Definition~\ref{defi:mutheta}).
The following cocycle condition is satisfied: for all $t,s\in\RR$ and $v\in\mathcal{G}_{\Gamma}$,
  \begin{equation} \label{eqn:l-cocycle}
l_{t+s,v} = l_{t, \varphi_{s}\cdot v} \, l_{s,v} .
  \end{equation}
We fix this map $(t,v)\mapsto l_{t,v}$ for the remainder of Section~\ref{sec:anos-repr-cart}.

The following lemma will be used in Sections \ref{subsec:contr-cart-proj} and~\ref{subsec:when-cart-proj} to prove the implications $\eqref{item:5}\Rightarrow\eqref{item:8}$ and $\eqref{item:7}\Rightarrow\eqref{item:5}$ of Theorem~\ref{thm:char_ano_maps}. 

\begin{lemma}\label{lem:mu_mutheta}
For any compact subset $\mathcal{D}$ of $\mathcal{G}_\Gamma$, the set $\breve{\beta}(\mathcal{D})$ is relatively compact in~$G$; in particular, $\mathcal{K} := 2 \sup_{v\in \mathcal{D}} \|\mu(\breve{\beta}(v))\|$ is finite.
Moreover, for any $v \in \mathcal{D}$, any $t\in \RR$, and any $\gamma \in \Gamma$ such that $\varphi_t\cdot v \in \gamma\cdot \mathcal{D}$,
\begin{equation*}
\| \mu(l_{t,v}) - \mu(\rho(\gamma))  \| \leq \mathcal{K}.
\end{equation*}
\end{lemma}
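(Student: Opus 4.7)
The first assertion is essentially a fiber-bundle statement. Since $\tilde{\beta}:\mathcal{G}_\Gamma\to G/K_\theta$ is continuous and $\mathcal{D}$ is compact, the set $\tilde{\beta}(\mathcal{D})$ is compact in $G/K_\theta$. The quotient map $\pi:G\to G/K_\theta$ is proper because its fibers are translates of the compact group $K_\theta$, so $\pi^{-1}(\tilde{\beta}(\mathcal{D}))$ is a compact subset of $G$ containing $\breve{\beta}(\mathcal{D})$. Hence $\breve{\beta}(\mathcal{D})$ is relatively compact, and by continuity of $\mu$ and $\|\cdot\|$ the quantity $\mathcal{K}=2\sup_{v\in\mathcal{D}}\|\mu(\breve{\beta}(v))\|$ is finite.

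For the quantitative second assertion, the plan is to write $l_{t,v}$ explicitly as a product of three elements of $G$ whose outer factors stay in the bounded set $\breve{\beta}(\mathcal{D})\cup\breve{\beta}(\mathcal{D})^{-1}$, and then apply the strong subadditivity of $\mu$. Concretely, let $v'\in\mathcal{D}$ be such that $\varphi_t\cdot v=\gamma\cdot v'$. The $\rho$-equivariance of $\breve{\beta}$ yields
\[
\breve{\beta}(\varphi_t\cdot v)=\breve{\beta}(\gamma\cdot v')=\rho(\gamma)\,\breve{\beta}(v'),
\]
while the defining relation $\breve{\beta}(\varphi_t\cdot v)=\breve{\beta}(v)\,l_{t,v}$ gives
\[
l_{t,v}=\breve{\beta}(v)^{-1}\,\rho(\gamma)\,\breve{\beta}(v').
\]

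Applying Fact~\ref{fact:mu-subadditive}.\eqref{item:mu-strong-subadd} (with $g_1=\breve{\beta}(v)^{-1}$, $g_2=\rho(\gamma)$, $g_3=\breve{\beta}(v')$) and then Fact~\ref{fact:mu-subadditive}.(1), we obtain
\[
\bigl\|\mu(l_{t,v})-\mu(\rho(\gamma))\bigr\|\leq\bigl\|\mu(\breve{\beta}(v)^{-1})\bigr\|+\bigl\|\mu(\breve{\beta}(v'))\bigr\|=\bigl\|\mu(\breve{\beta}(v))\bigr\|+\bigl\|\mu(\breve{\beta}(v'))\bigr\|\leq\mathcal{K},
\]
which is the desired inequality. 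There is no real obstacle here: once one notes that $\breve{\beta}$ is set-theoretic but lives over the continuous $\tilde\beta$, both claims reduce to standard properties of the Cartan projection, and the key move is simply to absorb the passage from the origin $v$ to the translated point $\gamma\cdot v'$ by pulling $\rho(\gamma)$ out of $l_{t,v}$ via $\rho$-equivariance of $\breve{\beta}$.
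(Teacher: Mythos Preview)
Your proof is correct and matches the paper's argument essentially line for line: both use properness of $G\to G/K_\theta$ for the first claim, and for the second both write the relation $\breve{\beta}(v)\,l_{t,v}=\rho(\gamma)\,\breve{\beta}(\gamma^{-1}\varphi_t\cdot v)$ and apply the strong subadditivity of~$\mu$ (Fact~\ref{fact:mu-subadditive}). The only cosmetic difference is that the paper isolates $\rho(\gamma)$ rather than $l_{t,v}$ before invoking subadditivity, which is immaterial.
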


\begin{proof}
The set $\breve{\beta}(\mathcal{D})$ is mapped onto the compact set $\tilde{\beta}( \mathcal{D})$ by the proper map $G \to G/K_\theta$, hence it is relatively compact in~$G$.
In particular, the continuous function $\| \mu\|$ is bounded on $\breve{\beta}(\mathcal{D})$, \ie $\mathcal{K}<+\infty$.
  Consider $(v,t,\gamma) \in \mathcal{D}\times\RR\times\Gamma$ such that $\varphi_t \cdot v \in \gamma\cdot \mathcal{D}$.
  Then
\[ \breve{\beta}(v) l_{t,v} = \breve{\beta}(\varphi_t \cdot v)  = \rho(\gamma) \breve{\beta}(\gamma^{-1}\cdot \varphi_t\cdot v )\]
and $\gamma^{-1}\cdot \varphi_t\cdot v \in \mathcal{D}$.
  Therefore, $\rho(\gamma) = g_1 l_{t,v} g_2^{-1}$ for some $g_1$, $g_2$ in $\breve{\beta}(\mathcal{D})$.
  The bound follows by Fact~\ref{fact:mu-subadditive}.
\end{proof}

\subsection{The Cartan projection on an Anosov representation} 
\label{subsec:contr-cart-proj}

We first prove the implication $\eqref{item:5} \Rightarrow \eqref{item:8}$ of Theorem~\ref{thm:char_ano_maps}.

Let $\Gamma$ be a word hyperbolic group, $G$ a real reductive Lie group, $\theta \subset \Delta$ a nonempty subset of the simple restricted roots of~$G$, and $\rho: \Gamma\to G$ a representation.
Suppose condition \eqref{item:5} of Theorem~\ref{thm:char_ano_maps} is satisfied, \ie $\rho$ is $P_\theta$-Anosov.
Then the exponential contraction property \eqref{item:ano-ii} in Definition~\ref{defi:ano2} is satisfied: there are constants $c, C>0$ such that for all $\alpha \in \theta$, all $v \in \mathcal{G}_\Gamma$, and all $t\in\RR$,
\begin{equation*}
\bigl\langle \alpha,\, \dtheta( \tilde{\beta}(v), \tilde{\beta}(\varphi_t \cdot v)) \bigr\rangle \geq ct -C.
\end{equation*}
For any $t\in\RR$ and $v\in\mathcal{G}_\Gamma$, let $l_{t,v}\in L_{\theta}$ be defined as in Section~\ref{subsec:prelim-result}.
This equation can be rephrased as 
\begin{equation}\label{eqn:exp-contract}
\langle\alpha,\mu_{\theta}(l_{t,v})\rangle\geq ct-C
\end{equation}
for all $\alpha \in \theta$, all $v \in \mathcal{G}_\Gamma$, and all $t\in\RR$.
We claim that if $t\geq C/c$, then
\begin{equation}\label{eqn:mu=mu-theta}
\mu( l_{t,v}) = \mu_\theta(l_{t,v}).
\end{equation}
Indeed, we always have $\langle \alpha,\, \mu_\theta(l_{t,v})\rangle\geq 0$ for $\alpha \in \Delta \smallsetminus \theta$ by definition \eqref{eqn:thetachamber} of the range $\athetaplus$ of~$\mu_{\theta}$, and if $t\geq C/c$ then $\langle \alpha,\, \mu_\theta(l_{t,v})\rangle\geq 0$ for $\alpha \in \theta$ by \eqref{eqn:exp-contract}.
Therefore, if $t\geq C/c$, then $\mu_\theta(l_{t,v}) \in \overline{\aaa}^+$ for all $v\in\mathcal{G}_\Gamma$, hence $\mu( l_{t,v}) = \mu_\theta(l_{t,v})$ (Remark~\ref{rem:mu-theta-equal-mu-l-in-L}).

By Corollary~\ref{cor:geo_seg_ell}, there are a compact subset $\mathcal{D}$ of~$\mathcal{G}_\Gamma$ and constants $c_1,c_2>0$ such that for any $\gamma\in\Gamma$ there exists $(t,v) \in \RR \times \mathcal{D}$ with $\varphi_t \cdot v \in \gamma \cdot \mathcal{D}$ and $t\geq c_1 \ellGamma{\gamma} -c_2$.
In particular, if $\ellGamma{\gamma}\geq n_0:=(C+cc_2)/cc_1$, then $t\geq C/c$ and so \eqref{eqn:mu=mu-theta} holds. Lemma~\ref{lem:mu_mutheta} applied to the compact set~$\mathcal{D}$ gives $\mathcal{K}>0$ such that
\begin{align*}
  \langle \alpha ,\, \mu(\rho(\gamma))\rangle & \geq \langle \alpha,
  \mu( l_{t,v}) \rangle - \mathcal{K} \| \alpha\|\\
  &= \langle \alpha,
  \mu_\theta( l_{t,v}) \rangle - \mathcal{K} \| \alpha\|\\
  & \geq cc_1 \ellGamma{\gamma} - C',
\end{align*}
where $C'=cc_2 + C + \mathcal{K} \| \alpha\|$.
Up to adjusting the additive constant $C'$, the same inequality also holds for the (finitely many) $\gamma\in\Gamma$ such that $\ellGamma{\gamma}<n_0$, hence condition \eqref{item:8} of Theorem~\ref{thm:char_ano_maps} holds.

This completes the proof of the implication $\eqref{item:5} \Rightarrow \eqref{item:8}$ of Theorem~\ref{thm:char_ano_maps}.

\subsection{Weak contraction and Anosov representations}
\label{subsec:weak-contr-anos}

In the course of proving $\eqref{item:7}\Rightarrow\nolinebreak\eqref{item:5}$ of Theorem~\ref{thm:char_ano_maps}, we will need the following characterization of Anosov representations.

\begin{proposition}\label{prop:weakcontr_ano}
  Let $\Gamma$ be a word hyperbolic group, $G$ a real reductive Lie group,~$\theta\subset\nolinebreak\Delta$ a nonempty subset of the simple restricted roots of~$G$, and $\rho:\Gamma \to\nolinebreak G$ a representation.
  Suppose there exist continuous, $\rho$-equivariant, transverse maps $\xi^+: \partial_{\infty} \Gamma \to G/P_\theta$ and $\xi^-: \partial_{\infty} \Gamma \to G/P_{\theta}^-$, and let $\tilde{\sigma} : \mathcal{G}_\Gamma \to G/L_\theta$ be the induced equivariant map and $\tilde{\beta}: \mathcal{G}_\Gamma \to G/K_\theta$ a lift, as in Definition~\ref{defi:ano2}.\eqref{item:ano-i}.
  Suppose also that for any $\alpha \in \theta$,
  \begin{equation}\label{eqn:beta-v-infty}
  \lim_{t \to +\infty}\, \inf_{v \in \mathcal{G}_\Gamma} \big\langle \alpha,\, \dtheta ( \tilde{\beta}(v), \tilde{\beta}(\varphi_t \cdot v)) \big\rangle = +\infty.
  \end{equation}
  Then the representation is $P_\theta$-Anosov (Definition~\ref{defi:ano2}): there exist $c,C>\nolinebreak 0$ such that
  \[\forall \alpha \in \theta,\ \forall t \in \RR,\ \forall v \in
    \mathcal{G}_\Gamma, \quad \big\langle \alpha,\, \dtheta (\tilde{\beta}(v), \tilde{\beta}(\varphi_t \cdot v)) \big\rangle\geq ct -C.\]
\end{proposition}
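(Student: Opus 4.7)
The plan is to upgrade the uniform divergence \eqref{eqn:beta-v-infty} to a uniform linear lower bound by reducing to a matrix computation in an irreducible $\theta$-proximal linear representation, then exploiting the cocycle structure \eqref{eqn:l-cocycle} together with the $\Gamma$-invariance \eqref{eqn:l-inv}.

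\emph{Setup.} By Fact~\ref{fact:Anosov_opp} and Remark~\ref{rem:combine}.\eqref{item:theta-symmetric} I may assume $\theta=\theta^\star$. Using Lemma~\ref{lem:theta-comp-exists} I fix an irreducible, $\theta$-proximal representation $\tau: G \to \GL_\RR(V)$, chosen compatibly with the Cartan decomposition (Remark~\ref{rem:Cartan_comp_rep}); this makes the weight-space decomposition $V = V^{\chi_\tau} \oplus V_{<\chi_\tau}$ orthogonal, with $V^{\chi_\tau}$ a line. Since $L_\theta = P_\theta \cap P_\theta^-$ stabilizes both summands (Proposition~\ref{prop:theta_emb}.\eqref{item:a_thtmb}), every $l \in L_\theta$ acts under $\tau$ as a block operator $a(l) \oplus B(l)$, where $a(l)\in\RR$. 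A $KAK$-computation in $L_\theta$ combined with Lemma~\ref{lem:tau}.\eqref{item:nonhighest-weights}--\eqref{item:eps1-eps2} shows that when $\mu_\theta(l)\in\overline{\aaa}^+$ (equivalently $\mu_\theta(l) = \mu(l)$, by Remark~\ref{rem:mu-theta-equal-mu-l-in-L}), the top two singular values of $\tau(l)$ are precisely $|a(l)|$ and $\|B(l)\|_{\mathrm{op}}$, and
\[ \log\bigl(|a(l)|\big/\|B(l)\|_{\mathrm{op}}\bigr) \;=\; \min_{\alpha\in\theta}\,\langle\alpha,\mu_\theta(l)\rangle. \]

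\emph{Iteration.} Pick $M\geq 1$ and apply \eqref{eqn:beta-v-infty} to choose $T\geq 1$ so that $\langle\alpha,\mu_\theta(l_{t,v})\rangle \geq M$ uniformly for every $\alpha\in\theta$, every $v\in\mathcal{G}_\Gamma$, and every $t\geq T$. This forces $\mu_\theta(l_{T,v}) = \mu(l_{T,v}) \in \overline{\aaa}^+$ and yields the uniform proximality gap $|a(l_{T,v})|\geq e^M\|B(l_{T,v})\|_{\mathrm{op}}$. The cocycle \eqref{eqn:l-cocycle} gives
\[ \tau(l_{nT,v}) \;=\; \prod_{i=0}^{n-1}\tau(l_{T,\varphi_{iT}\cdot v}) \;=\; \Bigl(\prod_{i}a(l_{T,\varphi_{iT}\cdot v})\Bigr) \oplus \Bigl(\prod_{i}B(l_{T,\varphi_{iT}\cdot v})\Bigr), \]
and submultiplicativity of the operator norm then yields
\[ |a(l_{nT,v})| \;\geq\; e^{nM}\!\prod_{i}\|B(l_{T,\varphi_{iT}\cdot v})\|_{\mathrm{op}} \;\geq\; e^{nM}\,\|B(l_{nT,v})\|_{\mathrm{op}}. \]
Reapplying Lemma~\ref{lem:tau}.\eqref{item:eps1-eps2} (using that $\mu_\theta(l_{nT,v}) \in \overline{\aaa}^+$ by hypothesis at time $nT$) produces $\langle\alpha,\mu_\theta(l_{nT,v})\rangle \geq nM$ for every $\alpha\in\theta$ and every $v\in\mathcal{G}_\Gamma$.

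\emph{Interpolation and main obstacle.} For arbitrary $t\geq T$, I will write $t = nT+r$ with $r\in[0,T)$ and use the cocycle $l_{t,v} = l_{r,\varphi_{nT}\cdot v}\cdot l_{nT,v}$. The $\Gamma$-invariance \eqref{eqn:l-inv} and the cocompactness of $\Gamma\backslash\mathcal{G}_\Gamma$ imply that $\{\mu(l_{r,v'}) : v'\in\mathcal{G}_\Gamma,\ r\in[0,T]\}$ is bounded, and Fact~\ref{fact:mu-subadditive}.\eqref{item:mu-strong-subadd} then controls $|\langle\alpha,\mu(l_{t,v}) - \mu(l_{nT,v})\rangle|$ by a constant $C_0$. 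Absorbing the initial segment $t<T$ into the additive constant, this yields $\langle\alpha,\dtheta(\tilde\beta(v),\tilde\beta(\varphi_t\cdot v))\rangle = \langle\alpha,\mu_\theta(l_{t,v})\rangle \geq (M/T)\,t - C$ for all $t\geq 0$. The main obstacle I anticipate is the delicate bookkeeping between the two projections $\mu$ and $\mu_\theta$: the hypothesis and conclusion are phrased in terms of $\mu_\theta$, whereas the singular-value manipulations live on $\mu$, so one has to verify at each iterate that the relevant $l_{t,v}$ has $\mu_\theta(l_{t,v})\in\overline{\aaa}^+$ before identifying the two.
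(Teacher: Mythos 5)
Your proof is correct, but it takes a genuinely different route from the paper's. The paper proves a two-sided relationship (Lemma~\ref{lem:contrctGmodP}) between the Lipschitz constant of the $L_\theta$-action on the tangent space $T_{x_\theta}(G/P_\theta)\simeq\mathfrak u_\theta^-$ and the quantity $\min_{\alpha\in\theta}\langle\alpha,\mu_\theta(l)\rangle$; it then chooses $\tau>0$ so that each $l_{\tau,v}$ is $e^{-1}$-contracting, iterates the cocycle to get $l_{t,v}$ is $Me^{-\lfloor t/\tau\rfloor}$-Lipschitz, and reads off the linear lower bound from part (i) of that same lemma. You instead pass through an irreducible $\theta$-proximal linear representation $(\tau,V)$, use the block decomposition $V=V^{\chi_\tau}\oplus V_{<\chi_\tau}$ stabilized by $L_\theta$, and convert $\min_{\alpha\in\theta}\langle\alpha,\mu_\theta(l)\rangle$ into a top-two singular value gap via Lemma~\ref{lem:tau}.\eqref{item:eps1-eps2}, then iterate with submultiplicativity of the operator norm. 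The two approaches are structurally parallel (a uniform contraction/gap estimate upgraded by cocycle iteration to a linear bound), and your worry about the $\mu$ vs.\ $\mu_\theta$ bookkeeping is real but handled correctly: you resolve it by reapplying \eqref{eqn:beta-v-infty} at each iterate time $nT$ to guarantee $\mu_\theta(l_{nT,v})\in\overline{\aaa}^+$ before invoking Lemma~\ref{lem:tau}.\eqref{item:eps1-eps2}. The paper's Lemma~\ref{lem:contrctGmodP}.\eqref{item:27} applies to \emph{any} $l\in L_\theta$ with no chamber restriction, so the hypothesis is invoked only once there; that makes the paper's version a little leaner, but your argument avoids introducing the intrinsic Lipschitz machinery by riding on the already-established representation-theoretic framework of Section~\ref{sec:repr-semis-lie}. (A tiny remark: the reduction to $\theta=\theta^\star$ at the start is unnecessary; Lemma~\ref{lem:theta-comp-exists} produces a $\theta$-proximal representation for any nonempty $\theta$.)
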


\begin{remark}\label{rem:inf_on_cpt}
  For fixed $t\in\RR$, by the \(\rho\)-equivariance of \(\tilde{\beta}\) and the invariance of~\(\dtheta\) under the diagonal action of $G$ on $G/K_{\theta}\times G/K_{\theta}$ (see Definition~\ref{defi:mutheta}), we have
  \[\inf_{v \in
    \mathcal{G}_\Gamma} \big\langle \alpha,\, \dtheta ( \tilde{\beta}(v),
  \tilde{\beta}(\varphi_t \cdot v)) \big\rangle = \inf_{v \in
    \mathcal{D}} \big\langle \alpha,\, \dtheta ( \tilde{\beta}(v),
  \tilde{\beta}(\varphi_t \cdot v)) \big\rangle\]
  for any subset $\mathcal{D}$ of~$\mathcal{G}_\Gamma$ with $\Gamma \cdot \mathcal{D} = \mathcal{G}_\Gamma$. 
  Choosing $\mathcal{D}$ to be compact, we see that the infimum is in fact a minimum.
\end{remark}

For the proof it will be useful to relate the contraction on $G/P_\theta$ with a lower bound on $\langle\alpha,\mu_{\theta}(\cdot)\rangle$ for $\alpha \in \theta$.
We set $x_{\theta}:=eP_{\theta}\in G/P_{\theta}$.

\begin{lemma}\label{lem:contrctGmodP}
  Let $G$ be a real reductive Lie group and $\theta\subset \Delta$ a nonempty subset of the simple restricted roots of~$G$.
  Fix a $K_\theta$-invariant norm on the tangent space $T_{x_\theta} (G/ P_\theta) \simeq \mathfrak{u}_{\theta}^{-}$. Then there is a constant $C>0$ with the following properties:
  \begin{enumerate}[label=(\roman*),ref=\roman*]
  \item\label{item:27} For any $t \in\RR$, if the left multiplication action of $l \in L_\theta$ on $T_{x_\theta} (G/P_\theta)$ is $e^{-t}$-Lipschitz, then $\langle \alpha,\, \mu_\theta(l)\rangle \geq t$ for all $\alpha\in \theta$.
  \item\label{item:28} For any nonnegative $t \in \RR$ and any $l \in L_\theta$, if $\langle \alpha,\, \mu_\theta(l)\rangle \geq t$ for all $\alpha\in \theta$, then the action of $l$ on $T_{x_\theta} (G/P_\theta)$ is $Ce^{-t}$-Lipschitz.
  \end{enumerate}
\end{lemma}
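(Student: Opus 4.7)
The plan is to reduce everything to analyzing the adjoint action of $\exp(\mu_\theta(l))$ on the root space decomposition of~$\mathfrak{u}_\theta^-$. First I would identify the tangent space $T_{x_\theta}(G/P_\theta)$ with $\g/\mathrm{Lie}(P_\theta)\simeq \mathfrak{u}_\theta^-$ (using \eqref{eqn:T-G/P} and \eqref{eqn:decompose}) and note that the derivative at $x_\theta$ of the left action of $l\in L_\theta$ is the restriction of $\Ad(l)$ to this quotient (which is preserved since $L_\theta$ normalizes the unipotent radical of $P_\theta^-$). Writing a Cartan decomposition $l=k_1\exp(Y)k_2$ in $L_\theta$ with $Y:=\mu_\theta(l)\in\athetaplus$ and $k_1,k_2\in K_\theta$, the $K_\theta$-invariance of the chosen norm makes $\Ad(k_1)$ and $\Ad(k_2)$ isometries of $\mathfrak{u}_\theta^-$, so that the Lipschitz constant of $l$ on $\mathfrak{u}_\theta^-$ equals the operator norm of $\Ad(\exp Y)|_{\mathfrak{u}_\theta^-}$. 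This latter operator is diagonal on $\mathfrak{u}_\theta^- = \bigoplus_{\alpha\in\Sigma_\theta^+}\g_{-\alpha}$, acting on $\g_{-\alpha}$ by the scalar $e^{-\langle\alpha,Y\rangle}$.

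For~\eqref{item:27} I would simply invoke the elementary fact that on any finite-dimensional normed space the operator norm dominates the spectral radius. Thus if the Lipschitz constant is at most $e^{-t}$, then $e^{-\langle\alpha,Y\rangle}\leq e^{-t}$ for every $\alpha\in\Sigma_\theta^+$. Since each simple root in $\theta$ is positive and does not lie in $\mathrm{span}(\Delta\smallsetminus\theta)$, it belongs to $\Sigma_\theta^+$, and this yields $\langle\alpha,\mu_\theta(l)\rangle\geq t$ for all $\alpha\in\theta$, with no multiplicative constant needed.

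For~\eqref{item:28} the argument runs in the opposite direction. Writing any $\alpha\in\Sigma_\theta^+$ as $\alpha=\sum_{\beta\in\Delta}n_\beta\beta$ with $n_\beta\in\NN$ and $\sum_{\beta\in\theta}n_\beta\geq 1$, I would combine $\langle\beta,Y\rangle\geq 0$ for $\beta\in\Delta\smallsetminus\theta$ (which holds because $Y\in\athetaplus$) with the hypothesis $\langle\beta,Y\rangle\geq t\geq 0$ for $\beta\in\theta$ to propagate the lower bound $\langle\alpha,Y\rangle\geq t$ to the whole of~$\Sigma_\theta^+$. Denoting by $P_\alpha:\mathfrak{u}_\theta^-\to\g_{-\alpha}$ the projections associated with the root space decomposition (fixed linear maps, independent of~$l$), the estimate
\[\bigl\Vert\Ad(\exp Y)|_{\mathfrak{u}_\theta^-}\bigr\Vert_{\mathrm{op}} \leq \sum_{\alpha\in\Sigma_\theta^+}e^{-\langle\alpha,Y\rangle}\Vert P_\alpha\Vert_{\mathrm{op}} \leq Ce^{-t}\]
gives the desired bound with $C:=\sum_{\alpha\in\Sigma_\theta^+}\Vert P_\alpha\Vert_{\mathrm{op}}$, a constant depending only on the chosen $K_\theta$-invariant norm.

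The main (minor) obstacle is that a general $K_\theta$-invariant norm need not be orthogonal for the root space decomposition, which is precisely what forces the multiplicative constant $C\geq 1$ in~\eqref{item:28} (a single optimal $C$ working for both statements). Note also that the nonnegativity assumption $t\geq 0$ is genuinely needed in~\eqref{item:28}, since without it one could have $n_\beta\geq 2$ with $\langle\beta,Y\rangle=t<0$ for some $\beta\in\theta$, driving $\sum n_\beta\langle\beta,Y\rangle$ below~$t$ and breaking the propagation step.
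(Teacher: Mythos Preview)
Your proof is correct and follows essentially the same approach as the paper's: reduce via the Cartan decomposition $l=k_1\exp(\mu_\theta(l))k_2$ and $K_\theta$-invariance to the diagonal action of $\exp(\mu_\theta(l))$ on the root space decomposition of $\mathfrak{u}_\theta^-$, then read off the bounds from the scalars $e^{-\langle\alpha,\mu_\theta(l)\rangle}$. Your write-up is in fact slightly more explicit than the paper's in two places (invoking ``operator norm $\geq$ spectral radius'' for~\eqref{item:27}, and spelling out the decomposition $\alpha=\sum_{\beta\in\Delta}n_\beta\beta$ with $\sum_{\beta\in\theta}n_\beta\geq 1$ to propagate the lower bound from $\theta$ to all of $\Sigma_\theta^+$ in~\eqref{item:28}), but the substance is the same.
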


\begin{proof}[Proof of Lemma~\ref{lem:contrctGmodP}]
The action of $l\in L_{\theta}$ on $T_{x_\theta} (G/P_\theta)$ identifies with the adjoint action of $l$ on $\mathfrak{u}_{\theta}^{-} = \bigoplus_{\alpha \in \Sigma_{\theta}^{+}} \g_{-\alpha}$.
Write $l\in K_{\theta}(\exp \mu_{\theta}(l))K_{\theta}$ where $\mu_{\theta}(l)\in\athetaplus$.
By definition, any $a\in\exp(\aaa)$ acts on~$\g_{-\alpha}$ by multiplication by the scalar $e^{-\langle\alpha,\, \log a\rangle}$.
Applying this to $\log a=\mu_{\theta}(l)$ and using the $K_{\theta}$-invariance of the norm, we obtain~\eqref{item:27}.
For \eqref{item:28}, note that if $\langle \alpha,\, \mu_\theta(l)\rangle \geq t$ for all $\alpha\in \theta$, then the scalars $e^{-\langle\alpha,\, \mu_{\theta}(l)\rangle}$ for $\alpha\in\Sigma_\theta^+$ are all in $(0,e^{-t}]$, since $\langle \alpha,\, \mu_\theta(l)\rangle\geq 0$ for $\alpha \in \Delta \smallsetminus \theta$ by definition \eqref{eqn:thetachamber} of the range $\athetaplus$ of~$\mu_{\theta}$.
The constant $C$ comes from the fact that the weight spaces $\g_{-\alpha}$ may not be orthogonal for the chosen $K_{\theta}$-invariant norm on $T_{x_\theta} (G/P_\theta) \simeq \mathfrak{u}_{\theta}^{-}$.
\end{proof}

\begin{proof}[Proof of Proposition~\ref{prop:weakcontr_ano}]
  As in Section~\ref{subsec:prelim-result}, we choose a $\rho$-equivariant set-theoretic lift $\breve{\beta} :\nolinebreak\mathcal{G}_\Gamma\to\nolinebreak G$ of~$\tilde{\beta}$, and for any $(t,v)\in \RR \times \mathcal{G}_\Gamma$ we set $l_{t,v} := \breve{\beta}(v)^{-1} \breve{\beta}(\varphi_t \cdot v) \in L_\theta$, so that $\dtheta (\tilde{\beta}(v), \tilde{\beta}(\varphi_t \cdot v))  = \mu_\theta (l_{t,v})$.
  Suppose \eqref{eqn:beta-v-infty} holds for all $\alpha\in\theta$, \ie
  \[\inf_{\alpha\in \theta,\ v\in \mathcal{G}_\Gamma}\ \langle \alpha,\mu_\theta(l_{t,v})\rangle \underset{t\to +\infty}{\longrightarrow} +\infty. \]
By Lemma~\ref{lem:contrctGmodP}.\eqref{item:28}, there exists $\tau>0$ such that $l_{\tau,v}$ is $e^{-1}$-contracting on $T_{x_\theta} (G/P_\theta)$ for all $v\in\mathcal{G}_\Gamma$, where $x_{\theta}=eP_{\theta}\in G/P_{\theta}$.
Let $M\geq 0$ be a bound for the Lipschitz constant of $l_{s,w}$ acting on $T_{x_\theta} (G/P_\theta)$ for all $s \in [0,\tau]$ and $w \in \mathcal{G}_\Gamma$.
  By \eqref{eqn:l-cocycle}, for any $t\geq 0$ and $v\in\mathcal{G}_\Gamma$,
  \[l_{t,v} = l_{t-n\tau, \varphi_{n\tau}\cdot v} \, \left ( l_{\tau, \varphi_{(n-1)\tau} \cdot v} \, \cdots \, l_{\tau,v} \right ) \quad \text{where} \ n = \Bigl\lfloor \frac{t}{\tau} \Bigr\rfloor, \]
  hence $l_{t,v}$ is $M e^{-n}$-Lipschitz on $T_{x_\theta}(G/P_\theta)$.
  By Lemma~\ref{lem:contrctGmodP}.\eqref{item:27}, for any~$\alpha\in\nolinebreak\theta$,
  \[\langle \alpha,\, \mu_\theta(l_{t,v}) \rangle \geq \Bigl\lfloor
    \frac{t}{\tau} \Bigr\rfloor - \log M \geq \frac{1}{\tau} t
  - \log M-1.\qedhere \]
\end{proof}

\subsection{When the Cartan projection drifts away from the $\theta$-walls}
\label{subsec:when-cart-proj}

We now address the implication $\eqref{item:7}\Rightarrow\nolinebreak\eqref{item:5}$ of Theorem~\ref{thm:char_ano_maps}.
We first fix an arbitrary representation $\rho : \Gamma \to G$ of the word hyperbolic group $\Gamma$ into the reductive group~$G$ and establish the following.

\begin{proposition}\label{prop:mu-theta-mu} 
If condition~\eqref{item:7} of Theorem~\ref{thm:char_ano_maps} holds, then there exists $T \geq 0$ such that for all $t\geq T$ and $v \in \mathcal{G}_\Gamma$, 
\begin{equation} \label{eqn:finite}
\mu_\theta(l_{t,v}) = \mu(l_{t,v}). 
\end{equation}
\end{proposition}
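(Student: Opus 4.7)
By Remark~\ref{rem:mu-theta-equal-mu-l-in-L}, it suffices to show $\langle\alpha,\mu_\theta(l_{t,v})\rangle \geq 0$ for all $\alpha\in\theta$, $t\geq T$, and $v\in\mathcal{G}_\Gamma$. Using the $\Gamma$-invariance~\eqref{eqn:l-inv} of $v\mapsto l_{t,v}$, I restrict $v$ to a compact fundamental domain $\mathcal{D}\subset\mathcal{G}_\Gamma$. Applying Lemma~\ref{lem:mu_mutheta} together with the fact that orbit maps $\Gamma\to\mathcal{G}_\Gamma$ are quasi-isometries and flow orbits are quasi-isometric embeddings yields $\gamma_{t,v}\in\Gamma$ with $\varphi_t\cdot v\in\gamma_{t,v}\cdot\mathcal{D}$, $\|\mu(l_{t,v})-\mu(\rho(\gamma_{t,v}))\|\leq\mathcal{K}$, and $\ellGamma{\gamma_{t,v}}\to+\infty$ uniformly in $v\in\mathcal{D}$ as $t\to+\infty$. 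Combined with hypothesis~\eqref{item:7}, this gives $\inf_{v\in\mathcal{D}}\langle\alpha,\mu(l_{t,v})\rangle\to+\infty$ for every $\alpha\in\theta$, and hence $\|\mu_\theta(l_{t,v})\|\to+\infty$ as well.

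I argue by contradiction: suppose there exist $t_n\to+\infty$, $v_n\in\mathcal{D}$, and $\alpha\in\theta$ with $\langle\alpha,\mu_\theta(l_{t_n,v_n})\rangle<0$. Fix an irreducible $\theta$-proximal linear representation $(\tau,V)$ of $G$ with highest weight $\chi_\tau$ (Lemma~\ref{lem:theta-comp-exists}) and a compatible Cartan decomposition (Remark~\ref{rem:Cartan_comp_rep}); by Proposition~\ref{prop:theta_emb}.\eqref{item:a_thtmb}, the decomposition $V = V^{\chi_\tau}\oplus V_{<\chi_\tau}$ is $L_\theta$-invariant with both summands stabilised by $\tau(K_\theta)\subset\OO(V)$. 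Write $l_{t_n,v_n} = k_{1,n}\exp(Y_n)k_{2,n}$ with $Y_n := \mu_\theta(l_{t_n,v_n})$ and $k_{i,n}\in K_\theta$. After extraction, $v_n\to v_\infty$, $v'_n := \gamma_n^{-1}\varphi_{t_n}v_n\to v'_\infty\in\mathcal{D}$, $k_{i,n}\to k_{i,\infty}\in K_\theta$, each $\gamma_n$ is of infinite order, and the weight $\chi_0$ of $\tau$ maximising $\chi\mapsto\langle\chi,Y_n\rangle$ is constant in $n$. By Lemma~\ref{lem:tau}.\eqref{item:diff-weights-theta}, $\chi_\tau - \alpha$ is a weight of $\tau$, and $\langle\alpha,Y_n\rangle<0$ forces $\langle\chi_\tau-\alpha,Y_n\rangle>\langle\chi_\tau,Y_n\rangle$, so $\chi_0\neq\chi_\tau$ and $V^{\chi_0}\subset V_{<\chi_\tau}$. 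From the factorisation $A_n := \tau(\breve\beta(\varphi_{t_n}v_n)) = \tau(\breve\beta(v_n))\cdot\tau(l_{t_n,v_n})$ (bounded times large-proximal), an SVD computation shows that the top image direction of $A_n$ in $\PP_{\RR}(V)$ converges to the line $\tau(\breve\beta(v_\infty))\,\tau(k_{1,\infty})\cdot V^{\chi_0}$.

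The main obstacle is to obtain a second expression for the same limit, using the factorisation $A_n = \tau(\rho(\gamma_n))\cdot\tau(\breve\beta(v'_n))$ (large-proximal times bounded invertible). By Lemma~\ref{lem:tau}.\eqref{item:eps1-eps2} combined with hypothesis~\eqref{item:7}, $\tau(\rho(\gamma_n))$ is arbitrarily proximal in $\PP_{\RR}(V)$; by dynamics-preservation of $\xi^+$ for $\rho$ and Proposition~\ref{prop:theta_emb}.\eqref{item:c_thtmb}, its attracting fixed point in $\PP_{\RR}(V)$ is $\iota^+(\xi^+(\eta^+_{\gamma_n}))$. Since $(\gamma_n)$ is a quasi-geodesic with forward endpoint $\varphi_{+\infty}(v_\infty)$, Fact~\ref{fact:dyn_at_infty} gives $\eta^+_{\gamma_n}\to\varphi_{+\infty}(v_\infty)$, whence $\iota^+(\xi^+(\eta^+_{\gamma_n}))\to\iota^+(\xi^+(\varphi_{+\infty}(v_\infty))) = \tau(\breve\beta(v_\infty))\cdot V^{\chi_\tau}$. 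A standard fact on highly proximal sequences identifies this limit with the limit of the top image directions of $\tau(\rho(\gamma_n))$, which equals the top image direction of $A_n$ in the limit (the bounded invertible factor $\tau(\breve\beta(v'_n))$ does not affect the top image direction). Equating the two expressions and applying $\tau(\breve\beta(v_\infty))^{-1}$ yields $\tau(k_{1,\infty})\cdot V^{\chi_0} = V^{\chi_\tau}$; but $\tau(k_{1,\infty})$ stabilises $V_{<\chi_\tau}\supset V^{\chi_0}$, so the left-hand side lies in $V_{<\chi_\tau}$, contradicting its transversality to $V^{\chi_\tau}$.
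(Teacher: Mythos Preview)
Your approach is genuinely different from the paper's. The paper argues by continuity and density: having established (just before the statement) that $\mu_\theta(l_{t,v})$ lies in $\s'_\theta := \athetaplus \cap (W\cdot\s_\theta)$ for all $t\geq T$, Lemma~\ref{lem:aRplus_connec} shows that $\s_\theta$ is a connected component of $\s'_\theta$, so by continuity in~$t$ it suffices to place $\mu_\theta(l_{t,v})$ in $\s_\theta$ for \emph{one} value of~$t$. This is achieved for $v$ on the axis of any infinite-order $\gamma$ (Lemma~\ref{lem:l_tgamma_in_aplus}, using dynamics-preservation together with Lemma~\ref{lem:contrctGmodP}.\eqref{item:27}), and density of periodic orbits in $\mathcal{G}_\Gamma$ (Remark~\ref{rem:flow-space-dense}) concludes.

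Your contradiction argument via a $\theta$-proximal representation has a genuine gap at the ``standard fact on highly proximal sequences''. The assertion --- that for a proximal sequence $(h_n)$ in $\GL_\KK(V)$ with $\langle\varepsilon_1-\varepsilon_2,\mu_{\GL_\KK(V)}(h_n)\rangle\to+\infty$, the top image directions and the attracting fixed points $\xi^+_{h_n}$ share the same limit --- is false in general. For instance, in $\GL_3(\RR)$ the upper-triangular matrices
\[
h_N = \begin{pmatrix} 2 & 0 & 0 \\ 0 & 1 & N \\ 0 & 0 & 1/2 \end{pmatrix}
\]
have constant attracting fixed point $\xi^+_{h_N}=[e_1]$, whereas the dominant eigenvector of $h_N h_N^{T}$ (whose $(2,2)$-entry is $1+N^2$) tends to $[e_2]$, so the top image direction tends to $[e_2]$; meanwhile $\langle\varepsilon_1-\varepsilon_2,\mu(h_N)\rangle\sim\log N\to+\infty$. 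Condition~\eqref{item:7} only controls the $\mu$-gap of $\rho(\gamma_n)$, not the $\lambda$-gap, so nothing in your hypotheses rules out this behaviour. Without this identification your two computations of the limiting top image direction of $A_n$ cannot be compared, and no contradiction follows.

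There are also secondary issues. The sequence $(\gamma_n)$ is not a quasi-geodesic (the basepoints $v_n$ vary and $(t_n)$ is an arbitrary divergent sequence); Fact~\ref{fact:dyn_at_infty} does not directly yield $\eta^+_{\gamma_n}\to\varphi_{+\infty}(v_\infty)$; and arranging that each $\gamma_n$ has infinite order needs justification.
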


Before proving\ Proposition~\ref{prop:mu-theta-mu}, we establish Lemmas \ref{lem:aRplus_connec}, \ref{lem:l_tgamma_in_aplus}, and~\ref{lem:on_axis_in_aplus} below.

Recall that condition~\eqref{item:7} of Theorem~\ref{thm:char_ano_maps} is the existence of continuous, $\rho$-equiva\-riant, transverse, dynamics-preserving maps $\xi^+ : \partial_{\infty}\Gamma \to G/P_{\theta}$ and $\xi^- : \partial_{\infty}\Gamma \to\nolinebreak G/P_{\theta}^-$ and the fact that for any $\alpha \in \theta$,
\begin{equation}\label{eqn:5}
\lim_{\gamma \to \infty} \bigl\langle
  \alpha,\, \mu(\rho(\gamma)) \bigr\rangle = + \infty.
\end{equation}
Consider maps $\tilde{\sigma}$, $\tilde{\beta}$, $\breve{\beta}$ and $l_{t,v}$ as in Section~\ref{subsec:prelim-result} and let
\[ \mathcal{K} := 2 \sup_{v\in \mathcal{D}} \|\mu(\breve{\beta}(v))\| < +\infty ,\]
where $\mathcal{D}$ is a compact fundamental domain of~$\mathcal{G}_{\Gamma}$ for the action of~$\Gamma$.
By \eqref{eqn:5}, there is a finite subset $F$ of~$\Gamma$ such that for any $\alpha \in \theta$ and $\gamma \in \Gamma \smallsetminus F$,
\[ \bigl\langle \alpha,\, \mu(\rho(\gamma)) \bigr\rangle \geq 1 + \Vert\alpha\Vert \, \mathcal{K}. \]
By Lemma~\ref{lem:mu_mutheta}, we then have $\langle\alpha,\mu(l_{t,v})\rangle\geq 1$ for all $\alpha\in\theta$, all $t\in\RR$, and all $v\in\mathcal{D}$ with $\varphi_t\cdot v\in (\Gamma\smallsetminus F)\cdot\mathcal{D}$.
Note that the set of $t\in\RR$ with $\varphi_t\cdot\mathcal{D}\cap F\cdot\mathcal{D}\neq\emptyset$ is compact.
Therefore there exists $T>0$ such that for any $v\in\mathcal{D}$ we have
\begin{equation} \label{eqn:insulate}
\forall\alpha \in \theta,\ \forall t\geq T,\quad \langle \alpha ,\, \mu(l_{t,v}) \rangle \geq 1.
\end{equation}
This actually holds for any $v\in\mathcal{G}_{\Gamma}$ by \eqref{eqn:l-inv}.

Using the Weyl group~$W$, we define 
\begin{align*}
\s_{\theta} & :=  \big\{Y \in \overline{\aaa}^+ \mid \langle \alpha,Y \rangle \geq 1 \text{ for all } \alpha \in \theta\big\} ,\\ 
\s'_{\theta} & :=  \athetaplus \cap (W \cdot \s_{\theta}).
\end{align*}

\begin{lemma}\label{lem:aRplus_connec}
  The set $\s_{\theta}$ is a connected component of~$\s'_{\theta}$.
\end{lemma}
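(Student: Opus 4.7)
The plan is to prove that $\s_\theta$ is closed, connected, and open in $\s'_\theta$; these three properties together will yield that $\s_\theta$ is a connected component of $\s'_\theta$. The first two are immediate: $\s_\theta$ is convex (hence connected), being cut out of $\aaa$ by finitely many affine inequalities, and it is closed in $\aaa$, hence in $\s'_\theta$. The substantive task is therefore to establish openness, which I would do by showing that any sequence $(Y_n)_{n\in\NN}$ in $\s'_\theta$ converging to some $Y_0 \in \s_\theta$ must eventually lie in $\s_\theta$.

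For this, I would write $Y_n = w_n Z_n$ with $w_n\in W$ and $Z_n \in \s_\theta$, and after extracting a subsequence assume $w_n = w$ is constant. Then $Z_n = w^{-1} Y_n$ converges to $w^{-1} Y_0$, which lies in $\overline{\aaa}^+$ by closedness of $\s_\theta$. Since $\overline{\aaa}^+$ is a strict fundamental domain for the $W$-action on $\aaa$, the two points $Y_0$ and $w^{-1} Y_0$ lie in a common $W$-orbit and must coincide, so $w$ fixes $Y_0$. It is standard that $\mathrm{Stab}_W(Y_0) = W_{\theta_0}$, where $\theta_0 := \{\alpha \in \Delta : \langle\alpha, Y_0\rangle = 0\}$. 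Crucially, since $\langle\alpha, Y_0\rangle \geq 1 > 0$ for every $\alpha \in \theta$, we obtain the key inclusion $\theta_0 \subset \Delta \smallsetminus \theta$.

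The main step is then a local analysis near $Y_0$. In a sufficiently small neighborhood $U$ of $Y_0$, the strict inequalities $\langle\alpha, \cdot\rangle > 0$ (for $\alpha \in \Delta\smallsetminus\theta_0$) and $\langle\alpha, \cdot\rangle > 1$ (for $\alpha\in \theta$) that hold at $Y_0$ remain valid. Consequently $\overline{\aaa}^+\cap U$, $\athetaplus \cap U$, and $\s_\theta \cap U$ are all cut out by the same binding inequalities $\langle\alpha, \cdot\rangle \geq 0$ for $\alpha\in \theta_0$, and hence all coincide with $U \cap (Y_0 + C^+)$, where $C^+ := \{Y\in\aaa : \langle\alpha,Y\rangle\geq 0 \text{ for all } \alpha\in\theta_0\}$ is the positive Weyl chamber of the root subsystem $\Sigma\cap\mathrm{span}(\theta_0)$, whose Weyl group is $W_{\theta_0}$. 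Since $w \in W_{\theta_0}$ fixes $Y_0$, the analogous analysis yields $(w\cdot\s_\theta) \cap U = U \cap (Y_0 + wC^+)$. Therefore
\[ \athetaplus \cap (w\cdot\s_\theta) \cap U \;=\; U \cap \bigl(Y_0 + (C^+ \cap wC^+)\bigr) \;\subset\; U \cap (Y_0 + C^+) \;=\; \s_\theta \cap U, \]
and in particular $Y_n \in \s_\theta$ for all sufficiently large $n$. The main obstacle is the local picture in the third step; once one observes the relation $\theta_0 \subset \Delta\smallsetminus\theta$, the coincidence of $\overline{\aaa}^+$, $\athetaplus$, and $\s_\theta$ near $Y_0$ falls into place and the containment of Weyl chambers becomes the only remaining content.
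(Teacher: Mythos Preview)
Your overall strategy (show $\s_\theta$ is closed, connected, and open in $\s'_\theta$) is sound, and the reduction to a sequential argument with a fixed $w$ is fine. However, the local analysis contains a genuine gap: you assert that the strict inequality $\langle\alpha,Y_0\rangle > 1$ holds for every $\alpha\in\theta$, but membership $Y_0\in\s_\theta$ only guarantees $\langle\alpha,Y_0\rangle \geq 1$. When equality occurs for some $\alpha\in\theta$, the constraint $\langle\alpha,\cdot\rangle\geq 1$ remains binding near~$Y_0$, so $\s_\theta\cap U$ is strictly smaller than $(Y_0+C^+)\cap U$, and your final chain of inclusions only yields $Y_n\in\overline{\aaa}^+$, not $Y_n\in\s_\theta$.

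The fix is immediate and actually simplifies the argument considerably. What you \emph{do} correctly establish (using only $\langle\alpha,Y_0\rangle\geq 1>0$ for $\alpha\in\theta$, together with $Y_n\in\athetaplus$) is that $Y_n\in\overline{\aaa}^+$ for large~$n$. But you already know $Z_n=w^{-1}Y_n\in\s_\theta\subset\overline{\aaa}^+$; since $\overline{\aaa}^+$ is a strict fundamental domain for~$W$, two $W$-related points of $\overline{\aaa}^+$ coincide, hence $Y_n=Z_n\in\s_\theta$. This bypasses the identification of $\mathrm{Stab}_W(Y_0)$ and the whole local picture of $w\cdot\s_\theta$; none of that machinery is needed.

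By comparison, the paper's proof is more global and combinatorial: rather than a sequential openness argument, it shows that for any nontrivial $w\in W$ with $w\cdot\s_\theta\cap\s_\theta\neq\emptyset$, some simple root $\alpha\in\Delta\smallsetminus\theta$ is nonpositive on all of $w\cdot\overline{\aaa}^+$, forcing $w\cdot\s_\theta\not\subset\athetaplus$. Your (corrected) approach has the advantage of being entirely self-contained from the fundamental-domain property, without needing to analyze which simple roots change sign under~$w$.
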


\begin{figure}[h!]
\centering
\labellist
\small\hair 2pt
\pinlabel $\s_{\theta}$  at 70 26
\pinlabel $\s'_{\theta}\smallsetminus \s_\theta$  at 31 31
\pinlabel $\athetaplus$  at -2 34
\pinlabel $\overline{\aaa}^+$  at 88 40
\pinlabel {$\color{red} \varepsilon_1-\varepsilon_2$}  at 75 9
\pinlabel {$\color{red} \varepsilon_2-\varepsilon_3$}  at 49 40
\endlabellist
\includegraphics[width=6cm]{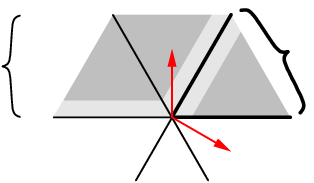}
\caption{Illustration of Lemma~\ref{lem:aRplus_connec} when $G=\SL_3(\RR)$ and $\theta=\{\varepsilon_1-\varepsilon_2\}$: the set $\s'_{\theta}=\s_{\theta}\cup (\s'_{\theta}\smallsetminus\s_{\theta})$ is shown in dark gray.}
\label{chambers}
\end{figure} 

\begin{proof}
The set $\s_{\theta}$ is clearly connected (in fact, convex) and contained in~$\s'_{\theta}$.
Since $\s'_{\theta}$ is a union of $W$-translates of~$\s_{\theta}$, it is sufficient to prove that  
for any nontrivial $w\in W$, if $w\cdot\s_{\theta} \cap \s_{\theta} \neq \emptyset$, then $w\cdot\s_{\theta} \not\subset \s'_{\theta}$ (see Figure~\ref{chambers}).
Note that for any nontrivial $w\in W$ there exists $\alpha\in\Delta$ such that $\langle\alpha,Y\rangle\leq 0$ for all $Y\in w\cdot\overline{\aaa}^+$.
If $\s_{\theta}\cap w\cdot\s_{\theta}\neq\emptyset$, then any such $\alpha$ belongs to $\Delta\smallsetminus\theta$ by definition of~$\s_{\theta}$, and so $w\cdot\s_{\theta}\not\subset\s'_{\theta}$  
by definition \eqref{eqn:thetachamber} of~$\athetaplus$.
\end{proof}

Let $\gamma \in \Gamma$ be an element of infinite order.
The invariant axis  
 $\mathcal{A}_{\gamma} \subset \mathcal{G}_\Gamma$ of~$\gamma$ is the set of $v\in\mathcal{G}_\Gamma$ such that $(\varphi_{+\infty},\varphi_{-\infty})(v) = (\eta^{+}_{\gamma}, \eta^{-}_{\gamma})$, where $\eta^+_{\gamma}$ and $\eta^-_{\gamma}$ are respectively the attracting and the repelling fixed point of $\gamma$ in $\partial_{\infty}\Gamma$.
  There is a constant $T_\gamma >0$ (the \emph{period} of~$\gamma$) such that $\gamma \cdot v = \varphi_{T_\gamma}\cdot v$ for all $v \in \mathcal{A}_{\gamma}$.
The assumption from Theorem~\ref{thm:char_ano_maps}.\eqref{item:7} that the maps $\xi^+, \xi^-$ are dynamics-preserving for~$\rho$ implies the following:

\begin{lemma}
  \label{lem:l_tgamma_in_aplus}
  Suppose condition~\eqref{item:7} of Theorem~\ref{thm:char_ano_maps} holds.
  Let $\gamma \in \Gamma$ be an element of infinite order, with invariant axis $\mathcal{A}_{\gamma} \subset \mathcal{G}_\Gamma$.
  Then for any $v\in\mathcal{A}_{\gamma}$,
  \[\mu_\theta (l_{T_\gamma,v})\in \overline{\aaa}^+, \]
  \ie $\langle \alpha,\, \mu_\theta(l_{T_\gamma, v}) \rangle \geq 0$ for all $\alpha \in \theta$ (for $\alpha\in \Delta\smallsetminus \theta$ the same inequality is true a priori by definition \eqref{eqn:thetachamber} of the range $\athetaplus$ of~$\mu_\theta$).
\end{lemma}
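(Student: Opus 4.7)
The strategy begins by identifying $l_{T_\gamma,v}$ explicitly. Since $v\in\mathcal{A}_\gamma$, one has $\varphi_{T_\gamma}\cdot v = \gamma\cdot v$, so the defining relation $\breve{\beta}(\varphi_{T_\gamma}\cdot v) = \breve{\beta}(v)\,l_{T_\gamma,v}$ together with the $\rho$-equivariance of $\breve{\beta}$ gives
\[ l_{T_\gamma,v} \;=\; \breve{\beta}(v)^{-1}\,\rho(\gamma)\,\breve{\beta}(v), \]
a $G$-conjugate of $\rho(\gamma)$. Combined with the cocycle relation \eqref{eqn:l-cocycle} and the $\Gamma$-invariance \eqref{eqn:l-inv} applied at $\gamma\cdot v = \varphi_{T_\gamma}\cdot v$, an induction shows that $l_{nT_\gamma,v} = (l_{T_\gamma,v})^n$ for every $n\in\NN$. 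Now since $\xi^\pm$ are dynamics-preserving and $\gamma$ has infinite order, Remark~\ref{rem:compatible}.\ref{item:Ano-prox} yields that $\rho(\gamma)$ is proximal in $G/P_\theta$, hence $\langle\alpha,\lambda(\rho(\gamma))\rangle>0$ for all $\alpha\in\theta$ by Lemma~\ref{lem:prox}. Conjugation-invariance of $\lambda$ transfers this positivity to $\lambda(l_{T_\gamma,v})$.

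Next comes the central continuity-plus-connectedness step. By \eqref{eqn:insulate}, $\mu(l_{t,v})\in \s_\theta$ for all $t\geq T$. Since $l_{t,v}\in L_\theta$ and $K_\theta\subset K$, the Cartan decompositions of~$l_{t,v}$ in $L_\theta$ and in $G$ yield vectors $\mu_\theta(l_{t,v})$ and $\mu(l_{t,v})$ that lie in a common $W$-orbit; hence $\mu_\theta(l_{t,v})\in (W\cdot \s_\theta)\cap \overline{\mathfrak{a}}^+_\theta = \s'_\theta$ for $t\geq T$. Writing $\mu_\theta(l_{t,v}) = \dtheta\bigl(\tilde{\beta}(v),\tilde{\beta}(\varphi_t\cdot v)\bigr)$ shows that this function is continuous in $t$, and Lemma~\ref{lem:aRplus_connec} forces it to stay in a single connected component of~$\s'_\theta$ on $[T,+\infty)$. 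The Lyapunov limit in~$L_\theta$,
\[ \frac{1}{n}\,\mu_\theta\bigl((l_{T_\gamma,v})^n\bigr)\ \longrightarrow\ \lambda(\rho(\gamma)) \ \in\ \mathrm{Int}(\s_\theta) \]
(where the identification $\lambda_\theta(l_{T_\gamma,v})=\lambda(l_{T_\gamma,v})$ follows because $\overline{\mathfrak{a}}^+$ is a fundamental domain for the Weyl group of~$L_\theta$ inside $\overline{\mathfrak{a}}^+_\theta$), pins this component down to be~$\s_\theta$ itself. Therefore $\mu_\theta(l_{t,v})\in\s_\theta\subset\overline{\mathfrak{a}}^+$ for every $t\geq T$, and in particular for $t=nT_\gamma$ with $n$ large.

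The main obstacle is passing from this large-$t$ conclusion back to the specific value $t=T_\gamma$, which may be strictly smaller than~$T$. To do so I will apply the preceding argument to $\gamma^N$ for an integer $N$ with $NT_\gamma\geq T$: its axis in $\mathcal{G}_\Gamma$ is the same as that of~$\gamma$, its period is $T_{\gamma^N}=NT_\gamma$, and $l_{T_{\gamma^N},v} = (l_{T_\gamma,v})^N$. The previous paragraph shows $\mu_\theta\bigl((l_{T_\gamma,v})^N\bigr)\in\overline{\mathfrak{a}}^+$. The remaining task, which is the technical heart of the lemma, is to propagate chamber-positivity from this $N$-th power back to $l_{T_\gamma,v}$ itself; this will be carried out by analyzing the Jordan decomposition of~$l_{T_\gamma,v}$ inside $L_\theta$ (whose hyperbolic part governs $\mu_\theta$ up to logarithmic corrections controlled by Claim~\ref{cla:unip-growth}) and invoking Remark~\ref{rem:mu-theta-equal-mu-l-in-L} to conclude $\mu_\theta(l_{T_\gamma,v})=\mu(l_{T_\gamma,v})\in\overline{\mathfrak{a}}^+$.
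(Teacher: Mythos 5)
The paper's proof is much shorter and takes a genuinely different route from yours: after noting, as you do, that $l_{T_\gamma,v}=\breve{\beta}(v)^{-1}\rho(\gamma)\breve{\beta}(v)$, it observes that since $\xi^+$ is dynamics-preserving and $\breve{\beta}(v)P_\theta=\xi^+(\eta^+_\gamma)$ (by construction of~$\tilde\sigma$), the attracting fixed point of $l_{T_\gamma,v}\in L_\theta$ in $G/P_\theta$ is the basepoint $x_\theta=P_\theta$ itself; it then applies Lemma~\ref{lem:contrctGmodP}.\eqref{item:27} directly to $l_{T_\gamma,v}$ to get $\langle\alpha,\mu_\theta(l_{T_\gamma,v})\rangle>0$ for every $\alpha\in\theta$. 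It never invokes the insulation estimate~\eqref{eqn:insulate}, Lemma~\ref{lem:aRplus_connec}, the Lyapunov limit, or any power $\gamma^N$ --- those are the ingredients of the \emph{next} lemma (Lemma~\ref{lem:on_axis_in_aplus}), which the paper proves afterwards \emph{using} the present lemma applied to a power of~$\gamma$.

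Your argument via~\eqref{eqn:insulate}, Lemma~\ref{lem:aRplus_connec}, and the Lyapunov limit is internally coherent and does establish $\mu_\theta(l_{t,v})\in\s_\theta\subset\overline{\aaa}^+$ for all $t\geq T$ (essentially an independent proof of Lemma~\ref{lem:on_axis_in_aplus}), hence the desired conclusion whenever $T_\gamma\geq T$. But the descent you then propose, from $\mu_\theta\bigl((l_{T_\gamma,v})^N\bigr)\in\overline{\aaa}^+$ for large $N$ back down to $\mu_\theta(l_{T_\gamma,v})\in\overline{\aaa}^+$, is the crux, and the tools you name do not close it: Jordan decomposition together with Claim~\ref{cla:unip-growth} controls the deviation $\mu_\theta(l^n)-n\lambda_\theta(l)$ only up to $O(\log n)$ \emph{as $n\to\infty$}, which says nothing about $n=1$. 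For an $l\in L_\theta$ that is proximal at~$x_\theta$ but carries a large unipotent component, the Cartan projection $\mu_\theta(l)$ can escape $\overline{\aaa}^+$ even though $\mu_\theta(l^n)$ lies deep in $\s_\theta$ for all large~$n$ --- the Cartan projection at $n=1$ is simply not governed by the asymptotic (Lyapunov) data. This is exactly the difficulty the paper's local argument via Lemma~\ref{lem:contrctGmodP}.\eqref{item:27} is designed to circumvent, by never reducing the lemma to its high-power version in the first place; without that direct step, your proof as written establishes the lemma only under the extra hypothesis $T_\gamma\geq T$.
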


\begin{proof}
  Let $v\in\mathcal{A}_{\gamma}$.
  Then \(\tilde{\sigma}(v) = \breve{\beta}(v) L_{\theta}\) and $\xi^{+}(\varphi_{+\infty}(v)) =\xi^+(\eta^{+}_{\gamma}) = \breve{\beta}(v)P_\theta$.
  Also,
  \[ \breve{\beta}( v) \, l_{T_\gamma,v} = \breve{\beta}(\varphi_{T_\gamma} \cdot v) = \breve{\beta}(\gamma\cdot v) = \rho(\gamma) \, \breve{\beta}( v) \]
  (from the construction of~$l_{t,v}$), hence $l_{T_\gamma,v}$ and $\rho(\gamma)$ are conjugate by \(\breve{\beta}(v)\). 
In particular, since the attracting fixed point of $\rho(\gamma)$ in $G/P_\theta$ is $\breve{\beta}(v) P_\theta$, the attracting fixed point for $l_{T_\gamma,v}$ is \(P_{\theta}\).
  By Lemma~\ref{lem:contrctGmodP}.\eqref{item:27},
  \[\langle \alpha ,\, \mu_\theta (l_{T_\gamma,v}) \rangle >0\]
  for all $\alpha \in \theta$, hence $\mu_\theta(l_{T_\gamma,v}) \in \overline{\aaa}^+$.
\end{proof}

\begin{lemma}\label{lem:on_axis_in_aplus}
  Suppose condition~\eqref{item:7} of Theorem~\ref{thm:char_ano_maps} holds.
Recall the constant $T$ from \eqref{eqn:insulate}.
Let $\gamma \in \Gamma$ be an element of infinite order with invariant axis $\mathcal{A}_{\gamma}\subset\mathcal{G}_{\Gamma}$.
For any $v\in\mathcal{A}_{\gamma}$ and any $t \geq T$,
\[\mu_\theta (l_{t,v}) \in \s_{\theta}.\]
\end{lemma}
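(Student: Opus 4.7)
The plan is to combine the uniform lower bound \eqref{eqn:insulate} with the ``anchor'' supplied by Lemma~\ref{lem:l_tgamma_in_aplus} at multiples of the period $T_\gamma$, and then to propagate along the continuous trajectory $s \mapsto \mu_\theta(l_{s,v})$ using the connectedness Lemma~\ref{lem:aRplus_connec}.

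First, I would show that for every $s \geq T$ and every $v \in \mathcal{G}_\Gamma$ (not just on $\mathcal{A}_\gamma$), we have $\mu_\theta(l_{s,v}) \in \s'_\theta$. This is a direct comparison between the two Cartan projections of the element $l_{s,v} \in L_\theta$: since $K_\theta \subset K$, the two decompositions $l_{s,v} \in K_\theta(\exp\mu_\theta(l_{s,v}))K_\theta$ and $l_{s,v} \in K(\exp\mu(l_{s,v}))K$ yield the same $K$-double coset, so $\mu_\theta(l_{s,v})$ and $\mu(l_{s,v})$ lie in the same $W$-orbit. By \eqref{eqn:insulate}, $\mu(l_{s,v}) \in \s_\theta$, and since $\mu_\theta(l_{s,v}) \in \athetaplus$ by definition, this gives $\mu_\theta(l_{s,v}) \in \athetaplus \cap W\cdot\s_\theta = \s'_\theta$.

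Next, I would anchor the trajectory inside the component $\s_\theta$ at a suitable time. For $v \in \mathcal{A}_\gamma$, the cocycle relation \eqref{eqn:l-cocycle} together with the $\Gamma$-invariance \eqref{eqn:l-inv} and the identity $\varphi_{T_\gamma}\cdot v = \gamma\cdot v$ give, by induction, $l_{nT_\gamma, v} = l_{T_\gamma,v}^n$ for every $n \geq 1$. Since $v \in \mathcal{A}_\gamma = \mathcal{A}_{\gamma^n}$ and $\gamma^n$ has period $nT_\gamma$, Lemma~\ref{lem:l_tgamma_in_aplus} applied to $\gamma^n$ yields $\mu_\theta(l_{nT_\gamma,v}) \in \overline{\aaa}^+$. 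Choosing $n$ large enough that $nT_\gamma \geq T$, Remark~\ref{rem:mu-theta-equal-mu-l-in-L} then identifies $\mu_\theta(l_{nT_\gamma,v}) = \mu(l_{nT_\gamma,v})$, and the latter lies in $\s_\theta$ by \eqref{eqn:insulate}.

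Finally, I would conclude by a connectedness argument. The map $s \mapsto \mu_\theta(l_{s,v}) = \dtheta(\tilde\beta(v),\tilde\beta(\varphi_s\cdot v))$ is continuous on $[T,+\infty)$, and by the first step its image lies in $\s'_\theta$. This image is connected and, by the second step, it meets $\s_\theta$ at the time $s = nT_\gamma$. By Lemma~\ref{lem:aRplus_connec}, $\s_\theta$ is a connected component of $\s'_\theta$, so the whole image is contained in $\s_\theta$, which is the desired conclusion. The only delicate point is the comparison of $\mu$ and $\mu_\theta$ on $L_\theta$, but this is standard; the rest is a clean continuity/connectedness argument made possible by Lemma~\ref{lem:aRplus_connec}.
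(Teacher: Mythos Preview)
Your proposal is correct and follows essentially the same approach as the paper: show the trajectory $s\mapsto\mu_\theta(l_{s,v})$ lies in $\s'_\theta$ for $s\geq T$ via \eqref{eqn:insulate} and the $W$-orbit comparison of $\mu$ and $\mu_\theta$, anchor it in $\s_\theta$ at some $nT_\gamma\geq T$ using Lemma~\ref{lem:l_tgamma_in_aplus} applied to a power of~$\gamma$, and conclude by continuity and Lemma~\ref{lem:aRplus_connec}. Your write-up is in fact a bit more explicit than the paper's (e.g.\ the cocycle computation $l_{nT_\gamma,v}=l_{T_\gamma,v}^n$ is not needed but is a nice check), and your justification of continuity via $\dtheta(\tilde\beta(v),\tilde\beta(\varphi_s\cdot v))$ correctly avoids the non-continuous lift~$\breve\beta$.
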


\begin{proof}
By \eqref{eqn:insulate}, the map $\psi : [T, + \infty ) \rightarrow \overline{\aaa}^+_{\theta}$ sending $t$ to $\mu_\theta (l_{t,v})$ takes values in~$\s'_{\theta}$.
This map is continuous since the map $(t,v)\mapsto K_\theta l_{t,v} K_\theta$ and the (bi-$K_\theta$-invariant) Cartan projection $\mu_\theta$ are both continuous.
Applying Lemma~\ref{lem:l_tgamma_in_aplus} to an appropriate power of~$\gamma$, we see that $\psi(t) \in \s_{\theta}$ for some $t\geq T$.
Lemma~\ref{lem:aRplus_connec} then implies $\psi(t) \in \s_{\theta}$ for all $t\geq T$.
\end{proof}

\begin{proof}[Proof of Proposition~\ref{prop:mu-theta-mu}]
Using Lemma~\ref{lem:on_axis_in_aplus}, we see that for any $t \geq T$ the closed set
\[\{v\in \mathcal{G}_\Gamma \mid \mu_\theta (l_{t,v}) \in
\s_{\theta}\}\]
 contains the invariant axis $\mathcal{A}_{\gamma}$ of any element $\gamma\in\Gamma$ of infinite order.
By density of the union of those axes (Remark~\ref{rem:flow-space-dense}), this set is all of $\mathcal{G}_\Gamma$. Using again Remark~\ref{rem:mu-theta-equal-mu-l-in-L}, we see that $\mu_\theta( l_{t,v}) = \mu(l_{t,v})$ for all $t \geq T$ and all $v\in\nolinebreak\mathcal{G}_\Gamma$.
\end{proof}

\begin{proof}[Proof of the implication $\eqref{item:7}\Rightarrow\eqref{item:5}$ of Theorem~\ref{thm:char_ano_maps}]
Suppose that \eqref{item:7} holds, \ie there exist continuous, $\rho$-equivariant, transverse, dynamics-preserving maps $\xi^+,\xi^-$ and \eqref{eqn:5} holds.
By Proposition~\ref{prop:weakcontr_ano}, in order to show that $\rho$ is $P_{\theta}$-Anosov, it is enough to prove that
\begin{equation} \label{eqn:6}
  \forall \alpha \in \theta, \quad \lim_{t \to +\infty} \inf_{v \in
    \mathcal{G}_\Gamma} \bigl\langle 
  \alpha,\, \mu_\theta(l_{t,v}) \bigr\rangle = + \infty.  
\end{equation}
By Remark~\ref{rem:inf_on_cpt}, the infimum needs to be taken only on a compact set $\mathcal{D}$ such that $\Gamma\cdot \mathcal{D} = \mathcal{G}_\Gamma$.
For any $t\in \RR$ and any $v \in \mathcal{D}$ there is an element $\gamma_{t,v}\in\Gamma$ such that $\varphi_t \cdot v \in \gamma_{t,v} \cdot
\mathcal{D}$. 
By Lemma~\ref{lem:mu_mutheta} and Proposition~\ref{prop:mu-theta-mu}, there are constants $\mathcal{K}>0$ and $T\geq 0$ (independent of $t$ and~$v$) such that if \(t\geq T\), then
\[ \Vert \mu_\theta(l_{t,v}) - \mu(\rho(\gamma_{t,v})) \Vert \leq \mathcal{K}. \]
Since any orbit map $\Gamma \to \mathcal{G}_\Gamma$ is a quasi-isometry, we obtain that $\gamma_{t,v} \to \infty$ as $t \to +\infty$ (uniformly in \(v\in\mathcal{D}\)), and so the assumption \eqref{eqn:5} readily implies \eqref{eqn:6}.
\end{proof}

\subsection{Anosov representations and embeddings into general linear groups}
\label{sec:anos-repr-embedd}

Proposition~\ref{prop:theta-comp-Anosov}.\eqref{item:theta-comp-Ano} is a consequence of the results of Section~\ref{subsec:proof-theta-emb} and of the equivalence $\eqref{item:5}\Leftrightarrow\eqref{item:7}$ of Theorem~\ref{thm:char_ano_maps} (or equivalence $\eqref{item:Ano}\Leftrightarrow\eqref{item:away-from-walls}$ of Theorem~\ref{thm:char_ano}) which we have just established.

\begin{proof}[Proof of Proposition~\ref{prop:theta-comp-Anosov}.\eqref{item:theta-comp-Ano}]
By Proposition~\ref{prop:theta-comp-Anosov}.\eqref{item:theta-comp-xi}, there exist continuous, $\rho$-equiva\-riant, dynamics-preserving, transverse boundary maps $\xi^+ : \partial_{\infty}\Gamma\to G/P_{\theta}$ and\linebreak $\xi^- : \partial_{\infty}\Gamma\to G/P_{\theta}^-$ if and only if there exist continuous, $(\tau\circ\rho)$-equivariant, dynamics-preserving, transverse boundary maps $\xi_V^+ : \partial_{\infty}\Gamma\to\PP_{\KK}(V)$ and $\xi_V^- : \partial_{\infty}\Gamma\to\PP_{\KK}(V^*)$.
By Lemma~\ref{lem:tau}.\eqref{item:eps1-eps2}, we have $\langle \alpha, \mu (\rho(\gamma))\rangle \to +\infty$ as $\gamma \to \infty$ for all \(\alpha\in\theta\) if and only if $\langle\varepsilon_1 - \varepsilon_2, \mu_{\GL_\KK(V)} (\tau\circ \rho( \gamma))\rangle \to +\infty$ as $\gamma \to \infty$.
Therefore, condition \eqref{item:7} of Theorem~\ref{thm:char_ano_maps} holds for~$\rho$ if and only if it holds for $\tau\circ\rho$.
Since \eqref{item:7} is equivalent to \eqref{item:5} in Theorem~\ref{thm:char_ano_maps}, we conclude that $\rho$ is $P_{\theta}$-Anosov if and only if $\tau\circ\rho$ is.
\end{proof}

\subsection{Approaching the Cartan projection by the Lyapunov projection}
\label{subsec:abels-marg-souif}

The implications $\eqref{item:lambda1}\Rightarrow\eqref{item:lambda2}\Rightarrow\eqref{item:lambda3}$ of Theorem~\ref{thm:char_ano_lambda} rely on \eqref{eqn:stablelength} and on \eqref{eqn:lambda-lim-mu}, which expresses the fact that the Lyapunov projection can be approached by the Cartan projection.

\begin{proof}[Proof of $\eqref{item:lambda1}\Rightarrow\eqref{item:lambda2}$ in Theorem~\ref{thm:char_ano_lambda}]
Suppose $\rho$ is $P_{\theta}$-Anosov.
By Theorem~\ref{thm:char_ano_maps}.\eqref{item:8}, there exist $c,C>0$ such that $\bigr\langle \alpha, \mu( \rho(\gamma) )\bigr\rangle \geq c \, \ellGamma{\gamma} -C$ for all $\alpha\in\theta$ and $\gamma\in\Gamma$.
Using \eqref{eqn:stablelength} and \eqref{eqn:lambda-lim-mu}, we find that for all $\alpha\in\theta$ and $\gamma\in\Gamma$,
\[ \bigl\langle \alpha, \lambda( \rho(\gamma) )\bigr\rangle =  \lim_{n\to +\infty} \dfrac{1}{n} \bigl\langle \alpha, \mu( \rho(\gamma^n) )\bigr\rangle \geq \lim_{n\to +\infty} \dfrac{1}{n} \big(c \, \ellGamma{\gamma^n} -C\big) = c \, \ellinfty{\gamma}. \qedhere \]
\end{proof}

\begin{proof}[Proof of $\eqref{item:lambda2}\Rightarrow\eqref{item:lambda3}$ in Theorem~\ref{thm:char_ano_lambda}]
By \eqref{eqn:mu-leq-klength} there exists $k>0$ such that $\Vert\mu(\rho(\gamma))\Vert\leq k\,\ellGamma{\gamma}$ for all $\gamma\in\Gamma$.
Using \eqref{eqn:stablelength} and \eqref{eqn:lambda-lim-mu}, we find that for all $\gamma\in\Gamma$,
\[ \Vert \lambda(\rho(\gamma)) \Vert = \lim_{n\to +\infty} \frac{1}{n} \, \Vert \mu(\rho(\gamma^n)) \Vert \leq k \lim_{n\to +\infty} \frac{1}{n} \, \ellGamma{\gamma^n} = k \, \ellinfty{\gamma}. \qedhere \]
\end{proof}

The implication $\eqref{item:lambda2}\Rightarrow \eqref{item:lambda4}$ of Theorem~\ref{thm:char_ano_lambda} is obvious.
It only remains to prove $\eqref{item:lambda3}\Rightarrow \eqref{item:lambda1}$ and $\eqref{item:lambda4}\Rightarrow \eqref{item:lambda1}$.

For semisimple~$\rho$, these implications 
both rely on the following result, which states that the Cartan projection can be approached by the Lyapunov projection.
This was established by Benoist~\cite{Benoist}, using a theorem of Abels--Margulis--Soifer \cite{Abels_Margulis_Soifer_Prox}.
Since the result is not explicitly stated as such in \cite{Benoist}, we recall a sketch of the proof.

\begin{theorem}[\cite{Benoist}] \label{thm:abels-marg-soif}
Let $\Gamma$ be a discrete group, $G$ a real reductive Lie group, and $\rho: \Gamma \to G$ a semisimple representation.
Then there exist a finite set $F\subset\Gamma$ and a constant $C_\rho>0$ such that for any $\gamma \in \Gamma$ we can find $f\in F$ with
\[\|\lambda(\rho(\gamma f)) - \mu(\rho(\gamma))\| \leq C_\rho.\] 
\end{theorem}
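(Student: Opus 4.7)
The proof combines the Abels--Margulis--Soifer theorem on simultaneous uniform loxodromicity with a quantitative comparison, due to Benoist, between the Cartan and Lyapunov projections on uniformly proximal elements, concluding by subadditivity of~$\mu$.

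\emph{Step 1 (reduction to the Zariski dense case).} Semisimplicity of~$\rho$ ensures that the Zariski closure $H$ of $\rho(\Gamma)$ in~$G$ is reductive. A choice of maximal compact subgroup $K_H = K\cap H$ compatible with~$K$ realizes $\mu_G|_H$ and $\mu_H$ as differing by a bounded additive amount, while $\lambda_G|_H$ and $\lambda_H$ agree after the natural identification of Cartan subspaces (the Jordan decomposition being intrinsic). Hence it is enough to prove the theorem with $G$ replaced by~$H$ and $\rho(\Gamma)$ now Zariski dense in the new~$G$.

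\emph{Step 2 (Abels--Margulis--Soifer).} For each $\alpha\in\Delta$, fix, via Lemma~\ref{lem:theta-comp-exists}, an irreducible proximal linear representation $(\tau_\alpha,V_\alpha)$ of~$G$ whose highest weight lies in $\NN^{\ast}\omega_\alpha+\aaa_s^0$ (so that $\tau_\alpha$ is $\{\alpha\}$-proximal). The Abels--Margulis--Soifer theorem \cite{Abels_Margulis_Soifer_Prox}, applied to the Zariski dense subgroup $\rho(\Gamma)$, then furnishes a finite subset $F\subset\Gamma$ and constants $r<1$ and $\varepsilon>0$ such that for every $\gamma\in\Gamma$ some $f\in F$ makes $\tau_\alpha(\rho(\gamma f))$ simultaneously $(r,\varepsilon)$-proximal in $\PP_{\RR}(V_\alpha)$ for every $\alpha\in\Delta$ --- with uniform gap between the attracting line and the repelling hyperplane, and uniform contraction rate off a neighborhood of the latter. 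This is the deep dynamical input.

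\emph{Step 3 ($\mu$--$\lambda$ comparison and conclusion).} The key quantitative lemma of Benoist \cite{Benoist} states that simultaneous $(r,\varepsilon)$-proximality in each $\tau_\alpha$ implies $\|\mu(g)-\lambda(g)\|\leq C_1(r,\varepsilon)$ uniformly in~$g$. Indeed, $(r,\varepsilon)$-proximality of $\tau_\alpha(g)$ in $\PP_{\RR}(V_\alpha)$ forces its top singular direction to lie near the attracting fixed line, so that the top singular value of $\tau_\alpha(g)$ is within a bounded ratio of the modulus of its top eigenvalue; via Lemma~\ref{lem:tau}.\eqref{item:eps1-eps2} this bounds $\langle\alpha,\mu(g)-\lambda(g)\rangle$ for each $\alpha\in\Delta$, hence $\|\mu(g)-\lambda(g)\|$ (using that $\Delta$ projects to a basis of~$\aaa_s^*$ and treating the central part separately). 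Applying this to $g=\rho(\gamma f)$ and combining with Fact~\ref{fact:mu-subadditive}.\eqref{item:16}, which gives $\|\mu(\rho(\gamma f))-\mu(\rho(\gamma))\|\leq\max_{f\in F}\|\mu(\rho(f))\|$, yields the desired bound with
\[ C_\rho = C_1(r,\varepsilon)+\max_{f\in F}\|\mu(\rho(f))\|. \]

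The principal obstacle is the Abels--Margulis--Soifer theorem itself, whose proof requires constructing a sufficiently rich family of very proximal elements in $\rho(\Gamma)$ and then playing a simultaneous ping-pong in every $\PP_{\RR}(V_\alpha)$; since we use it as a black box, the remaining work consists of the reduction in Step~1 and the careful deduction of the $\mu$--$\lambda$ estimate in Step~3.
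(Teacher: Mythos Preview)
Your argument is correct and follows essentially the same route as the paper's: reduce to the Zariski-dense case, apply Abels--Margulis--Soifer to obtain simultaneous $\varepsilon$-proximality in each $\tau_\alpha$, invoke Benoist's estimate $|\langle\omega_\alpha,\mu(g)-\lambda(g)\rangle|\leq C_\alpha$ for $\varepsilon$-proximal elements (\cite[Lem.\,2.2.5]{Benoist}), and conclude via the basis $(\omega_\alpha)_{\alpha\in\Delta}$ of $\aaa_s^*$ together with subadditivity of~$\mu$. One small slip: in Step~3 the quantity that gets bounded is $\langle\omega_\alpha,\mu(g)-\lambda(g)\rangle$, not $\langle\alpha,\mu(g)-\lambda(g)\rangle$, since the top singular value and top eigenvalue modulus of $\tau_\alpha(g)$ equal $\langle\chi_{\tau_\alpha},\mu(g)\rangle$ and $\langle\chi_{\tau_\alpha},\lambda(g)\rangle$ with $\chi_{\tau_\alpha}\in\NN^*\omega_\alpha+\aaa_s^0$ --- Lemma~\ref{lem:tau}.\eqref{item:eps1-eps2} concerns the \emph{gap} $\varepsilon_1-\varepsilon_2$ and is not the relevant reference here.
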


\begin{proof}
By subadditivity of~$\mu$ (Fact~\ref{fact:mu-subadditive}), it is sufficient to prove the existence of a finite set $F\subset\Gamma$ and a constant $C>0$ such that for any $\gamma \in \Gamma$ we can find $f\in F$ with $\|\lambda(\rho(\gamma f)) - \mu(\rho(\gamma f))\| \leq C$. 
Since $\rho$ is semisimple by assumption, \ie the Zariski closure of $\rho(\Gamma)$ in~$G$ is reductive, we may assume without loss of generality that $\rho(\Gamma)$ is actually Zariski-dense in~$G$.
It is then known that the set $\langle\alpha,\mu(\rho(\Gamma))\rangle\subset\RRp$ is unbounded for all $\alpha\in\Delta$ (see \cite{Goldsheid_Margulis,Guivarch_Raugi,Benoist_Labourie,Prasad}).
By Lemma~\ref{lem:theta-comp-exists}, for any $\alpha\in\Delta$ there is an irreducible proximal linear representation $(\tau_{\alpha},V_{\alpha})$ whose highest weight $\chi_{\tau_{\alpha}}$ is a multiple of the fundamental weight~$\omega_{\alpha}$ associated with~$\alpha$.
By \cite[Th.\,5.17]{Abels_Margulis_Soifer_Prox}, there are a finite set $F\subset\Gamma$ and a constant $\varepsilon>0$ with the following property: for any $\gamma \in \Gamma$ we can find $f\in F$ such that for any $\alpha\in\Delta$, the element $\tau_{\alpha}\circ\rho(\gamma f)\in\GL_{\RR}(V_{\alpha})$ is $\varepsilon$-proximal in $\PP_{\RR}(V_{\alpha})$ in the sense of \cite[\S\,2.2]{Benoist}.
We conclude as in \cite[Lem.\,4.5]{Benoist}: on the one hand, for any $\alpha\in\Delta$, there is a constant $C_{\alpha}>0$ such that for any $g\in G$, if $\tau_{\alpha}(g)$ is $\varepsilon$-proximal in $\PP_{\RR}(V_{\alpha})$, then $|\langle\omega_{\alpha},\mu(g)-\lambda(g)\rangle|\leq C_{\alpha}$ \cite[Lem.\,2.2.5]{Benoist}; on the other hand, the fundamental weights $\omega_{\alpha}$, for $\alpha\in\Delta$, form a basis of~$\aaa_{s}^{\ast}$, and for any $g\in G$ the elements $\lambda(g)$ and $\mu(g)$ of~$\aaa$ have the same projection to $\z(\g)$.
\end{proof}

\begin{proof}[Proof of $\eqref{item:lambda3}\Rightarrow\eqref{item:lambda1}$ and $\eqref{item:lambda4}\Rightarrow\eqref{item:lambda1}$ in Theorem~\ref{thm:char_ano_lambda} for semisimple~$\rho$]
Let $F$ be the finite subset of~$\Gamma$ and $C_\rho>0$ the constant given by Theorem~\ref{thm:abels-marg-soif}.
Let $(\gamma_n)\in\Gamma^{\NN}$ be a sequence of elements going to infinity in~$\Gamma$.
The existence of $\rho$-equivariant, continuous, transverse maps $\xi^+,\xi^-$ implies that $\rho$ has finite kernel and discrete image (Remark~\ref{rem:combine}.\eqref{item:compatible}), and so $\Vert\mu(\rho(\gamma_n))\Vert\to +\infty$ by properness of the map~$\mu$.
By Theorem~\ref{thm:abels-marg-soif}, for any $n\in\NN$ there exists $f_n\in F$ such that $\| \lambda(\rho(\gamma_n f_n)) - \mu(\rho(\gamma_n))\|\leq \nolinebreak C_\rho$.
In particular, $\Vert\lambda(\rho(\gamma_n f_n))\Vert\to +\infty$. 
\begin{itemize}
  \item If condition~\eqref{item:lambda3} of Theorem~\ref{thm:char_ano_lambda} holds, then for any $\alpha\in \theta$ we have $\langle \alpha, \lambda(\rho(\gamma_n f_n)) \rangle\linebreak \to +\infty$, hence $\langle \alpha, \mu(\rho(\gamma_n)) \rangle \to\nolinebreak +\infty$.
This holds for any sequence $(\gamma_n)\in\Gamma^{\NN}$ going to infinity in~$\Gamma$, hence condition~\eqref{item:7} of Theorem~\ref{thm:char_ano_maps} is satisfied.
Therefore $\rho$ is $P_{\theta}$-Anosov by the implication $\eqref{item:7}\Rightarrow\eqref{item:5}$ of Theorem~\ref{thm:char_ano_maps}.
  \item Note that $\Vert\lambda(\rho(\gamma_n f_n))\Vert\to +\infty$ implies that the conjugacy classes of the $\gamma_n f_n$ diverge, \ie $\trl{\gamma_n f_n}\to +\infty$, and so $\ellinfty{\gamma_n f_n}\to +\infty$ by Proposition~\ref{prop:word-length-stable}.
  If condition~\eqref{item:lambda4} of Theorem~\ref{thm:char_ano_lambda} holds, then we obtain $\langle \alpha, \lambda(\rho(\gamma_n f_n)) \rangle \to +\infty$ for all $\alpha\in \theta$, and we conclude as above.\qedhere
\end{itemize}
\end{proof}

\subsection{When the Lyapunov projection drifts away from the $\theta$-walls}
\label{subsec:char-terms-lyap}

It remains to establish the implications $\eqref{item:lambda3} \Rightarrow \eqref{item:lambda1}$ and $\eqref{item:lambda4} \Rightarrow \eqref{item:lambda1}$ of Theorem~\ref{thm:char_ano_lambda} in the general case, when $\rho$ is not necessarily semisimple.
We first prove the following proposition.

\begin{proposition} \label{prop:xi-for-rho-ss}
Let $\Gamma$ be a word hyperbolic group, $G$ a real reductive Lie group, and $\theta \subset \Delta$ a nonempty subset of the simple restricted roots of~$G$.
Consider a representation $\rho : \Gamma \to G$ with semisimplification $\rho^{ss} : \Gamma\to G$ (see Section~\ref{subsubsec:rho-non-ss}).
  If there exist continuous, dynamics-preserving, transverse boundary maps $\xi^+ : \partial_{\infty}\Gamma \to G/ P_\theta$ and $\xi^- : \partial_{\infty} \Gamma \to G/P_{\theta}^-$ for~$\rho$, then there exist continuous, dynamics-preserving, transverse boundary maps $\xi^{\prime +} : \partial_{\infty}\Gamma \to G/ P_\theta$ and $\xi^{\prime -} : \partial_{\infty} \Gamma \to G/P_{\theta}^-$ for~$\rho^{ss}$.
\end{proposition}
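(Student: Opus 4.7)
The plan is to realize $\rho^{ss}$ as a pointwise limit of $G$-conjugates of $\rho$, pull the boundary maps $\xi^\pm$ along these conjugations, and extract a subsequential limit yielding continuous, dynamics-preserving, transverse boundary maps for $\rho^{ss}$. By the characterization of semisimplification recalled in Section~\ref{subsubsec:rho-non-ss}, there exists a sequence $(g_n)_{n\in\NN}\in G^{\NN}$ with $g_n \rho g_n^{-1}\to\rho^{ss}$ pointwise on~$\Gamma$; for each~$n$ the pair $\xi^\pm_n:=g_n\cdot\xi^\pm$ is then a continuous, dynamics-preserving, transverse boundary pair for $g_n\rho g_n^{-1}$. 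Using Lemma~\ref{lem:theta-comp-exists} and Proposition~\ref{prop:theta-comp-Anosov}.\eqref{item:theta-comp-xi}, we reduce to the case $G=\GL_{\KK}(V)$ for some irreducible $\theta$-proximal representation $\tau$, with $\xi^+$ taking values in $\PP_{\KK}(V)$ and $\xi^-$ in $\PP_{\KK}(V^*)$ (via the embeddings of Proposition~\ref{prop:theta_emb}.\eqref{item:b_thtmb}); transversality at $(\eta,\eta')$ then simply means that the line $\xi^+(\eta)$ is not contained in the hyperplane $\xi^-(\eta')$.

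The construction at attracting fixed points is direct: for $\gamma\in\Gamma$ of infinite order, $\rho(\gamma)$ is proximal in $\PP_{\KK}(V)$ by Remark~\ref{rem:compatible}.\eqref{item:Ano-prox}; since $\lambda(\rho^{ss}(\gamma))=\lambda(\rho(\gamma))$ by Lemma~\ref{lem:lyapu_not_changed}, the element $\rho^{ss}(\gamma)$ is proximal as well, by Definition~\ref{defi:prox-G/P}.\eqref{item:lambda>0}. Since $g_n\rho(\gamma)g_n^{-1}\to\rho^{ss}(\gamma)$ and all terms are proximal, the standard continuous dependence of the dominant eigenline on an element of the proximal locus gives $\xi^+_n(\eta_\gamma^+)\to\xi^+_{\rho^{ss}(\gamma)}$ and $\xi^-_n(\eta_\gamma^+)\to\xi^-_{\rho^{ss}(\gamma)}$. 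We set $\xi^{\prime\pm}(\eta_\gamma^+):=\xi^\pm_{\rho^{ss}(\gamma)}$ on the dense $\Gamma$-invariant subset $D\subset\partial_\infty\Gamma$ of attracting fixed points of infinite-order elements (Fact~\ref{fact:dyn_at_infty}.\eqref{item:3}); the resulting pair is well-defined, $\rho^{ss}$-equivariant, dynamics-preserving on~$D$, and transverse at each pair of distinct points of~$D$ (this last property coming from the standard structure of attracting and repelling flags for proximal elements, applied to $\rho^{ss}(\gamma)$ and $\rho^{ss}(\gamma')$).

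To extend the construction to all of $\partial_\infty\Gamma$ we first pass to a subsequence of $(g_n)$ by Tychonoff's theorem applied to $\PP_{\KK}(V)\times\PP_{\KK}(V^*)$, obtaining pointwise convergence $(\xi^+_n,\xi^-_n)\to(\xi^{\prime+},\xi^{\prime-})$ on the whole boundary; the limit is automatically $\rho^{ss}$-equivariant and extends the maps defined on~$D$. Continuity at an arbitrary $\eta\in\partial_\infty\Gamma$ is proved by approximating $\eta$ by a sequence $(\eta_{\gamma_k}^+)\subset D$ (Fact~\ref{fact:dyn_at_infty}.\eqref{item:3}) and combining (i) the pointwise convergence $\xi^+_n(\eta_{\gamma_k}^+)\to\xi^{\prime+}(\eta_{\gamma_k}^+)$, (ii) the continuity of each $\xi^+_n$ at~$\eta$, and (iii) the fact that the open condition of transversality between $\xi^{\prime+}(\eta_{\gamma_k}^+)$ and $\xi^{\prime-}(\eta)$ prevents the pointwise limits from collapsing together; a diagonal-extraction argument then yields $\xi^{\prime+}(\eta_{\gamma_k}^+)\to\xi^{\prime+}(\eta)$, and likewise for $\xi^{\prime-}$. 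Dynamics-preservation at every infinite-order $\gamma$ and global transversality at all pairs of distinct points follow by continuity and density of $D$.

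The hard part is that last step: promoting pointwise to continuous convergence of $\xi^\pm_n$. Since $(g_n)$ need not be bounded in~$G$, the Lipschitz constants of the maps $\xi^\pm_n$ may blow up, so Arzelà--Ascoli does not apply directly; the uniform control must be extracted from the transversality of the pair $(\xi^+_n,\xi^-_n)$, which is the key rigidity input preventing the limits from degenerating or branching. A potentially cleaner alternative would be to construct $(\xi^{\prime+},\xi^{\prime-})$ algebraically by projecting $(\xi^+,\xi^-)$ along a Levi decomposition $H=L\ltimes R_u(H)$ of the Zariski closure $H$ of $\rho(\Gamma)$ in~$G$, exploiting the induced $L$-invariant splitting of $V$ and of the associated partial flag variety.
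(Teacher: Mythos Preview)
Your approach via limits of conjugates $g_n\rho g_n^{-1}\to\rho^{ss}$ is natural, but the continuity step is a genuine gap, not just a technicality. The diagonal-extraction argument you sketch does not work: knowing that $\xi_n^+(\eta_{\gamma_k}^+)\to\xi'^+(\eta_{\gamma_k}^+)$ for each fixed~$k$ and that each $\xi_n^+$ is continuous gives no control over the iterated limit without equicontinuity, and you correctly note that the Lipschitz constants of the $\xi_n^\pm$ blow up. Your claim that ``transversality prevents the pointwise limits from collapsing'' is not substantiated; transversality is an open condition, so it is not preserved under pointwise limits, and it gives no uniform modulus of continuity. Your transversality claim on $D\times D$ is also incomplete: for distinct $\gamma,\gamma'$, there is no a~priori reason the attracting line of $\rho^{ss}(\gamma)$ avoids the attracting hyperplane of $\rho^{ss}(\gamma')$ --- this needs the existence of a common invariant decomposition, which is precisely what the algebraic approach provides.

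The paper follows the alternative you mention at the end, and it is worth seeing how it avoids all these difficulties. After reducing to $G=\GL_\RR(V)$ and $\theta=\{\varepsilon_1-\varepsilon_2\}$, one sets $U:=\sum_{\eta}\xi^+(\eta)$ and $V_1:=U\cap\bigcap_{\eta}\xi^-(\eta)$, both $\rho(\Gamma)$-invariant, and chooses complements to obtain $V=V_1\oplus V_2\oplus V_3$ with $U=V_1\oplus V_2$. In this decomposition $\rho$ is block upper triangular with diagonal blocks $\rho_1,\rho_2,\rho_3$, so $\rho^{ss}=\rho_1^{ss}\oplus\rho_2^{ss}\oplus\rho_3^{ss}$. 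The boundary maps $\xi'^\pm$ are obtained by projecting $\xi^\pm$ to $\PP_\RR(V_2)$ and $\PP_\RR((V_1\oplus V_3)^0)$ respectively; these projections are continuous linear maps, so continuity and transversality of $\xi'^\pm$ are immediate. The one nontrivial point is that $\rho_2$ is already irreducible (hence $\rho_2=\rho_2^{ss}$), which follows from minimality of the $\Gamma$-action on $\partial_\infty\Gamma$ together with Lemma~\ref{lem:attr-basin}. This algebraic construction bypasses entirely the compactness and convergence issues that block your approach.
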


\begin{proof}
For any irreducible linear representation $(\tau,V)$ of~$G$, the representation $\tau\circ\rho^{ss}$ is a semisimplification of $(\tau\circ\rho)$.
Therefore, by Proposition~\ref{prop:theta-comp-Anosov}.\eqref{item:theta-comp-xi}, up to postcomposing $\rho$ with some irreducible, $\theta$-proximal representation $\tau : G\to\GL_{\RR}(V)$ (which exists by Lemma~\ref{lem:theta-comp-exists}), we may assume that $G=\GL_{\RR}(V)$ and $\theta=\{\varepsilon_1-\varepsilon_2\}$, so that $G/P_{\theta}=\PP_{\RR}(V)$ and $G/P_{\theta}^-=\PP_{\RR}(V^*)$.

Viewing $\PP_{\RR}(V)$ and $\PP_{\RR}(V^\ast)$ as the sets of lines and hyperplanes of~$V$, respectively, we define the linear subspaces
\[ U := \sum_{\eta \in \partial_{\infty}\Gamma} \xi^+(\eta) \quad\quad\mathrm{and}\quad\quad V_1 := U \cap \bigcap_{\eta \in \partial_{\infty}\Gamma} \xi^-(\eta) \]
of~$V$; they are $\rho(\Gamma)$-invariant.
Let $V_2$ be a complementary subspace of $V_1$ in~$U$, and $V_3$ a complementary subspace of $U$ in~$V$.

Under the decomposition $V= V_1 \oplus V_2 \oplus V_3$, the representation $\rho : \Gamma\to\GL_{\RR}(V)$ is block upper triangular: there are representations $\rho_i : \Gamma\to\GL_{\RR}(V_i)$ and maps $a_{i,j} : \Gamma\to\Hom_{\RR}(V_i,V_j)$, for $1\leq i,j\leq 3$, such that for any $\gamma \in \Gamma$,
 \[ \rho(\gamma) = \left(
  \begin{matrix}
    \rho_1(\gamma) & a_{1,2}(\gamma) & a_{1,3}(\gamma) \\ & \rho_2(\gamma) & a_{2,3}(\gamma) \\ & & \rho_3(\gamma)
  \end{matrix}\right).
\]
Then 
\[ \rho^{ss}(\gamma) = \left(
  \begin{matrix}
    \rho_{1}^{ss}(\gamma) & &  \\ & \rho_2^{ss}(\gamma) &  \\ & & \rho_{3}^{ss}(\gamma) \end{matrix}\right)
\]
for all $\gamma\in\Gamma$, where $\rho_{i}^{ss}$ is a semisimplification of~$\rho_i$.
We now construct continuous, dynamics-preserving, transverse boundary maps $\xi^{\prime +} : \partial_{\infty}\Gamma \to \PP_{\RR}(V)$ and $\xi^{\prime -} :\nolinebreak \partial_{\infty}\Gamma \to \PP_{\RR}(V^*)$ for~$\rho^{ss}$.

Since the maps $\xi^+$ and~$\xi^-$ are transverse, $\xi^+$ takes values in $\PP_\RR(V_1\oplus V_2)\smallsetminus \PP_\RR(V_1)$ and $\xi^-$ takes values in $\PP_\RR(V_1^0)\smallsetminus \PP_\RR(V_1\oplus V_2)^0$, where we denote by $V_1^0\subset V^*$ the annihilator of~$V_1$
and by $(V_1\oplus V_2)^0\subset V^*$ the annihilator of $V_1\oplus V_2$.
Let 
\[ p : V_1\oplus V_2 \longrightarrow V_2 \]
be the linear projection onto~$V_2$ with kernel~$V_1$ and $\PP p : \PP_\RR(V_1\oplus V_2) \smallsetminus\nolinebreak \PP_\RR( V_1)\to\nolinebreak\PP_\RR(V_2)$ the induced map.
Similarly, let
\[ p^* : V_1^0 \longrightarrow (V_1\oplus V_3)^0 \]
be the linear projection onto $(V_1\oplus V_3)^0$ with kernel $(V_1\oplus V_2)^0$ and\linebreak $\PP p^* : \PP_\RR(V_1^0) \smallsetminus \PP_\RR((V_1\oplus V_2)^0)\to\PP_\RR((V_1\oplus V_3)^0)$ the induced map.
We set
\[ \left\{\begin{array}{ccl}
\xi^{\prime +} := \PP p\circ\xi^+ & : & \partial_{\infty}\Gamma\to\PP_\RR(V_2) \subset \PP_\RR(V),\\
\xi^{\prime -} := \PP p^*\circ\xi^- & : & \partial_{\infty}\Gamma\to\PP_\RR((V_1\oplus V_3)^0) \subset \PP_\RR(V^{\ast}).
\end{array}\right. \]
These maps are continuous and transverse by construction, as well as dyna\-mics-preserving for $\rho_{1}^{ss} \oplus \rho_2 \oplus \rho_{3}^{ss} : \Gamma\to\GL_{\RR}(V_1 \oplus V_2\oplus V_3)$.
To see that they are dynamics-preserving for $\rho^{ss} : \Gamma\to\GL_{\RR}(V)$, it is sufficient to prove that $\rho_2 = \rho_2^{ss}$.

If $\Gamma$ is elementary, \ie if \(\Gamma\) is virtually the infinite cyclic group generated by a nontorsion element~\(\gamma\) and \(\partial_\infty \Gamma =\{ \eta^{+}_{\gamma}, \eta^{-}_{\gamma}\}\), then \(V_1=\{0\}\) and \(U=V_2= \xi^+(\eta^{+}_{\gamma}) \oplus \xi^+(\eta^{-}_{\gamma})\), thus \(\rho_2|_{\langle\gamma\rangle}\) is semisimple and $\rho_2 =  \rho_{2}^{ss}$ by Remark~\ref{rem:semisimple_finiteindex}.
We now assume that $\Gamma$ is nonelementary.
We claim that $\rho_2 : \Gamma\to\GL_{\RR}(V_2)$ is then irreducible.
Indeed, let $R$ be a $\rho_2(\Gamma)$-invariant subspace of~$V_2$.
Let $\gamma\in\Gamma$ be an element of infinite order with attracting (\resp repelling) fixed point $\eta^+_{\gamma}$ (\resp $\eta^-_{\gamma}$) in $\partial_{\infty}\Gamma$.
Since $\xi^{\prime +}$ and~$\xi^{\prime -}$ are dynamics-preserving for $\rho_1^{ss}\oplus\rho_2\oplus\rho_3^{ss}$ and take values in $\PP_{\RR}(V_2)$ and $\PP_\RR((V_1\oplus V_3)^0)\simeq\PP_{\RR}(V_2^*)$ respectively, Lemma~\ref{lem:attr-basin} implies that for any $x\in\PP_{\RR}(V_2)\smallsetminus\xi^{\prime -}(\eta^-_{\gamma})$ we have $\rho_2(\gamma^n)\cdot x\to\xi^{\prime +}(\eta^{+}_{\gamma})$ as $n\to +\infty$.
Since $R$ is closed and $\rho_2(\Gamma)$-invariant, we obtain that either $\PP_{\RR}(R)\subset \xi^{\prime -}(\eta^{-}_{\gamma})$ or $\xi^{\prime +}(\eta_{\gamma}^{+})\in\PP_{\RR}(R)$.
Thus, one of the closed $\Gamma$-invariant subsets $\{ \eta \in \partial_{\infty}\Gamma \mid \PP_{\RR}(R) \subset \xi^{\prime -}(\eta)\}$ or $\{ \eta \in \partial_{\infty}\Gamma \mid \xi^{\prime +}(\eta)\in\PP_{\RR}(R)\}$ is nonempty, hence equal to $\partial_{\infty}\Gamma$ by minimality of the action of the nonelementary group $\Gamma$ on $\partial_{\infty}\Gamma$ (Fact~\ref{fact:dyn_at_infty}.\eqref{item:9}).
This shows that $R=\{0\} $ or $R=V_2$.
Thus $\rho_2$ is irreducible, and in particular $\rho_2 = \rho_{2}^{ss}$.
\end{proof}

\begin{proof}[Proof of Proposition~\ref{prop:Ano_iff_ssAno}]
If $\rho^{ss}$ is $P_{\theta}$-Anosov, then $\rho$ is $P_{\theta}$-Anosov by Proposition~\ref{prop:semisimplification_anosov}.
Conversely, suppose $\rho : \Gamma\to G$ is $P_\theta$-Anosov.
By Proposition~\ref{prop:xi-for-rho-ss}, there exist continuous, dynamics-preserving, transverse boundary maps $\xi^{\prime +} : \partial_{\infty}\Gamma \to G/ P_\theta$ and $\xi^{\prime -} : \partial_{\infty} \Gamma \to G/P_{\theta}^-$ for~$\rho^{ss}$.
By the implication $\eqref{item:lambda1}\Rightarrow\eqref{item:lambda4}$ of Theorem~\ref{thm:char_ano_lambda} applied to~$\rho$, for any $\alpha\in\theta$ we have $\langle\alpha,\lambda(\rho(\gamma))\rangle\to+\infty$ as $\ellinfty{\gamma} \to +\infty$.
The point is that $\lambda\circ\rho^{ss}=\lambda\circ\rho$ (Lemma~\ref{lem:lyapu_not_changed}).
Therefore, for any $\alpha\in\theta$ we have $\langle\alpha,\lambda(\rho^{ss}(\gamma))\rangle\to+\infty$ as $\ellinfty{\gamma} \to +\infty$, and so $\rho^{ss}$ is $P_{\theta}$-Anosov by the implication $\eqref{item:lambda4}\Rightarrow\eqref{item:lambda1}$ of Theorem~\ref{thm:char_ano_lambda} applied to the semisimple representation~$\rho^{ss}$ (this implication has been proved in Section~\ref{subsec:abels-marg-souif}).
\end{proof}

\begin{proof}[Proof of $\eqref{item:lambda3}\Rightarrow\eqref{item:lambda1}$ and $\eqref{item:lambda4}\Rightarrow\eqref{item:lambda1}$ in Theorem~\ref{thm:char_ano_lambda} for general~$\rho$]
By Lemma~\ref{lem:lyapu_not_changed} and Proposition~\ref{prop:xi-for-rho-ss}, if $\rho$ satisfies \eqref{item:lambda3} (\resp \eqref{item:lambda4}) then so does~$\rho^{ss}$.
In Section~\ref{subsec:abels-marg-souif} we proved the implications $\eqref{item:lambda3}\Rightarrow\eqref{item:lambda1}$ and $\eqref{item:lambda4}\Rightarrow\eqref{item:lambda1}$ of Theorem~\ref{thm:char_ano_lambda} for the semisimple representation~$\rho^{ss}$.
By Proposition~\ref{prop:semisimplification_anosov}, if $\rho^{ss}$ satisfies \eqref{item:lambda1}, then so does~$\rho$.
\end{proof}

\section{Construction of the boundary maps}\label{sec:boundary}

In this section we prove Theorem~\ref{thm:constr-xi} (which implies in particular $\eqref{item:cli}\Rightarrow\eqref{item:Ano}$ in Theorem~\ref{thm:char_ano}) by establishing an explicit version of it, namely Theorem~\ref{thm:constr-xi-explicit}.
We also complete the proof of Theorem~\ref{thm:char_ano} by establishing its implication $\eqref{item:Ano}\Rightarrow\eqref{item:cli}$.
In particular this implication shows that the CLI constants in Theorem~\ref{thm:constr-xi}.\eqref{item:xi-transv_1}, if they exist, must be uniform (see Remark~\ref{rem:constr-xi}.\eqref{item:cli-non-unif}).

To put things into perspective, we start by recalling the notion of limit set in $G/P_{\theta}$.
This notion was first introduced and studied by Guivarc'h \cite{Guivarc'h} for subgroups of $G=\SL_d(\RR)$ acting proximally and strongly irreducibly on~$\RR^d$, then by Benoist \cite{Benoist} for Zariski-dense subgroups of any reductive Lie group~$G$; here we give a definition for arbitrary subgroups of~$G$.

\subsection{Limit sets in flag varieties} \label{subsec:limit-set}

Let $G$ be a real reductive Lie group with Cartan decomposition $K(\exp\overline{\aaa}^+)K$ and corresponding Cartan projection $\mu : G\to\overline{\aaa}^+$; we use the notation of Section~\ref{subsec:some-structure-semi}.
For any nonempty subset $\theta\subset\Delta$ of the simple restricted roots of $\aaa$ in~$G$, let $x_{\theta}=eP_{\theta}\in G/P_{\theta}$ be the basepoint of $G/P_{\theta}$.
We define a map $\Xi_{\theta} : G\to G/P_{\theta}$ as follows: for any $g\in G$, we choose $k_g, k'_g \in K$ such that $g =  k_g(\exp\mu(g))k'_g$ and set
\begin{equation} \label{eqn:Xi-theta+-}
\Xi_{\theta}(g) := k_g \cdot x_{\theta} \in G/P_{\theta}.
\end{equation}
This does not depend on the choice of $k_g,k'_g$ as soon as $\langle\alpha,\mu(g)\rangle > 0$ for all $\alpha\in\theta$ (see \cite[Ch.\,IX, Th.\,1.1]{Helgason}). 
If $g$ is proximal in $G/P_{\theta}$ (Definition~\ref{defi:prox-G/P}), then the sequence $(\Xi_{\theta}(g^n))_{n\in\NN}$ converges to the attracting fixed point of $g$ in $G/P_{\theta}$: see Lemma~\ref{lem:xi-preserves-dynamics} below.
We make the following definition.

\begin{definition} \label{defi:limit-set}
Let $\Gamma$ be any discrete group and $\rho : \Gamma\to G$ any representation. 
The \emph{limit set} $\Lambda_{\rho(\Gamma)}^{G/P_{\theta}}$ of $\rho(\Gamma)$ in $G/P_{\theta}$ is the set of all possible limits of sequences $(\Xi_{\theta}\circ\rho(\gamma_n))_{n\in\NN}$ for $(\gamma_n)\in\Gamma^{\NN}$ with $\langle\alpha,\mu(\rho(\gamma_n))\rangle\to +\infty$ for all $\alpha\in\theta$.
\end{definition}

The limit set $\Lambda_{\rho(\Gamma)}^{G/P_{\theta}}$ is closed and $\rho(\Gamma)$-invariant, and does not depend on the choice of Cartan decomposition (Corollary~\ref{cor:limit-set-indep} below).
It is well known (see \cite{Goldsheid_Margulis,Guivarch_Raugi,Benoist_Labourie,Prasad}) that if $\rho(\Gamma)$ is Zariski-dense in~$G$, then $\rho(\Gamma)$ contains elements that are proximal in $G/P_{\theta}$; by \cite[Lem.\,3.6]{Benoist}, in this case $\Lambda_{\rho(\Gamma)}^{G/P_{\theta}}$ is the closure in $G/P_{\theta}$ of the set of attracting fixed points of these elements.

The limit set $\Lambda_{\rho(\Gamma)}^{G/P_{\theta}}$ is nonempty for instance if $\rho$ is \emph{$P_{\theta}$-divergent}, in the sense~that
\[ \forall\alpha\in\theta,\quad\quad \langle\alpha,\mu(\rho(\gamma))\rangle \underset{\gamma\to\infty}{\longrightarrow} +\infty. \]
When $G$ has real rank one, $P_{\theta}$-divergence is equivalent to $\rho$ having finite kernel and discrete image; in this case, we recover the usual limit set of $\rho(\Gamma)$ in $G/P_{\theta}\simeq\partial_{\infty}(G/K)$.

\subsection{Constructing boundary maps}

In this Section~\ref{sec:boundary} we prove that if $\Gamma$ is word hyperbolic and if for any $\alpha\in\theta$ we have $\langle \alpha,\mu( \rho(\gamma))\rangle\to +\infty$ fast enough as \({\gamma\to\infty}\), then the map $\Xi^+:=\Xi_{\theta}\circ\nolinebreak\rho :\nolinebreak \Gamma\to G/P_{\theta}$  extends continuously to a $\rho$-equivariant boundary map $\xi^+ : \partial_{\infty}\Gamma\to G/P_{\theta}$ with image the limit set $\Lambda_{\rho(\Gamma)}^{G/P_{\theta}}$.
More precisely, ``fast enough'' is given by the following conditions.

\begin{definition} \label{defi:GSP}
A representation $\rho : \Gamma \to G$ has the \emph{gap summation property} with respect to~$\theta$ if for any $\alpha\in\theta$ and any geodesic ray $(\gamma_n)_{n\in\NN}$ in the Cayley graph of~$\Gamma$,
\begin{equation} \label{eqn:series-conv}
  \sum_{n\in\NN} e^{- \langle \alpha,\,  \mu(\rho(\gamma_n)) \rangle} < + \infty.
  \end{equation}
The representation $\rho$ has the \emph{uniform gap summation property} with respect to~$\theta$ if this series converges uniformly for all geodesic rays $(\gamma_n)_{n\in\NN}$ with $\gamma_0=e$, \ie if for any $\alpha \in \theta$,
  \begin{equation} \label{eqn:series-unif-conv}
  \sup_{\substack{(\gamma_n)_{n\in\NN}\\ \text{geodesic}\\ \text{with }\gamma_0=e}}\ \sum_{n\geq n_0} e^{- \langle \alpha,\, \mu(\rho(\gamma_n)) \rangle} \underset{n_0\to +\infty}{\longrightarrow} 0.
  \end{equation}
\end{definition}

For instance, if there exists $C>0$ such that $\langle\alpha,\mu(\rho(\gamma))\rangle\geq 2\log\ellGamma{\gamma}-C$ for all $\alpha\in\theta$ and $\gamma\in\Gamma$, then $\rho$ has the uniform gap summation property with respect to~$\theta$, and even with respect to $\theta\cup\theta^{\star}$ by \eqref{eqn:opp-inv-mu}.

Recall that the parabolic subgroup $P_{\theta}^-$ of~$G$ is conjugate to~$P_{\theta^{\star}}$, and so $G/P_{\theta}^-$ identifies with $G/P_{\theta^{\star}}$.
The goal of this Section~\ref{sec:boundary} is to prove the following result, which implies Theorem~\ref{thm:constr-xi}.

\begin{theorem}\label{thm:constr-xi-explicit}
Let $\Gamma$ be a word hyperbolic group, $G$ a real reductive Lie group, $\theta \subset \Delta$ a nonempty subset of the simple restricted roots of~$G$, and $\rho : \Gamma \to G$ a representation.
\begin{enumerate}
  \item \label{item:xi-exist}
   If $\rho$ has the gap summation property with respect to $\theta\cup\theta^{\star}$, then the maps 
   \[\left\{ \begin{array}{ll}
   \Xi^+ := \Xi_{\theta} \circ \rho : & \Gamma \to G/P_{\theta},\\
   \Xi^- := \Xi_{\theta^{\star}} \circ \rho : & \Gamma \to G/P_{\theta^{\star}}
   \end{array} \right. \]
   induce $\rho$-equivariant boundary maps
   \[\left\{ \begin{array}{rl}
   \xi^+ : & \partial_{\infty}\Gamma\to G/P_{\theta},\\
   \xi^- : & \partial_{\infty}\Gamma\to G/P_{\theta^{\star}}\simeq G/P_{\theta}^-,
   \end{array} \right. \]
   which are independent of all choices.
   For any $\eta \in \partial_{\infty}\Gamma$, the points $\xi^+(\eta)\in G/P_{\theta}$ and $\xi^-(\eta)\in G/P_{\theta}^-$ are compatible in the sense of Definition~\ref{defi:transverse-compatible}.
   \item\label{item:xi-dyn-preserv}
  If moreover for any $\alpha \in \theta$ and any $\gamma \in \Gamma$ of infinite order,
  \[ \langle \alpha, \mu(\rho(\gamma^n)) \rangle - 2 \log |n| \underset{|n|\to +\infty}{\longrightarrow} + \infty, \]
  then $\xi^+$ and~$\xi^-$ are dynamics-preserving for~$\rho$.
  \item \label{item:xi-cont}
  If $\rho$ has the \emph{uniform} gap summation property with respect to $\theta\cup\theta^{\star}$, then 
  \[\left\{ \begin{array}{rl}
  \Xi^+\sqcup\xi^+ : & \Gamma\cup\partial_{\infty}\Gamma\to G/P_{\theta},\\
  \Xi^-\sqcup\xi^- : & \Gamma\cup\partial_{\infty}\Gamma\to G/P_{\theta^{\star}}\simeq G/P_{\theta}^-
  \end{array} \right. \]
  are continuous, and the images of $\xi^+$ and~$\xi^-$ are the respective limit sets $\Lambda_{\rho(\Gamma)}^{G/P_{\theta}}$ and $\Lambda_{\rho(\Gamma)}^{G/P_{\theta^{\star}}}$ (Definition~\ref{defi:limit-set}).
    \item\label{item:xi-transv}
  If moreover for any $\alpha\in\Sigma^+_{\theta}$ and any geodesic ray $(\gamma_n)_{n\in\NN}$ the sequence $(\langle \alpha,\, \mu( \rho(\gamma_n))\rangle)_{n\in\NN}$ is CLI (Definition~\ref{defi:cli}), then $\xi^+$ and~$\xi^-$ are transverse.
  In particular, $\rho$ is $P_{\theta}$-Anosov and $\xi^+$ and~$\xi^-$ define homeomorphisms between $\partial_{\infty}\Gamma$ and the limit sets $\Lambda_{\rho(\Gamma)}^{G/P_{\theta}}$ and $\Lambda_{\rho(\Gamma)}^{G/P_{\theta^{\star}}}$, respectively.
\end{enumerate}
\end{theorem}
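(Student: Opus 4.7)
By Lemma~\ref{lem:theta-comp-exists} there is an irreducible $\theta$-proximal representation $\tau: G \to \GL_\RR(V)$. By Proposition~\ref{prop:theta-comp-Anosov}.\eqref{item:theta-comp-xi} and Proposition~\ref{prop:theta_emb}, boundary maps for $\rho$ into $G/P_\theta$ and $G/P_\theta^-$ are pulled back from boundary maps for $\tau \circ \rho$ into $\PP_\RR(V)$ and $\PP_\RR(V^*)$ via the equivariant Pl\"ucker-type embeddings $\iota^\pm$, which send $\Xi_\theta \circ \rho$ to $\Xi_{\varepsilon_1 - \varepsilon_2} \circ (\tau\circ\rho)$. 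By Lemma~\ref{lem:tau}.\eqref{item:eps1-eps2}, every growth condition on $\langle\alpha,\mu(\rho(\gamma))\rangle$ ($\alpha \in \theta$) translates verbatim to the corresponding condition on $\langle\varepsilon_1-\varepsilon_2,\mu_{\GL(V)}(\tau\circ\rho(\gamma))\rangle$, and the CLI condition on $\Sigma^+_\theta$ transfers similarly (after replacing $\tau$ by enough auxiliary proximal representations to cover all relevant roots). We therefore assume $G = \GL_\RR(V)$, $\theta = \{\varepsilon_1-\varepsilon_2\}$, so $\Xi^+(g)$ is the top singular line of $\rho(g)$ and $\Xi^-(g)$ its complementary $K$-invariant hyperplane read from the same Cartan decomposition.

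\textbf{Parts~\eqref{item:xi-exist}--\eqref{item:xi-cont}: a Cauchy argument on singular directions.} The cornerstone is the standard singular-perturbation estimate
\[
d\bigl(\Xi^+(g),\, \Xi^+(gs)\bigr) \ \leq\ C_{\|s\|}\,e^{-\langle \varepsilon_1-\varepsilon_2,\,\mu(g)\rangle}
\]
(and its analogue for $\Xi^-$), valid whenever the top singular gap of $g$ is large. For a geodesic ray $(\gamma_n)$ with $\gamma_{n+1}=\gamma_n s_n$, $s_n$ a generator, this bounds $\sum_n d(\Xi^\pm(\gamma_n),\Xi^\pm(\gamma_{n+1}))$ by the gap-summation series \eqref{eqn:series-conv}, yielding Cauchy sequences whose limits define $\xi^\pm(\eta)$. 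Compatibility at each $\eta$ is automatic because $\Xi^+(g)$ and $\Xi^-(g)$ are compatible for every~$g$ and compatibility is closed. Equivariance and independence from the choice of ray follow from a Morse-type fellow-traveler argument in the $\delta$-hyperbolic Cayley graph, combined with the same estimate to absorb the bounded discrepancy. Part~\eqref{item:xi-cont} is essentially the same argument with uniform control: \eqref{eqn:series-unif-conv} upgrades pointwise Cauchy to a uniform modulus over all rays from $e$, giving continuity of $\Xi^\pm\sqcup\xi^\pm$, and the image description is immediate from Definition~\ref{defi:limit-set}.

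\textbf{Part~\eqref{item:xi-dyn-preserv}: dynamics preserving.} For $\gamma\in\Gamma$ of infinite order, $(\gamma^n)_{n\geq 0}$ is a quasi-geodesic ray converging to $\eta_\gamma^+$, so part~\eqref{item:xi-exist} gives $\xi^+(\eta_\gamma^+) = \lim_n \Xi^+(\rho(\gamma^n))$. The hypothesis $\langle\alpha,\mu(\rho(\gamma^n))\rangle - 2\log|n| \to +\infty$ for every $\alpha\in\theta$ is exactly the criterion of Lemma~\ref{lem:prox}, forcing $\rho(\gamma)$ to be proximal in $G/P_\theta$, and the attracting fixed point of $\rho(\gamma)$ is then identified with $\lim_n \Xi^+(\rho(\gamma^n))$ (this is the projective-setting content of Lemma~\ref{lem:attr-basin}). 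The same reasoning applies to $\xi^-$.

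\textbf{Part~\eqref{item:xi-transv}: transversality --- the main obstacle.} To show $\xi^+(\eta) \not\subset \xi^-(\eta')$ for $\eta \neq \eta'$, pick a bi-infinite geodesic $(\gamma_n)_{n\in\ZZ}$ through $e$ with $\gamma_n \to \eta$ and $\gamma_{-n} \to \eta'$. Then $\xi^+(\eta) = \lim_n \Xi^+(\rho(\gamma_n))$ and $\xi^-(\eta') = \lim_n \Xi^-(\rho(\gamma_{-n}))$. The essential step is to read the failure of transversality off the Cartan projection of the compound element $\rho(\gamma_{-n}^{-1}\gamma_n)$, whose Cayley-graph path of length $\approx 2n$ is a uniform quasi-geodesic through~$e$. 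The CLI hypothesis applied to \emph{every} $\alpha \in \Sigma_\theta^+$ (not merely $\theta$) is precisely what keeps $\mu(\rho(\gamma_{-n}^{-1}\gamma_n))$ away, linearly in $n$, from \emph{all} root hyperplanes relevant to the opposition pairing of $\Xi^+$ and $\Xi^-$, and via Proposition~\ref{prop:QIflowbis} this bound is uniform over the family of such two-sided geodesics. A careful unpacking of the singular value decomposition of the product then shows that the line $\Xi^+(\rho(\gamma_n))$ has a component orthogonal to the hyperplane $\Xi^-(\rho(\gamma_{-n}))$ that stays bounded away from zero, which passes to the limit. This compound-element estimate is the technical heart of the theorem. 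Once transversality is established, CLI on $\theta$ alone yields $\langle\alpha,\mu(\rho(\gamma))\rangle \geq c\,\ellGamma{\gamma}-C$, so all hypotheses of Theorem~\ref{thm:char_ano_maps}.\eqref{item:8} hold; the implication $\eqref{item:8}\Rightarrow\eqref{item:5}$ established in Section~\ref{sec:anos-repr-cart} then yields the $P_\theta$-Anosov property, and Remark~\ref{rem:compatible}.\eqref{item:compat-transv->inj} combined with compactness of $\partial_\infty\Gamma$ identifies $\xi^\pm$ with homeomorphisms onto the limit sets.
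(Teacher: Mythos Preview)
Your treatment of parts~\eqref{item:xi-exist}--\eqref{item:xi-cont} is essentially the paper's argument: the perturbation estimate $d(\Xi^+(g),\Xi^+(gs))\leq C\,e^{-T_\theta(g)}$ (Lemma~\ref{lem:prefix}) plus the gap-summation series gives the Cauchy property along rays, and uniformity upgrades this to continuity. One minor correction: in part~\eqref{item:xi-dyn-preserv} the identification of $\lim_n\Xi^+(\rho(\gamma^n))$ with the attracting fixed point of $\rho(\gamma)$ is not the content of Lemma~\ref{lem:attr-basin}; the paper proves it separately as Lemma~\ref{lem:xi-preserves-dynamics} via a Jordan-decomposition estimate.

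For part~\eqref{item:xi-transv} there is a genuine gap. The sentence ``a careful unpacking of the singular value decomposition of the product then shows \ldots'' is exactly where all the difficulty lies, and you have not said how to do it. Knowing that $\mu(\rho(\gamma_{-n}^{-1}\gamma_n))$ is far from the walls does \emph{not} by itself control the angle between the top singular line of $\rho(\gamma_n)$ and the bottom singular hyperplane of $\rho(\gamma_{-n})$: those are determined by the left $K$-factors $k_n,l_n$ of the two separate Cartan decompositions, and the Cartan projection of the product $(l'_n)^{-1}b_n^{-1}(l_n^{-1}k_n)a_n k'_n$ does not in any transparent way pin down $l_n^{-1}k_n$. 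There is also a uniformity problem in your use of CLI: the translated rays $(\gamma_{-n}^{-1}\gamma_{m-n})_{m\geq 0}$ are genuinely different rays (with different endpoints $\gamma_{-n}^{-1}\cdot\eta$) as $n$ varies, and the CLI constants in the hypothesis are allowed to depend on the ray; Proposition~\ref{prop:QIflowbis} concerns the flow parameter, not the Cartan projection, and does not fix this.

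The paper's route is structurally different. It fixes a \emph{single} ray $(\gamma_n)$ with endpoint~$\eta$ and proves $\xi^+(\eta)\notin\xi^-(\eta')$ for \emph{all} $\eta'\neq\eta$ at once (Proposition~\ref{prop:transv-SL}). The technical core is a control on the drift of the left $K$-factors along that one ray: writing $H_n^m=k_n^{-1}k_{n+m}$, Lemma~\ref{lem:bound-h-n-infty} bounds the first column of $a_n^{-1}H_n^m a_n$ uniformly in $(n,m)$, via an expansion of $H_n^m$ as a product of one-step transitions $h_k=k_k^{-1}k_{k+1}$, each bounded by Lemma~\ref{lem:bound_hn}, followed by a summation by parts over ``descent'' indices --- and the CLI hypothesis on every $\langle\varepsilon_1-\varepsilon_i,\mu\rangle$ (transferred from $\Sigma^+_\theta$ via Lemmas~\ref{lem:tau} and~\ref{lem:reordering}, not via auxiliary representations as you suggest) enters precisely in that summation by parts. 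This yields an expansion bound $d(g_n^{-1}\xi^+(\eta),k_n'^{-1}\cdot x_\theta)\leq 1-\delta$ (Corollary~\ref{cor:limsup-Hnm}). Transversality then follows not from any compound-element estimate but from the \emph{convergence-group dynamics} of $\Gamma$ on $\partial_\infty\Gamma$ (Fact~\ref{fact:dyn_at_infty}.\eqref{item:4}): pass to a subsequence with $\gamma_{\phi(n)}^{-1}\to\eta''$ and $\gamma_{\phi(n)}^{-1}\cdot\eta'\to\eta''$, use the equivariance and continuity of $\xi^-$ already established in part~\eqref{item:xi-cont} to identify $\lim g_{\phi(n)}^{-1}\cdot\xi^-(\eta')$ with $\lim k_{\phi(n)}'^{-1}\cdot Y_\theta$, and combine with the expansion bound.
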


In \eqref{item:xi-exist}, by ``induce'' we mean that $\xi^+(\eta)=\lim_n \Xi^+(\gamma_n)$ and $\xi^-(\eta)=\lim_n \Xi^-(\gamma_n)$ for any quasi-geodesic ray $(\gamma_n)_{n\in\NN}$ in the Cayley graph of~$\Gamma$ with endpoint $\eta\in\partial_{\infty}\Gamma$.

In \eqref{item:xi-transv}, the fact that $\rho$ is $P_{\theta}$-Anosov comes from the implication $\eqref{item:away-from-walls}\Rightarrow\eqref{item:Ano}$ in Theorem~\ref{thm:char_ano}, and the fact that $\xi^+$ and~$\xi^-$ are homeomorphisms onto their image comes from the fact that they are continuous and injective and $\partial_{\infty}\Gamma$ is compact.

\begin{remark} \label{rem:fixed-pt-non-attract}
Let $\gamma\in\Gamma$ be an element of infinite order, with attracting fixed point $\eta^+_{\gamma}\in\partial_{\infty}\Gamma$.
The image of $\eta^+_{\gamma}$ under $\xi^+$ (or under any $\rho$-equivariant map $ \partial_{\infty}\Gamma\to G/P_{\theta}$) is always  a fixed point of $\rho(\gamma)$ in $G/P_{\theta}$; however, this fixed point $\xi^+(\eta^{+}_{\gamma})$ is attracting (in the sense used in Definition~\ref{defi:prox-G/P}) only if $\langle\alpha,\mu(\rho(\gamma)^n)\rangle$ grows faster than $2\log(n)$ for every $\alpha\in\theta$ (Lemma~\ref{lem:prox}).
This shows that the growth assumption in \eqref{item:xi-dyn-preserv} above is optimal.
For instance, here is a case where the assumptions of \eqref{item:xi-exist} and~\eqref{item:xi-cont} are satisfied but the conclusion of~\eqref{item:xi-dyn-preserv}~fails:
\end{remark}

\begin{example}\label{ex:log-growth}
Let $G=\SL_2(\RR)$, with Cartan projection $\mu : G\to\RRp$ obtained by identifying $\overline{\aaa}^+$ with~$\RRp$.
Let $\Gamma$ be a finitely generated Schottky subgroup of~$G$ containing a parabolic element~$u$.
There is a constant $C>0$ such that $\mu( \rho(\gamma)) \geq 2 \log \ellGamma{\gamma} - C$ for all $\gamma\in\Gamma$; in particular, $\rho$ satisfies the uniform gap summation property \eqref{eqn:series-unif-conv}.
However, this growth rate cannot be improved since $\mu(u^n)=2\log n+ O(1)$.
The continuous equivariant boundary map $\xi : \partial_\infty \Gamma \rightarrow \partial_\infty \HH^2$ given by Theorem~\ref{thm:constr-xi-explicit}.\eqref{item:xi-cont} is not dynamics-preserving since the fixed point of~$u$ in $G/P_{\theta}=\partial_\infty \HH^2$ is neither attracting nor repelling; thus the conclusion of Theorem~\ref{thm:constr-xi-explicit}.\eqref{item:xi-dyn-preserv} fails.
The transversality conclusion of Theorem~\ref{thm:constr-xi-explicit}.\eqref{item:xi-transv} also fails since $\xi(\lim_{+\infty} u^n)=\xi(\lim_{+\infty} u^{-n})$ (see Remark~\ref{rem:compatible}.\eqref{item:compat-transv->inj}).
\end{example}

We first discuss the gap summation property (Section~\ref{subsec:gap-summ-prop}), then establish some estimates for the map~$\Xi_{\theta}$ (Section~\ref{subsec:estim-Xi-theta}), which are useful in the proof of Theorem~\ref{thm:constr-xi-explicit}.\eqref{item:xi-exist}--\eqref{item:xi-dyn-preserv}--\eqref{item:xi-cont} (Section~\ref{subsec:exist-equiv-cont}).
The proof of Theorem~\ref{thm:constr-xi-explicit}.\eqref{item:xi-transv} (hence of $\eqref{item:cli}\Rightarrow\eqref{item:Ano}$ of Theorem~\ref{thm:char_ano}) is more delicate, and is the object of Section~\ref{subsec:proof-xi-transv}. 
Finally, in Section~\ref{subsec:proof-contr-Ano} we establish the implication $\eqref{item:Ano}\Rightarrow\eqref{item:cli}$ of Theorem~\ref{thm:char_ano}.

\subsection{The gap summation property}
\label{subsec:gap-summ-prop}

The following observation will be useful in the proof of Theorem~\ref{thm:constr-xi-explicit}.
 
\begin{lemma} \label{lem:gap-summ-prop}
For any $c,C>0$, there exists $C_0>0$ such that for any $(c,C)$-quasi-geodesic rays $(\gamma_n)_{n\in\NN}$ and $(\gamma'_n)_{n\in\NN}$ in the Cayley graph of~$\Gamma$, with the same initial point $\gamma_0 = \gamma'_0$ and the same endpoint in $\partial_{\infty}\Gamma$, and for any $\alpha\in\Delta$,
\[ \sum_{n\in\NN} e^{- \langle \alpha, \, \mu(\rho(\gamma'_n)) \rangle} \leq C_0 \, \sum_{n\in\NN} e^{- \langle \alpha,\,  \mu(\rho(\gamma_n)) \rangle}. \]
\end{lemma}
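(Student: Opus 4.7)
The plan is to use the stability of quasi-geodesics in the Gromov-hyperbolic Cayley graph of~$\Gamma$ (the Morse lemma) to show that the two rays track each other closely, and then transfer this closeness across~$\rho$ using the subadditivity of~$\mu$ from Fact~\ref{fact:mu-subadditive} together with~\eqref{eqn:mu-leq-klength}.

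More precisely, first I would invoke the Morse lemma: since $(\gamma_n)_{n\in\NN}$ and $(\gamma'_m)_{m\in\NN}$ are two $(c,C)$-quasi-geodesic rays in the $\delta$-hyperbolic Cayley graph of~$\Gamma$ sharing the same initial point and the same endpoint at infinity, there exists a constant $D=D(c,C,\delta)>0$ such that their images lie at Hausdorff distance at most~$D$. Thus for every $m\in\NN$ one can choose $n(m)\in\NN$ with $\ellGamma{\gamma_{n(m)}^{-1}\gamma'_m}\leq D$. By Fact~\ref{fact:mu-subadditive}.\eqref{item:16} and~\eqref{eqn:mu-leq-klength},
\[ \bigl\Vert \mu(\rho(\gamma'_m)) - \mu(\rho(\gamma_{n(m)}))\bigr\Vert \leq \bigl\Vert \mu\bigl(\rho(\gamma_{n(m)}^{-1}\gamma'_m)\bigr)\bigr\Vert \leq kD, \]
where $k:=\max_{s\in S}\Vert\mu(\rho(s))\Vert$ depends only on $\rho$ and on the generating set of~$\Gamma$. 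Consequently, for any $\alpha\in\Delta$,
\[ \bigl|\langle\alpha,\mu(\rho(\gamma'_m))\rangle - \langle\alpha,\mu(\rho(\gamma_{n(m)}))\rangle\bigr| \leq \Vert\alpha\Vert\,kD, \]
so that $e^{-\langle\alpha,\mu(\rho(\gamma'_m))\rangle}\leq e^{\Vert\alpha\Vert kD}\,e^{-\langle\alpha,\mu(\rho(\gamma_{n(m)}))\rangle}$.

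Next I would bound the multiplicity of the map $m\mapsto n(m)$. If $n(m)=n(m')=n$, then $\ellGamma{(\gamma'_m)^{-1}\gamma'_{m'}}\leq 2D$; since $(\gamma'_m)_{m\in\NN}$ is a $(c,C)$-quasi-geodesic, this forces $|m-m'|\leq c(2D+C)$. Hence the fibers of $n(\cdot)$ have cardinality at most $N:=2c(2D+C)+1$, which depends only on $c,C,\delta$. Summing the previous pointwise bound yields
\[ \sum_{m\in\NN} e^{-\langle\alpha,\mu(\rho(\gamma'_m))\rangle} \leq e^{\Vert\alpha\Vert kD}\sum_{m\in\NN}e^{-\langle\alpha,\mu(\rho(\gamma_{n(m)}))\rangle} \leq N\,e^{\Vert\alpha\Vert kD}\sum_{n\in\NN}e^{-\langle\alpha,\mu(\rho(\gamma_n))\rangle}, \]
so that $C_0:=N\cdot e^{(\max_{\alpha\in\Delta}\Vert\alpha\Vert)\,kD}$ does the job.

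The only mildly delicate point is making the multiplicity bound uniform; everything else is a direct application of Morse stability and the subadditivity of~$\mu$. Since $D$, $k$ and $N$ depend only on $c$, $C$, $\delta$, $\rho$ and the finite generating set, the constant $C_0$ is independent of the two chosen rays.
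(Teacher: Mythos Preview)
Your proof is correct and follows essentially the same approach as the paper's: both invoke Morse stability to obtain a bounded-distance correspondence $m\mapsto n(m)$ (the paper calls it a quasi-isometry $\phi:\NN\to\NN$), transfer this bound to the Cartan projection via Fact~\ref{fact:mu-subadditive}, and then control the fiber size of the correspondence using the quasi-geodesic condition on~$(\gamma'_m)$. The only cosmetic difference is that the paper bounds $\Vert\mu(\rho(\gamma))\Vert$ over the $R$-ball by a constant~$M$ directly rather than citing~\eqref{eqn:mu-leq-klength}.
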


Therefore, in the definition~\ref{defi:GSP} of the gap summation property, it is equivalent to ask for the convergence of the series~\eqref{eqn:series-conv} for all \emph{quasi}-geodesic rays.

\begin{proof}
Since $(\gamma_n)_{n\in\NN}$ and $(\gamma'_n)_{n\in\NN}$ have the same endpoint at infinity in the word hyperbolic group~$\Gamma$, there is a \((c',C')\)-quasi-isometry $\phi : \NN\to\NN$ such that the sequence $(\gamma'_n \gamma_{\phi(n)}^{-1})_{n\in\NN}$ is contained in the \(R\)-ball \(B_e(R)\) centered at~\(e\) in~$\Gamma$.
The constants \(c'\), \(C'\), and~\(R\) depend only on \((c,C)\) (and on the hyperbolicity constant of~$\Gamma$).
Let $M$ be a real number such that $\Vert \mu(\rho(\gamma)) \Vert \leq\nolinebreak M$ for all $\gamma \in B_e(R)$.
By subadditivity of~$\mu$ (Fact~\ref{fact:mu-subadditive}.\eqref{item:mu-strong-subadd}), for any $n\in\NN$,
\begin{align*}
  | \langle \alpha, \mu(\rho(\gamma_{\phi(n)})) - \mu(\rho(\gamma'_n)) \rangle |
  & \leq \big \Vert \mu(\rho(\gamma_{\phi(n)})) - \mu(\rho(\gamma'_n)) \big \Vert \, \Vert \alpha \Vert\\
  & \leq \big\Vert \mu\big(\rho\big(\gamma'_n \gamma_{\phi(n)}^{-1}\big)\big) \big\Vert\, \Vert \alpha \Vert \leq M \Vert \alpha \Vert.
\end{align*}
Moreover,  for any $p\in\NN$, the set $\{ n\in \NN \mid \phi(n) = p\}$ has at most $c' C'+1$ elements.
Thus, for any $n\in\NN$,
\begin{align*}
  \sum_{n\in\NN} e^{- \langle\alpha,\,  \mu(\rho(\gamma'_n)) \rangle} & \leq  e^{M \Vert \alpha \Vert} \sum_{n\in\NN} e^{- \langle\alpha, \, \mu(\rho(\gamma_{\phi(n)}))\rangle}\\
   & \leq  e^{M \Vert \alpha \Vert} (c' C'+1) \sum_{p\in\NN} e^{- \langle\alpha,\,  \mu(\rho(\gamma_p))\rangle}. \qedhere
\end{align*}
\end{proof}

For any $g\in G$, we set
\begin{equation} \label{eqn:Ttheta}
T_{\theta}(g) := \min_{\alpha\in\theta}\ \langle\alpha,\mu(g)\rangle \,\geq 0.
\end{equation}
If the gap summation property holds with respect to~$\theta$, then the series $\sum_n e^{-T_{\theta}(\rho(\gamma_n))}$ converges for every quasi-geodesic ray \((\gamma_n)_{n\in \NN}\).
Here is an immediate consequence of Lemma~\ref{lem:gap-summ-prop}.

\begin{corollary} \label{cor:unif-GSP-q-geod}
If the uniform gap summation property holds with respect to~$\theta$, then for any $c,C>0$,
\[  \sup_{\substack{(\gamma_n)_{n\in\NN}\\ (c,C)\text{-quasi-geodesic}\\ \text{with }\gamma_0=e}}\ \sum_{n\geq n_0} e^{- T_{\theta}(\rho(\gamma_n))} \underset{n_0\to +\infty}{\longrightarrow} 0. \]
\end{corollary}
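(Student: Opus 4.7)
The plan is to reduce the statement, paying attention to uniformity, to the uniform gap summation hypothesis as it is stated in Definition~\ref{defi:GSP} (for geodesic rays only), by exploiting the proof of Lemma~\ref{lem:gap-summ-prop}.

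First I would handle the minimum in the definition $T_{\theta}(g) = \min_{\alpha\in\theta}\langle\alpha,\mu(g)\rangle$: the inequality $e^{-T_{\theta}(g)} \leq \sum_{\alpha\in\theta} e^{-\langle\alpha,\mu(g)\rangle}$ together with the finiteness of $\theta$ reduces the problem to showing, for each individual $\alpha\in\theta$, that
\[ \sup_{(\gamma_n)}\ \sum_{n\geq n_0} e^{-\langle\alpha,\mu(\rho(\gamma_n))\rangle} \underset{n_0\to +\infty}{\longrightarrow} 0, \]
where the supremum is over $(c,C)$-quasi-geodesic rays $(\gamma_n)_{n\in\NN}$ with $\gamma_0 = e$.

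Second, I would fix such a quasi-geodesic ray. By the lower bound $|\gamma_n|_\Gamma \geq c^{-1}n - C$ the ray escapes to infinity, and by Gromov hyperbolicity it converges to an endpoint $\eta\in\partial_\infty\Gamma$. I would pick a geodesic ray $(\gamma'_n)_{n\in\NN}$ with $\gamma'_0=e$ and the same endpoint $\eta$. I would then rerun the proof of Lemma~\ref{lem:gap-summ-prop}, swapping the roles of $(\gamma_n)$ and $(\gamma'_n)$, to produce a $(c',C')$-quasi-isometry $\phi:\NN\to\NN$ and constants $M,C_0>0$ depending only on $c,C$ and on the hyperbolicity constant of $\Gamma$, such that $\gamma_n\gamma'^{-1}_{\phi(n)}$ lies in a uniformly bounded ball and $|\langle\alpha,\mu(\rho(\gamma_n))-\mu(\rho(\gamma'_{\phi(n)}))\rangle|\leq M\|\alpha\|$ for every $n$. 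Since $\phi$ is a $(c',C')$-quasi-isometry, for $n\geq n_0$ one has $\phi(n)\geq c'^{-1}n_0 - C'$, and since the fibers of $\phi$ have size $\leq c'C'+1$, this yields the tail estimate
\[ \sum_{n\geq n_0} e^{-\langle\alpha,\mu(\rho(\gamma_n))\rangle} \,\leq\, C_0 \sum_{p\geq n'_0} e^{-\langle\alpha,\mu(\rho(\gamma'_p))\rangle}, \]
where $n'_0 := \max(0,\lfloor c'^{-1}n_0 - C'\rfloor) \to +\infty$ as $n_0\to +\infty$.

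Finally, the uniform gap summation property \eqref{eqn:series-unif-conv} applied to the geodesic ray $(\gamma'_n)_{n\in\NN}$ gives that the right-hand side tends to $0$ uniformly as $n'_0\to +\infty$, hence as $n_0\to +\infty$. Because the constants $c',C',C_0$ depend only on $(c,C)$ (and on $\Gamma$), the bound is uniform over all $(c,C)$-quasi-geodesic rays $(\gamma_n)$ with $\gamma_0=e$, which is the desired conclusion. There is no real obstacle; the only point requiring care is ensuring that the quasi-isometry $\phi$ constructed in Lemma~\ref{lem:gap-summ-prop} has constants depending solely on $(c,C)$, which is automatic from the Morse stability of quasi-geodesics in hyperbolic spaces.
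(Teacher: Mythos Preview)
Your proposal is correct and follows the same route as the paper, which simply declares the corollary an ``immediate consequence of Lemma~\ref{lem:gap-summ-prop}'' without further detail. You have correctly recognized that it is the \emph{proof} of that lemma (the term-by-term comparison via the Morse-stability quasi-isometry $\phi$), rather than its stated full-sum inequality, that yields the required tail control, and you have unpacked this accurately.
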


\subsection{Metric estimates for the map $\Xi_{\theta}$} \label{subsec:estim-Xi-theta}

Consider an irreducible, \(\theta\)-proximal linear representation $(\tau,V)$ of~$G$ (see Definition~\ref{defi:theta-compatible} and Lemma~\ref{lem:theta-comp-exists}).
Let $\Vert\cdot\Vert_V$ be a $K$-invariant Euclidean norm on~$V$ for which the weight spaces of~$\tau$ are orthogonal.
It defines a $K$-invariant metric $d_{\PP(V)}$ on $\PP_{\RR}(V)$, given by
\[ d_{\PP(V)}([v],[v']) = |\sin\measuredangle(v,v')| \]
for all nonzero $v,v'\in V$.
By Proposition~\ref{prop:theta_emb}, the space $G/P_{\theta}$ embeds into $\PP_{\RR}(V)$ as the closed $G$-orbit of the point $x_\tau:=\PP_{\RR}(V^{\chi_{\tau}})\in\PP_{\RR}(V)$, and inherits from this a $K$-invariant metric~$d_{\scriptscriptstyle G\!/\!P_{\theta}}$: for any $g,g'\in G$, 
\[ d_{\scriptscriptstyle G\!/\!P_{\theta}}(g\cdot x_\theta, g'\cdot x_\theta) := d_{\PP(V)}(g\cdot x_\tau, g'\cdot x_\tau).\]
Here, and sometimes in the rest of the section, to simplify notation, we omit $\tau$ and write ``$\cdot$'' for the $\tau$-action of $G$ on both $V$ and $\PP_{\RR}(V)$.

The goal of this subsection is to establish the following useful estimates.

\begin{lemma} \label{lem:prefix}
For any compact subset $\mathcal{M}$ of~$G$, there is a constant $\CM>0$ such that for any $g\in G$ and $m\in\mathcal{M}$,
\begin{enumerate}[label=(\roman*),ref=\roman*]
\item\label{item:gm} \quad $d_{\scriptscriptstyle G\!/\!P_{\theta}}(\Xi_{\theta}(gm), \Xi_{\theta}(g)) \leq \CM e^{-T_{\theta}(g)}$,
\item\label{item:mg} \quad $d_{\scriptscriptstyle G\!/\!P_{\theta}}(\Xi_{\theta}(mg), m \cdot \Xi_{\theta}(g)) \leq \CM e^{-T_{\theta}(g)}$.
\end{enumerate}
\end{lemma}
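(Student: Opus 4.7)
The plan is to transfer both inequalities to $\PP_\RR(V)$ via the embedding $\iota^+ : G/P_\theta \hookrightarrow \PP_\RR(V)$ of Proposition~\ref{prop:theta_emb}.\eqref{item:b_thtmb}, since $d_{\scriptscriptstyle G\!/\!P_{\theta}}$ is by construction the pullback of $d_{\PP(V)}$. The key observation is that the chosen norm makes $\tau(\exp\mu(g))$ self-adjoint positive (by $K$-invariance of $\Vert\cdot\Vert_V$ and orthogonality of weight spaces), so that writing $g = k_g (\exp\mu(g)) k'_g$, the factorisation $\tau(g) = \tau(k_g)\,\tau(\exp\mu(g))\,\tau(k'_g)$ is a singular value decomposition of $\tau(g)$. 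By $\theta$-proximality the top singular value $e^{\langle\chi_\tau,\mu(g)\rangle}$ is simple and attained on $V^{\chi_\tau}$, so the top-left singular line $e_1(\tau(g))\in\PP_\RR(V)$ equals $\tau(k_g)\cdot[V^{\chi_\tau}] = \iota^+(\Xi_\theta(g))$; and by Lemma~\ref{lem:tau}.\eqref{item:eps1-eps2} the ratio of the two largest singular values is $\sigma_2(\tau(g))/\sigma_1(\tau(g)) = e^{-T_\theta(g)}$. Both inequalities of the lemma are trivial when $T_\theta(g)=0$, since $d_{\PP(V)}$ has diameter at most $1$, so we may assume $T_\theta(g)>0$ below.

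The core linear-algebra input is the following SVD perturbation estimate, to be proved directly: for any $A,B\in\GL_\RR(V)$ with $\sigma_1(A)>\sigma_2(A)$,
\[
d_{\PP(V)}\bigl(e_1(AB),\,e_1(A)\bigr) \,\leq\, \frac{\sigma_2(A)}{\sigma_1(A)}\,\Vert B\Vert\,\Vert B^{-1}\Vert,
\]
where $e_1(\cdot)$ denotes the top-left singular line. Indeed, letting $P$ be the orthogonal projection onto $e_1(A)^\perp$, one has $\Vert PA\Vert \leq \sigma_2(A)$ by definition while $\sigma_1(AB) \geq \sigma_1(A)/\Vert B^{-1}\Vert$ (apply $AB$ to a unit preimage of $v^*_1(A)$ under $B$). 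Writing a unit representative of $e_1(AB)$ as $\alpha\,e_1(A) + w$ with $w\perp e_1(A)$, the component $w$ equals $P(AB)\,v^*_1(AB)/\sigma_1(AB)$, yielding $\Vert w\Vert\leq(\sigma_2(A)/\sigma_1(A))\Vert B\Vert\Vert B^{-1}\Vert$, and $d_{\PP(V)}(e_1(AB),e_1(A)) = \Vert w\Vert$.

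Inequality~\eqref{item:gm} now follows by applying the estimate with $A=\tau(g)$ and $B=\tau(m)$, using that $\sup_{m\in\mathcal{M}}\Vert\tau(m)\Vert\Vert\tau(m)^{-1}\Vert<+\infty$ by continuity of $\tau$ and compactness of $\mathcal{M}$. For~\eqref{item:mg}, apply the same estimate to $A^TM^T$ with $A=\tau(g)$, $M=\tau(m)$; using the identity $e_1(C^T)=v^*_1(C)$ for the top-right singular line, this gives $d_{\PP(V)}\bigl(v^*_1(\tau(mg)),\,v^*_1(\tau(g))\bigr)\leq C_1 e^{-T_\theta(g)}$ for some $C_1=C_1(\mathcal{M})$. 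Since $\Xi_\theta(mg) = \tau(m)\cdot[\tau(g)\,v^*_1(\tau(mg))]$ and $m\cdot\Xi_\theta(g) = \tau(m)\cdot[\tau(g)\,v^*_1(\tau(g))]$, it remains to compare the two points inside the brackets. A further SVD computation, decomposing any unit $u$ along $v^*_1(\tau(g))$ and its orthogonal complement, gives
\[
d_{\PP(V)}\bigl([\tau(g)u],\,e_1(\tau(g))\bigr) \,\leq\, \frac{\sigma_2(\tau(g))}{\sigma_1(\tau(g))}\,\tan\angle\bigl(u,v^*_1(\tau(g))\bigr),
\]
and combining this with the previous bound produces a distance $\leq C_2\,e^{-T_\theta(g)}$ inside the brackets, where $C_2=C_2(\mathcal{M})$; bi-Lipschitzness of $\tau(m)$ on $\PP_\RR(V)$ with constants depending only on $\mathcal{M}$ then yields~\eqref{item:mg}. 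The one subtle point is the clean identification of $\Xi_\theta(g)$ with the top-left singular line of $\tau(g)$ via the $K$-invariant norm with orthogonal weight spaces and via Lemma~\ref{lem:tau}.\eqref{item:eps1-eps2}; once this identification is made, both inequalities reduce to the elementary SVD perturbation argument.
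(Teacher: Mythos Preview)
Your argument is correct. Both the paper and you work in the embedding $\iota^+:G/P_\theta\hookrightarrow\PP_\RR(V)$ and exploit the fact that $\iota^+(\Xi_\theta(g))$ is the top singular line of $\tau(g)$ with gap $e^{-T_\theta(g)}$, so the underlying idea is the same; the packaging differs. The paper isolates a single estimate (its Observation just before the proof): for a highest-weight unit vector $v$ and $a,b\in\exp\overline{\aaa}^+$, $h\in G$,
\[
d_{\scriptscriptstyle G\!/\!P_{\theta}}(x_\theta,h\cdot x_\theta)\ \le\ e^{-T_\theta(a)}\,e^{R(a)-R(b)+R(h^{-1})}\,\Vert a^{-1}hb\cdot v\Vert_V,
\]
and then applies it twice, with $(a,b,h)=(\exp\mu(g),\exp\mu(gm),k_g^{-1}k_{gm})$ for~\eqref{item:gm} and $(a,b,h)=(\exp\mu(mg),\exp\mu(g),k_{mg}^{-1}mk_g)$ for~\eqref{item:mg}; in each case $a^{-1}hb$ collapses (via the two Cartan factorizations) to an element of $K\mathcal{M}K$, whose norm on~$v$ is bounded by compactness, and $|R(a)-R(b)|$ is controlled by subadditivity of~$\mu$. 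You instead prove the generic SVD perturbation bound $d_{\PP(V)}(e_1(AB),e_1(A))\le(\sigma_2/\sigma_1)(A)\,\Vert B\Vert\,\Vert B^{-1}\Vert$ and feed in $A=\tau(g)$, $B=\tau(m)$ for~\eqref{item:gm}; for~\eqref{item:mg} you go via the right singular line (transpose trick), then push forward by $\tau(g)$ and finally by~$\tau(m)$. The paper's route for~\eqref{item:mg} is one step shorter (a single application of its Observation, no detour through $v^*_1$), while your SVD formulation is basis-free and reusable outside the Lie-theoretic setup. One small point worth making explicit in your write-up: the $\tan$ in your second displayed estimate is harmless because the case of small $T_\theta(g)$ is already absorbed into the constant, as you note at the outset.
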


Lemma~\ref{lem:prefix} has the following consequence.

\begin{corollary} \label{cor:limit-set-indep}
Consider a sequence $(g_n)\in G^{\NN}$ with $T_{\theta}(g_n)\to +\infty$.
Suppose that $(\Xi_{\theta}(g_n))_{n\in\NN}$ converges to some $x\in G/P_{\theta}$.
Then 
\begin{enumerate}
  \item\label{item:m-gn} for any $m\in G$, the sequence $(\Xi_{\theta}(mg_n))_{n\in\NN}$ converges to $m\cdot x\in G/P_{\theta}$;
  \item\label{item:indep-of-choices} $x$ does not depend on any choice made in the definition of~$\Xi_{\theta}$: namely, it does not depend on the choice of Cartan decomposition $G=K(\exp\overline{\aaa}^+)K$ (which determines the basepoint $x_{\theta}\in G/P_{\theta}$) nor, given this choice, on the choice of $k_{g_n},k'_{g_n}\in K$.
\end{enumerate}
\end{corollary}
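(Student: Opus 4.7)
For~\eqref{item:m-gn}, the plan is to invoke Lemma~\ref{lem:prefix}.\eqref{item:mg} with the compact subset $\mathcal{M} := \{m\}$: this will yield
\[ d_{\scriptscriptstyle G\!/\!P_{\theta}}(\Xi_\theta(m g_n),\, m \cdot \Xi_\theta(g_n)) \leq C_{\{m\}}\, e^{-T_\theta(g_n)} \longrightarrow 0, \]
and together with the continuity of the action of $m$ on $G/P_\theta$ and the assumption $\Xi_\theta(g_n) \to x$, this will give $\Xi_\theta(mg_n) \to m \cdot x$.

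For~\eqref{item:indep-of-choices}, two kinds of choices enter the definition of~$\Xi_\theta$, which I would handle separately. Given a fixed Cartan decomposition, the observation made in Section~\ref{subsec:limit-set} right after~\eqref{eqn:Xi-theta+-} shows that $\Xi_\theta(g)$ is independent of the choice of $k_g, k'_g$ as soon as $T_\theta(g) > 0$, a condition that holds for all large $n$ by hypothesis. For the choice of Cartan decomposition itself, I would use the classical fact that any two Cartan decompositions of~$G$ are conjugate by an element of~$G$: given a second one with maximal compact $K' = m K m^{-1}$ and Weyl chamber $\overline{\aaa}'^+ = \mathrm{Ad}(m)\overline{\aaa}^+$ for some $m \in G$, the corresponding parabolic subgroup is $P'_\theta = m P_\theta m^{-1}$. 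Writing $g = k'_1 \exp(\mu'(g)) k'_2$ with $k'_i = m k_i m^{-1} \in K'$ and noting that $\mu'(g) = \mathrm{Ad}(m)\,\mu(m^{-1} g m)$ would reduce this to a Cartan decomposition $m^{-1} g m = k_1 \exp(\mu(m^{-1} g m)) k_2$ relative to~$K$; under the natural identification of $G/P_\theta$ and $G/P'_\theta$ with the set of parabolic subgroups of~$G$ conjugate to~$P_\theta$ (Remark~\ref{rem:pnorm}), this would yield the key identity
\[ \Xi'_\theta(g) \,=\, m \cdot \Xi_\theta(m^{-1} g m). \]

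Granted this identity, the convergence $\Xi'_\theta(g_n) \to x$ will follow in two steps: first, applying~\eqref{item:m-gn} to~$(g_n)$ with the element $m^{-1} \in G$ gives $\Xi_\theta(m^{-1} g_n) \to m^{-1} \cdot x$; second, Lemma~\ref{lem:prefix}.\eqref{item:gm} with $\mathcal{M}=\{m\}$ controls the postfix multiplication by $m$, since $T_\theta(m^{-1} g_n) \geq T_\theta(g_n) - C'$ for a constant $C'$ depending only on~$m$ (by the subadditivity of~$\mu$, Fact~\ref{fact:mu-subadditive}), so that the estimate
\[ d_{\scriptscriptstyle G\!/\!P_\theta}(\Xi_\theta(m^{-1} g_n \cdot m),\, \Xi_\theta(m^{-1} g_n)) \leq C_{\{m\}}\, e^{-T_\theta(m^{-1} g_n)} \]
tends to zero. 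Combining these two steps yields $\Xi_\theta(m^{-1}g_n m) \to m^{-1} \cdot x$, and the identity above then gives $\Xi'_\theta(g_n) \to x$. The only point requiring genuine care is the derivation of the identity $\Xi'_\theta(g) = m \cdot \Xi_\theta(m^{-1}gm)$, which is a bookkeeping matter of tracking how the basepoint $x_\theta$ transforms under the change of Cartan data; everything else is a clean combination of part~\eqref{item:m-gn} with the two estimates of Lemma~\ref{lem:prefix}.
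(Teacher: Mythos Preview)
Your proposal is correct and follows essentially the same approach as the paper: part~\eqref{item:m-gn} is handled identically via Lemma~\ref{lem:prefix}.\eqref{item:mg}, and for part~\eqref{item:indep-of-choices} both you and the paper use the conjugacy of Cartan decompositions to express $\Xi'_\theta(g)$ as $m\cdot\Xi_\theta(m^{-1}gm)$ (equivalently, $mk_{m^{-1}gm}\cdot x_\theta$), then combine both estimates of Lemma~\ref{lem:prefix} together with subadditivity of~$\mu$. The only difference is organizational: the paper bounds $d_{\scriptscriptstyle G\!/\!P_\theta}(mk_{m^{-1}gm}\cdot x_\theta,\,k_g\cdot x_\theta)$ directly by $2\CM e^{-T_\theta(g)}$, while you route through part~\eqref{item:m-gn} and then invoke Lemma~\ref{lem:prefix}.\eqref{item:gm}; this is a matter of sequencing, not of substance.
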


\begin{proof}
For \eqref{item:m-gn}, apply Lemma~\ref{lem:prefix}.\eqref{item:mg} with $g=g_n$.
For \eqref{item:indep-of-choices}, recall that any Cartan decomposition of~$G$ is obtained from $G=K(\exp\overline{\aaa}^+)K$ by conjugating $K$ and $\exp\overline{\aaa}^+$ by some common element $m\in G$; this corresponds to replacing $k_g$ by $mk_{m^{-1}gm}m^{-1}$ for any $g\in G$ and the basepoint $x_{\theta}$ by $m\cdot x_{\theta}$.
Independence from the choice of Cartan decomposition then follows from the existence, given by Lemma~\ref{lem:prefix} applied to \(\mathcal{M}=\{m,m^{-1}\}\), of a constant $\CM>0$ such that for all $g\in G$,
\[ d_{\scriptscriptstyle G\!/\!P_{\theta}}(mk_{m^{-1}gm}\cdot x_{\theta},k_g\cdot x_{\theta})\leq 2\CM\,e^{-T_{\theta}(g)}. \]
Finally, since $T_{\theta}(g_n)>0$ for all large enough~$n$, the choice of $k_{g_n},k'_{g_n}\in K$ has no effect on $\Xi_{\theta}(g_n)$, as seen in Section~\ref{subsec:limit-set}.
\end{proof}

In order to prove Lemma~\ref{lem:prefix}, we make the following observation, where for $g\in G$ we denote by
\[ R(g) := \langle \chi_\tau,\, \mu(g) \rangle = \max_{v\in V\smallsetminus\{ 0\} } \log\biggl(\frac{\Vert g\cdot v\Vert_V}{\Vert v\Vert_V}\biggr) \,\geq 0 \]
the logarithm of the largest singular value of $\tau(g)$.
By Lemma~\ref{lem:tau}, the quantity $T_{\theta}(g)$ of \eqref{eqn:Ttheta} is the logarithm of the ratio of the two largest singular values of $\tau(g)$.

\begin{observation}\label{obs:norm-v}
Let $v\in V$ be a highest-weight vector for~$\tau$ with $\Vert v\Vert_V=1$.
For any $h\in G$ and $a,b\in \exp{\overline{\aaa}^+}$, 
\[ d_{\scriptscriptstyle G\!/\!P_{\theta}}(x_{\theta}, h\cdot x_{\theta}) \leq e^{-T_{\theta}(a)} e^{R(a)-R(b)+R(h^{-1})} \, \Vert a^{-1}hb\cdot v\Vert_V. \]
\end{observation}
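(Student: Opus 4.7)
The plan is as follows. Via the $\tau$-equivariant embedding $\iota^+\colon G/P_\theta \hookrightarrow \PP_\KK(V)$ of Proposition~\ref{prop:theta_emb}, the basepoint $x_\theta$ corresponds to the line $[v]$ spanned by the unit highest-weight vector $v\in V^{\chi_\tau}$ (which is a line since $\tau$ is proximal), so by definition of $d_{\scriptscriptstyle G\!/\!P_{\theta}}$,
\[ d_{\scriptscriptstyle G\!/\!P_{\theta}}(x_\theta, h\cdot x_\theta) = d_{\PP(V)}([v], [h\cdot v]) = |\sin\measuredangle(v, h\cdot v)|. \]
I would then decompose $h\cdot v = \alpha v + w$ with $\alpha\in\KK$ and $w\in V_{<\chi_\tau}$ orthogonal to $v$ (using that $V^{\chi_\tau}=\KK v$ and the weight spaces of $\tau$ are orthogonal for $\Vert\cdot\Vert_V$). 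This gives the clean identity $|\sin\measuredangle(v, h\cdot v)| = \Vert w\Vert_V/\Vert h\cdot v\Vert_V$.

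The first factor in the denominator can be lower-bounded by $\Vert h\cdot v\Vert_V \geq e^{-R(h^{-1})}$ since $v = h^{-1}\cdot(h\cdot v)$ and $\Vert h^{-1}\Vert_{\mathrm{op}} = e^{R(h^{-1})}$. For the numerator, I would compute $a^{-1}hb\cdot v$ using that $b\cdot v = e^{R(b)} v$ (as $v\in V^{\chi_\tau}$ and $b\in\exp\overline{\aaa}^+$), obtaining
\[ a^{-1}hb\cdot v = e^{R(b)}\bigl(\alpha\, e^{-R(a)} v + a^{-1}\cdot w\bigr), \]
where the two summands are orthogonal. In particular $\Vert a^{-1}hb\cdot v\Vert_V \geq e^{R(b)} \Vert a^{-1}\cdot w\Vert_V$.

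The key step, and essentially the only nontrivial one, is to show that $a$ contracts $V_{<\chi_\tau}$ at least by the factor $e^{-T_\theta(a)}$ relative to its action on $V^{\chi_\tau}$, \ie that $\Vert a^{-1}\cdot w\Vert_V \geq e^{T_\theta(a)-R(a)}\Vert w\Vert_V$. This follows from Lemma~\ref{lem:tau}.\eqref{item:nonhighest-weights}: for any weight $\chi < \chi_\tau$ of $\tau$ we have $\chi_\tau - \chi \in \sum_{\beta\in\Sigma_\theta^+}\NN\beta$, and every $\beta\in\Sigma_\theta^+$ involves with positive coefficient some simple root in $\theta$, so for $a\in\exp\overline{\aaa}^+$ we get $\langle\chi_\tau-\chi,\log a\rangle \geq T_\theta(a)$, \ie $\langle\chi,\log a\rangle \leq R(a)-T_\theta(a)$. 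Hence $a^{-1}$ acts on each $V^\chi\subset V_{<\chi_\tau}$ by multiplication by a scalar of modulus $\geq e^{T_\theta(a)-R(a)}$, and the desired bound on $w$ follows from the orthogonality of weight spaces. Combining these three ingredients yields
\[ |\sin\measuredangle(v,h\cdot v)| \;=\; \frac{\Vert w\Vert_V}{\Vert h\cdot v\Vert_V} \;\leq\; e^{-T_\theta(a)}\, e^{R(a)-R(b)+R(h^{-1})}\,\Vert a^{-1}hb\cdot v\Vert_V, \]
which is the stated inequality.
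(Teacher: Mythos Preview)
Your proof is correct and follows essentially the same approach as the paper's: decompose $h\cdot v$ orthogonally along $V^{\chi_\tau}$ and its complement, use $b\cdot v = e^{R(b)}v$ and $\Vert h\cdot v\Vert_V \geq e^{-R(h^{-1})}$, and bound $\Vert a^{-1}\cdot w\Vert_V$ from below via the gap between $\chi_\tau$ and the other weights. The only difference is one of presentation: you justify the key inequality $\Vert a^{-1}\cdot w\Vert_V \geq e^{T_\theta(a)-R(a)}\Vert w\Vert_V$ explicitly through Lemma~\ref{lem:tau}.\eqref{item:nonhighest-weights}, whereas the paper states it directly (it amounts to the fact, recorded in Lemma~\ref{lem:tau}.\eqref{item:eps1-eps2}, that the second singular value of $\tau(a)$ is $e^{R(a)-T_\theta(a)}$).
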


\begin{figure}[h!]
\centering
\labellist
\small\hair 2pt
\pinlabel $0$  at -1 2
\pinlabel $v$  at 110 8
\pinlabel $tv$  at 183 -3
\pinlabel $w$  at -4 105
\pinlabel $hv$  at 150 122
\pinlabel $a^{-1}\cdot hv$  at 53 75
\pinlabel $a^{-1}\cdot w$  at -15 75
\endlabellist
\includegraphics[width=7cm]{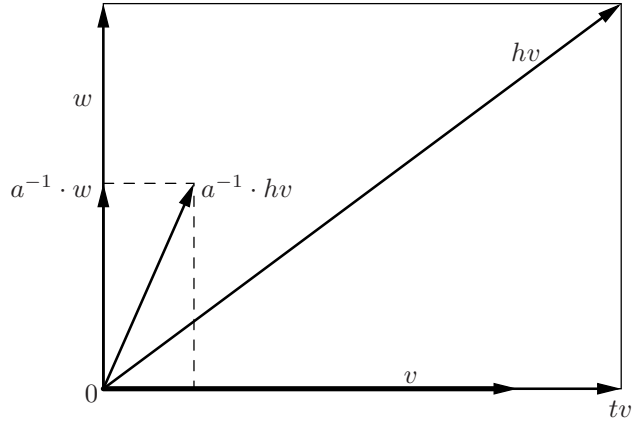}
\caption{Illustration of Observation~\ref{obs:norm-v} when $b=1$: as $a^{-1}$ compresses $e^{T_{\theta}(a)}$ times less strongly
along~$v^\perp$ than in the $v$-direction, we get a lower bound on $\Vert a^{-1}\cdot hv \Vert_V$ in terms of the angle $\measuredangle(v, hv)$ (or its sine $ d_{\scriptscriptstyle G\!/\!P_{\theta}}(x_{\theta}, h\cdot x_{\theta}))$.}
\label{vectors}
\end{figure}

\begin{proof}[Proof of Observation~\ref{obs:norm-v}]
Write $h\cdot v=tv+w$ where $t\in\RR$ and $w$ belongs to the orthogonal $v^{\perp}$ of $v$ in $(V,\Vert\cdot\Vert_V)$ (see Figure~\ref{vectors}). 
Then $\Vert w\Vert_V = \Vert h\cdot v\Vert_V \, |\sin\measuredangle(v,h\cdot v)| = \Vert h\cdot v\Vert_V \, d_{\scriptscriptstyle G\!/\!P_{\theta}}(x_{\theta}, h\cdot x_{\theta})$, and so
\[ \Vert a^{-1}h\cdot v\Vert_V \geq \Vert a^{-1}\cdot w\Vert_V \geq e^{-R(a)+T_{\theta}(a)} \, \Vert w \Vert_V = e^{-R(a)+T_{\theta}(a)} \, \Vert h\cdot v\Vert_V \, d_{\scriptscriptstyle G\!/\!P_{\theta}}(x_{\theta}, h\cdot x_{\theta}).\]
To conclude, note that $1=\Vert h^{-1}h\cdot v\Vert_V\leq e^{R(h^{-1})} \Vert h\cdot v\Vert_V$ and $b\cdot v=\nolinebreak e^{R(b)}v$, so that
\begin{eqnarray*}
\Vert a^{-1}hb\cdot v\Vert_V & = & e^{R(b)}\Vert a^{-1}h\cdot v\Vert_V\\
& \geq & e^{-R(a)+T_{\theta}(a)+R(b)} \, \Vert h\cdot v\Vert_V \, d_{\scriptscriptstyle G\!/\!P_{\theta}}(x_{\theta}, h\cdot x_{\theta})\\
& \geq & e^{-R(a)+T_{\theta}(a)+ R(b)-R(h^{-1})} \, d_{\scriptscriptstyle G\!/\!P_{\theta}}(x_{\theta}, h\cdot x_{\theta}). \hspace{1cm} \qedhere
\end{eqnarray*}
\end{proof}

\begin{proof}[Proof of Lemma~\ref{lem:prefix}]
Let $\mathcal{M}$ be a compact subset of~$G$.
By continuity of~$\mu$, there is a constant $\delta\geq 0$ such that
$\Vert m\cdot v'\Vert_V\leq e^\delta\,\Vert v'\Vert_V$ and
$\Vert\mu(m)\Vert\leq\delta$ for all $m\in\mathcal{M}$ and $v'\in V$, where $\Vert\cdot\Vert$ is the $W$-invariant Euclidean norm on~$\aaa$ from Section~\ref{subsubsec:Cartan-proj}.
Let $v\in V^{\chi_{\tau}}$ be a highest-weight vector for~$\tau$ with $\Vert v\Vert_V=\nolinebreak 1$.
Recall the elements $k_g, k'_g \in K$ defined before \eqref{eqn:Xi-theta+-}.
For any $g\in G$ and $m\in\mathcal{M}$,
\[ \big\Vert \exp(\mu(g))^{-1} (k_g^{-1} k_{gm}) \exp(\mu(gm)) \cdot v \big\Vert_V = \Vert k'_g m k_{gm}^{\prime -1} \cdot v \Vert_V \leq e^{\delta}. \]
By applying Observation~\ref{obs:norm-v} to $(a,b,h)=(\exp({\mu(g)}),\exp({\mu(gm)}),k_g^{-1} k_{gm})$ we obtain
\[ d_{\scriptscriptstyle G\!/\!P_{\theta}}(\Xi_{\theta}(g), \Xi_{\theta}(gm)) = d_{\scriptscriptstyle G\!/\!P_{\theta}}(k_g\cdot x_{\theta}, k_{gm}\cdot x_{\theta}) \leq e^{-T_{\theta}(g)} e^{R(g)-R(gm)} e^\delta. \]
By strong subadditivity of~$\mu$ (Fact~\ref{fact:mu-subadditive}),
\begin{align*}
|R(g)-R(gm)| & =  |\langle\chi_{\tau},\mu(g)-\mu(gm)\rangle| \leq \Vert\mu(g)-\mu(gm)\Vert \, \Vert \chi_\tau\Vert\\
& \leq  \Vert\mu(m)\Vert \, \Vert \chi_\tau\Vert \leq \delta \Vert \chi_\tau\Vert.
\end{align*}
Therefore,
\[ d_{\scriptscriptstyle G\!/\!P_{\theta}}(\Xi_{\theta}(g), \Xi_{\theta}(gm)) \leq e^{\delta (1+\Vert \chi_\tau\Vert)} e^{-T_{\theta}(g)} \]
\ie \eqref{item:gm} holds with $\CM = e^{\delta (1+\Vert \chi_\tau\Vert)}$.

Similarly, we have 
\[ \big\Vert \exp(\mu(mg))^{-1} (k_{mg}^{-1} m k_g) \exp(\mu(g)) \cdot v \big\Vert_V = \Vert k'_{mg} m k_{g}^{\prime -1} \cdot v \Vert_V \leq e^{\delta}. \]
By applying Observation~\ref{obs:norm-v} to $(a,b,h)=(\exp({\mu(mg)}),\exp({\mu(g)}),k_{mg}^{-1} m k_g)$ we obtain
\begin{align*} d_{\scriptscriptstyle G\!/\!P_{\theta}}(\Xi_{\theta}(mg), m \cdot \Xi_{\theta}(g)) 
 &=  d_{\scriptscriptstyle G\!/\!P_{\theta}}(k_{mg}\cdot x_{\theta}, mk_g\cdot x_{\theta}) \\ 
 &\leq e^{-T_{\theta}(g)} e^{R(mg)-R(g)+R(m^{-1})} e^\delta. 
 \end{align*}
As above, $|R(mg)-R(g)| \leq \Vert\mu(m)\Vert \, \Vert \chi_\tau\Vert \leq \delta \Vert \chi_\tau\Vert$, and
\[ R(m^{-1}) \leq \Vert\mu(m^{-1})\Vert \, \Vert\chi_{\tau}\Vert = \Vert\mu(m)\Vert \, \Vert\chi_{\tau}\Vert \leq \delta \, \Vert\chi_{\tau}\Vert, \]
hence
\[ d_{\scriptscriptstyle G\!/\!P_{\theta}}(\Xi_{\theta}(mg), m \cdot \Xi_{\theta}(g)) \leq e^{\delta (1+2\Vert \chi_\tau\Vert)} e^{-T_{\theta}(g)}, \]
\ie \eqref{item:mg} holds with $\CM = e^{\delta (1+2\Vert \chi_\tau\Vert)}$.
\end{proof}

\subsection{Existence, equivariance, continuity, and dynamics-preserving property for the boundary maps}
\label{subsec:exist-equiv-cont}

We now give a proof of statements \eqref{item:xi-exist}, \eqref{item:xi-dyn-preserv}, \eqref{item:xi-cont} of Theorem~\ref{thm:constr-xi-explicit}.

\begin{proof}[Proof of Theorem~\ref{thm:constr-xi-explicit}.\eqref{item:xi-exist}]
Let $(\gamma_n)_{n\in\NN}$ be a $(c,C)$-quasi-geodesic ray in the Cayley graph of~$\Gamma$, with endpoint $\eta\in\partial_{\infty}\Gamma$.
The set $\{ \gamma_{n}^{-1} \gamma_{n+1} \,|\, n\in\NN\}$ is contained in the ball $B_e(c+C)$ of radius $c+C$ centered at $e\in\Gamma$.
Applying Lemma~\ref{lem:prefix}.\eqref{item:gm} to $\rho(\gamma_n)$ and to $\mathcal{M} := \rho(B_e(c+C))$, we obtain 
\[ d_{\scriptscriptstyle G\!/\!P_{\theta}}\big(\Xi^+(\gamma_n), \Xi^+(\gamma_{n+1})\big) = d_{\scriptscriptstyle G\!/\!P_{\theta}}\big(\Xi_{\theta} \circ \rho(\gamma_n), \Xi_{\theta} \circ \rho(\gamma_{n+1})\big) \leq \CM \, e^{-T_{\theta}(\rho(\gamma_n))}\]
for all $n\in\NN$.
The gap summation property and Lemma~\ref{lem:gap-summ-prop} imply
\[ \sum_{n\in\NN} e^{-T_{\theta}(\rho(\gamma_n))} < + \infty. \]
Thus $(\Xi^+(\gamma_n))_{n\in\NN}$ is a Cauchy sequence in $G/P_\theta$, and so its limit $\xi^+(\eta)\in G/P_{\theta}$ exists.
More precisely, for any $n_0 \in \NN$,
\begin{equation}
  \label{eq:d_xi_Xi}
  d_{\scriptscriptstyle G\!/\!P_{\theta}}\big(\Xi^+(\gamma_{n_0}), \xi^+ (\eta)\big) \leq \CM \sum_{n\geq n_0} e^{-T_{\theta}(\rho(\gamma_n))}.
\end{equation}
This limit $\xi^+(\eta)$ does not depend on the choice of the quasi-geodesic ray $(\gamma_n)_{n\in\NN}$, because any other quasi-geodesic ray $(\gamma'_n)_{n\in\NN}$ lies within some distance $R$ from $(\gamma_n)$, and so we can apply Lemma~\ref{lem:prefix}.\eqref{item:gm} again (and Lemma~\ref{lem:gap-summ-prop}), taking for $\mathcal{M}$ the image under~$\rho$ of the Cayley $R$-ball centered at~$e$.
Thus $\xi^+ : \partial_{\infty} \Gamma\to G/P_{\theta}$ is well defined.
The independence of $\xi^+$ from all choices then follows from Corollary~\ref{cor:limit-set-indep}.

For the $\rho$-equivariance of~$\xi^+$, consider a quasi-geodesic ray $(\gamma_n)_{n\in\NN}$ with endpoint $\eta\in\partial_{\infty}\Gamma$ and an element $\gamma\in\Gamma$.
Then $(\gamma\gamma_n)_{n\in\NN}$ is a quasi-geodesic ray with endpoint $\gamma\cdot\eta\in\partial_{\infty}\Gamma$, and $\xi^+(\gamma\cdot\eta)=\rho(\gamma)\cdot\xi^+(\eta)$ by Corollary~\ref{cor:limit-set-indep}.\eqref{item:m-gn}.

We argue similarly for $\xi^- : \partial_{\infty}\Gamma\to G/P_{\theta^{\star}}\simeq G/P_{\theta}^-$, replacing $\theta$ with~$\theta^{\star}$ and $\Xi^+$ with~$\Xi^-$.
By construction, for any $\eta \in \partial_{\infty}\Gamma$, the points $\xi^+(\eta)\in G/P_{\theta}$ and $\xi^-(\eta)\in G/P_{\theta}^-$ are compatible in the sense of Definition~\ref{defi:transverse-compatible}.
\end{proof}

Theorem~\ref{thm:constr-xi-explicit}.\eqref{item:xi-dyn-preserv} is based on the following observation.

\begin{lemma}\label{lem:xi-preserves-dynamics}
For any $g\in G$ which is proximal in $G/P_{\theta}$,
\[ \Xi_{\theta}(g^n) \underset{n\to +\infty}{\longrightarrow} \xi_g^+,\]
where $\xi_g^+$ is the attracting fixed point of $g$ in $G/P_{\theta}$. 
\end{lemma}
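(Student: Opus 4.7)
The plan is to transfer the problem to a projective setting. By Lemma~\ref{lem:theta-comp-exists}, pick an irreducible $\theta$-proximal linear representation $\tau\colon G\to\GL_\RR(V)$ and let $v_1$ be a unit highest-weight vector in $V^{\chi_\tau}$. Proposition~\ref{prop:theta_emb} provides a $\tau$-equivariant topological embedding $\iota^+\colon G/P_\theta\hookrightarrow\PP_\RR(V)$ with $\iota^+(x_\theta)=[v_1]$, under which $M:=\tau(g)\in\GL_\RR(V)$ is proximal in $\PP_\RR(V)$ with unique attracting fixed point $\iota^+(\xi_g^+)$. Writing $g^n=k_{g^n}\exp(\mu(g^n))k'_{g^n}$ and using the compatibility of Cartan decompositions (Remark~\ref{rem:Cartan_comp_rep}) together with the fact that $v_1$ has weight $\chi_\tau$, one obtains
\[ \iota^+(\Xi_\theta(g^n))=\bigl[\tau(k_{g^n})v_1\bigr]=\bigl[M^n w_n\bigr], \]
where $w_n:=\tau(k'_{g^n})^{-1}v_1$ is a unit vector satisfying $\|M^n w_n\|=\|M^n\|=e^{R(g^n)}$. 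Thus it is enough to show $[M^n w_n]\to[v]$ in $\PP_\RR(V)$, where $v\in V$ spans the attracting eigenline $L=\iota^+(\xi_g^+)$ of~$M$.

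Next I invoke the classical spectral decomposition of a proximal endomorphism: there is a real eigenvalue $\lambda\neq 0$ of $M$ of maximal modulus (with eigenvector $v$) and a unique $M$-invariant hyperplane $H$ complementary to $L$. Let $P$ be the rank-one projector with image $L$ and kernel~$H$, and set $N:=M-\lambda P=M(\mathrm{Id}-P)$, so that $PN=NP=0$ and $N|_H=M|_H$. Then $M^n=\lambda^n P+N^n$, and since the spectral radius of $N$ is strictly less than $|\lambda|$, we have $\|N^n\|/|\lambda|^n\to 0$. In particular $M^n/\lambda^n\to P$ in operator norm, so $\|M^n\|/|\lambda|^n\to\|P\|>0$.

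The main obstacle is to control the component of $w_n$ along $L$: the $w_n$ depend on $n$ in an a priori uncontrolled way and the trivial bound goes the wrong direction. Decompose $w_n=\alpha_n v+h_n$ with $\alpha_n\in\RR$ and $h_n\in H$, so that $Pw_n=\alpha_n v$. The trick is that $w_n$ \emph{maximizes} $u\mapsto\|M^n u\|$ on the unit sphere, which together with the reverse triangle inequality gives
\[ \bigl|\,\|M^n w_n\|/|\lambda|^n-\|Pw_n\|\,\bigr|\leq\|M^n w_n/\lambda^n-Pw_n\|=\|N^n w_n\|/|\lambda|^n\leq\|N^n\|/|\lambda|^n\longrightarrow 0. \]
Since $\|M^n w_n\|/|\lambda|^n=\|M^n\|/|\lambda|^n\to\|P\|$, this forces $\|Pw_n\|\to\|P\|$, \ie $|\alpha_n|\to\|P\|/\|v\|>0$; in particular $|\alpha_n|$ is bounded below by a positive constant for $n$ large, and $\|h_n\|=\|w_n-\alpha_n v\|$ stays bounded.

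To conclude, from $M^n w_n=\alpha_n\lambda^n v+N^n h_n$ (with $N^n h_n\in H$), divide by $\lambda^n$ to get
\[ \frac{M^n w_n}{\lambda^n}=\alpha_n v+\frac{N^n h_n}{\lambda^n}. \]
The $H$-component has norm at most $\|N^n\|\,\|h_n\|/|\lambda|^n\to 0$, while $|\alpha_n|$ is bounded below; hence $M^n w_n/\lambda^n$ approaches the line $\RR v$, and $[M^n w_n]\to[v]=\iota^+(\xi_g^+)$ in $\PP_\RR(V)$. Injectivity of $\iota^+$ then yields $\Xi_\theta(g^n)\to\xi_g^+$ in $G/P_\theta$, as desired.
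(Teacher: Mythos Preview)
Your proof is correct, and it follows a genuinely different route from the paper's argument. The paper proceeds via the Jordan decomposition $g=g_hg_eg_u$ inside~$G$: writing $g_h=mzm^{-1}$ with $z\in\exp(\overline{\aaa}^+)$ so that $\xi_g^+=m\cdot x_\theta$, it then applies Observation~\ref{obs:norm-v} with $(a,b,h)=(z^n,\exp(\mu(g^n)),m^{-1}k_{g^n})$ and controls the error terms using $\|\mu(g_e^ng_u^n)\|=o(n)$, obtaining an explicit decay rate $d_{G/P_\theta}(\xi_g^+,\Xi_\theta(g^n))=e^{-nT_\theta(z)+o(n)}$. By contrast, you work entirely in $\GL_\RR(V)$: the key insight is that $w_n=\tau(k'_{g^n})^{-1}v_1$ is a \emph{top singular vector} of $M^n$ (i.e.\ $\|M^nw_n\|=\|M^n\|$), which combined with the spectral decomposition $M^n=\lambda^nP+N^n$ forces $\|Pw_n\|\to\|P\|>0$, hence the $L$-component of $w_n$ is bounded away from zero. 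Your argument is more elementary, needing only standard linear algebra of proximal operators rather than the Jordan decomposition and Observation~\ref{obs:norm-v}; the paper's approach, on the other hand, yields a quantitative convergence rate essentially for free.
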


\begin{proof}
The element~$g$ admits a Jordan decomposition $g=g_hg_eg_u$ where $g_h$ is hyperbolic, $g_e$ is elliptic, $g_u$ is unipotent, and $g_h,g_e,g_u$ commute.
Since $\lambda(g_eg_u)=0$ by definition of~$\lambda$, we have $\Vert\mu(g_e^n g_u^n)\Vert =o(n)$ as $n\to +\infty$, by \eqref{eqn:lambda-lim-mu}.
Let us write $g_h=mzm^{-1}$ where $m\in G$ and $z\in \exp( \overline{\aaa}^+)$.
Then $\xi_g^+=m\cdot P_{\theta}$.
Let $(\tau,V)$ and $\Vert\cdot\Vert_V$ be as in Section~\ref{subsec:estim-Xi-theta}, and let $v\in V^{\chi_{\tau}}$ be a highest-weight vector for~$\tau$ with $\Vert v\Vert_V=1$.
For any $n\in\NN$,
\[ \big\Vert z^{-n} m^{-1} k_{g^n} \exp({\mu(g^n)}) \cdot v \big\Vert_V = \Vert m^{-1} g_h^{-n} g^n k_{g^n}^{\prime -1} \cdot v \Vert_V = \Vert m^{-1} g_e^n g_u^n {k}_{g^n}^{\prime -1} \cdot v\Vert_V. \]
By Observation~\ref{obs:norm-v} applied to $(a,b,h)=(z^n,\exp({\mu(g^n)}),m^{-1} k_{g^n})$, we have
\begin{align*}
 d_{\scriptscriptstyle G\!/\!P_{\theta}}(\xi^{+}_{g}, \Xi_{\theta}({g^n}))= & ~ d_{\scriptscriptstyle G\!/\!P_{\theta}}(m\cdot x_{\theta}, k_{g^n}\cdot x_{\theta}) \ \leq\ C \, d_{\scriptscriptstyle G\!/\!P_{\theta}}(x_{\theta}, m^{-1}k_{g^n}\cdot x_{\theta})\\
 \leq & ~  C e^{-nT_{\theta}(z)} e^{R(z^n)-R(g^n)+R(m)} \, \Vert m^{-1} g_e^n g_u^n {k'}_{g^n}^{-1} \cdot v\Vert_V,
\end{align*}
where $C\geq 0$ is a Lipschitz constant for the action of $m$ on $G/P_{\theta}$.
By strong subadditivity of~$\mu$ (Fact~\ref{fact:mu-subadditive}),
\begin{align*}
|R(z^n)-R(g^n)| & =  |\langle\chi_{\tau},\mu(z^n)-\mu(mz^nm^{-1} g_e^n g_u^n)\rangle|\\
& \leq  \Vert \chi_\tau \Vert \big( 2\Vert\mu(m)\Vert + \Vert\mu(g_e^n g_u^n)\Vert\big) = o(n).
\end{align*}
Similarly,
\[ \Vert m^{-1} g_e^n g_u^n {k}_{g^n}^{\prime -1} \cdot v\Vert_V \leq e^{\langle\chi_{\tau},\, \mu(m^{-1} g_e^n g_u^n)\rangle}  \leq e^{\Vert\chi_{\tau}\Vert (\Vert\mu(m^{-1})\Vert + \Vert\mu(g_e^n g_u^n)\Vert)} = e^{o(n)}. \]
Therefore, $d_{\scriptscriptstyle G\!/\!P_{\theta}}(\xi_{g}^{+}, \Xi_{\theta}({g^n}))=e^{-nT_{\theta}(z)+o(n)}\to 0$ as $n\to\nolinebreak +\infty$.
\end{proof}

\begin{proof}[Proof of Theorem~\ref{thm:constr-xi-explicit}.\eqref{item:xi-dyn-preserv}]
Let $\gamma\in\Gamma$ be an element of infinite order with attracting fixed point $\eta_{\gamma}^+$ in $\partial_{\infty}\Gamma$.\,Suppose that for any $\alpha \in \theta$ we have $\langle \alpha, \mu(\rho(\gamma^n)) \rangle -\nolinebreak 2 \log n \to\nolinebreak\!+\infty$ as $n\to +\infty$.
By Lemma~\ref{lem:prox}, the element $\rho(\gamma)\in G$ is proximal in $G/P_{\theta}$.
By Lemma~\ref{lem:xi-preserves-dynamics}, the sequence $(\Xi^+(\gamma^n))_{n\in\NN}=(\Xi_{\theta}\circ \rho(\gamma^n))_{n\in\NN}$ converges to the attracting fixed point of $\rho(\gamma)$ in $G/P_{\theta}$.
On the other hand, this sequence converges to $\xi^+(\eta_{\gamma}^+)$ by construction of~$\xi^+$.
Thus $\xi^+$ is dynamics-preserving for~$\rho$.
We argue similarly for~$\xi^-$, replacing $\theta$ with~$\theta^{\star}$ and $\Xi^+$ with~$\Xi^-$, and using \eqref{eqn:opp-inv-mu}.
\end{proof}

\begin{proof}[Proof of Theorem~\ref{thm:constr-xi-explicit}.\eqref{item:xi-cont}]
It is sufficient to check the continuity of $\Xi^+\sqcup\xi^+$ and $\Xi^-\sqcup\xi^-$ at any point $\eta\in\partial_{\infty}\Gamma$.
Let $\delta>0$ be the Gromov hyperbolicity constant of the Cayley graph of~$\Gamma$.
By definition of the topology on $\Gamma\cup\partial_{\infty}\Gamma$ (see \cite[Lem.\,3.6]{Bridson_Haefliger}), for any $\eta\in\partial_{\infty}\Gamma$ which is the endpoint of a geodesic ray $(\sigma_n)\in\Gamma^{\NN}$ with $\sigma_0=e$, a basis of neighborhoods of~$\eta$ in $\Gamma\cup\partial_\infty \Gamma$ is given by the family $(\mathcal{V}^{\eta}_{n_0})_{n_0\in \NN}$, where $\mathcal{V}^{\eta}_{n_0}$ is by definition the set of endpoints of geodesic segments or rays $(\gamma_n)_{n\geq 0}$ of length $\geq n_0$ with $\ellGamma{\sigma_n^{-1}\gamma_n}\leq 3\delta$ for all $0\leq n\leq n_0$.
By Corollary~\ref{cor:unif-GSP-q-geod}, under the uniform gap  summation property,
\[ \varepsilon(n_0) := \sup_{\substack{(\gamma_n)_{n\in\NN}\\ \text{geodesic ray}\\ \text{with }\gamma_0=e}}\ \sum_{n\geq n_0} e^{- T_{\theta}(\rho(\gamma_n))} \]
tends to~$0$ as $n_0\to +\infty$.
Let $\mathcal{M} := \rho(B_e(1+3\delta))$ and let $\CM>0$ be given by Lemma~\ref{lem:prefix}.
For any $\eta\in\partial_{\infty}\Gamma$, any $n_0 \in \NN$, and any $y \in \mathcal{V}^{\eta}_{n_0}$ which is the endpoint of a geodesic segment or ray $(\gamma_n)_{n\geq 0}$ as above, we have 
\[ d_{\scriptscriptstyle G\!/\!P_{\theta}}\bigl((\Xi^+\sqcup\xi^+)(y),\xi^+(\eta)\bigr) \leq 2 \, \CM \,\varepsilon(n_0). \]
Indeed, by \eqref{eq:d_xi_Xi} applied to the $(1,3\delta)$-quasi-geodesic ray coinciding with $(\gamma_n)_{n\geq 0}$ for $n\leq n_0$ and with $(\sigma_n)_{n\in\NN}$ for $n>n_0$, we have
\[ d_{\scriptscriptstyle G\!/\!P_{\theta}}\bigl(\Xi^+(\gamma_{n_0}),\xi^+(\eta)\bigr) \leq \CM \,\varepsilon(n_0), \]
and by \eqref{eq:d_xi_Xi} applied to $(\gamma_n)_{n\geq 0}$ we have
\[ d_{\scriptscriptstyle G\!/\!P_{\theta}}\bigl(\Xi^+(\gamma_{n_0}),(\Xi^+\sqcup\xi^+)(y)\bigr) \leq \CM \,\varepsilon(n_0). \]
This shows that $\Xi^+\sqcup\xi^+$ is continuous at any $\eta\in\partial_{\infty}\Gamma$.

By construction, $\xi^+(\partial_\infty\Gamma)$ is contained in the limit set $\Lambda_{\rho(\Gamma)}^{G/P_{\theta}}$.
To check equality, consider a point $x\in\Lambda_{\rho(\Gamma)}^{G/P_{\theta}}$ which is the limit of a sequence $(\Xi^+(\gamma_n))_{n\in\NN}$ for some $(\gamma_n)\in\Gamma^{\NN}$ with $\langle\alpha,\mu(\rho(\gamma_n))\rangle\to +\infty$ for all $\alpha\in\theta$.
Since $\mu$ is a proper map, we have $\gamma_n\to\infty$ in~$\Gamma$.
By compactness of $\partial_{\infty}\Gamma$, up to extracting we may assume that $(\gamma_n)_{n\in\NN}$ converges to some $\eta\in\partial_{\infty}\Gamma$.
Then $x=\xi^+(\eta)$ by continuity of $\Xi^+\sqcup\xi^+$.
This shows that $\xi^+(\partial_{\infty}\Gamma)=\Lambda_{\rho(\Gamma)}^{G/P_{\theta}}$.

We argue similarly for $\xi^-$ and~$\Xi^-$, replacing $\theta$ with~$\theta^{\star}$.
\end{proof}

\subsection{Transversality of the boundary maps} \label{subsec:proof-xi-transv}

In order to prove the trans\-versality of the boundary maps $\xi^+,\xi^-$ under the assumption that the sequence $(\langle \alpha,\, \mu( \rho(\gamma_n))\rangle)_{n\in\NN}$ is CLI for any $\alpha\in\Sigma^+_{\theta}$ and any geodesic ray $(\gamma_n)_{n\in\NN}$ (Theorem~\ref{thm:constr-xi-explicit}.\eqref{item:xi-transv}), we first consider the special case where $G=\GL_d(\RR)$ and $P_\theta$ is the stabilizer of a line, \ie $G/P_\theta = \PP^{d-1}(\RR)$.
The general case is treated in Section~\ref{subsubsec:transv-general}: we reduce to this special case using the results of Section~\ref{sec:repr-semis-lie}.

\subsubsection{Transversality in $\GL_d(\RR)$}

Let $G=\GL_d(\RR)$.
As in Example~\ref{ex:roots}, we take $K$ to be $\OO(d)$ and $\overline{\aaa}^+$ to be the set of real diagonal matrices of size $d\times d$ with entries in nonincreasing order; for $1\leq i\leq d$ we denote by $\varepsilon_i\in\aaa^{\ast}$ the evaluation of the $i$-th diagonal entry.
For $\theta=\{ \varepsilon_1-\varepsilon_2\}$, we see $G/P_{\theta}$ as $\PP^{d-1}(\RR)$ and $G/P_{\theta^{\star}}\simeq G/P_{\theta}^-$ as the space of projective hyperplanes in $\PP^{d-1}(\RR)$.
Transversality of $\xi^+$ and~$\xi^-$ in this case is given by the following proposition.

\begin{proposition}\label{prop:transv-SL}
Let $G=\GL_d(\RR)$ and $\theta=\{ \varepsilon_1-\varepsilon_2\}$.
Let $\Gamma$ be a word hyperbolic group and $\rho : \Gamma\to G$ a representation.
Suppose the maps $\xi^+, \xi^-$ of Theorem~\ref{thm:constr-xi-explicit}.\eqref{item:xi-exist} are well defined and $\rho$-equivariant, and that $\Xi^- \sqcup \xi^- = (\Xi_{\theta^{\star}}\circ\rho) \sqcup \xi^- : \Gamma \cup \partial_{\infty}\Gamma \to G/P_{\theta^{\star}}$ is continuous.
Let $(\gamma_n)_{n\in\NN}$ be a quasi-geodesic ray in the Cayley graph of~$\Gamma$, with endpoint $\eta\in\partial_{\infty}\Gamma$.
If the sequences $(\langle \varepsilon_1-\varepsilon_i,\, \mu(\rho(\gamma_n))\rangle)_{n\in\NN}$ are CLI (Definition~\ref{defi:cli}) for all $2\leq i\leq d$, then $\xi^+(\eta)\notin\xi^-(\eta')$.
\end{proposition}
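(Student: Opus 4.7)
The plan is to argue by contradiction, using the Cartan decomposition and the CLI hypothesis to produce a specific limit rank-one operator, then exploiting $\rho$-equivariance together with the convergence action on $\partial_\infty\Gamma$ to propagate a hypothesized containment to the ``repelling'' hyperplane of this operator. Write $\rho(\gamma_n) = k_n a_n k'_n$ with $k_n, k'_n \in \OO(d)$ and $a_n = \mathrm{diag}(e^{\mu_1^{(n)}}, \ldots, e^{\mu_d^{(n)}})$, so that $\Xi^+(\gamma_n) = [k_n e_1] \in \PP(V)$ and, via a direct computation with the opposition involution, $\Xi^-(\gamma_n^{-1}) = k'^{-1}_n \cdot \mathrm{span}(e_2, \ldots, e_d)$. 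The CLI hypothesis combined with the estimate \eqref{eq:d_xi_Xi} from the proof of Theorem~\ref{thm:constr-xi-explicit}.\eqref{item:xi-exist} yields the quantitative rate $\|k_n e_1 \mp v_*\| = O\bigl(e^{-(\mu_1^{(n)} - \mu_2^{(n)})}\bigr)$, where $v_* \in V$ is a unit representative of $\xi^+(\eta)$.

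Writing $w_n := k_n^{-1} v_* = e_1 + \epsilon_n$, the crude bound $|(\epsilon_n)_i| \leq \|\epsilon_n\|$ together with $\mu_i^{(n)} \leq \mu_2^{(n)}$ for $i \geq 2$ forces the rescaled vectors
\[
u_n\ :=\ e^{\mu_1^{(n)}} a_n^{-1} w_n\ =\ e_1 + \sum_{i \geq 2} (\epsilon_n)_i\, e^{\mu_1^{(n)} - \mu_i^{(n)}}\, e_i
\]
to form a bounded sequence in $V$ with constant first coordinate $(u_n)_{e_1} \equiv 1$. By compactness, after passing to a common subsequence we may arrange that $u_n \to u_\infty$ (with $(u_\infty)_{e_1} = 1$, hence $u_\infty \notin \mathrm{span}(e_2, \ldots, e_d)$), $k'^{-1}_n \to k'^{-1}_\infty \in \OO(d)$, and $\gamma_n^{-1} \to \eta^* \in \partial_\infty \Gamma$. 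Then $\rho(\gamma_n)^{-1} [v_*] = [k'^{-1}_n u_n] \to L' := [k'^{-1}_\infty u_\infty]$ in $\PP(V)$, while $\Xi^-(\gamma_n^{-1}) \to H^- := k'^{-1}_\infty \cdot \mathrm{span}(e_2, \ldots, e_d)$. Continuity of $\Xi^- \sqcup \xi^-$ identifies $H^- = \xi^-(\eta^*)$, and the nonvanishing $e_1$-coefficient of $u_\infty$ gives $L' \not\subset H^-$.

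To conclude, suppose for contradiction that $\xi^+(\eta) \subset \xi^-(\eta')$ for some $\eta' \neq \eta$. By $\rho$-equivariance, $\xi^+(\gamma_n^{-1} \eta) \subset \xi^-(\gamma_n^{-1} \eta')$ for every $n$. The convergence-action property (Fact~\ref{fact:dyn_at_infty}.\eqref{item:4}) applied to the sequence $(\gamma_n^{-1})$---whose exceptional point, as a family of homeomorphisms of $\partial_\infty \Gamma$, is the attractor $\eta$ of the original sequence $(\gamma_n)$---then gives $\gamma_n^{-1} \eta' \to \eta^*$ along a compatible subsequence, so continuity of $\xi^-$ yields $\xi^-(\gamma_n^{-1} \eta') \to \xi^-(\eta^*) = H^-$. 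Since also $\xi^+(\gamma_n^{-1} \eta) = \rho(\gamma_n)^{-1} \xi^+(\eta) \to L'$, passing to the limit propagates the inclusion to $L' \subset H^-$, contradicting the previous paragraph.

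The main obstacle is the precise rate-matching in the first step: the CLI-enforced convergence rate $e^{-(\mu_1^{(n)} - \mu_2^{(n)})}$ for $\Xi^+(\gamma_n) \to \xi^+(\eta)$ must dovetail exactly with the singular-value gaps $e^{\mu_1^{(n)} - \mu_i^{(n)}}$ so that the residual factor $e^{\mu_2^{(n)} - \mu_i^{(n)}} \leq 1$ keeps $(u_n)$ bounded. It is this tight coupling that preserves the nonzero $e_1$-coefficient in the limit $u_\infty$, and is the mechanism by which the CLI bound on the top singular-value gap along the single ray to $\eta$ forces transversality of $\xi^+(\eta)$ with \emph{every} $\xi^-(\eta')$ for $\eta' \neq \eta$.
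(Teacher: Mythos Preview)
Your argument has a sign error that breaks the key step. You claim the residual factor satisfies $e^{\mu_2^{(n)} - \mu_i^{(n)}} \leq 1$; but the singular values are in nonincreasing order, so for $i\geq 2$ one has $\mu_2^{(n)} \geq \mu_i^{(n)}$ and hence $e^{\mu_2^{(n)} - \mu_i^{(n)}} \geq 1$. Consequently the crude bound $|(\epsilon_n)_i| \leq \|\epsilon_n\| = O\bigl(e^{-(\mu_1^{(n)}-\mu_2^{(n)})}\bigr)$ only gives
\[
|(u_n)_i| \;\leq\; C\, e^{-(\mu_1^{(n)}-\mu_2^{(n)})}\, e^{\mu_1^{(n)}-\mu_i^{(n)}} \;=\; C\, e^{\mu_2^{(n)}-\mu_i^{(n)}},
\]
which can blow up. So $(u_n)$ need not be bounded, and the limiting rank-one picture you describe does not follow. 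A tell-tale sign: your argument uses the CLI hypothesis only for $\varepsilon_1-\varepsilon_2$ (to control $\|\epsilon_n\|$), whereas the proposition assumes CLI for \emph{every} $\varepsilon_1-\varepsilon_i$; those extra hypotheses must enter somewhere.

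What is actually needed is the coordinate-wise estimate $|(\epsilon_n)_i| = O\bigl(e^{-(\mu_1^{(n)}-\mu_i^{(n)})}\bigr)$ for each $i\geq 2$, and this is not a consequence of the $\ell^2$ bound coming from \eqref{eq:d_xi_Xi}. The paper obtains exactly this via Lemma~\ref{lem:bound-h-n-infty}: writing $\xi^+(\eta)=\lim_m k_{n+m}\cdot x_\theta = k_n\cdot\lim_m H_n^m\cdot x_\theta$ with $H_n^m=k_n^{-1}k_{n+m}$, the $i$-th coordinate of your $\epsilon_n$ is the limit in $m$ of $H_n^m(i,1)$, and the lemma shows that the entries of $a_n^{-1}H_n^m a_n$ in the first column are uniformly bounded---equivalently, $|H_n^m(i,1)|\leq C\,e^{-(\mu_1^{(n)}-\mu_i^{(n)})}$. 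The proof of that lemma expands $H_n^m$ as a product of the one-step transitions $h_k=k_k^{-1}k_{k+1}$, uses the entrywise bound $|h_k(i,j)|\leq e^{r-|\langle\varepsilon_i-\varepsilon_j,\mu(\rho(\gamma_k))\rangle|}$ (Lemma~\ref{lem:bound_hn}), and then performs a summation-by-parts in which the CLI hypothesis on each $\varepsilon_1-\varepsilon_i$ is exactly what makes the resulting series converge. Once that coordinate-wise control is in hand, your limiting argument (or the paper's formulation via Corollary~\ref{cor:limsup-Hnm}) goes through; without it, it does not.
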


Before proving Proposition~\ref{prop:transv-SL}, let us fix some notation.
Let $(e_1, e_2, \dots, e_d)$ be the canonical basis of~$\RR^d$.
The point $P_{\theta}\in G/P_{\theta}$ corresponds to
\[x_{\theta} := [e_1] \in \PP^{d-1}(\RR).\]
The points $P_{\theta^{\star}}\in G/P_{\theta^{\star}}$ and $P_{\theta}^-\in G/P_{\theta}^-$ correspond respectively to the projective hyperplanes 
\[ X_{\theta} := \PP_{\RR}(\mathrm{span}(e_1,\dots,e_{d-1}))\quad \text{and}\quad Y_{\theta} := \PP_{\RR}(\mathrm{span}(e_2,\dots,e_d)). \]
For any $n\in\NN$, we set $g_n:=\rho(\gamma_n)$ and $a_n:=\exp(\mu(g_n))$ (this is a diagonal matrix with positive entries, in nonincreasing order), and choose elements $k_n,k'_n\in K$ such~that
\[ g_n = k_n a_n k'_n. \]
The crux of the proof of Proposition~\ref{prop:transv-SL} will be to control the elements
\begin{equation}\label{eqn:13}
H_{n}^{m} := k_{n}^{-1} k_{n+m} \, \in K.
\end{equation}
More precisely, our main technical result will be the following.

\begin{lemma}\label{lem:bound-h-n-infty}
With the above notation, if for every $2\leq i\leq d$ the sequence\linebreak $(\langle \varepsilon_1 -\nolinebreak \varepsilon_i, \mu(g_n) \rangle)_{n \in \NN}$ is CLI, then for every $2\leq i\leq d$ the absolute value of the $(i,1)$-th entry of the matrix $a_{n}^{-1} H_{n}^{m} a_n$ is uniformly bounded for $(n,m) \in \NN^2$.
\end{lemma}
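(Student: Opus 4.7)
The plan is to first reformulate the entries $(a_n^{-1} H_n^m a_n)_{i,1}$ in a tractable way, and then bound them by exploiting the cocycle structure along the quasi-geodesic ray together with the CLI hypothesis.

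For the reformulation, I would write $g_{n+m} = g_n h_n^m$ with $h_n^m := \rho(\gamma_n^{-1}\gamma_{n+m})$, and apply both sides to the top right singular vector $v_{n+m}^1 := (k'_{n+m})^{-1}e_1$ of $g_{n+m}$. Expanding $h_n^m v_{n+m}^1$ in the orthonormal basis $\{v_n^j := (k'_n)^{-1}e_j\}$ of right singular vectors of $g_n$, applying $g_n$, and equating with $\sigma_1(n+m)\,u_{n+m}$ in the orthonormal basis $\{k_n e_j\}$ yields the formula
\[
(a_n^{-1} H_n^m a_n)_{i,1} \;=\; \frac{\sigma_1(n)}{\sigma_1(n+m)}\;\langle v_n^i,\, h_n^m\, v_{n+m}^1\rangle,
\]
together with the Pythagorean constraint $\sum_{j=1}^d \sigma_j(n)^2\,\langle v_n^j, h_n^m v_{n+m}^1\rangle^2 = \sigma_1(n+m)^2$. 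Equivalently, $a_n^{-1} H_n^m a_{n+m} = k'_n\,h_n^m\,(k'_{n+m})^{-1}$, which has the same singular values as $h_n^m$, and $a_n^{-1} H_n^m a_n$ differs from this only by the right-multiplication by the diagonal matrix $a_{n+m}^{-1} a_n$ with entries $\sigma_j(n)/\sigma_j(n+m)$.

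The case $m=1$ is then immediate: since $\gamma_n^{-1}\gamma_{n+1}$ lies in a fixed bounded subset of $\Gamma$ (determined by the quasi-isometry constants), both $\|h_n^1\|$ and $\|(h_n^1)^{-1}\|$ are bounded by some uniform constant $M$, and the formula above yields $|(a_n^{-1}H_n^1 a_n)_{i,1}|\leq M^2$. For general $m$, I would set up the cocycle decomposition $H_n^m = H_n^1\,H_{n+1}^1\cdots H_{n+m-1}^1$ and the corresponding factorization
\[
a_n^{-1} H_n^m a_n \;=\; A_0\,A_1\cdots A_{m-1}\cdot\bigl(a_{n+m}^{-1} a_n\bigr),\qquad A_k := a_{n+k}^{-1} H_{n+k}^1\, a_{n+k+1},
\]
where each $A_k$ has the same singular values as $\rho(\gamma_{n+k}^{-1}\gamma_{n+k+1})$ and is therefore uniformly bounded in norm by~$M$.

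The main obstacle is that the naive norm bound $\|\prod_k A_k\|\leq M^m$ is far too weak: combined with $\|a_{n+m}^{-1}a_n\| \leq M^m$, it gives nothing useful, since the CLI hypothesis controls only ratios $\sigma_1/\sigma_i$ and not $\sigma_1(n+m)/\sigma_1(n)$ on its own. Overcoming this requires a finer analysis exploiting, at each step of the cocycle, the Pythagorean identity from the reformulation: applied to $(A_k)_{j,1}$ it gives $|(A_k)_{j,1}|\leq \sigma_1(n+k+1)/\sigma_j(n+k)$, i.e.\ the first column of each $A_k$ is concentrated in its top entry with off-diagonal entries decaying like $\sigma_1(n+k+1)/\sigma_j(n+k)$. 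Iterating these estimates along the product, and using the CLI for each $\varepsilon_1-\varepsilon_i$ to see that the successive ratios $\sigma_i(n+k)/\sigma_1(n+k)$ decay multiplicatively in $k$ with uniform constants, I expect to obtain a telescoping cancellation bounding the first column of $\prod A_k$ by $O(\sigma_1(n+m)/\sigma_1(n))$, which then cancels exactly against the diagonal correction factor $a_{n+m}^{-1}a_n$ and yields the uniform bound asserted in the lemma. Matching the upper and lower CLI constants across the cocycle so that the telescoping really gives a uniform bound---rather than one depending on $m$---will be the technical heart of the argument.
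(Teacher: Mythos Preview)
Your setup is correct: the formula
\[
(a_n^{-1} H_n^m a_n)_{i,1} \;=\; \frac{\sigma_1(n)}{\sigma_1(n+m)}\;\langle v_n^i,\, h_n^m\, v_{n+m}^1\rangle
\]
holds, the cocycle factorization $a_n^{-1}H_n^m a_n=(\prod_{k=0}^{m-1}A_k)\,a_{n+m}^{-1}a_n$ is right, and the $m=1$ case is immediate. But the plan for general $m$ has a genuine gap.

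First, the claimed ``concentration'' of the first column of $A_k$ is incorrect: your Pythagorean bound $|(A_k)_{j,1}|\le \sigma_1(n{+}k{+}1)/\sigma_j(n{+}k)$ is \emph{large} for $j\ge 2$ (since $\sigma_1\ge\sigma_j$), not small. There is no decay there. Second, and more fundamentally, information about the first column of each $A_k$ is insufficient: in the product only the last factor $A_{m-1}$ contributes its first column, while every earlier $A_k$ contributes \emph{all} its columns. If you use the naive bound $|(A_k)_{i,j}|\le \sigma_j(n{+}k{+}1)/\sigma_i(n{+}k)$ and expand over paths, the $\sigma$'s telescope and you recover nothing beyond the trivial identity $(a_n^{-1}H_n^m a_n)_{i,1}=e^{\langle\varepsilon_1-\varepsilon_i,\mu(g_n)\rangle}(H_n^m)_{i,1}$. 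So no ``telescoping cancellation'' materializes from the $A_k$ picture alone.

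The paper's proof supplies the missing mechanism. One expands $H_n^m(i,1)=\sum_{\mathbf u}\prod_k h_{n+k}(u_k,u_{k+1})$ over paths with $u_0=i$, $u_m=1$, and partitions the paths according to the times $m_1<\cdots<m_s$ at which $u$ reaches a new record minimum, with values $i=i_0>i_1>\cdots>i_s=1$. Between two consecutive records the path stays in $\{i_k,\dots,d\}$, and the key observation is that the lower-right $(d{+}1{-}i_k)\times(d{+}1{-}i_k)$ blocks $h_\ell[i_k]$ of the orthogonal matrices are $1$-Lipschitz, so the sum over all such sub-paths is a single entry of a contraction, bounded by~$1$. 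At each record step one then uses the sharper bound $|h_\ell(p,q)|\le e^{r-|\langle\varepsilon_p-\varepsilon_q,\mu(g_\ell)\rangle|}$, performs a summation by parts over the record values, and applies the CLI hypothesis to produce geometric decay in $m_s$. A polynomial count of descent data finishes the bound. The contraction-of-truncated-blocks idea and the descent partition are what make the combinatorics converge, and neither is present in your sketch.
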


Lemma~\ref{lem:bound-h-n-infty} has the following easy consequence. 
We use the $K$-invariant metric $d_{\scriptscriptstyle G\!/\!P_{\theta}}$ of Section~\ref{subsec:estim-Xi-theta} on $G/P_{\theta}=\PP^{d-1}(\RR)$, which is valued in $[0,1]$.

\begin{corollary} \label{cor:limsup-Hnm}
With the above notation, if for every $2\leq i\leq d$ the sequence $(\langle \varepsilon_1 -\nolinebreak \varepsilon_i, \mu(g_n) \rangle)_{n \in \NN}$ is CLI, then there exists $0<\delta\leq 1$ such that for all large enough $n\in\NN$,
  \begin{equation} \label{eqn:gn-inv-kn'}
  d_{\scriptscriptstyle G\!/\!P_{\theta}} \bigl(g_n^{-1} \cdot \xi^+(\eta), k_n^{\prime -1} \cdot x_{\theta} \bigr) \leq 1 - \delta.
  \end{equation}
\end{corollary}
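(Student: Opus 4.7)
The plan is to unpack the distance on the left-hand side by writing $g_n = k_n a_n k'_n$ and exploiting the fact that $x_\theta = [e_1]$ is fixed by every diagonal matrix, so that for each $m\in\NN$,
\[
g_n^{-1} \cdot \Xi^+(\gamma_{n+m}) \;=\; k_n^{\prime -1} a_n^{-1} k_n^{-1} k_{n+m} \cdot x_\theta \;=\; k_n^{\prime -1} \bigl(a_n^{-1} H_n^m a_n\bigr) \cdot x_\theta.
\]
Letting $m\to\infty$ gives, since $\Xi^+(\gamma_{n+m})\to\xi^+(\eta)$ and $g_n^{-1}$ is continuous,
\[
g_n^{-1}\cdot\xi^+(\eta) \;=\; k_n^{\prime -1} \cdot \lim_{m\to\infty} \bigl[v_{n,m}\bigr],
\]
where $v_{n,m}\in\RR^d$ is the first column of $a_n^{-1}H_n^m a_n$. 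By $K$-invariance of $d_{\scriptscriptstyle G\!/\!P_\theta}$, proving the claim reduces to bounding $d_{\PP(V)}([v_{n,m}],[e_1])$ away from~$1$, uniformly in $m$ and for all large $n$.

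Next I would examine the entries of $v_{n,m}$. Its first entry is $(H_n^m)_{1,1}$, and for $2\leq i\leq d$ its $i$-th entry is $(H_n^m)_{i,1}\cdot e^{\langle \varepsilon_1-\varepsilon_i,\,\mu(g_n)\rangle}$. Lemma~\ref{lem:bound-h-n-infty} provides a constant $M>0$, independent of $(n,m)$, bounding the absolute value of each such $i$-th entry. Since $H_n^m\in\OO(d)$, its first column is a unit vector, so
\[
(H_n^m)_{i,1}^{\,2} \;\leq\; M^2\, e^{-2\langle \varepsilon_1-\varepsilon_i,\,\mu(g_n)\rangle} \quad\text{for }i\geq 2.
\]
The CLI hypothesis, applied at the base point of the quasi-geodesic ray, forces $\langle\varepsilon_1-\varepsilon_i,\mu(g_n)\rangle\to+\infty$ as $n\to+\infty$ for each $2\leq i\leq d$. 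Consequently $(H_n^m)_{i,1}\to 0$ uniformly in~$m$, and the unit-norm relation $\sum_{i} (H_n^m)_{i,1}^2 = 1$ forces $(H_n^m)_{1,1}^2 \to 1$ uniformly in~$m$. Therefore there exists $N\in\NN$ such that $|(H_n^m)_{1,1}|\geq 1/2$ for all $n\geq N$ and all $m$.

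Combining these bounds, for $n\geq N$ the vector $v_{n,m}$ has first coordinate of modulus $\geq 1/2$ and remaining coordinates of modulus $\leq M$, so
\[
\bigl|\cos\measuredangle(v_{n,m},\,e_1)\bigr| \;=\; \frac{|(H_n^m)_{1,1}|}{\|v_{n,m}\|} \;\geq\; \frac{1/2}{\sqrt{1+(d-1)M^2}} \;=:\; c \;>\;0.
\]
Hence $d_{\PP(V)}([v_{n,m}],[e_1]) = |\sin\measuredangle(v_{n,m},e_1)|\leq \sqrt{1-c^2}$, uniformly in~$m$. Passing to the limit $m\to\infty$ and using $K$-invariance of $d_{\scriptscriptstyle G\!/\!P_\theta}$ yields
\[
d_{\scriptscriptstyle G\!/\!P_\theta}\bigl(g_n^{-1}\cdot\xi^+(\eta),\,k_n^{\prime -1}\cdot x_\theta\bigr) \;\leq\; \sqrt{1-c^2} \;=:\; 1-\delta
\]
with $\delta>0$, for every sufficiently large $n$, which is exactly the required inequality. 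The main obstacle, and the reason Lemma~\ref{lem:bound-h-n-infty} is needed in the form stated, is that one must convert the lower CLI bound on the gaps along the ray into a uniform-in-$m$ control of the off-diagonal coefficients after conjugation by $a_n$; everything else is a straightforward angle estimate in $\PP^{d-1}(\RR)$.
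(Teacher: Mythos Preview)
Your proof is correct and follows essentially the same approach as the paper: both reduce, via $K$-invariance and $x_\theta = a_n\cdot x_\theta$, to controlling the first column of $a_n^{-1}H_n^m a_n$, then use Lemma~\ref{lem:bound-h-n-infty} to bound the off-diagonal entries and the orthogonality of $H_n^m$ to force the $(1,1)$-entry close to~$1$. The only difference is cosmetic --- you carry out the final angle estimate explicitly with the constant $c=\tfrac{1/2}{\sqrt{1+(d-1)M^2}}$, whereas the paper simply asserts that conditions (i) and (ii) (bounded first column, $(1,1)$-entry bounded below) suffice.
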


\begin{remark}
Corollary~\ref{cor:limsup-Hnm} states an expansion property for the action of $(\rho(\gamma_n^{-1}))_{n\in\NN}\linebreak =(g_n^{-1})_{n\in\NN}$ on $\PP^{d-1}(\RR)$ at $\xi^+(\eta)$.
Indeed, for any large enough $n\in\NN$, the open set
\[ \mathcal{U}_n := \{ x\in\PP^{d-1}(\RR) ~|~ d_{\scriptscriptstyle G\!/\!P_{\theta}}(a_n^{-1} k_n^{-1} \cdot x, x_{\theta}) < 1 - \delta/2\} \]
is a neighborhood of $\xi^+(\eta)$ by Corollary~\ref{cor:limsup-Hnm} (recall that $d_{\scriptscriptstyle G\!/\!P_{\theta}}$ is $K$-invariant).
There is a constant $C>0$ such that for any~$n$ the element $a_n$ is $Ce^{-\langle\varepsilon_1-\varepsilon_2,\mu(g_n)\rangle}$-Lipschitz on the ball of radius $1-\delta/2$ centered at~$x_{\theta}$.
Therefore $\rho(\gamma_n^{-1})=g_n^{-1}$ is $C^{-1}e^{\langle\varepsilon_1-\varepsilon_2,\mu(g_n)\rangle}$-expanding on~$\mathcal{U}_n$, where $C^{-1}e^{\langle\varepsilon_1-\varepsilon_2,\mu(g_n)\rangle}\to +\infty$.
\end{remark}

\begin{proof}[Proof of Corollary~\ref{cor:limsup-Hnm}]
By construction of~$\xi^+$ and definition \eqref{eqn:13} of~$H_n^m$, for any~$n\in\nolinebreak\NN$,
  \[ \xi^+(\eta) = \lim_{m \to +\infty} k_n H_n^{m} \cdot x_{\theta}. \]
 Since $d_{\scriptscriptstyle G\!/\!P_{\theta}}$ is $K$-invariant and $x_{\theta} = a_{n}\cdot x_{\theta}$, we see that \eqref{eqn:gn-inv-kn'} is equivalent to
\[ \limsup_{m \to +\infty} d_{\scriptscriptstyle G\!/\!P_{\theta}}\bigl( (a_n^{-1} H_n^{m} a_n) \cdot x_{\theta}, x_{\theta} \bigr) \leq 1 - \delta. \]
To check the existence of $0<\delta\leq 1$ such that this last inequality holds for all large enough $n\in\NN$, it is sufficient to see that
\begin{enumerate}[label=(\roman*),ref=\roman*]
  \item\label{item:col-1} the first column of the matrix $a_{n}^{-1} H_{n}^{m} a_n$ is uniformly bounded for $(n,m)\in\NN^2$,
  \item\label{item:entry-1-1} the $(1,1)$-th entry of the matrix $a_{n}^{-1} H_{n}^{m} a_n$, which is also the $(1,1)$-th entry of~$H_n^m$, is uniformly bounded \emph{from below} for $(n,m)\in\NN^2$.
\end{enumerate}
(Here all bounds are meant for the absolute values of the entries.)
Suppose that the sequence $(\langle \varepsilon_1 - \varepsilon_i, \mu(g_n) \rangle)_{n \in \NN}$ is CLI for every $2\leq i\leq d$.
By Lemma~\ref{lem:bound-h-n-infty}, for every $2\leq i\leq d$ the $(i,1)$-th entry of $a_{n}^{-1} H_{n}^{m} a_n$ is uniformly bounded for $(n,m)\in\NN^2$.
The $(1,1)$-th entry is also bounded since $H_n^m\in K=\OO(d)$, hence \eqref{item:col-1} holds.
On the other hand, Lemma~\ref{lem:bound-h-n-infty} implies that for every $2\leq i\leq d$ the $(i,1)$-th entry of $H_{n}^{m}$ is uniformly bounded by a constant multiple of $e^{- \langle \varepsilon_1 - \varepsilon_i,\,  \mu(g_n) \rangle}$, which goes to $0$ as $n\rightarrow +\infty$.
Since $H_{n}^{m}\in K=\OO(d)$, this in turn implies that the absolute value of $(1,1)$-th entry of $H_{n}^{m}$ is close to~$1$ for all large enough~$n$ and all~$m$.
Thus \eqref{item:entry-1-1} holds.
\end{proof}

\begin{proof}[Proof of Proposition~\ref{prop:transv-SL}]
Let $0<\delta\leq 1$ be given by Corollary~\ref{cor:limsup-Hnm}.
We shall prove that for any $\eta^{\prime} \in \partial_{\infty}\Gamma \smallsetminus \{\eta\}$ we can find (infinitely many) $n\in\NN$ such that
\begin{equation} \label{eqn:proof_transvSL}
 \dist_{\scriptscriptstyle G\!/\!P_{\theta}}\big(g_n^{-1} \cdot \xi^+(\eta), g_n^{-1} \cdot \xi^-(\eta')\big) \geq \frac{\delta}{2} \, ,
\end{equation}
where \(\dist_{\scriptscriptstyle G\!/\!P_{\theta}}\) denotes the distance from a point to a set.
In particular, the point $\xi^+(\eta)$ does not belong to the hyperplane $\xi^-( \eta')$.

Applying Fact~\ref{fact:dyn_at_infty}.\eqref{item:4}, we can find a subsequence $(\gamma_{\phi(n)})_{n\in\NN}$ such that $(\gamma_{\phi(n)}^{-1})_{n\in\NN}$ converges to some point $\eta^{\prime \prime} \in \partial_{\infty}\Gamma$ and that $\lim_n \gamma_{\phi(n)}^{-1} \cdot \eta^{\prime} = \eta^{\prime \prime}$ for all $\eta^{\prime} \in \partial_{\infty}\Gamma \smallsetminus \{\eta\}$.
By $\rho$-equivariance and continuity of~$\xi^-$, for all $\eta^{\prime} \in \partial_{\infty}\Gamma \smallsetminus \{\eta\}$,
  \[ \lim_n g_{\phi(n)}^{-1} \cdot \xi^-( \eta') = \xi^-\bigl( \lim_n \gamma_{\phi(n)}^{-1} \cdot \eta'\bigr) = \xi^-( \eta^{\prime \prime}). \]

For any~$n$, the decomposition $g_n = k_n a_n k'_n \in K (\exp\overline{\aaa}^+) K$ induces the decomposition $g_n^{-1} = l_n b_n l_n^{\prime} \in K (\exp\overline{\aaa}^+) K$ with
\[ (l_n, b_n, l_n^{\prime}) = \big(k_n^{\prime -1} \tilde{w}_0, \, \tilde{w}_0^{-1} a_n^{-1} \tilde{w}_0, \, \tilde{w}_0^{-1} k_n^{-1}\big), \]
where $\tilde{w}_0 \in \OO(d)$ is the permutation matrix sending $(e_1,\dots,e_d)$ to $(e_d,\dots,e_1)$, representing the longest element $w_0$ of the Weyl group $W=\mathfrak{S}_d$ (see Example~\ref{ex:opp-inv}). By the definition of~$\xi^-$, the continuity of $\Xi^- \sqcup \xi^-$, and the fact that $l_n\cdot X_{\theta} = k^{\prime -1}_n\cdot Y_{\theta}$, we have
\[ \xi^{-}( \eta^{\prime \prime}) = \lim_n k^{\prime -1}_{\phi(n)} \cdot Y_{\theta}. \]
Thus
\[ \lim_n g_{\phi(n)}^{-1} \cdot \xi^-( \eta') = \lim_n k^{\prime -1}_{\phi(n)} \cdot Y_{\theta}. \]
On the other hand, by definition of~$d_{\scriptscriptstyle G\!/\!P_{\theta}}$ (see Section~\ref{subsec:estim-Xi-theta}), for any $x\in\PP^{d-1}(\RR)$,
  \[ \dist_{\scriptscriptstyle G\!/\!P_{\theta}}(x, Y_{\theta})^2 + d_{\scriptscriptstyle G\!/\!P_{\theta}}(x, x_{\theta})^2 = 1, \]
and so Corollary~\ref{cor:limsup-Hnm} implies the existence of $0<\delta\leq 1$ such that for all large enough~$n$,
\[ \dist_{\scriptscriptstyle G\!/\!P_{\theta}} \bigl(g_n^{-1} \cdot \xi^+(\eta), k_n^{\prime -1} \cdot Y_{\theta} \bigr) \geq \sqrt{1 - (1-\delta)^2} \,\geq\, \delta. \]
In particular, for any $\eta'\in\partial_{\infty}\Gamma\smallsetminus\{\eta\}$ we have, for all large enough~$n$,
\[ \dist_{\scriptscriptstyle G\!/\!P_{\theta}}\big(g_{\phi(n)}^{-1} \cdot \xi^+(\eta), g_{\phi(n)}^{-1} \cdot \xi^-(\eta')\big) \,\geq\, \frac{\delta}{2} \, , \]
proving \eqref{eqn:proof_transvSL}.
\end{proof}

For any $n\in\NN$, let $h_n := k_{n}^{-1} k_{n+1} \in K$, so that $H_{n}^{m} = h_n h_{n+1} \cdots h_{n+m-1}$ for all $n,m\in\NN$.
The rest of the section is devoted to establishing Lemma~\ref{lem:bound-h-n-infty}.
For this we first observe that we have a control on the entries of the matrices~$h_n$.
For $1\leq i,j\leq d$, the $(i,j)$-th entry of a matrix $h\in\GL_d(\RR)$ is denoted by $h(i,j)$.

\begin{lemma}\label{lem:bound_hn}
  There is a constant $r \geq 0$ such that for any $1\leq i,j\leq d$ and any $n\in\NN$, 
  \[| h_{n} (i,j) | \,\leq\, e^{r - |\langle \varepsilon_i - \varepsilon_j,\,  \mu(g_n) \rangle |}.\]
\end{lemma}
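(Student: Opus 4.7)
The plan is to exploit the fact that $(\gamma_n)_{n\in\NN}$ is a quasi-geodesic ray in order to control the $KA^+K$-components of consecutive $g_n$'s. Since $(\gamma_n)_{n\in\NN}$ is a $(c,C)$-quasi-geodesic, the element $\gamma_n^{-1}\gamma_{n+1}$ has word length at most $c+C$, so it ranges over a finite subset of~$\Gamma$. Consequently $g_n^{-1}g_{n+1}=\rho(\gamma_n^{-1}\gamma_{n+1})$ lies in a fixed compact subset $\mathcal{C}$ of~$G$; in particular there is a constant $M\geq 1$ such that every matrix entry of every element of $\mathcal{C}$ (and of $\mathcal{C}^{-1}$) is bounded in absolute value by~$M$, and such that $\|\mu(c)\|\leq \log M$ for all $c\in\mathcal{C}\cup\mathcal{C}^{-1}$.

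First I would write out the Cartan decompositions: from $g_n=k_n a_n k'_n$ and $g_{n+1}=k_{n+1}a_{n+1}k'_{n+1}$ one obtains
\[
g_n^{-1}g_{n+1} \;=\; (k'_n)^{-1}\bigl(a_n^{-1}h_n a_{n+1}\bigr)k'_{n+1},
\]
so $a_n^{-1}h_n a_{n+1}$ is an element whose entries are bounded in absolute value by $M$ (since conjugation by elements of $K=\OO(d)$ does not increase the entrywise sup-norm by more than a dimensional constant, which I absorb into~$M$). The diagonal entries of $a_n$ and $a_{n+1}$ are $e^{\langle\varepsilon_i,\mu(g_n)\rangle}$ and $e^{\langle\varepsilon_j,\mu(g_{n+1})\rangle}$, so reading off the $(i,j)$-entry gives
\[
|h_n(i,j)|\;\le\; M\,e^{\langle\varepsilon_i,\mu(g_n)\rangle-\langle\varepsilon_j,\mu(g_{n+1})\rangle}.
\]
Applying Fact~\ref{fact:mu-subadditive} to $g_{n+1}=g_n\cdot(g_n^{-1}g_{n+1})$ yields $\|\mu(g_{n+1})-\mu(g_n)\|\le \log M$, so $\langle\varepsilon_j,\mu(g_{n+1})\rangle=\langle\varepsilon_j,\mu(g_n)\rangle+O(1)$, and hence
\[
|h_n(i,j)|\;\le\; M'\,e^{\langle\varepsilon_i-\varepsilon_j,\mu(g_n)\rangle}
\]
for some constant $M'$ depending only on $M$ and on $\|\varepsilon_j\|$. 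This is the bound we want when $i\ge j$, since in that case $\mu(g_n)\in\overline{\aaa}^+$ forces $\langle\varepsilon_i-\varepsilon_j,\mu(g_n)\rangle\le 0$, which equals $-|\langle\varepsilon_i-\varepsilon_j,\mu(g_n)\rangle|$.

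Second, I would repeat the exact same argument starting from $g_{n+1}^{-1}g_n\in\mathcal{C}^{-1}$. Writing $g_{n+1}^{-1}g_n=(k'_{n+1})^{-1}(a_{n+1}^{-1}h_n^{-1}a_n)k'_n$ and using $h_n\in\OO(d)$ so that $h_n^{-1}(i,j)=h_n(j,i)$, the same reasoning gives
\[
|h_n(j,i)|\;\le\; M''\,e^{\langle\varepsilon_i-\varepsilon_j,\mu(g_n)\rangle},
\]
\ie\ swapping the roles of $i$ and~$j$,
\[
|h_n(i,j)|\;\le\; M''\,e^{\langle\varepsilon_j-\varepsilon_i,\mu(g_n)\rangle},
\]
which is the desired bound when $i\le j$.

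Combining the two inequalities, for every pair $(i,j)$ we obtain $|h_n(i,j)|\le e^{r}e^{-|\langle\varepsilon_i-\varepsilon_j,\mu(g_n)\rangle|}$ with $r:=\log\max(M',M'')$, which finishes the argument. There is no real obstacle here; the only subtle point is the symmetry trick of using both $g_n^{-1}g_{n+1}$ and its inverse, since a single application of the boundedness only controls the entries on one side of the diagonal (the direction for which $\langle\varepsilon_i-\varepsilon_j,\mu(g_n)\rangle\le 0$), and the orthogonality $h_n\in\OO(d)$ lets us trade rows for columns to get the other side.
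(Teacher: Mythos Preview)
Your proof is correct and follows essentially the same route as the paper's: both use boundedness of $g_n^{-1}g_{n+1}$, the subadditivity Fact~\ref{fact:mu-subadditive} to control $a_{n+1}^{-1}a_n$, and the orthogonality of~$h_n$ to handle both signs of $\langle\varepsilon_i-\varepsilon_j,\mu(g_n)\rangle$. The only cosmetic difference is that the paper first combines the two bounded pieces into the single matrix $a_n^{-1}h_n a_n=(a_n^{-1}h_n a_{n+1})(a_{n+1}^{-1}a_n)$ and then observes that its \emph{transposed inverse} $a_n h_n a_n^{-1}$ is also bounded (since $h_n\in\OO(d)$), reading off both inequalities at once; you instead run the argument a second time on $g_{n+1}^{-1}g_n$ and use $h_n^{-1}(i,j)=h_n(j,i)$, which is the same orthogonality trick unpacked.
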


\begin{proof}[Proof of Lemma~\ref{lem:bound_hn}]
Since $(\gamma_n)_{n\in\NN}$ is a geodesic ray in the Cayley graph of~$\Gamma$, the sequence $\gamma_{n}^{-1} \gamma_{n+1}$ is bounded (it takes only finitely many values).
Therefore the sequence $\rho(\gamma_n^{-1} \gamma_{n+1}) = g_{n}^{-1} g_{n+1} = k_{n}^{\prime -1} a_{n}^{-1} h_n a_{n+1} k_{n+1}^{\prime}$ is bounded, as well as the sequence $\mu(g_n) - \mu(g_{n+1}) = \log(a_{n+1}^{-1} a_n )$ by Fact~\ref{fact:mu-subadditive}.
  Thus the sequence $a_{n}^{-1} h_n a_n = ( a_{n}^{-1} h_n a_{n+1}) ( a_{n+1}^{-1} a_n)$ is bounded in $\GL_d(\RR)$, as well as its transposed inverse $a_n h_n a_n^{-1}$.
Since \(h_n(i,j) e^{\pm \langle \varepsilon_i - \varepsilon_j, \log a_n \rangle} = ( a_{n}^{\pm 1} h_n a_{n}^{\mp 1})(i,j)\) and $a_n = \exp(\mu(g_n))$, we can take for $e^r$ a bound for the absolute values of the entries of these matrices $(a_{n}^{\pm 1} h_n a_n^{\mp 1})_{n\in \NN}$.
\end{proof}

\begin{proof}[Proof of Lemma~\ref{lem:bound-h-n-infty}]
Let us first explain the strategy.
The goal, rephrased, is to bound, independently of $m$, the entries  $H_n^m(i,1)$ in the first column of the orthogonal matrix  $H_n^m$ by a multiple of \(e^{-\langle \varepsilon_1 - \varepsilon_i,\, \mu(g_n) \rangle}\).
We will develop the entries of $H_{n}^{m}$ in terms of the $(h_k)_{n \leq k < n+m}$.
  In doing so, subproducts of the form 
\[h_k(u_k, u_{k+1}) h_{k+1}( u_{k+1}, u_{k+2}) \cdots h_{k+\ell}( u_{k+\ell}, u_{k+\ell+1})\] 
appear (where \(u_k, \dots, u_{k+\ell+1} \in \{ 1, \dots, d\}\)).
  Controlling the last factor by Lemma~\ref{lem:bound_hn} and bounding all others by $1$, such a product is bounded by (a constant multiple of) \(e^{ \langle \varepsilon_{u_{k}} - \varepsilon_{u_{k+\ell+1}}, \mu(g_{k+\ell})\rangle}\), under the assumption that \(u_{k+\ell+1} < u_{k}\leq u_{k+\ell}\).
  Even more is true: the sum of \emph{all} those products with the triple \((u_k, u_{k+\ell}, u_{k+\ell+1})\) fixed 
  and \( (u_{k+1}, \dots, u_{k+\ell-1})\) varying in \(\{u_k, \dots, d\}^{\ell-1}\) 
  is also bounded by (a constant multiple of) the same estimate \(e^{ \langle \varepsilon_{u_{k}} - \varepsilon_{u_{k+\ell+1}}, \mu(g_{k+\ell})\rangle}\).
  A summation by parts reveals pairings of the form \(\langle \varepsilon_{u_{1}} - \varepsilon_{u_{k}}, \mu(g_k) - \mu(g_{k+\ell})\rangle\) that can be bounded thanks to the CLI hypothesis.
  At that point the coefficient \((a_{n}^{-1} H_{n}^{m} a_n)(i,1)\) is written as a sum of terms each bounded by (a constant multiple of) \(e^{-\kappa m_s}\) for some \(m_s \in \{1, \dots, m\}\).
  The final bound comes from the fact that, for each value of \(m_s\in\NN\), the number of terms in that sum corresponding to \(m_s\) is a polynomial in \(m_s\).
  We now give the details.

  Let us first introduce some notation.
  For any integer $1\leq i\leq d$ and any matrix $h\in\GL_d(\RR)$, we denote by $h[i]\in\GL_{d+1-i}(\RR)$ the matrix obtained by crossing out the $(i-1)$ topmost rows and the $(i-1)$ leftmost columns of~$h$.
  When $h$ is orthogonal, $h[i]$ is the matrix of a $1$-Lipschitz linear transformation.
  Similarly to $H_{n}^{m}$, we define
  \[ {}_i H_{n}^{m} := h_n[i] \, h_{n+1}[i] \, \cdots \, h_{n+m-1}[i] \in \GL_{d+1-i}(\RR). \]
  Then ${}_i H_{n}^{m}$ is the matrix of a $1$-Lipschitz transformation and its entries are bounded by~$1$ in absolute value.
  As above, the $(k,\ell)$-th entry of a matrix $h\in\GL_d(\RR)$ is denoted by $h(k,\ell)$.
  To make things more natural, we index the entries of \(h[i]\) and of 
 ${}_i H_{n}^{m}$ by pairs $(k,\ell)\in\{ i,\dots,d\}^2$.
  
Fix $i\in \{2,\dots, d\}$. Using this notation, we now express the $(i,1)$-th entry of $H_n^m$:
\[ H_{n}^{m}(i,1) = \sum_{\mathbf{u}}\, \mathcal{H}_{\mathbf{u}} \, , \]
where the sum is over all $(m+1)$-tuples $\mathbf{u} = (u_0, u_1, \dots , u_m) \in \{ 1,\dots, d\}^{m+1}$ with $u_0=i$ and $u_m =1$ and we set
  \[ \mathcal{H}_{\mathbf{u}} := h_{n}(u_0, u_1) \, h_{n+1}(u_1, u_2) \cdots h_{n+m-1}(u_{m-1}, u_{m}). \] 
  This sum admits the following decomposition:
  
  \begin{equation}\label{eqn:bound-H-n-m-i-1}
  H_{n}^{m}(i,1) = \sum_{s\in\{ 1,\dots,i-1\} }\ \sum_{\mathbf{i}, \mathbf{j}, \mathbf{m}} \mathcal{H}_{\mathbf{i}, \mathbf{j}, \mathbf{m}} \, ,
  \end{equation}
  where, for fixed $s\in\{ 1,\dots,i-1\}$, the sum is over all $\mathbf{i}, \mathbf{j}, \mathbf{m}$ such that
\begin{itemize}[label=--]
  \item $\mathbf{i} = (i_0, i_1, \dots, i_s) \in \NN^{s+1}$ satisfies $i=i_0 > i_1 > \cdots > i_s =1$, 
  \item $\mathbf{j} = (j_0, j_1, \dots, j_{s-1}) \in \NN^s$ satisfies $d\geq j_k \geq i_k$ for all $0\leq k\leq s-1$,
  \item $\mathbf{m} = ( m_1, \dots, m_s) \in \NN^{s}$ satisfies
    $0< m_1 < m_2 < \cdots <\nolinebreak m_s \leq\nolinebreak
    m$,
\end{itemize}
  and the summand is:
  \[\mathcal{H}_{\mathbf{i}, \mathbf{j}, \mathbf{m}} = \hspace{-1em}
  \begin{array}{l}
    {}_{i_0} H_{n}^{m_1-1} (i_0, j_0) \, h_{n+m_1-1}(j_0, i_1)  \\
    \hspace{1em} {}_{i_1} H_{n+m_1}^{m_2-m_1-1} (i_1, j_1) \, h_{n+m_2-1}(j_1, i_2)
     \cdots  \\
    \hspace{2em} \cdots \, {}_{i_{s-1}} H_{n+ m_{s-1}}^{m_s-m_{s-1}-1} (i_{s-1}, j_{s-1}) \, h_{n+m_s-1}(j_{s-1}, i_s) \\
 \hspace{6em}    {}_{i_s} H_{n+ m_s}^{m-m_s}(i_s, 1).
  \end{array}
  \]
  Indeed, $\mathcal{H}_{\mathbf{i}, \mathbf{j}, \mathbf{m}}$ is the sum of the $\mathcal{H}_{\mathbf{u}}$ over all $\mathbf{u} = (u_0, u_1, \dots , u_m) \in \{ 1,\dots, d\}^{m+1}$ with $u_0=i$ and $u_m =1$ such that:
  \begin{itemize}[label=--]
  \item the smallest index $k$ with $u_k < u_0 = i$ is~$m_1$,
  \item the smallest index $k$ with $u_k < u_{m_1}$ is~$m_2$,
  \item \dots{}
  \item the smallest index $k$ with $u_k < u_{m_{s-2}}$ is $m_{s-1}$,
  \item the smallest index $k$ with $u_k = 1$ is $m_s$, (well defined since $u_m = 1$)
  \item and for any $k \in \{1, \dots, s-1\}$,  $u_{m_k} = i_k$ and $u_{m_{k+1}-1} = j_k$, and \(u_{m_s}=i_s=1\).
  \end{itemize}

From this we see that the subsums $\mathcal{H}_{\mathbf{i}, \mathbf{j}, \mathbf{m}}$, for varying $s$ and $\mathbf{i}, \mathbf{j}, \mathbf{m}$, form a partition of all the $\mathcal{H}_{\mathbf{u}}$.

  Since the entries of ${}_i H_{n}^{m}$ are bounded by $1$ in absolute value, we have
  \[ | \mathcal{H}_{\mathbf{i}, \mathbf{j}, \mathbf{m}} | \leq  | h_{n+m_1-1}(j_0, i_1)| \, | h_{n+m_2-1}(j_1, i_2)| \cdots |h_{n+m_s-1}(j_{s-1}, i_s) |.\]
  We take the convention that $\log(0) = -\infty$.
  By Lemma~\ref{lem:bound_hn}, we then have
  \[ \log | \mathcal{H}_{\mathbf{i}, \mathbf{j}, \mathbf{m}} | \leq s r + \sum_{k=1}^{s}\, \langle \varepsilon_{j_{k-1}} - \varepsilon_{i_k} , \mu(g_{n+m_k-1}) \rangle. \]
  Since $j_{k-1} \geq i_{k-1}$, we have $\langle \varepsilon_{j_{k-1}} , \mu(g) \rangle \leq \langle \varepsilon_{i_{k-1}} , \mu(g) \rangle$ for all $g\in G$, hence
  \begin{equation}\label{eqn:bound-H-i-j-m}
  \log | \mathcal{H}_{\mathbf{i}, \mathbf{j}, \mathbf{m}} | \leq s r + \sum_{k=1}^s\, \langle \varepsilon_{i_{k-1}} - \varepsilon_{i_k} , \mu(g_{n+m_k-1}) \rangle.
  \end{equation}
  We now sum by parts, using the fact that $i_s=1$: the right-hand side of \eqref{eqn:bound-H-i-j-m} is equal to
 \[ s r - \langle \varepsilon_1 - \varepsilon_{i_0}, \mu(g_{n+m_1-1}) \rangle - \sum_{k=1}^{s-1}\, \langle \varepsilon_1 - \varepsilon_{i_k}, \mu(g_{n+m_{k+1}-1}) - \mu(g_{n+m_k-1}) \rangle. \]
 Using the CLI assumption and writing \(\langle \varepsilon_1 -
 \varepsilon_{i_0}, \mu(g_{n+m_1-1}) \rangle = \langle \varepsilon_1 -
 \varepsilon_{i_0}, \mu(g_{n}) \rangle + \langle \varepsilon_1 -
 \varepsilon_{i_0}, \mu(g_{n+m_1-1}) - \mu(g_n) \rangle\), we deduce
 \begin{align*}
 \log | \mathcal{H}_{\mathbf{i}, \mathbf{j}, \mathbf{m}} | \leq &\ 
  s r - \langle \varepsilon_1 - \varepsilon_i, \mu(g_{n}) \rangle - (\kappa(m_1-1) - \kappa') - \sum_{k=1}^{s-1}\, (\kappa (m_{k+1} - m_k) - \kappa')\\
  = &\ s r - \langle \varepsilon_1 - \varepsilon_{i}, \mu(g_n) \rangle + s \kappa' - \kappa m_s + \kappa.
 \end{align*}
Going back to the formula \eqref{eqn:bound-H-n-m-i-1} for $H_{n}^{m}(i,1)$, we obtain
\begin{align*}
   |(a_{n}^{-1} H_{n}^{m} a_n) (i,1)| \,=\,& e^{\langle \varepsilon_1 - \varepsilon_{i},\,  \mu(g_n) \rangle} \, |H_{n}^{m}(i,1)| \\
  \,\leq\,& \sum_{s=1}^{i-1} e^{s(r + \kappa') +\kappa} \sum_{ \mathbf{i}, \mathbf{j}, \mathbf{m}} e^{-\kappa m_s}.
\end{align*}
  Observe now that for a fixed~$q\in\NN$ there is at most a polynomial number $P(q)$ of possible $\mathbf{i}, \mathbf{j}, \mathbf{m}$ with $m_s =q$, hence the above sum is bounded by a multiple of the converging series $\sum_q P(q) e^{-\kappa q}$. The real numbers $(a_{n}^{-1} H_{n}^{m} a_n) (i,1)$ are therefore uniformly bounded for $(n, m) \in \NN^2$.
\end{proof}

\subsubsection{Transversality in general}\label{subsubsec:transv-general}

We now prove Theorem~\ref{thm:constr-xi-explicit}.\eqref{item:xi-transv} in full generality.
Recall the subset $\Sigma^+_{\theta}\subset\Sigma^+$ of the positive restricted roots from \eqref{eqn:T-G/P}.

\begin{proposition}\label{prop:transv}
Let $\Gamma$ be a word hyperbolic group, $G$ a real reductive Lie group, $\theta\subset\Delta$ a nonempty subset of the simple restricted roots of~$G$, and $\rho : \Gamma\to\nolinebreak G$ a representation.
Suppose that the maps $\xi^+,\xi^-$ of Theorem~\ref{thm:constr-xi-explicit}.\eqref{item:xi-exist} are well defined and $\rho$-equivariant, and that $\Xi^- \sqcup \xi^- = (\Xi_{\theta^{\star}}\circ\rho) \sqcup \xi^- : \Gamma \cup \partial_{\infty}\Gamma \to G/P_{\theta^{\star}}$ is continuous.
Let $(\gamma_n)_{n\in\NN}$ be a geodesic ray in the Cayley graph of~$\Gamma$, with endpoint $\eta\in\partial_{\infty}\Gamma$.
If the sequences $(\langle \alpha, \, \mu(\rho(\gamma_n))\rangle)_{n\in\NN}$ for $\alpha\in\Sigma^+_{\theta}$ are CLI, then $\xi^+(\eta)$ and $\xi^-(\eta')$ are transverse for all $\eta'\in\partial_{\infty}\Gamma\smallsetminus\{ \eta\}$.
\end{proposition}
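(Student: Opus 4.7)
The plan is to reduce to the $\GL_d(\RR)$ case already established in Proposition~\ref{prop:transv-SL}, by postcomposing~$\rho$ with a suitable linear representation of~$G$. By Lemma~\ref{lem:theta-comp-exists} there exists an irreducible $\theta$-proximal linear representation $(\tau,V)$ of~$G$ over~$\RR$; set $d := \dim_\RR V$. By Proposition~\ref{prop:theta_emb}.\eqref{item:b_thtmb}, the $\tau$-equivariant embeddings
\[ \iota^+ : G/P_\theta \longhookrightarrow \PP_\RR(V),\qquad \iota^- : G/P_\theta^- \longhookrightarrow \PP_\RR(V^*) \]
preserve and reflect transversality in the sense of Example~\ref{ex:transv-GL}, so it suffices to show that $\iota^+(\xi^+(\eta))$ and $\iota^-(\xi^-(\eta'))$ are transverse; I do this by applying Proposition~\ref{prop:transv-SL} to the representation $\tau\circ\rho : \Gamma\to\GL_d(\RR)$ with boundary maps $\xi_V^\pm := \iota^\pm\circ \xi^\pm$.

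I first verify the hypotheses of Proposition~\ref{prop:transv-SL} other than the CLI one. By Lemma~\ref{lem:tau}.\eqref{item:eps1-eps2}, the gap summation property of~$\rho$ with respect to $\theta\cup\theta^\star$ (implicit in the well-definedness of~$\xi^\pm$ via Theorem~\ref{thm:constr-xi-explicit}.\eqref{item:xi-exist}) implies the analogous gap summation property for $\tau\circ\rho$ with respect to $\{\varepsilon_1-\varepsilon_2\}\cup\{\varepsilon_{d-1}-\varepsilon_d\}$, so Theorem~\ref{thm:constr-xi-explicit}.\eqref{item:xi-exist} applies and produces boundary maps for $\tau\circ\rho$. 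Let $\Xi_V^\pm$ denote the natural analogues of $\Xi^\pm$ built from the Cartan decomposition of $\GL_d(\RR)$. Using the compatibility of Cartan decompositions of Remark~\ref{rem:Cartan_comp_rep}, one checks directly that whenever $T_\theta(\rho(\gamma))>0$ (respectively $T_{\theta^\star}(\rho(\gamma))>0$) the identity $\Xi_V^+(\gamma)=\iota^+(\Xi^+(\gamma))$ (respectively $\Xi_V^-(\gamma)=\iota^-(\Xi^-(\gamma))$) holds. Since these positivity conditions hold for $\gamma$ far enough from the identity (by the gap summation property applied to any geodesic ray), the continuity of $\Xi_V^-\sqcup \xi_V^-$ on $\Gamma\cup\partial_\infty\Gamma$ follows from the given continuity of $\Xi^-\sqcup \xi^-$ composed with~$\iota^-$, and by uniqueness the boundary maps produced by Theorem~\ref{thm:constr-xi-explicit}.\eqref{item:xi-exist} for $\tau\circ\rho$ coincide with $\xi_V^\pm$.

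The key technical step is to verify the CLI hypothesis of Proposition~\ref{prop:transv-SL} for $\tau\circ\rho$ along the given geodesic ray $(\gamma_n)_{n\in\NN}$: for each $i\in\{2,\dots,d\}$ the sequence $(\langle \varepsilon_1-\varepsilon_i,\,\mu_{\GL_d(\RR)}(\tau(\rho(\gamma_n)))\rangle)_{n\in\NN}$ must be CLI. The singular values of $\tau(g)$ are exactly $\{e^{\langle \chi,\mu(g)\rangle}\}$ as $\chi$ runs over the weights of~$\tau$ counted with multiplicity, and $\dim V^{\chi_\tau}=1$ by $\theta$-proximality, so the sought quantity equals the $(i{-}1)$-th smallest of $\{\langle \chi_\tau-\chi^{(j)},\mu(\rho(\gamma_n))\rangle\}_{j=2}^{d}$, where $\chi^{(2)},\dots,\chi^{(d)}$ enumerate the non-highest weights with multiplicity. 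By Lemma~\ref{lem:tau}.\eqref{item:nonhighest-weights} each $\chi_\tau-\chi^{(j)}$ is a nonzero nonnegative integer combination of roots in $\Sigma_\theta^+$, so each sequence $(\langle \chi_\tau-\chi^{(j)},\mu(\rho(\gamma_n))\rangle)_{n\in\NN}$ is a nonnegative integer combination, with at least one positive coefficient, of the CLI sequences $(\langle \alpha,\mu(\rho(\gamma_n))\rangle)_{n\in\NN}$ for $\alpha\in\Sigma_\theta^+$ (upper CLI comes automatically from the subadditivity of~$\mu$ in Fact~\ref{fact:mu-subadditive}). To finish, I use the elementary stability fact that the $k$-th order statistic of finitely many CLI sequences sharing common constants is again CLI; this follows from the min-max formula
\[ \text{($k$-th smallest of $y_2,\dots,y_d$)} \;=\; \min_{\substack{T\subset\{2,\dots,d\}\\|T|=k}}\ \max_{j\in T}\, y_j \]
together with the fact that both $\min$ and $\max$ preserve lower and upper CLI bounds with the same constants.

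With all hypotheses of Proposition~\ref{prop:transv-SL} verified for $\tau\circ\rho$, that proposition yields $\xi_V^+(\eta)\notin \xi_V^-(\eta')$, and Proposition~\ref{prop:theta_emb}.\eqref{item:b_thtmb} translates this into the sought transversality of $\xi^+(\eta)\in G/P_\theta$ and $\xi^-(\eta')\in G/P_\theta^-$. The main obstacle is the CLI bookkeeping of the previous paragraph: pushing the CLI condition from~$\mu$ to $\mu_{\GL_d(\RR)}\circ\tau$ is straightforward for a single fixed weight, but the ordering of the weight evaluations varies with~$n$, and it is the min-max/order-statistic stability argument that keeps the CLI constants uniform.
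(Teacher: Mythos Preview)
Your proof is correct and follows essentially the same approach as the paper: reduce to $\GL_d(\RR)$ via an irreducible $\theta$-proximal representation $(\tau,V)$, transfer the CLI hypothesis from $\Sigma^+_\theta$ to the $\langle\varepsilon_1-\varepsilon_i,\mu_{\GL_d}(\tau\circ\rho(\cdot))\rangle$, apply Proposition~\ref{prop:transv-SL}, and conclude via Proposition~\ref{prop:theta_emb}.\eqref{item:b_thtmb}. The only cosmetic difference is that where the paper invokes case~\eqref{item:ast} of Lemma~\ref{lem:reordering} for the order-statistic stability, you rederive the same fact via the min--max formula for order statistics; you are also more explicit than the paper in checking that the boundary-map and continuity hypotheses of Proposition~\ref{prop:transv-SL} transfer to $\tau\circ\rho$.
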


To prove Proposition~\ref{prop:transv} (and also later Proposition~\ref{prop:Ano-CLI}), we shall use the following lemma.

\begin{lemma} \label{lem:reordering}
Let $\kappa,\kappa'>0$.
For $1\leq i\leq D$, let $(x_n^{(i)})_{n\in\NN}$ and $(y_n^{(i)})_{n\in\NN}$ be sequences of real numbers.
Suppose that the $D$ sequences $(x_n^{(i)})_{n\in\NN}$, for $1\leq i\leq D$, are all $(\kappa,\kappa')$-lower CLI, and that for any $n\in\NN$ there exists $\sigma_n\in\mathfrak{S}_D$ such that $y_n^{(i)}=x_n^{(\sigma_n(i))}$ for all $1\leq i\leq D$.
Suppose also that we are in one of the following two cases:
\begin{enumerate}\renewcommand*{\labelenumi}{(\textasteriskcentered)}\renewcommand{\theenumi}{\textasteriskcentered}
   \item\label{item:ast}
     $y^{(1)}_n\geq \dots \geq y^{(D)}_n$ for all $n\in\NN$, or
     \renewcommand*{\labelenumi}{(\textasteriskcentered\textasteriskcentered)}\renewcommand{\theenumi}{\textasteriskcentered\textasteriskcentered}
  \item\label{item:astast}  there exists $M>0$ such that $|y_{n+1}^{(i)} - y_n^{(i)}|\leq M$ for all $n\in\NN$ and~$1\leq\nolinebreak i\leq\nolinebreak D$.
\end{enumerate} 
Then the $D$ sequences $(y_n^{(i)})_{n\in\NN}$, for $1\leq i\leq D$, are all $(\kappa,\tilde{\kappa}')$-lower CLI, where we set
\[ \tilde{\kappa}' := \left\{
\begin{array}{ll}
\kappa' & \text{in case }\eqref{item:ast},\\
\kappa'+D(\kappa+\kappa'+M) & \text{in case }\eqref{item:astast}.
\end{array}
\right. \]
\end{lemma}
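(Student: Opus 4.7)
For case~(\ref{item:ast}), I will argue directly. Fix a rank $i$, and let $S\subset\{1,\dots,D\}$ be the set of $i$ indices $j$ such that $x_n^{(j)}$ lies among the top $i$ values at time $n$. By the sorting hypothesis, $x_n^{(j)}\geq y_n^{(i)}$ for every $j\in S$. Applying the $(\kappa,\kappa')$-lower CLI property of each $x^{(j)}$, I obtain $x_{n+m}^{(j)}\geq y_n^{(i)}+\kappa m-\kappa'$ for all $j\in S$. Since $|S|=i$, there are at least $i$ values among $x_{n+m}^{(1)},\dots,x_{n+m}^{(D)}$ bounded below by $y_n^{(i)}+\kappa m-\kappa'$, so the $i$-th largest, which is $y_{n+m}^{(i)}$, satisfies the desired bound.

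For case~(\ref{item:astast}), I will first apply case~(\ref{item:ast}) to the family $(x_n^{(i)})$ to produce sorted sequences $\tilde y^{(1)}_n\geq\cdots\geq\tilde y^{(D)}_n$, each $(\kappa,\kappa')$-lower CLI. Fix $i$, $n$, $m$, and write $j_k:=\sigma_k(i)$ for $k\in[n,n+m]$. The key construction is a chain of ``waypoints'' $n=s_0\leq e_1<s_1\leq e_2<\cdots<s_{L-1}\leq e_L=n+m$ built greedily: given $s_{t-1}$, let $e_t$ be the \emph{last} time in $[s_{t-1},n+m]$ where the source equals $j_{s_{t-1}}$; if $e_t<n+m$, set $s_t:=e_t+1$ and iterate, otherwise stop with $L:=t$. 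On each ``long jump'' $[s_{t-1},e_t]$ the source is constant, so the $(\kappa,\kappa')$-lower CLI of the relevant $x^{(j_{s_{t-1}})}$ gives $y_{e_t}^{(i)}-y_{s_{t-1}}^{(i)}\geq\kappa(e_t-s_{t-1})-\kappa'$, while each of the $L-1$ ``short steps'' $e_t\to s_t=e_t+1$ contributes at least $-M$ via the small-jump hypothesis. Telescoping and using $\sum_t(e_t-s_{t-1})=m-(L-1)$ then yields $y_{n+m}^{(i)}-y_n^{(i)}\geq\kappa m-L(\kappa+\kappa'+M)+(\kappa+M)$.

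The main obstacle is bounding $L$. The last-occurrence choice is what makes this work: after time $e_t$, the source $j_{s_{t-1}}$ never reappears in $[s_t,n+m]$, so the active sources $j_{s_0},j_{s_1},\dots,j_{s_{L-1}}$ are pairwise distinct; since they take values in a set of size $D$, we get $L\leq D$, and the telescoping estimate then gives $y_{n+m}^{(i)}-y_n^{(i)}\geq\kappa m-D(\kappa+\kappa'+M)$, which is stronger than the claimed bound $\kappa m-\tilde\kappa'$. Identifying this greedy ``last occurrence'' construction is the main conceptual step, as more naive decompositions (for instance by tracking each rank change individually) produce an error term that grows with $m$ rather than being controlled by $D$.
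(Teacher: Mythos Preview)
Your case~(\ref{item:ast}) argument is essentially the paper's.

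For case~(\ref{item:astast}) your argument is correct but genuinely different from the paper's. The paper first sorts the $x^{(j)}$ (invoking case~(\ref{item:ast}) so that the sorted family is again $(\kappa,\kappa')$-lower CLI), then tracks the \emph{rank} $r_m\in\{1,\dots,D\}$ of $y_{n+m}^{(i)}$ among the sorted values and builds waypoints at times where this rank reaches a new record high; since the records are strictly increasing in $\{1,\dots,D\}$ there are at most $D$ of them. Between records the paper exploits $r_m\le r_{m_j}$ together with the sorted-CLI to bound $y_{n+m}^{(i)}-y_{n+m_j}^{(i)}$. You instead track the \emph{source} $j_k=\sigma_k(i)$ and build waypoints by a last-occurrence greedy rule, bounding their number by the pigeonhole observation that the successive sources are pairwise distinct. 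Your approach is a bit more direct: it never uses the sorted family (your opening sentence ``first apply case~(\ref{item:ast}) to produce sorted sequences $\tilde y$'' is in fact unused and can be deleted), and it yields a slightly sharper constant. The paper's approach, on the other hand, makes the monotone structure of the problem more visible.

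One wording slip: on a long jump $[s_{t-1},e_t]$ the source is \emph{not} constant in general; your construction only guarantees $j_{e_t}=j_{s_{t-1}}$. But that is exactly what your CLI estimate
\[
y_{e_t}^{(i)}-y_{s_{t-1}}^{(i)}=x_{e_t}^{(j_{s_{t-1}})}-x_{s_{t-1}}^{(j_{s_{t-1}})}\ge\kappa(e_t-s_{t-1})-\kappa'
\]
needs, so the argument stands.
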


\begin{remark}
An analogous statement holds for the \((\kappa^{\prime\prime}, \kappa^{\prime\prime\prime})\)-upper CLI property of sequences $(x_n^{(i)})_{n\in\NN}$ and $(y_n^{(i)})_{n\in\NN}$: this follows from Lemma~\ref{lem:reordering} applied to the sequences $(Rn-x^{(i)}_n)$ and $(Rn-y^{(i)}_n)$, where  $R$ is any real \(\geq \kappa^{\prime\prime}\).
\end{remark}

\begin{proof}[Proof of Lemma~\ref{lem:reordering}]
Suppose we are in case~\eqref{item:ast}.
Then for any $n\in\NN$ and $1\leq i\leq D$ the number $y^{(i)}_{n}$ can be defined as the largest real number $y$ such that at least $i$ of the $D$ numbers $x^{(1)}_n,\dots,x^{(D)}_n$ are $\geq y$.
By the CLI hypothesis, for any $m\geq 0$, at least $i$ of the $D$ numbers $x^{(1)}_{n+m}, \dots, x^{(D)}_{n+m}$ are $\geq y^{(i)}_{n}+\kappa m-\kappa'$, and so $y^{(i)}_{n+m}\geq y^{(i)}_{n}+ \kappa m-\kappa'$.
This proves that $(y_n^{(i)})_{n\in\NN}$ is lower CLI with constants $(\kappa ,\kappa')$.

We now treat case~\eqref{item:astast}.
By case~\eqref{item:ast}, up to reordering the $x^{(i)}_n$ for each $n\in\NN$, we may assume that $x^{(1)}_n\geq \dots \geq x^{(D)}_n$ for all $n\in\NN$.
Fix an integer $1\leq i\leq D$ and an integer $n\in\NN$, and focus on the sequence $(y^{(i)}_{n+m})_{m\geq 0}$.
For any $m\geq 0$, let $r_m:=\sigma_{n+m}(i)\in [1,D]$, so that $y^{(i)}_{n+m}=x^{(r_m)}_{n+m}$ is the $r_m$-th largest number in the family $(y^{(1)}_{n+m},..., y^{(D)}_{n+m})$.
There exist an integer $s\leq D$ and a finite maximal list 
$0 = m_0 < m_1 < \dots < m_{s+1} = +\infty$ such that
\[ r_{m_0} < r_{m_1} < \dots < r_{m_s} \leq D 
\:\:\: \text{ and } \:\:\:
 r_m \leq r_{m_j} \:\text{ for all }\: m < m_{j+1}. \]
For any $m_j\leq m < m_{j+1}$,
\[y^{(i)}_{n+m} - y^{(i)}_{n+m_j} = x^{(r_m)}_{n+m} - x^{(r_{m_j})}_{n+m_j} \geq x^{(r_{m_j})}_{n+m} - x^{(r_{m_j})}_{n+m_j} \geq \kappa (m-m_j) -\kappa', \]
where the first inequality comes from the assumption $x^{(1)}_{n+m}\geq \dots \geq x^{(D)}_{n+m}$ and the second inequality from the CLI hypothesis.
Then for any $m_j\leq m < m_{j+1}$, using the CLI hypothesis again as well as~\eqref{item:astast}, we can bound
\begin{align*}
  y^{(i)}_{n+m} -  y^{(i)}_n
& =  y^{(i)}_{n+m} - y^{(i)}_{n+m_j} + \sum_{k=1}^j \Big( \big(y^{(i)}_{n+m_k} - y^{(i)}_{n+m_k-1}\big) + \big(y^{(i)}_{n+m_k-1} - y^{(i)}_{n+m_{k-1}}\big) \Big)\\
& \geq  \displaystyle \kappa (m-m_j)- \kappa' + \sum_{k=1}^j \big(- M + \kappa(m_k - 1 - m_{k-1}) - \kappa'\big)\\
& =  \kappa m - j(M+\kappa) - (j+1)\kappa'. 
\end{align*}
Since $j \leq s \leq D$, this produces the desired bounds.
\end{proof}

\begin{proof}[Proof of Proposition~\ref{prop:transv}]
Let $(\tau,V)$ be an irreducible, pro\-ximal, $\theta$-compati\-ble linear representation of~$G$ (Lemma~\ref{lem:theta-comp-exists}). 
By Proposition~\ref{prop:theta_emb}.\eqref{item:b_thtmb}, it induces embeddings $\iota^+ :\nolinebreak G/P_{\theta}\to\PP_{\RR}(V)$ and $\iota^- : G/P^{-}_{\theta}\to\PP_{\RR}(V^{\ast})$.
We identify $\GL_{\RR}(V)$ with $\GL_d(\RR)$ for some $d\in\NN$, and denote by $\aaa_{\GL_d}$ the set of diagonal matrices in $\gl_d(\RR)$ and $\overline{\aaa}_{\GL_d}^+$ its subset with entries in nonincreasing order.
Up to conjugating~$\tau$, we may assume that the Cartan decomposition of~$G$ is compatible with the Cartan decomposition of $\GL_d(\RR)$ of Example~\ref{ex:roots}, in the sense that $\tau(K)\subset\OO(d)$ and $\mathrm{d}_e\tau(\aaa)\subset\aaa_{\GL_d}$.
We distinguish the corresponding Cartan projections $\mu : G\to\overline{\aaa}^+$ and $\mu_{\GL_d} : \GL_d(\RR)\to\overline{\aaa}_{\GL_d}^+$.
For any $g\in G$, the matrix $\mathrm{d}_e\tau(\mu(g))$ is diagonal with entries $\langle \chi, \mu(g) \rangle$, where $\chi\in\aaa^{\ast}$ ranges through the weights of~$\tau$.
The matrix $\mu_{\GL_d}(\tau(g))$ is the diagonal matrix with the previous entries ordered: $\langle \varepsilon_1, \mu_{\GL_d}(\tau(g)) \rangle = \langle \chi_\tau, \mu(g) \rangle$, and for any $2\leq i\leq d$ there is a weight $\chi \neq \chi_\tau$ (depending on $g$ and~$i$) such that $\langle \varepsilon_i, \mu_{\GL_d}(\tau(g)) \rangle = \langle \chi, \mu(g) \rangle$.

By Lemma~\ref{lem:tau}, for any weight $\chi\neq\chi_{\tau}$ of~$\tau$, we can write $\chi_{\tau}-\chi=\sum_{\alpha\in\Sigma^+_{\theta}} m_{\alpha}\,\alpha$ where $m_{\alpha}\geq 0$ for all~$\alpha$. 
Using that sums of CLI sequences are again CLI and applying case~\eqref{item:ast} of Lemma~\ref{lem:reordering} to the sequences $x^\chi = ( \langle \chi_{\tau}-\nolinebreak\chi, \mu( \rho( \gamma_n) ) )_{n\in \NN}$ for $\chi$ ranging through the set of weights of~$\tau$ different from \(\chi_\tau\) and 
\[y^{(i)} = ( \langle \varepsilon_1 - \varepsilon_i, \mu_{\GL_d}( \tau \circ \rho( \gamma_n)) \rangle)_{n\in \NN}\]
 for $i$ ranging through $\{ 2,\dots,d\}$, we see that if the sequences $(\langle \alpha, \, \mu( \rho(\gamma_n) )\rangle)_{n\in\NN}$ for $\alpha\in\Sigma^+_{\theta}$ are CLI, then so are the sequences $y^{(i)}$, for all $2\leq i\leq\nolinebreak d$.
By Proposition~\ref{prop:transv-SL}, $\iota^+ \circ\xi^+(\eta)$ and $\iota^-\circ\xi^-(\eta')$ are transverse for all \(\eta'\neq \eta\).
Therefore, $\xi^+(\eta)$ and~$\xi^-(\eta')$ are transverse for all \(\eta'\neq \eta\) by Proposition~\ref{prop:theta_emb}.\eqref{item:b_thtmb}.
\end{proof}

This concludes the proof of Theorem~\ref{thm:constr-xi}, hence also of the implication $\eqref{item:cli} \Rightarrow \eqref{item:Ano}$ of Theorem~\ref{thm:char_ano}.

\subsection{Contraction properties for Anosov representations} \label{subsec:proof-contr-Ano}

We now establish the implication $\eqref{item:Ano}\Rightarrow\eqref{item:cli}$ of Theorem~\ref{thm:char_ano}. 

\begin{proposition}\label{prop:Ano-CLI}
Let $\Gamma$ be a word hyperbolic group, $G$ a real reductive Lie group, $\theta \subset \Delta$ a nonempty subset of the simple restricted roots of~$G$, and $\rho : \Gamma \to G$ a $P_{\theta}$-Anosov representation.
Then there exist $\kappa,\kappa'>0$ such that for any $\alpha\in\Sigma^+_{\theta}$ and any geodesic ray $(\gamma_n)_{n\in\NN}$ with $\gamma_0=e$ in the Cayley graph of~$\Gamma$, the sequence $(\langle \alpha, \,\mu( \rho(\gamma_n))\rangle)_{n\in\NN}$ is $(\kappa,\kappa')$-lower CLI.
\end{proposition}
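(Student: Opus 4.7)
The plan is to transfer the geodesic ray $(\gamma_n)$ into the flow space $\mathcal{G}_\Gamma$, exploit the Anosov exponential contraction along the flow, and then carefully propagate CLI estimates through the cocycle relation.

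First I would reduce to the cocycle. By Proposition~\ref{prop:QIflowbis}, there exist a compact subset $\mathcal{D}\subset\mathcal{G}_\Gamma$ and uniform constants $\kappa_0,\kappa_0'>0$ such that for any geodesic ray $(\gamma_n)$ with $\gamma_0=e$, we can find $v\in\mathcal{D}$ and a $(\kappa_0,\kappa_0')$-lower CLI sequence $(t_n)\in\RR^\NN$ with $\varphi_{t_n}\cdot v\in\gamma_n\cdot\mathcal{D}$. With the notation of Section~\ref{subsec:prelim-result}, Lemma~\ref{lem:mu_mutheta} gives a uniform constant $\mathcal{K}>0$ with $\|\mu(\rho(\gamma_n))-\mu(l_{t_n,v})\|\leq\mathcal{K}$; by Proposition~\ref{prop:mu-theta-mu}, $\mu(l_{t_n,v})=\mu_\theta(l_{t_n,v})$ once $t_n\geq T$. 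Hence it suffices to establish uniform lower CLI for the sequences $(\langle\alpha,\mu_\theta(l_{t_n,v})\rangle)_n$, for each $\alpha\in\Sigma_\theta^+$.

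Second, I would reduce to the case $G=\GL_\KK(V)$ with $\theta=\{\varepsilon_1-\varepsilon_2\}$. By Lemma~\ref{lem:theta-comp-exists}, there exist enough irreducible $\theta$-proximal representations $(\tau,V)$ of $G$ that the weight differences $\chi_\tau-\chi$ (for $\chi$ ranging over the non-highest weights of $\tau$) span all of $\sum_{\alpha\in\Sigma_\theta^+}\RRp\alpha$. By Proposition~\ref{prop:theta-comp-Anosov}, each $\tau\circ\rho$ is $P_{\{\varepsilon_1-\varepsilon_2\}}$-Anosov in $\GL_\KK(V)$. Combining Lemma~\ref{lem:tau}.\eqref{item:nonhighest-weights} with Lemma~\ref{lem:reordering} (case~\eqref{item:astast}, applied to the singular value data of $\tau\circ\rho(\gamma_n)$, whose bounded increments come from \eqref{eqn:mu-leq-klength}), uniform lower CLI of the top singular value gaps $(\langle\varepsilon_1-\varepsilon_j,\mu_{\GL_\KK(V)}(\tau\circ\rho(\gamma_n))\rangle)_n$ for $j\geq 2$ in this family of representations implies the desired CLI of $(\langle\alpha,\mu(\rho(\gamma_n))\rangle)_n$ for each $\alpha\in\Sigma_\theta^+$.

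Third, and this is the crux of the argument, I would establish the CLI bound in the $\GL_d$ setting. Applying step~1 to $\tau\circ\rho$ reduces it to showing: for every $j\geq 2$ and every $n,m\in\NN$,
\[\langle\varepsilon_1-\varepsilon_j,\,\mu(l_{t_{n+m},v})-\mu(l_{t_n,v})\rangle\;\geq\;\kappa m-\kappa'.\]
The Anosov property in $\GL_\KK(V)$, applied at the flow point $\varphi_{t_n}\cdot v$, together with the cocycle relation $l_{t_n,v}^{-1}l_{t_{n+m},v}=l_{t_{n+m}-t_n,\,\varphi_{t_n}v}$, yields
\[\langle\varepsilon_1-\varepsilon_2,\,\mu(l_{t_{n+m}-t_n,\,\varphi_{t_n}v})\rangle\;\geq\;c(t_{n+m}-t_n)-C\;\geq\;c\kappa_0 m-(c\kappa_0'+C).\]
This is the ``incremental'' CLI estimate for the cocycle itself. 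The argument for transferring this to the Cartan projection $\mu(l_{t_n,v})$ mirrors the proof of Proposition~\ref{prop:transv-SL} and Corollary~\ref{cor:limsup-Hnm}: letting $l_{t_n,v}=k_n a_n k_n'$ be the KAK decomposition, the entries of $a_n^{-1}(k_n^{-1}k_{n+m})a_{n+m}$ are uniformly controlled by the Anosov attracting dynamics on $\PP_\KK(V)$ (which forces $(k_n\cdot[e_1])_n$ to converge rapidly toward $\xi_V^+(\eta)$ and $(k_n^{\prime-1}\cdot[V_{<\chi_\tau}])_n$ toward $\xi_V^-(\eta'')$ for some dual endpoint), and then the ``reverse triangle inequality'' in $\overline{\aaa}^+$ reads off the singular value increments from the compact-set factor.

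The hard part will be the last transfer. Subadditivity of $\mu$ (Fact~\ref{fact:mu-subadditive}) gives only an upper bound $\|\mu(l_1 l_2)-\mu(l_1)\|\leq\|\mu(l_2)\|$, not the coordinate-wise lower bound we need; it is therefore essential to use the alignment of successive KAK decompositions guaranteed by the Anosov dynamics (the attracting convergence $k_n\cdot[e_1]\to\xi_V^+(\eta)$ with a quantitative rate given by the exponential contraction). The technical input is an adaptation of the bound on the entries of $H_n^m=k_n^{-1}k_{n+m}$ from Lemma~\ref{lem:bound-h-n-infty}, now used to extract a \emph{lower} bound on the diagonal entries of $a_n^{-1}H_n^m a_{n+m}$ rather than just boundedness of the off-diagonal ones.
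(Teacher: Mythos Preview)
Your first reduction (Step~1) is exactly what the paper does. After that, however, your route diverges from the paper's and runs into a genuine gap.

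The core difficulty you identify in Step~3 is real, but your proposed resolution does not work. Invoking Lemma~\ref{lem:bound-h-n-infty} (or its proof) is in the wrong logical direction: that lemma takes the CLI hypothesis as \emph{input} and outputs boundedness of certain matrix entries. Here you are trying to \emph{produce} CLI, so appealing to that machinery is circular. Your ``incremental'' estimate $\langle\varepsilon_1-\varepsilon_2,\mu(l_{t_{n+m}-t_n,\varphi_{t_n}v})\rangle\geq c\kappa_0 m-C'$ controls only the \emph{top} singular-value gap of the cocycle increment; it does not by itself yield lower bounds on $\langle\varepsilon_1-\varepsilon_j,\mu(l_{t_{n+m},v})-\mu(l_{t_n,v})\rangle$ for $j\geq 3$, because subadditivity of~$\mu$ only gives upper bounds and the KAK alignment argument you sketch would itself need exactly the CLI conclusion to get started. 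Your Step~2 reduction to $\GL_\KK(V)$ does not help either: in that group, $\Sigma^+_{\{\varepsilon_1-\varepsilon_2\}}=\{\varepsilon_1-\varepsilon_j:j\geq 2\}$, so the very same problem of controlling \emph{all} singular-value gaps reappears unchanged.

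The paper's proof avoids this entirely by staying in~$G$ and using a clean geometric device you are missing: an elementary lemma on nested ellipsoids (Lemma~\ref{lem:ellipsoids}). One puts a $K_\theta$-invariant Euclidean norm $\|\cdot\|_0$ on $T_{x_\theta}(G/P_\theta)\simeq\mathfrak{u}_\theta^-$ and transports it along the flow to norms $\|\cdot\|_{\varphi_t\cdot v}$ on $E^+_{\tilde\sigma(v)}$. The Anosov dilation property (Definition~\ref{defi:ano1}.\eqref{item:E+}) says precisely that the unit ball $\mathcal{B}_{t+s,v}$ of $\|\cdot\|_{\varphi_{t+s}\cdot v}$ is contained in $e^{-\kappa s+\kappa'}\mathcal{B}_{t,v}$. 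Lemma~\ref{lem:ellipsoids} then says the $i$-th principal axis of $\mathcal{B}_{t+s,v}$ is at most $e^{-\kappa s+\kappa'}$ times that of $\mathcal{B}_{t,v}$, for \emph{every}~$i$. Since these axis lengths are exactly the $e^{-\langle\alpha,\mu_\theta(l_{t,v})\rangle}$ for $\alpha\in\Sigma^+_\theta$ (after assuming $\theta=\theta^\star$, via Fact~\ref{fact:Anosov_opp}), this gives a uniform lower bound on the \emph{sorted} increments, and Lemma~\ref{lem:reordering} case~\eqref{item:astast} (bounded increments from Fact~\ref{fact:mu-subadditive}) converts this into lower CLI for each individual $\langle\alpha,\mu_\theta(l_{t,v})\rangle$. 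This ellipsoid-inclusion step is the missing idea that replaces your Step~3.
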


To prove Proposition~\ref{prop:Ano-CLI}, we shall use the following lemma.

\begin{lemma}\label{lem:ellipsoids}
If an ellipsoid $\mathcal{B}$ is contained in an ellipsoid $\mathcal{B}'$ in~$\RR^d$, then for any $1\leq i\leq d$, the $i$-th principal axis of~$\mathcal{B}$ is no longer than the $i$-th principal axis of~$\mathcal{B}'$. 
\end{lemma}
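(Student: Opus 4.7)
The plan is to translate this geometric statement into a comparison of eigenvalues of positive-definite symmetric matrices and then invoke the Courant--Fischer min-max principle.

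First, I would reduce to the case where both ellipsoids are centered at the origin. Writing the centers as $c$ and~$c'$, the central symmetry of $\mathcal{B}'-c'$ about~$0$ combined with a simple convexity argument (if $y+c,\,-y+c\in\mathcal{B}\subset\mathcal{B}'$, then both $y+c$ and $-y-c$ lie in the centered $\mathcal{B}'-0$, hence so does their midpoint, etc.) shows that $\mathcal{B}-c\subset\mathcal{B}'-c'$. Since translation does not affect the principal axes, we may assume $\mathcal{B}$ and~$\mathcal{B}'$ are centered at~$0$, so that $\mathcal{B}=\{x\in\RR^d\mid\langle Ax,x\rangle\leq 1\}$ and $\mathcal{B}'=\{x\in\RR^d\mid\langle A'x,x\rangle\leq 1\}$ for unique positive-definite symmetric matrices $A,A'$.

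Second, I would show that the inclusion $\mathcal{B}\subset\mathcal{B}'$ is equivalent to the Loewner inequality $A\geq A'$. Indeed, for any $y\neq 0$ the vector $x:=y/\sqrt{\langle Ay,y\rangle}$ lies on $\partial\mathcal{B}\subset\mathcal{B}'$, hence $\langle A'x,x\rangle\leq 1=\langle Ax,x\rangle$, which rescales to $\langle A'y,y\rangle\leq\langle Ay,y\rangle$.

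Third, I would invoke the Courant--Fischer characterization
\[
\lambda_i(A)=\min_{\substack{V\subset\RR^d\\ \dim V=i}}\ \max_{\substack{x\in V\\ \Vert x\Vert=1}}\ \langle Ax,x\rangle
\]
(eigenvalues labeled in increasing order $\lambda_1\leq\dots\leq\lambda_d$) to conclude that $A\geq A'$ forces $\lambda_i(A)\geq\lambda_i(A')$ for every~$i$. Since the principal semi-axes of $\mathcal{B}$, listed in decreasing order of length, are precisely $\lambda_1(A)^{-1/2}\geq\dots\geq\lambda_d(A)^{-1/2}$, and similarly for~$\mathcal{B}'$, the inequality $\lambda_i(A)\geq\lambda_i(A')$ translates exactly into the $i$-th principal axis of~$\mathcal{B}$ being no longer than that of~$\mathcal{B}'$.

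This is a classical linear-algebra fact with no real obstacle; the only subtlety worth flagging is the reduction to centered ellipsoids, which uses the central symmetry of the outer ellipsoid (after translation to its center) in an essential way. If the ellipsoids in the paper's application happen to be centered at~$0$ (which is the natural situation when they arise as images of balls under linear maps, as in the singular value / Cartan-projection setting), this step can be omitted entirely.
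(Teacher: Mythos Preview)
Your proof is correct, but the paper takes a more directly geometric route: it characterizes the $i$-th largest semi-axis $r_i$ of~$\mathcal{B}$ as the radius of the largest $i$-dimensional Euclidean ball that fits inside~$\mathcal{B}$ (an $i$-ball of radius $r_i$ sits in the span of the $i$ longest axes, and no larger one fits because any $i$-dimensional subspace meets $V_{i-1}^\perp$ nontrivially, where all semi-axes are $\leq r_i$). This characterization is manifestly monotone under inclusion, so the lemma follows immediately. That geometric statement is of course nothing but the max--min form of Courant--Fischer in disguise, so the two arguments share the same core; the paper's version is self-contained and avoids naming the theorem, while yours routes through the Loewner order and may be more immediately recognizable to readers with a matrix-analysis background. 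The paper also tacitly works with centered ellipsoids throughout (as you anticipated, this is the situation in the application, where the ellipsoids are unit balls of Euclidean norms), so your reduction step---whose parenthetical sketch is a bit garbled (you want $y+c$ and $y-c$, not $-y-c$, to average to~$y$)---is not needed there.
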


\begin{proof}
It suffices to show that the length $r_i$ of the $i$-th largest axis of $\mathcal{B}$ is the diameter of the largest possible copy of the Euclidean ball of an \(i\)-dimensional subspace (or $i$-ball)  
 of~$\RR^d$ that fits into~$\mathcal{B}$.
Clearly, an $i$-ball of diameter~$r_i$ fits into~$\mathcal{B}$, for instance inside the $i$-dimensional vector space $V_i$ spanned by the $i$ largest axes.
Since the orthogonal $V_{i-1}^\perp$ of $V_{i-1}$ in~$\RR^d$ intersects $\mathcal{B}$ along a $(d-i+1)$-dimensional ellipsoid whose axes are all of length $\leq r_i$, it is also clear that no larger $i$-ball can fit into $\mathcal{B}$, since it would have a diameter in $V_{i-1}^\perp$ by a simple dimension count.
\end{proof}

\begin{proof}[Proof of Proposition~\ref{prop:Ano-CLI}]
By Fact~\ref{fact:Anosov_opp}, we may assume without loss of generality that $\theta=\theta^{\star}$, so that $\Sigma^+_{\theta}=(\Sigma^+_{\theta})^{\star}$.
Let $\tilde{\sigma} : \mathcal{G}_{\Gamma}\to G/L_{\theta}$ be the section 
 associated with the $P_{\theta}$-Anosov representation~$\rho$.
As in Section~\ref{subsec:prelim-result}, we choose a $\rho$-equivariant continuous lift $\tilde{\beta}:\mathcal{G}_\Gamma\rightarrow G/K_\theta$ of~$\tilde{\sigma}$ and a $\rho$-equivariant set-theoretic lift $\breve{\beta}:\mathcal{G}_\Gamma\rightarrow G$ of~$\tilde{\beta}$.
For any $(t,v)\in \RR \times \mathcal{G}_\Gamma$ the element \( l_{t,v} =\breve{\beta}(v)^{-1} \breve{\beta}(\varphi_t \cdot v) \) belongs to $ L_\theta$.

By Proposition~\ref{prop:QIflowbis}, Lemma~\ref{lem:mu_mutheta}, and Proposition~\ref{prop:mu-theta-mu}, there are a compact subset $\mathcal{D}$ of~$\mathcal{G}_{\Gamma}$ and constants $\mathcal{K}, n_0, \kappa_0, \kappa'_0>0$ with the following property: for any geodesic ray $(\gamma_n)_{n\in\NN}$ with $\gamma_0=e$ in the Cayley graph of~$\Gamma$, there exist $v \in \mathcal{D}$ and a $(\kappa_0, \kappa'_0)$-lower CLI sequence $(t_n)\in\RR^{\NN}$  such that for all $n\geq n_0$,
\[ \| \mu(\rho(\gamma_n)) - \mu_\theta(l_{t_n,v})  \| \leq \mathcal{K}. \]
Therefore it is enough to prove the existence of $\kappa,\tilde{\kappa}'>0$ such that for any $\alpha\in\Sigma^+_{\theta}$, any $t,s\in\RR$, and any $v\in\mathcal{D}$,
\begin{equation}\label{eqn:suff-cli}
\bigl \langle \alpha, \, \mu_{\theta}(l_{t+s,v}) - \mu_{\theta}(l_{t,v})\rangle \geq \kappa s - \tilde{\kappa}'.
\end{equation}
In fact, we only need to establish \eqref{eqn:suff-cli} for $t,s\in\NN$.
Indeed, $l_{t,v}$ is uniformly close to~$l_{\lfloor t\rfloor,v}$ for $t\in\RR$ and $v\in\mathcal{D}$, by the cocycle property \eqref{eqn:l-cocycle} of the map $(t,v)\mapsto l_{t,v}$, the $\Gamma$-invariance property \eqref{eqn:l-inv}, and the fact that the set $\{ l_{s,v} \,|\, s\in [0,1],\ v\in\mathcal{D}\}$ is bounded in~$G$ since $\{ \breve\beta(\varphi_s\cdot v) \,|\, s\in [0,1],\ v\in\mathcal{D}\}$ is by Lemma~\ref{lem:mu_mutheta}.

Recall the subbundle $E^+$ of $T(G/L_{\theta})$ from \eqref{eqn:E+E-}.
Since \(\breve{\beta} : \mathcal{G}_{\Gamma}\to G\) is a lift of $\tilde{\sigma} : \mathcal{G}_{\Gamma}\to G/L_{\theta}$, we have \(E^{+}_{\tilde{\sigma}(v)} = \breve{\beta}(v) \cdot T_{x_\theta}(G/P_\theta)\) for all \(v\in\mathcal{G}_\Gamma\), where we write ${\breve{\beta}}(v)$ for the derivative of the action of $\breve{\beta}(v)$ by left translation, and $x_{\theta}=eP_{\theta}\in G/P_{\theta}$.
Fix a $K_{\theta}$-invariant Euclidean norm $\Vert\cdot\Vert_0$ on $T_{x_\theta} (G/P_\theta) \simeq \mathfrak{u}_{\theta}^-$.
For any $v\in\mathcal{G}_{\Gamma}$,
\begin{equation}\label{eqn:def-norm-v}
w \longmapsto \Vert w\Vert_v := \Vert\breve{\beta}(v)^{-1} \cdot w\Vert_0
\end{equation}
defines a Euclidean norm on~$E^{+}_{\tilde{\sigma}(v)}$.
The norm \(\Vert\cdot\Vert_v\) does not depend on the lift
\(\breve{\beta}(v)\in G\) of \(\tilde{\beta}(v)\in G/K_\theta\) since $\Vert\cdot\Vert_0$ is $K_{\theta}$-invariant.
As \(\tilde{\beta}\) is continuous, the family $(\Vert\cdot\Vert_v)_{v\in\mathcal{G}_{\Gamma}}$ is continuous and $\rho$-equivariant.
By Definition~\ref{defi:ano1}.\eqref{item:E+} of a $P_{\theta}$-Anosov representation, there exist $\kappa,\kappa'>0$ such that
\[ \Vert w\Vert_v \leq e^{-\kappa s+\kappa'} \Vert w\Vert_{\varphi_s\cdot v}\]
for all $v\in\mathcal{G}_{\Gamma}$ and all $w\in E^{+}_{\tilde{\sigma}(v)}$.
In other words, for any $v\in\mathcal{G}_{\Gamma}$, the unit ball $\mathcal{B}_{t+s,v}$ of $\Vert\cdot\Vert_{\varphi_{t+s}\cdot v}$ in $E^+_{\tilde{\sigma}(v)}$ is contained in $e^{-\kappa s+\kappa'}$ times the unit ball $\mathcal{B}_{t,v}$ of $\Vert\cdot\Vert_{\varphi_t\cdot v}$, for all $t,s\geq 0$.
We now apply Lemma~\ref{lem:ellipsoids} to $E^+_{\tilde{\sigma}(v)}$ endowed with the Euclidean norm  $\Vert\cdot\Vert_v$: for any $1\leq i\leq\dim G/P_{\theta}$, the length of the $i$-th principal axis of $\mathcal{B}_{t+s,v}$ is at most $e^{-\kappa s+\kappa'}$ times that of the $i$-th principal axis of $\mathcal{B}_{t,v}$. 
By construction, the lengths of the principal axes of $\mathcal{B}_{t,v}$ are the $e^{-\langle\alpha,\, \mu_\theta(l_{t,v}^{-1})\rangle}$ for $\alpha$ ranging through~$\Sigma^+_{\theta}$, or in other words the $e^{-\langle\alpha,\, \mu_\theta(l_{t,v})\rangle}$ for $\alpha$ ranging through $(\Sigma^+_{\theta})^{\star}=\Sigma^+_{\theta}$, see \eqref{eqn:opp-inv-mu}.
Since $ \| \mu_\theta(l_{t+1,v})-\mu_\theta(l_{t,v}) \|$ is bounded, we may apply case~\eqref{item:astast} of Lemma~\ref{lem:reordering}: there exists $\tilde{\kappa}'>0$ such that for any $\alpha\in\Sigma^+_{\theta}$, any $t,s\in\NN$, and any $v\in\mathcal{D}$,
\[ \bigl \langle \alpha, \,\mu_\theta(l_{{t+s},v})  - \, \mu_\theta(l_{t,v}) \rangle \geq \kappa s - \tilde{\kappa}'. \]
Thus \eqref{eqn:suff-cli} holds, which completes the proof.
\end{proof}

\section{Anosov representations and proper actions}
\label{sec:gen-proper}

In view of the properness criterion of Benoist \cite{Benoist_properness} and Kobayashi \cite{TKobayashi_proper} (see Section~\ref{subsec:intro-Ano-implies-proper}), our characterizations of Anosov representations $\rho: \Gamma \to G$ in terms of the Cartan projection $\mu$ (Theorem~\ref{thm:char_ano}) provide a direct link with the properness of the action of $\Gamma$ (via~$\rho$) on homogeneous spaces of~$G$, and immediately imply Corollary~\ref{cor:link-proper-Ano}.
In this section we illustrate the proofs of Corollaries \ref{cor:Hitchin} and~\ref{cor:max} in some cases of Tables~\ref{table1} and~\ref{table2}, and provide a proof of Corollary~\ref{cor:standard-Ano}.

\subsection{Hitchin representations}
\label{sec:hitch-repr}

We first consider Corollary~\ref{cor:Hitchin}, which concerns Hitchin representations into split real semisimple Lie groups~$G$.
The point is that any Hitchin representation \(\rho: \pi_1(\Sigma) \to G\) is \(P_\Delta\)-Anosov \cite{Labourie_anosov}, \cite[Th.\,1.15]{Fock_Goncharov}.
Thus, in order to deduce Corollary~\ref{cor:Hitchin} from Corollary~\ref{cor:link-proper-Ano}, it is sufficient to check that \(\mu(H) \subset \bigcup_{\alpha\in \Delta} \Ker(\alpha)\) for every pair \((G,H)\) in Table~\ref{table1}.
Let $K_H$ be a maximal compact subgroup of~$H$ and $\overline{\aaa}_H^+$ a closed Weyl chamber in a Cartan subspace $\aaa_H$ of~$\h$ such that $H=K_H(\exp\overline{\aaa}_H^+)K_H$.
We may assume that the maximal compact subgroup \(K\) of \(G\) contains \(K_H\) and that the Cartan subspace \(\aaa\) of~$\g$ contains~\(\aaa_H\) \cite{Karpelevich,Mostow_dec}.
Let $\mu : G\to\overline{\aaa}^+$ and $\mu_H : H\to\overline{\aaa}_H^+$ be the corresponding Cartan projections.

\begin{lemma} \label{lem:aH-a-Hitchin}
In order to prove that $\pi_1(\Sigma)$ acts sharply on $G/H$ for any Hitchin representation $\rho: \pi_1(\Sigma) \to G$, it is sufficient to verify that
\[ \aaa_H \subset \bigcup_{\alpha\in\Sigma} \Ker(\alpha) \subset \aaa. \]
\end{lemma}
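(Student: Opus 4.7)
The plan is to deduce the lemma directly from Corollary~\ref{cor:link-proper-Ano}. Since any Hitchin representation $\rho:\pi_1(\Sigma)\to G$ is known to be $P_\Delta$-Anosov (as recalled just above), Corollary~\ref{cor:link-proper-Ano} applied with $\theta=\Delta$ gives sharpness of the action on $G/H$ as soon as one verifies the inclusion
\[ \mu(H) \,\subset\, \bigcup_{\alpha\in\Delta}\Ker(\alpha)\cap\overline{\aaa}^+. \]
So the whole content reduces to translating the hypothesis $\aaa_H\subset\bigcup_{\alpha\in\Sigma}\Ker(\alpha)$ into this inclusion.

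First I would use the compatibility of the two Cartan decompositions. Writing any $h\in H$ as $h=k_1(\exp\mu_H(h))k_2$ with $k_i\in K_H\subset K$ and $\mu_H(h)\in\overline{\aaa}_H^+\subset\aaa_H\subset\aaa$, the element $\mu(h)\in\overline{\aaa}^+$ must be the unique representative of the $W$-orbit of $\mu_H(h)$ lying in the closed positive Weyl chamber. In particular, $\mu(h)=w\cdot\mu_H(h)$ for some $w\in W$, so that $\mu(H)\subset W\cdot\aaa_H$.

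Next I would observe that the subset $\bigcup_{\alpha\in\Sigma}\Ker(\alpha)$ of $\aaa$ is $W$-invariant, since $W$ permutes the root system $\Sigma$. Thus, the assumption $\aaa_H\subset\bigcup_{\alpha\in\Sigma}\Ker(\alpha)$ implies $W\cdot\aaa_H\subset\bigcup_{\alpha\in\Sigma}\Ker(\alpha)$, and combining with the previous step, $\mu(H)\subset\bigcup_{\alpha\in\Sigma}\Ker(\alpha)$.

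Finally, intersecting with $\overline{\aaa}^+$ and using that on $\overline{\aaa}^+$ every positive root is a nonnegative combination of simple roots (so $\alpha(Y)=0$ for some $\alpha\in\Sigma^+$ forces $\alpha'(Y)=0$ for some $\alpha'\in\Delta$), one obtains
\[ \mu(H)\,\subset\,\overline{\aaa}^+\cap\bigcup_{\alpha\in\Sigma}\Ker(\alpha)\,=\,\bigcup_{\alpha\in\Delta}\Ker(\alpha)\cap\overline{\aaa}^+, \]
and Corollary~\ref{cor:link-proper-Ano} concludes. There is no serious obstacle here; the only point requiring some care is to check that $\mu(H)$ lies in the full $W$-translate $W\cdot\aaa_H$ (not just in $\aaa_H$), which is why the hypothesis is phrased for all of $\Sigma$ rather than only $\Delta$.
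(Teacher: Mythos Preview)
Your proof is correct and follows essentially the same approach as the paper's: both use that $\mu(h)$ is the $W$-translate of $\mu_H(h)$ lying in $\overline{\aaa}^+$, the $W$-invariance of $\bigcup_{\alpha\in\Sigma}\Ker(\alpha)$, the identity $\overline{\aaa}^+\cap\bigcup_{\alpha\in\Sigma}\Ker(\alpha)=\overline{\aaa}^+\cap\bigcup_{\alpha\in\Delta}\Ker(\alpha)$, and then conclude via Corollary~\ref{cor:link-proper-Ano}. Your version is just slightly more explicit about the intermediate steps.
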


\begin{proof}
For any $h\in H$, the element $\mu(h)\in\overline{\aaa}^+$ is the unique $W$-translate of $\mu_H(h)\in\nolinebreak\overline{\aaa}_H^+$ contained in~$\overline{\aaa}^+$.
Thus, if $\aaa_H\subset\bigcup_{\alpha\in\Sigma} \Ker(\alpha)$, then $\mu(H)\subset \overline{\aaa}^+\cap\bigcup_{\alpha\in\Sigma} \Ker(\alpha) = \overline{\aaa}^+\cap\bigcup_{\alpha\in \Delta} \Ker(\alpha)$.
We conclude using Corollary~\ref{cor:link-proper-Ano}.
\end{proof}

For the pairs \((G,H)\) in Table~\ref{table1}, the embedding \(H\hookrightarrow G\) is the natural one, \ie \(H\) is the stabilizer of a decomposition of the standard \(G\)-module or of an endomorphism of~it.
For instance, in case~(iii), the identification \(\CC^d \simeq \RR^{2d}\) gives an embedding \(j: \GL_d(\CC) \to \GL_{2d}(\RR)\) which restricts to an embedding \(\SL_d(\CC) \times \U(1) \hookrightarrow \SL_{2d}(\RR)\) since \(\det(j(A)) = |\det(A)|^2\) for any \(A\in \GL_d(\CC)\).

As an example, let us exhibit $\aaa_H\subset\aaa$ as in Lemma~\ref{lem:aH-a-Hitchin} for case~(vi) of Table~\ref{table1}, where \((G,H)= (\SO(d,d), \GL_k(\RR))\).
Let 
\[\aaa = \{ \mathrm{diag}(a_1, \dots, a_d, -a_1, \dots, -a_d) \mid a_1, \dots , a_d\in \RR \}\]
be the set of diagonal matrices contained in~\(\g\), where $G=\SO(d,d)$ is defined by the quadratic form $x_1x_{d+1} + \dots + x_dx_{2d}$.
Let
\[ \aaa_H = \{ \mathrm{diag}(b_1, \dots, b_k) \mid b_1, \dots, b_k \in \RR\} \]
be the set of diagonal matrices contained in \(\h=\gl_k(\RR)\).
The action of \(\GL_d(\RR)\) on the vector space \(\RR^d \oplus (\RR^d)^*\) preserves the natural pairing between \(\RR^d\) and \((\RR^d)^*\), which is a symmetric bilinear form of signature \((d,d)\).
This defines an embedding $\GL_d(\RR)\hookrightarrow G=\SO(d,d)$; composing it with the natural inclusion \(H=\GL_k(\RR)\hookrightarrow \GL_d(\RR)\) gives the embedding \(H\hookrightarrow G\).
The corresponding embedding \(\aaa_H\hookrightarrow\aaa\) is given by
\[ \mathrm{diag}(b_1, \dots, b_k) \longmapsto \mathrm{diag}( b_1, \dots, b_k, 0, \dots, 0, -b_1, \dots, -b_k, 0,\dots, 0). \]
The pair \((\aaa_H,\aaa)\) satisfies the condition of Lemma~\ref{lem:aH-a-Hitchin} since the restricted root
\[ \alpha : \mathrm{diag}(a_1, \dots, a_d, -a_1, ,\dots, -a_d) \longmapsto a_{d-1}-a_d \]
of \(\aaa\) in~\(G\) is zero on~\(\aaa_H\) under the condition \(k<d-1\).

\begin{remark}
In all examples of Table~\ref{table1} except (i), (vi), and~(vii), the homogeneous space $G/H$ is an affine symmetric space.
We obtained these examples by checking the condition of Lemma~\ref{lem:aH-a-Hitchin} in Okuda's classification \cite[App.\,A]{Okuda13} of affine symmetric spaces admitting a properly discontinuous action by a discrete group which is not virtually abelian.
Examples (i), (vi), and~(vii) of Table~\ref{table1} were obtained by checking the condition of Lemma~\ref{lem:aH-a-Hitchin} in a list of examples of nonsymmetric $G/H$ admitting properly discontinuous actions by nonvirtually abelian groups given in \cite{Bochenski_Jastrzebski_Tralle}.
\end{remark}

\subsection{Maximal representations}
\label{sec:maxim-repr}

We now address Corollary~\ref{cor:max}. 
Let \(G\) be a simple Lie group of Hermitian type, of real rank $d\geq 1$.
Then the restricted root system of~\(G\) is of type \(C_d\) or \(BC_d\).
This means that there is a system of simple restricted roots \(\Delta =\{ \alpha_1, \dots, \alpha_d\}\) such that \((\alpha_i,
\alpha_j)\neq 0\) \(\Leftrightarrow\) \(|i-j|\leq 1\) and
\(\|\alpha_d\| > \|\alpha_1\| = \cdots = \|\alpha_{d-1}\|\).
The restricted root system is of type \(BC_d\) if and only if \(2\alpha_1\) is a root.

Let $\Sigma$ be a closed hyperbolic surface.
The point is that any maximal representation \(\rho: \pi_1(\Sigma) \to G\) is \(P_{\alpha_d}\)-Anosov \cite{Burger_Iozzi_Labourie_Wienhard, Burger_Iozzi_Wienhard_anosov}.
Similarly to Lemma~\ref{lem:aH-a-Hitchin}, we thus only need to check the following for each pair $(G,H)$ of Table~\ref{table2}. As above we assume that the maximal compact subgroup \(K_H\) of \(H\) is contained in \(K\) and that the Cartan subspace \(\aaa_H\) of \(H\) is contained in \(\aaa\).

\begin{lemma} \label{lem:aH-a-max}
In order to prove that $\pi_1(\Sigma)$ acts sharply on $G/H$ for any maximal representation $\rho: \pi_1(\Sigma) \to G$, it is sufficient to check that 
\[ \aaa_H \subset \bigcup_{w\in W} \Ker(w\cdot\alpha_d) = \bigcup_{1\leq i\leq d} \Ker(\alpha_i + \dots + \alpha_d) \subset \aaa. \]
\end{lemma}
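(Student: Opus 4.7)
The plan is to mirror the argument of Lemma~\ref{lem:aH-a-Hitchin}. By the cited results of Burger--Iozzi--Labourie--Wienhard and Burger--Iozzi--Wienhard, any maximal representation $\rho: \pi_1(\Sigma) \to G$ is $P_{\alpha_d}$-Anosov; so, by Corollary~\ref{cor:link-proper-Ano} applied with $\theta = \{\alpha_d\}$, the sharpness of the $\pi_1(\Sigma)$-action on $G/H$ reduces to proving the inclusion $\mu(H) \subset \Ker(\alpha_d) \cap \overline{\aaa}^+$. Since $K_H \subset K$ and $\aaa_H \subset \aaa$, the Cartan decomposition $h = k(\exp\mu_H(h))k'$ with $k,k' \in K_H \subset K$ forces $\mu(h) \in \overline{\aaa}^+$ to be the unique $W$-translate of $\mu_H(h) \in \aaa_H$ lying in the positive Weyl chamber; equivalently, $\mu(H) \subset (W \cdot \aaa_H) \cap \overline{\aaa}^+$.

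Next I would use the hypothesis in the form $\aaa_H \subset \bigcup_{w\in W}\Ker(w\cdot\alpha_d)$, which says that for every $Y \in \aaa_H$ the orbit $W \cdot Y$ meets the wall $\Ker(\alpha_d)$. The crux is then the following structural claim, which exploits the Hermitian setting: \emph{for any $Z \in \overline{\aaa}^+$, if $W\cdot Z$ meets $\Ker(\alpha_d)$ then $Z \in \Ker(\alpha_d)$.} Applied to $Z = \mu(h)$ this yields $\mu(h) \in \Ker(\alpha_d)$ for every $h \in H$, hence the required inclusion $\mu(H) \subset \Ker(\alpha_d)$. To establish the claim I would work in standard coordinates $\aaa \simeq \RR^d$ for which $\overline{\aaa}^+ = \{a_1 \geq \dots \geq a_d \geq 0\}$, $\langle\alpha_d,\cdot\rangle$ is proportional to $a_d$, and the Weyl group $W = (\ZZ/2)^d \rtimes \mathfrak{S}_d$ acts on $\aaa$ by signed permutations: if $\langle\alpha_d,Z\rangle > 0$ then $a_d > 0$, hence $a_i \geq a_d > 0$ for all~$i$, and every signed permutation of $(a_1,\ldots,a_d)$ has nonzero $d$-th coordinate, contradicting membership of some $W$-translate of $Z$ in $\Ker(\alpha_d)$.

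The middle equality of the lemma, $\bigcup_{w\in W}\Ker(w\cdot\alpha_d) = \bigcup_{1\leq i\leq d}\Ker(\alpha_i+\cdots+\alpha_d)$, is a combinatorial identity in the $C_d$ or $BC_d$ restricted root system which I would verify separately by expressing each element of the $W$-orbit of $\alpha_d$ in the simple-root basis. The main obstacle is the structural claim: it is genuinely particular to the Hermitian case, where $\alpha_d$ is at the ``long end'' of the Dynkin diagram and the wall $\Ker(\alpha_d)$ lies transverse to the signed-permutation axes of the Weyl group; the analogous statement would fail for a generic simple root in a generic reductive group. Once this is granted, the proof concludes as in Lemma~\ref{lem:aH-a-Hitchin}, via Corollary~\ref{cor:link-proper-Ano}.
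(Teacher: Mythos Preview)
Your proposal is correct and is precisely the argument the paper intends: the paper simply says the proof is ``similarly to Lemma~\ref{lem:aH-a-Hitchin}'', and what you have written is the natural adaptation of that proof to the present setting. Your ``structural claim'' --- that $\overline{\aaa}^+ \cap \bigcup_{w\in W}\Ker(w\cdot\alpha_d) \subset \Ker(\alpha_d)$ in type $C_d$ or $BC_d$ --- is exactly the analogue of the identity $\overline{\aaa}^+ \cap \bigcup_{\alpha\in\Sigma}\Ker(\alpha) = \overline{\aaa}^+ \cap \bigcup_{\alpha\in\Delta}\Ker(\alpha)$ used there, and your signed-permutation verification is the right way to see it.
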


As an example, let us exhibit $\aaa_H\subset\aaa$ as in Lemma~\ref{lem:aH-a-max} for case~(ii) in Table~\ref{table2}, where $(G,H)=(\Sp(2d,\RR),\U(d-k,k)$.
By symmetry, we may assume \(k<d-k\).
Let
\[\aaa = \{ \mathrm{diag}(a_1, \dots, a_d, -a_1, \dots, -a_d) \mid a_1, \dots , a_d\in \RR \} \]
be the set of diagonal matrices contained in~\(\g\), where $G=\Sp(2d,\RR)$ is defined by the symplectic form $x_1\wedge x_{d+1} + \dots + x_d\wedge x_{2d}$.
Let
\[ \aaa_H =\{ \mathrm{diag}( b_1, \dots, b_k, 0, \dots, 0, -b_k, \dots, -b_1) \mid b_1, \dots, b_k \in \RR\} \]
be the set of real diagonal matrices contained in \(\h\), where $H=\U(d-k,k)$ is defined by the Hermitian form $z_1\overline{z_d} + \dots + z_k\overline{z_{d-k+1}} + |z_{k+1}|^2 + \dots + |z_{d-k}|^2$.
The embedding of $H$ into~$G$ is given by identifying $\CC^d$ with~$\RR^{2d}$ and observing that the imaginary part of the Hermitian form above is symplectic.
The corresponding embedding \(\aaa_H\hookrightarrow\aaa\) is given by
\begin{multline*}
 \mathrm{diag}( b_1, \dots, b_k, 0, \dots, 0, -b_k, \dots, -b_1)\\
 \longmapsto  \mathrm{diag}( b_1, \dots, b_k, 0, \dots,0, -b_k, \dots, -b_1, -b_1, \dots, -b_k, 0, \dots,0, b_k, \dots, b_1).
\end{multline*}
Since \(k<d-k\), the restricted root
\[ \alpha_{k+1}+\dots+\alpha_d : \mathrm{diag}(a_1, \dots, a_d, -a_1, \dots, -a_d) \longmapsto a_{k+1} \] 
is zero on~\(\aaa_H\), and so the condition of Lemma~\ref{lem:aH-a-max} is satisfied.

\subsection{Proper actions of rank-$1$ reductive groups yield Anosov representations} \label{subsec:proof-cor-intro}

In order to prove Corollary~\ref{cor:standard-Ano}, we first establish the following easy consequence of Theorem~\ref{thm:char_ano}; see also \cite[Prop.\,3.1]{Labourie_anosov} and \cite[Prop.\,4.7]{Guichard_Wienhard_DoD} for similar criteria. 

\begin{corollary} \label{cor:cc-G1-Ano}
Let $G$ be a real reductive Lie group, $\theta \subset \Delta$ a nonempty subset of the simple restricted roots of $\aaa$ in~$G$, and $G_1$ a reductive subgroup of~$G$ of real rank~$1$.
Suppose that $\aaa\cap\g_1$ is a Cartan subspace of~$\g_1$ satisfying $(\aaa\cap\g_1)\cap\nolinebreak\Ker(w\cdot\nolinebreak\alpha)=\nolinebreak\{0\}$ for all $w\in W$ and $\alpha\in\theta$.
Then for any convex cocompact subgroup $\Gamma$ of~$G_1$, the inclusion of $\Gamma$ into~$G$ is $P_{\theta}$-Anosov.
\end{corollary}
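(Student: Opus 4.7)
The plan is to verify condition~\eqref{item:cli} of Theorem~\ref{thm:char_ano} for the inclusion $\rho : \Gamma \hookrightarrow G$, which is convenient because it bypasses any construction of boundary maps. The central preliminary observation I will establish is a rigidity statement for~$\mu$ on~$G_1$: after a standard Karpelevich--Mostow adjustment so that the maximal compact $K_1$ of~$G_1$ lies in~$K$, the Cartan projection~$\mu$ restricted to~$G_1$ takes values along a single ray of~$\overline{\aaa}^+$. More precisely, let $Y_0$ be a unit generator of the (one-dimensional) chamber $\overline{\aaa}_1^+$ in $\aaa_1 := \aaa \cap \g_1$, and choose $w_0 \in W$ such that $Y_0^+ := w_0 \cdot Y_0 \in \overline{\aaa}^+$. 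Rewriting the $G_1$-Cartan decomposition of any $g\in G_1$ after conjugation by a lift of $w_0$ to $N_K(\aaa)$ should give $\mu(g) = \Vert \mu_1(g)\Vert\, Y_0^+$ for all $g \in G_1$, where $\mu_1$ is the Cartan projection of~$G_1$.

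Next I would translate the kernel hypothesis into the positivity statement $\langle \alpha, Y_0^+\rangle > 0$ for every $\alpha \in \Sigma_\theta^+$. For $\alpha \in \theta$ this is immediate by taking $w = w_0$ in the assumption: $\langle \alpha, Y_0^+\rangle = \langle w_0 \cdot \alpha, Y_0\rangle$ is nonzero by hypothesis and nonnegative because $Y_0^+ \in \overline{\aaa}^+$ and $\alpha \in \Delta$. A general $\alpha \in \Sigma_\theta^+$ expands as a nonnegative linear combination of simple roots with at least one coefficient on some $\beta \in \theta$ strictly positive, so $\langle \alpha, Y_0^+\rangle \geq \langle \beta, Y_0^+ \rangle$ (times a positive coefficient) is strictly positive as well.

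To conclude, I would invoke Remark~\ref{rem:rank1ccquasi}: convex cocompactness of $\Gamma$ in the rank-one group $G_1$ is equivalent to $\Gamma$ being word hyperbolic with quasi-isometrically embedded inclusion $\Gamma \hookrightarrow G_1$. Corollary~\ref{cor:CLI-rk1} then supplies uniform constants $\kappa, \kappa' > 0$ such that $(\Vert\mu_1(\gamma_n)\Vert)_{n \in \NN}$ is $(\kappa, \kappa')$-lower CLI for every geodesic ray $(\gamma_n)_{n\in \NN}$ with $\gamma_0 = e$ in the Cayley graph of~$\Gamma$. Multiplying by the positive scalar $\langle \alpha, Y_0^+ \rangle$ (which is independent of~$\gamma$ by the single-ray identity) transfers the CLI property to $(\langle \alpha, \mu(\gamma_n)\rangle)_{n\in\NN}$ for every $\alpha \in \Sigma_\theta^+$, with uniform constants. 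This is precisely condition~\eqref{item:cli} of Theorem~\ref{thm:char_ano}, and the equivalence $\eqref{item:cli}\Rightarrow\eqref{item:Ano}$ delivers the $P_\theta$-Anosov property.

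The only step requiring genuine care is the single-ray identity $\mu(g) = \Vert \mu_1(g)\Vert\, Y_0^+$: it depends crucially on $\aaa_1$ being one-dimensional, so that the $W$-orbit of $\overline{\aaa}_1^+$ meets $\overline{\aaa}^+$ along a single ray (otherwise the Weyl element $w_0$ would depend on~$g$ and positivity of $\langle \alpha, \mu(g)\rangle$ would not obviously follow from a single instance of the hypothesis). Once the geometry of this ray is pinned down, the kernel hypothesis cleanly encodes the required positivity and the rest is a routine transfer from the rank-one Cartan projection to the ambient one.
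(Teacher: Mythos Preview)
Your approach---verify condition~\eqref{item:cli} of Theorem~\ref{thm:char_ano} using the rank-one CLI estimate of Corollary~\ref{cor:CLI-rk1}---coincides with the paper's, and your argument is correct when $G_1$ is semisimple. The gap is the non-semisimple case: a reductive group $G_1$ of real rank~$1$ can be a central extension of a compact group by a one-parameter subgroup $\exp(\aaa_1)$ (think of a one-dimensional split torus in~$G$). Then the Weyl group of~$G_1$ is trivial, so the closed Weyl chamber $\overline{\aaa}_1^+$ is the full line~$\aaa_1$, not a ray, and $\mu(G_1) = \overline{\aaa}^+\cap W\!\cdot\aaa_1$ can consist of two \emph{distinct} rays (one coming from $Y_0$, the other from $-Y_0$ via the opposition involution). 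Your single-ray identity $\mu(g)=\Vert\mu_1(g)\Vert\,Y_0^+$ therefore fails, and in any case Corollary~\ref{cor:CLI-rk1} is stated only for semisimple rank-one groups.

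The paper handles this by writing $\mu(G_1)=\mathcal{L}_1\cup\mathcal{L}_2$ from the outset, with a separate positivity constant $C_{\alpha,i}$ on each ray. In the semisimple case $\mathcal{L}_1=\mathcal{L}_2$ and the argument is yours verbatim; in the non-semisimple case $\Gamma$ is virtually cyclic (generated by some $\gamma\in G_1\smallsetminus K$), and one checks the CLI property directly for the two quasi-geodesic rays $(\gamma^n)_{n\geq 0}$ and $(\gamma^{-n})_{n\geq 0}$, each of which lands in a single~$\mathcal{L}_i$. So the repair is small, but your closing paragraph is exactly where the reasoning breaks: one-dimensionality of~$\aaa_1$ does \emph{not} by itself force $\overline{\aaa}_1^+$ to be a ray.
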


\begin{example}
Let $\Gamma$ be a convex cocompact subgroup of $\SL_2(\RR)$ (for instance the image of a Fuchsian representation $\pi_1(\Sigma)\to\SL_2(\RR)$ for a closed hyperbolic surface~$\Sigma$).
For $k\geq 1$, let $\iota_k : \SL_2(\RR)\hookrightarrow\SL_2(\RR)^k$ be the diagonal embedding.
For $d\geq 2k$, if we see $\SL_2(\RR)^k$ as a subgroup of $G=\SL_d(\RR)$ by embedding it in the upper left corner of $\SL_d(\RR)$, then $\iota_k : \Gamma\to G$ is $P_{\varepsilon_k-\varepsilon_{k+1}}$-Anosov.
\end{example}

\begin{proof}[Proof of Corollary~\ref{cor:cc-G1-Ano}]
Up to conjugation, we may assume that $G_1$ admits the decom\-position
\begin{equation} \label{eqn:Cartan-G1}
G_1 = (K\cap G_1) (\exp(\aaa)\cap G_1) (K\cap G_1) .
\end{equation}
The set $\mu(G_1)=\mu(\exp(\aaa)\cap G_1)=\overline{\aaa}^+\cap W\cdot (\aaa\cap\g_1)$ is then a union of two (possibly equal) rays $\mathcal{L}_1,\mathcal{L}_2$ starting at~$0$.
By assumption, for any $i\in\{ 1,2\}$ and $\alpha\in\theta$ we have $\mathcal{L}_i\cap\Ker(\alpha)=\{ 0\}$; this is also true for any $\alpha\in\Sigma^+_{\theta}\smallsetminus\theta$ since $\overline{\aaa}^+\cap\Ker(\alpha)=\{ 0\}$.
Therefore, for any $\alpha\in\Sigma^+_{\theta}$ there is a constant $C_{\alpha,i}>0$ such that $\langle\alpha,Y\rangle = C_{\alpha,i}\,\Vert Y\Vert$ for all $Y\in\mathcal{L}_i$.

Let $\Gamma$ be a convex cocompact subgroup of~$G_1$.
Let us prove that the natural inclusion of $\Gamma$ in~$G$ is $P_{\theta}$-Anosov.

We first assume that $G_1$ is semisimple of real rank~$1$, so that there is only one ray \(\mathcal{L}_1=\mathcal{L}_2\).
Let $\mu_{G_1}$ be a Cartan projection for~$G_1$ associated with the decomposition \eqref{eqn:Cartan-G1}.
Since $G_1$ has real rank~$1$, we may see $\mu_{G_1}$ as a map from $G_1$ to~$\RRp$, and there is a constant $C>0$ such that $\Vert\mu(g)\Vert = C \, \mu_{G_1}(g)$ for all $g\in G_1$; thus, for all $g\in G_1$,
\begin{equation}\label{eqn:mu-G1}
\langle\alpha, \mu(g)\rangle = C_{\alpha,1} C \, \mu_{G_1}(g).
\end{equation}
Since $\Gamma$ is convex cocompact in~$G_1$, its inclusion in~$G_1$ is Anosov with respect to a minimal parabolic subgroup of~$G_1$ (Remark~\ref{rem:rank1ccquasi}).
By Theorem~\ref{thm:char_ano} (or Corollary~\ref{cor:CLI-rk1}), for any geodesic ray $(\gamma_n)_{n\in\NN}$ in the Cayley graph of~$\Gamma$, the sequence $(\mu_{G_1}(\gamma_n))_{n\in\NN}$ is CLI, and the CLI constants are uniform over all geodesic rays $(\gamma_n)_{n\in\NN}$ with $\gamma_0=e$.
By \eqref{eqn:mu-G1}, the sequence $(\langle\alpha,\mu(\gamma_n)\rangle)_{n\in\NN}$ is CLI as well for all $\alpha\in\Sigma^+_{\theta}$, with uniform CLI constants, and so the inclusion of $\Gamma$ in~$G$ is $P_{\theta}$-Anosov by Theorem~\ref{thm:char_ano}.

We now assume that $G_1$ is not semisimple.
It is then a central extension of the compact group $K\cap G_1$ by the one-parameter group $\exp(\aaa)\cap G_1$, and $\Gamma$ is virtually the cyclic group generated by some element $\gamma\in G_1 \smallsetminus K$.
Let $\mu_{G_1}$ be a Cartan projection for~$G_1$ associated with the decomposition \eqref{eqn:Cartan-G1}.
We may see $\mu_{G_1}$ as a map from $G_1$ to~$\RR$, and there is a constant $C>0$ such that $\Vert\mu(\gamma^n)\Vert = C \, |n| \, |\mu_{G_1}(\gamma)|$ for all $n\in\ZZ$.
For any $\alpha\in\Sigma^+_{\theta}$, the estimates $\langle\alpha,Y\rangle = C_{\alpha,i}\,\Vert Y\Vert$ for $Y\in \mathcal{L}_i$ imply that the sequences $(\langle \alpha, \mu(\gamma^n)\rangle)_{n\in\NN}$ and $(\langle \alpha, \mu(\gamma^{-n})\rangle)_{n\in\NN}$ are CLI, and so the inclusion of $\Gamma$ in~$G$ is $P_{\theta}$-Anosov by Theorem~\ref{thm:char_ano}.
\end{proof}

\begin{proof}[Proof of Corollary~\ref{cor:standard-Ano}]
Up to conjugation, we may assume that $G_1$ admits the decomposition \eqref{eqn:Cartan-G1}.
Then $\aaa\cap\g_1$ is a Cartan subspace of~$\g_1$ and $\mu(G_1)=\mu(\exp(\aaa)\cap\nolinebreak G_1)=\overline{\aaa}^+\cap W\cdot (\aaa\cap\g_1)$.
Since $G_1$ acts properly on $G/H$ and $\mu(H)\supset \bigcup_{\alpha\in\theta} \Ker(\alpha) \cap \overline{\aaa}^+$ by assumption, the properness criterion of Benoist and Kobayashi of Section~\ref{subsec:intro-Ano-implies-proper} (see also the earlier paper \cite{TKobayashi}) implies that $(\aaa\cap\g_1)\cap\nolinebreak\Ker(w\cdot\nolinebreak\alpha)=\nolinebreak\{0\}$ for all $w\in W$ and $\alpha\in\theta$.
By Corollary~\ref{cor:cc-G1-Ano}, for any convex cocompact subgroup $\Gamma$ of~$G_1$, the natural inclusion of $\Gamma$ in~$G$ is $P_{\theta}$-Anosov.
\end{proof}

\section{Proper actions on group manifolds}
\label{sec:proper}

In this section, we consider properly discontinuous actions on group
manifolds and deduce Theorems~\ref{thm:proper-GxG-rk1}  and~\ref{thm:Anosov=proper} and Corollary~\ref{cor:proper-open-GxG} from Theorems \ref{thm:char_ano} and~\ref{thm:char_ano_lambda_intro}.

\subsection{Proper actions on group manifolds, uniform domination, and Anosov representations}

Before stating our main Theorem~\ref{thm:complete-proper-GxG}, we introduce some useful notation and terminology.

\subsubsection{Uniform $P_{\theta}$-domination}

Let $\Gamma$ be a discrete group, $G$ a real reductive Lie group, and $\theta\subset\Delta$ a nonempty subset of the simple restricted roots of $\aaa$ in~$G$.
Recall that $\omega_\alpha$ denotes the corresponding fundamental weight \eqref{eqn:9} of a simple root $\alpha\in\Delta$.
We adopt the following terminology.

\begin{definition} \label{def:dominate}
A representation $\rho_L\in\Hom(\Gamma,G)$ \emph{uniformly $P_{\theta}$-dominates} a representation $\rho_R\in\Hom(\Gamma,G)$ if there exists $c<1$ such that for all $\alpha\in\theta$ and $\gamma\in\Gamma$,
\[ \langle\omega_{\alpha},\lambda(\rho_R(\gamma))\rangle \leq c \, \langle\omega_{\alpha},\lambda(\rho_L(\gamma))\rangle. \]
  If $G$ has real rank~$1$, then $\theta=\Delta$ is a singleton and we say simply that $\rho_L$ \emph{uniformly dominates}~$\rho_R$.
\end{definition}
  
Note that uniform $P_{\theta}$-domination is equivalent to uniform $P_{\theta\cup\theta^{\star}}$-domination by \eqref{eqn:opp-inv-lambda}.
We shall use uniform $P$-domination both in~$G$ and in the setting of the next paragraph.

\subsubsection{Automorphism groups of bilinear forms} \label{subsubsec:Aut(b)}

Let $\KK$ be $\RR$, $\CC$, or the ring $\bH$ of quaternions.
Let $V$ be a $\KK$-vector space (where $\KK$ acts on the right in the case of~$\bH$), and let $b : V \otimes_{\RR} V \to \KK$ be a nondegenerate $\RR$-bilinear form which is symmetric or antisymmetric (if $\KK=\RR$ or~$\CC$), or Hermitian or anti-Hermitian (if $\KK=\CC$ or~$\bH$).
Let $\Aut_{\KK}(b)$ be the subgroup of $\GL_{\KK}(V)$ preserving~$b$; we shall always assume it is noncompact.
Table~\ref{table4} gives a list of all possible examples.

\begin{table}[h!]
\centering
\begin{tabular}{|c|c|c|c|c|}
\hline
$\Aut_{\KK}(b)$ & $\KK$ & $\dim_{\KK}(V)$ &
Description of $b$ \tabularnewline
\hline
$\OO(p,q)$ & $\RR$& $p+q$ &
symmetric \tabularnewline
$\U(p,q)$ & $\CC$& $p+q$ &
Hermitian \tabularnewline
$\Sp(p,q)$ & $\bH$ & $p+q$ &
Hermitian \tabularnewline
$\OO(d,\CC)$ & $\CC$ & $d$ &
symmetric \tabularnewline
$\Sp(2d,\RR)$ & $\RR$ & $2d$ & antisymmetric \tabularnewline
$\Sp(2d,\CC)$ & $\CC$ & $2d$ & antisymmetric \tabularnewline
$\OO^*(2d)$ & $\bH$ & $2d$ & anti-Hermitian \tabularnewline
\hline
\end{tabular}
\vspace{0.2cm}
\caption{In these examples, $p,q,d$ are any integers $\geq 1$.}
\label{table4}
\end{table}

We denote by $Q_0(b) \subset \Aut_{\KK}(b)$ the stabilizer of a $b$-isotropic line of~$V$.
It is a maximal proper parabolic subgroup, 
 and $\mathcal{F}_0(b) := \Aut_{\KK}(b)/Q_0(b)$ identifies with the set of $b$-isotropic $\KK$-lines of~$V$ inside the projective space $\PP_{\KK}(V) = (V\smallsetminus\{ 0\})/\KK^{\ast}$.
As in Section~\ref{subsec:theta-compat}, we say that a representation $\tau : G\to\Aut_{\KK}(b)$ is \emph{proximal} if some element of $\tau(G)$ has an attracting fixed point in $\PP_{\KK}(V)$.

\begin{remark} \label{rem:Ano-nonconnected}
We have assumed since Section~\ref{subsec:some-structure-semi} that the reductive group~$G$ acts on~\(\g\) by inner automorphisms, to simplify the description of the parabolic subgroups of~$G$ and ensure that the Cartan projection $\mu : G\to \overline{\aaa}^+$ is well defined.
However, some of the groups $\Aut_{\KK}(b)$ in Table~\ref{table4}, namely $\OO(p,q)$ for $p=q$ and \(\OO(d,\CC)\) for even~\(d\), have elements acting on~\(\g\) via outer automorphisms.
For these groups $G=\Aut_{\KK}(b)$, we can still define notions of
\begin{itemize}
  \item $P_{\theta}$-Anosov representation $\rho : \Gamma\to G$;
  \item uniform $P_{\theta}$-domination of $\rho_R : \Gamma\to G$ by $\rho_L : \Gamma\to G$;
  \item sharpness (see \eqref{eqn:sharp}) for the action of $\Gamma$ via $\rho : \Gamma\to G$ on some homogeneous space $G/H$.
\end{itemize}
Indeed, for any representation $\rho : \Gamma\to G$ there is a finite-index subgroup $\Gamma'$ of~$\Gamma$ such that $\rho(\Gamma')$ is contained in the identity component of~$G$, which acts on~\(\g\) by inner automorphisms.
We define the properties above to hold when they hold for the restriction to~$\Gamma'$.
This does not depend on the finite-index subgroup~$\Gamma'$: see property~\eqref{item:f-i-subgroup} of Section~\ref{subsubsec:examples-properties} for Anosov representations, and Fact~\ref{fact:mu-subadditive} for sharpness.
\end{remark}

\subsubsection{A useful normalization}
\label{sec:useful-normalization}

We shall use the following normalization to avoid having to switch \(\rho_L\) and~\(\rho_R\) in Theorem~\ref{thm:complete-proper-GxG}.

Let $\tau : G\to\Aut_{\KK}(b)$ be an irreducible representation of~$G$ with highest weight~$\chi_{\tau}$.
For any $\rho_L,\rho_R\in\Hom(\Gamma,G)$, the following always holds up to switching $\rho_L$ and~$\rho_R$:
\begin{equation} \label{eqn:tautology}
\sup_{\substack{\gamma\in\Gamma\\ \text{of infinite order}}}\ \langle\chi_{\tau},\lambda(\rho_L(\gamma)) - \lambda(\rho_R(\gamma))\rangle \geq 0.
\end{equation}

If $G$ is semisimple of real rank~$1$, then $\theta=\Delta$ is a singleton $\{\alpha\}$ and (assuming $\tau$ to be nonzero) the inequality \eqref{eqn:tautology} is equivalent to
\[ \sup_{\substack{\gamma\in\Gamma\\ \text{of infinite order}}}\ \langle\omega_{\alpha},\lambda(\rho_L(\gamma)) - \lambda(\rho_R(\gamma))\rangle \geq 0. \]
For $G$ of arbitrary real rank, \eqref{eqn:tautology} always holds when $\rho_L$ uniformly $P_{\theta}$-dominates~$\rho_R$ and \(\tau\) is \(\theta\)-compatible (Definition~\ref{defi:theta-compatible}).

\subsubsection{The main theorem}

Here is the main result of this section. 

\begin{theorem} \label{thm:complete-proper-GxG}
Let $\Gamma$ be a discrete group, $G$ a real reductive Lie group, and $\theta\subset\Delta$ a nonempty subset of the simple restricted roots of~$G$, with $\theta = \theta^\star$.
For $\KK = \RR$, $\CC$, or $\bH$, let $V$ be a $\KK$-vector space and $\tau : G\to\GL_{\KK}(V)$ an irreducible, $\theta$-proximal representation preserving a nondegenerate $\RR$-bilinear form $b : V \otimes_{\RR} V \to \KK$ which is symmetric, antisymmetric, Hermitian, or anti-Hermitian over~$\KK$.
Let \(b'\) be a nonzero real multiple of \(b\).
For a pair $(\rho_L,\rho_R)\in\Hom(\Gamma,G)^2$ with the normalization \eqref{eqn:tautology}, consider the following conditions:
\begin{enumerate}
  \item\label{item:p1} $\Gamma$ is word hyperbolic and $\rho_L$ is $P_\theta$-Anosov and uniformly $P_\theta$-dominates~$\rho_R$;
  \item\label{item:p2} $\Gamma$ is word hyperbolic, $\rho_L$ is $P_\theta$-Anosov, and $\tau\circ\rho_L : \Gamma\to\Aut_{\KK}(b)$ uniformly $Q_0(b)$-dominates $\tau\circ\rho_R$;
  \item\label{item:p3} $\Gamma$ is word hyperbolic and $\tau\circ\rho_L:\Gamma \to \Aut_{\KK}(b)$ is $Q_0(b)$-Anosov and uniformly $Q_0(b)$-dominates $\tau\circ\rho_R:\Gamma \to \Aut_{\KK}(b)$;
  \item\label{item:p4} $\Gamma$ is word hyperbolic and $\tau\circ\rho_L \oplus \tau\circ\rho_R : \Gamma\to\Aut_{\KK}(b\oplus b')$ is $Q_0(b\oplus b')$-Anosov;
  \item\label{item:p7} $\rho_L$ is a quasi-isometric embedding and the action of $\Gamma$ on $(G\times\nolinebreak G)/\Diag(G)$ via $(\rho_L,\rho_R)$ is sharp (see~\eqref{eqn:sharp}); 
  \item\label{item:p8} $\rho_L$ is a quasi-isometric embedding and the action of $\Gamma$ on $(G\times\nolinebreak G)/\Diag(G)$ via $(\rho_L,\rho_R)$ is properly discontinuous;
  \item\label{item:p9} $(\rho_L,\rho_R) : \Gamma\to G\times G$ is a quasi-isometric embedding and the action of $\Gamma$ on $(G\times G)/\Diag(G)$ via $(\rho_L,\rho_R)$ is properly discontinuous.
\end{enumerate}

The following implications always hold:
\[ \eqref{item:p1} \Longrightarrow 
\eqref{item:p2} \Longleftrightarrow 
\eqref{item:p3} \Longleftrightarrow 
\eqref{item:p4} \Longrightarrow  
\eqref{item:p7} \Longrightarrow 
\eqref{item:p8} \Longrightarrow
\eqref{item:p9}. \]
If $\theta$ is a singleton, then $\eqref{item:p2} \Rightarrow \eqref{item:p1}$ holds as well.
If $G$ has real rank~$1$, then all conditions are equivalent.
\end{theorem}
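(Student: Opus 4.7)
The plan is to prove the implications in the order that minimizes redundancy: first the tautologies and the equivalences among $(1)$--$(4)$, then the passage from the Anosov side to the properness/sharpness side via Corollary~\ref{cor:link-proper-Ano}, and finally the rank-one converse by invoking Theorem~\ref{thm:proper-GxG-rk1}.

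First I would dispatch $(7) \Rightarrow (8) \Rightarrow (9)$, which are immediate (sharpness is stronger than proper discontinuity, and a quasi-isometric embedding $\rho_L$ forces $(\rho_L,\rho_R)$ to be one into $G\times G$). Next, $(2)\Leftrightarrow(3)$ is essentially Proposition~\ref{prop:theta-comp-Anosov} applied to $\rho_L$: since $\tau$ is $\theta$-proximal, $\rho_L$ is $P_\theta$-Anosov iff $\tau \circ \rho_L$ is Anosov with respect to the stabilizer of a line in $\GL_\KK(V)$. The key observation, coming from $\tau \cong \tau^*$ as $G$-modules (forced by the existence of $b$ together with $\theta = \theta^\star$), is that $V^{\chi_\tau}$ is $b$-isotropic, so the boundary map of $\tau\circ\rho_L$ lands in $\mathcal{F}_0(b)$, and $Q_0(b)$-Anosov in $\Aut_\KK(b)$ is the same as line-Anosov in $\GL_\KK(V)$. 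The domination condition in $(3)$ translates via Lemma~\ref{lem:tau}.\eqref{item:eps1-eps2} to $\langle\chi_\tau,\lambda(\rho_R(\gamma))\rangle \leq c\langle\chi_\tau,\lambda(\rho_L(\gamma))\rangle$ for some $c<1$. Writing $\chi_\tau = \sum_{\alpha\in\theta} n_\alpha \omega_\alpha$ with $n_\alpha > 0$ (by $\theta$-compatibility), this is implied by $(1)$, yielding $(1)\Rightarrow(2)$; and when $|\theta|=1$, $\chi_\tau$ is a positive multiple of the unique $\omega_\alpha$, so the implication reverses, giving $(2)\Rightarrow(1)$ in that case.

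The heart of the proof is $(3)\Leftrightarrow(4)$, which I would establish by explicit construction of boundary maps. Given $\xi_L^+ : \partial_\infty \Gamma \to \mathcal{F}_0(b) \subset \PP_\KK(V)$ for $\tau\circ\rho_L$, define $\xi:\partial_\infty\Gamma\to\mathcal{F}_0(b\oplus b') \subset \PP_\KK(V\oplus V)$ by $\xi(\eta) := [(\xi_L^+(\eta), 0)]$, which is isotropic since $(b\oplus b')((v,0),(v,0))=b(v,v)=0$; construct $\xi^-$ analogously using $\xi_L^-$ and the dominance. Continuity and transversality transfer immediately from those of $\xi_L^\pm$. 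For dynamics-preservation: given $\gamma\in\Gamma$ of infinite order, uniform $Q_0(b)$-dominance of $\tau\rho_R$ by $\tau\rho_L$ gives that the largest eigenvalue of $(\tau\rho_L \oplus \tau\rho_R)(\gamma)$ on $V\oplus V$ is attained strictly by the $\tau\rho_L$-factor, so its attracting fixed line is $[(\xi_L^+(\eta_\gamma^+),0)]$. The Anosov property for $\tau\rho_L\oplus\tau\rho_R$ is then verified through Theorem~\ref{thm:char_ano}: one needs $\langle\varepsilon_1-\varepsilon_2, \mu_{\Aut_\KK(b\oplus b')}((\tau\rho_L\oplus\tau\rho_R)(\gamma))\rangle \to +\infty$ as $\gamma\to\infty$, which amounts to a logarithmic gap between the top two singular values of the direct sum. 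Passing from the Lyapunov dominance to this Cartan-level gap uses Theorem~\ref{thm:abels-marg-soif} together with the fact that $\tau\circ\rho_L$ is already Anosov, which supplies a linear gap for its own top singular values. The converse $(4)\Rightarrow(3)$ reads the $Q_0(b)$-Anosov structure off the first factor and recovers the dominance inequality from the direct-sum Cartan gap.

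For $(4)\Rightarrow(7)$, I would apply Corollary~\ref{cor:link-proper-Ano} to the $Q_0(b\oplus b')$-Anosov embedding of $\Gamma$ into $\Aut_\KK(b\oplus b')$: it gives sharpness of the action on any homogeneous space $\Aut_\KK(b\oplus b')/H$ with $\mu(H) \subset \Ker(\varepsilon_1-\varepsilon_2)$, and one checks directly that the image of $\Diag(G)$ in $\Aut_\KK(b\oplus b')$ satisfies this condition (its elements fix the diagonally embedded isotropic line, so their top singular value is simple). This yields the desired sharpness of the $\Gamma$-action on $(G\times G)/\Diag(G)$; the quasi-isometric embedding property of $\rho_L$ is immediate from its $P_\theta$-Anosov property (via $(3)$) and Remark~\ref{rem:qimu}. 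Finally, the rank-one converse $(9)\Rightarrow(1)$ is the content of Theorem~\ref{thm:proper-GxG-rk1}, which we assume as already proved. The main obstacle is the Cartan/Lyapunov comparison in $(3)\Rightarrow(4)$: turning the Lyapunov dominance into a uniform Cartan gap for the direct sum (not merely asymptotic) requires the full strength of the equivalence between Theorems~\ref{thm:char_ano} and~\ref{thm:char_ano_lambda_intro}, rather than just the $\lim=+\infty$ forms.
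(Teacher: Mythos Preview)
Your overall structure matches the paper's, and the arguments for $(1)\Rightarrow(2)\Leftrightarrow(3)$ (and $(2)\Rightarrow(1)$ when $|\theta|=1$), $(7)\Rightarrow(8)\Rightarrow(9)$ are essentially the same. For $(3)\Leftrightarrow(4)$ the paper takes a cleaner route than yours: rather than passing from Lyapunov dominance to a Cartan-level gap (which, as you note, drags in Abels--Margulis--Soifer and the semisimplification machinery), it applies the \emph{Lyapunov} characterization Theorem~\ref{thm:char_ano_lambda_intro}.\eqref{item:away-from-walls-lambda} directly on both sides, using Lemma~\ref{fact:tau-O(b)}.\eqref{item:alpha-0-b+b} for~$\lambda$ to write $\langle\alpha_0(b\oplus b'),\lambda_{b\oplus b'}(\cdot)\rangle$ as the minimum of $\langle\alpha_0(b),\lambda_b(\tau\rho_L(\gamma))\rangle$ and $\langle\omega_{\alpha_0(b)},\lambda_b(\tau\rho_L(\gamma))-\lambda_b(\tau\rho_R(\gamma))\rangle$. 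No Cartan/Lyapunov comparison is needed at this step.

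There are however two genuine gaps. First, in $(4)\Rightarrow(7)$: your reasoning for $\mu(H)\subset\Ker(\varepsilon_1-\varepsilon_2)$ is backwards (elements $\tau(g)\oplus\tau(g)$ have each singular value of $\tau(g)$ appearing \emph{twice}, so the top one is \emph{not} simple --- which is precisely what gives $\varepsilon_1=\varepsilon_2$). More seriously, Corollary~\ref{cor:link-proper-Ano} yields sharpness of the action on $\Aut_\KK(b\oplus b')/\tilde H$ with $\tilde H=(\tau\oplus\tau)(\Diag(G))$, measured via the Cartan projection of $\Aut_\KK(b\oplus b')$; this is \emph{not} the same as sharpness on $(G\times G)/\Diag(G)$, which is measured via $\mu\times\mu$ in~$\aaa\times\aaa$ (see Section~\ref{subsubsec:prop-crit-GxG}). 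Transferring one to the other is exactly the content you are skipping. The paper instead reads off directly, from Theorem~\ref{thm:char_ano}.\eqref{item:lin-away-from-walls} and Lemma~\ref{fact:tau-O(b)}, the inequality $\langle\chi_\tau,\mu(\rho_L(\gamma))-\mu(\rho_R(\gamma))\rangle\geq c\,\ellGamma{\gamma}-C$, and converts it to the sharpness norm inequality via Cauchy--Schwarz and~\eqref{eqn:mu-leq-klength}. Second, your rank-one converse is circular: Theorem~\ref{thm:proper-GxG-rk1} is a \emph{consequence} of Theorem~\ref{thm:complete-proper-GxG}, not an input to it. The paper's argument for $(9)\Rightarrow(8)\Rightarrow(4)$ in real rank~$1$ goes through the external result Theorem~\ref{thm:kas-cork1} (from~\cite{Kassel_corank1}), which supplies both the discreteness of~$\rho_L$ and the strict inequality $\lambda(\rho_R(\gamma))<\lambda(\rho_L(\gamma))$ needed to make the direct-sum boundary map dynamics-preserving; then Theorem~\ref{thm:char_ano}.\eqref{item:away-from-walls} closes the loop.
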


\begin{remarks}
\begin{enumerate}[label=(\alph*),ref=\alph*]
  \item An irreducible, $\theta$-proximal representation $\tau : G\to\Aut_{\KK}(b)$ as in conditions \eqref{item:p2}, \eqref{item:p3}, \eqref{item:p4} always exists when $\theta = \theta^\star$: see Proposition~\ref{prop:exists-tau-O(b)} below.
  On the other hand, conditions \eqref{item:p1}, \eqref{item:p7}, \eqref{item:p8}, \eqref{item:p9} do not involve~$\tau$.
  \item The condition $\theta = \theta^{\star}$ is not restrictive: see Fact~\ref{fact:Anosov_opp}.
  \item In condition~\eqref{item:p4} there are essentially two choices for~$b'$: either \(b'=b\) or \(b'=-b\).
The groups \(\Aut_\KK(b\oplus b)\) and \(\Aut_\KK(b\oplus (-b))\) both sit inside \(\GL_\KK(V\oplus V)\) and their intersection is \(\Aut_\KK(b) \times \Aut_\KK(b)\).
These groups are not isomorphic in general: see Example~\ref{ex:SO(1,d)}.
Even when they are isomorphic, the corresponding two embeddings of $\Gamma$ via $(\rho_L,\rho_R)$ tend to be of a quite different nature: this is the case for instance when \(b\) is a real symplectic form.
\end{enumerate}
\end{remarks}

\begin{example} \label{ex:SO(1,d)}
For $G=\OO(1,d)$, which has real rank~$1$, the set $\theta$ is necessarily equal to~$\Delta$, which is a singleton.
We may take $\Aut_{\KK}(b)$ to be~$G$ and $\tau : G\to\Aut_{\KK}(b)$ to be the identity map.
Theorem~\ref{thm:complete-proper-GxG} then states the equivalence of the following conditions, for a discrete group $\Gamma$ and a pair $(\rho_L,\rho_R)\in\Hom(\Gamma,G)^2$ of representations:
\begin{itemize}
  \item[\eqref{item:p1}] $\Gamma$ is word hyperbolic and one of the representations $\rho_L$ or~$\rho_R$ is convex cocompact and uniformly dominates the other;
  \item[(\ref{item:p4}\(^+\))] $\Gamma$ is word hyperbolic and $\rho_L \oplus \rho_R : \Gamma\to\OO(2,2d)$ is Anosov with respect to the stabilizer of an isotropic line in~$\RR^{2,2d}$;
  \item[(\ref{item:p4}\(^-\))] $\Gamma$ is word hyperbolic and $\rho_L \oplus \rho_R : \Gamma\to\OO(d+1,d+1)$ is Anosov with respect to the stabilizer of an isotropic line in~$\RR^{d+1,d+1}$;
  \item[\eqref{item:p7}] one of the representations $\rho_L$ or~$\rho_R$ is a quasi-isometric embedding and the action of $\Gamma$ on $(G\times G)/\Diag(G)$ via $(\rho_L,\rho_R)$ is sharp;
  \item[\eqref{item:p8}] one of the representations $\rho_L$ or~$\rho_R$ is a quasi-isometric embedding and the action of $\Gamma$ on $(G\times G)/\Diag(G)$ via $(\rho_L,\rho_R)$ is properly discontinuous;
  \item[\eqref{item:p9}] $(\rho_L,\rho_R) : \Gamma\to G\times G$ is a quasi-isometric embedding and the action of $\Gamma$ on $(G\times G)/\Diag(G)$ via $(\rho_L,\rho_R)$ is properly discontinuous.
\end{itemize}
Here we see $\OO(2,2d)$ (\resp $\OO(d+1,d+1)$) as the stabilizer in $\GL_{2d+2}(\RR)$ of the quadratic form $x_0^2 - x_1^2 - \dots - x_d^2 + y_0^2 - y_1^2 - \dots - y_d^2$ (\resp $x_0^2 - x_1^2 - \dots - x_d^2 - y_0^2 + y_1^2 + \dots + y_d^2$).
Similar equivalences are true after replacing
\[ \big(\OO(1,d), \OO(2,2d), \OO(d+1,d+1), \RR^{2d+2}\big) \]
with $(\U(1,d), \U(2,2d), \U(d+1,d+1), \CC^{2d+2})$ or with $(\Sp(1,d), \Sp(2,2d), \Sp(d+\nolinebreak 1,d+\nolinebreak 1),\linebreak \bH^{2d+2})$, or after taking compact extensions of these groups.
\end{example}

We refer to \cite{Goldman_NonStddLortz, Ghys_HomoSL2C, TKobayashi_deformation, Salein, Kassel_these, Gueritaud_Kassel, Gueritaud_Kassel_Wolff, Deroin_Tholozan, Tholozan_domin, Danciger_Gueritaud_Kassel} for examples of discrete subgroups of $\OO(1,d)\times\OO(1,d)$ satisfying the equivalent conditions of Example~\ref{ex:SO(1,d)}.

Theorems \ref{thm:proper-GxG-rk1} and~\ref{thm:Anosov=proper} are contained in Theorem~\ref{thm:complete-proper-GxG}: namely, conditions \eqref{item:proper-GxG-rk1}, \eqref{item:sharp-GxG-rk1}, \eqref{item:Gamma-j-rho} of Theorem~\ref{thm:proper-GxG-rk1} correspond to conditions \eqref{item:p9}, \eqref{item:p7}, \eqref{item:p1} of Theorem~\ref{thm:complete-proper-GxG} (see Example~\ref{ex:SO(1,d)} for classical~$G$), while conditions \eqref{item:Gamma-j-rho-Ano}, \eqref{item:Q0bb-Ano}, \eqref{item:Q0b-b-Ano} of Theorem~\ref{thm:Anosov=proper} correspond to conditions \eqref{item:p1}, \eqref{item:p4} with $(\tau,b')=(\mathrm{id},b)$, and \eqref{item:p4} with $(\tau,b')=(\mathrm{id},-b)$ of Theorem~\ref{thm:complete-proper-GxG}. 

\begin{remark} \label{rem:GxG-false-higher-rk}
When $G$ has higher real rank, the implication $\eqref{item:p7} \Rightarrow \eqref{item:p4}$ of Theorem~\ref{thm:complete-proper-GxG} is false.
Indeed, if $\Gamma$ is quasi-isometrically embedded in $G\times G$ and acts sharply on $(G\times G)/\Diag(G)$, it does not need to be word hyperbolic: for instance, for $G = \OO(2,2d)$, any discrete subgroup of $\OO(1,2d)\times\U(1,d) \subset G\times G$ acts sharply on $(G\times G)/\Diag(G)$.
The implication is actually false even if we assume $\Gamma$ to be word hyperbolic: for instance, take $\rho_L$ to be any quasi-isometric embedding which is not $P_{\theta}$-Anosov (see \eg Appendix~\ref{app:discr-quasi-isom}) and $\rho_R$ to be the constant representation.
\end{remark}

\subsubsection{A complement to the main theorem}

In Theorem~\ref{thm:complete-proper-GxG}, we may replace the notion of Anosov representation into $\Aut_{\KK}(b)$ with that of Anosov representation into $\GL_{\KK}(V)$, as follows.

\begin{proposition} \label{prop:complement-GxG}
In the setting of Theorem~\ref{thm:complete-proper-GxG}, let $P_{\varepsilon_1-\varepsilon_2}(V)$ be the stabilizer in $\GL_{\KK}(V)$ of a line of~$V$ and $P_{\varepsilon_1-\varepsilon_2}(V\oplus V)$ the stabilizer in $\GL_{\KK}(V\oplus\nolinebreak V)$ of a line of $V\oplus V$.
Condition~\eqref{item:p3} of Theorem~\ref{thm:complete-proper-GxG} is equivalent to
\begin{enumerate}\renewcommand{\theenumi}{\ref{item:p3}'}\renewcommand{\labelenumi}{(\ref{item:p3}')}
  \item\label{item:p3prime} $\Gamma$ is word hyperbolic and $\tau\circ\rho_L: \Gamma \to \GL_\KK(V) $ is $P_{\varepsilon_1-\varepsilon_2}(V)$-Anosov and uniformly $P_{\varepsilon_1-\varepsilon_2}(V)$-dominates $\tau\circ\rho_R$.
\end{enumerate}
Condition~\eqref{item:p4} of Theorem~\ref{thm:complete-proper-GxG} is equivalent to
\begin{enumerate}\renewcommand{\theenumi}{\ref{item:p4}'}\renewcommand{\labelenumi}{(\ref{item:p4}')}
  \item\label{item:p4prime} $\Gamma$ is word hyperbolic and $\tau\circ\rho_L \oplus \tau\circ\rho_R : \Gamma\to \GL_{\KK}(V\oplus V)$ is $P_{\varepsilon_1-\varepsilon_2}(V\oplus V)$-Anosov.
\end{enumerate}
\end{proposition}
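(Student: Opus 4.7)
The plan is to deduce both equivalences from Proposition~\ref{prop:theta-comp-Anosov}.\eqref{item:theta-comp-Ano} by applying it to the natural embedding $\iota : \Aut_\KK(b) \hookrightarrow \GL_\KK(V)$ for the first equivalence, and to its analogue $\iota' : \Aut_\KK(b\oplus b') \hookrightarrow \GL_\KK(V\oplus V)$ for the second. Write $\alpha_0$ for the simple restricted root of $\Aut_\KK(b)$ such that $Q_0(b) = P_{\{\alpha_0\}}$, and $\alpha_0'$ for the analogous root of $\Aut_\KK(b\oplus b')$.

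First I would verify that $\iota$ (resp.\ $\iota'$) is irreducible and $\{\alpha_0\}$-proximal (resp.\ $\{\alpha_0'\}$-proximal). Irreducibility is immediate from the classification in Table~\ref{table4}. For proximality, observe that $\Aut_\KK(b)$ contains elements with a simple largest-modulus eigenvalue $\lambda>1$; if $v$ is a corresponding eigenvector, then $b(v,v) = \lambda^{-2} b(v,v)$ forces $b(v,v)=0$, so the top eigenline is $b$-isotropic and its stabilizer in $\Aut_\KK(b)$ is precisely $Q_0(b)$. This identifies the highest weight of $\iota$ as $\chi_\iota = n\,\omega_{\alpha_0}$ for some integer $n\geq 1$, with no $\aaa_s^0$-component since the center of every group in Table~\ref{table4} is compact (so $\aaa = \aaa_s$). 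An identical discussion handles $\iota'$.

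Granted these verifications, the Anosov clause in the equivalence \eqref{item:p3}~$\Leftrightarrow$~\eqref{item:p3prime} follows by applying Proposition~\ref{prop:theta-comp-Anosov}.\eqref{item:theta-comp-Ano} to $\iota$ and to the representation $\tau\circ\rho_L : \Gamma \to \Aut_\KK(b)$: this converts the $Q_0(b)$-Anosov property of $\tau\circ\rho_L$ into the $P_{\varepsilon_1-\varepsilon_2}(V)$-Anosov property of $\iota\circ\tau\circ\rho_L = \tau\circ\rho_L$ viewed as a map to $\GL_\KK(V)$. For the uniform-domination clause, the key identity will be
\[
\langle \omega_{\varepsilon_1-\varepsilon_2},\, \lambda_{\GL_\KK(V)}(\iota(g)) \rangle
= \langle \varepsilon_1,\, \lambda_{\GL_\KK(V)}(\iota(g))\rangle
= \langle \chi_\iota,\, \lambda_{\Aut_\KK(b)}(g)\rangle
= n\,\langle \omega_{\alpha_0},\, \lambda_{\Aut_\KK(b)}(g)\rangle
\]
valid for every $g \in \Aut_\KK(b)$, which transfers uniform $Q_0(b)$-domination of $\tau\circ\rho_R$ by $\tau\circ\rho_L$ into uniform $P_{\varepsilon_1-\varepsilon_2}(V)$-domination, and conversely. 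The equivalence \eqref{item:p4}~$\Leftrightarrow$~\eqref{item:p4prime} then follows by running the same argument for $\iota'$ applied to $\tau\circ\rho_L \oplus \tau\circ\rho_R$, with no domination clause to verify. The main obstacle I anticipate is the weight computation showing that $\chi_\iota$ is a positive multiple of $\omega_{\alpha_0}$, uniformly across Table~\ref{table4}; this reduces to the two observations that $\aaa_s^0 = 0$ for each of these reductive groups and that their standard representation is $\{\alpha_0\}$-compatible, which in turn is exactly the isotropy argument above.
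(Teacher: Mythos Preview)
Your proposal is correct and follows the same route as the paper: apply Proposition~\ref{prop:theta-comp-Anosov}.\eqref{item:theta-comp-Ano} to the inclusion $\Aut_\KK(b)\hookrightarrow\GL_\KK(V)$ (resp.\ $\Aut_\KK(b\oplus b')\hookrightarrow\GL_\KK(V\oplus V)$) for the Anosov clause, and use the identity $\langle\omega_{\alpha_0(b)},\lambda_b(g)\rangle=\langle\varepsilon_1,\lambda_{\GL_\KK(V)}(g)\rangle$ for the domination clause. The paper states this identity as Lemma~\ref{fact:tau-O(b)}.\eqref{item:omega-alpha-0} rather than deriving it inline, and simply asserts that the inclusion is $\alpha_0(b)$-proximal; your isotropy argument and the observation that $\aaa_s^0=0$ for the groups in Table~\ref{table4} supply the justification the paper leaves implicit.
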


\subsection{Linear representations into automorphism groups of bilinear forms}
\label{subsec:orth-repr}

Before proving Theorem~\ref{thm:complete-proper-GxG} and Proposition~\ref{prop:complement-GxG}, we make a few useful observations and fix some notation.

\subsubsection{Existence of representations}

The following proposition justifies the assumptions in Theorem~\ref{thm:complete-proper-GxG}.

\begin{proposition}\label{prop:exists-tau-O(b)}
Let $G$ be a noncompact real reductive Lie group and $\theta \subset \Delta$ a nonempty subset of the simple restricted roots of~$G$.
For $\KK=\RR$, $\CC$, or~$\bH$, there exists an irreducible, $\theta$-proximal representation $\tau : G\to \GL_{\KK}(V)$ preserving a nondegenerate $\RR$-bilinear form $b : V\otimes_\RR V\to \KK$ if and only if $\theta = \theta^{\star}$.
\end{proposition}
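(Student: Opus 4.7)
The plan is to prove both implications by reducing everything to the statement that an irreducible representation $\tau:G\to\GL_{\KK}(V)$ with highest weight $\chi_{\tau}$ admits an invariant nondegenerate $\RR$-bilinear form of one of the admissible types if and only if the associated $G$-module $V$ is self-dual (in an appropriate sense), which in turn is equivalent to $\chi_{\tau}=\chi_{\tau}^{\star}$ (under the opposition involution \eqref{eqn:opp-inv}). The bridge between this self-duality condition and the condition $\theta=\theta^{\star}$ is provided by the $\theta$-compatibility, $\theta = \{\alpha\in\Delta : (\chi_{\tau},\alpha)>0\}$, together with the fact that the scalar product on~$\aaa^{\ast}$ is $W$-invariant.

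For the direction ($\Leftarrow$), I would assume $\theta=\theta^{\star}$ and construct $\tau$ over $\KK=\RR$. First, I would verify that the fundamental weight $\omega_{\alpha}$ satisfies $\omega_{\alpha}^{\star}=\omega_{\alpha^{\star}}$: this follows by a short computation from the definition \eqref{eqn:9}, the $W$-invariance of $(\cdot,\cdot)$, and the fact that the opposition involution permutes $\Delta$. Next, I would choose $N\geq 1$ as in Lemma~\ref{lem:theta-comp-exists} and set $\chi := N\sum_{\alpha\in\theta}\omega_{\alpha}$; by the lemma, $\chi$ is the highest weight of an irreducible proximal representation $(\tau,V)$ of $G$ over~$\RR$, which is $\theta$-compatible because $\chi\in\sum_{\alpha\in\theta}\NN^{\ast}\omega_{\alpha}$. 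The assumption $\theta=\theta^{\star}$ forces $\chi^{\star}=\chi$, hence $V$ and $V^{\ast}$ have the same highest weight; since they are both irreducible, they are isomorphic as $G$-modules. Any such isomorphism provides a $G$-invariant nondegenerate $\RR$-bilinear form $b$ on~$V$, which, being invariant under the involution swapping its arguments up to sign, decomposes into symmetric and antisymmetric parts; irreducibility of~$V$ forces $b$ to be purely symmetric or purely antisymmetric.

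For the direction ($\Rightarrow$), I would start from an irreducible $\theta$-proximal representation $\tau:G\to\GL_{\KK}(V)$ preserving $b$, with highest weight $\chi_{\tau}$, and first establish that $\chi_{\tau}^{\star}=\chi_{\tau}$. When $\KK=\RR$, or when $\KK=\CC$ and $b$ is $\CC$-bilinear, the form $b$ directly identifies $V$ with $V^{\ast}$, so their highest weights coincide. When $b$ is Hermitian or anti-Hermitian, $b$ identifies $V^{\ast}$ with the complex-conjugate representation $\bar V$ (resp.\ with a quaternionic analogue when $\KK=\bH$); since $\aaa\subset\g$ acts on weight spaces of~$V$ by real scalars (the restricted weights lie in $\aaa^{\ast}=\Hom_{\RR}(\aaa,\RR)$), the conjugate representation $\bar V$ has the same highest weight $\chi_{\tau}$ as~$V$, so again $\chi_{\tau}^{\star}=\chi_{\tau}$. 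Once $\chi_{\tau}^{\star}=\chi_{\tau}$ is established, I conclude via $(\chi_{\tau},\alpha)=(\chi_{\tau}^{\star},\alpha^{\star})=(\chi_{\tau},\alpha^{\star})$ that the set $\theta=\{\alpha\in\Delta:(\chi_{\tau},\alpha)>0\}$ is stable under $\alpha\mapsto\alpha^{\star}$, i.e.\ $\theta^{\star}=\theta$.

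The main obstacle is the $\KK=\bH$ case: unlike the real or complex case, there is no direct $\KK$-linear dual that one can compare to~$V$, and one must interpret the invariant form $b$ through the underlying complex structure on $V$ (viewing a quaternionic representation as a complex representation endowed with a $G$-equivariant antilinear involution squaring to~$-\mathrm{id}$) and track carefully what ``self-dual'' means in this language. Modulo this bookkeeping, the highest-weight argument is the same, because once again $\aaa$ acts by real scalars on weight spaces, so passing to the conjugate does not change the highest weight.
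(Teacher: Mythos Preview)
Your approach is essentially the paper's: both directions pivot on the equivalence between the existence of an invariant nondegenerate form and self-duality $\chi_\tau=\chi_\tau^\star$ (which the paper packages as Lemma~\ref{lem:preserv-B}), and then translate this into $\theta=\theta^\star$ via $\theta$-compatibility and the $W$-invariance of the scalar product. One omission: in the $(\Leftarrow)$ direction you only construct $\tau$ over $\KK=\RR$, whereas the statement asks for each $\KK$; the paper handles this by tensoring the real representation with~$\KK$, which preserves irreducibility (proximality forces $\mathrm{End}_G(V)=\RR$), $\theta$-proximality, and the invariant form.
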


Note that in this case the group $\Aut_{\KK}(b)$ is necessarily noncompact since it contains an element which is proximal in $\PP_{\KK}(V)$.

One implication of Proposition~\ref{prop:exists-tau-O(b)} is given by the following observation.

\begin{lemma} \label{lem:preserv-B} 
For $\KK=\RR$, $\CC$, or~$\bH$, let $\tau : G\to \GL_{\KK}(V)$ be an irreducible representation with highest weight~$\chi_{\tau}$.
If the group $\tau(G)$ preserves a nondegenerate $\RR$-bilinear form $b:V\otimes_\RR V\to \KK$, then $\chi_{\tau} = \chi_{\tau}^{\star}$; moreover, $b$ is unique up to scale.
When $\KK=\RR$ and \(\tau\) is proximal, the converse also holds.
\end{lemma}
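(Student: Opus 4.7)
My plan is to treat the forward direction and uniqueness in one sweep, then separately address the converse.

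For the forward implication, I would associate to the form $b$ a canonical $G$-equivariant isomorphism. Precisely: if $b$ is $\KK$-bilinear (symmetric or antisymmetric), the map $\Phi_b : V \to V^{\ast}_{\KK}$, $v \mapsto b(v,\cdot)$, is $\KK$-linear, $G$-equivariant and nondegenerate; if $b$ is Hermitian or anti-Hermitian, the same formula yields a $\KK$-antilinear $G$-equivariant isomorphism, which may equivalently be viewed as a $\KK$-linear isomorphism $\bar V \to V^{\ast}_{\KK}$, where $\bar V$ denotes $V$ with the conjugate $\KK$-structure. In all four cases the target is an irreducible $\KK G$-module, and its highest restricted weight is $-w_0 \cdot \chi_\tau = \chi_\tau^{\star}$ (restricted weights are real-valued on $\aaa$, so the conjugate $\bar V$ has the same restricted weights as $V$). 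By irreducibility, the existence of a nonzero $G$-equivariant map forces $V$ and its (possibly conjugate) dual to have the same highest weight, yielding $\chi_\tau = \chi_\tau^{\star}$.

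For the uniqueness statement, suppose $b, b'$ are two $G$-invariant nondegenerate $\RR$-bilinear forms of the same type. Then $\Phi_{b'} \circ \Phi_b^{-1}$ belongs to $\End_{\KK G}(V)$ (or, in the Hermitian cases, to $\End_{\KK G}(\bar V)$, which still identifies with $\End_{\KK G}(V)$ as an $\RR$-algebra via complex conjugation). By Schur's lemma this is a finite-dimensional division algebra $D$ over the center of $\KK$; the type constraint (symmetry, antisymmetry, or conjugate-symmetry) translates into an involutive $\RR$-linear condition on $D$ whose fixed-point set is exactly $\RR$. Thus $b' = c\,b$ for some $c \in \RR^{\ast}$.

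For the converse when $\KK=\RR$ and $\tau$ is proximal, I would argue that $V$ and $V^{\ast}$ are isomorphic as real $G$-modules, and then pull back the canonical duality pairing to obtain a nondegenerate $G$-invariant $\RR$-bilinear form on $V$. The dual $\tau^{\ast}$ is again irreducible with restricted highest weight $\chi_\tau^{\star} = \chi_\tau$, and proximality implies that the highest weight space of $V^{\ast}$ is also one-dimensional. The hard part is to upgrade the equality of restricted highest weights into an actual isomorphism over $\RR$: over $\CC$ the proximality of $\tau$ forces the complexification $V_\CC$ to remain irreducible with one-dimensional highest weight space, so $V_\CC \simeq (V^{\ast})_\CC$ by the classification of complex irreducible highest weight modules; one then descends to $\RR$ by showing that $V$ is recovered as the unique real form of $V_\CC$ on which a generator of the highest weight line is real. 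This is where the proximality hypothesis is indispensable: without a one-dimensional highest weight line, the real form need not be unique and the converse can fail. Once $V \simeq V^{\ast}$ is established, any $\RR G$-isomorphism $\phi$ produces the required form $b(v,w) := \phi(v)(w)$, which is automatically nondegenerate.
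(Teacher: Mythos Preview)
Your approach matches the paper's: identify the invariant form with a $G$-intertwiner to the (conjugate) dual, compare highest restricted weights, and invoke Schur. The forward direction is correct and essentially identical to the paper's argument.

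Your uniqueness argument, however, overreaches. You claim the type constraint cuts the Schur division algebra $D=\mathrm{End}_{\KK G}(V)$ down to exactly~$\RR$. For $\KK=\CC$ with $b$ symmetric (or antisymmetric) $\CC$-bilinear this is false: here $D=\CC$, and multiplying $b$ by any $c\in\CC^{*}$ preserves the symmetry, so the scale ranges over all of~$\CC$, not just~$\RR$. (Your claim \emph{is} correct in the Hermitian and anti-Hermitian cases, where conjugate-symmetry forces $c=\bar c$.) The paper's proof simply says the intertwiner is ``unique up to scale by the Schur lemma'', meaning up to the Schur scalar; it does not claim uniqueness up to a real scalar in general.

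For the converse, your complexification-and-descent route is more explicit than the paper's one-line assertion and is essentially correct, but one step needs sharpening. The classification of complex irreducibles is by the \emph{full} highest weight (for a Cartan subalgebra of~$\g_\CC$), not just the restricted one, so $\chi_\tau=\chi_\tau^\star$ alone would not suffice to conclude $V_\CC\simeq (V^{*})_\CC$. Proximality is what closes this gap: the one-dimensional real highest restricted weight line forces the centralizer $Z_K(\aaa)$ to act on it by $\pm 1$, hence the compact part of the Cartan acts trivially there, and the full highest weight of $V_\CC$ is determined by $\chi_\tau$ alone; applying the same reasoning to the proximal representation~$V^{*}$ gives matching full highest weights. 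This observation also yields $\mathrm{End}_{\RR G}(V)=\RR$ (any intertwiner is determined by its real scalar action on that line), which makes your descent step clean.
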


\begin{proof}[Proof of Lemma~\ref{lem:preserv-B}]
The dual representation $(\tau^*, V^*=\Hom_{\KK}(V,\KK))$ has highest weight~$\chi_{\tau}^{\star}$.
Therefore, if there exists a $(\tau,\tau^*)$-equivariant isomorphim $\psi : V\to V^*$ then $\chi_{\tau} = \chi_{\tau}^{\star}$; in this case $\psi$ is unique up to scale by the Schur lemma.
We then note that the space of nondegenerate $\tau(G)$-invariant bilinear forms $b : V\otimes_{\KK} V\to \KK$ identifies with the space of $(\tau,\tau^*)$-equivariant isomorphims $V\to V^*$ by sending $b$ to the isomorphism $v\mapsto b(v,\cdot)$.
This treats the case of a symmetric or antisymmetric~form.

For the case of Hermitian and anti-Hermitian forms (where \(\KK=\CC\) or~\(\bH\)), we observe that the (real vector) space of forms \(b: V \otimes_\RR V \to\KK\) that are \(\KK\)-linear in the second variable and antilinear in the first variable identifies with the space of $(\tau,\bar{\tau}^*)$-equivariant homomorphims $V\to \bar{V}^*$ where \(\bar{\tau}^*\) is the representation of \(G\) on the space \(\bar{V}^*\) of antilinear forms \(u: V\to \KK\), \ie \(u(vz) = \bar{z} u(v)\) for all \(v\in V\) and \(z\in\KK\).
The highest weight of~\(\bar{V}^*\) is also~\(\chi_{\tau}^{\star}\).

When $\KK=\RR$ and $\tau$ is proximal, the equality $\chi_{\tau} = \chi_{\tau}^{\star}$ implies the existence of an equivariant isomorphism $V\to V^*$, hence of a nondegenerate invariant bilinear~form. 
\end{proof}

\begin{proof}[Proof of Proposition~\ref{prop:exists-tau-O(b)}]
Suppose there exists an irreducible, $\theta$-proximal representation $\tau : G\to \GL_{\KK}(V)$ preserving a nondegenerate $\RR$-bilinear form $b:V\otimes_\RR V\to \KK$.
By Lemma~\ref{lem:preserv-B}, the highest weight $\chi_{\tau}$ of~$\tau$ satisfies $\chi_{\tau} = \chi_{\tau}^{\star}$.
By definition of $\theta$-compatibility, $\theta$ is the set of $\alpha \in \Delta$ such that $(\chi_\tau, \alpha) > 0$.
Since the $W$-invariant scalar product $(\cdot,\cdot)$ on~$\aaa^{\ast}$ is invariant under $\alpha\mapsto\alpha^{\star}$, we conclude that $\theta = \theta^{\star}$.

Conversely, suppose $\theta = \theta^{\star}$.
By Lemma~\ref{lem:theta-comp-exists}, we can find an irreducible proximal real representation $(\tau,V)$ of~$G$ with highest weight~$\chi_{\tau} \in \sum_{\alpha\in \theta} \NN^*\omega_\alpha$ satisfying \(\chi_{\tau} = \chi_\tau^{\star}\); it is \(\theta\)-compatible by definition.
By Lemma~\ref{lem:preserv-B}, the group $\tau(G)$ preserves a nondegenerate real bilinear form.
Tensoring with $\KK$ gives an irreducible,  $\theta$-proximal $\KK$-representation~$V$ together with an invariant bilinear form $b : V\otimes_\RR V \to \KK$.
\end{proof}

\begin{remark}
For $\KK=\RR$ or \(\CC\), we can always assume $b$ to be symmetric up to replacing $V$ with the irreducible representation of highest weight $2\chi_\tau$, which is a subrepresentation of $\mathrm{Sym}^2(V)$.
\end{remark}

\subsubsection{Cartan and Lyapunov projections for $G$, $\Aut_{\KK}(b)$, and $\GL_{\KK}(V)$}

As in Theorem~\ref{thm:complete-proper-GxG}, let $b: V \otimes_\RR V \to \KK$ be a nondegenerate $\RR$-bilinear form on a $\KK$-vector space~$V$ and $\tau : G\to\Aut_{\KK}(b)\subset\GL_{\KK}(V)$ an irreducible, $\theta$-proximal representation.
We identify $\GL_{\KK}(V)$ with $\GL_d(\KK)$ where $d=\dim_{\KK}(V)$, and use the notation of Example~\ref{ex:roots}.
Up to conjugating, we may assume that the real Lie groups $G$, $\Aut_{\KK}(b)$, and $\GL_{\KK}(V)$ have compatible Cartan decompositions, in the sense of inclusion of the corresponding maximal compact subgroups and inclusion of the corresponding Cartan subspaces (see Remark~\ref{rem:Cartan_comp_rep}).
We denote the corresponding Cartan projections by $\mu$, $\mu_{b}$, $\mu_{\GL_{\KK}(V)}$, and the corresponding Lyapunov projections by $\lambda$, $\lambda_{b}$, $\lambda_{\GL_{\KK}(V)}$.
Let $\alpha_0(b)$ be a simple restricted root of $\Aut_{\KK}(b)$, determining the parabolic subgroup $Q_0(b)$ of Section~\ref{subsubsec:Aut(b)}; then $\alpha_0(b)=\alpha_0(b)^{\star}$.
Let $\omega_{\alpha_0(b)}$ be the corresponding fundamental weight.
We use similar notation for $b\oplus b'$ on $V\oplus V$.
Then the following equalities hold.

\begin{lemma}\label{fact:tau-O(b)}
Let $\nu$ be either the Cartan projection $\mu$ or the Lyapunov projection~$\lambda$.
For any $g\in G$,
\begin{enumerate}
  \item\label{item:omega-alpha-0} $\langle \omega_{\alpha_0(b)}, \nu_{b}(\tau(g)) \rangle  = \langle \varepsilon_1, \nu_{\GL_{\KK}(V)}(\tau(g)) \rangle = \langle\chi_{\tau}, \nu(g)\rangle$,
  \item\label{item:alpha-0} $\langle \alpha_0(b), \nu_{b}(\tau(g)) \rangle = \langle \varepsilon_1 - \varepsilon_2, \,\nu_{\GL_{\KK}(V)}(\tau(g)) \rangle$.
\end{enumerate}
For any $g,g'\in G$ with $\langle\chi_{\tau},\nu(g)\rangle \geq \langle\chi_{\tau},\nu(g')\rangle$,
\begin{enumerate}
\setcounter{enumi}{2}
  \item \label{item:alpha-0-b+b} $\langle \alpha_0(b\oplus b'), \nu_{b\oplus b'}(\tau(g) \oplus \tau(g')) \rangle$\\
  ${}\ \ = \langle \varepsilon_1 - \varepsilon_2, \,\nu_{\GL_{\KK}(V\oplus V)}(\tau(g) \oplus \tau(g')) \rangle$\\
  \noindent ${}\ \ = \min\big\{ \langle \alpha_0(b), \nu_{b}(\tau(g)) \rangle, \langle \omega_{\alpha_0(b)}, \,\nu_{b}(\tau(g)) - \nu_{b}(\tau(g'))\rangle \big\}$.
 \end{enumerate}
\end{lemma}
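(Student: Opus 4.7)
The plan is to read every pairing in the statement as the logarithm of a singular value (or of a ratio of two singular values) in a standard representation, and then to transfer this singular-value information through \(\tau\) using weight decompositions.

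The key preliminary step is to handle the standard inclusion \(\iota:\Aut_\KK(b)\hookrightarrow\GL_\KK(V)\): I would first show that for every \(h\in\Aut_\KK(b)\) one has
\[
\langle\omega_{\alpha_0(b)},\mu_b(h)\rangle=\langle\varepsilon_1,\mu_{\GL_\KK(V)}(\iota(h))\rangle
\quad\text{and}\quad
\langle\alpha_0(b),\mu_b(h)\rangle=\langle\varepsilon_1-\varepsilon_2,\mu_{\GL_\KK(V)}(\iota(h))\rangle.
\]
By compatibility of the two Cartan decompositions (Remark~\ref{rem:Cartan_comp_rep}), the multiset of diagonal entries of \(\mathrm{d}_e\iota(\mu_b(h))\) is exactly the multiset of the numbers \(\langle\chi,\mu_b(h)\rangle\) as \(\chi\) runs through the weights of \(\iota\), while the entries of \(\mu_{\GL_\KK(V)}(\iota(h))\) are these same numbers reordered in nonincreasing order. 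A uniform case-by-case inspection of Table~\ref{table4} shows that \(\iota\) is irreducible and proximal, with highest weight \emph{exactly} \(\omega_{\alpha_0(b)}\) and not merely a positive integer multiple of it. Since \(\mu_b(h)\) lies in the closed Weyl chamber of \(\Aut_\KK(b)\), Lemma~\ref{lem:tau}.\eqref{item:nonhighest-weights} applied to \(\iota\) forces the maximum of \(\langle\chi,\mu_b(h)\rangle\) to be attained at \(\chi=\omega_{\alpha_0(b)}\), giving the first equality. Proximality makes the top weight space a line, so the second-largest singular value is also well defined, and the same argument applied to the gap between the two largest singular values gives the second equality.

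Applying this preliminary step to \(h=\tau(g)\), parts~\eqref{item:omega-alpha-0} and~\eqref{item:alpha-0} reduce to \(\langle\varepsilon_1,\mu_{\GL_\KK(V)}(\tau(g))\rangle=\langle\chi_\tau,\mu(g)\rangle\) (the same singular-value argument applied to \(\tau\) itself) and to \(\langle\varepsilon_1-\varepsilon_2,\mu_{\GL_\KK(V)}(\tau(g))\rangle=\min_{\alpha\in\theta}\langle\alpha,\mu(g)\rangle\), which is Lemma~\ref{lem:tau}.\eqref{item:eps1-eps2}; this finishes those parts for \(\nu=\mu\). The case \(\nu=\lambda\) then follows by applying everything to \(g^n\), dividing by \(n\), and sending \(n\to+\infty\) via~\eqref{eqn:lambda-lim-mu}. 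For the direct-sum identity~\eqref{item:alpha-0-b+b}, the multiset of singular values of \(\tau(g)\oplus\tau(g')\) on \(V\oplus V\) is the disjoint union of those of \(\tau(g)\) and of \(\tau(g')\); under the hypothesis \(\langle\chi_\tau,\nu(g)\rangle\geq\langle\chi_\tau,\nu(g')\rangle\) the top singular value of the sum is that of \(\tau(g)\), while the second is either the second of \(\tau(g)\) or the top of \(\tau(g')\). The logarithmic gap between the two largest singular values of the sum is therefore \(\min\{\,\text{log-gap inside }\tau(g),\ \langle\chi_\tau,\nu(g)-\nu(g')\rangle\,\}\); applying the preliminary step to the standard inclusion \(\Aut_\KK(b\oplus b')\hookrightarrow\GL_\KK(V\oplus V)\) to rewrite the left-hand side, and using~\eqref{item:omega-alpha-0}--\eqref{item:alpha-0} to rewrite the two terms of the minimum, produces the claimed formula.

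The main technical nuisance is the uniform verification across Table~\ref{table4} that the highest weight of the standard representation of \(\Aut_\KK(b)\) (and of \(\Aut_\KK(b\oplus b')\)) is exactly the fundamental weight \(\omega_{\alpha_0(b)}\) (respectively \(\omega_{\alpha_0(b\oplus b')}\)) and not a positive multiple of it; this is a short row-by-row check using the root systems of type \(B_d\), \(C_d\), \(D_d\) (and, in the quaternionic case, their appropriate real forms). Everything else is a routine exercise in tracking singular values through weight spaces.
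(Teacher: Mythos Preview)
Your argument is correct. The paper does not actually give a proof of this lemma: it is stated without proof, as a routine consequence of the compatibility of Cartan decompositions set up in Remark~\ref{rem:Cartan_comp_rep} together with the weight-space analysis of Lemma~\ref{lem:tau}. Your approach---interpreting each pairing as the logarithm of a singular value (or eigenvalue modulus, for $\nu=\lambda$), applying Lemma~\ref{lem:tau} both to the standard inclusion $\Aut_\KK(b)\hookrightarrow\GL_\KK(V)$ and to $\tau$, and handling the direct sum by merging singular-value multisets---is exactly the intended verification; the row-by-row check that the standard representation of $\Aut_\KK(b)$ has highest weight precisely $\omega_{\alpha_0(b)}$ (rather than a multiple) is indeed the one point requiring care, and is needed only for part~\eqref{item:omega-alpha-0}.
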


The space $\mathcal{F}_0(b) = \Aut_{\KK}(b)/Q_0(b)$ identifies with the subset of $\PP_{\KK}(V)$ consisting of $b$-isotropic lines, and similarly for $\mathcal{F}_0(b\oplus b')$ inside $\PP_{\KK}(V\oplus V)$.
The embedding  $V \simeq V \oplus \{0\} \hookrightarrow V\oplus V$ induces a natural embedding $\mathcal{F}_0(b) \hookrightarrow \mathcal{F}_0(b\oplus b')$.

Similarly to Lemma~\ref{fact:tau-O(b)}.\eqref{item:alpha-0-b+b} for~$\lambda$, the following holds:

\begin{remark} \label{rem:attract-f-p-g-g'}
Let $g,g'\in G$ satisfy $\langle\chi_{\tau}, \lambda(g) - \lambda(g')\rangle > 0$.
Then the element $\tau(g) \oplus {\tau}(g') \in \Aut_{\KK}(b\oplus b')$ is proximal in $\mathcal{F}_0(b\oplus b')$ if and only if $\tau(g) \in \Aut_{\KK}(b)$ is proximal in~$\mathcal{F}_0(b)$.
In this case the attracting fixed point of $\tau(g) \oplus {\tau}(g')$ in $\mathcal{F}_0(b\oplus b')$ is the image of the attracting fixed point of $\tau(g)$ under the natural embedding $\mathcal{F}_0(b) \hookrightarrow \mathcal{F}_0(b\oplus b')$, and the same holds with $(g^{-1}, {g'}^{-1})$ instead of $(g,g')$, by \eqref{eqn:opp-inv-lambda} and the fact that $\chi_{\tau} = \chi_{\tau}^{\star}$ (Lemma~\ref{lem:preserv-B}).
\end{remark}

\subsubsection{The properness criterion of Benoist and Kobayashi for group manifolds} \label{subsubsec:prop-crit-GxG}

If $\mu : G\to\overline{\aaa}^+$ is a Cartan projection for~$G$ as above, then
\begin{equation}
 \mu\times\mu : G\times G \longrightarrow \overline{\aaa}^+\times\overline{\aaa}^+\label{eq:muGxG}
\end{equation}
is a Cartan projection for $G\times G$.
It sends $\Diag(G)$ to the diagonal of $\overline{\aaa}^+\times\nolinebreak\overline{\aaa}^+$.
Let $\Vert\cdot\Vert$ be a $W$-invariant Euclidean norm on~$\aaa$ as in Section~\ref{subsubsec:Cartan-proj}.
In this setting the properness criterion of Benoist and Kobayashi (see Section~\ref{subsec:intro-Ano-implies-proper}) can be expressed as follows.

\smallskip 
\noindent
{\bf Properness criterion for group manifolds} \cite{Benoist_properness, TKobayashi_proper}: 
{\em A discrete subgroup $\Gamma'$ of $G\times G$ acts properly discontinuously on $(G\times G)/\Diag(G)$ if and only if}
\[ \Vert \mu(\gamma'_1) - \mu(\gamma'_2) \Vert \underset{\scriptscriptstyle \gamma'=(\gamma'_1,\gamma'_2)\to\infty}{-\!\!\!-\!\!\!-\!\!\!-\!\!\!\longrightarrow} + \infty, \]
where $\gamma'\to\infty$ means that $\gamma'$ exits every finite subset of~$\Gamma'$.

The action of $\Gamma'$ on $(G\times G)/\Diag(G)$ is \emph{sharp}, in the sense of \eqref{eqn:sharp}, if and only if there exist $c,C>0$ such that for any $\gamma' = (\gamma'_1,\gamma'_2) \in \Gamma'$,
\[ \Vert \mu(\gamma'_1) - \mu(\gamma'_2) \Vert \geq c \, \big( \Vert \mu(\gamma'_1) \Vert  + \Vert \mu(\gamma'_2) \Vert \big) - C. \]

\subsection{Proof of Theorem~\ref{thm:complete-proper-GxG}} \label{subsec:proof-proper-GxG}

In Theorem~\ref{thm:complete-proper-GxG}, the implication $\eqref{item:p7} \Rightarrow \eqref{item:p8}$  is immediate from the definition \eqref{eqn:sharp} of sharpness and the properness criterion of Benoist and Kobayashi.
Remark~\ref{rem:qimu}, with~\eqref{eq:muGxG}, yields the implication $\eqref{item:p8}\Rightarrow \eqref{item:p9}$.

We now prove the other implications in Theorem~\ref{thm:complete-proper-GxG}, using the notation of Section~\ref{subsec:orth-repr}.
Note that our proofs of $\eqref{item:p8}\Rightarrow \eqref{item:p4}$ for $G$ of real rank~$1$ and $\eqref{item:p3} \Rightarrow \eqref{item:p4}$ rely on our characterizations of Anosov representations given by Theorems \ref{thm:char_ano}.\eqref{item:away-from-walls} and~\ref{thm:char_ano_lambda_intro}.\eqref{item:away-from-walls-lambda}, while $\eqref{item:p4} \Rightarrow \eqref{item:p7}$ and $\eqref{item:p4} \Rightarrow \eqref{item:p3}$ rely on Theorems \ref{thm:char_ano}.\eqref{item:lin-away-from-walls} and~\ref{thm:char_ano_lambda_intro}.\eqref{item:lin-away-from-walls-lambda}.

\begin{proof}[Proof of $\eqref{item:p1}\Rightarrow \eqref{item:p2}$ in Theorem~\ref{thm:complete-proper-GxG}]
By definition of $\theta$-compatibility, we can write $\chi_{\tau} = \sum_{\alpha\in\theta} n_{\alpha}\,\omega_{\alpha}$ where $n_{\alpha}>0$ for all $\alpha\in\theta$.
Lemma~\ref{fact:tau-O(b)}.\eqref{item:omega-alpha-0} for the Lyapunov projection~$\lambda$ then yields, for any $\gamma\in\Gamma$,
\begin{align*}
\langle \omega_{\alpha_0(b)}, \lambda_{b}(\tau\circ\rho_L(\gamma)) \rangle & =  \sum_{\alpha\in\theta} n_{\alpha} \, \langle \omega_{\alpha}, \lambda(\rho_L(\gamma)) \rangle,\\
\langle \omega_{\alpha_0(b)}, \lambda_{b}(\tau\circ\rho_R(\gamma)) \rangle & =  \sum_{\alpha\in\theta} n_{\alpha} \, \langle \omega_{\alpha}, \lambda(\rho_R(\gamma)) \rangle.
\end{align*}
Therefore, if $\rho_L$ uniformly $P_{\theta}$-dominates~$\rho_R$, then $\tau\circ\rho_L$ uniformly $Q_0(b)$-dominates $\tau\circ\nolinebreak\rho_R$.
\end{proof}

If $\theta$ is a singleton (\eg if $G$ is semisimple of real rank~$1$), then the previous proof shows that the uniform $P_{\theta}$-domination of $\rho_R$ by~$\rho_L$ is equivalent to the uniform $Q_0(b)$-domination of $\tau\circ\rho_R$ by $\tau\circ\rho_L$, \ie $\eqref{item:p2}\Rightarrow \eqref{item:p1}$ holds as well.

\begin{proof}[Proof of $\eqref{item:p2}\! \Leftrightarrow\! \eqref{item:p3prime}\! \Leftrightarrow\! \eqref{item:p3}$ and $\eqref{item:p4}\! \Leftrightarrow\! \eqref{item:p4prime}$ in Theorem~\ref{thm:complete-proper-GxG} and Proposition~\ref{prop:complement-GxG}] 
Suppose $\Gamma$ is word hyperbolic.
The natural inclusion \(j:\Aut_\KK(b) \hookrightarrow \GL_\KK(V)\) is \(\alpha_0(b)\)-proximal, hence Proposition~\ref{prop:theta-comp-Anosov}.\eqref{item:theta-comp-Ano} applies: a representation \(\tau \circ \rho_L:\Gamma \to \Aut_\KK(b)\) is \(Q_0(b)\)-Anosov if and only if \(j\circ (\tau\circ \rho_L):\Gamma\to \GL_\KK(V)\) is \(P_{\varepsilon_1 - \varepsilon_2}(V)\)-Anosov.
Moreover, Lemma~\ref{fact:tau-O(b)}.\eqref{item:omega-alpha-0} for the Lyapunov projection~$\lambda$ yields that $\tau\circ\rho_L$ uniformly $Q_0(b)$-dominates $\tau\circ\rho_R$ 
if and only if $j\circ (\tau\circ\rho_L)$ uniformly $P_{\varepsilon_1-\varepsilon_2}(V)$-dominates $j\circ (\tau\circ\rho_R)$.
Thus $\eqref{item:p3} \Leftrightarrow \eqref{item:p3prime}$ holds.

The equivalence $\eqref{item:p4} \Leftrightarrow \eqref{item:p4prime}$ follows from the same argument, using $b\oplus b'$ on $V\oplus V$ instead of $b$ on~$V$.

Similarly, $\rho_L$ is $P_\theta$-Anosov if and only if $(j\circ \tau)\circ\rho_L$ is $P_{\varepsilon_1-\varepsilon_2}(V)$-Anosov by Proposition~\ref{prop:theta-comp-Anosov}.\eqref{item:theta-comp-Ano}; thus $\eqref{item:p2} \Leftrightarrow \eqref{item:p3prime}$ holds.
\end{proof}

In order to prove the equivalence $\eqref{item:p3} \Leftrightarrow \eqref{item:p4}$ in Theorem~\ref{thm:complete-proper-GxG}, we first establish the following. 

\begin{proposition} \label{prop:3<->4partly}
In the setting of Theorem~\ref{thm:complete-proper-GxG}, suppose $\Gamma$ is word hyperbolic.
Then the following are equivalent:
\begin{enumerate}[label=(\roman*),ref=\roman*]
\setcounter{enumi}{2}
  \item\label{item:p3bis} there exists a continuous, $(\tau\circ\rho_L)$-equivariant, transverse, dynamics-preserving map $\xi_V : \partial_{\infty}\Gamma \to \mathcal{F}_0(b)$ and for all $\gamma\in\Gamma$,
  \begin{equation} \label{eqn:weak-domin}
  \langle\omega_{\alpha_0(b)},\lambda_b(\tau\circ\rho_R(\gamma))\rangle < \langle\omega_{\alpha_0(b)},\lambda_b(\tau\circ\rho_L(\gamma))\rangle \, ;
  \end{equation}
  \item\label{item:p4bis} there exist a continuous, $(\tau\circ\rho_L\oplus\tau\circ\rho_R)$-equivariant, transverse, dynamics-preserving map $\xi_{V\oplus V} : \partial_{\infty}\Gamma \to \mathcal{F}_0(b\oplus b')$.
\end{enumerate}
\end{proposition}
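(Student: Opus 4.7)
The plan is to exploit the natural closed embedding $\iota\colon\mathcal{F}_0(b)\hookrightarrow\mathcal{F}_0(b\oplus b')$ induced by $V\ni v\mapsto(v,0)\in V\oplus V$, along with its analogue $\iota'\colon\mathcal{F}_0(b')\hookrightarrow\mathcal{F}_0(b\oplus b')$ on the second factor. The key observation is that $(b\oplus b')\bigl((v,0),(0,w)\bigr)=0$, so no line in $\iota(\mathcal{F}_0(b))$ is transverse in $\mathcal{F}_0(b\oplus b')$ to any line in $\iota'(\mathcal{F}_0(b'))$; this non-transversality phenomenon will be the engine of the argument.

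For (\ref{item:p3bis})~$\Rightarrow$~(\ref{item:p4bis}), I would set $\xi_{V\oplus V}:=\iota\circ\xi_V$. Continuity, equivariance under $\tau\circ\rho_L\oplus\tau\circ\rho_R$, and pointwise transversality transport through $\iota$ directly. For dynamics-preservation at an infinite-order $\gamma\in\Gamma$, converting \eqref{eqn:weak-domin} via Lemma~\ref{fact:tau-O(b)}.\eqref{item:omega-alpha-0} into $\langle\chi_\tau,\lambda(\rho_L(\gamma))-\lambda(\rho_R(\gamma))\rangle>0$ allows Remark~\ref{rem:attract-f-p-g-g'} to identify the attracting fixed point of $\tau\circ\rho_L(\gamma)\oplus\tau\circ\rho_R(\gamma)$ in $\mathcal{F}_0(b\oplus b')$ as $\iota(\xi_V(\eta^+_\gamma))=\xi_{V\oplus V}(\eta^+_\gamma)$.

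The converse (\ref{item:p4bis})~$\Rightarrow$~(\ref{item:p3bis}) reduces to showing that $\xi_{V\oplus V}$ actually takes values in $\iota(\mathcal{F}_0(b))$. For any infinite-order $\gamma\in\Gamma$, dynamics-preservation forces $\tau\circ\rho_L(\gamma)\oplus\tau\circ\rho_R(\gamma)$ to be proximal in $\mathcal{F}_0(b\oplus b')$, and Lemma~\ref{fact:tau-O(b)}.\eqref{item:alpha-0-b+b} then yields $\langle\chi_\tau,\lambda(\rho_L(\gamma))-\lambda(\rho_R(\gamma))\rangle\neq 0$. Partition such $\gamma$ into $\Gamma^+$ (positive difference) and $\Gamma^-$ (negative); both sets are conjugation-invariant, and closed under $\gamma\mapsto\gamma^{-1}$ since $\chi_\tau=\chi_\tau^\star$ by Lemma~\ref{lem:preserv-B}. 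Remark~\ref{rem:attract-f-p-g-g'} (and its symmetric counterpart) places $\xi_{V\oplus V}(\eta^+_\gamma)$ in $\iota(\mathcal{F}_0(b))$ when $\gamma\in\Gamma^+$ and in $\iota'(\mathcal{F}_0(b'))$ when $\gamma\in\Gamma^-$. The normalization \eqref{eqn:tautology} together with homogeneity of $\lambda$ under powers forces $\Gamma^+$ to be nonempty whenever $\Gamma$ contains infinite-order elements.

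The main obstacle is ruling out $\Gamma^-$. Assuming coexistence $\gamma^\pm\in\Gamma^\pm$, one can arrange $\eta^+_{\gamma^+}\neq\eta^+_{\gamma^-}$: in the nonelementary case by conjugating $\gamma^+$ via an appropriate $\beta\in\Gamma$, using minimality of the $\Gamma$-action on $\partial_\infty\Gamma$ (Fact~\ref{fact:dyn_at_infty}.\eqref{item:9}) together with conjugation-invariance of $\Gamma^+$; in the virtually cyclic case one has $\Gamma^-=\emptyset$ automatically, since every infinite-order element is a power of a common generator. Then $\xi_{V\oplus V}(\eta^+_{\gamma^+})$ and $\xi_{V\oplus V}(\eta^+_{\gamma^-})$ would lie in $\iota(\mathcal{F}_0(b))$ and $\iota'(\mathcal{F}_0(b'))$ respectively, which by the opening observation are non-transverse, contradicting the transversality of $\xi_{V\oplus V}$. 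Hence $\Gamma^-=\emptyset$, yielding \eqref{eqn:weak-domin} on infinite-order elements (and trivially on torsion, where both sides vanish). Finally, since $\iota(\mathcal{F}_0(b))$ is closed in $\mathcal{F}_0(b\oplus b')$ and attracting fixed points of infinite-order elements are dense in $\partial_\infty\Gamma$ by Fact~\ref{fact:dyn_at_infty}.\eqref{item:3}, continuity of $\xi_{V\oplus V}$ gives $\xi_{V\oplus V}(\partial_\infty\Gamma)\subset\iota(\mathcal{F}_0(b))$; setting $\xi_V:=\iota^{-1}\circ\xi_{V\oplus V}$, all the required properties for (\ref{item:p3bis}) follow by restriction.
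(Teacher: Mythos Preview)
Your argument is correct, and for \eqref{item:p3bis}~$\Rightarrow$~\eqref{item:p4bis} it matches the paper's proof. For \eqref{item:p4bis}~$\Rightarrow$~\eqref{item:p3bis}, however, you take a genuinely different route. The paper observes that the set $\{\eta\in\partial_\infty\Gamma : \xi_{V\oplus V}(\eta)\in\iota(\mathcal{F}_0(b))\}$ is closed, $\Gamma$-invariant (since $V\oplus 0$ is preserved by $\tau\circ\rho_L\oplus\tau\circ\rho_R$), and nonempty (by the normalization and Remark~\ref{rem:attract-f-p-g-g'}); minimality of the $\Gamma$-action on $\partial_\infty\Gamma$ then forces it to be everything, and the strict domination drops out of Lemma~\ref{fact:tau-O(b)}.\eqref{item:alpha-0-b+b}. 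Your approach instead exploits the transversality hypothesis together with the geometric fact that no line in $\iota(\mathcal{F}_0(b))$ is $(b\oplus b')$-transverse to any line in $\iota'(\mathcal{F}_0(b'))$, turning coexistence of $\Gamma^+$ and $\Gamma^-$ into a contradiction. The paper's argument is a little shorter and uses minimality once, centrally; yours puts the transversality hypothesis to more direct use, which is a nice observation in its own right.

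Two small points. First, your conjugation step to separate $\eta^+_{\gamma^+}$ from $\eta^+_{\gamma^-}$ is unnecessary: if they were equal, $\xi_{V\oplus V}$ of that point would lie in $\iota(\mathcal{F}_0(b))\cap\iota'(\mathcal{F}_0(b'))=\emptyset$, already a contradiction. Second, in the elementary case your phrasing ``every infinite-order element is a power of a common generator'' is not quite right for an arbitrary virtually cyclic group; the correct statement is that some \emph{power} of each infinite-order element lies in a common infinite cyclic subgroup, which (together with $\chi_\tau=\chi_\tau^\star$) is what forces all the differences $\langle\chi_\tau,\lambda(\rho_L(\gamma))-\lambda(\rho_R(\gamma))\rangle$ to share a sign. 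Finally, note that \eqref{eqn:weak-domin} is a \emph{strict} inequality, so it does not hold ``trivially'' on torsion elements where both sides vanish; the statement should really be read as ranging over infinite-order~$\gamma$ (the paper's own proof glosses over this as well).
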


\begin{proof}
Suppose \eqref{item:p3bis} holds.
By Remark~\ref{rem:compatible}.\eqref{item:Ano-prox}, for any $\gamma \in \Gamma$ of infinite order, $\tau\circ\rho_L(\gamma)$ is proximal in $\mathcal{F}_0(b)$.
Remark~\ref{rem:attract-f-p-g-g'} and \eqref{eqn:weak-domin} imply that $(\tau\circ\rho_L \oplus {\tau}\circ\rho_R)(\gamma)$ is proximal in $\mathcal{F}_0(b\oplus b')$ and $\xi_V$ sends the attracting fixed point of $\gamma$ in $\partial_{\infty}\Gamma$ to the attracting fixed point of $(\tau\circ\rho_L \oplus {\tau}\circ\rho_R)(\gamma)$ in $\mathcal{F}_0(b\oplus b')$, after embedding $\mathcal{F}_0(b)$ into $\mathcal{F}_0(b\oplus b')$.
Thus we obtain a continuous, $(\tau\circ\rho_L\oplus\tau\circ\rho_R)$-equivariant, transverse, dynamics-preserving maps $\xi_{V\oplus V} : \partial_{\infty}\Gamma \to \mathcal{F}_0(b\oplus b')$ by postcomposing $\xi_V$ with the natural inclusion $\mathcal{F}_0(b) \hookrightarrow \mathcal{F}_0(b\oplus b')$, and \eqref{item:p4bis} holds.

Conversely, suppose \eqref{item:p4bis} holds.
By Remark~\ref{rem:compatible}.\eqref{item:Ano-prox}, for any $\gamma \in \Gamma$ of infinite order, $(\tau\circ\rho_L \oplus {\tau}\circ\rho_R)(\gamma)$ is proximal in $\mathcal{F}_0(b\oplus b')$.
By the normalization \eqref{eqn:tautology}, Lemma~\ref{fact:tau-O(b)}.\eqref{item:omega-alpha-0}, and Remark~\ref{rem:attract-f-p-g-g'}, there exists $\gamma\in\Gamma$ of infinite order for which $\tau\circ\rho_L(\gamma)$ is proximal in~$\mathcal{F}_0(b)$ and the attracting fixed point of $(\tau\circ\rho_L \oplus {\tau}\circ\rho_R)(\gamma)$ in $\mathcal{F}_0(b\oplus b')$ is the image of the attracting fixed point of $\tau(\rho_L(\gamma))$ under the natural embedding $\mathcal{F}_0(b)\hookrightarrow\mathcal{F}_0(b\oplus b')$; the same holds for~$\gamma^{-1}$ instead of~$\gamma$.
In particular, the closed $\Gamma$-invariant set
\[ \{\eta \in \partial_{\infty} \Gamma \mid \xi_{V\oplus V}(\eta) \in \mathcal{F}_0(b) \subset \mathcal{F}_0(b\oplus b')\} \]
contains the attracting fixed points $\eta_{\gamma}^+, \eta_{\gamma^{-1}}^+$ of $\gamma$ and~$\gamma^{-1}$, hence is nonempty.
This set is equal to $\partial_{\infty}\Gamma$, by minimality of the action of $\Gamma$ on $\partial_{\infty}\Gamma$ if $\Gamma$ is nonelementary, and by the fact that $\partial_{\infty}\Gamma = \{ \eta_{\gamma}^+, \eta_{\gamma^{-1}}^+\}$ if $\Gamma$ is elementary.
Therefore, $\xi_{V\oplus V}$ defines a continuous map from $\partial_{\infty}\Gamma$ to $\mathcal{F}_0(b)$ which is equivariant and dynamics-preserving for $\tau\circ\rho_L$.
Moreover, \eqref{eqn:weak-domin} holds by Lemma~\ref{fact:tau-O(b)}.\eqref{item:alpha-0-b+b} for the Lyapunov projection~$\lambda$.
Thus \eqref{item:p3bis} holds.
\end{proof}

\begin{proof}[Proof of $\eqref{item:p3} \Leftrightarrow \eqref{item:p4}$ in Theorem~\ref{thm:complete-proper-GxG}]
Suppose condition \eqref{item:p3} of Theorem~\ref{thm:complete-proper-GxG} holds, \ie $\Gamma$ is word hyperbolic and $\tau\circ\rho_L$ is $Q_0(b)$-Anosov and uniformly $Q_0(b)$-dominates $\tau\circ\rho_R$.
By Proposition~\ref{prop:3<->4partly}, there exists a continuous, $(\tau\circ\rho_L\oplus\tau\circ\rho_R)$-equivariant, transverse, dynamics-preserving map $\xi_{V\oplus V} : \partial_{\infty}\Gamma \to \mathcal{F}_0(b\oplus -b)$.
Since $\tau\circ\rho_L$ is $Q_0(b)$-Anosov, Theorem~\ref{thm:char_ano_lambda_intro}.\eqref{item:away-from-walls-lambda} implies
\[ \big\langle \alpha_0(b), \lambda_{b}(\tau\circ\rho_L(\gamma)) \big\rangle \underset{\ellinfty{\gamma}\to +\infty}{\longrightarrow} + \infty. \]
Moreover, uniform $Q_0(b)$-domination implies
\[ \big\langle \omega_{\alpha_0(b)}, \lambda_{b}(\tau\circ\rho_L(\gamma)) - \lambda_{b}(\tau\circ\rho_R(\gamma)) \big\rangle \underset{\ellinfty{\gamma}\to +\infty}{\longrightarrow} + \infty. \]
  By Lemma~\ref{fact:tau-O(b)}.\eqref{item:alpha-0-b+b} for the Lyapunov projection~$\lambda$,
  \[ \big\langle \alpha_0(b\oplus b'), \lambda_{b\oplus b'}\big((\tau\circ\rho_L \oplus {\tau}\circ\rho_R)(\gamma)\big) \big\rangle \underset{\ellinfty{\gamma}\to +\infty}{\longrightarrow} + \infty. \]
Therefore $\tau\circ\rho_L\oplus {\tau}\circ\rho_R$ is $Q_0(b\oplus b')$-Anosov by Theorem~\ref{thm:char_ano_lambda_intro}.\eqref{item:away-from-walls-lambda}, \ie condition \eqref{item:p4} of Theorem~\ref{thm:complete-proper-GxG} holds.

Conversely, suppose condition \eqref{item:p4} of Theorem~\ref{thm:complete-proper-GxG} holds, \ie $\Gamma$ is word hyperbolic and $\tau\circ\rho_L \oplus {\tau}\circ\rho_R : \Gamma\to\Aut_{\KK}(b\oplus b')$ is $Q_0(b\oplus b')$-Anosov.
By Proposition~\ref{prop:3<->4partly}, there exists a continuous, $(\tau\circ\rho_L)$-equivariant, transverse, dynamics-preserving map $\xi_V : \partial_{\infty}\Gamma \to \mathcal{F}_0(b)$.
Since $\tau\circ\rho_L \oplus {\tau}\circ\rho_R$ is $Q_0(b\oplus b')$-Anosov, Theorem~\ref{thm:char_ano_lambda_intro}.\eqref{item:away-from-walls-lambda} implies
  \[ \big\langle \alpha_0(b\oplus b'), \lambda_{b\oplus b'}((\tau\circ\rho_L \oplus {\tau}\circ\rho_R)(\gamma)) \big\rangle \underset{\ellinfty{\gamma}\to +\infty}{\longrightarrow} + \infty. \]
  By Lemma~\ref{fact:tau-O(b)}.\eqref{item:alpha-0-b+b} for the Lyapunov projection~$\lambda$,
  \[ \big\langle \alpha_0(b), \lambda_{b}(\tau\circ\rho_L(\gamma)) \big\rangle \underset{\ellinfty{\gamma}\to +\infty}{\longrightarrow} + \infty. \]
  Therefore $\tau\circ\rho_L$ is $Q_0(b)$-Anosov by Theorem~\ref{thm:char_ano_lambda_intro}.\eqref{item:away-from-walls-lambda}.
  On the other hand, using Theorem~\ref{thm:char_ano_lambda_intro}.\eqref{item:lin-away-from-walls-lambda} and Lemma~\ref{fact:tau-O(b)}.\eqref{item:alpha-0-b+b}, we see that there exist $c,C>0$ such that for any $\gamma \in \Gamma$,
\[ \langle \omega_{\alpha_0(b)}, \lambda_{b}(\tau\circ\rho_L(\gamma)) - \lambda_{b}(\tau\circ\rho_R(\gamma)) \rangle \geq c \, \langle \omega_{\alpha_0(b)}, \lambda_{b}(\tau\circ\rho_L(\gamma)) \rangle - C. \]
Applying this to $\gamma^n$, dividing by~$n$, and taking the limit, we obtain
\[ \langle \omega_{\alpha_0(b)}, \lambda_{b}(\tau\circ\rho_R(\gamma)) \rangle \leq (1-c) \, \langle \omega_{\alpha_0(b)}, \lambda_{b}(\tau\circ\rho_L(\gamma)) \rangle. \]
Thus $\tau\circ\rho_L(\gamma)$ uniformly $Q_0(b)$-dominates $\tau\circ\rho_R(\gamma)$, \ie condition \eqref{item:p3} of Theorem~\ref{thm:complete-proper-GxG} holds.
\end{proof}

\begin{proof}[Proof of $\eqref{item:p2}, \eqref{item:p4} \Rightarrow \eqref{item:p7}$ in Theorem~\ref{thm:complete-proper-GxG}]
Suppose that \eqref{item:p2} and \eqref{item:p4} hold (we have seen that they are equivalent).
Since $\rho_L$ is $P_{\theta}$-Anosov, it is a quasi-isometric embedding (see Section~\ref{subsubsec:examples-properties}).
Since $\tau\circ\rho_L \oplus {\tau}\circ\rho_R$ is $Q_0(b\oplus b')$-Anosov, Theorem~\ref{thm:char_ano}.\eqref{item:lin-away-from-walls}, Lemma~\ref{fact:tau-O(b)}.\eqref{item:omega-alpha-0}--\eqref{item:alpha-0-b+b} for the Cartan projection~$\mu$, and the normalization \eqref{eqn:tautology} show that there exist $c,C>0$ such that for any $\gamma \in \Gamma$,
\[ \langle \chi_{\tau}, \mu(\rho_L(\gamma)) - \mu(\rho_R(\gamma)) \rangle \geq c \, \ellGamma{\gamma} - C. \]
Using \eqref{eqn:mu-leq-klength}, we see that there exist $c', C'>0$ such that for any $\gamma \in \Gamma$, 
\[ \Vert \mu(\rho_L(\gamma)) - \mu(\rho_R(\gamma)) \Vert \geq c' \, \big( \Vert\mu(\rho_L(\gamma))\Vert + \Vert\mu(\rho_R(\gamma))\Vert \big) - C', \]
where $\Vert\cdot\Vert$ is the $W$-invariant Euclidean norm on~$\aaa$ from Section~\ref{subsubsec:Cartan-proj}.
Thus the action of $\Gamma$ on $(G\times G)/\Diag(G)$ via $(\rho_L,\rho_R)$ is sharp (see Section~\ref{subsubsec:prop-crit-GxG}).
\end{proof}

Note that any subgroup of \(G\times G\) is always of the form
\begin{equation}
\Gamma^{\rho_L, \rho_R} =\{ (\rho_{L}(\gamma), \rho_{R}(\gamma) ) \mid \gamma \in \Gamma\} ,\label{eq:subgrp_GxG}
\end{equation}
where $\Gamma$ is a group and $\rho_L, \rho_R \in\Hom(\Gamma,G)$ two representations, corresponding to the two projections of $G\times G$ onto~$G$.

In the case that $G$ is semisimple of real rank~$1$, the implication $\eqref{item:p9}\Rightarrow \eqref{item:p8}$ of Theorem~\ref{thm:complete-proper-GxG} is an immediate consequence of the following result.
(Since $G$ has real rank~$1$, we identify $\overline{\aaa}^+$ with~$\RRp$ and see $\lambda$ as a function $G\to\RRp$.)

\begin{theorem}[\cite{Kassel_corank1}]\label{thm:kas-cork1}
Let $G$ be a semisimple Lie group of real~rank~$1$. 
Then any discrete subgroup of $G \times G$ acting properly discontinuously on $(G \times G) / \Diag(G)$ is of the form \( \Gamma^{\rho_L, \rho_R} \) as in \eqref{eq:subgrp_GxG} where, under the normalization \eqref{eqn:tautology}, the representation $\rho_{L}$ has finite kernel and discrete image and $\mu(\rho_R(\gamma))<\mu(\rho_L(\gamma))$ for almost all $\gamma\in\Gamma$, and
\begin{equation}\label{eqn:lambda-rhoLR}
\lambda(\rho_R(\gamma)) < \lambda(\rho_L(\gamma))~\text{ for all $\gamma\in\Gamma$ of infinite order.}
\end{equation}
In particular, if $\Gamma^{\rho_L,\rho_R}$ is finitely generated and quasi-isometrically embedded in $G\times G$, then \(\rho_L:\Gamma \to G\) is a quasi-isometric embedding and \(\Gamma\) is word hyperbolic (see Remark~\ref{rem:rank1ccquasi}).
\end{theorem}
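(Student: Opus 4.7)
The plan is to exploit the fact that in real rank one the Cartan subspace $\aaa$ is one-dimensional, so both $\mu$ and $\lambda$ reduce to scalar $\RRp$-valued functions, and the Benoist--Kobayashi properness criterion of Section~\ref{subsubsec:prop-crit-GxG} becomes the compact statement $|\mu(\rho_L(\gamma)) - \mu(\rho_R(\gamma))| \to +\infty$ as $\gamma \to \infty$ in $\Gamma$. The tautological form $\Gamma = \Gamma^{\rho_L,\rho_R}$ is automatic: any subgroup of $G \times G$ is of this shape, with $\rho_L, \rho_R$ the restrictions of the two coordinate projections, which are homomorphisms because $\Gamma$ is a subgroup. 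The normalization \eqref{eqn:tautology} then fixes the labeling (up to swapping the two factors) by choosing $\rho_L$ to be the representation for which $\sup_\gamma \langle \chi_\tau, \lambda(\rho_L(\gamma)) - \lambda(\rho_R(\gamma))\rangle \geq 0$, which in real rank one just says that some infinite-order $\gamma_0 \in \Gamma$ satisfies $\lambda(\rho_L(\gamma_0)) \geq \lambda(\rho_R(\gamma_0))$.

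Next I would show that $\rho_L$ has finite kernel and discrete image. Suppose otherwise: there exists a sequence $\beta_n \to \infty$ in $\Gamma$ along which $\mu(\rho_L(\beta_n))$ stays bounded, so $\mu(\rho_R(\beta_n)) \to +\infty$ by properness. Pick $\gamma_0$ as above; by \eqref{eqn:lambda-lim-mu} we have $\mu(\rho_L(\gamma_0^k)) = k\,\lambda(\rho_L(\gamma_0)) + o(k)$ and similarly for $\rho_R$, so for large $k$ the $\rho_L$-contribution dominates the $\rho_R$-contribution along powers of $\gamma_0$. Choosing $k = k(n)$ so that $\mu(\rho_R(\gamma_0^{-k(n)})) \asymp \mu(\rho_R(\beta_n))$ and using strong subadditivity of $\mu$ (Fact~\ref{fact:mu-subadditive}), the products $\beta_n \gamma_0^{-k(n)}$ then give a sequence going to infinity in $\Gamma$ along which $|\mu(\rho_L) - \mu(\rho_R)|$ remains bounded, contradicting properness. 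The main obstacle is to balance the two factors delicately: here the scalar nature of $\overline{\aaa}^+$ is decisive, because matching values up to an additive constant in one dimension is a genuine matching, whereas in higher rank it would only equalize one component of a vector.

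For the sign conclusion, observe that by Fact~\ref{fact:mu-subadditive} the scalar quantity $\gamma \mapsto \mu(\rho_L(\gamma)) - \mu(\rho_R(\gamma))$ changes by at most a uniform constant under right multiplication by a fixed element $s$. Since properness forces this quantity to have large absolute value outside a finite set, its sign is locally constant there. Normalization excludes the case that the negative sign prevails on an infinite subset: otherwise, passing to powers and dividing by $n$ via \eqref{eqn:lambda-lim-mu} would yield $\lambda(\rho_R) \geq \lambda(\rho_L)$ strictly on infinitely many conjugacy classes, contradicting the choice of $\gamma_0$. Thus $\mu(\rho_R(\gamma)) < \mu(\rho_L(\gamma))$ for all but finitely many $\gamma$. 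The strict $\lambda$-inequality \eqref{eqn:lambda-rhoLR} for infinite-order $\gamma$ then follows by applying this to $\gamma^n$, dividing by $n$, and passing to the limit, which gives $\lambda(\rho_R(\gamma)) \leq \lambda(\rho_L(\gamma))$; strictness is obtained because equality of the Lyapunov values would, by the rank-one asymptotic $\mu(g^n) = n\,\lambda(g) + O(1)$ for hyperbolic $g$ and the logarithmic bound of Claim~\ref{cla:unip-growth} in the unipotent or parabolic case, force $|\mu(\rho_L(\gamma^n)) - \mu(\rho_R(\gamma^n))|$ to remain bounded, contradicting properness along $\gamma^n \to \infty$.

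For the final assertion, suppose $\Gamma^{\rho_L,\rho_R}$ is finitely generated and quasi-isometrically embedded in $G \times G$. By Remark~\ref{rem:qimu} applied to $G \times G$ with Cartan projection $\mu \times \mu$, we have $\mu(\rho_L(\gamma)) + \mu(\rho_R(\gamma)) \asymp \ellGamma{\gamma}$. Combined with $\mu(\rho_R(\gamma)) \leq \mu(\rho_L(\gamma))$ for all but finitely many $\gamma$, this forces $\mu(\rho_L(\gamma)) \asymp \ellGamma{\gamma}$, so $\rho_L : \Gamma \to G$ is itself a quasi-isometric embedding (again by Remark~\ref{rem:qimu}). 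By Remark~\ref{rem:rank1ccquasi}, the image $\rho_L(\Gamma)$ is convex cocompact in $G$, hence word hyperbolic; since $\rho_L$ has finite kernel, $\Gamma$ itself is word hyperbolic.
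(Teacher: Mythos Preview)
The paper does not prove this theorem: it is quoted as an external result from \cite{Kassel_corank1}, with no proof given here. So there is nothing in the paper to compare your argument against directly.

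That said, your proof attempt has a genuine gap in the step where you argue that $\rho_L$ has finite kernel and discrete image. You assume a sequence $\beta_n\to\infty$ with $\mu(\rho_L(\beta_n))$ bounded and $\mu(\rho_R(\beta_n))\to+\infty$, and you want to produce $\delta_n=\beta_n\gamma_0^{-k(n)}\to\infty$ with $|\mu(\rho_L(\delta_n))-\mu(\rho_R(\delta_n))|$ bounded. But subadditivity (Fact~\ref{fact:mu-subadditive}) only gives $|\mu(gh)-\mu(g)|\leq\mu(h)$; it does \emph{not} let you cancel Cartan projections by multiplication. Concretely, matching $\mu(\rho_R(\gamma_0^{k(n)}))\asymp\mu(\rho_R(\beta_n))$ only pins $\mu(\rho_R(\beta_n\gamma_0^{-k(n)}))$ down to an interval of width comparable to $\mu(\rho_R(\beta_n))$, not to a bounded value: in rank one, $\mu(gh)$ depends on the relative position of $g$ and $h$ in $G/K$, not just on $\mu(g)$ and $\mu(h)$. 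So the contradiction does not follow. The actual argument in \cite{Kassel_corank1} is more delicate and passes through the structure of the closure of a non-discrete image in~$G$.

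The sign argument is also incomplete. You correctly observe that $\gamma\mapsto\mu(\rho_L(\gamma))-\mu(\rho_R(\gamma))$ is coarsely Lipschitz along Cayley edges, so its sign is locally constant where its absolute value is large. But ``locally constant outside a finite set'' does not give ``globally constant'': removing a finite set can disconnect the Cayley graph (think $\Gamma=\ZZ$), and your appeal to the normalization \eqref{eqn:tautology} does not close the gap, since that normalization only asserts $\sup_\gamma(\lambda(\rho_L(\gamma))-\lambda(\rho_R(\gamma)))\geq 0$, which neither produces a specific $\gamma_0$ realizing the inequality nor rules out infinitely many $\gamma$ with the opposite $\mu$-sign. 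The final paragraph, deducing that $\rho_L$ is a quasi-isometric embedding from the $\mu$-inequality and quasi-isometric embedding of $(\rho_L,\rho_R)$, is correct once the earlier parts are in place.
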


We now use this result to prove the implication $\eqref{item:p8}\Rightarrow \eqref{item:p4}$ of Theorem~\ref{thm:complete-proper-GxG}.

\begin{proof}[Proof of $\eqref{item:p8}\Rightarrow \eqref{item:p4}$ in Theorem~\ref{thm:complete-proper-GxG} for $G$ semisimple of real rank~$1$]
\hspace{-0.2cm} Suppose $G$ is semi\-simple of real rank~$1$ and \eqref{item:p8} holds.
By Remark~\ref{rem:rank1ccquasi} the group~\(\Gamma\) is word hyperbolic and $\rho_L$ is $P_{\theta}$-Anosov. 
The boundary map of~$\rho_L$ induces a boundary map $\xi : \partial_{\infty}\Gamma\to \mathcal{F}_0(b \oplus b')$ that is continuous, $(\tau\circ\rho_L\oplus {\tau}\circ\rho_R)$-equivariant, and transverse.
By \eqref{eqn:lambda-rhoLR} in Theorem~\ref{thm:kas-cork1} and by Remark~\ref{rem:attract-f-p-g-g'}, the map $\xi$ is dynamics-preserving.
By the properness criterion of Benoist and Kobayashi (Section~\ref{subsubsec:prop-crit-GxG}), we have
\[ \Vert \mu(\rho_L(\gamma)) - \mu(\rho_R(\gamma)) \Vert \underset{\gamma\to\infty}{\longrightarrow} + \infty. \]
Since $G$ is semisimple of real rank~$1$, this also holds if we replace the norm $\Vert\cdot\Vert$ on~$\aaa$ with $\langle\chi_{\tau},\cdot\rangle$.
Using Lemma~\ref{fact:tau-O(b)}.\eqref{item:alpha-0-b+b} for the Cartan projection~$\mu$, as well as the fact that $\rho_L$ is a quasi-isometric embedding and Remark~\ref{rem:qimu}, we deduce that
\[ \langle \alpha_0(b\oplus b'), \mu_{b\oplus b'}((\tau\circ\rho_L\oplus {\tau}\circ\rho_R)(\gamma)) \rangle \underset{\gamma\to\infty}{\longrightarrow} + \infty. \]
Therefore, $\tau\circ\rho_L\oplus{\tau}\circ\rho_R$ is $Q_0(b\oplus b')$-Anosov by Theorem~\ref{thm:char_ano}.\eqref{item:away-from-walls}.
\end{proof}

\subsection{Proofs of Corollaries \ref{cor:sharpness-conj-GxG-rk1} and~\ref{cor:proper-open-GxG}}

We use the following classical cohomological arguments (see \eg \cite[Corollary\,5.5]{TKobayashi}):
\begin{enumerate}[label=(\roman*),ref=\roman*]
  \item\label{item:cohom-GxG} When a torsion-free discrete subgroup of $G\times G$ acts properly discontinuously on $(G\times G)/\Diag(G)$, it acts cocompactly on $(G\times G)/\Diag(G)$ if and only if its cohomological dimension is equal to $\dim_{\RR}(G/K)$.
  \item\label{item:cohom-G} A torsion-free discrete subgroup of~$G$ is a uniform lattice in~$G$ if and only if its cohomological dimension is equal to $\dim_{\RR}(G/K)$.
\end{enumerate}

\begin{proof}[Proof of Corollary~\ref{cor:sharpness-conj-GxG-rk1}]
Suppose the action of $\Gamma$ on $(G\times G)/\Diag(G)$ is properly discontinuous and cocompact.
Then $\Gamma$ is finitely generated (using the Milnor--\v{S}varc lemma), and so up to passing to a finite-index subgroup we may assume that $\Gamma$ is torsion-free (using the Selberg lemma).
By Theorem~\ref{thm:kas-cork1}, one of the projections of $\Gamma$ onto~$G$ is injective and discrete; its image is a uniform lattice of~$G$ by \eqref{item:cohom-GxG} and~\eqref{item:cohom-G}, hence it is a quasi-isometric embedding.
We conclude using the implication $\eqref{item:p8}\Rightarrow\eqref{item:p7}$ of Theorem~\ref{thm:complete-proper-GxG} (or the implication $\eqref{item:proper-GxG-rk1}\Rightarrow\eqref{item:sharp-GxG-rk1}$ of Theorem~\ref{thm:proper-GxG-rk1}).
\end{proof}

\begin{proof}[Proof of Corollary~\ref{cor:proper-open-GxG}]
The first statement (openness) follows from the equivalence $\eqref{item:p4}\Leftrightarrow\eqref{item:p9}$ of Theorem~\ref{thm:complete-proper-GxG} and from the fact that being Anosov is an open property \cite{Labourie_anosov,Guichard_Wienhard_DoD}.
For the second statement (compactness), note that up to passing to a finite-index subgroup we may again assume $\Gamma$ to be torsion-free, by the Selberg lemma; there is a neighborhood $\mathcal{U}\subset\Hom(\Gamma,G\times G)$ consisting of injective and discrete representations and we use \eqref{item:cohom-GxG} above.
\end{proof}

\subsection{Example of a non-Anosov representation with nice boundary maps}

We now construct an example of a representation $\rho : \Gamma\to G$ which admits continuous, dynamics-preserving, transverse boundary maps $\xi^+ : \partial_{\infty}\Gamma\to G/P_{\theta}$ and $\xi^- : \partial_{\infty}\Gamma\to\nolinebreak G/P_{\theta}^-$, but which is not $P_{\theta}$-Anosov.

\begin{example} \label{ex:nice-maps-not-Ano}
Let $\Gamma$ be a finitely generated discrete group and $\rho_L,\rho_R : \Gamma\to G=\SO(1,2)=\Aut_{\RR}(b)$ two representations such that $\rho_L$ is convex cocompact, $\lambda(\rho_R(\gamma))<\lambda(\rho_L(\gamma))$ for all $\gamma\in\Gamma$ of infinite order, but
\begin{equation} \label{eqn:sup=1}
\sup_{\substack{\gamma\in\Gamma\\ \text{of infinite order}}}\ \frac{\lambda(\rho_R(\gamma))}{\lambda(\rho_L(\gamma))} = 1.
\end{equation}
By Proposition~\ref{prop:3<->4partly} with $\tau=\mathrm{id}$, the representation $\rho := \rho_L \oplus \rho_R : \Gamma\to\Aut_{\RR}(b\oplus b)$ admits a continuous, dynamics-preserving, transverse boundary map $\xi : \partial_{\infty}\Gamma\to\nolinebreak\mathcal{F}_0(b\oplus\nolinebreak b)$.
However, $\rho_L$ does not uniformly $Q_0(b)$-dominate~$\rho_R$, and so Theorem~\ref{thm:complete-proper-GxG} shows that $\rho$ is \emph{not} $Q_0(b\oplus b)$-Anosov.
\end{example}

Here is one construction of a pair $(\rho_L,\rho_R)$ as in Example~\ref{ex:nice-maps-not-Ano}
 for $\Gamma$ a free group on two generators, following a key idea of \cite{Goldman_Labourie_Margulis_Minsky}.
Let $S$ be a hyperbolic one-holed torus with infinite area and compact convex core~$S_0$, and let $\Upsilon\subset S_0$ be a transversely measured geodesic lamination with irrational support. Then $\Upsilon$ intersects every nonperipheral closed curve of $S$, and every half-leaf of $\Upsilon$ is dense in $\Upsilon$.

Let $(\ell(t))_{t\in\RR}$ be an injectively immersed geodesic of $S$ parameterized by arc length, with $\ell(0)\in\partial S_0$ and $\ell(t)$ spiralling asymptotically to $\Upsilon$ as $t\rightarrow +\infty$. We can find a one-parameter family of (distinct) small deformations $\{\ell_s\}_{s\in[-\varepsilon,\varepsilon]}$ of $\ell$, with each $\ell_s$ an injectively immersed geodesic of $S$ parameterized by arc length, satisfying $\lim_{t\rightarrow +\infty} d(\ell_s(t),\ell(t))=0$. The map $\mathbf{\ell}:(s,t)\mapsto \ell_s(t)$ from $[-\varepsilon,\varepsilon]\times \RR$ to $S$ is then injective. Let $S'$ be the hyperbolic surface obtained from $S$ by collapsing each arc $\mathbf{\ell}([-\varepsilon, \varepsilon]\times\{t\})$ to a point. The collapsing map $\psi:S\rightarrow S'$ is $1$-Lipschitz and allows us to identify the fundamental groups of both $S$ and $S'$ with $\Gamma$. 

We can take for $\rho_L$ and $\rho_R$ holonomy representations of $S$ and $S'$ respectively.
For any $\gamma\in\Gamma\smallsetminus\{e\}$, the inequality $\lambda(\rho_R(\gamma))<\lambda(\rho_L(\gamma))$ follows from the fact that the geodesic representative of the loop in~$S$ associated with~$\gamma$ crosses the collapsing region $\mathbf{\ell}([-\varepsilon,\varepsilon]\times \RR)$ (because it crosses $\Upsilon$), hence is taken by $\psi$ to a shorter rectifiable loop.
The supremum \eqref{eqn:sup=1} is approached when $\gamma$ follows $\Upsilon$ for most of its length.

\appendix

\section{An unstable quasi-isometrically embedded subgroup}
\label{app:discr-quasi-isom}

In this appendix we give an example (Proposition~\ref{prop:an-unstable-quasi}) of a quasi-isometric embedding $\rho_0$ of a free group $\Gamma$ into a semisimple Lie group of higher real rank which is not stable under small deformations; in particular $\rho_0$ is \emph{not} an Anosov representation of~$\Gamma$.
This example was first described in~\cite{Guichard_these}.
We use it to construct a non-Anosov representation of $\Gamma$ into $\SL_6(\RR)$ with interesting properties (Example~\ref{ex:xi-not-dyn-preserv}).

\begin{proposition} \label{prop:an-unstable-quasi}
Let $\Gamma$ be a free group on two generators.
Then there is a continuous family $\{\rho_t\}_{t\in [0,1]}$ of representations $\Gamma\to\SL_2(\RR) \times \SL_2(\RR)$ such that
  \begin{itemize}
  \item $\rho_0$ is a quasi-isometric embedding;
  \item for any $t\notin \QQ$, the group $\rho_t(\Gamma)$ is dense in $\SL_2(\RR) \times \SL_2(\RR)$ (for the real topology).
  \end{itemize}
\end{proposition}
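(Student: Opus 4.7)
The plan is to exhibit a continuous family in $\Hom(\Gamma,\SL_2(\RR)\times\SL_2(\RR))$ whose value at $t=0$ is discrete and quasi-isometrically embedded, but whose value at every irrational $t\in(0,1]$ is dense. Since any quasi-isometric embedding of an infinite finitely generated group into a Lie group has discrete image (a bounded set in the target is pulled back to a bounded, hence finite, set in $\Gamma$), such a family immediately proves that being a quasi-isometric embedding is not an open condition in higher rank, as used in Remark~\ref{rem:intro-char-Ano}. The approach has two ingredients: an arithmetic choice of~$\rho_0$, followed by a generic deformation that breaks the arithmetic structure at all irrational parameters.

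For the initial representation $\rho_0$, I would use the Hilbert modular construction. Let $K=\QQ(\sqrt 2)$ with ring of integers $\mathcal{O}_K=\ZZ[\sqrt 2]$ and real embeddings $\iota_1,\iota_2:K\hookrightarrow\RR$, giving the irreducible Hilbert modular lattice $(\iota_1,\iota_2)\bigl(\SL_2(\mathcal{O}_K)\bigr)\subset\SL_2(\RR)\times\SL_2(\RR)$. Choose two hyperbolic elements $\alpha,\beta\in\SL_2(\mathcal{O}_K)$ whose Galois images generate a rank-two free subgroup of this lattice that is quasi-isometrically embedded in $\SL_2(\RR)\times\SL_2(\RR)$. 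Such $\alpha,\beta$ can be produced by a ping-pong argument in $\HH^2\times\HH^2$: taking high powers of two fixed hyperbolic elements of $\SL_2(\mathcal{O}_K)$ in sufficiently generic position makes both their Galois copies play ping-pong on four disjoint Schottky disks in $\partial\HH^2\times\partial\HH^2$. Setting $\rho_0(a)=(\iota_1(\alpha),\iota_2(\alpha))$ and $\rho_0(b)=(\iota_1(\beta),\iota_2(\beta))$ produces a representation whose image sits inside the discrete Hilbert modular lattice, with the ping-pong estimate furnishing the linear lower bound $\|\mu_{\SL_2(\RR)^2}(\rho_0(\gamma))\|\geq c\,\ellGamma{\gamma}-C$ needed for the quasi-isometric embedding property. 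Note that neither of the $\SL_2(\RR)$-projections of $\rho_0$ is discrete; the discreteness is ``distributed'' between the two factors, which is why the small-deformation obstruction that would occur in a Schottky-in-one-factor construction does not apply.

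For the deformation, fix four vectors $X,X',Y,Y'\in\mathfrak{sl}_2(\RR)$ and set
\[ \rho_t(a)=\bigl(\iota_1(\alpha)\,e^{tX},\ \iota_2(\alpha)\,e^{tX'}\bigr),\qquad \rho_t(b)=\bigl(\iota_1(\beta)\,e^{tY},\ \iota_2(\beta)\,e^{tY'}\bigr). \]
The family is continuous and agrees with $\rho_0$ at $t=0$. For $t\neq 0$ the arithmetic tie between the two projections $\sigma_t,\tau_t:\Gamma\to\SL_2(\RR)$ is broken, and for a generic choice of $(X,X',Y,Y')$ both projections are Zariski-dense in $\SL_2(\RR)$ for all $t$ outside a countable set, hence topologically dense. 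A Goursat-type analysis of closed connected subgroups of $\SL_2(\RR)\times\SL_2(\RR)$ with surjective projections then shows that $\overline{\rho_t(\Gamma)}$ is either all of $\SL_2(\RR)\times\SL_2(\RR)$ or the graph of an automorphism of $\SL_2(\RR)$ intertwining $\sigma_t$ and $\tau_t$.

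The main obstacle is controlling the exceptional ``graph'' parameters and making sure none of them is irrational. The graph alternative is equivalent to the character identity $\operatorname{tr}(\tau_t(\gamma))=\operatorname{tr}(\sigma_t(\gamma))$ for every $\gamma\in\Gamma$, which is a countable family of real-analytic equations in $t$ whose Taylor coefficients are polynomials in the entries of the fixed perturbation data $(X,X',Y,Y')$. By a Baire-category/transcendence argument, choosing $(X,X',Y,Y')$ from a countable intersection of dense open subsets of $\mathfrak{sl}_2(\RR)^{\oplus 4}$ (selected to avoid the bad algebraic loci coming from each $\gamma$ and each algebraic relation), one arranges that each exceptional locus in~$t$ consists of finitely many algebraic numbers; a final rescaling $t\mapsto\lambda t$ by a transcendental $\lambda$ sends all these algebraic values off $\RR\smallsetminus\QQ$. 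This gives density of $\rho_t(\Gamma)$ in $\SL_2(\RR)\times\SL_2(\RR)$ for every $t\in[0,1]\smallsetminus\QQ$, completing the construction.
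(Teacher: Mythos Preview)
Your construction of $\rho_0$ via a Hilbert modular lattice is a valid (if heavier) way to produce a quasi-isometrically embedded free group in $\SL_2(\RR)\times\SL_2(\RR)$. The genuine gap is in the deformation step: your argument does not establish density of $\rho_t(\Gamma)$ for \emph{every} irrational $t$, which is what the statement demands.

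Concretely, the claim that a generic choice of $(X,X',Y,Y')$ forces the exceptional parameters (those where a projection fails to be Zariski-dense, or where the Goursat graph alternative occurs) to be \emph{algebraic} numbers is unsupported. The trace identities $\operatorname{tr}(\tau_t(\gamma))=\operatorname{tr}(\sigma_t(\gamma))$ are real-analytic in~$t$, and even when their Taylor coefficients lie in a fixed number field, their zeros are typically transcendental, not algebraic. More seriously, the final ``rescaling by a transcendental $\lambda$'' cannot work: if the bad set is $\{t_i\}$, the rescaled bad set is $\{t_i/\lambda\}$, and asking all of these to land in~$\QQ$ forces every $t_i$ to be a rational multiple of the single number~$\lambda$, which there is no reason to expect. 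A Baire-category argument will at best give density for a comeager set of~$t$, not for the precise set $[0,1]\smallsetminus\QQ$.

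The paper's approach avoids this by building the irrationality condition directly into the construction. One takes $\rho_{\alpha,t}(a)=A$ hyperbolic (independent of~$t$) and $\rho_{\alpha,t}(b)$ a rotation by angle $\pi t$, degenerating to a parabolic at $t=0$; thus $\rho_{\alpha,t}(b)$ generates a dense subgroup of a conjugate of $\SO(2)$ exactly when $t\notin\QQ$, and then $\langle A,\rho_{\alpha,t}(b)\rangle$ is dense in $\SL_2(\RR)$. At $t=0$ a ping-pong argument with the hyperbolic~$A$ and parabolic~$B$ gives $\mu(\rho_{\alpha,0}(\gamma))\geq\kappa\,\ellFF{\gamma}{a}$. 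Setting $\rho_t=(\rho_{\alpha,t},\rho_{\alpha,t}\circ\varsigma)$, where $\varsigma$ swaps $a$ and~$b$, one gets $\|\mu(\rho_0(\gamma))\|\geq\kappa(\ellFF{\gamma}{a}+\ellFF{\gamma}{b})=\kappa\,\ellGamma{\gamma}$. For irrational~$t$ the Goursat graph alternative is ruled out because $\rho_{\alpha,t}(a)$ is hyperbolic while $\rho_{\beta,t}(a)=\rho_{\alpha,t}(b)$ is elliptic, so no automorphism of $\SL_2(\RR)$ can intertwine them. The explicit rotation angle is what gives the exact dichotomy between rational and irrational~$t$ that your genericity argument cannot reach.
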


In order to prove Proposition~\ref{prop:an-unstable-quasi}, we consider a free generating subset $\{ a,b\}$ of~$\Gamma$.
For any $\gamma = a^{m_1}b^{n_1} \cdots a^{m_N} b^{n_N}\in\Gamma$ with $m_i\neq 0$ for all $i>1$ and $n_i\neq 0$ for all $i<N$, we set
\[\left\{\begin{array}{l}
\ellFF{\gamma}{a} = |m_1| + \cdots + |m_N|,\\
\ellFF{\gamma}{b} \hspace{0.05cm} = |n_1| + \cdots + |n_N|.
\end{array}\right.\]
Then the word length function $\ellFF{\cdot}{\Gamma} : \Gamma\to\NN$ with respect to $\{ a,b\}$ satisfies
\[\ellFF{\gamma}{\Gamma} = \ellFF{\gamma}{a}+\ellFF{\gamma}{b}\]
for all $\gamma\in\Gamma$.
We identify the Weyl chamber $\overline{\aaa}^+$ with~$\RRp$, so that the Cartan projection $\mu : \SL_2(\RR)\to\overline{\aaa}^+$ of Section~\ref{subsubsec:Cartan-proj} takes values in~$\RRp$.
With this notation, Proposition~\ref{prop:an-unstable-quasi} is an easy consequence of the following.

\begin{proposition} \label{prop:an-unstable-quasi-2}
Let $\Gamma$ be a free group on two generators $a,b$ and let \(\kappa>0\).
Then there is a continuous family $\{\rho_{\alpha,t}\}_{t\in [0,1]}$ of representations $\Gamma \to \SL_2(\RR)$ such that
\begin{itemize}
  \item $\mu(\rho_{\alpha,0}(\gamma)) \geq \kappa \ellFF{\gamma}{a}$ for all $\gamma \in \Gamma$;
  \item for any $t\notin\QQ$, the group $\rho_{\alpha,t}(\Gamma)$ is dense in $\SL_2(\RR)$ (for the real topology), the element $\rho_{\alpha,t}(a)$ is hyperbolic, and $\rho_{\alpha,t}(b)$ is elliptic.
\end{itemize}
\end{proposition}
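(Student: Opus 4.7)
I would construct the family by fixing the image of $a$ and deforming only the image of $b$ from a parabolic element (at $t=0$) through a one-parameter family of elliptic rotations (for $t>0$). Concretely, fix a hyperbolic element $A \in \SL_2(\RR)$ of translation length $L$ along its axis $\ell_A \subset \HH^2$, where $L$ is to be chosen large depending on $\kappa$; let $x_A^{\pm} \in \partial\HH^2$ be its fixed points. After an initial conjugation, pick a parabolic element $U \in \SL_2(\RR)$ with parabolic fixed point $y \in \partial\HH^2 \setminus \{x_A^+, x_A^-\}$, so that (for $L$ large) the pair $(A,U)$ is in Schottky ping-pong position: there exist disjoint disks $D^{\pm}$ around $x_A^{\pm}$ and a horodisk $H$ around $y$, with $A^m$ mapping the outside of $D^{\mp}$ into $D^{\pm}$ and $U^n$ preserving $H$. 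Then build a continuous family $(B_t)_{t\in[0,1]}$ with $B_0 = U$ such that, for every $t \in (0,1]$, $B_t$ is elliptic with trace $2\cos(\pi t/2)$ and rotation angle exactly $\pi t$ around some center $p(t) \in \HH^2$ with $p(t)\to y$ as $t\to 0^+$. An explicit such family (in the upper half-plane with $y=\infty$, before the global conjugation that moves $A$'s axis off the imaginary axis) is
\[
B_t \;=\; \cos(\pi t/2)\begin{pmatrix} 1 & 1 \\ -\tan^2(\pi t/2) & 1 \end{pmatrix},
\]
which has determinant~$1$, satisfies $B_0 = U = \bigl(\begin{smallmatrix} 1 & 1 \\ 0 & 1\end{smallmatrix}\bigr)$, and for $t \in (0,1]$ is elliptic with center $i\cot(\pi t/2) \in \HH^2$ and rotation angle $\pi t$. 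Finally set $\rho_{\alpha,t}(a) := A$ and $\rho_{\alpha,t}(b) := B_t$.

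\paragraph*{Density at irrational $t$.} For $t \in (0,1]$ with $t \notin \QQ$, the angle $\pi t$ is an irrational multiple of $\pi$, so $\overline{\langle B_t\rangle}$ is the entire compact one-parameter subgroup $K_{p(t)} \subset \SL_2(\RR)$ of rotations about $p(t)$. Since $A$ is hyperbolic and (after the global conjugation) does not fix $p(t)$, it does not normalize $K_{p(t)}$. By the classification of closed connected subgroups of $\SL_2(\RR)$, the only closed subgroup containing both $K_{p(t)}$ and an element outside $N(K_{p(t)})$ is $\SL_2(\RR)$ itself; hence $\overline{\rho_{\alpha,t}(\Gamma)} = \SL_2(\RR)$. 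The conditions that $\rho_{\alpha,t}(a)=A$ is hyperbolic and $\rho_{\alpha,t}(b)=B_t$ is elliptic for $t \notin \QQ$ are immediate. Continuity of $t \mapsto \rho_{\alpha,t}$ is clear from the formula.

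\paragraph*{Lower bound at $t=0$.} This is the heart of the construction: I would verify that for $L$ large enough, the Schottky ping-pong between the hyperbolic $A$ and the parabolic $U$ on $\overline{\HH^2}$ yields the inequality $\mu(\rho_{\alpha,0}(\gamma)) \geq \kappa\,\ellFF{\gamma}{a}$. For a reduced word $\gamma = a^{m_1} b^{n_1} \cdots a^{m_N} b^{n_N}$, the image $\rho_{\alpha,0}(\gamma) = A^{m_1} U^{n_1} \cdots A^{m_N} U^{n_N}$ is a Schottky-reduced word. Writing $\mu(\rho_{\alpha,0}(\gamma)) = d_{\HH^2}(x_0, \rho_{\alpha,0}(\gamma)\cdot x_0)/c_0$ (up to a normalization constant $c_0$) for a suitable base point $x_0$ inside $H$, the ping-pong dynamics telescope the orbit through the disks $D^{\pm}$ and~$H$: each hyperbolic syllable $A^{m_i}$ carries the running point between these disks, contributing at least $L|m_i| - O(1)$ to the total hyperbolic distance traversed, while each parabolic syllable $U^{n_i}$ keeps the running point inside~$H$ at no net cost (or at worst a logarithmic cost that can be absorbed). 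Summing, one obtains $\mu(\rho_{\alpha,0}(\gamma)) \geq L \sum_i |m_i| - O(N)$, and taking $L$ large relative to $\kappa$ (with $L \geq 2\kappa$ and additional room to absorb the bounded-per-syllable correction) gives $\mu(\rho_{\alpha,0}(\gamma)) \geq \kappa\,\ellFF{\gamma}{a}$ for every $\gamma$.

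\paragraph*{Main obstacle.} The key technical work lies in the Schottky estimate at $t = 0$ with a \emph{parabolic} second generator: one has to make the ping-pong constants explicit enough (in terms of $L$ and the geometry of $D^{\pm}$ and $H$) to get a clean lower bound in terms of $\ellFF{\gamma}{a}$ alone, rather than in terms of the full word length. The parameterization of $B_t$ through a parabolic-to-elliptic transition is comparatively easy; the essential point is that requiring $B_t$ elliptic for \emph{every} irrational $t$ forces $\mathrm{tr}(B_t)$ to stay in $[-2,2]$ for all $t$ (hence $B_0$ cannot be hyperbolic), and parabolic is the unique borderline behavior compatible with both the ping-pong at $t=0$ and the elliptic-with-irrational-angle condition at irrational $t>0$.
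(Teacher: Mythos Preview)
Your overall architecture matches the paper's: fix $\rho_{\alpha,t}(a)=A$ hyperbolic independent of~$t$, and deform $\rho_{\alpha,t}(b)$ from a parabolic $B_0=\bigl(\begin{smallmatrix}1&1\\0&1\end{smallmatrix}\bigr)$ through elliptics whose rotation angle is an irrational multiple of~$\pi$ whenever $t\notin\QQ$. The density argument (closure of $\langle B_t\rangle$ is a conjugate of $\SO(2)$, which together with any hyperbolic element generates $\SL_2(\RR)$) is the same. Your explicit one-parameter family differs only cosmetically from the paper's $\bigl(\begin{smallmatrix}\cos\pi t & (\pi t)^{-1}\sin\pi t\\ -\pi t\sin\pi t & \cos\pi t\end{smallmatrix}\bigr)$.

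There is, however, a genuine gap in your lower-bound argument at $t=0$. The condition ``$U^n$ preserving $H$'' is \emph{not} a ping-pong hypothesis: once an $A$-syllable has carried the running point into $D^{\pm}$ (which is disjoint from~$H$), the next parabolic syllable $U^{n_i}$ does not return it to~$H$ or to any designated region, so the telescoping you describe does not get off the ground. A correct parabolic ping-pong needs \emph{two} regions for~$U$ (e.g.\ half-planes $J^{\pm}$ with $U^{\pm 1}$ taking $\PP^1\smallsetminus J^{\mp}$ into $J^{\pm}$). Even with that fix, your displacement estimate $d(x_0,\gamma x_0)\geq L\,\ellFF{\gamma}{a}-O(N)$ still needs a local-to-global argument for broken geodesics that you only gesture at.

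The paper sidesteps these difficulties by working on the boundary $\PP^1(\RR)$ with the round metric rather than inside $\HH^2$. It chooses intervals $V_{\pm}$ about the parabolic fixed point~$\infty$ so that $B^{\pm 1}$ is $1$-\emph{Lipschitz} on $\PP^1(\RR)\smallsetminus V_{\mp}$ and maps it onto~$V_{\pm}$, and then intervals $U_{\pm}\subset\PP^1(\RR)\smallsetminus(V_+\cup V_-)$ so that $A^{\pm 1}$ is $e^{-\kappa}$-\emph{contracting} on $\PP^1(\RR)\smallsetminus U_{\mp}$ and maps it into~$U_{\pm}$. Any reduced word $\rho_{\alpha,0}(\gamma)$ is then $e^{-\kappa\,\ellFF{\gamma}{a}}$-contracting at every point outside the four intervals (the parabolic syllables contribute factors $\leq 1$ with no error term), and a one-line lemma --- if $g\in\SL_2(\RR)$ is $e^{-\ell}$-contracting at some point of $\PP^1(\RR)$ then $\mu(g)\geq\ell$ --- gives the inequality directly. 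This boundary-contraction viewpoint is what makes the bookkeeping clean and yields the bound in terms of $\ellFF{\gamma}{a}$ alone.
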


\begin{proof}[Proof of Proposition~\ref{prop:an-unstable-quasi} using Proposition~\ref{prop:an-unstable-quasi-2}]
Let $\{\rho_{\alpha,t}\}_{t\in [0,1]}$ be the continuous family of representations $\Gamma\to\SL_2(\RR)$ given by Proposition~\ref{prop:an-unstable-quasi-2} (for \(\kappa=1\) say), and let $\{\rho_{\beta,t}\}_{t\in [0,1]}$ be defined by $\rho_{\beta,t}=\rho_{\alpha,t}\circ\varsigma$ where $\varsigma$ is the automorphism of~$\Gamma$ switching $a$ and~$b$.
For any $t\in [0,1]$, consider the representation
\[ \rho_t=(\rho_{\alpha,t},\rho_{\beta,t}) : \Gamma \longrightarrow \SL_2(\RR)\times\SL_2(\RR).\]
Then $\rho_0$ is a quasi-isometric embedding because
\[ \mu(\rho_{\alpha, 0}(\gamma))+\mu(\rho_{\beta, 0}(\gamma)) \geq \kappa(\ellFF{\gamma}{a}+\ellFF{\gamma}{b})=\kappa \ellFF{\gamma}{\Gamma} \]
for all $\gamma\in\Gamma$.
Let $G_t$ be the closure of $\rho_t(\Gamma)$ in $\SL_2(\RR)\times\SL_2(\RR)$.
If $t\notin\QQ$, then the two projections of $G_t$ to $\SL_2(\RR)$ are equal to the full group $\SL_2(\RR)$.
In that case, by Goursat's lemma (see \eg \cite{Petrillo}), either $G_t = \SL_2(\RR) \times\nolinebreak\SL_2(\RR)$ or $G_t = \{ (h,ghg^{-1}) \mid h\in \SL_2(\RR)\}$ for some $g$ in $\GL_2(\RR)$.
The second case cannot occur since it would imply that the hyperbolic element $\rho_{\alpha,t}(a)$ is conjugate to the elliptic element $\rho_{\beta,t}(a)$.
\end{proof}

\begin{proof}[Proof of Proposition~\ref{prop:an-unstable-quasi-2}]
We define $\rho_{\alpha,t}(a)$ to be a hyperbolic element $A\in\SL_2(\RR)$, independent of~$t$, whose properties will be specified in a moment.
For $t>0$, we also define
\[ \rho_{\alpha,t}(b) = \begin{pmatrix} \cos \pi t & \frac{1}{\pi t}\sin \pi t \\ -\pi t \sin \pi t& \cos \pi t \end{pmatrix},\]
and extend this by continuity to $\rho_{\alpha,0}(b) = B := (\begin{smallmatrix} 1 & 1 \\ 0 & 1 \end{smallmatrix})$.

For $t$ in $[0,1]\smallsetminus\QQ$, the element $\rho_{\alpha,t}(b)$ is conjugate to an irrational rotation, hence the closure of the group spanned by $\rho_{\alpha,t}(b)$ is a conjugate of $\SO(2)$; since $\SL_2(\RR)$ is generated by any hyperbolic element and $\SO(2)$, we conclude that $\rho_{\alpha,t}(\Gamma)$ is dense in $\SL_2(\RR)$.

We now show that, for some appropriate choice of the hyperbolic element~$A$, we have $\mu(\rho_{\alpha,0}(\gamma)) \geq \kappa \ellFF{\gamma}{a}$ for all $\gamma \in \Gamma$.
Endow $\PP^1(\RR) = \RR \cup \{\infty\}=\partial_\infty\mathbb{H}^2$ with the round metric centered at $\sqrt{-1}\in\HH^2$.
The parabolic element $B=\rho_{\alpha,0}(b)$ fixes the point $\infty\in\PP^1(\RR)$, and the two compact intervals $V_+:=[\frac{1}{2},\infty]$ and $V_-:=[\infty, \frac{-1}{2}]$ of $\PP^1(\RR)$ (intersecting only at the point~$\infty$) satisfy:
\begin{itemize}
  \item $B( \PP^1(\RR) \smallsetminus V_-) = V_+$ and $B|_{\PP^1(\RR) \smallsetminus V_-}$ is $1$-Lipschitz;
  \item $B^{-1}( \PP^1(\RR) \smallsetminus V_+) = V_-$ and $B^{-1}|_{\PP^1(\RR) \smallsetminus V_+}$ is $1$-Lipschitz.
\end{itemize}

\begin{figure}[h!]
\centering
\labellist
\small\hair 2pt
\pinlabel $-\frac{1}{2}$  at 12 32
\pinlabel $\frac{1}{2}$  at 167 32
\pinlabel $\infty$  at 92 187
\pinlabel $A$  at 90 28
\pinlabel $B$  at 90 91
\pinlabel $\HH^2$  at 80 60
\pinlabel $U_-$  at 52 2
\pinlabel $U_+$  at 133 2
\pinlabel $V_-$  at 13 150
\pinlabel $V_+$  at 170 150
\endlabellist
\includegraphics[width=6cm]{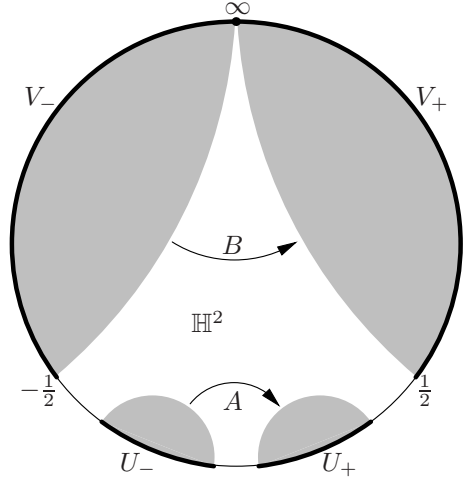}
\caption{The parabolic element $B$ fixes $\infty$ and takes $-1/2$ to $1/2$. For $A$ we can choose any hyperbolic element with large enough translation length whose translation axis contains the shortest segment connecting the hyperbolic half-plane bordered by~$U_-$ to the one bordered by~$U_+$.}
\label{domain}
\end{figure}

Choose two disjoint compact intervals $U_-$ and $U_+$ in $\PP^1(\RR) \smallsetminus (V_- \cup V_+)$ (see Figure~\ref{domain}).  
There exists a hyperbolic element $A= \rho_{\alpha,0}(a)$ such that
\begin{itemize}
  \item $A( \PP^1(\RR) \smallsetminus U_-) \subset U_+$ and $A|_{\PP^1(\RR) \smallsetminus U_-}$ is $e^{-\kappa}$-contracting;
  \item $A^{-1}( \PP^1(\RR) \smallsetminus U_+) \subset U_-$ and $A^{-1}|_{\PP^1(\RR) \smallsetminus U_+}$ is $e^{-\kappa}$-contracting.
\end{itemize}
Then for any $\gamma \in \Gamma$, the element $\rho_{\alpha,0}(\gamma)$ is $e^{-\kappa\ellFF{\gamma}{a}}$-contracting at every point in $\PP^1(\RR) \smallsetminus (V_- \cup V_+ \cup U_- \cup
U_+)$.
We obtain that $\mu(\rho_{\alpha,0}(\gamma)) \geq \kappa \ellFF{\gamma}{a}$ for all $\gamma \in \Gamma$ from the following lemma.
\end{proof}

\begin{lemma}
For any $g\in\SL_2(\RR)$ and $\ell\geq 0$, if $g$ is $e^{-\ell}$-contracting at some point of $\PP^1(\RR)$, then $\mu(g) \geq \ell$. 
\end{lemma}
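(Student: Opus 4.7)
The idea is to use the Cartan decomposition $\SL_2(\RR)=K(\exp\overline{\aaa}^+)K$ with $K=\SO(2)$, combined with the fact that $K$ acts by isometries on $\PP^1(\RR)$ equipped with the round metric (since the round metric is the visual metric at $i\in\HH^2$, and $K$ fixes~$i$).

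First I would write $g=k_1 a\, k_2$ with $k_1,k_2\in K$ and $a=\mathrm{diag}(\lambda,\lambda^{-1})$ for some $\lambda\geq 1$. Under the normalization of $\overline{\aaa}^+\simeq\RRp$ used in the paper (which agrees with $\|\mu(g)\|=d_{\HH^2}(i,g\cdot i)$), one has $\mu(g)=2\log\lambda$; this identification is consistent with the way the lemma is invoked in the proof of Proposition~\ref{prop:an-unstable-quasi-2}, where one needs a hyperbolic element whose $e^{-\kappa}$-contraction factor matches its Cartan projection. Because $k_1,k_2\in K$ act by isometries of the round metric, the Lipschitz constant of $g$ at any point $x\in\PP^1(\RR)$ equals that of $a$ at $k_2\cdot x$; so it suffices to bound from below the pointwise Lipschitz constant of~$a$.

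Next I would carry out this bound by a direct computation. Parameterizing $\PP^1(\RR)$ by the angle $\theta\mapsto[\cos\theta:\sin\theta]$, the map $a$ sends $\theta$ to $\theta'$ with $\tan\theta'=\lambda^{-2}\tan\theta$, and differentiation gives
\[
\frac{d\theta'}{d\theta}=\frac{\lambda^{-2}}{\cos^{2}\theta+\lambda^{-4}\sin^{2}\theta}\,\geq\,\lambda^{-2},
\]
since the denominator is a convex combination of $1$ and $\lambda^{-4}\leq 1$. Thus the Lipschitz constant of $a$ at every point of $\PP^1(\RR)$ is at least $\lambda^{-2}=e^{-\mu(g)}$, with equality attained at the attracting fixed point.

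Finally, if $g$ is $e^{-\ell}$-contracting at some point $x\in\PP^1(\RR)$, then by the above its Lipschitz constant at $x$ satisfies $e^{-\mu(g)}\leq e^{-\ell}$, whence $\mu(g)\geq\ell$. There is no real obstacle here: the only substantive content is the elementary computation of the pointwise Lipschitz constant of a diagonal element on the round circle, after which $K$-invariance of the round metric does the rest.
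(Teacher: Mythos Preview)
Your proof is correct and follows essentially the same approach as the paper: both arguments reduce via the $KAK$ decomposition and $\SO(2)$-invariance of the round metric to bounding the pointwise Lipschitz constant of a diagonal element, and both find this constant to be at least $e^{-\mu(g)}$. The only cosmetic difference is that you use the angle parameterization $\theta\mapsto[\cos\theta:\sin\theta]$ and compute $d\theta'/d\theta$ explicitly, whereas the paper uses the affine coordinate $u$ and the monotonicity of $\|\partial/\partial u\|_x$ in~$|x|$; your computation is a touch more explicit but amounts to the same thing.
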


\begin{proof}
The assumptions do not change if we multiply $g$ by elements of $\SO(2)$ on either side.
Thus we can assume that $g =
  \bigl(\begin{smallmatrix}
    e^{-s/2} & 0 \\ 0 & e^{s/2}
  \end{smallmatrix}\bigr)
$ with $s\geq 0$ (so that $\mu(g) =s$).
Let $x_0 \in \PP^1(\RR)$ be a point where the differential $\mathrm{d}g$ is $e^{-\ell}$-contracting.
Then $x_0 \neq \infty$.
Let $\partial/ \partial u$ be the translation-invariant vector field on~$\RR$.
Denoting by $\| \cdot\|_x$ the Riemannian norm (for the round metric) of tangent vectors at a point $x\in \PP^1(\RR)$, the norm $\Vert \partial/\partial u \Vert_x$ is a decreasing function of $|x|$, and the contraction of $\mathrm{d}g$ at~$x_0$ is
\[\frac{ \| \mathrm{d}g \cdot \partial /\partial u\|_{g\cdot x_0}}{ \| \partial
  /\partial u\|_{x_0}} =  \frac{ \|  e^{-s} \partial /\partial u\|_{g\cdot x_0}}{ \| \partial
  /\partial u\|_{x_0}} \leq e^{-\ell}.\]
Since $\frac{ \|  \partial /\partial u\|_{g\cdot x_0}}{ \| \partial /\partial u\|_{x_0}}\geq 1$, the conclusion $\mu(g)= s\geq \ell$ follows.
\end{proof}

\begin{remark}
   For \(t\in \QQ \cap (0,1]\), the representation \(\rho_t\) has a nontrivial kernel: a power \(b^n\) of \(b\) is in the kernel of \(\rho_{\alpha,t}\) and \(a^n\) is in the kernel of \(\rho_{\beta,t}\) and therefore the commutator \(a^n b^n a^{-n} b^{-n}\) is in the kernel of~\(\rho_t\).
   Thus \(\rho_0\) is the endpoint of a continuous family of representations, all of them being nondiscrete or nonfaithful.
\end{remark}

\begin{remark} \label{rem:no-cont-dyn-preserv-map}
The representation $\rho : \Gamma\to\SL_2(\RR)\times\SL_2(\RR)$, seen as a representation into $\SL_4(\RR)$, does not admit any continuous, dynamics-preserving boundary map $\xi^+ : \partial_{\infty}\Gamma\to\PP^3(\RR)$.
Indeed, suppose by contradiction that such a map~$\xi^+$ exists.
By construction, the closed $\rho(\Gamma)$-invariant sets $\{ \eta \in \partial_{\infty}\Gamma \mid \xi^+(\eta)\subset\PP(\RR^2\oplus\{0\})\}$ and $\{ \eta \in \partial_{\infty}\Gamma \mid \xi^+(\eta)\in\PP_{\RR}(\{0\}\oplus\RR^2)\}$ are both nonempty: this contradicts the minimality of the action of $\Gamma$ on $\partial_{\infty}\Gamma$.
\end{remark}

We can use the representation $\rho_0$ of Proposition~\ref{prop:an-unstable-quasi} to construct an example of a representation $\rho : \Gamma\to G$ and a set $\theta\subset\Delta$ with the following properties:
\begin{itemize}
  \item there exist continuous, $\rho$-equivariant, transverse maps $\xi^+ : \partial_{\infty}\Gamma\to G/P_{\theta}$ and $\xi^- : \partial_{\infty}\Gamma\to G/P_{\theta}^-$ such that for any $\eta \in \partial_{\infty}\Gamma$, the points $\xi^+(\eta)\in G/P_{\theta}$ and $\xi^-(\eta)\in G/P_{\theta}^-$ are compatible in the sense of Definition~\ref{defi:transverse-compatible};
  \item no such maps can be dynamics-preserving for~$\rho$. 
\end{itemize}

\begin{example} \label{ex:xi-not-dyn-preserv}
Let $\Gamma$ be a free group on two generators and $\rho' : \Gamma\to\nolinebreak\SL_2(\RR)$ a convex cocompact representation.
We see $\SL_2(\RR)$ as a subgroup of $G=\GL_6(\RR)$ by embedding it into the lower right corner of~$G$, and use the notation of Example~\ref{ex:roots} for~$G$.
By \eqref{eqn:mu-leq-klength}, there exists $k>0$ such that
\[ \langle \varepsilon_1, \mu(\rho'(\gamma)) \rangle \leq k \ellGamma{\gamma} \]
for all $\gamma\in\Gamma$.
On the other hand, if we choose $\kappa>k$ in Proposition~\ref{prop:an-unstable-quasi-2}, then the representation $\rho_0 : \Gamma\to\SL_2(\RR)\times\SL_2(\RR)$ constructed in Proposition~\ref{prop:an-unstable-quasi}, seen as a representation into $G=\GL_6(\RR)$ by embedding $\SL_2(\RR)\times\SL_2(\RR)$ into the upper left corner of~$G$, satisfies 
\[ \langle \varepsilon_1, \mu(\rho_0(\gamma)) \rangle \geq \kappa \ellGamma{\gamma} \]
for all $\gamma\in\Gamma$.
In particular, using \eqref{eqn:lambda-lim-mu}, we see that $\rho_0$ uniformly $P_{\{\varepsilon_1-\varepsilon_2\}}$-dominates $\rho'$ as representations into~$G$: there is a constant \(c<1\) such that \( \langle \varepsilon_1, \lambda(\rho'(\gamma)) \rangle \leq c \, \langle \varepsilon_1, \lambda(\rho_0(\gamma)) \rangle \) for all \(\gamma\in \Gamma\).
Consider the representation
\[ \rho := (\rho_0, \rho') : \Gamma \longrightarrow \big(\SL_2(\RR)\times\SL_2(\RR)\big) \times \SL_2(\RR) \longhookrightarrow G. \]
It admits continuous, $\rho$-equivariant, transverse boundary maps $\xi^+: \partial_\infty \Gamma \to \PP(\RR^6) = G/P_{\{\varepsilon_1-\varepsilon_2\}}$ and $\xi^-: \partial_\infty \Gamma \to \PP((\RR^6)^{\ast}) = G/P_{\{\varepsilon_1-\varepsilon_2\}}^-$, obtained by composing the boundary maps of the Anosov representation $\rho' : \Gamma\to\SL_2(\RR)$ (Example~\ref{subsubsec:examples-properties}.\eqref{item:2}) with the inclusion of $\PP(\RR^2)\simeq\PP(\{ 0\} \times \RR^2)$ into $\PP(\RR^6)$.
However, $\rho$ does not admit any continuous, dynamics-preserving boundary map, since $\rho_0$ uniformly $P_{\{\varepsilon_1-\varepsilon_2\}}$-dominates $\rho'$ and $\rho_0$ does not admit any continuous, dynamics-preserving boundary map (Remark~\ref{rem:no-cont-dyn-preserv-map}).
\end{example}


\providecommand{\bysame}{\leavevmode\hbox to3em{\hrulefill}\thinspace}

\end{document}